\newcommand{\coch}{\mathrm{Chain}(\mathbb{Z})}
\newcommand{\cochtfr}{\mathrm{Chain}(\mathbb{Z})^{\mathrm{tf}}}
\newcommand{\dhp}{\widehat{D(\Z_p)}}
\newcommand{\lotimes}{\stackrel{L}{\otimes}}
\newcommand{\cochfr}{\mathrm{Chain}(\Z)^{\mathrm{free}}}
\newcommand{\spec}{\mathrm{Spec}}
\renewcommand{\phi}{\varphi}
\newcommand{\Ab}{\mathrm{Ab}}
\newtheorem{theorem}{Theorem}[subsection]
\newtheorem{lemma}[theorem]{Lemma}
\newtheorem{proposition}[theorem]{Proposition}
\newtheorem{corollary}[theorem]{Corollary}
\renewcommand{\mathbb}{\mathbf}
\theoremstyle{definition}
\newtheorem{definition}[theorem]{Definition}
\newtheorem{construction}[theorem]{Construction}
\newtheorem{example}[theorem]{Example}
\newtheorem{notation}[theorem]{Notation}
\newtheorem{warning}[theorem]{Warning}
\newtheorem{remark}[theorem]{Remark}
\newtheorem{variant}[theorem]{Variant}
\newcommand{\etale}{\'{e}tale}
\newcommand{\sm}{\mathrm{small}}
\def\Z{\mathbf{Z}}
\def\C{\mathbf{C}}
\def\F{\mathbf{F}}
\DeclareMathOperator{\VPro}{ \mathbf{VPC} }
\DeclareMathOperator{\Res}{Res}
\DeclareMathOperator{\sheafF}{{\mathscr F}}
\DeclareMathOperator{\sheafO}{{\mathscr O}}
\DeclareMathOperator{\RGamma}{R\Gamma}
\DeclareMathOperator{\calU}{\mathcal U}
\DeclareMathOperator{\calC}{{\mathcal C} }
\DeclareMathOperator{\calN}{{\mathcal{N}}}
\DeclareMathOperator{\aff}{aff}
\DeclareMathOperator{\calD}{{\mathcal D}}
\DeclareMathOperator{\op}{op}
\DeclareMathOperator{\HH}{H}
\DeclareMathOperator{\dR}{dR}
\DeclareMathOperator{\WittScript}{ \mathcal{W} }
\DeclareMathOperator{\WOmega}{ \WittScript \Omega }
\newcommand{\WrOmega}[1]{ \WittScript_{#1} \Omega } 
\DeclareMathOperator{\WSaturate}{\WittScript \Saturate}
\DeclareMathOperator{\crys}{crys}
\DeclareMathOperator{\FrobComp}{ \mathbf{DC} }
\DeclareMathOperator{\FrobAlg}{ \mathbf{DA} }
\DeclareMathOperator{\sat}{sat}
\DeclareMathOperator{\str}{str}
\DeclareMathOperator{\Cart}{Cart}
\DeclareMathOperator{\FrobCompSat}{ \mathbf{DC}_{\sat} }
\DeclareMathOperator{\FrobAlgSat}{ \mathbf{DA}_{\sat} }
\DeclareMathOperator{\FrobCompComplete}{ \mathbf{DC}_{\str} }
\DeclareMathOperator{\FrobAlgComplete}{ \mathbf{DA}_{\str} }
\DeclareMathOperator{\red}{red}
\DeclareMathOperator{\Ext}{Ext}
\DeclareMathOperator{\Saturate}{Sat}
\DeclareMathOperator{\id}{id}
\DeclareMathOperator{\im}{im}
\DeclareMathOperator{\Spf}{Spf}
\DeclareMathOperator{\Spec}{Spec}
\DeclareMathOperator{\Fun}{Fun}
\DeclareMathOperator{\Mod}{Mod}
\DeclareMathOperator{\Hom}{Hom}
\DeclareMathOperator{\perf}{perf}
\DeclareMathOperator{\Tot}{Tot}
\DeclareMathOperator{\supp}{supp}
\DeclareMathOperator{\CAlg}{CAlg}
\DeclareMathOperator{\Vect}{Vect}
\DeclareMathOperator{\coker}{coker}
\newcommand{\et}{\'{e}t}
\newcommand{\mathet}{\text{\et}}
\newcommand{\SCRf}{\mathrm{SCR}_{\mathbb{F}_p}}
\newcommand{\gr}{\mathrm{gr}}
\newcommand{\Adjoint}[4]{\xymatrix@1{#2 \ar@<.4ex>[r]^-{#1} & #3 \ar@<.4ex>[l]^-{#4}}}
\title{Revisiting the de~Rham--Witt complex}
\author{Bhargav Bhatt}
\author{Jacob Lurie}
\author{Akhil Mathew}
\begin{document}
\begin{abstract}
The goal of this paper is to offer a new construction of the de~Rham--Witt
complex of a smooth variety over a perfect field of characteristic $p > 0$.

We introduce a category of cochain complexes
which are equipped with an endomorphism $F$ of underlying graded abelian groups satisfying $dF = pFd$, whose homological algebra we study in detail. 
To any such object satisfying an abstract analog of the Cartier isomorphism, an
elementary homological process  associates a generalization of the de~Rham--Witt
construction. Abstractly, the homological algebra can be viewed as a
calculation of the fixed points of the Berthelot--Ogus operator $L \eta_p$ on
the $p$-complete derived category. 
We give various applications of this approach, including a simplification of the
crystalline comparison for the $A \Omega$-cohomology theory introduced in \cite{BMS}.
\end{abstract}

\maketitle

\setcounter{tocdepth}{2}

\tableofcontents

\newcommand{\FrobTow}{\mathbf{TD}}
\newcommand{\W}{\mathcal{W}}
\newpage
\section{Introduction}

\subsection{The de~Rham--Witt complex}
Let $X$ be a smooth algebraic variety defined over a field $k$. The {\it algebraic de~Rham cohomology} 
$\mathrm{H}^{\ast}_{\dR}(X/k)$ is defined as the hypercohomology of the de~Rham complex
$$ \Omega^{0}_{X/k} \xrightarrow{d} \Omega^1_{X/k} \xrightarrow{ d}
\Omega^2_{X/k} \xrightarrow{d} \cdots.$$
When $k = \C$ is the field of complex numbers, Grothendieck
\cite{GrothendieckdR} showed that the algebraic de~Rham cohomology
$\mathrm{H}^{\ast}_{\dR}(X/k)$ is isomorphic to the usual de~Rham cohomology of
the underlying complex manifold $X(\C)$ (and therefore also to the singular
cohomology with complex coefficients of the topological space $X(\C)$). However,
over fields of characteristic $p > 0$, algebraic de~Rham cohomology is a less
satisfactory invariant.  This is due in part to the fact that it takes values in
the category of vector spaces over $k$, and therefore consists entirely of
$p$-torsion. To address this point, Berthelot \cite{Berthelot74} and
Grothendieck \cite{Grothendieck68} introduced the theory of {\it crystalline
cohomology}. Suppose (for the rest of the introduction) that $k$ is perfect and
characteristic $p > 0$. To every smooth algebraic variety over $k$, the theory
of crystalline cohomology associates
cohomology groups $\mathrm{H}^{\ast}_{\crys}( X )$ which are modules over the
ring of $p$-typical Witt vectors $W(k)$, and can be regarded as integral versions of the de~Rham cohomology groups
$\mathrm{H}^{\ast}_{\dR}(X/k)$. 

The crystalline cohomology groups of a variety $X$ were originally defined as the cohomology groups of the structure sheaf of a certain ringed topos, called the 
{\it crystalline topos} (of $X$). However, Bloch \cite{Bloch} (in the case of
small dimension) and Deligne-Illusie
\cite{illusie} later gave an alternative description of crystalline cohomology, which is closer in spirit to the definition of algebraic de~Rham cohomology.
More precisely, they showed that the crystalline cohomology of a smooth variety
$X$ over $k$  can be realized as the hypercohomology of a complex of sheaves
$$ W\Omega^{0}_{X} \xrightarrow{d} W\Omega^1_{X} \xrightarrow{ d}
W\Omega^2_{X} \xrightarrow{d} \cdots$$
called the {\it de~Rham--Witt complex of $X$}. The complex $W \Omega^{\ast}_{X}$
is related to the usual de~Rham complex by the following pair of results:

\begin{theorem}\label{maintheoA}
Let $X$ be a smooth algebraic variety defined over $k$. Then
there is a canonical map of cochain complexes of sheaves $W \Omega^{\ast}_{X} \rightarrow \Omega^{\ast}_{X}$, which
induces a quasi-isomorphism $W\Omega^{\ast}_{X} / p W\Omega^{\ast}_{X}
\rightarrow \Omega^{\ast}_{X/k}$.
\end{theorem}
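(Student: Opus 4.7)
The plan is to reduce both parts of the statement to the abstract homological framework developed later in the paper: first to construct the map using the natural augmentation from the saturation to its input, and then to establish the quasi-isomorphism modulo $p$ by invoking an abstract Cartier-type statement about the saturation functor, verifying its hypothesis for the de Rham complex of a smooth variety.

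For the construction of the map, I would work Zariski-locally on $X$ and choose an affine open $U = \Spec(R) \subseteq X$ admitting a smooth lift $\widetilde{R}$ to $W(k)$ together with a Frobenius endomorphism $\phi\colon \widetilde{R} \to \widetilde{R}$ lifting the absolute Frobenius of $R$. The de Rham complex $\Omega^*_{\widetilde{R}/W(k)}$ carries a natural endomorphism $F$ of underlying graded modules, defined on $\Omega^i$ by $F = \phi^*/p^i$, which lands in the complex because $\phi$ lifts Frobenius (so $\phi^*$ is divisible by $p^i$ on $i$-forms), and satisfies $dF = pFd$ by a direct computation. This realizes $\Omega^*_{\widetilde{R}/W(k)}$ as an object of the paper's category of Frobenius complexes, and $W\Omega^*_R$ arises as its \emph{saturation}. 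Saturation is equipped by construction with a tautological comparison map back to its input; reducing modulo $p$ produces the desired map $W\Omega^*_R/p \to \Omega^*_R$. Independence of the choice of lift, and hence globalization to a map of sheaves on $X$, is handled by the functorial and derived nature of the construction.

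For the quasi-isomorphism, the essential input is the classical Cartier isomorphism: for a smooth $\F_p$-algebra $R$, the Frobenius induces an isomorphism $C^{-1}\colon \Omega^i_R \xrightarrow{\sim} \HH^i(\Omega^*_R)$. In the paper's framework this translates into the assertion that $(\Omega^*_{\widetilde{R}/W(k)}, F)$ satisfies the abstract Cartier hypothesis advertised in the abstract. The central abstract claim is then: for any Frobenius complex $M^*$ satisfying this hypothesis, the natural map $\Saturate(M^*)/p \to M^*$ is a quasi-isomorphism. Applied with $M^* = \Omega^*_{\widetilde{R}/W(k)}$, this yields the desired quasi-isomorphism locally, and sheafifying gives the theorem.

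The main obstacle is the abstract saturation lemma itself: verifying that reduction modulo $p$ inverts the saturation process under the abstract Cartier hypothesis. This is, as indicated in the abstract, essentially a statement about fixed points of the Berthelot-Ogus operator $L\eta_p$ on the $p$-complete derived category, and requires the full strength of the homological algebra developed later in the paper. A secondary but less conceptual difficulty is globalization: patching the local constructions, which a priori depend on the choice of Frobenius lift, into a canonical sheaf of complexes on $X$. This is again handled by the derived and $\infty$-categorical formalism underlying the saturation construction.
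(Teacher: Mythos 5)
Your overall strategy (local smooth lift with Frobenius, Dieudonn\'{e} structure on the completed de Rham complex, saturation, Cartier criterion) is the right framework, but the mechanism you propose for producing the map in the theorem does not exist, and this is where the real content of the proof lies. The saturation (and the completed saturation $\WSaturate$, which is what actually yields $W\Omega^{\ast}_{R}$ --- saturation alone does not) is a \emph{left} adjoint: it comes with a tautological map $M^{\ast} \to \WSaturate(M^{\ast})$ \emph{from} the input, not ``back to its input.'' There is no natural map $\WSaturate(\widehat{\Omega}^{\ast}_{\widetilde{R}}) \to \widehat{\Omega}^{\ast}_{\widetilde{R}}$, and any such map would in any case depend on the lift $\widetilde{R}$, whereas the theorem asserts a \emph{canonical} map of complexes of sheaves $W\Omega^{\ast}_{X} \to \Omega^{\ast}_{X}$. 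For the same reason your ``central abstract claim'' has the arrow reversed: the Cartier criterion (Theorem~\ref{theo60}) produces a quasi-isomorphism $M^{\ast}/p \to \Saturate(M^{\ast})/p$, i.e.\ from the de Rham complex into the saturation mod $p$ (this is essentially Theorem~\ref{maintheoB} mod $p$), and a quasi-isomorphism in that direction does not by itself give a map of cochain complexes in the direction demanded by Theorem~\ref{maintheoA}.

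The step you are missing is the identification of $\Omega^{\ast}_{R}$ with the quotient $\W_1\Omega^{\ast}_{R} = \W\Omega^{\ast}_{R}/(\im V + \im\, dV)$. The canonical map of the theorem is the composite $\W\Omega^{\ast}_{R} \to \W_1\Omega^{\ast}_{R} \xrightarrow{\nu^{-1}} \Omega^{\ast}_{R}$, where $\nu\colon \Omega^{\ast}_{R} \to \W_1\Omega^{\ast}_{R}$ is induced by the universal property of the de Rham complex; proving that $\nu$ is an isomorphism is a genuine theorem (Theorem~\ref{theo73}, Proposition~\ref{prop76}), whose proof combines the Cartier isomorphism, the quasi-isomorphism $\widehat{\Omega}^{\ast}_{\widetilde{R}} \to \W\Omega^{\ast}_{R}$ of Theorem~\ref{theo53}, and the isomorphism $\W_1(M)^{\ast} \simeq \mathrm{H}^{\ast}(M/pM)$ of Proposition~\ref{prop11}. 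Once this is in place, the mod-$p$ quasi-isomorphism follows because the quotient $M^{\ast}/pM^{\ast} \to \W_1(M)^{\ast}$ is a quasi-isomorphism for any saturated $M^{\ast}$ (Corollary~\ref{cor79}); note also that the statement concerns the classical complex of Deligne--Illusie, so one further needs the identification with the saturated complex in the smooth case (Theorem~\ref{maintheoC}). Globalization is then handled not by ``derived functoriality'' of the saturation (the map from the lift genuinely depends on the Frobenius lift) but by the universal property of $\W\Omega^{\ast}_{R}$, which makes the complex and the quotient map functorial in $R$ alone, together with the sheaf property of $\W\Omega^{\ast}_{X}$.
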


\begin{theorem}\label{maintheoB}
Let $\mathfrak{X}$ be a smooth formal scheme over $\Spf(W(k))$ with central fiber $X = \Spec(k) \times_{ \Spf(W(k))} \mathfrak{X}$. 
Moreover, suppose that the Frobenius map $\varphi_X\colon X \rightarrow X$ extends to a map of formal schemes $\varphi_{ \mathfrak{X} }\colon \mathfrak{X} \rightarrow \mathfrak{X}$. 
Then there is a natural quasi-isomorphism $\Omega^{\ast}_{ \mathfrak{X}/W(k) }
\rightarrow W\Omega^{\ast}_{ X}$ of cochain complexes of abelian sheaves on the
underlying topological space of $X$ (which is the same as the underlying
topological space of $\mathfrak{X}$). Here $\Omega^{\ast}_{ \mathfrak{X}/W(k)}$ denotes the de~Rham complex of the formal scheme $\mathfrak{X}$ relative to $W(k)$. 
\end{theorem}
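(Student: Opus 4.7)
The plan is to build the quasi-isomorphism by exhibiting $\Omega^{\ast}_{\mathfrak{X}}$ as a model, in the authors' abstract framework, for the de Rham--Witt complex of $X$, using the Frobenius lift $\varphi_{\mathfrak{X}}$ as input. The key observation is that the functoriality of the Kähler differentials in characteristic zero makes $\varphi_{\mathfrak{X}}^{\ast}$ divisible by $p^{i}$ on $\Omega^{i}_{\mathfrak{X}}$: locally, if $t$ is an étale coordinate, then $\varphi_{\mathfrak{X}}(t) = t^{p} + p u$, so $\varphi_{\mathfrak{X}}^{\ast}(dt) = (pt^{p-1} + p\, du)\, dt$, which is divisible by $p$, and the multiplicative structure of $\Omega^{\ast}$ then gives divisibility by $p^{i}$ in degree $i$.

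First, I would define $F\colon \Omega^{i}_{\mathfrak{X}} \to \Omega^{i}_{\mathfrak{X}}$ by $F = p^{-i} \varphi_{\mathfrak{X}}^{\ast}$ and verify the identity $dF = pFd$, which follows from the fact that $\varphi_{\mathfrak{X}}^{\ast}$ commutes with $d$ together with the extra factor of $p$ picked up when differentiating. This exhibits $(\Omega^{\ast}_{\mathfrak{X}}, F)$ as an object of the category $\FrobComp$ of cochain complexes with a Frobenius satisfying $dF = pFd$. Next, I would check that $(\Omega^{\ast}_{\mathfrak{X}}, F)$ satisfies the abstract analog of the Cartier isomorphism referenced in the abstract: reducing modulo $p$, the map $F$ on $\Omega^{\ast}_{\mathfrak{X}}/p = \Omega^{\ast}_{X}$ becomes the inverse Cartier operator $C^{-1}$ (this is precisely the mod-$p$ meaning of $\varphi_{\mathfrak{X}}^{\ast}/p^{i}$), which is an isomorphism onto the cohomology by the classical Cartier isomorphism.

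Once these two properties are in place, I would apply the paper's main homological machinery: the de Rham--Witt complex $W\Omega^{\ast}_{X}$ is characterized (up to quasi-isomorphism, and after an appropriate completion/saturation) as the universal strict Dieudonné complex associated to any object of $\FrobComp$ which reduces mod $p$ to $\Omega^{\ast}_{X}$ with its Cartier structure. Concretely, one produces a natural map $\Omega^{\ast}_{\mathfrak{X}} \to W\Omega^{\ast}_{X}$ either by invoking the universal property of $W\Omega^{\ast}_{X}$ (with $F$ on the source corresponding to $F$ on the target and the natural projection $\Omega^{\ast}_{\mathfrak{X}} \to \Omega^{\ast}_{X}$ used to get started), or by exhibiting both as saturations/completions of the same underlying data. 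To check that this map is a quasi-isomorphism, one reduces modulo $p$: the source becomes $\Omega^{\ast}_{X}$, the target becomes $\Omega^{\ast}_{X}$ by Theorem~\ref{maintheoA}, and the induced map is the identity; one then concludes by a standard derived Nakayama / $p$-completeness argument, since both sides are derived $p$-complete ($W\Omega^{\ast}_{X}$ as an inverse limit of $W_{n}\Omega^{\ast}_{X}$, and $\Omega^{\ast}_{\mathfrak{X}}$ because $\mathfrak{X}$ is a formal scheme over $\Spf W(k)$).

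The main obstacle, in my view, is not the verification of $dF = pFd$ or the mod-$p$ Cartier condition, both of which are essentially formal consequences of the existence of $\varphi_{\mathfrak{X}}$. Rather, the delicate point is showing that the abstract saturation/fixed-point construction of $L\eta_{p}$ applied to $(\Omega^{\ast}_{\mathfrak{X}}, F)$ genuinely recovers $W\Omega^{\ast}_{X}$ \emph{as a complex of sheaves on $X$}, and that the comparison map is quasi-isomorphism locally; this is where one must confront the distinction between the presheaf-level constructions (which may produce the wrong object before sheafification) and the correct sheaf-theoretic statement. Once this identification is in hand, together with Theorem~\ref{maintheoA} to control the mod-$p$ fiber, the rest is a formal deduction.
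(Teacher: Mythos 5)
Your construction of the comparison map coincides with the paper's: endow $\Omega^{\ast}_{\mathfrak{X}}$ (locally, the completed de Rham complex $\widehat{\Omega}^{\ast}_{\widetilde{R}}$ of an affine piece) with the Dieudonn\'{e} algebra structure $F = p^{-i}\varphi_{\mathfrak{X}}^{\ast}$ in degree $i$ (Proposition \ref{prop42}, Variant \ref{var46}), note that mod $p$ this realizes the inverse Cartier operator, so the complex is of Cartier type (Example \ref{zisp}, Corollary \ref{cor68}), and produce $\Omega^{\ast}_{\mathfrak{X}} \to W\Omega^{\ast}_X$ from the universal property (Proposition \ref{prop50}, Corollary \ref{cor70}). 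The genuine gap is in how you certify that this map is a quasi-isomorphism. You reduce mod $p$ and invoke Theorem \ref{maintheoA} to identify $W\Omega^{\ast}_X/p$ with $\Omega^{\ast}_X$ (and the claim that the resulting composite endomorphism of $\Omega^{\ast}_X$ is the identity itself needs the cdga universal property of the de Rham complex, though that is minor). But inside this paper Theorem \ref{maintheoA} is not an independent input: the statement that $\W\Omega^{\ast}_R/p \to \Omega^{\ast}_R$ is a quasi-isomorphism is Remark \ref{plux}, which rests on Theorem \ref{theo73}/Proposition \ref{prop76}, whose proof invokes Theorem \ref{theo53} --- precisely the local form of the statement you are trying to prove. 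So your reduction is circular within the paper's development, unless you import Illusie's computational proof of Theorem \ref{maintheoA}, which is exactly what this machinery is designed to avoid. What you have skipped is the actual engine: the Cartier criterion, Theorem \ref{theo60} (saturation does not change mod-$p$ cohomology for complexes of Cartier type, via Proposition \ref{prop62}), together with Corollary \ref{cor79}/Proposition \ref{prop18} and Corollary \ref{cor67} (completion does not change it either, and $p$-completeness then upgrades this to an integral quasi-isomorphism). That is the paper's substitute for your appeal to Theorem \ref{maintheoA}, and it is the heart of Theorem \ref{theo53}.

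A second gap: the target of the theorem is the classical complex $W\Omega^{\ast}_X$, whereas the universal property you use naturally identifies $\WSaturate(\widehat{\Omega}^{\ast}_{\widetilde{R}})$ with the saturated complex $\W\Omega^{\ast}_{R}$. Passing from the saturated to the classical complex is Theorem \ref{maintheoC}, whose proof requires Illusie's structural results (that $W\Omega^{\ast}_R$ is a saturated and strict Dieudonn\'{e} algebra for $R$ smooth) together with Theorem \ref{theo73}; you assert this identification ``up to an appropriate completion/saturation'' without argument. By contrast, the sheaf-theoretic issue you single out as the delicate point is comparatively soft: the local comparison maps are unique by the universal property, hence glue, and a quasi-isomorphism of complexes of sheaves can be checked on affines, where Theorem \ref{makeglob} and Proposition \ref{snippet} control the saturated complex. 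So the weight of the proof is carried by the Cartier criterion and by the comparison with the classical complex, not by sheafification.
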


\begin{warning}
In the situation of Theorem \ref{maintheoB}, the map of cochain complexes
$\Omega^{\ast}_{ \mathfrak{X}/W(k)} \rightarrow W \Omega^{\ast}_{ X}$ depends on the
choice of the map
$\varphi_{ \mathfrak{X} }$, though the induced map on cohomology groups (or even in the derived category) is independent of that choice.
\end{warning}

The proofs of Theorems \ref{maintheoA} and \ref{maintheoB} which appear in \cite{illusie} depend on some rather laborious calculations. Our aim in this paper is to present
an alternate construction of the complex $W \Omega^{\ast}_{X}$ (which agrees with the construction of Deligne and Illusie for smooth varieties; see Theorem \ref{maintheoC})
which can be used to prove Theorems \ref{maintheoA} and \ref{maintheoB} in an essentially calculation-free way: the only input we will need is the Cartier isomorphism
(which we recall as Theorem~\ref{theo71}) and some elementary homological algebra.

The de~Rham--Witt complexes constructed in this paper agree with those of \cite{illusie} in the smooth case, but differ in general. For this reason, we distinguish the two constructions notationally: we write $W\Omega^*_X$ for the construction from \cite{illusie} and refer to it as the {\em classical de~Rham--Witt complex}, while we write $\W\Omega^*_X$ for our construction and refer to it as the {\em saturated de~Rham--Witt complex}.

\subsection{Overview of the Construction}
Assume for simplicity that $k$ is a perfect field  of characteristic $p>0$ and that $X = \Spec(R)$ is the spectrum of a smooth $k$-algebra $R$. In this case,
we can identify $\mathrm{H}^{\ast}_{\dR}(X/k)$ with the cohomology of a cochain complex of $k$-vector spaces
$( \Omega^{\ast}_{R}, d)  = (\Omega^{\ast}_{R/k}, d)$. This complex admits both concrete and abstract descriptions:
\begin{itemize}
\item[$(a)$] For each $n \geq 0$, the $R$-module $\Omega^{n}_{R}$ is given by the $n$th exterior power of the module
of K\"{a}hler differentials $\Omega^1_{R/k}$, which is a projective $R$-module of finite rank. The de~Rham differential 
$d\colon \Omega^{n}_{R} \rightarrow \Omega^{n+1}_{R}$ is then given concretely by the formula
$$ d( a_0 (da_1 \wedge da_2 \wedge \cdots \wedge da_n) ) = da_0 \wedge da_1 \wedge \cdots \wedge da_n$$
for $a_0, a_1, \ldots, a_n \in R$.

\item[$(b)$] The de~Rham complex $(\Omega^{\ast}_{R}, d)$ has the structure of a commutative differential graded algebra over $\F_p$ (see \S \ref{frobalgsec}). Moreover, it is characterized by the following universal property: if $(A^{\ast}, d)$ is any commutative differential graded algebra over $\F_p$, then every ring homomorphism $R \rightarrow A^{0}$ extends uniquely to a map of commutative differential graded algebras
$\Omega^{\ast}_{R} \rightarrow A^{\ast}$.
\end{itemize}

Both of these descriptions have analogues for our saturated de~Rham--Witt complex $\W \Omega_{R}^{\ast}$.
Let us begin with $(a)$. Fix an isomorphism $R \simeq \widetilde{R} / p \widetilde{R}$, where
$\widetilde{R}$ is a $W(k)$-algebra which is $p$-adically complete and $p$-torsion-free (such an isomorphism always exists, by virtue
our assumption that $R$ is smooth over $k$). Let $\widehat{ \Omega }^{\ast}_{\widetilde{R}}$ denote the $p$-adic completion 
of the de~Rham complex of $\widetilde{R}$ (or, equivalently, of the de~Rham complex of $\widetilde{R}$ relative to $W(k)$). 
Then $\widehat{ \Omega}^{\ast}_{ \widetilde{R} }$ is a commutative differential graded algebra over $W(k)$, and we have a canonical isomorphism
$\Omega^{\ast}_{R} \simeq \widehat{ \Omega}^{\ast}_{ \widetilde{R} } / p \widehat{ \Omega}^{\ast}_{ \widetilde{R} }$. Beware that
the cochain complex $\widehat{\Omega}_{\widetilde{R}}^{\ast}$ depends on the choice of $\widetilde{R}$, and does not depend functorially
on $R$.

We now make a further choice: a ring homomorphism $\varphi\colon \widetilde{R} \rightarrow \widetilde{R}$ which is compatible with the Frobenius endomorphism of $R$
(so that $\varphi( x) \equiv x^{p} \pmod{p}$); such a choice exists by
the smoothness of $R$ over $k$. Then $\varphi^*$ is divisible by $p^n$ on $\Omega^n_{\widetilde{R}}$, so we obtain a homomorphism of graded algebras
\begin{equation}
\label{FIntro}
F\colon\widehat{\Omega}_{\widetilde{R}}^{\ast} \to \widehat{\Omega}_{\widetilde{R}}^{\ast}  
\end{equation}
by the formula
\begin{align*}
F( a_0 da_1 \wedge da_2 \wedge \cdots \wedge da_n ) &:= ``\frac{1}{p^n} \cdot \varphi^* (a_0 da_1 \wedge da_2 \wedge \cdots \wedge da_n)" \\
										     &:= \varphi(a_0) \frac{ d \varphi(a_1)}{p} \wedge \cdots \wedge \frac{ d \varphi(a_n)}{p}.
\end{align*}
The homomorphism $F$ is not a map of {\em differential} graded algebras because it does not commute with the differential $d$: instead, 
we have the identity $d F(\omega) = p F( d \omega )$ for each $\omega \in \widehat{ \Omega}^{\ast}_{R}$. 

Motivated by the preceding observation, we make the following definition, depending on a prime $p$ which is implicitly fixed throughout the sequel. 

\begin{definition}[See  Definition~\ref{def1}]
A {\it Dieudonn\'{e} complex} is a cochain complex of abelian groups $(M^{\ast},d)$
equipped with a map of graded abelian groups $F\colon M^{\ast} \rightarrow
M^{\ast}$ satisfying $dF = p Fd$. 
\end{definition} 
We devote \S \ref{section2} of this paper to a general study of the homological algebra of Dieudonn\'{e} complexes.

For any Dieudonn\'{e} complex $M^{\ast} = (M^{\ast}, d, F)$, the Frobenius map $F$ determines a map of cochain complexes
\begin{equation}\label{Cartier.reference} ( M^{\ast} / p M^{\ast}, 0) \rightarrow (M^{\ast} / p M^{\ast}, d ). \end{equation}
In the case where $M^{\ast} = \widehat{\Omega}^{\ast}_{ \widetilde{R} }$ is the completed de~Rham complex of
$\widetilde{R}$, this map turns out to be a quasi-isomorphism: that is, on
cohomology groups, it induces a (Frobenius-semilinear)
isomorphism from the ring of differential forms $\Omega^{\ast}_{R}$ to the de~Rham
cohomology $\mathrm{H}^{\ast}_{\dR}( \Spec(R)/k )$. This is (the inverse of) the classical
\emph{Cartier isomorphism}. Motivated by this observation, we introduce the following:

\begin{definition}[See Definition~\ref{def:Cartiertype}] 
A Dieudonn\'{e} complex $M^{\ast}$ is {\it of Cartier type} if it is $p$-torsion-free and the map $(\ref{Cartier.reference})$ is a quasi-isomorphism.
\end{definition} 

Given an arbitrary Dieudonn\'{e} complex $M^{\ast}$, we will construct a new (generally much larger)
Dieudonn\'{e} complex $\WSaturate( M^{\ast} )$, which we call the {\it completed saturation} of $M^{\ast}$. 
This is a combination of two simpler constructions: a saturation process which
produces a Dieudonn\'e complex with an additional \emph{Verschiebung} operator
$V$, and a second procedure which enforces (loosely speaking) completeness with respect to $V$.
We show that the completed saturation construction (while difficult to control
in general) is especially well-behaved
for Dieudonn\'e complexes of Cartier type. 

\begin{theorem}[See Theorem~\ref{theo60} and Proposition~\ref{prop18}] 
Let $M^{\ast}$ be a Dieudonn\'{e} complex. If $M^{\ast}$ is of Cartier type, the map $M^{\ast} \to \W
\mathrm{Sat}(M^{\ast})$ induces an isomorphism 
on cohomology with modulo $p$ coefficients (that is, it becomes a
quasi-isomorphism  after reducing modulo $p$). 
\end{theorem} 

Our saturated de~Rham--Witt complex $\W \Omega_{R}^{\ast}$ can be defined
as the completed saturation of $\widehat{ \Omega}^{\ast}_{ \widetilde{R} }$: that is,
we have a canonical isomorphism $\W \Omega_R^{\ast} \simeq  \W \mathrm{Sat}(  \widehat{ \Omega}^{\ast}_{
\widetilde{R} } )$. We will see that $\W \Omega_R^{\ast}$ is equipped
with an algebra structure which is (in a suitable sense) compatible with its structure as a Dieudonn\'{e} complex,
exhibiting it as an example of the following:

\begin{definition}[See Definition~\ref{SDA} ]
A \emph{strict Dieudonn\'e algebra} is a Dieudonn\'e complex $(A^{\ast}, d, F)$
with the structure of a differential graded algebra such that: 
\begin{enumerate}
\item $A^i = 0$ for $i < 0$.  
\item $F \colon  A^{\ast} \to A^{\ast}$ is a homomorphism of graded rings. 
\item $A^{\ast}$ is $p$-torsion-free. 
\item The map $F \colon A^i \to A^i$ is injective and has image given by those $x \in
A^i$ such that $dx$ is divisible by $p$. 
\item For $x \in A^0$, we have $Fx \equiv x^p \ ( \mathrm{mod}  \ p)$. 
\item $A^{\ast}$ satisfies a certain completeness condition with respect
to the Verschiebung (see Definition~\ref{def22}).  
\end{enumerate}
\end{definition}

The Frobenius map on $R$ induces a ring homomorphism 
\[ R \to \HH^0(\W\Omega^*_R/p),\]
which is analogous to observing that the de~Rham differential on $\Omega^*_R$ is linear over the subring of $p$-th powers in $R$. In the setting of strict Dieudonn\'{e} algebras, we can use this map to characterize $\W\Omega^*_R$ by a universal property in the spirit of $(b)$ (see Definition~\ref{def80} and 
Corollary~\ref{cor70}): 

\begin{theorem}[The universal property of $\W \Omega_R^{\ast}$] 
If $A^{\ast}$ is any strict Dieudonn\'{e} algebra, then every ring homomorphism
$R \rightarrow \HH^0(A^*/p A^*)$ can be extended uniquely to a map of strict Dieudonn\'{e} algebras
$\W \Omega_{R}^{\ast} \rightarrow A^{\ast}$.
\end{theorem} 

Consequently, the saturated de~Rham--Witt complex $\W\Omega^*_R$
depends functorially on $R$, and is independent of the choice of the $W(k)$-algebra $\widetilde{R}$ lifting $R$ or the map
$\varphi\colon \widetilde{R} \rightarrow \widetilde{R}$ lifting the Frobenius.

\begin{remark}[Extension to all $\F_p$-algebras]
\label{extensionFpalgebras}
Using the characterization of $\W \Omega^{\ast}_{R}$ by the universal property mentioned above, we can extend the definition of
$\W \Omega^{\ast}_{R}$ to the case where $R$ is an arbitrary $\F_p$-algebra, not necessarily smooth over
a perfect field $k$. In the general case, we will see that the saturated de~Rham--Witt complex $\W\Omega^*_R$ differs from the classical one. For instance, $\W\Omega^*_R$ only depends on the reduction $R^{\red}$ of $R$ (see Lemma~\ref{lem27}), while the analogous statement fails for the classical de~Rham--Witt complex $W\Omega^*_R$. In \S \ref{sec:Seminormal}, we will prove a stronger result: $\W \Omega^*_{R}$ depends only on the seminormalization $R^{\mathrm{sn}}$ of $R$. Moreover, we can identify $R^{\mathrm{sn}}$ with the $0$th cohomology group
$\mathrm{H}^0(\W \Omega^*_R/p)$. We are not aware of a similar description for the higher cohomology groups of $\W\Omega^*_R/p$. 
\end{remark}

\begin{remark}[Frobenius untwisted characterization]
It is possible to give a formulation of the universal property of
$\W\Omega^*_R$ which is closer in spirit to $(b)$. Any saturated Dieudonn\'e
complex $(M^*,d,F)$ is equipped with a {\it Verschiebung map} $V\colon M^* \to M^*$,
which is characterized by the formula $FV = p$ (Proposition~\ref{Vexists}). If $(M^*,d,F)$
is saturated and $M^{i} = 0$ for $i < 0$, then $F$ induces an isomorphism
$M^0/VM^0 \xrightarrow{\sim} \HH^0(M^*/pM^*)$ (Proposition~\ref{prop11}). It follows that the
Frobenius map $R \to \HH^{0}( \W \Omega_R^{\ast} / p )$ factors uniquely as a composition $R \rightarrow \W\Omega^{0}_{R} / V \W \Omega^{0}_{R} \xrightarrow{F} \HH^{0}( \W \Omega^{0}_{R} / p )$. The universal property $(\ast)$ then translates to the following:

\begin{itemize}
\item  If $A^{\ast}$ is a strict Dieudonn\'{e} algebra, then every ring homomorphism $R \rightarrow A^0/VA^0$ can be extended uniquely to a map of strict Dieudonn\'{e} algebras $\W \Omega_{R}^{\ast} \rightarrow A^{\ast}$.
\end{itemize}

Here we can think of the map $\gamma\colon  R \to \W \Omega_{R}^{0} / V \W \Omega^{0}_{R}$ as an analogue of the identification $R \simeq W(R) / V W(R)$ for the ring of Witt vectors $W(R)$.
Beware, however, that the map $\gamma$ need not be an isomorphism when $R$ is
not regular: in fact (as mentioned in Remark~\ref{extensionFpalgebras} above), it exhibits $W \Omega_{R}^{0} / V W \Omega_{R}^{0}$ as the seminormalization $R^{\mathrm{sn}}$ (Theorem~\ref{dRWsn2}).
\end{remark}

\begin{remark}[Differences with the classical theory]
Let us briefly contrast our definition of the saturated de~Rham--Witt complex $\W \Omega^{\ast}_{R}$ with the classical
de~Rham--Witt complex $W\Omega^{\ast}_{R}$ of \cite{illusie}. Recall that $W \Omega^{\ast}_{R}$ is defined as the inverse limit
of a tower of differential graded algebras $\{ W_r \Omega_{R}^{\ast} \}_{r \geq 0}$, which is defined to be an initial
object of the category of \emph{$V$-pro-complexes} (see \S \ref{compareclass} for a review of the relevant definitions). Our approach differs in several respects:

\begin{itemize}
\item Our saturated de~Rham--Witt complex can also be described as the inverse
limit of a tower $\{ \W_r \Omega_{R}^{\ast} \}_{r \geq 0}$, but for most purposes we find it more convenient to work directly
with the inverse limit $\W \Omega_{R}^{\ast} = \varprojlim_{r} \W_r \Omega_{R}^{\ast}$. This allows us to sidestep certain complications:
for example, the limit $\W \Omega_{R}^{\ast}$ is $p$-torsion free, while the
individual terms $\W_r \Omega_{R}^{\ast}$ are essentially never $p$-torsion-free
(or even flat over $\Z / p^{r} \Z$). 

\item In the construction of the classical de~Rham--Witt pro-complex $\{ W_r \Omega_{R}^{\ast} \}_{r \geq 0}$, the Verschiebung operator $V$
is fundamental and the Frobenius operator $F$ plays an ancillary role. In our presentation, the opposite is true:
we regard the Frobenius operator as an essential part of the structure $\W \Omega_{R}^{\ast}$, while the Verschiebung operator is determined by requiring $FV = p = VF$.

\item The notion of a $V$-pro-complex introduced in \cite{illusie} is essentially non-linear: the axioms make sense only for differential graded algebras, rather than for general cochain complexes. By contrast, our Dieudonn\'{e} complexes form an additive category $\FrobComp$,
and our Dieudonn\'{e} algebras can be viewed as commutative algebras in the
category $\FrobComp$ (satisfying a mild additional condition).
\end{itemize}
\end{remark}

\begin{remark}
The theory of the de~Rham--Witt complex has taken many forms since its original
introduction in \cite{illusie}, and our approach is restricted to the classical
case of algebras over $\mathbb{F}_p$. In particular, we do not discuss the
relative de~Rham--Witt complexes of Langer--Zink \cite{LZ}, or the absolute de
Rham--Witt complexes considered by Hesselholt--Madsen \cite{HMabsolute} in
relation to topological Hochschild homology. 
\end{remark}

\subsection{Motivation via the $L\eta$ Functor}

In this section, we briefly discuss the original (and somewhat more highbrow) motivation that led us to the notion of strict Dieudonn\'e complexes, and some related results. To avoid a proliferation of Frobenius twists,  we work over the field $\F_p$ (instead of an arbitrary perfect field $k$ of characteristic $p$) for the rest of this section.

Let $R$ be a smooth $\F_p$-algebra and let $\Omega^{\ast}_{R} = \Omega^{\ast}_{R/\F_p}$ denote the algebraic de~Rham complex of $R$. Then $\Omega^{\ast}_{R}$ is a cochain complex of vector spaces over $\F_p$, and
can therefore be viewed as an object of the derived category $D( \F_p )$. The
Berthelot--Grothendieck theory of crystalline cohomology provides an object
$\RGamma_{\crys}( \Spec(R) )$ of the derived category $D(\Z)$, which plays the role of a ``characteristic zero lift'' of $\Omega^{\ast}_{R}$ in the sense that there is
a canonical isomorphism
$$\alpha\colon  \F_p \otimes^{L}_{\Z} \RGamma_{\crys}( \Spec(R) ) \rightarrow \Omega^{\ast}_{R}$$
in the category $D( \F_p )$. Explicitly, the object $\RGamma_{\crys}( \Spec(R) ) \in D( \Z )$ can be realized as the global sections of $\mathscr{I}^{\ast}$, where $\mathscr{I}^{\ast}$ is any injective resolution of the structure sheaf on the crystalline site of $\Spec(R)$. Beware that, when regarded as a cochain complex, $\RGamma_{\crys}( \Spec(R) )$ depends on the choice
of the injective resolution $\mathscr{I}^{\ast}$: different choices of injective resolutions  generally yield nonisomorphic cochain complexes, which are nevertheless (canonically) isomorphic
as objects of the derived category $D( \Z )$. 

From the above perspective, it is somewhat surprising that the object $\RGamma_{\crys}( \Spec(R) ) \in D(\Z)$ admits a canonical representative at the level of cochain complexes (given by the de~Rham--Witt complex $W \Omega^{\ast}_{R}$). One of our goals in this paper is to give an explanation for this phenomenon: the cochain complex $W \Omega^{\ast}_{R}$ can actually be functorially recovered from
its image $\RGamma_{\crys}( \Spec(R) ) \in D( \Z )$, together with an additional structure which can also be formulated intrinsically at the level of the derived category. 

To describe this additional structure, recall that the derived category of abelian groups $D(\Z)$ is equipped with an additive (but non-exact) endofunctor
$L \eta_{p}\colon  D(\mathbb{Z}) \to D(\mathbb{Z})$ (see Construction~\ref{etaconstruction} for the definition). This operation was discovered 
in early work on crystalline cohomology stemming from Mazur's resolution \cite{Mazur73} of a conjecture of Katz, giving a relationship between the Newton and Hodge
polygons associated to a smooth projective variety $X$. In \cite{BO78}, Ogus showed (following a suggestion of Deligne) that Mazur's result can be obtained from the following stronger local assertion.

\begin{theorem}[Ogus] 
The Frobenius endomorphism of $R$ endows the crystalline cochain complex $\RGamma_{\crys}(\Spec(R))$ of a smooth affine variety $\Spec(R)$ with a canonical isomorphism
\begin{equation}
\label{OgusDivFrob} 
\widetilde{\varphi}_R\colon \RGamma_{\crys}(\Spec(R)) \simeq L\eta_p \RGamma_{\crys}(\Spec(R))
\end{equation}
in the derived category $D(\Z)$.
\end{theorem} 

It follows that we can regard the pair $(\RGamma_{\crys}(\Spec(R)), \widetilde{\varphi}_R)$ as a \emph{fixed point} of the $L\eta_p$ operator acting on $D(\mathbf{Z})$. 
It was observed in \cite{BMS} that for each $K \in D(\Z)$, the object $(\Z / p^{n} \Z) \otimes_{\Z}^{L} (L \eta_{p})^n K$ of the derived category $D( \Z / p^{n} \Z)$ 
admits a {\em canonical} representative in the category of cochain complexes, given by the Bockstein complex $(\mathrm{H}^*(\Z/p^n \Z \otimes_{\Z}^L K), \mathrm{Bock}_{p^n})$
Consequently, if the object $K \in D(\Z)$ is equipped with an isomorphism $\alpha\colon  K \simeq L \eta_p K$ (in the derived category), then
each tensor product $$(\Z / p^{n} \Z) \otimes_{\Z}^{L} K \simeq ( \Z / p^{n} \Z) \otimes_{\Z}^{L} (L \eta_p)^{n} K$$ 
also has a canonical representative by cochain complex of $\Z / p^{n} \Z$-modules. One might therefore hope that these canonical representatives can be amalgamated
to obtain a representative for $K$ itself (at least up to $p$-adic completion).
We will verify this heuristic expectation in the
following result.

\begin{theorem}[See Theorem~\ref{fixedpointDZ} below]
The category of {\it strict Dieudonne complexes} (Definition~\ref{def22}) is equivalent to the category $\widehat{D(\mathbb{Z})}_p^{L
\eta_p}$ of fixed points for the endofunctor $L \eta_p$ on the $p$-completed derived category of abelian groups (see Definition~\ref{definition.dhp}).
\end{theorem} 
From this optic, Ogus's isomorphism $\widetilde{\varphi}$ in \eqref{OgusDivFrob} above guarantees that
$\RGamma_{\crys}( \Spec(R) )$ admits a canonical presentation by a strict Dieudonne complex, which can then be identified with the
de~Rham--Witt complex $W \Omega^{\ast}_{R}$ of Deligne-Illusie (this observation traces back to the work of Katz, cf. \cite[\S III.1.5]{IllusieRaynaud}).

We give two applications of the equivalence of categories $\FrobCompComplete \simeq \widehat{D(\mathbb{Z})}_p^{L
\eta_p}$. The first is internal to the considerations of this
paper: in \S \ref{sec:sddRW}, we give an alternative construction of the
saturated de~Rham--Witt complex $\W\Omega^*_R$ as a {\it saturation} of the
derived de~Rham--Witt complex $LW\Omega_R$ (for an arbitrary $\F_p$-algebra $R$). 
That is, we observe the derived de~Rham--Witt complex $L W \Omega_R$ is
equipped with a natural map $\alpha_R\colon  L W \Omega_R \to L \eta_p( L W \Omega_R)$
and that $\W \Omega^{\ast}_{R}$ can be identified with the (homotopy) colimit of the diagram
\[ \left( L W \Omega_R \xrightarrow{\alpha_R} L \eta_p( L W \Omega_R)
\xrightarrow{ L\eta_p( \alpha_R)} L\eta_{p^2} ( L W \Omega_R ) \to \dots
\right)^{\widehat{}}_p 
,\]
formed in the $p$-complete derived category $\widehat{D( \Z)}_p$. As a result, we deduce an analog of the Berthelot--Ogus isogeny theorem for $\W\Omega^*_R$ (Corollary~\ref{FrobSatdRWIsogeny}). Our second application concerns integral $p$-adic Hodge theory: in \S \ref{cryscompsec}, we apply the universal property of saturated de~Rham--Witt complexes to give a relatively simple construction of the crystalline comparison map for the $A\Omega$-complexes introduced in \cite{BMS}.

\begin{remark}
One particularly striking feature of the de~Rham--Witt complex $W \Omega^{\ast}_{R}$ is that it provides a cochain-level model for the cup product on crystalline cohomology, where
the associativity and (graded)-commutativity are visible at the level of cochains: that is, $W \Omega^{\ast}_{R}$ is a commutative differential graded algebra. From a homotopy-theoretic
point of view, this is a very strong condition. For example, there can be no analogous realization for the $\F_{\ell}$-{\etale} cohomology of algebraic varieties (or for the singular cohomology of topological spaces), due to the existence of nontrivial Steenrod operations.
\end{remark}

\subsection{Notation and Terminology}

Throughout this paper, we let $p$ denote a fixed prime number. 
A cochain complex $(M^*,d)$ is {\em $p$-torsion-free} if each $M^i$ is $p$-torsion-free. 

Given a cochain complex $(M^{\ast}, d)$ and an integer $k \in \mathbb{Z}$, we
write $\tau^{\leq k} M^{\ast}$ for the canonical truncation in degrees $\leq k$, i.e., $\tau^{\leq k} M^\ast$ is the subcomplex 
\[ \left\{ \dots \to M^{k-2} \to M^{k-1} \to \ker( d\colon  M^k \to M^{k+1}) \to 0 \to \dots \right\}\] 
of $M^{\ast}$; similarly for $\tau^{\geq k} M^\ast$. 

For a commutative ring $R$, we write $D(R)$ for the (unbounded) derived category of $R$-modules, viewed as a symmetric monoidal category by the usual (derived) tensor product of chain complexes. As it is convenient to use the language of $\infty$-categories in the later sections of this paper (especially when discussing the comparison with derived crystalline cohomology), we write $\mathcal{D}(R)$ for the derived $\infty$-category of $R$-modules, so that $D(R)$ is the homotopy category of $\mathcal{D}(R)$. Note that every cochain complex of $R$-modules can be regarded as an object of the derived category $D(R)$. To avoid confusion, we will generally follow the convention of using the notation
$M^{\ast}$ to denote a cochain complex, and $M$ (with no superscript) for its image in the category $D(R)$ (or the $\infty$-category $\mathcal{D}(R)$).

\subsection{Prerequisites}
A primary goal of this paper is to give an account of 
the theory of the de~Rham--Witt complex for $\F_p$-algebras which is as
elementary and accessible as possible. 
In particular, part 1 (comprising sections 1 through 6 of the paper), which
contains the construction of
$\W \Omega_R^{\ast}$, presupposes only a knowledge of elementary
commutative and homological algebra. 

By contrast, part 2 will require additional prerequisites. In section 7, we assume the reader is familiar with derived categories and their
$\infty$-categorical enhancements. In section 9, we will use the theory of nonabelian derived functors (such as the cotangent complex
and derived de~Rham and crystalline cohomology for $\F_p$-algebras). In sections 10 and 11, we will use techniques from the paper \cite{BMS}. 

\subsection{Acknowledgments}

We would like to thank Luc Illusie for his correspondence.
The idea of using $L \eta_p$ to reconstruct the de~Rham--Witt complex has been
independently considered by him. We would also like to thank Vladimir
Drinfeld, Arthur Ogus, Matthew Morrow, and Peter Scholze for helpful discussions. Early discussions of this project took place under the auspices of the Arizona Winter School on {\em Perfectoid Spaces} in March 2017, and we thank the organizers of this school for creating a stimulating environment.
Finally, we most heartily thank Illusie, Ogus and the referee for many helpful comments. 

During the period in which this work was carried
out, the first author was supported by NSF grants \#1501461 and \#1801689, a Packard fellowship and the Simons Foundation grant \#622511, the second author was supported by the National Science Foundation under grant number 1510417, and the third author was supported by a Clay
Research Fellowship. 

\newpage

\part{Construction of $\W \Omega_R^{\ast}$}
\section{Dieudonn\'{e} Complexes}\label{section2}

Our goal in this section is to study the category $\FrobComp$ of
Dieudonn\'e complexes. In particular, we define
the classes of {\it saturated} and {\it strict} Dieudonn\'e complexes, and the operations of
saturation and completed saturation. We also introduce the class of Dieudonn\'{e} complexes of \emph{Cartier type}, for which
the operations of saturation and completed saturation do not change the mod $p$
homology (Theorem~\ref{theo60}), or the homology at all under completeness
hypotheses (Corollary~\ref{cor67}).

\subsection{The Category $\FrobComp$}

\begin{definition}\label{def1}
A {\it Dieudonn\'{e} complex} is a triple $( M^{\ast}, d, F )$, where $( M^{\ast}, d)$ is a cochain complex of abelian groups and $F\colon  M^{\ast} \rightarrow M^{\ast}$ is a map of
graded abelian groups which satisfies the identity $dF(x) = p F(dx)$.

We let $\FrobComp$ denote the category whose objects are Dieudonn\'{e} complexes, where a morphism from $(M^{\ast}, d, F)$ to $(M'^{\ast}, d', F')$ in
$\FrobComp$ is a map of graded abelian groups $f\colon  M^{\ast} \rightarrow
M'^{\ast}$ satisfying $d'f(x) = f(dx)$ and $F'f(x) = f( Fx)$ for each $x \in M^{\ast}$.
\end{definition}

\begin{remark}
Let $(M^{\ast}, d, F)$ be a Dieudonn\'{e} complex. We will generally abuse
notation by simply referring to the underlying graded abelian group
$M^{\ast}$ as a {\it Dieudonn\'{e} complex}; in this case we implicitly assume that the data of $d$ and $F$ have also been specified.
We will refer to $F$ as the {\it Frobenius map} on the complex $M^{\ast}$.
\end{remark}

In the situation of Definition \ref{def1}, it will be convenient to interpret $F$ as a map of cochain complexes.
Note that 
$F$ defines a morphism of complexes $(M^{\ast}, pd) \to (M^{\ast}, d)$; we will
need a slight variant of this in the $p$-torsion-free case. 

\begin{construction}
\label{etaconstruction}
Let $(M^{\ast}, d)$ be a $p$-torsion-free cochain complex of abelian groups,
which we identify with a subcomplex\footnote{In this paper, we will almost
exclusively consider cochain complexes in nonnegative degrees; for such
$M^{\ast}$, we have $(\eta_p M)^{\ast} \subseteq M^{\ast}$.} of the localization
$M^{\ast}[p^{-1}]$. We define another subcomplex $(\eta_p M)^{\ast} \subseteq M^{\ast} [ p^{-1} ]$ as follows: for each integer $n$, we set
$$( \eta_p M)^{n} := \{ x \in p^{n} M^n\colon  dx \in p^{n+1} M^{n+1}
\}.$$
\end{construction}

\begin{remark}\label{rem2}
Let $(M^{\ast}, d, F)$ be a Dieudonn\'{e} complex, and suppose that $M^{\ast}$ is $p$-torsion-free. Then $F$ determines a map of cochain complexes
$$\alpha_{F}\colon  M^{\ast} \rightarrow (\eta_p M)^{\ast},$$ given by the formula $\alpha_{F}(x) = p^{n} F(x)$ for $x \in M^{n}$. 
Conversely, if $(M^{\ast}, d)$ is a cochain complex which is $p$-torsion-free
and $\alpha\colon  M^{\ast} \rightarrow (\eta_p M)^{\ast}$ is a map of cochain complexes,
then we can form a Dieudonn\'{e} complex $(M^{\ast}, d, F)$ by setting $F(x) = p^{-n} \alpha(x)$ for $x \in M^{n}$. These constructions are inverse to one another.
\end{remark}

\begin{remark}[Tensor Products]\label{tprop}
Let $M^{\ast}$ and $N^{\ast}$ be Dieudonn\'{e} complexes. Then the tensor product $M^{\ast} \otimes N^{\ast}$ admits the structure of a Dieudonn\'{e} complex,
with Frobenius given by $F(x \otimes y) = Fx \otimes Fy$. This tensor product operation (together with the usual commutativity and associativity constraints)
determines a symmetric monoidal structure on the category $\FrobComp$ of Dieudonn\'{e} complexes.
\end{remark}

\subsection{Saturated Dieudonn\'{e} Complexes}

\begin{definition}\label{def21}
Let $(M^{\ast}, d, F)$ be a Dieudonn\'{e} complex. We will say that $(M^{\ast}, d, F)$ is {\it saturated} if the following pair of conditions is satisfied:
\begin{itemize}
\item[$(i)$] The graded abelian group $M^{\ast}$ is $p$-torsion-free.
\item[$(ii)$] For each integer $n$, the map $F$ induces an isomorphism of abelian groups $$M^{n} \rightarrow \{ x \in M^{n}\colon  dx \in p M^{n+1} \}.$$
\end{itemize}
We let $\FrobCompSat$ denote the full subcategory of $\FrobComp$ spanned by the saturated Dieudonn\'{e} complexes.
\end{definition}

\begin{remark}\label{rem3}
Let $( M^{\ast}, d, F)$ be a Dieudonn\'{e} complex, and suppose that $M^{\ast}$ is $p$-torsion-free. Then
$(M^{\ast}, d, F)$ is a saturated Dieudonn\'{e} complex if and only if the map $\alpha_{F}\colon  M^{\ast} \rightarrow (\eta_p M)^{\ast}$ is
an isomorphism (see Remark \ref{rem2}).
\end{remark}

\begin{remark}\label{rem5}
Let $(M^{\ast}, d, F)$ be a saturated Dieudonn\'{e} complex. Then the map $F\colon  M^{\ast} \rightarrow M^{\ast}$ is injective.
Moreover, the image of $F$ contains the subcomplex $p M^{\ast}$. It follows that for each element
$x \in M^{n}$, there exists a unique element $Vx \in M^{n}$ satisfying $F(Vx) = px$. We will refer to
the map $V\colon  M^{\ast} \rightarrow M^{\ast}$ as the {\it Verschiebung}. 
\end{remark}

\begin{proposition}\label{Vexists}
Let $(M^{\ast}, d, F)$ be a saturated Dieudonn\'{e} complex. Then the Verschiebung $V\colon  M^{\ast} \rightarrow M^{\ast}$ is an injective map which satisfies the identities
\begin{gather} F \circ V = V \circ F = p ( \id ) \\ F \circ d \circ V = d 
\\ p (d \circ V) = V \circ d .\end{gather}
\end{proposition}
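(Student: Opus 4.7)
The proof will be a sequence of short manipulations that leverage two facts from the preceding discussion: the injectivity of $F$ on a saturated Dieudonn\'e complex (Remark \ref{rem5}) and the $p$-torsion-freeness of $M^*$ (part of the definition of saturated). The defining identity $dF = pFd$ and the defining relation $F(Vx) = px$ will do all the real work; cancellation of $p$ at the right moments is what makes each identity come out cleanly.

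First, $F \circ V = p \cdot \id$ is immediate from the definition of $V$. For $V \circ F = p \cdot \id$, I would apply $F$ to both sides: $F(V(Fx)) = p Fx = F(px)$ by the defining property of $V$, and since $F$ is injective (Remark \ref{rem5}), we conclude $VFx = px$. Injectivity of $V$ then follows formally: if $Vx = 0$ then $px = FVx = 0$, and since $M^*$ is $p$-torsion-free we get $x = 0$.

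For the identity $F \circ d \circ V = d$, the plan is to apply $d$ to the equation $F(Vx) = px$, which gives $dF(Vx) = p\,dx$. Using the Dieudonn\'e relation $dF = pFd$ on the left side yields $pF(d(Vx)) = p\,dx$; cancelling $p$ (permissible by $p$-torsion-freeness) gives $F(dVx) = dx$, as desired. Finally, to obtain $p\,(d \circ V) = V \circ d$, I would apply $V$ to both sides of $FdV = d$: on the left, $V \circ F \circ (dV) = p \cdot dV$ using the identity $VF = p\id$ just established; on the right, we get $Vd$. Comparing gives $p\,dV = Vd$.

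There is no real obstacle here beyond bookkeeping: every step is forced, and the only subtlety is that we must cancel $p$ exactly once (in the proof of $FdV = d$), which is legitimate precisely because $M^*$ is $p$-torsion-free. The remaining identities follow by applying $F$ or $V$ to previously established equations and using injectivity.
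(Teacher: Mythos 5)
Your proof is correct and follows essentially the same route as the paper: $FV = p$ by definition, injectivity of $V$ from $p$-torsion-freeness, $VF = p$ via injectivity of $F$, then $FdV = d$ by applying $d$ to $FV = p$ and cancelling $p$, and finally $pdV = Vd$ by postcomposing with $V$. No gaps.
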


\begin{proof}
The relation $F \circ V = p ( \id)$ follows from the definition, and immediately implies that $V$ is injective (since $M^{\ast}$ is $p$-torsion-free).
Precomposing with $F$, we obtain $F \circ V \circ F = p F$. Since $F$ is injective, it follows that $V \circ F = p (\id)$. 
We have $p d = (d \circ F \circ V ) = p (F \circ d \circ V)$, so that $d = F \circ d \circ V$. Postcomposition with $V$ then yields the identity
$V \circ d = V \circ F \circ d \circ V = p( d \circ V)$.
\end{proof}

\begin{proposition}\label{prop10}
Let $(M^{\ast}, d, F)$ be a saturated Dieudonn\'{e} complex and let $r \geq 0$ be an integer. Then the map $F^{r}$ induces an isomorphism of graded abelian groups
$$M^{\ast} \rightarrow \{ x \in M^{\ast}: dx \in p^{r} M^{\ast+1} \}.$$
\end{proposition}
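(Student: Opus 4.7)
The plan is to argue by induction on $r$. The case $r=0$ is a tautology, and the case $r=1$ is precisely Definition~\ref{def21}(ii). So assume the result for some $r \geq 1$ and establish it for $r+1$.

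First I would record well-definedness and injectivity. Iterating the identity $dF = pFd$ gives $dF^{r+1} = p^{r+1} F^{r+1} d$, so $d(F^{r+1} y) = p^{r+1} F^{r+1}(dy) \in p^{r+1} M^{*+1}$ for every $y$. Injectivity of $F^{r+1}$ is immediate from Remark~\ref{rem5}, which tells us $F$ itself is injective on any saturated complex.

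The substantive point is surjectivity. Let $x \in M^n$ satisfy $dx \in p^{r+1} M^{n+1}$. Since in particular $dx \in p M^{n+1}$, the $r=1$ case supplies a unique $y \in M^n$ with $Fy = x$, and then $p F(dy) = dF(y) = dx \in p^{r+1} M^{n+1}$, so $F(dy) \in p^{r} M^{n+1}$ by $p$-torsion-freeness. If I can promote this to $dy \in p^r M^{n+1}$, the inductive hypothesis applied to $y$ produces $z$ with $y = F^r z$, and hence $x = F y = F^{r+1} z$, finishing the argument.

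The key step, and the main obstacle, is therefore to bootstrap divisibility of $F(dy)$ by $p^r$ into divisibility of $dy$ itself by $p^r$. Writing $F(dy) = p^r w$, I would exploit $d^2 = 0$: on one hand $d(F(dy)) = pF(d(dy)) = 0$, and on the other hand $d(p^r w) = p^r dw$, forcing $dw = 0$ (using $p$-torsion-freeness). In particular $dw \in p M^{n+2}$, so the $r=1$ saturation hypothesis yields $w = F w'$ for some $w' \in M^{n+1}$. Then $F(dy) = p^r F w' = F(p^r w')$, and the injectivity of $F$ gives $dy = p^r w' \in p^r M^{n+1}$, as required.
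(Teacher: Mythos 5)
Your proof is correct and follows essentially the same route as the paper: an induction on $r$ that peels off one Frobenius via the $r=1$ saturation condition, uses $d^2=0$ together with $p$-torsion-freeness to see the auxiliary element is a cycle (hence again in the image of $F$), and cancels $F$ by injectivity to lower the power of $p$ by one. The only difference is cosmetic — the paper applies saturation simultaneously to $x$ and to $dx/p^r$ and then divides, whereas you first write $x=Fy$ and then treat $F(dy)/p^r$ — but the mechanism is identical.
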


\begin{proof}
It is clear that $F^{r}$ is injective (since $F$ is injective), and the inclusion $F^{r} M^{\ast} \subseteq \{ x \in M^{\ast}: dx \in p^{r} M^{\ast+1} \}$
follows from the calculation $d(F^r y) = p^{r} F^{r}(dy)$. We will prove the reverse inclusion by induction on $r$. Assume that $r > 0$ and
that $x \in M^{\ast}$ satisfies $dx = p^{r} y$ for some $y \in M^{\ast+1}$. Then $d(p^{r} y) = 0 $, so our assumption that $M^{\ast}$ is $p$-torsion-free guarantees that $dy = 0$. Applying our assumption that $M^{\ast}$ is a saturated Dieudonn\'{e} complex to $x$ and $y$, we can write
$x = F(x')$ and $y = F(y')$ for some $x' \in M^{\ast}$ and $y' \in M^{\ast+1}$. We then compute
$$ p F(dx') = d( F(x') ) = dx = p^{r} y = p^{r} F(y').$$
Canceling $F$ and $p$, we obtain $dx' = p^{r-1} y'$. It follows from our inductive hypothesis that $x' \in \im( F^{r-1} )$, so that $x = F(x') \in \im( F^{r} )$.
\end{proof}

\begin{remark}
Let $(M^{\ast}, d, F)$ be a saturated Dieudonn\'{e} complex. It follows from Proposition \ref{prop10} that every cycle $z \in M^{\ast}$ is infinitely divisible
by $F$: that is, it belongs to $F^{r} M^{\ast}$ for each $r \geq 0$.
\end{remark}

\subsection{Saturation of Dieudonn\'{e} Complexes}\label{substrict}

Let $f\colon  M^{\ast} \rightarrow N^{\ast}$ be a morphism of Dieudonn\'{e} complexes. We will say that $f$ {\it exhibits $N^{\ast}$ as a saturation of $M^{\ast}$}
if $N^{\ast}$ is saturated and, for every saturated Dieudonn\'{e} complex $K^{\ast}$, composition with $f$ induces a bijection
\begin{equation} \label{Wvectuniv} \Hom_{ \FrobComp}( N^{\ast}, K^{\ast} )
\xrightarrow{\sim} \Hom_{ \FrobComp}( M^{\ast}, K^{\ast} ).\end{equation}
In this case, the Dieudonn\'{e} complex $N^{\ast}$ (and the morphism $f$) are determined by $M^{\ast}$ up to unique isomorphism; we will refer to
$N^{\ast}$ as the {\it saturation} of $M^{\ast}$ and denote it by $\Saturate(M^{\ast} )$.

\begin{proposition}\label{prop4}
Let $M^{\ast}$ be a Dieudonn\'{e} complex. Then $M^{\ast}$ admits a saturation $\Saturate( M^{\ast} )$.
\end{proposition}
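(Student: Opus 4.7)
The plan is to construct $\Saturate(M^*)$ in two stages: first kill $p$-power torsion, then iterate the $\eta_p$ construction. Since a saturated Dieudonn\'e complex is by definition $p$-torsion-free, any morphism from $M^*$ into a saturated complex must annihilate the graded subgroup $T^* \subseteq M^*$ of $p^{\infty}$-torsion elements. This subgroup is preserved by $d$ and $F$ (both additive), so $M^*/T^*$ inherits the structure of a $p$-torsion-free Dieudonn\'e complex through which every such morphism factors uniquely. I may therefore assume $M^*$ is $p$-torsion-free from the outset.

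For such $M^*$, the subcomplex $(\eta_p M)^* \subseteq M^*[p^{-1}]$ inherits a Dieudonn\'e structure by restriction of $d$ and the $\Z[p^{-1}]$-linear extension of $F$; a short check verifies that $F$ preserves $(\eta_p M)^*$. With this structure, the map $\alpha_F\colon M^* \to (\eta_p M)^*$ of Remark~\ref{rem2} becomes a morphism in $\FrobComp$. I would then define
\[ \Saturate(M^*) \;:=\; \colim\bigl(\, M^* \xrightarrow{\alpha_F} (\eta_p M)^* \xrightarrow{\alpha_F} (\eta_p^2 M)^* \to \cdots \,\bigr) \]
in $\FrobComp$, where each arrow is the $\alpha_F$-map for the preceding complex. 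Being $p$-torsion-free is preserved under filtered colimits, so $\Saturate(M^*)$ is $p$-torsion-free.

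To check saturation via Remark~\ref{rem3}, I need $\alpha_F\colon \Saturate(M^*) \to (\eta_p \Saturate(M^*))^*$ to be an isomorphism. The key technical point is that the endofunctor $\eta_p$ commutes with filtered colimits of $p$-torsion-free complexes, which is immediate from the levelwise description of $(\eta_p M)^n$ as a kernel-type condition; once this is granted, $\alpha_F$ on $\Saturate(M^*)$ is identified with the tautological shift-by-one of the defining colimit, which is an isomorphism. For the universal property, let $K^*$ be saturated and $f\colon M^* \to K^*$ a morphism; since $\alpha_F$ is invertible on $K^*$, the map $f$ extends uniquely to $(\eta_p M)^* \to K^*$, and iterating yields a compatible system that assembles into the unique factorization through $\Saturate(M^*)$. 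The only nonformal ingredients are verifying that $\eta_p$ is an endofunctor on $p$-torsion-free Dieudonn\'e complexes and that it commutes with filtered colimits; I expect this to be the main (though still elementary) obstacle, after which the rest is bookkeeping.
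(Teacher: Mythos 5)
Your proposal is correct and is essentially the paper's own argument: the paper likewise first passes to the quotient by the $p$-power torsion subgroup $T^{\ast}$ and then defines $\Saturate(M^{\ast})$ as the colimit of $M^{\ast} \xrightarrow{\alpha_F} (\eta_p M)^{\ast} \xrightarrow{\eta_p(\alpha_F)} (\eta_p \eta_p M)^{\ast} \to \cdots$, citing Remark~\ref{rem3} and the fact that $\eta_p$ commutes with filtered colimits. Your extra observations (that the transition maps agree with the $\alpha_F$-maps of the intermediate complexes, and the verification of the universal property stage by stage) are just the details the paper leaves implicit.
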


\begin{proof}
Let $T^{\ast} \subseteq M^{\ast}$ be the graded subgroup consisting of elements $x \in M^{\ast}$ satisfying
$p^{n} x = 0$ for $n \gg 0$. Replacing $M^{\ast}$ by the quotient $M^{\ast} / T^{\ast}$, we can
reduce to the case where $M^{\ast}$ is $p$-torsion-free. In this case, Remark
\ref{rem3} and the fact that $\eta_p$ commutes with filtered colimits implies that the direct limit of the sequence
 \begin{equation} \label{saturationexp} M^{\ast} \xrightarrow{ \alpha_F }
 (\eta_p M)^{\ast} \xrightarrow{ \eta_p( \alpha_F ) } (\eta_p \eta_p M)^{\ast}
 \xrightarrow{ \eta_p( \eta_p( \alpha_F ) )} ( \eta_p \eta_p \eta_p M)^{\ast}
 \rightarrow \cdots \end{equation}
is a saturation of $M^{\ast}$.
\end{proof}

\begin{corollary}\label{cor65}
The inclusion functor $\FrobCompSat \hookrightarrow \FrobComp$ admits a left adjoint, given by the saturation construction $M^{\ast} \mapsto \Saturate( M^{\ast} )$.
\end{corollary}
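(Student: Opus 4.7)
The statement is essentially a formal consequence of Proposition~\ref{prop4} together with the universal property (\ref{Wvectuniv}) that was built into the definition of $\Saturate(M^{\ast})$. My plan is to observe this in three short steps.

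First, I would unpack what (\ref{Wvectuniv}) says in adjunction-theoretic terms: since $\FrobCompSat$ is a \emph{full} subcategory of $\FrobComp$, for any saturated $K^{\ast}$ one has $\Hom_{\FrobComp}(\Saturate(M^{\ast}), K^{\ast}) = \Hom_{\FrobCompSat}(\Saturate(M^{\ast}), K^{\ast})$ and $\Hom_{\FrobComp}(M^{\ast}, K^{\ast}) = \Hom_{\FrobComp}(M^{\ast}, i(K^{\ast}))$ where $i$ is the inclusion. Thus the bijection in (\ref{Wvectuniv}) reads
$$\Hom_{\FrobCompSat}(\Saturate(M^{\ast}), K^{\ast}) \cong \Hom_{\FrobComp}(M^{\ast}, i(K^{\ast})),$$
with the map $f\colon M^{\ast} \to \Saturate(M^{\ast})$ playing the role of a unit.

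Second, I would promote the assignment $M^{\ast} \mapsto \Saturate(M^{\ast})$ to a functor. Given any morphism $g\colon M^{\ast} \to N^{\ast}$ in $\FrobComp$, the composite $M^{\ast} \xrightarrow{g} N^{\ast} \to \Saturate(N^{\ast})$ is a map from $M^{\ast}$ to a saturated Dieudonn\'e complex, so by the universal property of $f\colon M^{\ast} \to \Saturate(M^{\ast})$ established in Proposition~\ref{prop4}, it factors uniquely through a morphism $\Saturate(g)\colon \Saturate(M^{\ast}) \to \Saturate(N^{\ast})$. The uniqueness clause forces $\Saturate$ to preserve identities and composition, so we obtain a functor $\Saturate\colon \FrobComp \to \FrobCompSat$ together with a natural transformation $\eta\colon \id \Rightarrow i \circ \Saturate$.

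Third, I would assemble these data into the adjunction $\Saturate \dashv i$. The bijection from step one is natural in $K^{\ast}$ by definition (precomposition of maps) and natural in $M^{\ast}$ by the uniqueness in the universal property; equivalently, one checks directly that $\eta$ satisfies the universal property of a unit of adjunction. There is essentially no obstacle here: the real content — existence of a reflection of each object — was already the work of Proposition~\ref{prop4}, and this corollary is the standard packaging of that fact as a reflective-subcategory statement.
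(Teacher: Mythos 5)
Your proposal is correct and matches the paper's intent: the paper gives no separate argument for this corollary precisely because it is the standard reflective-subcategory packaging of Proposition~\ref{prop4} and the universal property (\ref{Wvectuniv}), which is exactly what you spell out. Nothing is missing.
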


\begin{remark}
\label{rem:satissubgroup}
Let $(M^{\ast}, d, F)$ be a Dieudonn\'{e} complex. Assume 
that $M^{\ast}$ is $p$-torsion-free and that $F$ is injective
on $M^{\ast}$ (if not, one can replace $M$ by its quotient by the $F$-power
torsion subgroup). Then we can identify $M^{\ast}$ with a subgroup of the graded abelian group
$M^{\ast}[ F^{-1} ]$, defined as the colimit
$M^{\ast} \stackrel{F}{\to} M^{\ast} \stackrel{F}{\to} \dots $, whose elements
can be written as formal expressions of the form $F^{n} x$ for $x \in M^{\ast}$ and
$n \in \Z$. Unwinding \eqref{saturationexp}, we see that the saturation $\Saturate( M^{\ast} )$ can be described more concretely as follows:
\begin{itemize}
\item As a graded abelian group, $\Saturate( M^{\ast} )$ can be identified with the subgroup of $M^{\ast}[F^{-1}]$ consisting of those elements
$x    \in M^{\ast}[F^{-1}]$ satisfying $d (F^{n}(x) ) \in p^{n} M^{\ast}$ for $n \gg 0$ (note that if this condition is satisfied for some nonnegative integer $n$,
then it is also satisfied for every larger integer).
Concretely, if $x = F^{-m}y$ for $y \in M^{\ast}$, then this condition requires that
$d( F^{n-m}y ) \in p^n M^{\ast}$ for $n \gg m$. 

\item The differential on $\Saturate(M^{\ast} )$ is given by the construction $x \mapsto F^{-n} p^{-n} d (F^{n} x)$ for $n \gg 0$.

\item The Frobenius map on $\Saturate(M^{\ast})$ is given by the restriction of the map $F\colon  M^{\ast}[F^{-1}] \rightarrow M^{\ast}[ F^{-1} ]$.
\end{itemize}
\end{remark}
\begin{remark}
\label{integralformsabstract}

We can rephrase Remark~\ref{rem:satissubgroup} as follows.  The operator $d\colon  M^{\ast} \to M^{\ast + 1}$ extends to a map
\[ d\colon  M^{\ast}[F^{-1}] \to M^{\ast+1}[F^{-1}] \otimes_{\mathbb{Z}}
\mathbb{Z}[1/p] \]
given by the formula
\[ d( F^{-n} x) = p^{-n } F^{-n}dx. \]
We claim that $\mathrm{Sat}(M^{\ast}) \subseteq M^{\ast}[F^{-1}]$ can be identified with
the graded subgroup consisting of those elements $y = F^{-n} x \in M^{\ast}[F^{-1}]$
for which $dy \in M^{\ast}[F^{-1}] \otimes_{\mathbb{Z}} \mathbb{Z}[1/p]$ belongs to 
$M^{\ast}[F^{-1}]$. 

To see this, suppose that we can write $d( F^{-n}x) = F^{-m} z$ in
$M^{\ast}[F^{-1}]$ for $x, z \in M^{\ast}$. 
It follows that $d(F^{r-n} x) = d(F^r F^{-n} x)= p^r F^r
d(F^{-n} x) = p^r F^r  (F^{-m} z)$ belongs to $p^{r} M^{\ast}$ for $r \gg 0$, so that
$F^{-n} x $ belongs to $\mathrm{Sat}(M^{\ast})$ (Remark~\ref{rem:satissubgroup}). 
Conversely, if $F^{-n} x$ belongs to  $\mathrm{Sat}(M^{\ast})$, then $d(F^{r-n} x) \in p^r M^{\ast}$ for $r \gg 0$ (Remark~\ref{rem:satissubgroup}),
so that $d(F^{-n} x) = p^{-r}F^{-r}d( F^{r-n} dx)  \in M^{\ast}[F^{-1}]$ as desired. 
\end{remark}

\subsection{The Cartier Criterion}\label{cartcrit}

Let $M^{\ast}$ be a Dieudonn\'{e} complex. For each $x \in M^{\ast}$, the equation $dFx = p F(dx)$ shows that the image of
$Fx$ in the quotient complex $M^{\ast} / p M^{\ast}$ is a cycle, and therefore represents an element in the cohomology $\mathrm{H}^{\ast}( M / p M )$.
This construction determines a map of graded abelian groups $M^{\ast} \rightarrow \mathrm{H}^{\ast}( M / p M)$, which factors uniquely through the quotient
$M^{\ast} / p M^{\ast}$.

\begin{definition} 
\label{def:Cartiertype}
We say that the Dieudonn\'e complex $M^{\ast}$ is of \emph{Cartier type} if 
\begin{itemize}
\item[$(i)$] The complex $M^{\ast}$ is $p$-torsion-free.
\item[$(ii)$] The Frobenius map $F$ induces an isomorphism of graded abelian
groups $$M^{\ast} / p M^{\ast} \xrightarrow{\sim} \mathrm{H}^{\ast}(M / p M).$$ 
\end{itemize}
\end{definition}

\begin{theorem}[Cartier Criterion]\label{theo60}
Let $M^{\ast}$ be a Dieudonn\'{e} complex which is of Cartier type. 
Then the canonical map $M^{\ast} \rightarrow \Saturate( M^{\ast} )$ induces a
quasi-isomorphism of cochain complexes $M^{\ast} / p M^{\ast} \rightarrow \Saturate( M^{\ast} ) / p \Saturate( M^{\ast} )$.
\end{theorem}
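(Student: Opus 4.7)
The strategy is to realize $\Saturate(M^\ast)$ as a filtered colimit and reduce the quasi-isomorphism claim to a statement about each step of the tower. By the proof of Proposition \ref{prop4}, $\Saturate(M^\ast) = \varinjlim_k (\eta_p^k M)^\ast$, with transition maps the iterated $\alpha_F$. Since both reduction modulo $p$ and cohomology commute with filtered colimits, it suffices to verify that every transition $(\eta_p^k M)^\ast/p \to (\eta_p^{k+1} M)^\ast/p$ is a quasi-isomorphism.

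The main technical input, which I would formulate and prove first, is a décalage-type lemma: for every $p$-torsion-free cochain complex $N^\ast$, the natural map $\phi_N\colon (\eta_p N)^\ast/p \to (\HH^\ast(N/p), \beta_N)$ given by $[p^n y] \mapsto [y \bmod p]$ is a quasi-isomorphism, where $\beta_N$ denotes the Bockstein differential associated to $0 \to N \xrightarrow{p} N \to N/p \to 0$. Well-definedness uses that $dy \in pN^{n+1}$ makes $y \bmod p$ a cycle; the chain-map property reduces to $\beta_N[y] = [dy/p]$. For the cohomology-level surjectivity, I would use that any Bockstein-closed class $[z]$ admits a representative $y = z - pu$ with $dy \in p^2 N^{n+1}$, whence $p^n y$ is a cycle in $(\eta_p N)^\ast/p$ mapping to $[z]$. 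The injectivity step, which I expect to be the main computational obstacle, involves writing any null-homologous cycle $[p^n y]$ as $y = dw/p + d\alpha + p\beta$ and then verifying that $p^{n-1}dw$ and $p^n d\alpha$ lie in $d(\eta_p N)^{n-1}$ (by taking lifts $p\alpha$, etc., of the appropriate forms) while $p^{n+1}\beta \in p(\eta_p N)^n$ (using that $dy \in p^2 N$ forces $d\beta \in pN$).

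With the décalage lemma available, the base case is immediate: the composite $M^\ast/p \xrightarrow{\alpha_F} (\eta_p M)^\ast/p \xrightarrow{\phi_M} (\HH^\ast(M/p), \beta_M)$ sends $[x]$ to $[F(x) \bmod p]$, which is precisely the Cartier map of Section \ref{cartcrit}. By the Cartier-type hypothesis this is an isomorphism of graded abelian groups, and a brief calculation $\beta_M[F(x)] = [dF(x)/p] = [F(dx)]$ shows it intertwines $d$ with $\beta_M$. Hence this composite is an isomorphism of complexes, and two-out-of-three against the décalage quasi-iso yields that $\alpha_F \bmod p$ itself is a quasi-iso.

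For the inductive step I would show that $\eta_p$ preserves mod-$p$ quasi-isomorphisms between $p$-torsion-free complexes: given $f\colon N \to N'$ with $f \bmod p$ a quasi-iso, the naturality of $\phi$ identifies $(\eta_p f) \bmod p$ up to quasi-iso with the map $(\HH^\ast(N/p), \beta_N) \to (\HH^\ast(N'/p), \beta_{N'})$ induced by $f \bmod p$, which is a graded-group isomorphism compatible with the natural Bockstein, hence a quasi-iso. Iterating this step by step through the saturation tower and passing to the colimit completes the proof. The main obstacle throughout is the décalage lemma itself, especially the injectivity calculation; the remaining arguments are formal bookkeeping once that key input is in hand.
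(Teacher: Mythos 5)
Your proposal is correct and follows essentially the same route as the paper: reduce along the saturation tower by filtered colimits, prove the d\'ecalage quasi-isomorphism $(\eta_p N)^{\ast}/p \simeq (\mathrm{H}^{\ast}(N/p), \beta)$ (the paper's Proposition \ref{prop62}), deduce that $\eta_p$ preserves mod-$p$ quasi-isomorphisms (Corollary \ref{cor63}), and handle the first step of the tower by identifying the composite with the Cartier-type map. The only differences are in the bookkeeping details of the d\'ecalage lemma's proof (the paper shows the kernel complex is acyclic rather than arguing directly on cohomology classes), which is immaterial.
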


\begin{remark} 
The Dieudonn\'{e} complex $\Saturate(M^{\ast})$ itself is essentially never of Cartier
type. 
\end{remark} 

To prove Theorem \ref{theo60}, we will need to review some properties of the construction $M^{\ast} \mapsto (\eta_p M)^{\ast}$.

\begin{construction}\label{con61}
Let $M^{\ast}$ be a cochain complex which is $p$-torsion-free. For each element
$x \in ( \eta_p M)^{k}$, we let $\overline{\gamma}(x) \in \mathrm{H}^{k}( M / pM )$ denote the cohomology class represented by $p^{-k} x$ (which is a cycle in
$M^{\ast} / p M^{\ast}$, by virtue of our assumption that $x \in (\eta_p
M)^{k}$). This construction determines a map of graded abelian groups
$\overline{\gamma}\colon  (\eta_p M)^{\ast} \rightarrow \mathrm{H}^{\ast}( M / pM )$. Since the codomain of $\overline{\gamma}$ is a $p$-torsion group,
the map $\overline{\gamma}$ factors (uniquely) as a composition
$$(\eta_p M)^{\ast} \rightarrow (\eta_p M)^{\ast} / p (\eta_p M)^{\ast} \xrightarrow{\gamma} \mathrm{H}^{\ast}( M / pM ).$$
\end{construction}

We next need a general lemma about the behavior of $\eta_p$ reduced mod $p$. 
This is a special case of a result about d\'ecalage due to  Deligne \cite[Prop. 1.3.4]{HodgeII}, where the filtration is
taken to be the $p$-adic one. 
This result in the case of $L \eta_p$ appears in \cite[Prop. 6.12]{BMS}. 

\begin{proposition}[{}]\label{prop62}
Let $M^{\ast}$ be a cochain complex of abelian groups which is $p$-torsion-free.
Then Construction \ref{con61} determines a quasi-isomorphism of cochain complexes
$$ \gamma\colon  (\eta_p M)^{\ast} / p (\eta_p M)^{\ast} \rightarrow \mathrm{H}^{\ast}( M / p M).$$
Here we regard $\mathrm{H}^{\ast}( M / pM)$ as a cochain complex with respect to the Bockstein map
$\beta\colon  \mathrm{H}^{\ast}( M / pM) \rightarrow \mathrm{H}^{\ast+1}( M/ pM)$ determined the
short exact sequence of cochain complexes
$0 \rightarrow M^{\ast} / p M^{\ast} \xrightarrow{p} M^{\ast} / p^{2} M^{\ast} \rightarrow M^{\ast} / p M^{\ast} \rightarrow 0$.
\end{proposition}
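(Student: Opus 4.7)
The plan is to verify the proposition by an elementwise computation, once the map $\gamma$ is described concretely. Using $p$-torsion-freeness of $M^\ast$, every element of $(\eta_p M)^k$ can be written uniquely as $p^k y$ for some $y \in M^k$ with $dy \in pM^{k+1}$, and under this parametrization $\gamma$ sends $p^k y$ to the class $[y] \in \HH^k(M/pM)$. A direct calculation confirms the chain-map property: $d(p^k y) = p^{k+1}(p^{-1}dy)$, so $\gamma$ carries it to $[p^{-1}dy]$, which is precisely the Bockstein $\beta[y]$. This also shows that the map factors through $(\eta_p M)^k/p(\eta_p M)^k$, since $\gamma(p \cdot p^k y) = [py] = 0$.

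Next I would unwind the cycle and boundary conditions in $(\eta_p M)^\ast / p(\eta_p M)^\ast$. The cycle condition $d(p^k y) \in p(\eta_p M)^{k+1}$ simplifies, after canceling $p^k$, to $dy \in p^2 M^{k+1}$. The boundary condition $p^k y = d\xi + p\eta$ with $\xi \in (\eta_p M)^{k-1}$ and $\eta \in (\eta_p M)^k$ unpacks to the existence of $w \in M^{k-1}$ and $u \in M^k$ with $y = dw + pu$ and $du \in pM^{k+1}$; here the representative $\xi = p^{k-1}(pw) = p^k w$ is used, for which the condition $d(pw) = p\,dw \in pM^k$ required by membership in $(\eta_p M)^{k-1}$ is automatic.

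With these descriptions, both surjectivity and injectivity on $\HH^\ast$ reduce to short manipulations. For surjectivity, a Bockstein-cycle $[y] \in \HH^k(M/pM)$ is represented by $y \in M^k$ with $dy = p\,ds + p^2 t$ for some $s \in M^k$, $t \in M^{k+1}$; replacing $y$ by $\widetilde y := y - ps$ preserves the class in $\HH^k(M/pM)$ and yields $d\widetilde y = p^2 t \in p^2 M^{k+1}$, so $p^k \widetilde y$ is a cycle in $(\eta_p M)^\ast/p(\eta_p M)^\ast$ with $\gamma[p^k \widetilde y] = [y]$. For injectivity, if $p^k y$ is a cycle (so $dy \in p^2 M^{k+1}$) whose image $[y]$ vanishes in $\HH^k(M/pM)$, then $y = dw + pu$ for some $w \in M^{k-1}$, $u \in M^k$; differentiating gives $p\,du = dy \in p^2 M^{k+1}$, forcing $du \in pM^{k+1}$, which is exactly the extra condition identified above as the boundary template.

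I do not foresee any genuine obstacle: the whole argument is careful bookkeeping with powers of $p$. The one subtlety worth flagging is that neither implication is literally an identity — in surjectivity one must modify the representative from $y$ to $y - ps$, and in injectivity one must pass from the naive $w \in M^{k-1}$ to $pw$ in order to land inside $(\eta_p M)^{k-1}$. Both adjustments exploit $p$-torsion-freeness of $M^\ast$ and bridge the small gap between the literal boundary/cycle conditions on the two complexes.
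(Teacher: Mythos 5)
Your computations are correct as far as they go, and your route — computing the induced map on cohomology directly — is genuinely different from the paper's, which instead observes that $\gamma$ is surjective in each degree and proves that the kernel complex $K^{\ast}=\ker(\gamma)$ is acyclic. The explicit description of $\gamma$, the verification that it intertwines $d$ with the Bockstein $\beta$, the identification of the cocycle condition as $dy\in p^2M^{k+1}$, and the surjectivity argument (replacing $y$ by $y-ps$) are all fine. The gap is in the injectivity step: injectivity of $\mathrm{H}^k(\gamma)$ means that a cocycle $p^ky$ of $(\eta_pM)^{\ast}/p(\eta_pM)^{\ast}$ whose image is a coboundary in the target complex $(\mathrm{H}^{\ast}(M/pM),\beta)$ — i.e.\ $[y]=\beta[x]$ for some $[x]\in\mathrm{H}^{k-1}(M/pM)$ — is itself a coboundary. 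You only treat the case where $[y]$ is zero as an element of the group $\mathrm{H}^k(M/pM)$, i.e.\ where $\gamma(p^ky)$ is the zero cochain of the target, which is a strictly stronger hypothesis. The repair is short: $\gamma$ is surjective in every degree (the class $[x]$ is the image of $p^{k-1}x$), so choose $\xi\in(\eta_pM)^{k-1}$ with $\gamma(\xi)=[x]$ and replace $p^ky$ by $p^ky-d\xi$; this is again a cocycle, now mapping to $0$ in $\mathrm{H}^k(M/pM)$, and your argument applies. As written, though, injectivity on cohomology has not been proved.

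A secondary inaccuracy: your ``unpacking'' of the boundary condition is not an equivalence. A general element of $d\bigl((\eta_pM)^{k-1}\bigr)+p(\eta_pM)^k$ has the form $p^{k-1}dv+p^{k+1}u$ with $dv\in pM^k$ and $du\in pM^{k+1}$, i.e.\ $y=p^{-1}dv+pu$, and $p^{-1}dv$ need not be exact; for instance with $M^{k-1}=\Z v$, $M^k=\Z s$, $dv=ps$, $ds=0$, the element $p^ks$ is a boundary of $(\eta_pM)^{\ast}/p$ although $s$ is not of the form $dw+pu$. Fortunately you only invoke the implication you actually verified (that $y=dw+pu$ with $du\in pM^{k+1}$ forces $p^ky$ to be a boundary), so this does not damage the argument, but the claim of equivalence should be dropped. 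With these two repairs your proof is complete; note that the paper's formulation ``levelwise surjective with acyclic kernel'' yields the quasi-isomorphism through the long exact sequence and thereby handles automatically the reduction step that your injectivity argument is missing.
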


\begin{proof}
For each element $x \in M^{k}$ satisfying $dx \in p M^{k+1}$, let $[x] \in \mathrm{H}^{k}( M / pM)$ denote the cohomology class represented by $x$.
Unwinding the definitions, we see that the map $\beta$ satisfies the identity $\beta( [x] ) = [ p^{-1} dx ]$. It follows that for each
$y \in (\eta_p M)^{k}$, we have 
$$ \beta( \overline{\gamma}(y) ) = \beta( [ p^{-k} y] ) = [ p^{-k-1} dy] = \overline{\gamma}( dy ),$$
so that $\overline{\gamma}\colon  (\eta_p M)^{\ast} \rightarrow \mathrm{H}^{\ast}(M/pM)$ is a
map of cochain complexes.
It follows that $\gamma$ is also a map of cochain complexes. It follows immediately from the definition of $(\eta_p M)^{\ast}$ that the map
$\gamma$ is surjective. Let $K^{\ast}$ denote the kernel of $\gamma$. We will
complete the proof by showing that the cochain complex $K^{\ast}$ is acyclic.
Unwinding the definitions, we can identify $K^{\ast}$ with the subquotient of $M^{\ast} [p^{-1}]$ described by the formula
$$ K^{k} = ( p^{k+1} M^{k} + d( p^{k} M^{k-1} )) / ( p^{k+1} M^{k} \cap d^{-1}( p^{k+2} M^{k+1} ) ).$$
Explicitly, if $x \in (\eta_p M)^k$ and $\gamma(x) = 0$,
then we can write
$p^{-k} x  = dy + pz$ in $M^{k}$, forcing $x = p^{k+1}z + d( p^k y)$, giving
the formula for $K^k$ as desired.  

Suppose that $e \in K^{k}$ is a cocycle, represented by $p^{k+1} x + d( p^{k} y)$ for some $x \in M^{k}$ and $y \in M^{k-1}$.
The equation $de = 0 \in K^{k+1}$ implies that $d(p^{k+1} x) \in p^{k+2} M^{k+1}$. It follows that $e$ is also represented by $d( p^{k} y)$, and is therefore
a coboundary in $K^{k}$.
\end{proof}

\begin{corollary}\label{cor63}
Let $f\colon  M^{\ast} \rightarrow N^{\ast}$ be a map of cochain complexes of abelian groups. Assume that $M^{\ast}$ and $N^{\ast}$ are $p$-torsion-free.
If $f$ induces a quasi-isomorphism $M^{\ast} / p M^{\ast} \rightarrow N^{\ast} / p N^{\ast}$, then the induced map
$$(\eta_p M)^{\ast} / p (\eta_p M)^{\ast} \rightarrow (\eta_p N)^{\ast} /p (\eta_p N)^{\ast}$$ is also a quasi-isomorphism.
\end{corollary}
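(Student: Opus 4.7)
The plan is to deduce this directly from Proposition \ref{prop62} via a naturality argument. Since $\eta_p$ is functorial on $p$-torsion-free cochain complexes (it is defined as a subcomplex of $M^{\ast}[p^{-1}]$, and any map $f\colon M^{\ast} \to N^{\ast}$ of $p$-torsion-free complexes induces a map of localizations which visibly preserves the defining subgroup), the map $f$ induces a morphism $\eta_p(f)\colon (\eta_p M)^{\ast} \to (\eta_p N)^{\ast}$ and hence a map on the mod-$p$ reductions. Moreover, the Bockstein construction is functorial in $f$, so $f$ induces a map $\mathrm{H}^{\ast}(f/p)\colon \mathrm{H}^{\ast}(M/pM) \to \mathrm{H}^{\ast}(N/pN)$ which commutes with the Bockstein differentials (because the Bockstein is natural in short exact sequences of complexes, and $f$ is compatible with the short exact sequences $0 \to M^\ast/pM^\ast \to M^\ast/p^2M^\ast \to M^\ast/pM^\ast \to 0$ and its analogue for $N^\ast$).

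Next, I would assemble these into the commutative square
\[
\begin{array}{ccc}
(\eta_p M)^{\ast} / p(\eta_p M)^{\ast} & \longrightarrow & (\eta_p N)^{\ast} / p(\eta_p N)^{\ast} \\
\big\downarrow \gamma_M & & \big\downarrow \gamma_N \\
\mathrm{H}^{\ast}(M/pM) & \longrightarrow & \mathrm{H}^{\ast}(N/pN),
\end{array}
\]
whose commutativity is immediate from the explicit formula $\overline{\gamma}(x) = [p^{-k}x]$ for $x \in (\eta_p M)^k$ given in Construction \ref{con61}. By Proposition \ref{prop62}, both vertical maps $\gamma_M$ and $\gamma_N$ are quasi-isomorphisms.

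It remains to verify that the bottom horizontal map is a quasi-isomorphism of Bockstein complexes. But the hypothesis that $f$ is a quasi-isomorphism mod $p$ means exactly that $\mathrm{H}^{\ast}(f/p)$ is an isomorphism of graded abelian groups, hence an isomorphism of cochain complexes (regardless of the differential), and in particular a quasi-isomorphism. By the two-out-of-three property applied to the square, the top horizontal map is a quasi-isomorphism, which is the desired conclusion.

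I do not expect any real obstacle here: the only thing to take care of is the naturality of the Bockstein differential under $f$, which is standard. The bulk of the work has already been done in Proposition \ref{prop62}, and this corollary is essentially a formal consequence.
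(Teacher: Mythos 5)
Your proposal is correct and follows the same route as the paper: reduce via the naturality of $\gamma$ and Proposition \ref{prop62} to the map of Bockstein complexes $\mathrm{H}^{\ast}(M/pM) \to \mathrm{H}^{\ast}(N/pN)$, which is an isomorphism (hence a quasi-isomorphism) because $f$ is a quasi-isomorphism mod $p$. Nothing to add.
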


\begin{proof}
By virtue of Proposition \ref{prop62}, it suffices to show that $f$ induces a quasi-isomorphism
$\mathrm{H}^{\ast}( M / pM ) \rightarrow \mathrm{H}^{\ast}( N / pN)$ (where both are regarded as chain complexes with differential given by the Bockstein operator).
In fact, this map is an isomorphism (by virtue of our assumption that $f$ induces a quasi-isomorphism $M^{\ast} / p M^{\ast} \rightarrow N^{\ast} / p N^{\ast}$).
\end{proof}

\begin{proof}[Proof of Theorem \ref{theo60}]
Let $(M^{\ast},d,F)$ be a Dieudonn\'{e} complex which is $p$-torsion-free. Then the saturation $\Saturate( M^{\ast} )$ can be identified with the colimit of the sequence
$$ M^{\ast} \xrightarrow{ \alpha_F } (\eta_p M)^{\ast} \xrightarrow{ \eta_p( \alpha_F ) } (\eta_p \eta_p M)^{\ast} \xrightarrow{ \eta_p( \eta_p( \alpha_F ) )} ( \eta_p \eta_p \eta_p M)^{\ast} \rightarrow \cdots $$
Consequently, to show that the canonical map $M^{\ast} / p M^{\ast} \rightarrow \Saturate( M^{\ast} ) / p \Saturate( M^{\ast} )$ is a quasi-isomorphism, it will suffice to show that
each of the maps $$(\eta_p^{k} M)^{\ast} / p (\eta_p^{k} M)^{\ast} \rightarrow
(\eta_p^{k+1} M)^{\ast} / p (\eta_p^{k+1} M)^{\ast} \quad k \geq 0$$ is a quasi-isomorphism.
By virtue of Corollary \ref{cor63}, it suffices to verify this when $k = 0$. In this case, we have a commutative diagram
of cochain complexes
$$ \xymatrix{ M^{\ast} / p M^{\ast} \ar[rr]^{ \alpha_F } \ar[dr]_F & & (\eta_p M)^{\ast} / p ( \eta_p M)^{\ast} \ar[dl]^{\gamma} \\
& \mathrm{H}^{\ast}( M / p M ), & }$$
where the map $M^{\ast}/pM^{\ast} \to \mathrm{H}^*(M/pM )$ is induced by the Frobenius.
Since $M^{\ast}$ is assumed to be of Cartier type, this map is an isomorphism. 
 Since $\gamma$ is a quasi-isomorphism
by virtue of Proposition \ref{prop62}, it follows that the map
$\alpha_F\colon M^{\ast} / p M^{\ast} \rightarrow (\eta_p M)^{\ast} / p (\eta_p M)^{\ast}$ is also a quasi-isomorphism.
\end{proof}

\subsection{Strict Dieudonn\'{e} Complexes}
\label{strictdieudonnecomplex}

We now introduce a special class of saturated Dieudonn\'{e} complexes, characterized by the requirement that they are complete with
respect to a filtration determined by the Verschiebung operator $V$.

\begin{construction}[Completion]
\label{completionDieudonne}
Let $M^{\ast}$ be a saturated Dieudonn\'{e} complex. 
For each integer $r \geq 0$, we let $\WittScript_{r}(M)^{\ast}$ denote the quotient of
$M^{\ast}$ by the subcomplex $\im( V^{r} ) + \im ( d V^r )$, where $V$ is the Verschiebung map of Remark \ref{rem5}.
We denote the natural quotient maps $\mathcal{W}_{r+1}(M)^{\ast} \to
\mathcal{W}_r(M)^{\ast}$ by $\Res\colon  \mathcal{W}_{r+1}(M)^{\ast} \rightarrow \mathcal{W}_{r}(M)^{\ast}$.
We let $\WittScript(M)^{\ast}$ denote the inverse limit of the tower of cochain complexes
$$  \rightarrow \cdots \rightarrow \WittScript_{3}(M)^{\ast} \xrightarrow{\Res} \WittScript_{2}(M)^{\ast} \xrightarrow{\Res} \WittScript_{1}(M)^{\ast} \xrightarrow{\Res} \WittScript_0(M)^{\ast} = 0.$$
We will refer to $\WittScript(M)^{\ast}$ as the {\it completion} of the
saturated Dieudonn\'{e} complex $M^{\ast}$.
\end{construction}

\begin{remark}[Frobenius and Verschiebung at Finite Levels]\label{rem13}
Let $M^{\ast}$ be a saturated Dieudonn\'{e} complex. Using the identities $F \circ d \circ V = d$ and $F \circ V = p \id$, 
we deduce that the Frobenius map $F\colon  M^{\ast} \rightarrow M^{\ast}$ carries $\im( V^{r} ) + \im( d V^{r} )$ into $\im( V^{r-1} ) + \im( d V^{r-1} )$.
It follows that there is a unique map of graded abelian groups $F\colon  \WittScript_{r}(M)^{\ast} \rightarrow \WittScript_{r-1}(M)^{\ast}$ for which the diagram
$$ \xymatrix{ M^{\ast} \ar[r]^-{F} \ar@{->>}[d] & M^{\ast} \ar@{->>}[d] \\
\WittScript_{r}(M)^{\ast} \ar[r]^-{F} & \WittScript_{r-1}(M)^{\ast} }$$
commutes. Passing to the inverse limit over $r$, we obtain a map $F\colon  \WittScript(M)^{\ast} \rightarrow \WittScript(M)^{\ast}$ which
endows the completion $\WittScript(M)^{\ast}$ with the structure of a Dieudonn\'e 
complex.

Similarly, the identity $V \circ d = p( d \circ V)$ implies that $V$ carries $\im( V^{r} ) + \im( dV^{r} )$ into
$\im( V^{r+1} ) + \im( d V^{r+1} )$, so that there is a unique map $V\colon  \WittScript_{r}(M)^{\ast} \rightarrow \WittScript_{r+1}(M)^{\ast}$
for which the diagram
$$ \xymatrix{ M^{\ast} \ar[r]^-{V} \ar[d] & M^{\ast} \ar[d] \\
\WittScript_{r}(M)^{\ast} \ar[r]^-{V} & \WittScript_{r+1}(M)^{\ast} }$$
commutes.
\end{remark}

\begin{remark}
The formation of completions is functorial: if $f\colon  M^{\ast} \rightarrow N^{\ast}$ is a morphism of saturated Dieudonn\'{e} complexes, then
$f$ induces a morphism $\WittScript(f)\colon  \WittScript(M)^{\ast} \rightarrow \WittScript(N)^{\ast}$, given by the inverse limit of the tautological maps
$$M^{\ast} / (\im( V^{r} ) + \im( d V^{r} ) ) \xrightarrow{f} N^{\ast} / ( \im( V^{r} ) + \im( d V^{r} ) ).$$
\end{remark}

For every saturated Dieudonn\'{e} complex $M^{\ast}$, the tower of cochain complexes
$$ M^{\ast} \rightarrow \cdots \rightarrow \WittScript_{3}(M)^{\ast} \xrightarrow{\Res} \WittScript_{2}(M)^{\ast} \xrightarrow{\Res} \WittScript_{1}(M)^{\ast} \xrightarrow{\Res} \WittScript_0(M)^{\ast} = 0.$$
determines a tautological map $\rho_{M}\colon  M^{\ast} \rightarrow \WittScript(M)^{\ast}$. It follows immediately from the definitions that $\rho_M$ is a map of Dieudonn\'{e} complexes, which depends functorially on $M^{\ast}$.

\begin{definition}\label{def22}
Let $M^{\ast}$ be a Dieudonn\'{e} complex. We will say that $M^{\ast}$ is {\it strict} if it is saturated and the map $\rho_M\colon  M^{\ast} \rightarrow \WittScript(M)^{\ast}$
is an isomorphism. We let $\FrobCompComplete$ denote the full subcategory of $\FrobComp$ spanned by the strict Dieudonn\'{e} complexes.
\end{definition}

\begin{remark}\label{olos}
Let $M^{\ast}$ be a strict Dieudonn\'{e} complex. Then each $M^{n}$ is a torsion-free abelian group: it is $p$-torsion free by virtue of
the assumption that $M^{\ast}$ is saturated, and $\ell$-torsion free for $\ell \neq p$ because $M^{n}$ is $p$-adically complete.
\end{remark}

\begin{example}\label{pulox}
Let $A$ be an abelian group and let $$M^{\ast} = \begin{cases} A & \text{ if } \ast = 0 \\
0 & \text{otherwise} \end{cases}$$ denote the cochain complex consisting of the abelian group $A$ in degree zero.
Then any  endomorphism $F\colon  A \rightarrow A$ endows $M^{\ast}$ with the structure of a Dieudonn\'{e} complex.
The Dieudonn\'{e} complex $M^{\ast}$ is saturated if and only if $A$ is $p$-torsion-free and the map $F$ is an automorphism. 
If these conditions are satisfied, then $M^{\ast}$ is strict if and only if $A$ is $p$-adically complete: that is,
if and only if the canonical map $A \rightarrow \varprojlim A/ p^{r} A$ is an isomorphism.
\end{example} 

\begin{example}[Free Strict Dieudonn\'{e} Complexes]
Let $M^{0}$ denote the abelian group consisting of formal expressions of the form
$$\sum_{m \geq 0} a_m F^m x + \sum_{n > 0} b_n V^n x$$
where the coefficients $a_m$ and $b_n$ are $p$-adic integers with the property
that the sequence $\{ a_m \}_{m \geq 0}$ converges to zero (with respect to the $p$-adic topology on $\Z_p$). 
Let $M^{1}$ denote the abelian group of formal expressions of the form
$$\sum_{m \geq 0} c_m F^m dx + \sum_{n > 0} d  (d_n V^n x)$$
where the coefficients $c_m$ and $d_n$ are $p$-adic integers with the property that
the sequence $\{ c_m \}_{m \geq 0}$ converges to zero (also with respect to the $p$-adic topology). 
Then we can form a strict Dieudonn\'{e} complex
$$ \cdots \rightarrow 0 \rightarrow 0 \rightarrow M^{0} \xrightarrow{d} M^{1} \rightarrow 0 \rightarrow 0 \rightarrow \cdots,$$
where the differential $d$ is given by the formula
$$ d( \sum_{m \geq 0} a_m F^m x + \sum_{n > 0} b_n V^n x ) = \sum_{m \geq 0} p^{m} a_m F^m dx + 
\sum_{n > 0} b_n dV^{n} x,$$
and where 
$F$ is defined in the natural fashion with the identities $FdV = d$, $FV = p$. 
Moreover, the strict Dieudonn\'{e} complex $M^{\ast}$ is freely generated by $x \in M^0$ in the following
sense: for any strict Dieudonn\'{e} complex $N^{\ast}$, evaluation on $x$ induces a bijection
$$\Hom_{\FrobComp}(M^{\ast},
N^{\ast}) \simeq N^{0}.$$
\end{example}

\subsection{Strict Dieudonn\'{e} Towers}

For later use, it will be convenient to axiomatize some of the features of the complexes
$\W_{r}( M)^{\ast}$ arising from Construction \ref{completionDieudonne}.

\begin{definition}\label{stricTD}
\label{strictTD}
A \textbf{strict Dieudonn\'e tower} is an inverse system of cochain complexes
$$ \cdots \rightarrow X_{3}^{\ast} \xrightarrow{R} X_{2}^{\ast} \xrightarrow{R} X_1^{\ast} \xrightarrow{R} X_0^{\ast}$$
which is equipped with maps of graded abelian groups $F\colon  X_{r+1}^{\ast} \to X_{r}^{\ast}$ and $V\colon 
X_{r}^{\ast} \to X_{r+1}^{\ast}$ which satisfy the following axioms:
\begin{enumerate}
\item The cochain complex $X_0^{\ast}$ vanishes.

\item The map $R\colon  X_{r+1}^{\ast} \to X_{r}^{\ast}$ is surjective for each $r
\geq 0$. 
\item The operator $F\colon  X_{r+1}^{\ast} \to X_{r}^{\ast}$ satisfies $dF = pFd$
for each $r \geq 0$. 
\item The operators $F$, $R$, and $V$ commute with each other. 
\item We have $F(V(x)) = px = V(F(x))$ for each $x \in X_{r}^{\ast}$.

\item Given any $x \in X_r^{\ast}$ such that $dx$ is divisible by
$p$, $x$ belongs to the image of $F \colon  X_{r+1}^{\ast} \to
X_r^{\ast}$. 
\item The kernel of the map $R\colon  X_{r+1}^{\ast} \to X_{r}^{\ast}$ coincides with the submodule
$X_{r+1}^{\ast}[p] \subseteq X_{r+1}^{\ast}$ consisting of those elements $x \in
X_{r+1}^{\ast}$ satisfying $px = 0$. 
\item The kernel of $R\colon  X_{r+1}^{\ast} \to X_{r}^{\ast}$ is the span of the images
of $V^{r}, dV^{r}\colon  X_1^{\ast} \to X_{r+1}^{\ast}$. 
\end{enumerate}
A morphism of strict Dieudonn\'e towers is a morphism of towers of cochain
complexes which are compatible with the $F, V$ operators. 
We let $\FrobTow$ denote the category of strict Dieudonn\'e towers. 
\end{definition}

We now show that Construction \ref{completionDieudonne} gives rise to strict Dieudonn\'{e} towers:

\begin{proposition} 
\label{buildtowerfromsat}
Let $M^{\ast}$ be a saturated Dieudonn\'e complex. Then the diagram
$$ \cdots \rightarrow \mathcal{W}_{3}(M)^{\ast}
\xrightarrow{\Res} \mathcal{W}_2(M)^{\ast} \xrightarrow{\Res} \mathcal{W}_1(M)^{\ast} \xrightarrow{\Res} \mathcal{W}_0(M)^{\ast}$$
is a strict Dieudonn\'{e} tower, when endowed with the Frobenius and Verschiebung operators of Remark \ref{rem13}.
\end{proposition}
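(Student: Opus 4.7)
The plan is to verify each of the eight axioms of Definition~\ref{strictTD} for the tower $\{\mathcal{W}_r(M)^{\ast}\}_{r \geq 0}$ equipped with the operators $F, V, R$ from Remark~\ref{rem13} and Construction~\ref{completionDieudonne}. Axioms (1)--(5) are essentially formal: axiom (1) holds because $V^0 = \id$, so $\mathcal{W}_0(M)^{\ast} = 0$; axiom (2) follows from the nesting $\im V^{r+1} + \im dV^{r+1} \subseteq \im V^r + \im dV^r$; and axioms (3), (4), (5) reduce to descending the identities $dF = pFd$, $FV = VF = p$, and the commutation of $F, V, R$ from $M^{\ast}$ (where the latter is immediate since all three operators originate on $M^{\ast}$ and $R$ is the quotient map). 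For axiom (8), the identity $V^r d = p^r dV^r$ (iterated from Proposition~\ref{Vexists}) guarantees that $V^r$ and $dV^r$ descend to well-defined maps $\mathcal{W}_1(M)^{\ast} \to \mathcal{W}_{r+1}(M)^{\ast}$, after which the kernel $(\im V^r + \im dV^r)/(\im V^{r+1} + \im dV^{r+1})$ of $R$ is manifestly the span of their images.

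The substance lies in axioms (6) and (7). For axiom (7), the inclusion $\ker R \subseteq \mathcal{W}_{r+1}^{\ast}[p]$ uses $p V^r z = V^{r+1}(F z)$ and $p \cdot dV^r z = dV^{r+1}(F z)$, both consequences of $p = VF$. The reverse inclusion is proved by a cascade argument. Given $px = V^{r+1}a + dV^{r+1}b$ in $M^{\ast}$, applying $F$ yields $pFx = pV^r a + dV^r b$, so $dV^r b = p(Fx - V^r a) \in pM^{\ast}$. Iterating the identity $F(dV^k b) = dV^{k-1}b$ (which comes from $FdV = d$) propagates this divisibility downwards, eventually giving $db \in pM^{\ast}$; saturation then provides $\tilde b$ with $b = F(\tilde b)$, whence $dV^{r+1}b = p \cdot dV^r \tilde b$. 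Setting $x_1 := x - dV^r \tilde b$ reduces us to $px_1 = V^{r+1}a$, so that $V^{r+1}a \in pM^{\ast}$. A parallel cascade applied to $a$ yields $a = F(a')$ and $V^{r+1}a = p V^r a'$; by $p$-torsion-freeness, $x_1 = V^r a'$, and hence $x = V^r a' + dV^r \tilde b \in \im V^r + \im dV^r$.

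For axiom (6), lift $x$ to $M^{\ast}$ so that $dx = py + V^r u + dV^r v$ for some $y, u, v \in M^{\ast}$; replacing $x$ by $x - V^r v$ (which does not change its class in $\mathcal{W}_r$), we may assume $dx = py + V^r u$. The identity $d(dx) = 0$ forces $dV^r u = -pdy$, and applying $F^r$ to both sides (using $F^r \circ dV^r = d$) gives $du = -pF^r(dy)$. A brief calculation using $dF^r = p^r F^r d$ then shows that $w := p^{r-1} u + F^r y$ is a cocycle, so by Proposition~\ref{prop10} we may write $w = F^r(t)$ for some $t \in M^{\ast}$. Rearranging yields $F^r(t - y) = p^{r-1}u$; applying $V^r$, using $V^r F^r = p^r$, and cancelling $p^{r-1}$ gives $V^r u = p(t - y)$. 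Therefore $dx = py + V^r u = pt \in pM^{\ast}$, and saturation forces $x \in F(M^{\ast})$. Passing to the quotient, $[x]_r$ lies in the image of $F\colon \mathcal{W}_{r+1}(M)^{\ast} \to \mathcal{W}_r(M)^{\ast}$, as required.

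The main obstacles are locating the correct cascade in axiom (7) and discovering the cocycle $w = p^{r-1}u + F^r y$ in axiom (6); both arguments rely crucially on Proposition~\ref{prop10}, which characterizes $\im F^r$ by divisibility of the differential.
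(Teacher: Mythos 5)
Your proof is correct, and its overall shape---treating axioms (1)--(5) and (8) as formal and concentrating the work in (6) and (7)---is the same as the paper's. For axiom (7) your argument is essentially the paper's: the paper isolates the key step as Lemma~\ref{lem6} (if $d(V^r x) \in pM^{\ast+1}$ then $x \in \im(F)$, proved in one line via $dx = F^r d(V^r x)$) and applies it twice in Lemma~\ref{lem:noptorsion}; your ``cascade'' iterating $F\,dV = d$ is just that lemma unwound, and handling $b$ before $a$ rather than after is immaterial. One step to make explicit: from $px_1 = V^{r+1}a$ the cascade must be run on $dV^{r+1}a = p\,dx_1$ (differentiate first), since what saturation detects is $da \in pM^{\ast}$; applying $F$ directly to $V^{r+1}a \in pM^{\ast}$ loses the $p$-divisibility after one step. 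This is a one-line fix, not a gap. For axiom (6) you take a genuinely different route: the paper notes that $d^2 = 0$ forces $d(V^r a) \in pM^{\ast}$, applies Lemma~\ref{lem6} to write $a = F\widetilde{a}$, and then computes $d(x - V^r b) = p(y + V^{r-1}\widetilde{a})$, so saturation finishes immediately; you instead absorb the $dV^r v$ term, build the cocycle $w = p^{r-1}u + F^r y$, and invoke Proposition~\ref{prop10} to write $w = F^r t$, deducing $V^r u = p(t-y)$ and hence $dx = pt \in pM^{\ast}$. Both arguments are valid; the paper's is shorter (a single application of Lemma~\ref{lem6}, no appeal to Proposition~\ref{prop10}), while yours trades that for an explicit cocycle construction showing directly that the modified lift has differential divisible by $p$.
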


The proof of Proposition \ref{buildtowerfromsat} will require some preliminaries.

\begin{lemma}\label{lem6}
Let $M^{\ast}$ be a saturated Dieudonn\'{e} complex and let $x \in M^{\ast}$ satisfy
$d(V^{r} x) \in p M^{\ast+1}$. Then $x \in \im(F)$.
\end{lemma}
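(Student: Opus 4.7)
The plan is to reduce the claim to condition (ii) of Definition~\ref{def21} by showing that the hypothesis $d(V^r x) \in pM^{\ast+1}$ already forces $dx$ itself to lie in $pM^{\ast+1}$. The mechanism for this reduction is the identity $F \circ d \circ V = d$ established in Proposition~\ref{Vexists}.

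First I would verify by induction on $r$ that $F^r \circ d \circ V^r = d$ as operators on $M^\ast$. The base case $r=1$ is Proposition~\ref{Vexists}, and the inductive step is one line: $F^r d V^r(x) = F^{r-1}(F d V)(V^{r-1} x) = F^{r-1} d V^{r-1}(x)$, which equals $dx$ by the inductive hypothesis.

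Given this iterated identity, write $d(V^r x) = pw$ with $w \in M^{\ast+1}$, which is possible by hypothesis. Applying $F^r$ yields
\[ dx \;=\; F^r\bigl(d(V^r x)\bigr) \;=\; F^r(pw) \;=\; p\,F^r(w), \]
so $dx \in pM^{\ast+1}$. Saturation (Definition~\ref{def21}(ii)) then produces $z \in M^\ast$ with $F(z) = x$, proving $x \in \im(F)$. The argument is essentially formal once the iterated identity is in hand, and I do not expect any genuine obstacle; the only noteworthy point is that we only need divisibility of $dx$ by a single power of $p$ rather than $p^{r+1}$, and this is exactly what the $r$ applications of $F$ extract from the hypothesis.
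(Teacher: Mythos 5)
Your proposal is correct and follows essentially the same route as the paper: the paper's proof likewise applies the (iterated) identity $F^r \circ d \circ V^r = d$ to conclude $dx = F^r(d(V^r x)) \in p M^{\ast+1}$ and then invokes saturation. The only difference is that you spell out the easy induction establishing $F^r d V^r = d$, which the paper treats as immediate from $FdV = d$.
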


\begin{proof}
We have $dx = F^{r} d (V^{r} x ) \in F^{r} p M^{\ast+1} \subseteq p M^{\ast+1}$, so the desired result follows from our assumption that $M^{\ast}$ is saturated.
\end{proof}

\begin{lemma} 
\label{lem:noptorsion}
Let $M^{\ast}$ be a saturated Dieudonn\'e complex and let $x \in M^{\ast}$. If
the image of $px$ vanishes in $\W_{r+1}(M)^{\ast}$, then the image of $x$ vanishes
in $\W_{r}(M)^{\ast}$. 
\end{lemma}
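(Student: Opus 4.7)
The plan is to unpack the hypothesis $px \in \im(V^r) + \im(dV^r)$ by writing $px = V^r a + d V^r b$ for some $a, b \in M^{\ast}$, and then prove that both $a$ and $b$ lie in the image of $F$. Once this is achieved, the identity $VF = p$ converts each $V^r$ into $p \cdot V^{r-1}$, which lets us cancel the $p$ on both sides and exhibit $x$ as an element of $\im(V^{r-1}) + \im(dV^{r-1})$.

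First I will apply $F$ to the equation $px = V^r a + dV^r b$. Using $FV^r = pV^{r-1}$ (from $FV = p$ together with commutation of $F$ and $V$) and $FdV^r = dV^{r-1}$ (from the identity $FdV = d$ of Proposition~\ref{Vexists}), I obtain $pFx = pV^{r-1} a + dV^{r-1} b$. This shows $d(V^{r-1}b) = p(Fx - V^{r-1}a) \in p M^{\ast+1}$, so Lemma~\ref{lem6} yields $b = Fb'$ for some $b' \in M^{\ast}$. Substituting back gives $dV^r b = p\,dV^{r-1}b'$, so after cancellation $V^r a = p(x - dV^{r-1}b')$. Denote $\xi := x - dV^{r-1}b'$, so that $V^r a = p\xi$.

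Next I will show $a \in \im(F)$ by a parallel argument applied to the differential. Differentiating the original equation gives $dV^r a = p\,dx$. Iterating the identity $FdV = d$ (which implies $F^r \circ d \circ V^r = d$) and then applying $F^r$, I compute $da = F^r(dV^r a) = F^r(p\,dx) = p F^r dx \in p M^{\ast+1}$. Since $M^{\ast}$ is saturated, the condition $da \in pM^{\ast+1}$ forces $a = Fa'$ for some $a' \in M^{\ast}$. Then $V^r a = V^{r-1}(Va) = V^{r-1}(VFa') = pV^{r-1}a'$. Combining with $V^r a = p\xi$ and using that $M^{\ast}$ is $p$-torsion-free, I conclude $\xi = V^{r-1}a'$, whence $x = V^{r-1}a' + dV^{r-1}b' \in \im(V^{r-1}) + \im(dV^{r-1})$, which is exactly the vanishing of $x$ in $\W_{r-1}(M)^{\ast}$.

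The only subtle step is the second one: deducing $a \in \im(F)$. One might be tempted to argue directly from $V^r a = p\xi$ (i.e., from $V^r a \in pM^{\ast}$), but this alone is insufficient in general. What rescues the argument is that $a$ is not an arbitrary element with $V^r a$ divisible by $p$: from the ambient equation one also obtains $dV^r a \in p M^{\ast+1}$, and then the telescoping identity $F^r \circ d \circ V^r = d$ transfers the $p$-divisibility to $da$, after which saturation does the rest. All other manipulations are routine uses of the identities $FV = VF = p$, $FdV = d$, and $Vd = p\,dV$ recorded in Proposition~\ref{Vexists}.
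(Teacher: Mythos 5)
Your proof is correct and follows essentially the same route as the paper: decompose $px = V^r a + dV^r b$, show $a$ and $b$ both lie in $\im(F)$ using Lemma~\ref{lem6} (your $F^r\circ d\circ V^r = d$ argument for $a$ is just that lemma's proof written out), and cancel $p$ using torsion-freeness. The only difference is cosmetic: the paper treats $a$ first and then $b$, whereas you extract $b$ first by applying $F$ to the whole equation, but the ingredients and structure are identical.
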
 
\begin{proof} 
Write $px = V^{r+1} a + dV^{r+1} b$ for some $a, b \in M^{\ast}$. 
Then $dV^{r+1} a = p(dx)$ is divisible by $p$, so that Lemma~\ref{lem6} allows us to
write $a = F \widetilde{a}$ for some $\widetilde{a} \in M^{\ast}$. This guarantees that
$d V^{r+1} b = p (x - V^{r} \widetilde{a} )$ is divisible by $p$, so that
$b = F \widetilde{b}$ for some $\widetilde{b} \in
M^{\ast}$ (Lemma~\ref{lem6}). Then we can write
\[ px = p V^r \widetilde{a} + d V^r (p \widetilde{b}), \]
so that $x = V^{r} \widetilde{a} + d V^{r} \widetilde{b} \in \im(V^{r}) + \im( d V^{r} )$ as desired. 
\end{proof}

\begin{proof}[Proof of Proposition \ref{buildtowerfromsat}]
Let $M^{\ast}$ be a saturated Dieudonn\'{e} complex; we wish to show that the
inverse system $\{ \W_r ( M)^{\ast} \}_{r \geq 0}$
of Construction \ref{completionDieudonne} is a strict Dieudonn\'{e} tower (when equipped with the Frobenius and Verschiebung operators of
Remark \ref{rem13}). It is clear that this system satisfies axioms $(1)$ through $(5)$ and $(8)$ of Definition \ref{stricTD}. To verify
$(6)$, suppose that we are given an element $\overline{x} \in \W_r( M)^{\ast}$ such that $d \overline{x}$ is divisible by $p$.
Choose an element $x \in M^{\ast}$ representing $\overline{x}$, so that have an identity of the form
$dx = py + V^{r} a + d V^{r} b$ for some $y, a, b \in M^{\ast}$. Then $d(V^r a)$ is divisible by $p$,
so Lemma~\ref{lem6} implies that we can write $a = F \widetilde{a}$ for some $\widetilde{a} \in M^{\ast}$.
We then have
\[ d(x - V^r b) = p (y + V^{r-1} \widetilde{a}), \]
so our assumption that $M^{\ast}$ is saturated guarantees that we can write $x - V^r b = Fz$ 
for some $z \in M^{\ast}$. It follows that $\overline{x} = F \overline{z}$, where $\overline{z} \in \W_{r+1} M^{\ast}$
denotes the image of $z$.

We now verify $(7)$. We first note that if $x$ belongs to $\im( V^r ) + \im( d V^r )$, then
$px$ belongs to 
$$\im( p V^r ) + \im( p dV^r ) = \im( V^{r+1} F) + \im( d V^{r+1} F ) \subseteq \im( V^{r+1}) + \im( d V^{r+1} ).$$
It follows that the kernel of the restriction map $\Res\colon  \W_{r+1} (M)^{\ast}
\rightarrow \W_{r} (M)^{\ast}$ is contained
in $(\W_{r+1} (M)^{\ast})[p]$. The reverse inclusion follows from Lemma \ref{lem:noptorsion}.
\end{proof}

We next establish a sort of converse to Proposition \ref{buildtowerfromsat}: from any strict
Dieudonn\'{e} tower, we can construct a saturated Dieudonn\'{e} complex.

\begin{proposition}\label{buildstrictDieudonne}
Let $\left\{X_r^{\ast}\right\}$ be a strict Dieudonn\'{e} tower and let
$X^{\ast}$ denote the cochain complex given by the inverse limit 
$\varprojlim_r X_r^{\ast}$. Let $F\colon  X^{\ast} \rightarrow X^{\ast}$
denote the map of graded abelian groups given by the inverse limit
of the Frobenius operators $F\colon  X_{r+1}^{\ast} \rightarrow X_{r}^{\ast}$.
Then $( X^{\ast}, F)$ is a saturated Dieudonn\'{e} complex.
\end{proposition}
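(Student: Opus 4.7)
The plan is to verify the two conditions of Definition~\ref{def21} for $X^{\ast} := \varprojlim_r X_r^{\ast}$ equipped with the inverse limit of the Frobenius operators. First I would observe that $F\colon X^{\ast} \to X^{\ast}$ is well-defined: the maps $F\colon X_{r+1}^{\ast} \to X_r^{\ast}$ commute with the restriction maps by axiom~(4) of Definition~\ref{strictTD}, so they assemble into an endomorphism of the inverse limit. The identity $dF = pFd$ holds at the limit because it holds at each finite level (axiom~(3)), so $(X^{\ast},d,F)$ is a Dieudonn\'e complex to begin with.

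For $p$-torsion-freeness, I would argue that if $pz = 0$ for $z = (z_r) \in X^{\ast}$, then $pz_r = 0$ in each $X_r^{\ast}$, so $z_r \in X_r^{\ast}[p] = \ker(R\colon X_r^{\ast} \to X_{r-1}^{\ast})$ by axiom~(7). Consequently $z_{r-1} = R(z_r) = 0$ for every $r \geq 1$, forcing $z = 0$. Injectivity of $F$ on $X^{\ast}$ follows at once: if $F(z) = 0$, then $pz_{r+1} = VF(z_{r+1}) = 0$ at each level by axiom~(5), so $pz = 0$ and $z = 0$ by the preceding sentence. The inclusion $F(X^{\ast}) \subseteq \{x \in X^{\ast} \colon dx \in pX^{\ast+1}\}$ is a formal consequence of $dF = pFd$.

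The step I expect to be the main obstacle is the reverse inclusion: given $x = (x_r) \in X^{\ast}$ with $dx = py$ for some $y \in X^{\ast+1}$, one must produce a coherent system $z = (z_r)$ with $F(z) = x$. Axiom~(6) supplies, at each level, an element $w_r \in X_{r+1}^{\ast}$ with $F(w_r) = x_r$ (using that $dx_r = py_r$ is divisible by $p$), but these $w_r$ need not be compatible under $R$. The idea I would use is to \emph{shift by one level} and set $z_r := R(w_r) \in X_r^{\ast}$. To verify coherence, I would compute $F\bigl(R(w_{r+1}) - w_r\bigr) = R(x_{r+1}) - x_r = 0$, so $R(w_{r+1}) - w_r$ lies in $\ker(F\colon X_{r+1}^{\ast} \to X_r^{\ast})$. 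The crucial containment $\ker F \subseteq \ker R$ then follows from $VF = p$ together with axiom~(7): if $F(u) = 0$, then $pu = VF(u) = 0$, so $u \in X_{r+1}^{\ast}[p] = \ker R$. Applying $R$ to $R(w_{r+1}) - w_r$ therefore yields $R(z_{r+1}) = z_r$, confirming that $(z_r)$ is coherent, while $F(z_{r+1}) = R(F(w_{r+1})) = R(x_{r+1}) = x_r$ shows $F(z) = x$ in the limit.

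The main obstacle is thus a coherence problem for preimages under $F$: the preimages supplied by axiom~(6) are noncanonical, and the essential trick is to sacrifice one level of precision by applying $R$ to enforce compatibility. The structural input that enables this shift is the containment $\ker F \subseteq \ker R$, which the axioms encode implicitly via $VF = p$ (axiom~(5)) together with the identification $X_r^{\ast}[p] = \ker R$ (axiom~(7)). Notably the argument does not require axiom~(8).
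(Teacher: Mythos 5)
Your proof is correct and follows essentially the same route as the paper: levelwise preimages under $F$ from axiom~(6), made coherent by applying $R$ and using that $\ker F \subseteq \ker R$ via $VF = p$ and axiom~(7). The explicit verification of injectivity of $F$ (which the paper leaves implicit) is a harmless addition, and your observation that axiom~(8) is not needed is accurate.
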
 

\begin{proof} 
It follows from axiom $(3)$ of Definition \ref{stricTD} that the Frobenius map
$F\colon  X^{\ast} \rightarrow X^{\ast}$ satisfies $dF = p Fd$, so that $(X^{\ast}, F)$ is a Dieudonn\'{e} complex.
We claim that the graded abelian group $X^{\ast}$ is $p$-torsion-free. 
Choose any element $x \in X^{\ast}$, given by a compatible system of elements
$\{ x_r \in X^{\ast}_{r} \}_{r \geq 0}$. If $px = 0$, then each $x_r$ satisfies
$px_r = 0$ in $X^{\ast}_{r}$, so that $x_{r-1} = 0$ by virtue of axiom $(7)$ of Definition \ref{stricTD}.
Since $r$ is arbitrary, it follows that $x = 0$.

To complete the proof that $X^{\ast}$ is saturated, it suffices now to show that if $x = \{ x_r \}_{r \geq 0}$ is an element of 
$X^{\ast}$ and $dx$ is divisible by $p$, then $x$ belongs to the image of $F$. 
Using axiom $(6)$ of Definition \ref{stricTD}, we can choose $y_{r+1} \in X_{r+1}^{\ast}$ such that
$Fy_{r+1} = x_r$. Beware that we do not necessarily have $R( y_{r+1} ) = y_r$ for $r > 0$.
However, the difference $R(y_{r+1}) - y_r \in X_{r}^{\ast}$ is annihilated
by $F$, hence also by $p = VF$, hence also by the restriction map $R$ (by virtue of axiom
$(7)$ of Definition \ref{strictTD}). It follows that the sequence $\left\{R(y_{r+1}) \in X_r^{\ast}\right\}$ is
determines an element $y \in X^{\ast}$ satisfying $Fy=x$.
\end{proof} 

\subsection{The Completion of a Saturated Dieudonn\'{e} Complex}\label{lebex}

Let $M^{\ast}$ be a saturated Dieudonn\'{e} complex. It follows from Propositions~\ref{buildtowerfromsat} and \ref{buildstrictDieudonne}
that the completion $\WittScript(M)^{\ast}$ is also a saturated Dieudonn\'{e} complex. Our next goal is to show that
the completion $\WittScript(M)^{\ast}$ is strict (Corollary \ref{realcor14}), and is {\em universal} among strict Dieudonn\'{e} complexes
receiving a map from $M^{\ast}$ (Proposition \ref{prop37}). We begin by studying the homological properties of
Construction \ref{completionDieudonne}.

\begin{proposition}\label{prop11}
Let $M^{\ast}$ be a saturated Dieudonn\'{e} complex and let $r$ be a nonnegative integer. Then the map
$F^{r}\colon  M^{\ast} \rightarrow M^{\ast}$ induces an isomorphism of graded abelian groups
$\WittScript_{r}(M)^{\ast} \xrightarrow{\sim} \mathrm{H}^{\ast}( M^{\ast} / p^{r} M^{\ast} )$.
\end{proposition}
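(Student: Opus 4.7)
The plan is to construct the asserted map explicitly, verify it is well-defined on $\W_r(M)^{\ast}$, and then establish surjectivity and injectivity separately using the results already proved about saturated Dieudonn\'e complexes (especially Proposition~\ref{prop10} and Proposition~\ref{Vexists}).

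First, I would define a map $\phi_r\colon M^{\ast} \to \HH^{\ast}(M^{\ast}/p^r M^{\ast})$ by $x \mapsto [F^r x]$. This is well-defined because the identity $dF^r = p^r F^r d$ implies $dF^r x \in p^r M^{\ast+1}$, so $F^r x$ is a cocycle modulo $p^r$. Next, to see that $\phi_r$ factors through the quotient $\W_r(M)^{\ast} = M^{\ast}/(\im(V^r) + \im(dV^r))$, note that $F^r V^r y = p^r y$ vanishes modulo $p^r$, and $F^r d V^r y = dy$ (obtained by iterating the identity $F\circ d\circ V = d$ from Proposition~\ref{Vexists}) is a coboundary. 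Thus $\phi_r$ descends to a map $\overline{\phi}_r\colon \W_r(M)^{\ast}\to \HH^{\ast}(M^{\ast}/p^r M^{\ast})$.

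For surjectivity, let $\bar{y}\in \HH^{\ast}(M^{\ast}/p^r M^{\ast})$ be a class represented by $y \in M^{\ast}$ with $dy \in p^r M^{\ast+1}$. By Proposition~\ref{prop10}, we may write $y = F^r x$ for some $x \in M^{\ast}$, so $\bar{y} = \overline{\phi}_r([x])$. For injectivity, suppose $F^r x = p^r w + dz$ for some $w \in M^{\ast}$ and $z \in M^{\ast-1}$. Using $F^r V^r = p^r \cdot \id$, rewrite this as $F^r(x - V^r w) = dz$. I would then apply $d$ and use that $M^{\ast}$ is $p$-torsion-free together with injectivity of $F$ (from Remark~\ref{rem5}) to conclude that $d(x - V^r w) = 0$. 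Setting $a = x - V^r w$, I would show $a = dV^r z$ as follows: iterating $F\circ d\circ V = d$ gives $F^r\circ d\circ V^r = d$, so $F^r(dV^r z) = dz = F^r a$; then injectivity of $F^r$ yields $a = dV^r z$. Hence $x = V^r w + dV^r z \in \im(V^r) + \im(dV^r)$, so $[x] = 0$ in $\W_r(M)^{\ast}$.

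The step I expect to be most delicate is the injectivity argument, specifically the manipulation that reduces the equation $F^r x \equiv dz \pmod{p^r}$ to $a = dV^r z$ via the identity $F^r \circ d \circ V^r = d$ and $F$-injectivity; the small subtlety is being careful that each reduction is legitimate in a $p$-torsion-free saturated complex. Everything else is a direct bookkeeping consequence of the axioms already in place.
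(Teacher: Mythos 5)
Your proposal is correct and follows essentially the same route as the paper: the same explicit map $[x]\mapsto [F^r x]$, well-definedness via $F^rV^r=p^r$ and $F^r dV^r=d$, and surjectivity via Proposition~\ref{prop10}. Your injectivity step is a harmless variant (you cancel $F^r$ after rewriting $p^r w = F^rV^rw$, whereas the paper applies $V^r$ and divides by $p^r$ using torsion-freeness), and the intermediate observation $d(x-V^rw)=0$ is not actually needed.
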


\begin{proof}
The equality $d \circ F^{r} = p^{r} (F^{r} \circ d)$ guarantees that $F^{r}$ carries each element of $M^{\ast}$ to a 
cycle in the quotient complex $M^{\ast} / p^{r} M^{\ast}$. Moreover, the identities 
$$ F^{r} V^{r} (x) = p^{r} x \quad \quad F^{r} d V^{r}(y) = dy$$
imply that $F^{r}$ carries the subgroup $\im( V^{r} ) + \im( d V^{r} )
\subseteq M^{\ast}$ to the set of boundaries in the cochain complex
$M^{\ast} / p^{r} M^{\ast}$. We therefore obtain a well-defined homomorphism $\theta\colon  \WittScript_{r}(M)^{\ast} \rightarrow \mathrm{H}^{\ast}( M^{\ast} / p^{r} M^{\ast} )$.
The surjectivity of $\theta$ follows from Proposition \ref{prop10}. To prove injectivity, suppose that $x \in M^{\ast}$ has the property that
$F^{r} x$ represents a boundary in the quotient complex $M^{\ast} / p^{r} M^{\ast}$, so that we can write $F^{r} x = p^{r} y + dz$ for some $y \in M^{\ast}$ and $z \in M^{\ast-1}$.
Then $$p^{r} x = V^{r} F^{r} x = V^{r} ( p^{r} y + dz ) = p^{r} V^{r}(y) + p^{r} d V^{r}(z),$$ so that $x = V^{r}(y) + dV^{r}(z) \in \im( V^{r} ) + \im( d V^{r} )$.
\end{proof}

\begin{corollary}\label{cor79}
Let $(M^{\ast}, d, F)$ be a saturated Dieudonn\'{e} complex. Then the quotient map
$u\colon  M^{\ast} / pM^{\ast} \rightarrow \WittScript_1(M)^{\ast}$ is a
quasi-isomorphism of cochain complexes.
\end{corollary}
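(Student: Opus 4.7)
My plan is to show that the kernel $K^{\ast}$ of $u$ is an acyclic subcomplex of $M^{\ast}/pM^{\ast}$, whereupon the long exact sequence on cohomology forces $u$ to be a quasi-isomorphism.

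First I would check that $u$ is well-defined. Since $M^{\ast}$ is saturated, Proposition \ref{Vexists} supplies a Verschiebung $V$ satisfying $VF = p \cdot \id$; consequently $pM^{\ast} \subseteq VM^{\ast} \subseteq \im(V) + \im(dV)$, so the quotient map $M^{\ast} \to \W_1(M)^{\ast}$ factors through $M^{\ast}/pM^{\ast}$. The same inclusion identifies the kernel as the subcomplex
\[ K^n = (VM^n + dVM^{n-1})/pM^n \subseteq M^n/pM^n, \]
whose differential (inherited from $d$ on $M^{\ast}$) sends $[Va + dVb]$ to $[dVa]$.

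The main step is to verify $H^{\ast}(K^{\ast}) = 0$. Let $[x] \in K^n$ be a cocycle with representative $x = Va + dVb$ for some $a \in M^n$, $b \in M^{n-1}$ (taking $b = 0$ when $n = 0$). The condition $d[x] = 0$ in $K^{n+1}$ amounts to $dVa \in pM^{n+1}$, and Lemma \ref{lem6} applied with $r = 1$ then produces $a' \in M^n$ with $a = Fa'$. Hence $Va = VFa' = pa'$ lies in $pM^n$, so that $[x] = [Va + dVb] = [dVb]$ in $K^n$. For $n \geq 1$, the element $[Vb] \in K^{n-1}$ satisfies $d[Vb] = [x]$, exhibiting $[x]$ as a coboundary; for $n = 0$, the cocycle $[x] = [Va]$ is itself already zero in $K^0$. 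Thus $K^{\ast}$ is acyclic.

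The short exact sequence $0 \to K^{\ast} \to M^{\ast}/pM^{\ast} \xrightarrow{u} \W_1(M)^{\ast} \to 0$, combined with the acyclicity just proved, forces $H^{\ast}(u)$ to be an isomorphism via the long exact sequence in cohomology. I expect the only substantive step in this argument to be the appeal to Lemma \ref{lem6}, which is precisely the point at which the saturated hypothesis on $M^{\ast}$ is used.
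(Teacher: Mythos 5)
Your proof is correct, but it takes a different route from the paper. The paper deduces the corollary from machinery already in place: it forms the commutative diagram relating $u$, the isomorphism $\alpha_F\colon M^{\ast}/pM^{\ast} \to (\eta_p M)^{\ast}/p(\eta_p M)^{\ast}$ (saturation), the d\'ecalage quasi-isomorphism $\gamma\colon (\eta_p M)^{\ast}/p(\eta_p M)^{\ast} \to \mathrm{H}^{\ast}(M/pM)$ of Proposition \ref{prop62}, and the isomorphism $\W_1(M)^{\ast} \simeq \mathrm{H}^{\ast}(M/pM)$ of Proposition \ref{prop11}, and concludes by two-out-of-three. You instead argue directly that the kernel $K^{\ast} = (\im V + \im\, dV)/pM^{\ast}$ is acyclic, using only Lemma \ref{lem6} and the identities $FV = VF = p$; your key computation ($dVa \in pM^{\ast+1} \Rightarrow a = Fa' \Rightarrow Va = pa'$, so a cocycle class reduces to $[dVb] = d[Vb]$) is sound, and the long exact sequence then finishes the job. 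Your argument is more elementary and self-contained — it needs neither Proposition \ref{prop62} nor Proposition \ref{prop11} — whereas the paper's route reuses results it needs anyway (e.g.\ for the Cartier criterion) and makes visible the compatibility of $u$ with the Bockstein differential on $\mathrm{H}^{\ast}(M/pM)$, which is exploited later (e.g.\ in the proof of Theorem \ref{theo60} and in Proposition \ref{prop76}). One small remark: the corollary does not assume $M^{\ast}$ vanishes in negative degrees, so your parenthetical ``take $b=0$ when $n=0$'' and the separate treatment of $n=0$ implicitly assume $M^{-1}=0$; this is harmless, since your general argument (with $b \in M^{n-1}$ arbitrary) works uniformly in every degree and the case split can simply be dropped.
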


\begin{proof}
Regard $\mathrm{H}^{\ast}(M/pM)$ as a cochain complex with respect to the
Bockstein operator, as in Proposition~\ref{prop62}. 
We have a commutative diagram of cochain complexes
$$ \xymatrix{ M^{\ast} / p M^{\ast} \ar[r]^-{ \alpha_F } \ar[d]^{u} & (\eta_p M)^{\ast} / p (\eta_p M)^{\ast} \ar[d] \\
\WittScript_1(M)^{\ast} \ar[r]^-{\sim} & \mathrm{H}^{\ast}( M / p M), }$$
where the bottom horizontal map is the isomorphism of Proposition
\ref{prop11} (which one easily checks to be a map of cochain complexes as in
Proposition~\ref{prop62}, either directly or using the surjectivity of $u$), the right vertical map is the quasi-isomorphism
of Proposition \ref{prop62}, and $\alpha_F$ is an isomorphism by virtue of our assumption that $M^{\ast}$ is saturated.
It follows that $u$ is also a quasi-isomorphism.
\end{proof}

\begin{remark}
More generally, if $M^{\ast}$ is a saturated Dieudonn\'{e} complex, then the quotient map $M^{\ast} / p^{r} M^{\ast} \rightarrow \WittScript_r(M)^{\ast}$ is a quasi-isomorphism
for each $r \geq 0$. This can be proven by essentially the same argument, using
a slight generalization of Proposition \ref{prop62}. One can also argue directly as follows. 
First, we claim that the map 
$\mathrm{H}^i (M/p^r M) \to \mathrm{H}^i( \WittScript_r(M))$ is a surjection. 
A class in $\mathrm{H}^i( \WittScript_r(M))$ is represented by an element $x \in M^i$
such that we can write $dx = V^r y +  dV^r z$ for $y \in M^{i+1}, z \in M^i$. 
This implies that $dV^r y = 0$, so $dy = F^r dV^r y = 0$ and $y$ is a cycle. 
Since $M^{\ast}$ is saturated, this implies that we can write $y = F^r y'$, so
$dx  = p^r y' + dV^r z$. It follows that $x - V^r z
\in M^i$ is a class which is a cycle modulo $p^r$, and lifts $[x] \in
\mathrm{H}^i( \WittScript_r(M))$. Conversely, 
if $w \in M^i$ is a class with $dw \in p^r M^{i+1}$ 
such that $[w] \in \mathrm{H}^i( M/p^rM)$ maps to zero in $\mathrm{H}^i(
\WittScript_r(M))$, then 
we can  write $w =  dw' + V^r  u + dV^r v$ in $M^i$ for $u \in M^i, w', v
\in M^{i-1}$.  
It follows that $dV^r u \in p^r M^{i+1}$ and hence $du \in p^r M^{i+1}$,  so that we can write $ u = F^r u'$ by
saturation. 
Therefore, $w = d(w' + V^r v) + p^r u'$, implying that $[w] = 0$ in
$\mathrm{H}^i(M/p^r M)$, as desired. 
\end{remark}

\begin{corollary}\label{cor15}
Let $f\colon  M^{\ast} \rightarrow N^{\ast}$ be a morphism of saturated Dieudonn\'{e} complexes. The following conditions are equivalent:
\begin{itemize}
\item[$(1)$] The induced map $M^{\ast} / p M^{\ast} \rightarrow N^{\ast} / p N^{\ast}$ is a quasi-isomorphism of cochain complexes.
\item[$(2)$] The induced map $\WittScript_{1}(M)^{\ast} \rightarrow
\WittScript_1(N)^{\ast}$ is an isomorphism of cochain complexes.
\item[$(3)$] For each $r \geq 0$, the map $M^{\ast} / p^{r} M^{\ast}
\rightarrow N^{\ast} / p^{r} N^{\ast}$ is a quasi-isomorphism of cochain complexes.
\item[$(4)$] For each $r \geq 0$, the map $\WittScript_{r}(M)^{\ast}
\rightarrow \WittScript_{r}(N)^{\ast}$ is an isomorphism of cochain complexes.
\end{itemize}
\end{corollary}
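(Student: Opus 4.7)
The approach is to first dispatch the ``horizontal'' equivalences $(1) \Leftrightarrow (2)$ and $(3) \Leftrightarrow (4)$ by appealing to Proposition \ref{prop11}. For each $r \geq 0$, that proposition gives an isomorphism of graded abelian groups $\W_r(M)^{\ast} \cong \mathrm{H}^{\ast}(M^{\ast}/p^r M^{\ast})$ induced by $F^r$. Since any morphism $f\colon M^{\ast} \to N^{\ast}$ of Dieudonn\'e complexes commutes with $F$, this isomorphism is natural in $M^{\ast}$. The induced map $\W_r(M)^{\ast} \to \W_r(N)^{\ast}$ is automatically a morphism of cochain complexes (being the quotient of $f$), so it is an isomorphism of cochain complexes if and only if it is an isomorphism of the underlying graded abelian groups; by naturality of Proposition \ref{prop11}, this is in turn equivalent to $\mathrm{H}^{\ast}(M^{\ast}/p^r M^{\ast}) \to \mathrm{H}^{\ast}(N^{\ast}/p^r N^{\ast})$ being an isomorphism, i.e.\ to $M^{\ast}/p^r M^{\ast} \to N^{\ast}/p^r N^{\ast}$ being a quasi-isomorphism.

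It remains to establish the ``vertical'' implications $(1) \Leftrightarrow (3)$. The direction $(3) \Rightarrow (1)$ is trivial (take $r = 1$). For the converse, I would proceed by induction on $r$: since $M^{\ast}$ and $N^{\ast}$ are saturated, hence $p$-torsion-free, there is a short exact sequence of cochain complexes
$$0 \to M^{\ast}/pM^{\ast} \xrightarrow{p^{r-1}} M^{\ast}/p^r M^{\ast} \to M^{\ast}/p^{r-1} M^{\ast} \to 0,$$
and similarly for $N^{\ast}$, with $f$ inducing a morphism between these sequences. Passing to the associated long exact sequences in cohomology and applying the five-lemma, the base case $(1)$ together with the inductive hypothesis (that $f$ induces a quasi-isomorphism on the mod $p^{r-1}$ reductions) implies that $f$ induces a quasi-isomorphism on the mod $p^r$ reductions as well.

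The main obstacle is essentially bookkeeping: one must confirm that the isomorphism of Proposition \ref{prop11} is natural in $M^{\ast}$ (which is immediate from its definition via $F^r$ acting on representatives) and that ``isomorphism of cochain complexes'' in $(2)$ and $(4)$ reduces to the corresponding statement on underlying graded abelian groups (which holds because the map in question is already known to be a chain map). Once these points are noted, the equivalences $(1) \Leftrightarrow (2)$ and $(3) \Leftrightarrow (4)$ are immediate, and the remaining implication $(1) \Rightarrow (3)$ is the standard inductive five-lemma argument sketched above.
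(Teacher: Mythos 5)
Your proposal is correct and follows the paper's own argument: the equivalences $(1)\Leftrightarrow(2)$ and $(3)\Leftrightarrow(4)$ are deduced from the natural isomorphism $\WittScript_r(M)^{\ast}\simeq \mathrm{H}^{\ast}(M^{\ast}/p^{r}M^{\ast})$ of Proposition \ref{prop11}, while $(3)\Rightarrow(1)$ is trivial and $(1)\Rightarrow(3)$ is the standard induction on $r$ (which the paper leaves implicit and you correctly flesh out via the $p$-torsion-freeness of saturated complexes, the short exact sequence of mod $p^{r}$ reductions, and the five lemma). No gaps.
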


\begin{proof}
The equivalences $(1) \Leftrightarrow (2)$ and $(3) \Leftrightarrow (4)$ follow from Proposition \ref{prop11}. The implication
$(3) \Rightarrow (1)$ is obvious, and the reverse implication follows by induction on $r$.
\end{proof}

\begin{proposition}\label{prop17}
Let $M^{\ast}$ be a saturated Dieudonn\'{e} complex. Then the map $\rho_{M}\colon  M^{\ast} \rightarrow \WittScript(M)^{\ast}$ induces isomorphisms
$\WittScript_{r}(M)^{\ast} \rightarrow \WittScript_{r}( \WittScript(M) )^{\ast}$ for $r \geq 0$.
\end{proposition}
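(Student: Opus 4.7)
The strategy is to exhibit an explicit left inverse to $\W_r(\rho_M)$ and then prove it is an isomorphism. Because the Verschiebung $V$ on $\W(M)^{\ast}$ shifts inverse-limit indices upward by one, one has $(V^r n)_r = V^r(n_0) = 0$ and $(dV^r n)_r = 0$ for every $n \in \W(M)^{\ast}$ (using $n_0 \in \W_0 = 0$). Consequently, the projection $\pi_r : \W(M)^{\ast} \to \W_r(M)^{\ast}$ from the inverse limit to its $r$-th stage annihilates $V^r \W(M)^{\ast} + dV^r \W(M)^{\ast}$, so it descends to a map $\bar\pi_r : \W_r(\W(M))^{\ast} \to \W_r(M)^{\ast}$. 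Since $\pi_r \circ \rho_M$ coincides with the quotient map $M^{\ast} \to \W_r(M)^{\ast}$, we get $\bar\pi_r \circ \W_r(\rho_M) = \mathrm{id}$; hence $\W_r(\rho_M)$ is split injective, and it suffices to show $\bar\pi_r$ is injective. Unwinding, this amounts to the claim that every $z \in \W(M)^{\ast}$ with $z_r = 0$ has the form $z = V^r a + dV^r b$ for some $a, b \in \W(M)^{\ast}$.

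To produce such $a, b$, I would inductively build compatible sequences $(a_s, b_s)_{s \geq 0}$ with $a_s, b_s \in \W_s(M)^{\ast}$ satisfying $R(a_{s+1}) = a_s$, $R(b_{s+1}) = b_s$, and $z_{s+r} = V^r(a_s) + dV^r(b_s)$; these assemble into $a, b \in \W(M)^{\ast}$ yielding the desired decomposition. The base case $s = 0$ is vacuous: take $a_0 = b_0 = 0 \in \W_0 = 0$, using the hypothesis $z_r = 0$.

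The main obstacle is the inductive step, where $R$-compatibility must be layered on top of an arbitrary decomposition. Since $z_r = 0$ forces $z_{s+r+1} \in \ker(\W_{s+r+1}(M)^{\ast} \to \W_r(M)^{\ast})$, one can write $z_{s+r+1} = V^r \alpha + dV^r \beta$ for some $\alpha, \beta \in \W_{s+1}(M)^{\ast}$. Applying $R$ and using the inductive decomposition $z_{s+r} = V^r a_s + dV^r b_s$ gives $V^r u + dV^r v = 0$ in $\W_{s+r}(M)^{\ast}$, where $u := R(\alpha) - a_s$ and $v := R(\beta) - b_s$ lie in $\W_s(M)^{\ast}$. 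Lifting $-u, -v$ to $\mu_0, \nu_0 \in \W_{s+1}(M)^{\ast}$ (possible by axiom~(2) of Definition~\ref{stricTD}) and replacing $(\alpha, \beta)$ by $(\alpha + \mu_0, \beta + \nu_0)$ corrects the $R$-discrepancy but introduces a new error $V^r \mu_0 + dV^r \nu_0$ at level $s+r+1$. This error lies in $\ker(R : \W_{s+r+1}(M)^{\ast} \to \W_{s+r}(M)^{\ast}) = \W_{s+r+1}(M)^{\ast}[p]$ by axiom~(7), hence by axiom~(8) equals $V^{s+r} u' + dV^{s+r} v'$ for some $u', v' \in \W_1(M)^{\ast}$. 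A final correction subtracting $V^s u'$ and $V^s v'$ from $\mu_0, \nu_0$ cancels the error exactly, and crucially preserves $R$-compatibility because $R(u'), R(v') \in \W_0 = 0$. This completes the induction and hence the proof.
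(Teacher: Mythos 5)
Your proof is correct, but it follows a different route than the paper's argument for this proposition. The paper first reduces to the case $r=1$ by invoking Corollary \ref{cor15} (which rests on the identification $\WittScript_r(M)^{\ast} \simeq \mathrm{H}^{\ast}(M/p^rM)$ of Proposition \ref{prop11}), and then checks injectivity and surjectivity of $\WittScript_1(\rho_M)$ by direct element chases in the inverse limit. You instead treat all $r$ uniformly: you observe that the projection $\pi_r\colon \WittScript(M)^{\ast}\to \WittScript_r(M)^{\ast}$ kills $\im(V^r)+\im(dV^r)$ and hence induces a retraction $\bar\pi_r$ of $\WittScript_r(\rho_M)$, and you reduce everything to the containment $\ker(\pi_r)\subseteq \im(V^r)+\im(dV^r)$ inside $\WittScript(M)^{\ast}$, which you prove by inductively building $R$-compatible sequences and correcting the discrepancies using the surjectivity of $R$ and the identification of $\ker(R)$ with the span of $\im(V^{s+r}),\im(dV^{s+r})$ from $\WittScript_1$. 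This is essentially the content and the proof of the paper's later Proposition \ref{strictTDconverse} (whose argument uses the same axioms $(2)$ and $(8)$ and the same correction device, with $R(u')=R(v')=0$ saving the $R$-compatibility), specialized to the tower $\{\WittScript_s(M)^{\ast}\}$; since that proposition is proved from Proposition \ref{buildtowerfromsat} alone, your route introduces no circularity. The one step you leave implicit---that any element of $\ker(\WittScript_{s+r+1}(M)^{\ast}\to\WittScript_r(M)^{\ast})$ can be written as $V^r\alpha+dV^r\beta$ with $\alpha,\beta\in\WittScript_{s+1}(M)^{\ast}$---is immediate from the description of the $\WittScript_t(M)^{\ast}$ as quotients of $M^{\ast}$, since $V^rM^{\ast}$ and $dV^rM^{\ast}$ map onto $V^r\WittScript_{s+1}(M)^{\ast}$ and $dV^r\WittScript_{s+1}(M)^{\ast}$, so it is a harmless gap. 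What the two approaches buy: the paper's reduction to $r=1$ is shorter given the cohomological input of Proposition \ref{prop11}, while your argument avoids that input entirely, produces the inverse map explicitly, and in effect anticipates the equivalence between strict Dieudonn\'e complexes and strict Dieudonn\'e towers.
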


\begin{proof}
Recall that $\WittScript(M)^{\ast}$ is saturated
(Propositions~\ref{buildtowerfromsat} and \ref{buildstrictDieudonne}). 
By virtue of Corollary \ref{cor15}, it will suffice to treat the case $r = 1$. Let $\gamma\colon  \WittScript_1(M)^{\ast} \rightarrow \WittScript_{1}( \WittScript(M) )^{\ast}$ be the map determined by
$\rho_{M}$. We first show that $\gamma$ is injective. Choose an element $\overline{x} \in \WittScript_1(M)^{\ast}$ satisfying $\gamma( \overline{x} ) = 0$; we wish to show
that $\overline{x} = 0$. Represent $\overline{x}$ by an element $x \in M^{\ast}$. The vanishing of $\gamma( \overline{x} )$ then implies that we can write
$\rho_{M}(x) = Vy + dVz$ for some $y \in \WittScript(M)^{\ast}$ and $z \in \WittScript(M)^{\ast-1}$. Then we can identify
$y$ and $z$ with compatible sequences $\overline{y}_m \in \WittScript_{m}(M)^{\ast}$ and $\overline{z}_{m} \in \WittScript_{m}(M)^{\ast-1}$. The equality
$\rho_M(x) = Vy + dVz$ then yields $\overline{x} = V(\overline{y}_0) + dV(\overline{z}_0) = 0$.

We now prove that $\gamma$ is surjective. Choose an element $e \in \WittScript_1( \WittScript(M) )^{\ast}$; we wish to show that 
$e$ belongs to the image of $\gamma$. Let $x \in \WittScript(M)^{\ast}$ be an element representing $e$, which we can identify
with a compatible sequence of elements $\overline{x}_{m} \in \WittScript_m(M)^{\ast}$ represented by elements $x_m \in M^{\ast}$.
The compatibility of the sequence $\{ \overline{x}_m \}_{m \geq 0}$ guarantees that we can write
$x_{m+1} = x_{m} + V^{m} y_m + d V^{m} z_m$ for some elements $y_{m} \in M^{\ast}$ and $z_m \in M^{\ast-1}$.
We then have the identity
$$ x = \rho_M(x_1) + V (\sum_{m \geq 0} V^{m} y_{m+1}) + dV ( \sum_{m \geq 0} V^{m} z_{m+1} )$$
in $\WittScript(M)^{\ast}$, which yields the identity $e = \gamma( \overline{x}_1 )$ in $\WittScript_1( \WittScript(M) )^{\ast}$.
\end{proof}

\begin{corollary}\label{realcor14}
Let $M^{\ast}$ be a saturated Dieudonn\'{e} complex. Then the completion $\WittScript(M)^{\ast}$ is a strict Dieudonn\'{e} complex.
\end{corollary}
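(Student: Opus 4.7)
The plan is to verify the two clauses of Definition \ref{def22} for the completion $\WittScript(M)^{\ast}$: that it is saturated, and that the canonical comparison map $\rho_{\WittScript(M)}\colon \WittScript(M)^{\ast} \to \WittScript(\WittScript(M))^{\ast}$ is an isomorphism.

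Saturation is essentially immediate from the two preceding propositions. By Proposition \ref{buildtowerfromsat}, the tower $\{\WittScript_r(M)^{\ast}\}_{r \geq 0}$ is a strict Dieudonn\'{e} tower in the sense of Definition \ref{strictTD}, with Frobenius and Verschiebung inherited from $M^{\ast}$ as in Remark \ref{rem13}. Proposition \ref{buildstrictDieudonne} then guarantees that the inverse limit $\WittScript(M)^{\ast}$ of any strict Dieudonn\'{e} tower is a saturated Dieudonn\'{e} complex.

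For the isomorphism, the key input is Proposition \ref{prop17}, which provides isomorphisms $\WittScript_r(\rho_M)\colon \WittScript_r(M)^{\ast} \xrightarrow{\sim} \WittScript_r(\WittScript(M))^{\ast}$ for every $r \geq 0$. I would first observe that these isomorphisms are compatible with the restriction maps $\Res$ by the naturality of the quotient construction $\WittScript_r$, and therefore assemble into an isomorphism $\theta\colon \WittScript(M)^{\ast} \xrightarrow{\sim} \WittScript(\WittScript(M))^{\ast}$ upon passing to the inverse limit in $r$.

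It then remains to identify $\theta$ with $\rho_{\WittScript(M)}$. Writing $\pi_r\colon \WittScript(M)^{\ast} \to \WittScript_r(M)^{\ast}$ for the tautological projection from the defining inverse limit and $q_{\WittScript(M), r}\colon \WittScript(M)^{\ast} \to \WittScript_r(\WittScript(M))^{\ast}$ for the canonical quotient map, this reduces to the equality $q_{\WittScript(M), r} = \WittScript_r(\rho_M) \circ \pi_r$ for every $r$. On elements of the form $\rho_M(x)$ this equality is precisely the naturality square for $\WittScript_r$ applied to $\rho_M$. To extend the identity to an arbitrary $y \in \WittScript(M)^{\ast}$, I would invoke the surjectivity half of Proposition \ref{prop17} to write $y = \rho_M(x) + V^r w + dV^r w'$ for suitable $x \in M^{\ast}$ and $w, w' \in \WittScript(M)^{\ast}$; both sides of the desired equality annihilate the contributions of $V^r w$ and $dV^r w'$, reducing to the case already handled. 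The principal subtlety is this last bookkeeping step, but it is essentially formal; the real mathematical content has already been absorbed into Proposition \ref{prop17}.
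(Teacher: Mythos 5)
Your proof is correct and follows essentially the same route as the paper: saturation of $\WittScript(M)^{\ast}$ via Propositions \ref{buildtowerfromsat} and \ref{buildstrictDieudonne}, and strictness by using Proposition \ref{prop17} to see that $\WittScript(\rho_M)$ (your $\theta$) is an isomorphism and then checking it coincides with $\rho_{\WittScript(M)}$ by reducing modulo $\im(V^r)+\im(dV^r)$ to elements of the form $\rho_M(x)$, where naturality applies. The paper's proof is the same argument phrased directly in terms of the pair of maps $\rho_{\WittScript(M)}$, $\WittScript(\rho_M)$, so no substantive difference.
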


\begin{proof}
It follows from Propositions~\ref{buildtowerfromsat} and \ref{buildstrictDieudonne} that
$\WittScript(M)^{\ast}$ is a saturated Dieudonn\'{e} complex. Let $\rho_M\colon  M^{\ast} \rightarrow \WittScript(M)^{\ast}$ be the canonical map, so that we have a pair of maps of Dieudonn\'{e} complexes
$$ \rho_{ \WittScript(M) }, \WittScript( \rho_M)\colon  \WittScript(M)^{\ast} \rightarrow \WittScript(\WittScript(M))^{\ast}.$$
We wish to show that $\rho_{ \WittScript(M) }$ is an isomorphism. It follows from Proposition \ref{prop17} that the map $\WittScript( \rho_M )$ is an isomorphism.
It will therefore suffice to show that $\rho_{ \WittScript(M)}$ and $\WittScript( \rho_M)$ coincide. Fix an element $x \in \WittScript(M)^{\ast}$; we wish to show that
$\rho_{ \WittScript(M) }(x)$ and $\WittScript( \rho_M )(x)$ have the same image in $\WittScript_{r}( \WittScript(M) )^{\ast}$ for each $r \geq 0$.
Replacing $x$ by an element of the form $x + V^{r} y + d V^{r} z$, we can assume
without loss of generality (by Proposition~\ref{prop17}) that $x = \rho_M( x' )$ for some $x' \in M^{\ast}$.
In this case, the desired result follows from the commutativity of the diagram
$$ \xymatrix{ M^{\ast} \ar[r]^{\rho_M} \ar[d]^{\rho_M} & \WittScript(M)^{\ast} \ar[d] \\
\WittScript_r(M)^{\ast} \ar[r] & \WittScript_{r}(\WittScript(M))^{\ast}.  } 
$$
\end{proof}

\begin{proposition}\label{prop37}
Let $M^{\ast}$ and $N^{\ast}$ be saturated Dieudonn\'{e} complexes, where $N^{\ast}$ is strict. Then composition with the map $\rho_{M}$ induces a bijection
$$ \theta\colon  \Hom_{ \FrobComp}( \WittScript(M)^{\ast}, N^{\ast} ) \rightarrow \Hom_{ \FrobComp}( M^{\ast}, N^{\ast} ).$$
\end{proposition}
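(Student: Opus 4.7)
The plan is to construct an inverse to $\theta$ by leveraging strictness of $N^{\ast}$ to identify it with its own completion.

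The first step is to show that any morphism $f \colon M^{\ast} \to N^{\ast}$ of saturated Dieudonn\'e complexes automatically commutes with the Verschiebung. Indeed, applying $f$ to the identity $F V x = p x$ and using the injectivity of $F$ on the saturated complex $N^{\ast}$ (Remark~\ref{rem5}) together with the computation $F(f(Vx)) = f(px) = p f(x) = F(V f(x))$ yields $f(Vx) = V f(x)$. Since $f$ is a map of cochain complexes, one also gets $f(dV^r x) = dV^r f(x)$ for every $r$. Consequently $f$ sends $\im(V^r) + \im(dV^r) \subseteq M^{\ast}$ into $\im(V^r) + \im(dV^r) \subseteq N^{\ast}$, and hence descends to morphisms $\WittScript_r(f) \colon \WittScript_r(M)^{\ast} \to \WittScript_r(N)^{\ast}$ compatible with the restriction maps.

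Passing to the inverse limit and composing with the inverse of the strictness isomorphism $\rho_N \colon N^{\ast} \xrightarrow{\sim} \WittScript(N)^{\ast}$ produces a morphism $\widetilde{f} \colon \WittScript(M)^{\ast} \to N^{\ast}$ in $\FrobComp$. The identity $\widetilde{f} \circ \rho_M = f$ is immediate from the commutativity at each finite level $r$, so $\theta$ is surjective. This proves the surjectivity half, with $f \mapsto \widetilde{f}$ serving as a section.

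For injectivity, suppose $g, g' \colon \WittScript(M)^{\ast} \to N^{\ast}$ satisfy $g \circ \rho_M = g' \circ \rho_M$, and fix $x \in \WittScript(M)^{\ast}$. Writing $x$ as a compatible sequence $\{\overline{x}_m \in \WittScript_m(M)^{\ast}\}$ with each $\overline{x}_m$ represented by some $x_m \in M^{\ast}$, I observe that for any fixed $r \geq 0$ the difference $x - \rho_M(x_r)$ lies in the kernel of the projection $\WittScript(M)^{\ast} \to \WittScript_r(M)^{\ast}$, i.e.\ in $\im(V^r) + \im(dV^r)$ inside $\WittScript(M)^{\ast}$. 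Since both $g$ and $g'$ respect $V$ and $d$, the difference $g(x) - g'(x)$ lies in $\im(V^r) + \im(dV^r) \subseteq N^{\ast}$, and hence maps to zero in $\WittScript_r(N)^{\ast}$. This holds for every $r$, and the strictness of $N^{\ast}$ identifies $N^{\ast}$ with $\varprojlim_r \WittScript_r(N)^{\ast}$, forcing $g(x) = g'(x)$.

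The main obstacle is the first step: extracting the compatibility of $f$ with $V$ (and hence with the filtration defining $\WittScript_r$) from the bare Dieudonn\'e complex structure. Once this is in hand, everything else is formal from the description of $\WittScript(M)^{\ast}$ as an inverse limit and the strictness hypothesis on $N^{\ast}$, which simultaneously supplies the target of the induced map and guarantees the separatedness needed for uniqueness.
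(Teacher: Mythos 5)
Your proof is correct, and the surjectivity half is exactly the paper's argument: morphisms of saturated Dieudonn\'{e} complexes automatically commute with $V$ (your computation via injectivity of $F$ on $N^{\ast}$ is the right one), so $\WittScript$ is functorial, and $f \mapsto \rho_N^{-1} \circ \WittScript(f)$ is a section of $\theta$ by naturality of $\rho$. Where you diverge is in the injectivity half. The paper argues at finite levels: since $\WittScript_r(\rho_M)$ is an isomorphism (Proposition \ref{prop17}), the hypothesis $g \circ \rho_M = g' \circ \rho_M$ forces $\WittScript_r(g) = \WittScript_r(g')$ for all $r$, and strictness of $N^{\ast}$ finishes. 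You instead approximate an arbitrary $x \in \WittScript(M)^{\ast}$ by $\rho_M(x_r)$ and use that the error lies in $\im(V^r) + \im(dV^r)$ inside $\WittScript(M)^{\ast}$. Be aware that the ``i.e.''\ in that step hides a genuine (though true) assertion: that the kernel of the projection $\WittScript(M)^{\ast} \to \WittScript_r(M)^{\ast}$ is \emph{exactly} $\im(V^r) + \im(dV^r)$, not merely contains it. This is the content of Proposition \ref{strictTDconverse} (applied to the strict Dieudonn\'{e} tower $\{\WittScript_r(M)^{\ast}\}$ of Proposition \ref{buildtowerfromsat}), and its proof requires an inductive lifting/convergent-series argument of the same kind as the surjectivity step in the proof of Proposition \ref{prop17}; it is not formal from the definitions. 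Since that proposition is proved independently of the present statement, there is no circularity, and your route is a legitimate alternative --- it trades the use of Proposition \ref{prop17} for the tower-kernel identification --- but as written that step needs either a citation or the explicit series argument $x = \rho_M(x_r) + V^r(\sum_{m \geq 0} V^m y_{m+r}) + dV^r(\sum_{m \geq 0} V^m z_{m+r})$ to be complete.
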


\begin{proof}
We first show that $\theta$ is injective. Let $f\colon  \WittScript(M)^{\ast} \rightarrow N^{\ast}$ be a morphism of Dieudonn\'{e} complexes such that
$\theta(f) = f \circ \rho_M$ vanishes; we wish to show that $f$ vanishes. For each $r \geq 0$, we have a commutative diagram of cochain complexes
$$ \xymatrix@C=50pt{ M^{\ast} \ar[r]^-{\rho_M} \ar[d] & \WittScript(M)^{\ast} \ar[r]^-{f} \ar[d] & N^{\ast} \ar[d] \\
\WittScript_r(M)^{\ast} \ar[r]^-{ \WittScript_r(\rho_M)} & \WittScript_r( \WittScript(M) )^{\ast} \ar[r]^-{\WittScript_r(f)} & \WittScript_r(N)^{\ast} }$$
Since $\WittScript_r(f) \circ \WittScript_r( \rho_M ) = \WittScript_r( f \circ \rho_M)$ vanishes and $\WittScript_r( \rho_M )$ is an isomorphism (Proposition \ref{prop17}),
we conclude that $\WittScript_r(f) = 0$. It follows that the composite map $\WittScript(M)^{\ast} \xrightarrow{f} N^{\ast} \rightarrow \WittScript_r(N)^{\ast}$ vanishes
for all $r$. Invoking the strictness of $N^{\ast}$, we conclude that $f =0$.

We now argue that $\theta$ is surjective. Suppose we are given a map of Dieudonn\'{e} complexes $f_0\colon  M^{\ast} \rightarrow N^{\ast}$; we wish to show that $f_0$ belongs to the image of $\theta$. We have a commutative diagram of Dieudonn\'{e} complexes
$$ \xymatrix@C=50pt{ M^{\ast} \ar[r]^{f_0} \ar[d]^{ \rho_M} & N^{\ast} \ar[d]^{ \rho_N} \\
\WittScript(M)^{\ast} \ar[r]^{ \WittScript(f_0) } & \WittScript(N)^{\ast}, }$$
where the right vertical map is an isomorphism (since $N^{\ast}$ is strict). It follows that $f_0 = \theta( \rho_{N}^{-1} \circ \WittScript{f}_0 )$ belongs to the image of $\theta$, as desired.
\end{proof}

\begin{corollary}\label{cor66a}
The inclusion functor $\FrobCompComplete \hookrightarrow \FrobCompSat$ admits a
left adjoint, given by the completion functor $M^{\ast} \mapsto
\WittScript(M)^{\ast}$. \qed 
\end{corollary}

\subsection{Comparison of $M^{\ast}$ with $\WittScript(M)^{\ast}$}

We now record a few homological consequences of the results of \S \ref{lebex}.
As before, we write $\WittScript(\cdot)^{\ast}$ for the completion of a
saturated Dieudonn\'e complex. 

\begin{proposition}\label{prop18}
Let $M^{\ast}$ be a saturated Dieudonn\'{e} complex. Then the tautological map
$\rho_{M}\colon  M^{\ast} \rightarrow \WittScript(M)^{\ast}$ induces a quasi-isomorphism $M^{\ast} / p^{r} M^{\ast} \rightarrow
\WittScript(M)^{\ast} / p^{r} \WittScript(M)^{\ast}$ for every nonnegative integer $r$.
\end{proposition}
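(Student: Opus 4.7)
The plan is to reduce the statement directly to Proposition \ref{prop17} using the equivalence of conditions in Corollary \ref{cor15}. The key observation is that $\rho_M$ is a morphism between two saturated Dieudonn\'{e} complexes, so the machinery of that corollary applies.

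First I would observe that the target $\WittScript(M)^{\ast}$ is itself saturated: by Corollary \ref{realcor14}, $\WittScript(M)^{\ast}$ is in fact strict, and a strict Dieudonn\'{e} complex is saturated by definition. Thus $\rho_M\colon M^{\ast} \to \WittScript(M)^{\ast}$ is a morphism of saturated Dieudonn\'{e} complexes, and Corollary \ref{cor15} tells us that the four conditions $(1)$--$(4)$ listed there are mutually equivalent for $\rho_M$.

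The conclusion we wish to establish is precisely condition $(3)$ of Corollary \ref{cor15} applied to $\rho_M$: that the induced map $M^{\ast}/p^r M^{\ast} \to \WittScript(M)^{\ast}/p^r \WittScript(M)^{\ast}$ is a quasi-isomorphism for every $r \geq 0$. By the equivalence $(3) \Leftrightarrow (4)$, it suffices to show that the map $\WittScript_r(\rho_M)\colon \WittScript_r(M)^{\ast} \to \WittScript_r(\WittScript(M))^{\ast}$ is an isomorphism of cochain complexes for every $r \geq 0$. But this is exactly the content of Proposition \ref{prop17}.

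There is no real obstacle here, since the substance has already been extracted in Propositions \ref{prop11}, \ref{prop17}, and Corollary \ref{cor15}. The only point worth verifying is the compatibility: namely that the isomorphism produced by Proposition \ref{prop17} is indeed the one obtained by applying the functor $\WittScript_r(-)$ to $\rho_M$. This is immediate from how that isomorphism is constructed in the proof of Proposition \ref{prop17}---it is defined precisely as $\WittScript_r(\rho_M)$---so no additional bookkeeping is required.
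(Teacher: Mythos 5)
Your proposal is correct and matches the paper's proof, which is stated simply as ``Combine Proposition \ref{prop17} with Corollary \ref{cor15}''; you have merely spelled out the implicit steps (saturation of $\WittScript(M)^{\ast}$ via Corollary \ref{realcor14}, and the equivalence $(3)\Leftrightarrow(4)$ of Corollary \ref{cor15}).
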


\begin{proof}
Combine Proposition \ref{prop17} with Corollary \ref{cor15}.
\end{proof}

\begin{corollary}
\label{derivedcompleteDC}
Let $M^{\ast}$ be a saturated Dieudonn\'{e} complex. Then the tautological map 
$\rho_{M}\colon  M^{\ast} \rightarrow \WittScript(M)^{\ast}$ exhibits $\WittScript(M)^{\ast}$ as a $p$-completion of $M^{\ast}$ in the derived category of abelian groups (see \S~\ref{S:derivedreview}).
\end{corollary}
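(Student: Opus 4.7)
The plan is to verify the two standard criteria guaranteeing that $\rho_M\colon M^{\ast} \to \WittScript(M)^{\ast}$ is a $p$-completion in $\mathcal{D}(\mathbb{Z})$: namely, $(a)$ the target $\WittScript(M)^{\ast}$ is derived $p$-complete, and $(b)$ the cofiber of $\rho_M$ is annihilated by $- \lotimes \mathbb{Z}/p\mathbb{Z}$.

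For $(b)$, I would observe that $M^{\ast}$ is $p$-torsion-free since it is saturated, and the same holds for $\WittScript(M)^{\ast}$ because it is strict, hence saturated, by Corollary~\ref{realcor14}. Consequently the derived reductions $M^{\ast} \lotimes \mathbb{Z}/p\mathbb{Z}$ and $\WittScript(M)^{\ast} \lotimes \mathbb{Z}/p\mathbb{Z}$ coincide with the honest quotients $M^{\ast}/pM^{\ast}$ and $\WittScript(M)^{\ast}/p\WittScript(M)^{\ast}$. Under this identification, the derived mod-$p$ reduction of $\rho_M$ becomes precisely the quasi-isomorphism supplied by Proposition~\ref{prop18} with $r=1$, so the cofiber of $\rho_M$ is $p$-divisible in the derived sense.

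For $(a)$, the key input is Proposition~\ref{prop11}, which identifies $\WittScript_r(M)^{\ast}$ as a graded abelian group with $\mathrm{H}^{\ast}(M^{\ast}/p^r M^{\ast})$; in particular every term of the cochain complex $\WittScript_r(M)^{\ast}$ is annihilated by $p^r$ and is therefore derived $p$-complete as an object of $\mathcal{D}(\mathbb{Z})$. Since the full subcategory of derived $p$-complete objects is closed under limits in $\mathcal{D}(\mathbb{Z})$, it suffices to show that the ordinary cochain-level limit $\WittScript(M)^{\ast} = \varprojlim_r \WittScript_r(M)^{\ast}$ computes the homotopy limit of the tower $\{\WittScript_r(M)^{\ast}\}$. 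This reduces to the vanishing of $\varprojlim^1$ of the tower in each degree, which follows from the termwise surjectivity of the transition maps $\WittScript_{r+1}(M)^{\ast} \to \WittScript_r(M)^{\ast}$ (axiom $(2)$ of Definition~\ref{strictTD}, verified in Proposition~\ref{buildtowerfromsat}).

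The most delicate point is reconciling the ordinary cochain-level limit with the $\infty$-categorical limit in step $(a)$; once this is handled via the $\lim^1$-vanishing argument above, the standard characterization of derived $p$-completion in $\mathcal{D}(\mathbb{Z})$ assembles $(a)$ and $(b)$ into the desired conclusion.
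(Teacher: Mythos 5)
Your proof is correct and follows essentially the same route as the paper: both arguments rest on Proposition~\ref{prop18} to see that $\rho_M$ is an isomorphism after derived reduction mod $p$ (using $p$-torsion-freeness to replace derived by naive quotients), together with the tower $\{\WittScript_r(M)^{\ast}\}_{r \geq 0}$ of $p^r$-torsion complexes to establish $p$-completeness of $\WittScript(M)^{\ast}$. The only (harmless) variation is in that last step: the paper simply notes that each term $\WittScript(M)^{n} = \varprojlim_r \WittScript_r(M)^{n}$ is a $p$-adically complete abelian group and invokes the criterion of Example~\ref{sez}, whereas you realize $\WittScript(M)^{\ast}$ as a homotopy limit of $p$-complete objects via termwise surjectivity and $\varprojlim^{1}$-vanishing, which is a slightly longer but equally valid justification.
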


\begin{proof}
By virtue of Proposition \ref{prop18}, it will suffice to show that $\WittScript(M)^{\ast}$ is a $p$-complete object of the derived category. In fact, something stronger is true:
each term in the cochain complex $\WittScript(M)^{\ast}$ is a $p$-complete
abelian group, since $\WittScript(M)^{n}$ is given by an inverse limit
$\varprojlim_r \WittScript_{r}(M)^{n}$,
where $\WittScript_r( M)^{n}$ is annihilated by $p^{r}$.
\end{proof}

\begin{corollary}
\label{cor66}
Let $M^{\ast}$ be a saturated Dieudonn\'{e} complex. The following conditions are equivalent:
\begin{itemize}
\item[$(1)$] The tautological map $\rho_M\colon  M^{\ast} \rightarrow \WittScript(M)^{\ast}$ is a quasi-isomorphism.
\item[$(2)$] The underlying cochain complex $M^{\ast}$ is a $p$-complete object
of the derived category of abelian groups. \qed
\end{itemize}
\end{corollary}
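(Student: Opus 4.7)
The plan is to deduce this directly from Corollary \ref{derivedcompleteDC}, which already does the real work: it asserts that $\rho_M\colon M^{\ast} \to \WittScript(M)^{\ast}$ exhibits $\WittScript(M)^{\ast}$ as a derived $p$-completion of $M^{\ast}$, and it shows along the way that $\WittScript(M)^{\ast}$ is termwise $p$-complete (hence in particular $p$-complete as an object of the derived category).

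For the implication $(1)\Rightarrow(2)$, I would argue as follows. If $\rho_M$ is a quasi-isomorphism, then $M^{\ast}$ is isomorphic to $\WittScript(M)^{\ast}$ in the derived category. Since $p$-completeness in the derived category is invariant under quasi-isomorphism, and $\WittScript(M)^{\ast}$ is $p$-complete by Corollary \ref{derivedcompleteDC}, it follows that $M^{\ast}$ is $p$-complete as well.

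For the reverse implication $(2)\Rightarrow(1)$, suppose that $M^{\ast}$ is already $p$-complete in the derived category. Then the derived $p$-completion map is an isomorphism in the derived category onto any representative of the $p$-completion. Since, by Corollary \ref{derivedcompleteDC}, $\rho_M$ exhibits $\WittScript(M)^{\ast}$ as such a $p$-completion, it must itself be a quasi-isomorphism.

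The only point that requires any care is verifying that one is entitled to use the phrase ``derived $p$-completion'' in the manner above; this relies only on the formal properties of $p$-completion in the derived category recalled in \S\ref{S:derivedreview} (namely, that derived $p$-completion is idempotent, so that an object is $p$-complete precisely when the map to its $p$-completion is a quasi-isomorphism). Granted this, no further calculation is needed: the substance of the corollary has already been absorbed into Corollary \ref{derivedcompleteDC} via Propositions \ref{prop17} and \ref{prop18}.
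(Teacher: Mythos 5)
Your proof is correct and is exactly the intended argument: the paper states this corollary without proof precisely because it is the formal consequence of Corollary \ref{derivedcompleteDC} (that $\rho_M$ is a derived $p$-completion map, with $p$-complete target) together with the idempotence of derived $p$-completion. Nothing further is needed.
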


We now combine Corollary \ref{cor66} with our study of the saturation construction $M^{\ast} \mapsto \Saturate( M^{\ast} )$.

\begin{notation}\label{completesat}
Let $M^{\ast}$ be a Dieudonn\'{e} complex. We let $\WSaturate(M^{\ast})$ denote the completion of the saturation $\Saturate( M^{\ast} )$.
Note that the construction $M^{\ast} \mapsto \WSaturate(M^{\ast})$ is left adjoint to the inclusion functor $\FrobCompComplete \hookrightarrow \FrobComp$ 
(see Corollaries \ref{cor65} and \ref{cor66}).
\end{notation}

\begin{corollary}\label{cor67}
Let $M^{\ast}$ be a Dieudonn\'{e} complex of Cartier type, and suppose that each of the abelian groups $M^{n}$ is $p$-adically complete.
Then the canonical map $u\colon  M^{\ast} \rightarrow \WSaturate(M^{\ast} )$ is a quasi-isomorphism.
\end{corollary}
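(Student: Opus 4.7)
The plan is to factor the canonical map $u$ as the composition $M^{\ast} \xrightarrow{u_1} \Saturate(M^{\ast}) \xrightarrow{u_2} \WSaturate(M^{\ast})$ and control each piece after reducing mod $p$, then bootstrap to the integral statement via a $p$-completeness argument.

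First, I would apply the Cartier Criterion (Theorem~\ref{theo60}) to the saturation map $u_1$: since $M^{\ast}$ is of Cartier type by hypothesis, $u_1$ induces a quasi-isomorphism $M^{\ast}/pM^{\ast} \to \Saturate(M^{\ast})/p\Saturate(M^{\ast})$. Next, Proposition~\ref{prop18} applied to the saturated complex $\Saturate(M^{\ast})$ (noting that $\WSaturate(M^{\ast})$ is by definition the completion $\WittScript(\Saturate(M^{\ast}))^{\ast}$) says that $u_2$ induces a quasi-isomorphism $\Saturate(M^{\ast})/p\Saturate(M^{\ast}) \to \WSaturate(M^{\ast})/p\WSaturate(M^{\ast})$. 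Composing, $u$ induces a quasi-isomorphism modulo $p$.

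To upgrade this to an honest quasi-isomorphism, I would argue that both source and target are derived $p$-complete and that all three quotients represent the correct derived reductions. The complex $M^{\ast}$ is $p$-torsion-free (since it is of Cartier type), and each $M^n$ is $p$-adically complete by assumption, so $M^{\ast}$ is derived $p$-complete. The complex $\WSaturate(M^{\ast})$ is derived $p$-complete by Corollary~\ref{derivedcompleteDC}. Moreover, both $M^{\ast}$ and $\WSaturate(M^{\ast})$ (as well as $\Saturate(M^{\ast})$) are $p$-torsion-free, so their mod $p$ quotients agree with their derived reductions $\otimes^{L}_{\Z} \Z/p$. Thus $u \otimes^{L}_{\Z} \Z/p$ is a quasi-isomorphism.

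The conclusion then follows from the standard fact that a morphism between derived $p$-complete objects of $D(\Z)$ which is a quasi-isomorphism after applying $-\otimes^{L}_{\Z} \Z/p$ is itself a quasi-isomorphism (the cone is derived $p$-complete with vanishing derived reduction, hence zero). The only mild subtlety worth checking is the derived $p$-completeness of $M^{\ast}$ term-by-term, but this is immediate from $p$-torsion-freeness together with the $p$-adic completeness of each $M^n$. So the main conceptual step is really just the combination of Theorem~\ref{theo60} with Proposition~\ref{prop18}; the rest is a formal promotion from mod-$p$ to integral statements, enabled by the completeness hypothesis on $M^{\ast}$.
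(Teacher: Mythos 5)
Your proof is correct and follows essentially the same route as the paper: reduce mod $p$ using the $p$-torsion-freeness and $p$-(derived) completeness of both sides, then factor the mod-$p$ map through $\Saturate(M^{\ast})/p$ and invoke Theorem~\ref{theo60} for the saturation step and Proposition~\ref{prop18} for the completion step. The explicit appeal to derived $p$-completeness to promote the mod-$p$ quasi-isomorphism is just a slightly more formal phrasing of the paper's identical argument.
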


\begin{proof}
Note that the domain and codomain of $u$ are cochain complexes of abelian groups which are $p$-complete and $p$-torsion-free.
Consequently, to show that $u$ is a quasi-isomorphism, it will suffice to show that the induced map
$M^{\ast} / p M^{\ast} \rightarrow \WSaturate(M^{\ast} ) / p \WSaturate(M^{\ast} )$ is a quasi-isomorphism.
This map factors as a composition
$$M^{\ast} / p M^{\ast} \xrightarrow{v} \Saturate(M^{\ast}) / p \Saturate(M^{\ast}) \xrightarrow{v'} \WSaturate(M^{\ast} ) / p \WSaturate(M^{\ast} ).$$
The map $v$ is a quasi-isomorphism by virtue of Theorem \ref{theo60}, and the map $v'$ is a quasi-isomorphism by Proposition \ref{prop18}.
\end{proof}

\subsection{More on Strict Dieudonn\'{e} Towers}

We close this section by showing that the category $\FrobCompComplete$ of strict Dieudonn\'{e} complexes is equivalent
to the category $\FrobTow$ of strict Dieudonn\'{e} towers (Corollary \ref{Dieudtowerstrict}).

\begin{proposition} 
\label{strictTDconverse}
Let $\{ X^{\ast}_{r} \}_{r \geq 0}$ be a strict Dieudonn\'{e} tower and let
$X^{\ast} = \varprojlim_{r} X_{r}^{\ast}$ be its inverse limit, which
we regard as a Dieudonn\'{e} complex as in Proposition \ref{buildstrictDieudonne}.
Then, for each $r \geq 0$, the canonical map $X^{\ast} \rightarrow X_{r}^{\ast}$
induces an isomorphism of cochain complexes
$$ \WittScript_{r}( X )^{\ast} = X^{\ast} / (\im(V^r) + \im( dV^r) )
\xrightarrow{\sim} X_{r}^{\ast}.$$
\end{proposition}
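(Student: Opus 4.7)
The plan is to show that the natural map
\[
\overline{\pi}_r\colon \WittScript_r(X)^{\ast} = X^{\ast}/(\im V^r + \im dV^r) \longrightarrow X_r^{\ast},
\]
induced by the canonical projection $\pi_r\colon X^{\ast} \to X_r^{\ast}$ out of the inverse limit, is a bijection, having first verified that it is well-defined. I would begin by recording that, by the compatibility $RV = VR$ from axiom (4) together with $X_0^{\ast} = 0$, the Verschiebung on $X^{\ast}$ acts componentwise as $(V\alpha)_m = V(\alpha_{m-1})$ for $m \geq 1$ and $(V\alpha)_0 = 0$ (this is forced by $FV = p\cdot\id$ and the componentwise description of $F$ from Proposition \ref{buildstrictDieudonne}). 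In particular, for any $\alpha, \beta \in X^{\ast}$ one has $(V^r \alpha)_r = V^r(\alpha_0) = 0$ and $(dV^r\beta)_r = 0$, so $\pi_r$ annihilates $\im V^r + \im dV^r$ and $\overline{\pi}_r$ is well-defined. Surjectivity is then immediate from axiom (2): given $x_r \in X_r^{\ast}$, set $x_m = R^{r-m}(x_r)$ for $m \leq r$ and iteratively choose lifts $x_{m+1}$ of $x_m$ under $R$ for $m \geq r$.

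The substantive task is injectivity. Suppose $y = (y_m) \in X^{\ast}$ satisfies $y_r = 0$, which forces $y_m = 0$ for all $m \leq r$. By induction on $k$, I would construct compatible sequences $\alpha = (\alpha_k)_{k \geq 0}$ and $\beta = (\beta_k)_{k \geq 0}$ with $\alpha_k, \beta_k \in X_k^{\ast}$, satisfying
\[
y_{r+k} = V^r \alpha_k + dV^r \beta_k \qquad \text{for all } k \geq 0;
\]
this will exhibit $y$ as $V^r \alpha + dV^r \beta$ in $X^{\ast}$. The base case $k = 0$ is automatic with $\alpha_0 = \beta_0 = 0 \in X_0^{\ast} = 0$. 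For the inductive step, lift $\alpha_k, \beta_k$ arbitrarily to $\widetilde{\alpha}, \widetilde{\beta} \in X_{k+1}^{\ast}$ using surjectivity of $R$ (axiom (2)), and set $z := y_{r+k+1} - V^r \widetilde{\alpha} - dV^r \widetilde{\beta}$. A direct computation using $R(y_{r+k+1}) = y_{r+k}$ shows $R(z) = 0$, so axiom (8) lets me write $z = V^{r+k} a + dV^{r+k} b$ for some $a, b \in X_1^{\ast}$. Rewriting $V^{r+k} a = V^r(V^k a)$ with $V^k a \in X_{k+1}^{\ast}$ and analogously for $b$, I set $\alpha_{k+1} = \widetilde{\alpha} + V^k a$ and $\beta_{k+1} = \widetilde{\beta} + V^k b$.

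The only delicate point, and the main obstacle I expect, is to verify that these corrected lifts remain compatible under $R$, i.e., that $R(\alpha_{k+1}) = \alpha_k$ and $R(\beta_{k+1}) = \beta_k$. This is exactly what dictated the form of the correction: since $R(V^k a) = V^k R(a)$ by iterating axiom (4) and $R(a) \in X_0^{\ast} = 0$, the correction $V^k a$ contributes nothing to $R(\alpha_{k+1}) = R(\widetilde{\alpha}) + R(V^k a) = \alpha_k$, and symmetrically for $\beta$. Assembling $\alpha, \beta$ as elements of $X^{\ast}$ then yields the desired identity $y = V^r \alpha + dV^r \beta$, completing the proof of injectivity.
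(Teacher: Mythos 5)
Your proof is correct and follows essentially the same route as the paper: surjectivity from axiom (2), the containment $\im V^r + \im dV^r \subseteq \ker$ from the vanishing of $X_0^{\ast}$, and injectivity via an inductive construction of compatible preimages using axioms (2) and (8), with correction terms of the form $V^k a$, $V^k b$ that die under restriction because $R(a), R(b) \in X_0^{\ast} = 0$. The only difference is cosmetic (an index shift and your explicit componentwise description of $V$ on the inverse limit, which the paper leaves implicit).
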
 

\begin{proof} 
Axiom $(2)$ of Definition \ref{stricTD} guarantees that the restriction maps in
the tower $\{ X_{k}^{\ast} \}_{k \geq r }$ are surjective, so that
the induced map $\theta\colon  X^{\ast} \rightarrow X_{r}^{\ast}$ is surjective.
Moreover, the vanishing of $X_0^{\ast}$ guarantees that
$\ker(\theta)$ contains $\im( V^{r} ) + \im( d V^r )$. 
We will complete the proof by verifying the reverse inclusion.
Fix an element $x \in \ker(\theta)$, which we can identify
with a compatible sequence of elements $\{ x_k \in X^{\ast}_{k+r} \}_{k \geq 0}$
satisfying $x_0 = 0$. We wish to show that $x$ can be written as a sum $V^{r} y + d V^{r} z$ for
some $y, z \in X^{\ast}$. Equivalently, we wish to show that we can choose
compatible sequences of elements $\{ y_k, z_k \in X^{\ast}_{k} \}$ satisfying
$x_k = V^{r} y_k + d V^{r} z_k$. We proceed by induction on $k$, the case $k=0$ being trivial.
To carry out the inductive step, let us assume that $k > 0$ and that we have
chosen $y_{k-1}, z_{k-1} \in X^{\ast}_{k-1}$ satisfying
$x_{k-1} = V^{r} y_{k-1} + d V^{r} z_{k-1}$. Using Axiom $(2)$ of Definition \ref{stricTD},
we can lift $y_{k-1}$ and $z_{k-1}$ to elements $\overline{y}_{k}, \overline{z}_{k}$
in $X^{\ast}_{k}$. We then have 
$$x_{k} = V^{r} \overline{y}_k + d V^{r} \overline{z}_k + e$$
where $e$ belongs to the kernel of the restriction map $X^{\ast}_{r+k} \rightarrow X^{\ast}_{r+k-1}$.
Applying axiom $(8)$ of Definition \ref{stricTD}, we can write $e = V^{r+k-1} e' + d V^{r+k-1} e''$
for some $e', e'' \in X^{\ast}_{1}$. We conclude the proof by setting $y_{k} = \overline{y}_k + V^{k-1} e'$
and $z_k = \overline{z}_k + V^{k-1} e''$.
\end{proof}

\begin{corollary}\label{cor101}
Let $\{ X^{\ast}_{r} \}_{r \geq 0}$ be a strict Dieudonn\'{e} tower. Then
there exists a saturated Dieudonn\'{e} complex $M^{\ast}$
and an isomorphism of strict Dieudonn\'{e} towers
$\{ X^{\ast}_{r} \}_{r \geq 0} \simeq \{ \WittScript_{r}( M)^{\ast} \}_{r \geq 0}$.
\end{corollary}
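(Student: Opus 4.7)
The plan is to take $M^{\ast}$ to be the inverse limit of the given tower and to identify $\mathcal{W}_r(M)^{\ast}$ with $X_r^{\ast}$ using the results already established.

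First, I would set $M^{\ast} := \varprojlim_{r} X_r^{\ast}$, equipped with the Frobenius obtained as the inverse limit of the operators $F\colon X_{r+1}^{\ast} \to X_r^{\ast}$. By Proposition \ref{buildstrictDieudonne}, the pair $(M^{\ast}, F)$ is then a saturated Dieudonné complex. Note that the Verschiebung $V$ on $M^{\ast}$ (guaranteed by saturation, via Remark \ref{rem5}) is automatically identified with the inverse limit of the Verschiebung operators on the $X_r^{\ast}$: indeed, the sequence $\{V(x_r) \in X^{\ast}_{r+1}\}$ for $x = \{x_r\} \in M^{\ast}$ is a compatible system (by axiom $(4)$ of Definition \ref{stricTD}) and satisfies $F(V(x_r)) = p x_r$, so it defines an element $V(x) \in M^{\ast}$ with $F(V(x)) = px$, which must coincide with the Verschiebung of Remark \ref{rem5}.

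Next, for each $r \geq 0$ the canonical projection $\pi_r\colon M^{\ast} \to X_r^{\ast}$ is a map of cochain complexes, and by the compatibility of $V$ noted above it sends $\im(V^r) + \im(dV^r) \subseteq M^{\ast}$ into the kernel of the restriction map $X_r^{\ast} \to X_0^{\ast} = 0$, i.e.\ kills nothing in $X_r^{\ast}$ that we don't want. More precisely, $\pi_r$ factors through $\mathcal{W}_r(M)^{\ast}$, yielding a map $\overline{\pi}_r\colon \mathcal{W}_r(M)^{\ast} \to X_r^{\ast}$. Proposition \ref{strictTDconverse} asserts exactly that $\overline{\pi}_r$ is an isomorphism of cochain complexes, so I may invoke it directly.

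It remains to check that the collection $\{\overline{\pi}_r\}_{r \geq 0}$ is a morphism of strict Dieudonné towers, i.e.\ is compatible with the restriction maps and with the $F$ and $V$ operators. Each of these compatibilities reduces, by the definition of the structure maps on $\mathcal{W}_\bullet(M)^{\ast}$ (Construction \ref{completionDieudonne} and Remark \ref{rem13}), to the statement that $\pi_r$ intertwines the corresponding operators on $M^{\ast}$ and $X_r^{\ast}$; for $F$ and the restriction this is by construction, and for $V$ this was verified above. Since each $\overline{\pi}_r$ is bijective, the inverse maps automatically assemble into an inverse morphism of towers, giving the desired isomorphism $\{X_r^{\ast}\}_{r \geq 0} \simeq \{\mathcal{W}_r(M)^{\ast}\}_{r \geq 0}$.

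The only slightly nontrivial point is the identification of $V$ on $M^{\ast}$ with the inverse limit of the $V$'s on the tower; everything else is essentially a repackaging of Propositions \ref{buildstrictDieudonne} and \ref{strictTDconverse}. I do not anticipate a serious obstacle, since Proposition \ref{strictTDconverse} already does the real work of showing that the quotient $M^{\ast}/(\im(V^r) + \im(dV^r))$ recovers $X_r^{\ast}$.
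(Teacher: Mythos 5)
Your proposal is correct and follows essentially the same route as the paper, which proves Corollary \ref{cor101} by combining Proposition \ref{buildstrictDieudonne} (the inverse limit is a saturated Dieudonn\'{e} complex) with Proposition \ref{strictTDconverse} (the quotients $\WittScript_r$ of that limit recover the tower). Your extra verifications—that the Verschiebung on $\varprojlim_r X_r^{\ast}$ agrees with the levelwise $V$'s and that the resulting isomorphisms are compatible with $R$, $F$, $V$—are details the paper leaves implicit, and they check out.
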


\begin{proof}
Combine Proposition \ref{strictTDconverse} with Proposition \ref{buildstrictDieudonne}.
\end{proof}

\begin{corollary}\label{cor102}
Let $\{ X^{\ast}_{r} \}_{r \geq 0}$ be a strict Dieudonn\'{e} tower.
Then the inverse limit $X^{\ast} = \varprojlim_{r} X_{r}^{\ast}$ is a strict
Dieudonn\'{e} complex.
\end{corollary}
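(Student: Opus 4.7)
The plan is to combine Proposition~\ref{buildstrictDieudonne} with Proposition~\ref{strictTDconverse}. First, Proposition~\ref{buildstrictDieudonne} already tells us that $X^{\ast} = \varprojlim_r X_r^{\ast}$, equipped with the inverse limits of the Frobenius and (in view of axiom~(5) of Definition~\ref{stricTD}) Verschiebung maps, is a saturated Dieudonn\'e complex. So the only remaining task is to verify that the canonical map $\rho_X\colon X^{\ast} \to \WittScript(X)^{\ast}$ is an isomorphism.

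The key input is Proposition~\ref{strictTDconverse}: for each $r \geq 0$, the projection $\pi_r\colon X^{\ast} \to X_r^{\ast}$ descends to an isomorphism of cochain complexes $\overline{\pi}_r\colon \WittScript_r(X)^{\ast} \xrightarrow{\sim} X_r^{\ast}$. These isomorphisms are compatible with the restriction maps on each side (this compatibility is immediate from the construction, since both $\Res$ on $\WittScript_r(X)^{\ast}$ and $R$ on $X_r^{\ast}$ are induced by the projection $\pi_{r-1}$). Passing to the inverse limit over $r$, we obtain an isomorphism
\[
\overline{\pi}\colon \WittScript(X)^{\ast} = \varprojlim_r \WittScript_r(X)^{\ast} \xrightarrow{\sim} \varprojlim_r X_r^{\ast} = X^{\ast}.
\]

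To conclude, I would check that $\overline{\pi} \circ \rho_X = \id_{X^{\ast}}$, which implies $\rho_X$ is an isomorphism (its inverse being $\overline{\pi}$). This is a direct diagram chase: the $r$th component of $\overline{\pi} \circ \rho_X$ is by construction the composition
\[
X^{\ast} \xrightarrow{\rho_X} \WittScript(X)^{\ast} \twoheadrightarrow \WittScript_r(X)^{\ast} \xrightarrow{\overline{\pi}_r} X_r^{\ast},
\]
and unwinding definitions this agrees with $\pi_r$ itself. Since this holds for every $r$ and $X^{\ast}$ is the inverse limit of the $X_r^{\ast}$, we conclude that $\overline{\pi} \circ \rho_X = \id$.

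I do not expect any serious obstacle here: the content of the corollary is already packaged in Proposition~\ref{strictTDconverse}, and what remains is simply to recognize that the displayed isomorphisms assemble into the inverse of the completion map. The only mild care needed is in confirming that the restriction maps in the tower $\{\WittScript_r(X)^{\ast}\}$ really do match those in $\{X_r^{\ast}\}$ under the $\overline{\pi}_r$, so that the inverse limit makes sense and produces the desired two-sided inverse of $\rho_X$.
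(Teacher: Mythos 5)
Your argument is correct, and it differs from the paper's in the final step. The paper's proof is a two-line reduction: by Corollary \ref{cor101} (itself a combination of Propositions \ref{strictTDconverse} and \ref{buildstrictDieudonne}) one may assume the tower has the form $\{ \WittScript_{r}(M)^{\ast} \}_{r \geq 0}$ for a saturated Dieudonn\'{e} complex $M^{\ast}$, and then the inverse limit is the completion $\WittScript(M)^{\ast}$, which is strict by Corollary \ref{realcor14}. You use the same two propositions but avoid Corollary \ref{realcor14} (and hence the machinery of Proposition \ref{prop17} and the argument that $\rho_{\WittScript(M)}$ agrees with $\WittScript(\rho_M)$ behind it): instead you assemble the isomorphisms $\overline{\pi}_r\colon \WittScript_r(X)^{\ast} \to X_r^{\ast}$ of Proposition \ref{strictTDconverse} into an explicit two-sided inverse of $\rho_X$, checking compatibility with the restriction maps and the identity $\overline{\pi} \circ \rho_X = \id$ componentwise. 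This is a more self-contained and elementary verification of strictness for this particular corollary, at the cost of redoing a small diagram chase; the paper's route is shorter on the page because it transports strictness along an isomorphism from a previously established general fact about completions of saturated complexes. One small point worth making explicit in your write-up: the Verschiebung on the saturated complex $X^{\ast}$ (the unique $V$ with $FV = p$ from Remark \ref{rem5}) coincides with the inverse limit of the tower's $V$ operators by axiom $(5)$ of Definition \ref{stricTD}, which is what justifies using Proposition \ref{strictTDconverse} to describe the quotients $\WittScript_r(X)^{\ast}$.
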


\begin{proof}
By virtue of Corollary \ref{cor101}, we can assume that $\{ X^{\ast}_{r} \}_{r \geq 0} =
\{ \WittScript_{r}(M)^{\ast} \}_{r \geq 0}$, where $M^{\ast}$ is a saturated Dieudonn\'{e} complex.
In this case, the desired result follows from Corollary \ref{realcor14}.
\end{proof}

\begin{corollary} 
\label{Dieudtowerstrict}
The composite functor $\FrobCompComplete \hookrightarrow \FrobComp \to \FrobTow$
sending a strict Dieudonn\'e complex $M^{\ast}$ to the strict Dieudonn\'e tower
$\left\{\WittScript_r(M)^{\ast}\right\}_{r \geq 0}$ is an equivalence of categories, with inverse
equivalence given by $\{ X^{\ast}_{r} \}_{r \geq 0} \mapsto \varprojlim_{r} X^{\ast}_{r}$.
\end{corollary}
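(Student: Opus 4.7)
The plan is to define the two candidate functors explicitly and then verify they are mutually inverse by assembling the preceding results of this section. Write $\Phi\colon \FrobCompComplete \to \FrobTow$ for the functor $M^{\ast} \mapsto \{\WittScript_r(M)^{\ast}\}_{r \geq 0}$; this is well-defined by Proposition~\ref{buildtowerfromsat}. In the other direction, let $\Psi\colon \FrobTow \to \FrobCompComplete$ send $\{X_r^{\ast}\}$ to the inverse limit $\varprojlim_r X_r^{\ast}$, which is indeed a strict Dieudonn\'e complex by Corollary~\ref{cor102}. Functoriality of both assignments is immediate: morphisms of Dieudonn\'e complexes respect the subgroups $\im(V^r) + \im(dV^r)$ cut out by the completion, and inverse limits are functorial.

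I would next exhibit a natural isomorphism $\mathrm{id}_{\FrobCompComplete} \simeq \Psi \circ \Phi$. For a strict Dieudonn\'e complex $M^{\ast}$, unwinding definitions gives $(\Psi \circ \Phi)(M^{\ast}) = \varprojlim_r \WittScript_r(M)^{\ast} = \WittScript(M)^{\ast}$, and the canonical comparison map $\rho_M\colon M^{\ast} \to \WittScript(M)^{\ast}$ is an isomorphism by the very definition of strictness (Definition~\ref{def22}). Naturality of $\rho_M$ in $M^{\ast}$ is built into its construction, so these isomorphisms assemble into the desired natural isomorphism of functors.

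Finally, I would exhibit $\Phi \circ \Psi \simeq \mathrm{id}_{\FrobTow}$. Given a strict Dieudonn\'e tower $\{X_r^{\ast}\}$ with limit $X^{\ast}$, Proposition~\ref{strictTDconverse} furnishes, for each $r \geq 0$, an isomorphism of cochain complexes $\WittScript_r(X)^{\ast} \xrightarrow{\sim} X_r^{\ast}$ induced by the projection $X^{\ast} \to X_r^{\ast}$. The only remaining task is to check that these identifications intertwine the $\Res$, $F$, and $V$ operators on the two towers; this is tautological, because by Remark~\ref{rem13} and the construction of $\Psi$ the $F$ and $V$ operators on $\{\WittScript_r(X)^{\ast}\}$ are inherited from those on $X^{\ast}$, which in turn are by definition the inverse limits of the operators on $\{X_r^{\ast}\}$. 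Naturality in $\{X_r^{\ast}\}$ is similarly automatic.

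I do not anticipate any real obstacle; the corollary is essentially a bookkeeping exercise that repackages Propositions~\ref{buildtowerfromsat}, \ref{buildstrictDieudonne}, \ref{strictTDconverse} and Corollaries~\ref{realcor14} and \ref{cor102}. The closest thing to a subtle point is ensuring that the isomorphism $\WittScript_r(X)^{\ast} \xrightarrow{\sim} X_r^{\ast}$ of Proposition~\ref{strictTDconverse} is compatible with the Frobenius and Verschiebung on both sides, but since both structures trace back to the same operators on $X^{\ast} = \varprojlim_r X_r^{\ast}$, this compatibility is immediate from the definitions.
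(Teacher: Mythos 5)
Your proof is correct and follows essentially the same route as the paper: the unit $M^{\ast} \to \varprojlim_r \WittScript_r(M)^{\ast}$ is an isomorphism by the definition of strictness, and the counit $\WittScript_r(\varprojlim X)^{\ast} \to X_r^{\ast}$ is an isomorphism by Proposition~\ref{strictTDconverse}, with well-definedness supplied by Propositions~\ref{buildtowerfromsat}, \ref{buildstrictDieudonne} and Corollary~\ref{cor102}. Your extra check that the counit intertwines $F$, $V$, and $\Res$ is a detail the paper leaves implicit, and your unwinding of it (both structures descend from the same operators on $X^{\ast} = \varprojlim_r X_r^{\ast}$) is accurate.
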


\begin{proof}
For any saturated Dieudonn\'{e} complex $M^{\ast}$ and every strict Dieudonn\'{e} tower
$\{ X_r^{\ast} \}_{r \geq 0}$, we have canonical maps
$$ M^{\ast} \rightarrow \varprojlim \WittScript_{r}(M)^{\ast} \quad \quad
\WittScript_{r'}( \varprojlim_{r} X_{r}^{\ast} ) \rightarrow X_{r'}^{\ast}.$$
The first of these maps is an isomorphism when $M^{\ast}$ is strict
(by definition), and the second for any strict Dieudonn\'{e} tower
(by virtue of Proposition \ref{strictTDconverse}).
\end{proof} 

\newpage
\section{Dieudonn\'{e} Algebras}\label{section3}

In this section, we introduce the notion of a {\it Dieudonn\'{e} algebra}. Roughly speaking, a Dieudonn\'{e} algebra
is a Dieudonn\'{e} complex $( A^{\ast}, d, F)$ which is equipped with a ring structure which is compatible
with the differential $d$ and the Frobenius operator $F$ (see Definition~\ref{def20}). We show that if
$A$ is a $p$-torsion-free ring equipped with a lift of the Frobenius $\varphi_{
A/pA}\colon  A/pA \to A/pA$, then the absolute de~Rham complex
$\Omega^{\ast}_{A}$ inherits the structure of a Dieudonn\'{e} algebra (Proposition~\ref{prop42}). We also show that
the saturation and completed saturation constructions of \S \ref{section2} have counterparts in the setting of
Dieudonn\'{e} algebras.

\subsection{The Category $\FrobAlg$}\label{frobalgsec}

Recall that a {\it commutative differential graded algebra} is a cochain complex $(A^{\ast}, d)$ which is equipped
with the structure of a graded ring satisfying the following conditions:
\begin{itemize}
\item[$(a)$] The multiplication on $A^{\ast}$ is graded-commutative: that is, for every pair of elements $x \in A^{m}$ and $y \in A^{n}$,
we have $xy = (-1)^{mn} yx$ in $A^{m+n}$.

\item[$(b)$] If $x \in A^{n}$ is a homogeneous element of odd degree, then $x^2$ vanishes in $A^{2n}$.

\item[$(c)$] The differential $d$ satisfies the Leibniz rule $d(xy) = (dx) y + (-1)^{m} x(dy)$ for $x \in A^{m}$.
\end{itemize}

\begin{remark}
For each homogeneous element $x \in A^{n}$ of odd degree, assumption $(a)$ implies that $x^2 = - x^2$. Consequently,
assumption $(b)$ is automatic if $A^{\ast}$ is $2$-torsion free.
\end{remark}

\begin{definition}\label{def20}
A {\it Dieudonn\'{e} algebra} is a triple $( A^{\ast}, d, F )$, where $( A^{\ast}, d)$ is a commutative differential graded algebra and
$F\colon  A^{\ast} \rightarrow A^{\ast}$ is a homomorphism of graded rings
satisfying the following additional conditions: 
\begin{itemize}
\item[$(i)$] For each $x \in A^{\ast}$, we have $dF(x) = p F(dx)$ for $x \in A^{\ast}$.
\item[$(ii)$] The groups $A^{n}$ vanish for $n < 0$.
\item[$(iii)$] For $x \in A^{0}$, we have $Fx \equiv x^{p} \pmod{p}$.
\end{itemize}

Let $( A^{\ast}, d, F)$ and $(A'^{\ast}, d', F' )$ be Dieudonn\'{e} algebras. A {\it morphism of Dieudonn\'{e} algebras}
from $(A^{\ast}, d, F)$ to $(A'^{\ast}, d', F')$ is a graded ring homomorphism $f\colon  A^{\ast} \rightarrow A'^{\ast}$
satisfying $d' \circ f = f \circ d$ and $F' \circ f = f \circ F$. We let $\FrobAlg$ denote the category whose objects are Dieudonn\'{e} algebras
and whose morphisms are Dieudonn\'{e} algebra morphisms.
\end{definition}

\begin{remark}
In the situation of Definition \ref{def20}, we will generally abuse terminology and simply refer to $A^{\ast}$ as a Dieudonn\'{e} algebra.
\end{remark}

\begin{remark}
There is an evident forgetful functor $\FrobAlg \rightarrow \FrobComp$, which carries a Dieudonn\'{e} algebra
$A^{\ast}$ to its underlying Dieudonn\'{e} complex (obtained by neglecting the multiplication on $A^{\ast}$).
\end{remark}

\begin{remark}\label{algdesc}
Let us regard the category of $\FrobComp$ of Dieudonn\'{e} complexes as a symmetric monoidal category
with respect to the tensor product described in Remark \ref{tprop}. Then we can identify the category
$\FrobAlg$ of Dieudonn\'{e} algebras with a full subcategory of the category $\CAlg( \FrobComp )$ of commutative
algebra objects of $\FrobComp$. A commutative algebra object $A^{\ast}$ of $\FrobComp$ is a Dieudonn\'{e} algebra
if and only if it satisfies the following conditions:
\begin{itemize}
\item[$(1)$] The groups $A^{n}$ vanish for $n < 0$.
\item[$(2)$] For $x \in A^{0}$, we have $Fx \equiv x^{p} \pmod{p}$.
\item[$(3)$] Every homogeneous element $x \in A^{\ast}$ of odd degree satisfies $x^2 = 0$.
\end{itemize}
Note that condition $(3)$ is automatic if the Dieudonn\'{e} complex $A^{\ast}$ is strict,
since it is $2$-torsion free by virtue of Remark \ref{olos}.
\end{remark}

\begin{remark}\label{rem30}
Let $A^{\ast}$ be a commutative differential graded algebra, and suppose that $A^{\ast}$ is $p$-torsion-free.
Let $\eta_p A^{\ast} \subseteq A^{\ast}[ p^{-1} ]$ denote the subcomplex introduced in Remark \ref{rem3}. Then $\eta_{p} A^{\ast}$ is closed under multiplication, and
therefore inherits the structure of a commutative differential graded algebra. Moreover, the dictionary of Remark \ref{rem3} establishes a bijection between the following data:
\begin{itemize}
\item Maps of differential graded algebras $\alpha\colon  A^{\ast} \rightarrow \eta_{p} A^{\ast}$
\item Graded ring homomorphisms $F\colon  A^{\ast} \rightarrow A^{\ast}$ satisfying $d \circ F = p (F \circ d)$.
\end{itemize}
\end{remark}

\begin{remark}\label{rem34}
Let $(A^{\ast}, d, F)$ be a Dieudonn\'{e} algebra, and suppose that $A^{\ast}$ is $p$-torsion-free. Then we can regard $\eta_p A^{\ast}$
as a subalgebra of $A^{\ast}$, which is closed under the action of $F$. We claim that $( \eta_p A^{\ast}, d|_{ \eta_p A^{\ast} }, F|_{ \eta_p A^{\ast} } )$ is
also a Dieudonn\'{e} algebra. The only nontrivial point is to check condition $(iii)$ of Definition \ref{def20}. Suppose we are given an element
$x \in (\eta_p A)^{0}$: that is, an element $x \in A^{0}$ such that $dx = py$ for some $y \in A^{1}$. Since $A^{\ast}$ satisfies condition $(iii)$ of Definition \ref{def20}, we can write
$Fx = x^{p} + p z$ for some element $z \in A^{0}$. We wish to show that $z$ also
belongs to $(\eta_p A)^{0}$:   that is, $dz \in p A^{1}$. This follows from the calculation
$$ p( dz ) = d(pz) = d( Fx - x^{p} ) = p F(dx) - p x^{p-1} dx = p^2 F(y) - p^2 x^{p-1} y,$$
since $A^{1}$ is $p$-torsion-free.
\end{remark}

\begin{example}\label{ex24}
Let $R$ be a commutative $\F_p$-algebra and let $W(R)$ denote the ring of Witt vectors of $R$, regarded as a commutative differential graded algebra which is concentrated in degree zero.
Then the Witt vector Frobenius $F \colon W(R) \rightarrow W(R)$ exhibits $W(R)$ as a Dieudonn\'{e} algebra, in the sense of Definition \ref{def20}. The Dieudonn\'{e} algebra $W(R)$ is saturated (as a Dieudonn\'{e} complex) if and only if the $\F_p$-algebra $R$ is perfect. If this condition is satisfied, then $W(R)$ is also strict (as a Dieudonn\'{e} complex).
\end{example}

\subsection{Example: The de~Rham Complex}

We now consider an important special class of Dieudonn\'{e} algebras.

\begin{proposition}\label{prop42}
Let $R$ be a commutative ring which is $p$-torsion-free, and let $\varphi\colon  R \rightarrow R$ be a ring homomorphism satisfying 
$\varphi(x) \equiv x^{p} \pmod{p}$. Then there is a unique ring homomorphism $F\colon  \Omega^{\ast}_{R} \rightarrow \Omega^{\ast}_{R}$
with the following properties:
\begin{itemize}
\item[$(1)$] For each element $x \in R = \Omega^{0}_{R}$, we have $F(x) = \varphi(x)$.
\item[$(2)$] For each element $x \in R$, we have $F(dx) = x^{p-1} dx + d( \frac{ \varphi(x) - x^p}{p} )$
\end{itemize}
Moreover, the triple $(\Omega^{\ast}_{R}, d, F)$ is a Dieudonn\'{e} algebra.
\end{proposition}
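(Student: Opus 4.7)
The plan is to use the description of $\Omega^{\ast}_{R}$ as the free graded-commutative $R$-algebra generated by $\Omega^1_R$ in degree one. Accordingly, a graded ring homomorphism $F\colon \Omega^{\ast}_R \to \Omega^{\ast}_R$ is the same data as a ring endomorphism $F_0$ of $R = \Omega^0_R$ together with an $R$-linear map $F_1\colon \Omega^1_R \to \Omega^1_R$, where the target is regarded as an $R$-module via $F_0$; and by the universal property of K\"ahler differentials, $F_1$ is equivalent in turn to a twisted derivation $D\colon R \to \Omega^1_R$ satisfying $D(xy) = F_0(x) D(y) + F_0(y) D(x)$. So I would take $F_0 = \varphi$ and construct the appropriate $D$.

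Because $R$ is $p$-torsion-free and $\varphi(x) \equiv x^p \pmod p$, the assignment $\delta(x) := (\varphi(x) - x^p)/p$ is a well-defined function $R \to R$, and I set $D(x) := x^{p-1} dx + d\delta(x)$, so that property $(2)$ holds. Additivity of $D$ is clear. The main step, and the only genuinely computational one, is verifying the twisted Leibniz rule for $D$. I would do this by using $\varphi(xy) = \varphi(x)\varphi(y)$ together with $\varphi(r) = r^p + p\delta(r)$ to derive the formula $\delta(xy) = x^p \delta(y) + y^p \delta(x) + p\,\delta(x)\delta(y)$, then expanding both $D(xy)$ and $\varphi(x) D(y) + \varphi(y) D(x)$ and matching terms. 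This bookkeeping is the main (mild) obstacle, but the identity holds on the nose in $\Omega^1_R$, with no need to pass to $\Omega^{\ast}_R[1/p]$.

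Granting $D$, the universal property produces a unique graded ring homomorphism $F\colon \Omega^{\ast}_R \to \Omega^{\ast}_R$ satisfying $(1)$ and $(2)$; uniqueness also follows directly from the fact that $\Omega^{\ast}_R$ is generated as a graded ring by $R$ and $dR$. It remains to verify the Dieudonn\'e algebra axioms of Definition~\ref{def20}. Conditions $(ii)$ and $(iii)$ are immediate from the construction and the hypothesis on $\varphi$. For condition $(i)$, the identity $dF = pFd$, I would observe that if it holds on $\omega$ and $\omega'$, then the graded Leibniz rule together with the fact that $F$ is a ring homomorphism force it to hold on $\omega \omega'$; it therefore suffices to check the identity on the generators $R$ and $dR$. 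For $x \in R$ this is just $d\varphi(x) = p x^{p-1} dx + p\, d\delta(x) = p F(dx)$, and for $dx$ both sides vanish (the right via $d^2 = 0$; the left via $d^2 = 0$ together with $dx \wedge dx = 0$, which holds by graded-commutativity of $\Omega^{\ast}_R$).
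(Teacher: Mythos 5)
Your proposal is correct and takes essentially the same route as the paper: introduce $\delta(x) = (\varphi(x)-x^p)/p$, show that $x \mapsto x^{p-1}\,dx + d\delta(x)$ is a $\varphi$-twisted derivation, invoke the universal property of $\Omega^1_R$ and extend to the exterior algebra using the fact that images of $1$-forms square to zero, and then verify $dF = pFd$ on the generators $x$ and $dx$ and propagate by multiplicativity of the locus where it holds. The only quibble is that additivity of $D$ is not quite ``clear'': it requires the same kind of binomial bookkeeping, via $\delta(x+y) = \delta(x)+\delta(y) - \sum_{0<i<p}\tfrac{(p-1)!}{i!\,(p-i)!}\,x^{i}y^{p-i}$, as the twisted Leibniz rule, and the paper carries out both computations explicitly.
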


\begin{remark}
In the situation of Proposition \ref{prop42}, suppose that the de~Rham complex $\Omega^{\ast}_{R}$ is $p$-torsion-free,
and regard $\eta_p \Omega^{\ast}_{R}$ as a (differential graded) subalgebra of $\Omega^{\ast}_{R}$. 
The ring homomorphism $\varphi$ extends uniquely to a map of commutative differential graded algebras
$\alpha\colon  \Omega^{\ast}_{R} \rightarrow \Omega^{\ast}_{R}$. The calculation
$$ \alpha( dx ) = d \alpha(x) = d \varphi(x) \equiv dx^{p} = p x^{p-1} dx \equiv 0 \pmod{p}$$
shows that $\alpha$ carries $\Omega^{n}_{R}$ into $p^{n} \Omega^{n}_{R}$, and can therefore be regarded as a map of differential graded algebras
$\Omega^{\ast}_{R} \rightarrow \eta_p \Omega^{\ast}_{R}$. Applying Remark \ref{rem30}, we see that there is a unique ring homomorphism
$F\colon  \Omega^{\ast}_{R} \rightarrow \Omega^{\ast}_{R}$ which satisfies condition $(1)$ of Proposition \ref{prop42} and endows $\Omega^{\ast}_{R}$
with the structure of a Dieudonn\'{e} algebra. Moreover, the map $F$ automatically satisfies condition $(2)$ of Proposition \ref{prop42}: this follows
from the calculation
$$p F(dx)  =  d( Fx )  =  d( \varphi(x) ) 
 =  d( x^{p} + p \tfrac{ \varphi(x) - x^{p} }{p}) 
 =  p (x^{p-1} dx + d \textstyle \frac{ \varphi(x) - x^p}{p} ). $$
\end{remark}

\begin{proof}[Proof of Proposition \ref{prop42}]
The uniqueness of the homomorphism $F$ is clear, since the de~Rham complex $\Omega^{\ast}_{R}$ is generated (as a graded ring) by elements
of the form $x$ and $dx$, where $x \in R$. We now prove existence. Define $\delta\colon  R \rightarrow R$ by the formula
$\delta(x) = \frac{ \varphi(x)- x^{p} }{p}$. A simple calculation then gives
\begin{gather}\label{eqn1}
\delta(x+y) = \delta(x) + \delta(y) - \sum_{0 < i < p } \frac{ (p-1)!}{ i!
(p-i)!} x^{i} y^{p-i}, \\
\label{eqn2}
\delta(xy) = \varphi(x) \delta(y) + \delta(x) \varphi(y) - p \delta(x)
\delta(y). \end{gather}
Consider the map $\rho\colon  R \rightarrow \Omega^{1}_R$ given by the formula
$\rho(x) = x^{p-1} dx + d \delta(x)$. We first claim that $\rho$ is a group homomorphism. This follows from the calculation
\begin{eqnarray*}
\rho(x+y) & = & (x+y)^{p-1} d(x+y) + d \delta(x+y) \\
& = & (x+y)^{p-1} d(x+y) + d \delta(x) + d \delta(y) - d ( \sum_{ 0 < i < p }
\tfrac{ (p-1)!}{ i! (p-i)!} x^{i} y^{p-i} ) \\
& = & \rho(x) + \rho(y) + ( (x+y)^{p-1} - x^{p-1} - \sum_{0 < i < p} \tfrac{ (p-1)!}{ (i-1)! (p-i)!} x^{i-1} y^{p-i} ) dx \\
& & +  ( (x+y)^{p-1} - y^{p-1} - \sum_{0 < i < p} \tfrac{ (p-1)!}{ i! (p-i-1)!} x^{i} y^{p-i-1} ) dy \\
& = & \rho(x) + \rho(y). \end{eqnarray*}

We next claim that $\rho$ is a $\varphi$-linear derivation from $R$ into $\Omega^{1}_{R}$: that is,
it satisfies the identity $\rho(xy) = \varphi(y) \rho(x) + \varphi(x) \rho(y)$. This follows from the calculation
\begin{eqnarray*}
\rho(xy) & = & (xy)^{p-1} d(xy) + d \delta(xy) \\
& = & (xy)^{p-1} d(xy) + d( \varphi(x) \delta(y) + \delta(x) \varphi(y) - p \delta(x) \delta(y) ) \\
& = & (x^{p-1} y^{p} dx + \delta(y) d \varphi(x) + \varphi(y) d \delta(x) - p \delta(y) d \delta(x) ) \\
& & + (x^{p} y^{p-1} dy + \varphi(x) d \delta(y) + \delta(x) d \varphi(y) - p \delta(x) d \delta(y) ) \\
& = & ( (y^{p} + p \delta(y)) x^{p-1} dx + \varphi(y) d \delta(x) ) + ( (x^{p} + p \delta(x)) y^{p-1} dy + \varphi(x) d \delta(y) ) \\ 
& = & \varphi(y) (x^{p-1} dx + d \delta(x) ) + \varphi(x) ( y^{p-1} dy + d \delta(y) ) \\
& = & \varphi(y) \rho(x) + \varphi(x) \rho(y).\end{eqnarray*}

Invoking the universal property of the module of K\"{a}hler differentials $\Omega^{1}_{R}$, we deduce that
there is a unique $\varphi$-semilinear map $F\colon  \Omega^{1}_{R} \rightarrow \Omega^{1}_{R}$
satisfying $F(dx) = \rho(x)$ for $x \in R$. For every element $\omega \in \Omega^{1}_{R}$, we have
$F(\omega)^2 = 0$ in $\Omega^{\ast}_{R}$ (since $F(\omega)$ is a $1$-form), so $F$ extends
to a ring homomorphism $\Omega^{\ast}_{R} \rightarrow \Omega^{\ast}_{R}$ satisfying $(1)$ and $(2)$.

To complete the proof, it will suffice to show that $F$ endows $\Omega^{\ast}_{R}$ with the structure of a Dieudonn\'{e} algebra.
It follows from $(1)$ that $F(x) \equiv x^{p} \pmod{p}$ for $x \in \Omega^{0}_{R}$. It will therefore suffice to show
that $d \circ F = p (F \circ d)$. Let $A \subseteq \Omega^{\ast}_{R}$ be the subset consisting of those elements
$\omega$ which satisfy $d F(\omega) = p F( d\omega)$. It is clear that $A$ is a graded subgroup of
$\Omega^{\ast}_{R}$. Moreover, $A$ is closed under multiplication: if $\omega \in \Omega^{m}_{R}$ and
$\omega' \in \Omega^{n}_{R}$ belong to $A$, then we compute
\begin{eqnarray*}
d F( \omega \wedge \omega' ) & = & d( F(\omega) \wedge F(\omega') ) \\
& = & ( (dF\omega ) \wedge F\omega') + (-1)^{m} (F\omega \wedge (dF \omega' )) \\
& = & p( (Fd \omega) \wedge F\omega') + (-1)^{m} (F\omega \wedge (Fd\omega')) \\
& = & pF( d \omega \wedge \omega' + (-1)^{m} \omega \wedge d \omega' ) \\
& = & pFd( \omega \wedge \omega' ).\end{eqnarray*}
Consequently, to show that $A = \Omega^{\ast}_{R}$, it will suffice to show that $A$ contains $x$ and $dx$ for
each $x \in R$. The inclusion $dx \in A$ is clear (note that $Fd dx = 0 = d
F(dx)$, since $F(dx) = x^{p-1} dx + d \delta(x) $ is a closed $1$-form). 
The inclusion $x \in A$ follows from the calculation
\begin{align*}
dFx  & =  d \varphi(x) 
 =  d( x^{p} + p \delta(x) ) \\
& =  p x^{p-1} dx + p d \delta(x) 
 =  p ( x^{p-1} dx + d \delta(x) ) \\
& =  p F(dx).  \qedhere \end{align*}
\end{proof}

The Dieudonn\'{e} algebra of Proposition \ref{prop42} enjoys the following universal property:

\begin{proposition}\label{prop39}
Let $R$ be a commutative ring which is $p$-torsion-free, let $\varphi\colon  R \rightarrow R$ be a ring homomorphism satisfying 
$\varphi(x) \equiv x^{p} \pmod{p}$, and regard the de~Rham complex $\Omega^{\ast}_{R}$ as equipped with the Dieudonn\'{e} algebra
structure of Proposition \ref{prop42}. Let $A^{\ast}$ be a Dieudonn\'{e} algebra which is $p$-torsion-free.
Then the restriction map
$$ \Hom_{ \FrobAlg}( \Omega^{\ast}_{R}, A^{\ast} ) \rightarrow \Hom( R, A^{0} )$$
is injective, and its image consists of those ring homomorphisms $f\colon  R \rightarrow A^{0}$ satisfying $f( \varphi(x) ) = F( f(x) )$ for $x \in R$.
\end{proposition}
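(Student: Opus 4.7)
The plan is to verify the three separate assertions: injectivity, necessity of the Frobenius-compatibility condition, and sufficiency (extension). Injectivity and necessity are immediate from the structure of the de Rham complex and the definition of morphisms in $\FrobAlg$: any morphism $g\colon \Omega^{\ast}_R \to A^{\ast}$ of Dieudonn\'e algebras is in particular a morphism of commutative differential graded algebras, so it is determined by its restriction to $R=\Omega^0_R$ (since $\Omega^*_R$ is generated as a graded ring by elements $x$ and $dx$ with $x \in R$). Moreover the identity $g\circ F = F\circ g$ restricted to $R$ gives $g(\varphi(x)) = F(g(x))$, since $F$ acts on $\Omega^0_R = R$ by $\varphi$. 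This disposes of injectivity and the necessity of the compatibility condition.

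For sufficiency, let $f\colon R\to A^0$ be a ring homomorphism satisfying $f\circ\varphi = F\circ f$. By the universal property of the K\"ahler differentials and the de Rham complex as the free commutative differential graded algebra on $R$, there is a unique morphism of commutative differential graded algebras $g\colon \Omega^{\ast}_R \to A^{\ast}$ extending $f$. I will show that $g$ commutes with $F$, so that it is a morphism of Dieudonn\'e algebras. Let $S \subseteq \Omega^{\ast}_R$ denote the graded subgroup of those $\omega$ with $g(F\omega) = F(g\omega)$. Because $F$ and $g$ are both graded ring homomorphisms, $S$ is closed under addition and multiplication, so it is a graded subring of $\Omega^*_R$.

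The key step is to show $S$ is closed under the differential $d$. If $\omega \in S$, then applying $g$ to the relation $dF\omega = pFd\omega$ yields $p\, g(F d\omega) = d(g(F\omega)) = d(F(g\omega)) = p\,F(d(g\omega))$, and since $A^{\ast}$ is $p$-torsion-free we can cancel $p$ to obtain $g(Fd\omega) = F(g(d\omega))$, so $d\omega \in S$. (This is the place where $p$-torsion-freeness of $A^*$ is essential; without it the argument breaks.) Finally, $S$ contains $R$: for $x \in R$ this is the hypothesis $g(F(x)) = g(\varphi(x)) = f(\varphi(x)) = F(f(x)) = F(g(x))$. Since $\Omega^*_R$ is generated as a differential graded ring by $R$, the subring $S$ closed under $d$ and containing $R$ must equal $\Omega^{\ast}_R$, completing the proof.

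The main obstacle is the verification that $S$ is closed under $d$; the calculation is short but relies crucially on $p$-torsion-freeness of $A^*$ and on $\Omega^*_R$ being generated over $R$ as a \emph{differential} graded algebra, not merely as a graded algebra. Everything else is essentially bookkeeping via the universal properties.
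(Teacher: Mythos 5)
Your proposal is correct and follows essentially the same route as the paper: extend $f$ via the universal property of the de Rham complex, note that the elements on which the extension commutes with $F$ form a subring containing the generators, and cancel $p$ using the $p$-torsion-freeness of $A^{\ast}$. The only (harmless) variation is that you deduce closure of this subring under $d$ abstractly from the identity $dF = pFd$ in $\Omega^{\ast}_{R}$, whereas the paper checks the generator $dx$ directly using the explicit formula $F(dx) = x^{p-1}dx + d\bigl(\frac{\varphi(x)-x^{p}}{p}\bigr)$; both verifications come down to the same cancellation of $p$.
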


\begin{proof}
Let $f\colon  R \rightarrow A^{0}$ be a ring homomorphism. Invoking the universal property of the de~Rham complex $\Omega^{\ast}_{R}$, we see that
$f$ extends uniquely to a homomorphism of differential graded algebras $\overline{f}\colon  \Omega^{\ast}_{R} \rightarrow A^{\ast}$. To complete the proof, it will suffice
to show that $\overline{f}$ is a map of Dieudonn\'{e} algebras if and only if $f$ satisfies the identity $f( \varphi(x) ) = F( f(x) )$ for each $x \in R$. The
``only if'' direction is trivial. Conversely, suppose that $f( \varphi(x) ) = F( f(x) )$ for $x \in R$; we wish to show that
$\overline{f}( F \omega ) = F \overline{f}(\omega)$ for each $\omega \in \Omega^{\ast}_{R}$. The collection of those
elements $\omega \in \Omega^{\ast}_{R}$ which satisfy this identity form a subring of $\Omega^{\ast}_{R}$. Consequently,
we may assume without loss of generality that $\omega = x$ or $\omega = dx$, for some $x \in R$. In the first case,
the desired result follows from our assumption. To handle the second, we compute
\begin{align*}
p \overline{f}( F dx ) & =  p \overline{f} ( x^{p-1} dx + d (\tfrac{
\varphi(x) - x^{p} }{p}) ) \\
& =  \overline{f}( p x^{p-1} dx + d( \varphi(x) - x^{p} ) ) \\
& =  \overline{f}( d \varphi(x) ) 
 =  d \overline{f}( \varphi(x) ) \\
& =  d F f(x) 
 =  p F d(f(x) ).
\end{align*}
Since $A^{1}$ is $p$-torsion-free, it follows that $\overline{f}( F dx ) = F d \overline{f}(x)$, as desired.
\end{proof}

\subsection{The Cartier Isomorphism}

We will need a variant of Proposition~\ref{prop42} for {\it completed} de~Rham complexes. 

\begin{variant}[Completed de~Rham Complexes]\label{var46}
Let $R$ be a commutative ring. We let $\widehat{ \Omega}^{\ast}_{R}$ denote the inverse limit 
$$\varprojlim_{n} \left( \Omega^{\ast}_{R} / p^{n} \Omega^{\ast}_{R} \right) \simeq \varprojlim_{n} \Omega^{\ast}_{ R / p^n R}.$$
We will refer to $\widehat{\Omega}^{\ast}_{R}$ as the {\it completed de~Rham complex} of $R$. Note that there is a unique
multiplication on $\widehat{\Omega}^{\ast}_{R}$ for which the tautological map $\Omega^{\ast}_{R} \rightarrow \widehat{ \Omega}^{\ast}_{R}$ is
a morphism of differential graded algebras.

Now suppose that $R$ is $p$-torsion-free, and that we are given a ring homomorphism $\varphi\colon  R \rightarrow R$
satisfying $\varphi(x) \equiv x^{p} \pmod{p}$. Then the homomorphism $F\colon  \Omega^{\ast}_{R} \rightarrow \Omega^{\ast}_{R}$
of Proposition \ref{prop42} induces a map $\widehat{\Omega}^{\ast}_{R} \rightarrow \widehat{\Omega}^{\ast}_{R}$, which we will also denote by $F$,
and which endows $\widehat{ \Omega}^{\ast}_{R}$ with the structure of a Dieudonn\'{e} algebra. Moreover, this Dieudonn\'{e} algebra enjoys the following
universal property: if $A^{\ast}$ is any Dieudonn\'{e} algebra which is $p$-adically complete and $p$-torsion-free,
then the restriction map
$$ \Hom_{ \FrobAlg}( \widehat{\Omega}^{\ast}_{R}, A^{\ast} ) \rightarrow \Hom( R, A^{0} )$$
is an injection, whose image is the collection of ring homomorphisms $f\colon  R \rightarrow A^{0}$ satisfying
$f \circ \varphi = F \circ f$. This follows immediately from Proposition \ref{prop39}.
\end{variant}

\begin{remark}
Let $R$ be a commutative ring. Then the quotient $\widehat{\Omega}^{\ast}_{R} / p^{n} \widehat{\Omega}^{\ast}_{R}$ can be identified with the de~Rham complex
$\Omega^{\ast}_{ R/p^n R }$.
\end{remark}

\begin{remark}
\label{rem:freenessinsmoothcase}
Let $R$ be a commutative ring which is $p$-torsion-free. Suppose that there exists a perfect $\F_p$-algebra $k$ such that $R/pR$ is a smooth $k$-algebra.
Then each $R / p^n R$ is also a smooth $W_{n}(k)$-algebra, by the universal
lifting property of Witt vectors (cf. \cite[Ch. II, Sec. 5--6]{SerreLF}). 
Our assumption that $k$ is perfect guarantees that the quotient $\widehat{\Omega}^{\ast}_{R} / p^{n} \widehat{\Omega}^{\ast}_{R}$ can
be identified with the de~Rham complex of $R / p^{n} R$ {\em relative} to $W_n(k)$. It follows that each $\widehat{\Omega}^{i}_R / p^n \widehat{\Omega}^{i}_{R}$ is a projective
$R/p^n R $-module of finite rank, and that the transition maps
$$(\widehat{\Omega}^{i}_R / p^n \widehat{\Omega}^{i}_{R}) \otimes_{ \Z / p^{n} \Z} (\Z / p^{n-1} \Z) \rightarrow (\widehat{\Omega}^{i}_R / p^{n-1} \widehat{\Omega}^{i}_{R})$$
are isomorphisms. From this, we conclude that each $\widehat{\Omega}^{i}_{R}$ is a projective module of finite rank over the completion $\widehat{R} = \varprojlim R/p^{n} R$.
In particular, $\widehat{\Omega}^{\ast}_{R}$ is $p$-torsion-free.
\end{remark}

We now recall the classical {\it Cartier isomorphism}, which will be useful for studying the completed de~Rham complexes of
Variant \ref{var46}.

\begin{proposition}\label{cartmapexist}
Let $A$ be a commutative $\F_p$-algebra. Then there is a unique homomorphism of graded algebras
$\Cart\colon  \Omega^{\ast}_{A} \rightarrow \mathrm{H}^{\ast}( \Omega^{\ast}_{A} )$ satisfying
$$ \Cart(x) = x^{p} ,\quad \quad \Cart(dy) = [y^{p-1} dy],$$
where $[y^{p-1} dy]$ denotes the cohomology class of the cocycle $y^{p-1} dy \in \Omega^{1}_{A}$.
\end{proposition}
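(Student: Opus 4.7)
Uniqueness of $\Cart$ is automatic: since $\Omega^{\ast}_A$ is generated as a graded ring by $A = \Omega^0_A$ together with the image of $d : A \to \Omega^1_A$, the two prescribed formulas determine $\Cart$ on a generating set.

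For existence, the plan is to construct $\Cart$ piecewise and then assemble via the universal property of $\Omega^{\ast}_A$ as a graded-commutative $A$-algebra generated by $\Omega^1_A$ in degree $1$ modulo $\omega \wedge \omega = 0$. In degree $0$, the Frobenius $\varphi : A \to A$, $\varphi(a) = a^p$, factors through $\HH^0(\Omega^{\ast}_A) = \ker(d)$, since $d(a^p) = p a^{p-1} da = 0$ in characteristic $p$. Define $\delta : A \to \HH^1(\Omega^{\ast}_A)$ by $\delta(y) = [y^{p-1} dy]$; this is well defined because $d(y^{p-1} dy) = (p-1)\, y^{p-2}\, dy \wedge dy = 0$. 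Regarding $\HH^{\ast}(\Omega^{\ast}_A)$ as a graded $A$-algebra via $\varphi$, the key claim is that $\delta$ is additive and a $\varphi$-semilinear derivation, i.e.\ $\delta(yz) = \varphi(y)\delta(z) + \varphi(z)\delta(y)$. The derivation identity is immediate from $(yz)^{p-1} d(yz) = y^p z^{p-1} dz + y^{p-1} z^p dy$, together with the observation that $y^p$ and $z^p$ are cocycles and so act on cohomology classes.

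Additivity of $\delta$ is the one substantive computation. I would reduce to the universal case in characteristic zero: in $\Omega^{\ast}_{\Z[Y,Z]}$ one has
\[ (Y+Z)^{p-1} d(Y+Z) = \tfrac{1}{p}\, d\bigl((Y+Z)^p\bigr) = Y^{p-1} dY + Z^{p-1} dZ + dg, \]
where $g = \tfrac{1}{p}\bigl((Y+Z)^p - Y^p - Z^p\bigr) \in \Z[Y,Z]$ (the binomial coefficients $\binom{p}{k}$, $1 \le k \le p-1$, being divisible by $p$). Reducing modulo $p$ and pulling back along the map $\Z[Y,Z] \to A$ sending $Y \mapsto y$ and $Z \mapsto z$ yields $\delta(y+z) = \delta(y) + \delta(z)$ in $\HH^1(\Omega^{\ast}_A)$.

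Given these properties, the universal property of K\"ahler differentials extends $\delta$ uniquely to an $A$-linear map $\tilde\delta : \Omega^1_A \to \HH^1(\Omega^{\ast}_A)$ (where the target is an $A$-module via $\varphi$) with $\tilde\delta(dy) = [y^{p-1} dy]$. To upgrade $(\varphi,\tilde\delta)$ to a graded ring homomorphism $\Cart : \Omega^{\ast}_A \to \HH^{\ast}(\Omega^{\ast}_A)$, it suffices to check that $\tilde\delta(\omega)^2 = 0$ in $\HH^2$ for every $\omega \in \Omega^1_A$: on basic elements $\tilde\delta(a\, dy)^2 = [a^{2p} y^{2(p-1)}\, dy \wedge dy] = 0$, and for a general $\omega = \sum_i a_i\, dy_i$ the cross terms in the expansion of $\tilde\delta(\omega)^2$ cancel by the graded commutativity of $\HH^{\ast}(\Omega^{\ast}_A)$ in degree $1$. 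The resulting $\Cart$ satisfies both prescribed formulas by construction; the only real obstacle is the additivity of $\delta$, dispatched by the calculation in $\Z[Y, Z]$, and every other step is formal.
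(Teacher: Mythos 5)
Your proof is correct and takes essentially the same route as the paper: both reduce existence to checking that $y \mapsto [y^{p-1}\,dy]$ is a derivation over the Frobenius map $A \to \mathrm{H}^{0}(\Omega^{\ast}_{A})$ (the same Leibniz identity, and the same additivity identity) and then conclude via the universal properties of $\Omega^{1}_{A}$ and the exterior algebra. The only difference is cosmetic: where you obtain additivity by lifting to $\Z[Y,Z]$ and dividing by $p$, the paper writes the resulting primitive $\sum_{0<i<p}\tfrac{(p-1)!}{i!(p-i)!}x^{i}y^{p-i}$ directly in characteristic $p$.
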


\begin{proof}
It will suffice to show that the construction $(y \in A) \mapsto [ y^{p-1} dy ] \in \mathrm{H}^{1}( \Omega^{\ast}_A )$
is a derivation with respect to the Frobenius map $A \rightarrow \mathrm{H}^{0}( \Omega^{\ast}_{A} )$. 
This follows from the calculations
\begin{align*}
(xy)^{p-1} d(xy) & =  x^{p} (y^{p-1} dy) + y^{p} (x^{p-1} dx) \\
(x+y)^{p-1} d(x+y) & =  x^{p-1} dx + y^{p-1} dy + d ( \sum_{0 < i < p}
\tfrac{(p-1)!}{i! (p-i)!} x^{i} y^{p-i} ). \qedhere
\end{align*}
\end{proof}

We will refer to the homomorphism $\Cart$ of Proposition \ref{cartmapexist} as the {\it Cartier map}.

\begin{example}\label{zisp}
Let $R$ be a commutative ring which is $p$-torsion-free, let $\varphi\colon  R \rightarrow R$ be a ring homomorphism satisfying
$\varphi(x) \equiv x^{p} \pmod{p}$, and regard $\widehat{\Omega}^{\ast}_{R}$ as a Dieudonn\'{e} algebra as in Variant \ref{var46}.
Then the Frobenius map $F\colon  \widehat{\Omega}^{\ast}_{R} \rightarrow \widehat{\Omega}^{\ast}_{R}$ induces a map of graded
rings 
$$ \Omega^{\ast}_{R/pR} \simeq \widehat{\Omega}^{\ast}_{R} / p \widehat{\Omega}^{\ast}_{R} \rightarrow
\mathrm{H}^{\ast}( \widehat{\Omega}^{\ast}_{R} / p \widehat{\Omega}^{\ast}_{R}  ) \simeq \mathrm{H}^{\ast}( \Omega^{\ast}_{ R/ p } ).$$
Using the formulae of Proposition \ref{prop42}, we see that this map is given concretely by the formula
$$ x_0 dx_1 \wedge \cdots \wedge dx_n \mapsto [ x_0^{p} (x_1^{p-1} dx_1) \wedge \cdots \wedge ( x_n^{p-1} dx_n) ],$$
and therefore coincides with the Cartier map of Proposition \ref{cartmapexist}. In particular, it does not depend on the choice
of $\varphi$.
\end{example}

We refer the reader to \cite[Th. 7.2]{Katz} for a proof of the following:

\begin{theorem}[Cartier Isomorphism]\label{theo71}
Let $k$ be a perfect $\F_p$-algebra and let $A$ be a smooth $k$-algebra. Then the Cartier map
$\Cart\colon  \Omega^{\ast}_{A} \rightarrow \mathrm{H}^{\ast}( \Omega^{\ast}_{A} )$ is an isomorphism.
\end{theorem}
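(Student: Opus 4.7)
My plan is to reduce the statement through three successive simplifications: the smooth case to the polynomial case, the polynomial case to the one-variable case, and then compute the one-variable case directly.

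First, I would handle the base case $A = k[x]$. In characteristic $p$ a direct computation identifies $\ker(d\colon k[x] \to k[x]\,dx)$ with $k[x^p]$, and $\Omega^1_{k[x]}/\mathrm{im}(d)$ with the free $k[x^p]$-module on $x^{p-1}\,dx$ (the image of $d$ is spanned by those $x^n\,dx$ with $n \not\equiv p-1 \pmod{p}$). The Cartier map sends $\sum a_n x^n \mapsto \sum a_n^p x^{pn}$ in degree $0$ and $\sum a_n x^n\,dx \mapsto \sum a_n^p [x^{np+p-1}\,dx]$ in degree $1$. Perfectness of $k$ lets us extract $p$-th roots of scalars, while the freshman's dream gives $(\sum b_n x^n)^p = \sum b_n^p x^{np}$, so both components are bijective.

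Second, I would bootstrap to $A = k[x_1,\dots,x_d]$ using multiplicativity. Since $\Omega^{\ast}_{B \otimes_k C} \cong \Omega^{\ast}_B \otimes_k \Omega^{\ast}_C$ as commutative differential graded algebras and each factor is a free $k$-module, the algebraic Künneth formula gives $\mathrm{H}^{\ast}(\Omega^{\ast}_{B \otimes_k C}) \cong \mathrm{H}^{\ast}(\Omega^{\ast}_B) \otimes_k \mathrm{H}^{\ast}(\Omega^{\ast}_C)$. Because $\Cart$ is a map of graded $k$-algebras determined by its values on $x_i$ and $dx_i$ (by Proposition~\ref{cartmapexist}), it respects this tensor decomposition, reducing the polynomial case to the one-variable case already handled.

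Third, I would pass from polynomials to arbitrary smooth $A$ by \'etale localization. By the local structure theorem for smooth algebras, $\Spec(A)$ admits a Zariski cover by opens $\Spec(A_\alpha)$ each fitting into an \'etale map $P_\alpha = k[x_1,\dots,x_{d_\alpha}] \to A_\alpha$. Since formation of $\Omega^{\ast}$, its cohomology, and the Cartier map all commute with Zariski localization, and since the statement is local, it suffices to treat the case of an \'etale map $P \to A$. Here one invokes the two key compatibilities: $(a)$ $\Omega^i_A \cong \Omega^i_P \otimes_P A$ via standard \'etale base change for K\"ahler differentials, and $(b)$ $\mathrm{H}^i(\Omega^{\ast}_A) \cong \mathrm{H}^i(\Omega^{\ast}_P) \otimes_{P,\varphi_P} A$, where the $P$-module structure on $A$ used in the tensor product is twisted by the absolute Frobenius $\varphi_P$ on $P$. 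Granted these, the Cartier map for $A$ is the base change of the Cartier map for $P$ along $\varphi_P$, hence bijective by step two.

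The main obstacle is justifying step $(b)$ in the last paragraph, namely that taking de Rham cohomology is compatible with \'etale base change, with a Frobenius twist of the ground ring structure on the target. The standard approach is to observe that for an \'etale morphism $P \to A$, the relative Frobenius $A \otimes_{P,\varphi_P} P \to A$ is an isomorphism (this is the key input that uses \'etaleness in an essential way, via invariance of the \'etale site under Frobenius); combining this with the flatness of Frobenius on the smooth $\F_p$-algebra $P$ (which in turn rests on $k$ being perfect, so that the cotangent complex $L_{P/\F_p}$ is flat in low degrees) lets one commute cohomology past the base change $- \otimes_{P,\varphi_P} A$ and conclude. Everything else in the argument is formal once this geometric input is in place.
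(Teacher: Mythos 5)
The paper does not prove Theorem \ref{theo71} at all — it refers to \cite[Th. 7.2]{Katz} — and your argument is essentially that standard proof: the explicit one-variable computation, K\"unneth to pass to polynomial rings, and \'etale localization using that the square formed by an \'etale map $P \to A$ together with the absolute Frobenii of $P$ and $A$ is a pushout. Your reduction is correct as written; the only blemish is the justification at the end, where flatness of Frobenius on $P$ (and the cotangent-complex remark) is not really needed: since the differential of $\Omega^{\ast}_{P}$ is already linear over $P$ acting through $\varphi_P$, the flatness of the \'etale map $P \to A$ by itself allows you to commute cohomology with the Frobenius-twisted base change identifying $\Omega^{\ast}_{A}$ with the extension of scalars of $\Omega^{\ast}_{P}$.
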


\begin{remark}
By Popescu's smoothing theorem \cite[Tag 07GB]{stacks-project}, 
it follows from 
Theorem \ref{theo71}
that 
the Cartier map $\Cart\colon  \Omega^{\ast}_{A} \rightarrow \mathrm{H}^{\ast}( \Omega^{\ast}_{A} )$ is an
isomorphism whenever $A$ is a regular Noetherian $\F_p$-algebra. 
In \S \ref{sec9sub6}, we will give a proof of this fact (Theorem
\ref{avoidpop}) which is independent of Popescu's theorem.
\end{remark}

\begin{corollary}\label{cor68}
Let $R$ be a commutative ring which is $p$-torsion-free and let $\varphi\colon  R \rightarrow R$ be a ring homomorphism satisfying
$\varphi(x) \equiv x^{p} \pmod{p}$. Suppose that there exists a perfect $\F_p$-algebra $k$ such that $R/pR$ is a smooth algebra over $k$.
Then:
\begin{itemize}
\item[$(1)$] When regarded as a Dieudonn\'{e} complex, the completed de~Rham complex $\widehat{\Omega}^{\ast}_{R}$ is of Cartier type
(in the sense of Definition \ref{def:Cartiertype}).
\item[$(2)$] The canonical map $\widehat{ \Omega}^{\ast}_{R} \rightarrow \WSaturate( \widehat{\Omega}^{\ast}_{R} )$ is a quasi-isomorphism of chain complexes.
\end{itemize}
\end{corollary}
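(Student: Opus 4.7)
\medskip

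\noindent\textbf{Proof plan.} The plan is to reduce both assertions to statements already established in the excerpt, with the Cartier isomorphism (Theorem \ref{theo71}) as the essential geometric input.

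\medskip

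For part $(1)$, I will verify the two conditions of Definition \ref{def:Cartiertype} in turn. The $p$-torsion-freeness of $\widehat{\Omega}^{\ast}_R$ follows from the remark after Variant \ref{var46}: the smoothness of $R/pR$ over the perfect algebra $k$ implies that each $\widehat{\Omega}^i_R$ is a finitely generated projective module over the $p$-adic completion $\widehat{R}$, and in particular $p$-torsion-free. To check condition $(ii)$, I need to show that the map of graded algebras
\[
F\colon \widehat{\Omega}^{\ast}_R/p\widehat{\Omega}^{\ast}_R \longrightarrow \HH^{\ast}(\widehat{\Omega}^{\ast}_R/p\widehat{\Omega}^{\ast}_R)
\]
induced by the Frobenius of the Dieudonn\'e algebra $\widehat{\Omega}^{\ast}_R$ is an isomorphism. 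The source is canonically identified with $\Omega^{\ast}_{R/pR}$, and the target with $\HH^{\ast}(\Omega^{\ast}_{R/pR})$. By the explicit formula for $F$ from Proposition \ref{prop42}, worked out in Example \ref{zisp}, this map agrees with the classical Cartier map $\Cart\colon \Omega^{\ast}_{R/pR} \to \HH^{\ast}(\Omega^{\ast}_{R/pR})$ of Proposition \ref{cartmapexist} (and in particular is independent of the lift $\varphi$). Since $R/pR$ is smooth over the perfect $\F_p$-algebra $k$, the Cartier Isomorphism Theorem \ref{theo71} tells us that $\Cart$ is an isomorphism, completing the verification.

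\medskip

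For part $(2)$, the strategy is to invoke Corollary \ref{cor67} directly. Its hypotheses are precisely that $\widehat{\Omega}^{\ast}_R$ is a Dieudonn\'e complex of Cartier type (established in part $(1)$) and that each abelian group $\widehat{\Omega}^i_R$ is $p$-adically complete. The latter holds because, as noted above, $\widehat{\Omega}^i_R$ is a finitely generated projective module over the $p$-adically complete ring $\widehat{R}$ (and such modules, being direct summands of finite free modules, are $p$-adically complete). Corollary \ref{cor67} then gives that $\widehat{\Omega}^{\ast}_R \to \WSaturate(\widehat{\Omega}^{\ast}_R)$ is a quasi-isomorphism.

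\medskip

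The only nontrivial step is the identification in part $(1)$ of the Frobenius-induced map with the Cartier map; everything else is a matter of assembling the quoted results. The key observation making this identification work is the formula $F(dx) = x^{p-1}dx + d\bigl(\tfrac{\varphi(x)-x^p}{p}\bigr)$ from Proposition \ref{prop42}: upon reducing mod $p$, the correction term $d\bigl(\tfrac{\varphi(x)-x^p}{p}\bigr)$ becomes a coboundary, so the induced map on cohomology sends $dx$ to the class of $x^{p-1}dx$, matching the Cartier map on generators.
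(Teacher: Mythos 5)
Your proof is correct and follows the same route as the paper: part $(1)$ is exactly the combination of Example \ref{zisp} (identifying the Frobenius-induced map with the Cartier map) and Theorem \ref{theo71}, and part $(2)$ is the application of Corollary \ref{cor67}, with the torsion-freeness and $p$-adic completeness supplied by the remark following Variant \ref{var46}. You have simply spelled out the details that the paper's two-line proof leaves implicit.
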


\begin{proof}
Assertion $(1)$ follows from Theorem \ref{theo71} and Example
\ref{zisp}; note also that 
the completed de Rham complex $\widehat{\Omega}^{\ast}_R$ is $p$-torsion-free
by Remark~\ref{rem:freenessinsmoothcase}. 
Assertion $(2)$ follows from $(1)$ and Corollary \ref{cor67}.
\end{proof}

\subsection{Saturated Dieudonn\'{e} Algebras}

\begin{definition}
Let $A^{\ast}$ be a Dieudonn\'{e} algebra. We will say that $A^{\ast}$ is {\it
saturated} if it is saturated when regarded as a Dieudonn\'{e} complex:
that is, if it is $p$-torsion-free and the map
$F\colon  A^{\ast} \rightarrow \{ x \in A^{\ast}: d x \in p A^{\ast} \}$ is a bijection. We let $\FrobAlgSat$ denote the full subcategory of $\FrobAlg$ spanned by the saturated Dieudonn\'{e} algebras.
\end{definition}

In the setting of saturated Dieudonn\'{e} algebras, axiom $(iii)$ of Definition \ref{def20} can be slightly weakened:

\begin{proposition}\label{prop35}
Let $A^{\ast}$ be a saturated Dieudonn\'{e} complex satisfying $A^{\ast} = 0$ for $\ast < 0$. Suppose
that $A^{\ast}$ is equipped with the structure of a commutative differential graded algebra for which the Frobenius map
$F\colon  A^{\ast} \rightarrow A^{\ast}$ is a ring homomorphism. The following conditions are equivalent:
\begin{itemize}
\item[$(a)$] The complex $A^{\ast}$ is a Dieudonn\'{e} algebra: that is, each element $x \in A^{0}$ satisfies
$Fx \equiv x^{p} \pmod{p}$.
\item[$(b)$] Each element $x \in A^{0}$ satisfies $Fx \equiv x^{p} \pmod{VA^{0}}$. 
\end{itemize}
\end{proposition}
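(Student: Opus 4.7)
The plan is straightforward: the direction $(a) \Rightarrow (b)$ is immediate from the identity $p \cdot \mathrm{id} = V \circ F$ (Proposition~\ref{Vexists}), which gives $p A^{0} = V(F A^{0}) \subseteq V A^{0}$, so any congruence mod $p$ is automatically a congruence mod $V A^{0}$.

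For $(b) \Rightarrow (a)$, I would start by fixing $x \in A^{0}$ and writing $Fx - x^{p} = Vy$ for some $y \in A^{0}$, using hypothesis $(b)$. The goal is to upgrade this to $Fx - x^{p} \in p A^{0}$, and the key will be to show that $y$ itself lies in the image of $F$, since then $Vy = V F z = p z$ for some $z \in A^{0}$.

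To carry this out, I would apply the differential $d$ to the equation $Fx - x^{p} = Vy$. Using $dF = pFd$ (so $d(Fx) = pF(dx)$), the Leibniz rule (so $d(x^p) = p x^{p-1} dx$), and the relation $V d = p \, d V$ from Proposition~\ref{Vexists}, we obtain
\[ p\bigl(F(dx) - x^{p-1} dx\bigr) = d(Vy) \in A^{1}. \]
Hence $d(Vy) \in p A^{1}$. Multiplying by $p$ and invoking $p \, d V = V d$, this yields $V(dy) \in p^{2} A^{1}$. Now apply $F$: since $F V = p \cdot \mathrm{id}$, we get $p \, dy = F(V(dy)) \in p^{2} A^{1}$, and because $A^{1}$ is $p$-torsion-free (as $A^{\ast}$ is saturated), we conclude $dy \in p A^{1}$.

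At this point the saturation hypothesis (Definition~\ref{def21}) on $A^{\ast}$ provides $z \in A^{0}$ with $Fz = y$, so $Vy = VFz = pz$, which gives $Fx \equiv x^{p} \pmod{p}$ as required. The main (and essentially only) obstacle is organizing the chain of identities $dF = pFd$, $Vd = p\,dV$, $FV = p$ to successively gain divisibility by one more power of $p$; once $dy \in pA^{1}$ is obtained, the characterization of saturated Dieudonn\'e complexes closes the argument immediately.
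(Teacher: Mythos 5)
Your proof is correct and follows essentially the same route as the paper: reduce to showing $Fx - x^p = Vy$ has $y \in \im(F)$, which follows from saturation once one knows $dy \in pA^1$. The only cosmetic difference is that the paper gets this from $d(Vy)\in pA^1$ by invoking Lemma~\ref{lem6} (i.e.\ the identity $F\circ d\circ V = d$), whereas you reach $dy \in pA^1$ via $V\circ d = p(d\circ V)$, $F\circ V = p$, and $p$-torsion-freeness -- the same argument with slightly different bookkeeping.
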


\begin{proof}
The implication $(a) \Rightarrow (b)$ is obvious. For the converse, suppose that $(b)$ is satisfied
and let $x \in A^{0}$. Then we can write $Fx = x^{p} + Vy$ for some $y \in A^{0}$. Applying the differential $d$,
we obtain $d(Vy) = d( Fx - x^{p} ) = p (F(dx) - x^{p-1} dx) \in p A^{1}$. Invoking Lemma \ref{lem6}, we can write
$y = Fz$ for some $z \in A^{0}$, so that $Fx = x^{p} + Vy = x^{p} + VFz = x^{p} + pz \equiv x^{p} \pmod{p}$.
\end{proof}

Let $f\colon  A^{\ast} \rightarrow B^{\ast}$ be a morphism of Dieudonn\'{e} algebras. We will say that $f$ {\it exhibits $B^{\ast}$ as a saturation of $A^{\ast}$}
if $B^{\ast}$ is saturated and, for every saturated Dieudonn\'{e} algebra $C^{\ast}$, composition with $f$ induces a bijection
$$ \Hom_{ \FrobAlg}( B^{\ast}, C^{\ast} ) \rightarrow \Hom_{ \FrobAlg}( A^{\ast}, C^{\ast} ).$$

\begin{proposition}\label{prop33}
Let $A^{\ast}$ be a Dieudonn\'{e} algebra. Then there exists a map of Dieudonn\'{e} algebras $f\colon  A^{\ast} \rightarrow B^{\ast}$ which exhibits
$B^{\ast}$ as a saturation of $A^{\ast}$. Moreover, $f$ induces an isomorphism of Dieudonn\'{e} complexes $\Saturate( A^{\ast} ) \rightarrow B^{\ast}$,
where $\Saturate( A^{\ast} )$ is the saturation of \S \ref{substrict}.
\end{proposition}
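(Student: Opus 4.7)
The plan is to show that the Dieudonn\'e-complex saturation $\Saturate(A^{\ast})$ constructed in Proposition \ref{prop4} inherits a natural Dieudonn\'e-algebra structure and then satisfies the universal property of Proposition \ref{prop33} within $\FrobAlg$.

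First I would reduce to the case where $A^{\ast}$ is $p$-torsion-free. Let $T^{\ast} \subseteq A^{\ast}$ denote the subgroup of $p$-power torsion elements. Since $F$ is a graded ring homomorphism, $T^{\ast}$ is closed under $F$; because $p^{n} x = 0$ forces $p^{n} dx = 0$, it is closed under $d$; and by the Leibniz rule it is a graded two-sided ideal. Hence $A^{\ast}/T^{\ast}$ is a $p$-torsion-free Dieudonn\'e algebra, and any morphism from $A^{\ast}$ to a saturated (hence $p$-torsion-free) Dieudonn\'e algebra factors uniquely through it.

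Assuming now that $A^{\ast}$ is $p$-torsion-free, Remark \ref{rem34} makes $(\eta_p A)^{\ast} \subseteq A^{\ast}[p^{-1}]$ into a Dieudonn\'e algebra, and by Remark \ref{rem30} the map $\alpha_F\colon A^{\ast} \to (\eta_p A)^{\ast}$ is a morphism of commutative differential graded algebras. Iterating yields a sequential diagram in $\FrobAlg$
$$A^{\ast} \xrightarrow{\alpha_F} (\eta_p A)^{\ast} \xrightarrow{\eta_p(\alpha_F)} (\eta_p \eta_p A)^{\ast} \to \cdots,$$
whose filtered colimit inherits a natural commutative differential graded algebra structure and, as a Dieudonn\'e complex, coincides with $\Saturate(A^{\ast})$ by the proof of Proposition \ref{prop4}. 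The remaining axioms of Definition \ref{def20}---vanishing in negative degrees and the congruence $Fx \equiv x^{p} \pmod{p}$ in degree zero---are visibly preserved by each $\eta_p$ step and hence by the colimit. This endows $\Saturate(A^{\ast})$ with the structure of a Dieudonn\'e algebra and produces a canonical map $f\colon A^{\ast} \to \Saturate(A^{\ast})$ in $\FrobAlg$.

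For the universal property, suppose $C^{\ast}$ is a saturated Dieudonn\'e algebra and $g\colon A^{\ast} \to C^{\ast}$ is a morphism in $\FrobAlg$. Corollary \ref{cor65} supplies a unique factorization $\bar g\colon \Saturate(A^{\ast}) \to C^{\ast}$ in $\FrobComp$; the task is to check that $\bar g$ is multiplicative. Since $C^{\ast}$ is saturated, $\alpha_F\colon C^{\ast} \to (\eta_p C)^{\ast}$ is an isomorphism by Remark \ref{rem3}, so iterating gives canonical identifications $(\eta_p^{n} C)^{\ast} \simeq C^{\ast}$ in $\FrobAlg$. Applying the functor $\eta_p^{n}$ to $g$ and composing with these identifications yields morphisms of differential graded algebras $(\eta_p^{n} A)^{\ast} \to C^{\ast}$ compatible with the transition maps of the colimit system, so the induced map from $\Saturate(A^{\ast})$ is automatically a ring homomorphism. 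The main technical point is verifying Remark \ref{rem34}---that $(\eta_p A)^{\ast}$ genuinely inherits a Dieudonn\'e algebra structure compatible with all iterates---after which the colimit argument is routine.
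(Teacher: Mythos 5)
Your proposal is correct and takes essentially the same route as the paper's proof: reduce to the $p$-torsion-free case by killing the $p$-power torsion ideal, then realize $\Saturate(A^{\ast})$ as the colimit of the $\eta_p$-iterates, with Remark \ref{rem34} (already proved in the paper) supplying the Dieudonn\'{e} algebra structure at each stage. The paper states the universal property verification only implicitly, whereas you spell it out via the identifications $(\eta_p^{n} C)^{\ast} \simeq C^{\ast}$ for saturated $C^{\ast}$ and uniqueness of the complex-level factorization; this is a faithful elaboration rather than a different argument.
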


\begin{remark}
We can summarize Proposition \ref{prop33} more informally as follows: if $A^{\ast}$ is a Dieudonn\'{e} algebra and
$\Saturate( A^{\ast} )$ is the saturation of $A^{\ast}$ in the category
of Dieudonn\'{e} complexes, then $\Saturate( A^{\ast})$ inherits the structure of a Dieudonn\'{e} algebra,
and is also a saturation of $A^{\ast}$ in the category of Dieudonn\'{e} algebras.
\end{remark}

\begin{proof}[Proof of Proposition \ref{prop33}]
Replacing $A^{\ast}$ by the quotient $A^{\ast} / A^{\ast} [ p^{\infty} ]$, we can reduce to the case where $A^{\ast}$ is $p$-torsion-free.
In this case, the Frobenius on $A$ determines a map of Dieudonn\'{e} algebras
$\alpha_F\colon  A^{\ast} \rightarrow (\eta_p A)^{\ast}$ (Remarks \ref{rem30}
and  \ref{rem34}), and
$A^{\ast}$ is saturated if and only if $\alpha_{F}$ is an isomorphism. The saturation of $A$ can then be described as the direct limit of the sequence
$$ A^{\ast} \xrightarrow{ \alpha_F } (\eta_p A)^{\ast} \xrightarrow{ \eta_p( \alpha_F ) } (\eta_p \eta_p A)^{\ast} \xrightarrow{ \eta_p( \eta_p( \alpha_F ) )} ( \eta_p \eta_p \eta_p A)^{\ast} \rightarrow \cdots, $$
which is also the saturation of $A^{\ast}$ as a Dieudonn\'{e} complex.
\end{proof}

\subsection{Completions of Saturated Dieudonn\'{e} Algebras}

\begin{proposition}\label{proposition.dgraded}
Let $A^{\ast}$ be a saturated Dieudonn\'{e} algebra, and let $V\colon  A^{\ast} \rightarrow A^{\ast}$ be the Verschiebung map of
Remark \ref{rem5}. Then:
\begin{itemize}
\item[$(i)$] The map $V$ satisfies the projection formula $x V(y) = V( F(x) y )$.
\item[$(ii)$] For each $r \geq 0$, the sum $\im( V^{r} ) + \im( dV^r ) \subseteq A^{\ast}$ is a (differential graded) ideal.
\end{itemize}
\end{proposition}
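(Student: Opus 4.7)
My plan is to deduce the projection formula in (i) from injectivity of $F$ (which holds for any saturated Dieudonn\'e complex, by Remark~\ref{rem5}), and then derive (ii) as a formal consequence of (i) together with the Leibniz rule.

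For (i), the strategy is to apply $F$ to both sides and compare. On the one hand, since $F$ is a ring homomorphism and $F \circ V = p \cdot \id$ (Proposition~\ref{Vexists}), we compute
\[
F\bigl(x V(y)\bigr) = F(x) \cdot F(V(y)) = p\, F(x)\, y.
\]
On the other hand, $F(V(F(x) y)) = p\, F(x)\, y$ as well. Since $A^{\ast}$ is $p$-torsion-free and $F$ is injective on saturated Dieudonn\'e complexes, cancelling $F$ yields $x V(y) = V(F(x) y)$.

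For (ii), I need to check that $I_r := \im(V^r) + \im(dV^r)$ is both an ideal for the graded multiplication and stable under $d$. Stability under $d$ is immediate: $d$ carries $\im(V^r)$ into $\im(dV^r)$, and kills $\im(dV^r)$ since $d^2 = 0$. For the ideal property, iterating the projection formula from (i) gives
\[
x \cdot V^r(y) = V^r\bigl(F^r(x)\, y\bigr) \in \im(V^r),
\]
so $I_r$ absorbs multiplication into its first summand. For the second summand, I apply the Leibniz rule: for homogeneous $x$,
\[
x \cdot dV^r(y) = (-1)^{|x|}\,d\bigl(x\, V^r(y)\bigr) - (-1)^{|x|} (dx) \cdot V^r(y).
\]
The first term lies in $d(\im(V^r)) = \im(dV^r)$ by what we just proved, and the second lies in $\im(V^r)$ by another application of the projection formula. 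Hence $x \cdot dV^r(y) \in I_r$, and $I_r$ is a two-sided graded ideal closed under $d$.

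The only real content is the projection formula (i); part (ii) is then purely formal. The main (minor) subtlety is making sure one is entitled to cancel $F$ in the proof of (i), which uses both $p$-torsion-freeness and the injectivity of $F$ guaranteed by saturation.
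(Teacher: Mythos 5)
Your proof is correct and follows essentially the same route as the paper: the projection formula is obtained by applying the injective map $F$ to both sides and using $FV = p$, and part (ii) then follows from the iterated projection formula $x V^{r}(y) = V^{r}(F^{r}(x)y)$ together with the Leibniz rule, exactly as in the paper's argument.
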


\begin{proof}
We first prove $(i)$. Given $x,y \in A^{\ast}$, we compute
$$F( x V(y) ) = F(x) F(V(y)) = p F(x) y = F( V( F(x) y).$$
Since the map $F$ is injective, we conclude that $x V(y) = V( F(x) y)$.

We now prove $(ii)$. Let $x$ be an element of $A^{\ast}$; we wish to show that multiplication by $x$ carries
$\im( V^{r} ) + \im( dV^{r} )$ into itself. This follows from the identities
$$ x V^{r}(y) = V^{r}( F^r(x) y )  $$
$$(-1)^{\mathrm{deg} x}x d(V^{r} y) = d( x V^{r}(y) ) - (dx) V^{r}(y) = d
\left( V^{r}(
F^{r}(x) y )\right) -
V^{r}( F^{r}(dx) y ). \qedhere$$
\end{proof}

\begin{corollary}
Let $A^{\ast}$ be a saturated Dieudonn\'{e} algebra. For each $r \geq 0$, there is a unique ring structure on the quotient
$\WittScript_{r}( A)^{\ast}$ for which the projection map $A^{\ast} \rightarrow
\WittScript_r(A)^{\ast}$ is a ring homomorphism. Moreover, this ring structure
exhibits $\WittScript_r(A)^{\ast}$ as a commutative differential graded
algebra. \qed 
\end{corollary}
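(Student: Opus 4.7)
The plan is to derive this corollary as an essentially formal consequence of Proposition \ref{proposition.dgraded}(ii). The uniqueness of the ring structure is immediate: if $\pi\colon A^\ast \to \WittScript_r(A)^\ast$ is required to be a ring homomorphism, then for any $\overline{x}, \overline{y} \in \WittScript_r(A)^\ast$ lifted to $x,y \in A^\ast$, the product $\overline{x}\cdot \overline{y}$ must equal $\pi(xy)$, which fixes the multiplication (and in particular fixes the unit as $\pi(1)$).

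For existence, I would invoke the general principle that if $I \subseteq A^\ast$ is a homogeneous two-sided ideal of a graded ring with $d(I) \subseteq I$, then the quotient $A^\ast/I$ inherits a unique graded ring structure and differential making the projection a morphism of differential graded rings; and graded-commutativity together with the vanishing of odd-degree squares are preserved under such quotients. So the only thing to verify is that $I := \im(V^r) + \im(dV^r)$ satisfies these hypotheses. The fact that $I$ is a homogeneous two-sided ideal is precisely Proposition \ref{proposition.dgraded}(ii), and the closure under $d$ is obvious: $d$ carries $\im(V^r)$ into $\im(dV^r) \subseteq I$ and kills $\im(dV^r)$ since $d^2=0$. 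This immediately produces the desired commutative differential graded algebra structure on $\WittScript_r(A)^\ast$.

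There is no serious obstacle here; the whole content is packaged in the previous proposition. The one minor subtlety worth flagging is that one should note that the projection formula $x V(y) = V(F(x)y)$ from Proposition \ref{proposition.dgraded}(i), together with the identity $\pm x\,d(V^r y) = dV^r(F^r(x)y) - V^r(F^r(dx)y)$ used in the proof of part (ii), are what guarantees $I$ is closed under left (equivalently, right, by graded-commutativity) multiplication by arbitrary elements of $A^\ast$; without this, one would only get a left ideal of the underlying ungraded ring. Once this is in hand, the corollary is formal.
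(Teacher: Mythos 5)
Your proposal is correct and is exactly the route the paper intends: the corollary is a formal consequence of Proposition \ref{proposition.dgraded}(ii), which says $\im(V^{r}) + \im(dV^{r})$ is a differential graded ideal, so the quotient inherits the commutative differential graded algebra structure, and uniqueness follows from surjectivity of the projection. Your flagged subtlety (that the projection formula and the identity $\pm x\, d(V^{r}y) = dV^{r}(F^{r}(x)y) - V^{r}(F^{r}(dx)y)$ are what make it a two-sided ideal rather than merely a subgroup) is precisely the content of the paper's proof of that proposition, so nothing is missing.
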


\begin{remark}\label{rem25}
Let $A^{\ast}$ be a saturated Dieudonn\'{e} algebra. Since $A^{-1} = 0$, we have $\WittScript_{r}(A)^{0} = A^{0} / V^{r} A^{0}$. In particular,
we have $\WittScript_1(A)^{0} = A^0 / V A^{0}$.
\end{remark}

\begin{construction}
Let $A^{\ast}$ be a saturated Dieudonn\'{e} algebra. We let $\WittScript(A)^{\ast}$ denote the completion of $A^{\ast}$ as a (saturated) Dieudonn\'{e} complex, given by the inverse limit of the tower
$$ \cdots \rightarrow \WittScript_3( A)^{\ast} \rightarrow \WittScript_2(A)^{\ast} \rightarrow \WittScript_1(A)^{\ast} \rightarrow \WittScript_0(A)^{\ast} \simeq 0.$$
Since each $\WittScript_{n}(A)^{\ast}$ has the structure of a commutative differential graded algebra, the inverse limit $\WittScript(A)^{\ast}$ inherits the structure of a commutative differential graded algebra.
\end{construction}

\begin{proposition}
Let $A^{\ast}$ be a saturated Dieudonn\'{e} algebra. Then the completion $\WittScript(A)^{\ast}$ is also a saturated Dieudonn\'{e} algebra.
\end{proposition}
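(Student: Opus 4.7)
The plan is to check the axioms of Definition \ref{def20} for $\mathcal{W}(A)^*$ in turn. The vanishing $\mathcal{W}(A)^n = 0$ for $n < 0$ is immediate from $A^n = 0$ for $n<0$. For the commutative differential graded algebra structure, Proposition \ref{proposition.dgraded}(ii) says that each $\mathrm{im}(V^r)+\mathrm{im}(dV^r)\subseteq A^*$ is a differential graded ideal, so every quotient $\mathcal{W}_r(A)^*$ inherits a cdga structure, and the Frobenius maps $F\colon \mathcal{W}_{r+1}(A)^*\to \mathcal{W}_r(A)^*$ of Remark \ref{rem13} are ring homomorphisms (being induced from the ring homomorphism $F\colon A^*\to A^*$). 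Passing to the inverse limit gives $\mathcal{W}(A)^*$ a cdga structure together with a graded ring endomorphism $F$ satisfying $dF=pFd$. By Propositions \ref{buildtowerfromsat} and \ref{buildstrictDieudonne}, $\mathcal{W}(A)^*$ is also a saturated Dieudonn\'e complex.

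It remains to verify the congruence $Fx\equiv x^p\pmod{p}$ for every $x\in \mathcal{W}(A)^0$. The hypotheses of Proposition \ref{prop35} are now in place, so it suffices instead to check $Fx\equiv x^p\pmod{V\mathcal{W}(A)^0}$. Here we use Proposition \ref{prop17}: the map $\rho_A$ induces an isomorphism $\mathcal{W}_1(A)^*\xrightarrow{\sim}\mathcal{W}_1(\mathcal{W}(A))^*$, and since $\mathcal{W}(A)^{-1}=0$ the degree zero piece is simply $\mathcal{W}(A)^0/V\mathcal{W}(A)^0$. Chasing this identification through the inverse limit definition shows that $V\mathcal{W}(A)^0$ coincides with the kernel of the canonical projection $\pi\colon \mathcal{W}(A)^0\to \mathcal{W}_1(A)^0 = A^0/VA^0$, so the problem reduces to proving $\pi(Fx-x^p)=0$ in $A^0/VA^0$.

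To complete the calculation, write $x=(x_r)_{r\geq 0}$ and choose any lift $\tilde x\in A^0$ of the component $x_2\in \mathcal{W}_2(A)^0=A^0/V^2A^0$. Since the Frobenius of the tower is the quotient of $F\colon A^0\to A^0$, we have $\pi(Fx) = F\tilde x\bmod VA^0$ and $\pi(x) = \tilde x\bmod VA^0$, giving $\pi(Fx-x^p) = (F\tilde x - \tilde x^p)\bmod VA^0$. But $A^*$ itself is a Dieudonn\'e algebra, so $F\tilde x - \tilde x^p\in pA^0$; and $pA^0 = V(FA^0)\subseteq VA^0$ because $p=VF$. Hence $\pi(Fx-x^p)=0$, which is exactly what we needed. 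The main obstacle is the fact that the Verschiebung $V$ on the limit $\mathcal{W}(A)^0$ is not a priori comparable with the Verschiebung on $A^0$; using Proposition \ref{prop35} to weaken the mod-$p$ congruence to a mod-$V$ congruence, and Proposition \ref{prop17} to translate $V\mathcal{W}(A)^0$ back into $VA^0$, together reduce the verification to an identity already known inside $A^*$.
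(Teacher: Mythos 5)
Your proof is correct and follows essentially the same route as the paper: reduce the congruence $Fx \equiv x^p \pmod{p}$ to a congruence modulo $V\WittScript(A)^0$ via Proposition \ref{prop35}, and then transport the verification back to $A^0/VA^0$ using the isomorphism $\WittScript_1(A)^{\ast} \simeq \WittScript_1(\WittScript(A))^{\ast}$ of Proposition \ref{prop17}. Your additional identification of $V\WittScript(A)^0$ with the kernel of the projection to $\WittScript_1(A)^0$ is a correct (and slightly more explicit) unwinding of that same isomorphism.
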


\begin{proof}
It follows from Corollary \ref{realcor14} that $\WittScript(A)^{\ast}$ is a saturated Dieudonn\'{e} complex. Moreover, the Frobenius map
$F\colon  \WittScript(A)^{\ast} \rightarrow \WittScript(A)^{\ast}$ is an inverse limit of the maps $F\colon  \WittScript_{r}(A)^{\ast} \rightarrow \WittScript_{r-1}(A)^{\ast}$ appearing
in Remark \ref{rem13}, each of which is a ring homomorphism (since the Frobenius on $A^{\ast}$ is a ring homomorphism). It follows
that $F\colon  \WittScript(A)^{\ast} \rightarrow \WittScript(A)^{\ast}$ is a ring homomorphism. The vanishing of $\WittScript(A)^{\ast}$ for $\ast < 0$ follows immediately
from the analogous property of $A^{\ast}$. To complete the proof, it will suffice to show that each element $x \in \WittScript(A)^{0}$ satisfies
$Fx \equiv x^{p} \pmod{V \WittScript(A)^{0} }$ (Proposition \ref{prop35}). In other words, we must show that $F$ induces the usual Frobenius
map on the $\F_p$-algebra $\WittScript_1( \WittScript(A)^{0} ) = \WittScript(A)^{0} / V \WittScript(A)^{0}$. This follows from the analogous property for
$A^{\ast}$, since the tautological map $\WittScript_1( A )^{\ast} \rightarrow \WittScript_1( \WittScript(A) )^{\ast}$ is an isomorphism by Proposition \ref{prop17}.
\end{proof}

Let $A^{\ast}$ be a saturated Dieudonn\'{e} algebra. Then the tautological map $\rho_{A}\colon  A^{\ast} \rightarrow \WittScript(A)^{\ast}$ of
\S \ref{strictdieudonnecomplex} is a morphism of (saturated) Dieudonn\'{e} algebras. 

\begin{definition}
\label{SDA}
Let $A^{\ast}$ be a saturated Dieudonn\'{e} algebra. We will say that $A^{\ast}$ is {\it strict} if the map $\rho_{A}\colon  A^{\ast} \rightarrow \WittScript(A)^{\ast}$
is an isomorphism of Dieudonn\'{e} algebras. We let $\FrobAlgComplete$ denote the full subcategory of $\FrobAlgSat$ spanned by the strict Dieudonn\'{e} algebras.
\end{definition}

\begin{remark}
A saturated Dieudonn\'{e} algebra $A^{\ast}$ is strict if and only if it is strict when regarded as a saturated Dieudonn\'{e} complex.
\end{remark}

We have the following nonlinear version of Proposition \ref{prop37}:

\begin{proposition}\label{prop38}
Let $A^{\ast}$ and $B^{\ast}$ be saturated Dieudonn\'{e} algebras, where $B^{\ast}$ is strict. Then composition with the map $\rho_{A}$ induces a bijection
$$ \theta\colon  \Hom_{ \FrobAlg}( \WittScript(A)^{\ast}, B^{\ast} ) \rightarrow \Hom_{ \FrobAlg}( A^{\ast}, B^{\ast} ).$$
\end{proposition}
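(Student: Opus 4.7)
The plan is to reduce to the analogous statement for Dieudonn\'e complexes, namely Proposition~\ref{prop37}, and then verify that the bijection produced there is compatible with the multiplicative structure. The forgetful functor $\FrobAlg \to \FrobComp$ makes $\theta$ a restriction of the bijection of Proposition~\ref{prop37}, so injectivity of $\theta$ is automatic: any two Dieudonn\'e algebra morphisms $f,g\colon \WittScript(A)^{\ast} \to B^{\ast}$ with $f \circ \rho_A = g \circ \rho_A$ agree as morphisms of Dieudonn\'e complexes, hence coincide.

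For surjectivity, the essential point is to show that if $f_0\colon A^{\ast} \to B^{\ast}$ is a morphism of Dieudonn\'e algebras, then the unique morphism of Dieudonn\'e complexes $\widetilde{f}\colon \WittScript(A)^{\ast} \to B^{\ast}$ extending $f_0$ (supplied by Proposition~\ref{prop37}) is automatically multiplicative. To this end, I would imitate the last step in the proof of Proposition~\ref{prop37} and consider the commutative square
$$\xymatrix{ A^{\ast} \ar[r]^-{f_0} \ar[d]_{\rho_A} & B^{\ast} \ar[d]^{\rho_B} \\ \WittScript(A)^{\ast} \ar[r]^-{\WittScript(f_0)} & \WittScript(B)^{\ast}, }$$
in which the right vertical map is an isomorphism of saturated Dieudonn\'e algebras because $B^{\ast}$ is strict. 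Then $\widetilde{f}$ is forced to be the composite $\rho_B^{-1} \circ \WittScript(f_0)$.

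It therefore suffices to check two multiplicativity assertions: that $\rho_B$ (and hence $\rho_B^{-1}$) is a map of graded rings, and that $\WittScript(f_0)$ is a map of graded rings. Both are essentially formal. For the first, $\rho_B$ is by construction the canonical map to an inverse limit of quotients $\WittScript_r(B)^{\ast} = B^{\ast}/(\im(V^r) + \im(dV^r))$, each of which carries a ring structure making the projection a ring homomorphism (by the corollary following Proposition~\ref{proposition.dgraded}), so the limit map is a ring homomorphism. For the second, $\WittScript(f_0)$ is the inverse limit of the induced maps $\WittScript_r(f_0)\colon \WittScript_r(A)^{\ast} \to \WittScript_r(B)^{\ast}$; since $f_0$ is a ring map and the quotients are formed by differential graded ideals on both sides (Proposition~\ref{proposition.dgraded}(ii)), each $\WittScript_r(f_0)$ is a ring homomorphism, and hence so is the limit.

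The only step that could plausibly cause trouble is verifying that $\WittScript(f_0)$ respects multiplication at the finite stages, but this is immediate from the fact that $f_0$ sends the ideal $\im(V^r) + \im(dV^r) \subseteq A^{\ast}$ into the corresponding ideal in $B^{\ast}$, which in turn follows from $f_0$ being a morphism of Dieudonn\'e algebras (so $f_0 \circ V = V \circ f_0$ and $f_0 \circ d = d \circ f_0$). Combining, $\widetilde{f} = \rho_B^{-1} \circ \WittScript(f_0)$ is a morphism of Dieudonn\'e algebras with $\theta(\widetilde{f}) = \widetilde{f} \circ \rho_A = f_0$, completing the proof.
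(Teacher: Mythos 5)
Your proof is correct and follows essentially the same route as the paper: reduce to Proposition~\ref{prop37} and then check that the unique complex-level extension of a ring homomorphism is multiplicative, using that the quotients $\WittScript_r(-)^{\ast}$ are formed by differential graded ideals. The only cosmetic difference is that you verify multiplicativity by identifying the extension with the explicit composite $\rho_B^{-1} \circ \WittScript(f_0)$ (valid by the uniqueness in Proposition~\ref{prop37} and naturality of $\rho$), whereas the paper checks it at each finite level via the diagram involving the isomorphism $\WittScript_r(\rho_A)$ from Proposition~\ref{prop17}; both are harmless variants of the same argument.
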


\begin{proof}
By virtue of Proposition \ref{prop37}, it will suffice to show that if $f\colon  \WittScript(A)^{\ast} \rightarrow B^{\ast}$
is a map of Dieudonn\'{e} complexes for which $f \circ \rho_{A}$ is a ring homomorphism, then $f$ is a ring homomorphism.
Since $B^{\ast}$ is complete, it will suffice to show that each of the composite maps $\WittScript(A)^{\ast} \rightarrow B^{\ast} \rightarrow \WittScript_{r}(B)^{\ast}$ is a ring homomorphism.
This follows by inspecting the commutative diagram
$$ \xymatrix{ A^{\ast} \ar[r]^-{\rho_A} \ar[d] & \WittScript(A)^{\ast} \ar[r]^-{f} \ar[d] & B^{\ast} \ar[d] \\
\WittScript_r(A)^{\ast} \ar[r]^-{\beta}_-{\sim} & \WittScript_r(\WittScript(A))^{\ast} \ar[r] & \WittScript_r(B)^{\ast}, }$$
since the map $\beta$ bijective (Proposition \ref{prop17}).
\end{proof}

\begin{corollary}\label{cor69}
The inclusion functor $\FrobAlgComplete \hookrightarrow \FrobAlgSat$ admits a left adjoint, given by the construction
$A^{\ast} \mapsto \WittScript(A)^{\ast}$. \qed
\end{corollary}

\begin{corollary}\label{oloter}
The inclusion functor $\FrobAlgComplete \hookrightarrow \FrobAlg$ admits a left adjoint, given by the construction $A^{\ast} \mapsto \WSaturate(A)^{\ast}$.
\end{corollary}

\begin{proof}
Combine Proposition \ref{prop33} with Corollary \ref{cor69}.
\end{proof}

\subsection{Comparison with Witt Vectors}

\begin{lemma}\label{lem27}
Let $A^{\ast}$ be a saturated Dieudonn\'{e} algebra. Then the quotient $R = A^{0} / V A^{0}$ is a reduced $\F_p$-algebra.
\end{lemma}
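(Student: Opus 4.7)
The plan has two parts: first confirm $R = A^{0}/VA^{0}$ is an $\F_p$-algebra, then verify that it has no nilpotents. For the first part, the identity $VF = p\,\id$ from Proposition~\ref{Vexists} gives $p \cdot 1_{A^{0}} = V(F(1)) = V(1)$, so $p \in VA^{0}$ and $R$ is annihilated by $p$.

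For reducedness, since $R$ is an $\F_p$-algebra it suffices to show that $x^{p} \in VA^{0}$ forces $x \in VA^{0}$. I would start by writing $x^{p} = Vy$ for some $y \in A^{0}$. Using the Dieudonn\'e algebra axiom $Fx \equiv x^{p} \pmod p$, write $Fx = x^{p} + pz = Vy + VFz = V(y+Fz)$, so $Fx$ already lies in $VA^{0}$. The goal now is to upgrade this to $Fx \in pA^{0}$, since $Fx = p\beta = FV\beta$ combined with injectivity of $F$ (Remark~\ref{rem5}) will immediately yield $x = V\beta \in VA^{0}$.

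To make the upgrade, I would differentiate the relation $Vy = x^{p}$: one side gives $d(x^{p}) = p x^{p-1}\, dx$, and applying the identity $Vd = p\, dV$ from Proposition~\ref{Vexists} to the other side gives $V(dy) = p \cdot d(Vy) = p^{2}\, x^{p-1} dx = VF(p\, x^{p-1} dx)$. Injectivity of $V$ then yields $dy = p F(x^{p-1} dx) \in pA^{1}$, and saturation (condition $(ii)$ of Definition~\ref{def21}) lets us write $y = Fy'$ for some $y' \in A^{0}$. Substituting back, $Fx = V(Fy' + Fz) = VF(y' + z) = p(y'+z) = FV(y'+z)$, so injectivity of $F$ gives $x = V(y'+z) \in VA^{0}$, as desired.

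The only nonroutine step is the middle one, where one has to exploit the differential to pass from $Fx \in VA^{0}$ to $Fx \in pA^{0}$; this is exactly the point where the full Dieudonn\'e algebra structure (not just the Frobenius semilinearity) is used, through the Leibniz rule and the commutation identity between $V$ and $d$.
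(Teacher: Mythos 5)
Your proof is correct and follows essentially the same route as the paper: from $x^{p}\in VA^{0}$ deduce $Fx\in VA^{0}$, show the $V$-preimage has differential divisible by $p$, apply saturation to write it as $F$ of something, and cancel using $FV=VF=p$ together with injectivity of $F$. The only cosmetic difference is that you obtain the divisibility $dy\in pA^{1}$ by differentiating $x^{p}=Vy$ via the Leibniz rule, the identity $V\circ d=p(d\circ V)$, and injectivity of $V$, whereas the paper differentiates $Fx=Vy$ using $dF=pFd$ and invokes Lemma~\ref{lem6}.
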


\begin{proof}
It follows from Remark \ref{rem25} that $V A^{0}$ is an ideal in $A^{0}$, so that we can regard $R$ as a commutative ring.
Note that the ideal $V A^{0}$ contains $V(1) = V( F(1) ) = p$, so that $R$ is an $\F_p$-algebra. To show that $R$ is reduced, it will suffice to show that if
$\overline{x}$ is an element of $R$ satisfying $\overline{x}^{p} = 0$, then $\overline{x} = 0$. Choose an element $x \in A^{0}$ representing $\overline{x}$.
The condition $\overline{x}^{p} = 0$ implies that $x^{p} \in V A^{0}$, so that $F(x) \equiv x^{p} \pmod{ p}$ also belongs to $V A^{0}$. We can therefore
write $Fx = Vy$ for some $y \in A^{0}$. Applying the differential $d$, we obtain $d( Vy) = d(Fx) = p F(dx) \in p A^{1}$. Invoking Lemma \ref{lem6},
we can write $y = Fz$ for some element $z \in A^{0}$. Then $Fx = VFz = FVz$, so that $x = Vz \in V A^{0}$ and therefore $\overline{x} = 0$, as desired.
\end{proof}

\begin{proposition}\label{prop28}
Let $A^{\ast}$ be a strict Dieudonn\'{e} algebra and let $R = A^{0} / V A^{0}$. Then there is a unique ring isomorphism
$u\colon  A^{0} \xrightarrow{\sim}  W(R)$ which is compatible with the identification $R = A^0/VA^0$ and for which the Witt vector
Frobenius corresponds to $F$. 
More precisely: 
\begin{itemize}
\item[$(i)$] The diagram
$$ \xymatrix{ A^{0} \ar[r]^-{u} \ar[d] & W(R) \ar[d] \\
A^{0} / V A^{0} \ar[r]^-{\id} & R }$$
commutes (where the vertical maps are the natural projections).

\item[$(ii)$] The diagram 
$$ \xymatrix{ A^{0} \ar[d]^-{F} \ar[r]^-{u} & W(R) \ar[d]^-{F} \\
A^{0} \ar[r]^-{ u } & W(R) }$$
commutes (where the right vertical map is the Witt vector Frobenius).
\end{itemize}
\end{proposition}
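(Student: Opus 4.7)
The plan is to identify $A^0$ with $W(R)$ by constructing $u$ via the universal property of Witt vectors and then verifying bijectivity level-by-level in the $V$-adic filtration. To set up, I record that $A^0$ is $p$-torsion-free (since $A^*$ is saturated) and carries a ring endomorphism $F$ with $F(x) \equiv x^p \pmod p$ (by the Dieudonn\'e algebra axioms); by Lemma~\ref{lem27}, $R = A^0/VA^0$ is a reduced $\mathbf{F}_p$-algebra, so the Witt ring $W(R)$ of Example~\ref{ex24} is defined. Its Frobenius $F$ is injective on $W(R)$, because in Witt coordinates over an $\mathbf{F}_p$-algebra it acts coordinatewise by the $p$-th power map, which has trivial kernel on the reduced ring $R$. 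Strictness of $A^*$ combined with Remark~\ref{rem25} gives $A^0 = \varprojlim_r A^0/V^r A^0$, so $A^0$ is $V$-adically complete and separated, and the same holds for $W(R) = \varprojlim_r W_r(R)$ by definition.

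To construct $u$, I invoke the universal property of the $p$-typical Witt vectors: for any $p$-torsion-free ring $S$ with a lift of Frobenius $\phi_S$ and any ring homomorphism $f\colon S \to R'$ to any ring $R'$, there is a unique ring homomorphism $\tilde f\colon S \to W(R')$ that commutes with Frobenius and whose composition with the projection $W(R') \to R'$ equals $f$. Applied to $S = A^0$ and the quotient $A^0 \twoheadrightarrow R$, this yields both the existence and uniqueness of a ring map $u$ satisfying conditions (i) and (ii) in the proposition. A short computation gives $V$-equivariance of $u$: from $FV = VF = p$ on both sides and the Frobenius-equivariance of $u$, the identity $F(u(Vx) - Vu(x)) = 0$ holds for every $x \in A^0$, and the injectivity of $F$ on $W(R)$ forces $u(Vx) = Vu(x)$.

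It remains to show $u$ is bijective. By $V$-equivariance and $V$-adic completeness of both sides, this reduces to checking that the induced map $\bar u_r\colon A^0/V^r A^0 \to W_r(R)$ is an isomorphism for each $r \geq 1$. I argue by induction on $r$; the base case $r = 1$ is the identity on $R$ by construction of $u$. For the inductive step, I identify the kernel $V^r A^0/V^{r+1} A^0$ of the restriction map with $R$ via $\bar a \mapsto V^r(a) \bmod V^{r+1}$ (well-defined because $V^r \circ V = V^{r+1}$; bijective because $V$, and hence $V^r$, is injective by Proposition~\ref{Vexists}), and the same construction gives the analogous identification for $W(R)$. The $V$-equivariance of $u$ then places everything into a commutative diagram with exact rows
\[
\begin{array}{ccccccccc}
0 & \to & R & \to & A^0/V^{r+1} A^0 & \to & A^0/V^r A^0 & \to & 0 \\
&& \downarrow \id && \downarrow \bar u_{r+1} && \downarrow \bar u_r && \\
0 & \to & R & \to & W_{r+1}(R) & \to & W_r(R) & \to & 0,
\end{array}
\]
to which the five lemma applies: the inductive hypothesis that $\bar u_r$ is an isomorphism upgrades to the same statement for $\bar u_{r+1}$, closing the induction.

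The main obstacle is the construction step, which requires invoking the universal property of $W(R)$ for a general (non-perfect) reduced $\mathbf{F}_p$-algebra, rather than the classical theory of strict $p$-rings valid for perfect $R$. Once $u$ exists and intertwines $V$, the remaining steps are diagram chasing facilitated by Proposition~\ref{Vexists} and the identifications of $V^r/V^{r+1}$ with $R$.
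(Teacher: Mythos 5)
Your proposal is correct and follows essentially the same route as the paper's own proof: produce $u$ from the universal property of $W(R)$ applied to the $p$-torsion-free ring $A^0$ with its Frobenius lift (using Lemma~\ref{lem27} to know $R$ is reduced), deduce $u(Vx)=Vu(x)$ from the injectivity of $F$ on $W(R)$ by exactly the computation $F(u(Vx))=pu(x)=FVu(x)$, and then prove bijectivity by induction along the $V$-adic filtration using the short exact sequences identifying $V^rA^0/V^{r+1}A^0$ and $V^rW(R)/V^{r+1}W(R)$ with $R$. One caution: the universal property as you state it (a unique Frobenius-compatible lift $S\to W(R')$ for an \emph{arbitrary} ring $R'$) is false in general--uniqueness can fail when $W(R')$ has $p$-torsion, e.g.\ for $R'=\mathbf{F}_p[t]/(t^2)$ the element $V([t])$ satisfies $F(\alpha)=\alpha^p$ and has zeroth component $0$--but your application only concerns the reduced ring $R$, where $W(R)$ is $p$-torsion-free and the statement is precisely the cofree $\lambda_p$-ring property invoked in the paper.
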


\begin{proof}
Since the ring $R$ is reduced (Lemma \ref{lem27}), the ring of Witt vectors
$W(R)$ is $p$-torsion free as it embeds inside $W(R^{1/p^{\infty}})$ where
$R^{1/p^{\infty}}$ denotes the \emph{perfection} 
$$ \varinjlim( R \xrightarrow{ x \mapsto x^p} R \xrightarrow{ x \mapsto x^p} R \xrightarrow{} \cdots )$$
Applying the universal property of $W(R)$ (as the cofree $\delta$-ring on
$R$; see Definition~\ref{lambdapring} below and \cite{Joyal}), we deduce
that there exists a unique ring homomorphism $u\colon  A^{0} \rightarrow W(R)$ which satisfies conditions $(i)$ and $(ii)$. To complete the proof, it will suffice to show that $u$ is an isomorphism.
We first note that $u$ satisfies the identity $u( Vx ) = V u(x)$ for each $x \in A^{0}$ (where the second $V$ denotes the usual Witt vector Verschiebung). To prove this, we begin with the identity
$$ F( u(Vx) ) = u( FVx) = u(px) = p u(x) = F V u(x)$$
and then invoke the fact that the Frobenius map $F\colon  W(R) \rightarrow W(R)$ is injective (by virtue of the fact that $R$ is reduced). 
It follows that $u$ carries $V^r A^0$ into $V^{r} W(R)$, and therefore induces a ring homomorphism $u_{r}\colon   \WittScript_{r}(A)^{0} \rightarrow W_r(R)$ for every nonnegative integer $r$.
Since $A^{\ast}$ is strict, we can identify $u$ with the inverse limit of the tower of maps $\{ u_r \}_{r \geq 0}$. Consequently, to show that $u$ is an isomorphism, it will suffice to show
that each $u_r$ is an isomorphism. We now proceed by induction on $r$, the case $r = 0$ being trivial. We have a commutative diagram of short exact sequences
$$\xymatrix{ 0 \ar[r] & A^{0} / VA^{0} \ar[r]^{ V^{r-1} } \ar[d]^{u_1} & A^{0} / V^{r} A^{0} \ar[r] \ar[d]^{ u_r } & A^{0} / V^{r-1} A^{0} \ar[d]^{ u_{r-1} } \ar[r] & 0 \\
0 \ar[r] & R \ar[r]^-{ V^{r-1} } & W_{r}(R) \ar[r] & W_{r-1} (R) \ar[r] & 0 }$$
where $u_1$ is the identity map from $A^{0} / VA^{0} = R$ to itself, and $u_{r-1}$ is an isomorphism by the inductive hypothesis. It follows that $u_r$ is also an isomorphism, as desired.
\end{proof}

We will also need the following:

\begin{proposition}\label{prop30}
Let $A^{\ast}$ be a strict Dieudonn\'{e} algebra and let $R$ be a commutative $\F_p$-algebra. For every ring homomorphism
$f_0\colon  R \rightarrow A^{0} / V A^{0}$, there is a unique ring homomorphism $f\colon  W(R) \rightarrow A^{0}$ satisfying the following pair of conditions:
\begin{itemize}
\item[$(i)$] The diagram
$$ \xymatrix{ W(R) \ar[r]^-{f} \ar[d] & A^{0} \ar[d] \\
R \ar[r]^-{f_0} & A^{0} / V A^{0} }$$
commutes (where the vertical maps are the natural projections).

\item[$(ii)$] The diagram 
$$ \xymatrix{ W(R) \ar[d]^{F} \ar[r]^{f} & A^{0} \ar[d]^{F} \\
W(R) \ar[r]^{ f } & A^{0} }$$
commutes (where the left vertical map is the Witt vector Frobenius).
\end{itemize}
\end{proposition}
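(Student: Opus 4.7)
The strategy is to appeal to Proposition~\ref{prop28} and the universal property of Witt vectors. Set $S = A^{0}/VA^{0}$. By Lemma~\ref{lem27}, $S$ is a reduced $\F_p$-algebra, so Proposition~\ref{prop28} supplies a canonical Frobenius-equivariant ring isomorphism $u\colon A^{0} \xrightarrow{\sim} W(S)$ lifting the identity on $S$; in particular $A^{0}$ is $p$-torsion-free, since $W(S)$ is (as $S$ is reduced and embeds into its perfection).

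For existence, take $f := u^{-1} \circ W(f_{0})$, i.e., the composition
$$W(R) \xrightarrow{W(f_{0})} W(S) \xrightarrow{u^{-1}} A^{0}.$$
Functoriality of $W$ implies that $W(f_{0})$ commutes with the Witt Frobenius, so combined with the Frobenius-equivariance of $u$ this yields condition~(ii). Naturality of the ghost projection $W \to \mathrm{id}$ together with the compatibility of $u$ with the projections onto $S$ yields condition~(i).

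For uniqueness, composing with $u$ reduces us to comparing two Frobenius-equivariant ring maps $f, f'\colon W(R) \to W(S)$ with a prescribed reduction mod $V$. I claim any such map $g$ is automatically a $\lambda_p$-ring map: using that $W(S)$ is $p$-torsion-free, one computes
$$p \cdot \delta_{W(S)}(g(x)) = F(g(x)) - g(x)^{p} = g(F(x) - x^{p}) = g(p \cdot \delta_{W(R)}(x)) = p \cdot g(\delta_{W(R)}(x))$$
and cancels $p$. The universal property of $W$ as the right adjoint to the forgetful functor from $\lambda_p$-rings to rings (cf.\ \cite{Joyal} and the proof of Proposition~\ref{prop28}) then identifies $\lambda_p$-ring maps $W(R) \to W(S)$ with ring maps $W(R) \to S$ via post-composition with the projection $W(S) \twoheadrightarrow S$. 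Condition~(i) forces this underlying ring map to equal $f_{0} \circ \pi$, where $\pi\colon W(R) \twoheadrightarrow R$ is the canonical projection, so $f$ is uniquely determined.

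The principal subtlety is that $W(R)$ is not $p$-torsion-free when $R$ has nilpotents, so the $\lambda_p$-structure on $W(R)$ carries strictly more information than its Witt Frobenius alone; however, the implication \emph{Frobenius-equivariant $\Rightarrow$ $\lambda_p$-equivariant} for a map $W(R) \to W(S)$ requires only $p$-torsion-freeness of the target, which is guaranteed by the reducedness of $S$.
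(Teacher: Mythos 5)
Your proof is correct, and its existence half is exactly the paper's: identify $A^{0}\simeq W(S)$, $S=A^{0}/VA^{0}$, via Proposition~\ref{prop28} and apply the Witt functor to $f_{0}$. Where you diverge is the uniqueness half. The paper isolates this as Lemma~\ref{lem29} and proves it by an elementary induction: one shows $g([a])\equiv[f(a)]\pmod{V^{r}}$ for all $r$ using injectivity of $F$ on $W(R')$ (valid since $R'$ is reduced), deduces that $g$ commutes with $V$, and then compares $g$ with $W(f)$ term by term on Teichm\"{u}ller expansions, using $V$-adic separatedness. You instead observe that for any ring map $g\colon W(R)\to W(S)$ commuting with $F$, the identity $p\,\theta_{W(S)}(g(x)) = g(F(x)-x^{p}) = p\,g(\theta_{W(R)}(x))$ together with $p$-torsion-freeness of $W(S)$ forces $g$ to be a $\lambda_p$-ring map, after which the cofree universal property of $W(S)$ (already invoked in the proof of Proposition~\ref{prop28}) pins $g$ down by its composite $W(R)\to S$, which condition $(i)$ prescribes to be $f_{0}\circ\pi$. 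Both arguments hinge on the same input (reducedness of $S$, hence $p$-torsion-freeness of $W(S)$ and injectivity of its Frobenius), but yours is shorter and more conceptual, at the cost of invoking one extra standard fact that the paper never needs: that the canonical $\lambda_p$-structure on $W(R)$ for an \emph{arbitrary} (possibly non-reduced) $R$ has the Witt vector Frobenius as its associated Frobenius lift, i.e.\ $F(x)-x^{p}=p\,\theta_{W(R)}(x)$ holds even when $W(R)$ has $p$-torsion; this is easily justified (e.g.\ by descending the identity along a surjection from $W$ of a polynomial ring), and you correctly flag that the torsion issue only concerns the source, not the target. The paper's route, by contrast, stays entirely within explicit Witt-vector manipulations and yields as byproducts the identification $g=W(f)$ and its $V$-equivariance.
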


In the statement of Proposition \ref{prop30}, the assumption that $A^{\ast}$ is a strict Dieudonn\'{e} algebra
is needed only to guarantee that $A^{0}$ is isomorphic to the ring of Witt vectors of $A^{0}/VA^{0}$ (Proposition \ref{prop28}). The
{\em existence} of the map $f\colon  W(R) \rightarrow A^{0}$ is clear: we simply apply
the Witt vector functor to the ring homomorphism $f_0\colon  R \rightarrow A^{0} / V A^{0}$.
The uniqueness of $f$ is a consequence of the following elementary assertion:

\begin{lemma}\label{lem29}
Let $R, R' $ be commutative $\F_p$-algebras, and let $g \colon W(R) \to W(R')$ be a
ring homomorphism. 
Suppose that: 
\begin{enumerate}
\item The map $g$ carries the ideal $VW(R) $ into the ideal $VW(R')$, and therefore
induces a map $f \colon R \to R'$ of $\F_p$-algebras; 
\item The map $g$ is compatible with the Witt vector Frobenius on each side; 
\item The ring $R'$ is reduced. 
\end{enumerate}
Then $g = W(f)$. 
\end{lemma}

\begin{proof}
Our hypotheses imply that $g \colon W(R) \to W(R')$ is a map of $\delta$-rings
(Definition~\ref{lambdapring}),
since it commutes with the Witt vector Frobenius on both sides and since
$W(R')$ is $p$-torsionfree. Therefore, $g$
is determined by the composite map of rings $W(R) \xrightarrow{g} W(R') \to 
R'$, thanks to the universal property of the Witt vectors of \cite{Joyal}. 
Similarly, $W(f)$ is determined by the composite map of rings $W(R)
\xrightarrow{W(f)} W(R') \to R'$. But these two composites agree since $g$
carries the ideal $VW(R) $ into  $VW(R')$. 
\end{proof}

\subsection{Aside: Rings with $p$-Torsion}
\label{lambda}

We now describe a generalization of Proposition \ref{prop42} which allows us to
relax the assumption that $R$ is $p$-torsion free. This material will
not be used in the sequel, and can be safely skipped without loss of
continuity. We begin by recalling the notion of a \emph{$\delta$-ring}, introduced by Joyal in \cite{Joyal} and further developed (using the language of \emph{plethories}) in
\cite{BoWi05}. 

\begin{definition}
\label{lambdapring}
A {\it $\delta$-ring} is a commutative ring $R$ equipped with a function $\delta\colon  R \to R$ 
satisfying the identities
$$
\delta(x+y) = \delta(x) + \delta(y) - \sum_{0 < i < p } \frac{ (p-1)!}{ i! (p-i)!} x^{i} y^{p-i}$$
$$ \delta(xy) = x^p \delta(y) + \delta(x) y^p + p \delta(x) \delta(y).$$
$$\delta(1) = \delta(0) = 0.$$
\end{definition}

\begin{remark}
Let $(R, \delta)$ be a $\delta$-ring, and define $\varphi\colon  R \rightarrow R$ by the formula
$\varphi(x) = x^{p} + p \delta(x)$. Then $\varphi$ is a ring homomorphism satisfying the condition
$\varphi(x) \equiv x^{p} \pmod{p}$. Conversely, if $R$ is a $p$-torsion-free ring
equipped with a ring homomorphism $\varphi\colon  R \rightarrow R$ satisfying $\varphi(x) \equiv x^{p} \pmod{p}$,
then the construction 
\[x \mapsto \frac{\varphi(x) - x^p}{p}\]
 determines a map $\delta\colon  R \rightarrow R$
which endows $R$ with the structure of a $\delta$-ring (as noted in the proof of Proposition \ref{prop42}).
This observation does not extend to the case where $R$ has $p$-torsion. For example, if $R$ is an $\F_p$-algebra,
then there is a unique ring homomorphism $\varphi\colon  R \rightarrow R$ satisfying $\varphi(x) \equiv x^p \pmod{p}$.
However, $R$ cannot admit the structure of a $\delta$-ring unless $R \simeq 0$ (this follows from the calculation
$0 = \delta(0) = \delta(p) = 1 - p^{p-1}$).
\end{remark}

\begin{remark}\label{losbit}
The free $\delta$-ring on a single generator $x$ is given as a commutative ring
by $S = \mathbb{Z}[x, \delta(x), \delta^2(x), \dots ]$. More generally, the free $\delta$-ring on
a set of generators $\{ x_i \}_{i \in I}$ can be identified with an $I$-fold tensor product of copies of $S$, and is therefore
isomorphic (as a commutative ring) to a polynomial algebra over $\Z$ on infinitely many generators.
\end{remark}

\begin{definition} 
A \emph{$\delta$-cdga} is a commutative differential graded algebra
$(A^{\ast}, d)$ together with a map of graded rings $F\colon  A^{\ast} \rightarrow A^{\ast}$
and a map of sets $\delta\colon  A^{0} \rightarrow A^{0}$ satisfying the following conditions:
\begin{enumerate}
\item The group $A^{n}$ vanishes for $n < 0$.
\item The map $\delta\colon  A^{0} \rightarrow A^{0}$ endows $A^{0}$ with the structure 
of a $\delta$-ring.
\item For each $x \in A^{0}$, we have $F(x) = x^p + p \delta(x)$.
\item For each $x \in A^{\ast}$, we have $d F(x) = p F(dx)$.
\item For each $x \in A^{0}$, we have $F(dx) = x^{p-1} dx + d \delta(x)$ (note that
this condition is automatic if $A^{1}$ is $p$-torsion-free, since it holds after multiplying by $p$).
\end{enumerate}
A map  of $\delta$-cdgas is a map of commutative differential graded
algebras compatible with $\delta$ and $F$. 
\end{definition} 

\begin{remark}
Let $( A^{\ast}, d, F, \delta)$ be a $\delta$-cdga. Then the triple $(A^{\ast}, d, F)$ is a Dieudonn\'{e} algebra.
Conversely, if $( A^{\ast}, d, F)$ is a $p$-torsion-free Dieudonn\'{e} algebra, then there is a unique
map $\delta\colon  A^{0} \rightarrow A^{0}$ for which $(A^{\ast}, d, F, \delta)$ is a $\delta$-cdga.
\end{remark}

\begin{proposition} 
Let $(R, \delta)$ be a $\delta$-ring. Then there is a unique map $F\colon  \Omega_{R}^{\ast} \rightarrow \Omega_{R}^{\ast}$
which endows the de~Rham complex $\Omega_{R}^{\ast}$ with the structure of a
$\delta$-cdga with respect to the map $\delta$. Moreover, for any 
$\delta$-cdga $A^{\ast}$, we have a canonical bijection
\[ \left\{\delta\text{-}\mathrm{cdga \ maps \ } \Omega_R^{\ast} \to
A^{\ast}\right\}  \simeq \left\{\delta\text{-}\mathrm{ring \ maps \ } R \to
A^0\right\}. \]
In other words, the construction $R \mapsto \Omega_R^{\ast}$ is the left adjoint
of the forgetful functor $A^{\ast} \mapsto A^0$ from $\delta$-cdgas to
$\delta$-rings. 
\end{proposition}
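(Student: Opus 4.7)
Uniqueness of $F$ is easy: $\Omega_R^{\ast}$ is generated as a graded ring by $R$ (in degree $0$) and the image of $d\colon R \to \Omega_R^1$, and axioms $(3)$ and $(5)$ of a $\lambda_p$-cdga force $F(x) = x^p + p\theta(x)$ and $F(dx) = x^{p-1} dx + d\theta(x)$ on these generators. So there is at most one $F$ compatible with $\theta$, and this also shows that any cdga map between $\lambda_p$-cdgas extending a $\lambda_p$-ring map in degree $0$ automatically respects $F$ on degree-$0$ generators.

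Existence when $R$ is $p$-torsion-free follows directly from Proposition \ref{prop42}, applied to the ring homomorphism $\varphi(x) = x^p + p\theta(x)$ (which satisfies $\varphi(x) \equiv x^p \pmod p$). The $\theta$ reconstructed from the resulting $F$ via $(Fx - x^p)/p$ agrees with the given one because $R$ is $p$-torsion-free.

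In the general case, I would use Remark \ref{losbit}: every $\lambda_p$-ring $R$ is a quotient of a free $\lambda_p$-ring $S$, which is a polynomial ring over $\Z$ and hence $p$-torsion-free. Write $R = S/I$ for an ideal $I \subseteq S$ closed under $\theta$ (equivalently under $\varphi$); the universal property of K\"ahler differentials then identifies $\Omega_R^{\ast}$ with $\Omega_S^{\ast}/J$, where $J$ is the differential graded ideal generated by $I$, namely $J = I \cdot \Omega_S^{\ast} + \Omega_S^{\ast} \cdot d(I)$. The Frobenius $F$ from Proposition \ref{prop42} preserves $J$: for $x \in I$ the $\theta$-closure of $I$ gives $F(x) = x^p + p\theta(x) \in I$, and axiom $(5)$ gives $F(dx) = x^{p-1} dx + d\theta(x) \in I \cdot \Omega_S^1 + d(I) \subseteq J$; since $F$ is a ring homomorphism and $J$ is a dg ideal, this suffices. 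The descent then makes $\Omega_R^{\ast}$ into a $\lambda_p$-cdga, with $\theta$ on $R = \Omega_R^0$ induced from that on $S$ (which agrees with the original $\theta$ on $R$ by construction).

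For the adjunction, let $A^{\ast}$ be a $\lambda_p$-cdga and $f_0\colon R \to A^0$ a $\lambda_p$-ring map. The universal property of the K\"ahler de Rham complex produces a unique cdga map $f\colon \Omega_R^{\ast} \to A^{\ast}$ extending $f_0$; I only need to check that $f$ commutes with $F$. Since $f$ is a ring homomorphism and $\Omega_R^{\ast}$ is generated by $R$ and $d(R)$, it suffices to check on generators. On $R$ this is the hypothesis that $f_0$ is a $\lambda_p$-ring map (so $f_0 \circ \varphi = \varphi \circ f_0 = F \circ f_0$). On $dR$, using axiom $(5)$ in $A^{\ast}$ and $f_0 \circ \theta = \theta \circ f_0$,
\[ f(F(dx)) = f_0(x)^{p-1} df_0(x) + d\theta(f_0(x)) = F(df_0(x)) = F(f(dx)). \]
Combined with uniqueness, this yields the claimed bijection $\Hom_{\lambda_p\text{-cdga}}(\Omega_R^{\ast}, A^{\ast}) \simeq \Hom_{\lambda_p\text{-ring}}(R, A^0)$.

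The only step requiring genuine care is the descent argument showing that $F$ preserves the dg ideal cutting out $\Omega_R^{\ast}$ from $\Omega_S^{\ast}$; everything else is formal manipulation of generators and the universal property of K\"ahler differentials. The reason this descent works is precisely that the explicit formula in axiom $(5)$ uses only $\theta$, which is defined on $R$ itself without needing to invert $p$.
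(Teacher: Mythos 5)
Your proof is correct, and the torsion-free case together with the adjunction argument (extend $f_0$ by the universal property of the de Rham complex, then check $F$-compatibility on the generators $R$ and $dR$ using axiom $(5)$) coincides with what the paper does. Where you diverge is the passage to general $\lambda_p$-rings with $p$-torsion: the paper presents $R$ as a reflexive coequalizer $\varinjlim(F' \rightrightarrows F)$ of \emph{free} $\lambda_p$-rings (which are $p$-torsion-free by Remark~\ref{losbit}) and invokes the fact that reflexive coequalizers of $\lambda_p$-cdgas are computed on underlying abelian groups, so that $\Omega_R^{\ast} \simeq \varinjlim(\Omega_{F'}^{\ast} \rightrightarrows \Omega_{F}^{\ast})$ inherits the structure and universal property formally; you instead present $R = S/I$ as a quotient of a single free $\lambda_p$-ring by a $\theta$-stable ideal, identify $\Omega_R^{\ast} \simeq \Omega_S^{\ast}/J$ with $J$ the differential graded ideal generated by $I$, and verify by hand that $F(J) \subseteq J$ so that the Dieudonn\'e-algebra structure of Proposition~\ref{prop42} descends. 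Both routes rest on the same input (freeness gives $p$-torsion-freeness), and both are valid; yours is more explicit but requires the identification $\Omega_{S/I}^{\ast} \simeq \Omega_S^{\ast}/J$ and the stability computation, while the paper's categorical argument sidesteps those verifications at the cost of the (standard) fact about reflexive coequalizers. One small inaccuracy worth noting: $\theta$-stability of $I$ is \emph{not} equivalent to $\varphi$-stability in general (only $\theta$-stable $\Rightarrow$ $\varphi$-stable; e.g.\ $I = (x,p)$ in the free $\lambda_p$-ring on $x$ is $\varphi$-stable but not $\theta$-stable), but this is harmless since the kernel of a surjection of $\lambda_p$-rings is automatically $\theta$-stable, which is all your argument uses.
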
 

\begin{proof}
Assume first that $R$ is $p$-torsion free. The existence and uniqueness of $F$ follows from Proposition~\ref{prop42}.
Let $A^{\ast}$ be any $\delta$-cdga. Using the universal property of the 
de~Rham complex $\Omega^{\ast}_{R}$, we see that every morphism of $\delta$-rings $f\colon  R \to A^0$ extends
uniquely to a map of differential graded algebras $\overline{f}\colon  \Omega^{\ast}_R \to A^{\ast}$.
It remains to check that $\overline{f}$ is a map of $\delta$-cdgas: that is, that it commutes with the
Frobenius operator $F$. Since $\Omega^{\ast}_{R}$ is generated by $R$ together with elements of the form
$dx$, we are reduced to showing that $\overline{f}( F dx ) = F \overline{f}(dx)$ for each $x \in R$, 
which follows from the identity $F(dy) = y^{p-1} dy + d \delta(y)$ for $y = f(x) \in A^0$.

To treat the general case, we note that any $\delta$-ring $R$ can be resolved
using free $\delta$-rings. Explicitly, $R$ can be presented as a reflexive coequalizer
$\varinjlim( F' \rightrightarrows F )$, where $F$ and $F'$ are free
$\delta$-rings; this follows from the general theory of (possibly
multi-sorted) finitary algebraic theories \cite[Ch. 6]{Johnstone}. In this case,
$F$ and $F'$ are $p$-torsion-free (Remark \ref{losbit}). It follows from the preceding argument that the de~Rham complexes
$\Omega_{F}^{\ast}$ and $\Omega_{F'}^{\ast}$ can be regarded as $\delta$-cdgas.
Since reflexive coequalizers in the category of $\delta$-cdgas can be computed at the level of the underlying abelian groups,
it follows that $\Omega_{R}^{\ast} \simeq \varinjlim( \Omega_{F'}^{\ast} \rightrightarrows \Omega_{F}^{\ast} )$ inherits
the structure of a $\delta$-algebra with the desired universal property.
\end{proof}

\newpage
\section{The Saturated de~Rham--Witt Complex}

In this section, we introduce the {\it saturated de~Rham--Witt complex} $\WOmega^{\ast}_{R}$ of
a commutative $\F_p$-algebra $R$ (Definition~\ref{def80}). We define $\WOmega^{\ast}_{R}$
to be a strict Dieudonn\'{e} algebra satisfying a particular universal property, analogous
to the characterization of the usual de~Rham complex $\Omega^{\ast}_{R}$ as the commutative differential graded algebra
generated by $R$. In the case where $R$ is a smooth algebra over a perfect ring $k$, we show that $\WOmega^{\ast}_{R}$ is
quasi-isomorphic to the completed de~Rham complex of any $W(k)$-lift of $R$ (Theorem~\ref{theo53}). In the case where
$R$ is a regular Noetherian ring, we show that the saturated de~Rham--Witt complex $\WOmega^{\ast}_{R}$ agrees with
the classical de~Rham--Witt complex $W \Omega^{\ast}_{R}$ constructed in \cite{illusie} (Theorem~\ref{maintheoC}).

\subsection{Construction of $\WOmega^{\ast}_{R}$}

\begin{definition}\label{def80}
Let $A^{\ast}$ be a strict Dieudonn\'{e} algebra and suppose we are given
an $\F_p$-algebra homomorphism $f \colon R \rightarrow A^{0} / VA^{0}$. We will say that
$f$ {\it exhibits $A^{\ast}$ as a saturated de~Rham--Witt complex of $R$} if it satisfies the following universal property:
for every strict Dieudonn\'{e} algebra $B^{\ast}$, composition with $f$ induces a bijection
$$ \Hom_{ \FrobAlg}( A^{\ast}, B^{\ast} ) \rightarrow \Hom( R, B^{0} / V B^{0} ).$$
\end{definition}

\begin{notation}\label{not40}
Let $R$ be an $\F_p$-algebra. It follows immediately from the definitions that if there exists
a strict Dieudonn\'{e} algebra $A^{\ast}$ and a map $f\colon  R \rightarrow A^{0} / V A^{0}$ which
exhibits $A^{\ast}$ as a de~Rham--Witt complex of $R$, then $A^{\ast}$ (and the map $f$) are determined up to unique isomorphism.
We will indicate this dependence by denoting $A^{\ast}$ as $\WOmega^{\ast}_{R}$ and referring to $A^{\ast}$ as
{\it the saturated de~Rham--Witt complex of $R$}. 
\end{notation}

\begin{warning}
The saturated de~Rham Witt complex $\WOmega^{\ast}_{R}$ of Notation \ref{not40} is generally not the same as the de~Rham--Witt complex
defined in \cite{illusie} (though they agree for a large class of $\F_p$-algebras: see Theorem~\ref{maintheoC}). For this reason, we use the term {\em classical de~Rham--Witt complex} to refer to complex
$W \Omega^{\ast}_{R}$ constructed in \cite{illusie} (see Definition~\ref{classicaldRW} for a review).

For example, it follows from Lemma \ref{lem27} that the saturated de~Rham--Witt
complex $\WOmega^{\ast}_{R}$ agrees with the saturated de~Rham--Witt complex
$\WOmega^{\ast}_{ R^{\red} }$, where $R^{\red}$ denotes the quotient of $R$ by
its nilradical; the classical de~Rham--Witt complex does not have this property.
For a reduced example of this phenomenon, see Proposition~\ref{CuspComparedRW}. \end{warning}

For existence, we have the following:

\begin{proposition}
\label{dRWExists}
Let $R$ be an $\F_p$-algebra. Then there exists strict Dieudonn\'{e} algebra $A^{\ast}$ and a map $f\colon  R \rightarrow A^{0} / V A^{0}$ which
exhibits $A^{\ast}$ as a saturated de~Rham--Witt complex of $R$.
\end{proposition}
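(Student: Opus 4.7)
The plan is to construct $\WOmega^{\ast}_{R}$ by first treating the case where $R = \F_p[X]$ is a polynomial algebra on a set $X$, and then bootstrapping to arbitrary $R$ by means of coequalizers in $\FrobAlgComplete$.

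For the polynomial case, set $\tilde R = \Z[X]$ and equip it with the Frobenius lift $\varphi(x) = x^p$ on generators. Since $\tilde R$ is $p$-torsion-free, Variant~\ref{var46} endows the completed de Rham complex $\widehat{\Omega}^{\ast}_{\tilde R}$ with the structure of a Dieudonn\'e algebra, and applying Corollary~\ref{oloter} yields a strict Dieudonn\'e algebra $A^{\ast} := \WSaturate(\widehat{\Omega}^{\ast}_{\tilde R})$. The composition $\tilde R \to A^{0} \to A^{0}/VA^{0}$ factors through $R$ because the target is an $\F_p$-algebra by Lemma~\ref{lem27} (note $p = V(1) \in VA^{0}$), producing a canonical map $f\colon R \to A^{0}/VA^{0}$. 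To verify the universal property of Definition~\ref{def80}, I would chain the following natural bijections, for any strict Dieudonn\'e algebra $B^{\ast}$:
\[ \Hom_{\FrobAlg}(A^{\ast}, B^{\ast}) \;\simeq\; \Hom_{\FrobAlg}(\widehat{\Omega}^{\ast}_{\tilde R}, B^{\ast}) \;\simeq\; \Hom(\tilde R, B^{0}/VB^{0}) \;\simeq\; \Hom(R, B^{0}/VB^{0}). \]
The first uses the adjunction of Corollary~\ref{oloter}. The second combines Variant~\ref{var46} (identifying maps of Dieudonn\'e algebras $\widehat{\Omega}^{\ast}_{\tilde R} \to B^{\ast}$ with Frobenius-equivariant ring maps $\tilde R \to B^{0}$) with Corollary~\ref{cor47} (lifting ring maps $\tilde R \to B^{0}/VB^{0}$ uniquely to such Frobenius-equivariant maps). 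The third uses that $\tilde R = \Z[X]$ and $R = \F_p[X]$ are both free on $X$, so ring maps into the $\F_p$-algebra $B^{0}/VB^{0}$ correspond to the same set of data, namely arbitrary set maps $X \to B^{0}/VB^{0}$.

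For a general $\F_p$-algebra $R$, I would write $R$ as a coequalizer $R = \mathrm{coeq}(F_1 \rightrightarrows F_0)$ of polynomial $\F_p$-algebras, and define $\WOmega^{\ast}_R$ as the coequalizer in $\FrobAlgComplete$ of the diagram $\WOmega^{\ast}_{F_1} \rightrightarrows \WOmega^{\ast}_{F_0}$ induced by functoriality (established by the case just treated). Such coequalizers exist: the category $\FrobAlg$ admits coequalizers (formed by quotienting by the $F$-stable differential graded ideal generated by the relations), and then the reflector $\WSaturate$ of Corollary~\ref{oloter} produces the coequalizer in $\FrobAlgComplete$. The universal property for $\WOmega^{\ast}_R$ transfers along the coequalizer from the universal properties of $\WOmega^{\ast}_{F_0}$ and $\WOmega^{\ast}_{F_1}$, since the functor $B^{\ast} \mapsto \Hom(-, B^{0}/VB^{0})$ sends the coequalizer to an equalizer on the right.

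The main obstacle I anticipate is the middle bijection in Step~1, where I need Corollary~\ref{cor47} to give a genuine inverse to the forgetful map rather than an ``essentially unique'' lift. This is not a serious difficulty, because $B^{\ast}$ strict forces $B^{0} \simeq W(B^{0}/VB^{0})$ canonically (Proposition~\ref{prop28}), pinning down the Frobenius-equivariant lift uniquely. Beyond this, the remaining steps are formal manipulations of adjunctions and the standard presentation of an $\F_p$-algebra as a coequalizer of free ones.
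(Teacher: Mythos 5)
Your proposal is correct, but it follows a different route than the paper. The paper's proof handles all $\F_p$-algebras at once: it first reduces to the case of a reduced ring $R$ (using Lemma~\ref{lem27}, since $B^{0}/VB^{0}$ is always reduced), observes that $W(R)$ is then $p$-torsion-free with its canonical Frobenius, endows $\Omega^{\ast}_{W(R)}$ with a Dieudonn\'e algebra structure via Proposition~\ref{prop42}, and takes $A^{\ast} = \WSaturate(\Omega^{\ast}_{W(R)})$; the universal property is then obtained by chaining Propositions~\ref{prop38}, \ref{prop33}, \ref{prop39} and~\ref{prop30}. You instead settle the polynomial case using the explicit lift $\Z[X]$ with $\varphi(x)=x^{p}$ (your middle bijection is precisely the paper's Proposition~\ref{prop50}, proved there exactly as you propose, by combining Variant~\ref{var46} with Corollary~\ref{cor47}), and then bootstrap to general $R$ by presenting it as a coequalizer of polynomial algebras and forming the corresponding coequalizer in $\FrobAlgComplete$ (which exists, as you say, because quotients of Dieudonn\'e algebras by $F$- and $d$-stable graded ideals are again Dieudonn\'e algebras, and $\WSaturate$ is a reflector by Corollary~\ref{oloter}). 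What the paper's approach buys is economy: no colimit construction or transfer-of-universal-property argument is needed, since the Witt vector functor supplies a canonical torsion-free Frobenius lift of any reduced $\F_p$-algebra. What your approach buys is that it sidesteps the de Rham complex of $W(R)$ for general $R$ and the reduction step via Lemma~\ref{lem27}, at the cost of the (routine but necessary) verifications that $\WOmega^{\ast}$ is functorial on polynomial algebras, that the structure map $F_0 \to A^{0}/VA^{0}$ descends to $R$, and that $\Hom(-,B^{0}/VB^{0})$ converts the coequalizer into the appropriate equalizer; it is close in spirit to the adjoint-functor-theorem argument sketched in the remark following Corollary~\ref{dRWLeftAdj}.
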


\begin{proof}
By virtue of Lemma \ref{lem27}, we may assume without loss of generality that $R$ is reduced. 
Let $W(R)$ denote the ring of Witt vectors of $R$, and let $\varphi\colon  W(R) \rightarrow W(R)$ be the Witt vector Frobenius.
Since $R$ is reduced, $W(R)$ is $p$-torsion-free. Using Proposition
\ref{prop42}, we can endow the de~Rham complex $\Omega^{\ast}_{ W(R) }$ with the structure of a Dieudonn\'{e} algebra.
Let $A^{\ast}$ be the completed saturation $\WSaturate( \Omega^{\ast}_{ W(R) } )$. 
Combining the universal properties described in Proposition \ref{prop38}, Proposition \ref{prop33}, 
Proposition \ref{prop39}, and Proposition \ref{prop30}, we
see that for every strict Dieudonn\'{e} algebra $B^{\ast}$, we have natural bijections
\begin{eqnarray*}
\Hom_{ \FrobAlgComplete}( A^{\ast}, B^{\ast} ) & \simeq & \Hom_{ \FrobAlgSat}(
\Saturate( \Omega^{\ast}_{ W(R) }), B^{\ast} ) \\
& \simeq & \Hom_{ \FrobAlg}( \Omega^{\ast}_{W(R)}, B^{\ast} ) \\
& \simeq & \Hom_{F}( W(R), B ) \\
& \simeq & \Hom(R, B^{0}/ V B^{0} );
\end{eqnarray*}
here $\Hom_{F}(W(R), B)$ denotes the set of ring homomorphisms from $f\colon  W(R) \rightarrow B$ satisfying $f \circ \varphi = F \circ f$.
Taking $B^{\ast} = A^{\ast}$, the image of the identity map $\id_{ A^{\ast} }$ under the composite bijection determines a ring homomorphism
$R \rightarrow A^{0} / V A^{0}$ with the desired property.
\end{proof}

Let $\CAlg_{\F_p}$ denote the category of commutative $\F_p$-algebras. Then the construction $A^{\ast} \mapsto A^{0} / V A^{0}$ determines a functor
$\FrobAlgComplete \rightarrow \CAlg_{ \F_p }$.

\begin{corollary}
\label{dRWLeftAdj}
The functor $A^{\ast} \mapsto A^{0} / V A^{0}$ described above admits a left adjoint $\WOmega^{\ast}\colon  \CAlg_{ \F_p } \rightarrow \FrobAlgComplete$, given
by the formation of saturated de~Rham--Witt complexes $R \mapsto
\WOmega^{\ast}_{R}$. \qed
\end{corollary}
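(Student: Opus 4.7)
The plan is to observe that this corollary is essentially a formal consequence of Proposition~\ref{dRWExists} combined with the universal property in Definition~\ref{def80}, via the standard recognition principle for adjoint functors.

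More concretely, Proposition~\ref{dRWExists} guarantees that for every $R \in \CAlg_{\F_p}$ we can choose a strict Dieudonn\'{e} algebra $\WOmega^{\ast}_R$ together with a unit map $\eta_R\colon R \to \WOmega^{0}_R/V\WOmega^{0}_R$ exhibiting $\WOmega^{\ast}_R$ as a saturated de Rham--Witt complex of $R$. By Definition~\ref{def80}, this means that for every strict Dieudonn\'e algebra $B^{\ast}$, composition with $\eta_R$ yields a bijection
\[
\Hom_{\FrobAlgComplete}(\WOmega^{\ast}_R,\, B^{\ast}) \;\xrightarrow{\sim}\; \Hom_{\CAlg_{\F_p}}(R,\, B^{0}/VB^{0}),
\]
which is tautologically natural in $B^{\ast}$ since the right-hand side only depends on the functor $B^{\ast} \mapsto B^0/VB^0$.

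Next I would promote $R \mapsto \WOmega^{\ast}_R$ to a functor. Given a morphism $f\colon R \to R'$ in $\CAlg_{\F_p}$, the composition $\eta_{R'} \circ f\colon R \to \WOmega^{0}_{R'}/V\WOmega^{0}_{R'}$ is a homomorphism of $\F_p$-algebras, so by the universal property applied with $B^{\ast} = \WOmega^{\ast}_{R'}$ there is a unique morphism $\WOmega^{\ast}(f)\colon \WOmega^{\ast}_R \to \WOmega^{\ast}_{R'}$ of strict Dieudonn\'{e} algebras making the obvious square with $\eta_R$ and $\eta_{R'}$ commute. The uniqueness clause in the universal property immediately yields $\WOmega^{\ast}(\id_R) = \id$ and $\WOmega^{\ast}(g \circ f) = \WOmega^{\ast}(g) \circ \WOmega^{\ast}(f)$, so $\WOmega^{\ast}$ is a functor and the maps $\eta_R$ assemble into a natural transformation.

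Finally, the naturality in $R$ of the bijection above is then automatic: given $f\colon R \to R'$ and any strict Dieudonn\'{e} algebra $B^{\ast}$, both compositions in the square relating $\Hom_{\FrobAlgComplete}(\WOmega^{\ast}_{?}, B^{\ast})$ to $\Hom(?, B^{0}/VB^{0})$ send a morphism $g\colon \WOmega^{\ast}_{R'} \to B^{\ast}$ to the map $R \to B^0/VB^0$ induced by $g \circ \WOmega^{\ast}(f)$, which by construction agrees with the map induced by $g$ precomposed with $\eta_{R'} \circ f$. There is no real obstacle here --- everything is formal from Proposition~\ref{dRWExists} --- and the only point worth spelling out carefully is the compatibility of the universal property with the chosen unit $\eta_R$, so that the functoriality in $R$ is forced rather than requiring a separate construction.
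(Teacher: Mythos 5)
Your proposal is correct and matches the paper's (implicit) argument: Corollary~\ref{dRWLeftAdj} is deduced formally from Proposition~\ref{dRWExists} together with the universal property of Definition~\ref{def80}, exactly as you describe via the pointwise-universal-arrow criterion for adjoints. (The paper only adds a remark that one could alternatively invoke the adjoint functor theorem, but its main route is the one you took.)
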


\begin{remark} 
Corollary~\ref{dRWLeftAdj} can also be proved by applying the adjoint
functor theorem \cite[Th. 1.66]{AdamekRosicky}. The category $\FrobAlgComplete$ is locally presentable, and
it is a straightforward consequence of Proposition~\ref{prop28} that the
construction $A^{\ast} \mapsto A^0/VA^0$ commutes with arbitrary limits (which
are computed at the level of underlying graded abelian groups) and
filtered colimits (see Proposition~\ref{prop74}). We leave the details to the reader.
\end{remark} 

\subsection{Comparison with a Smooth Lift}

\begin{proposition}\label{prop50}
Let $R$ be a commutative ring which is $p$-torsion-free, and let $\varphi\colon  R \rightarrow R$ be a ring homomorphism satisfying
$\varphi(x) \equiv x^{p} \pmod{p}$ for $x \in R$. Let $\widehat{\Omega}^{\ast}_{R}$ be the completed de~Rham complex
of Variant \ref{var46}, and let $A^{\ast}$ be a strict Dieudonn\'{e} algebra. Then the restriction map
$$ \Hom_{ \FrobAlg}( \widehat{ \Omega}^{\ast}_{R}, A^{\ast} ) \rightarrow
\Hom( R, A^{0} / V A^{0} )$$
is bijective. In other words, every ring homomorphism $R \rightarrow A^{0} / VA^{0}$ can be lifted uniquely to a 
map of Dieudonn\'{e} algebras $\widehat{ \Omega}^{\ast}_{R} \rightarrow A^{\ast}$.
\end{proposition}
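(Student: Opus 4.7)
The plan is to reduce the statement to the universal property already established for $\widehat{\Omega}^{\ast}_R$ in the category of $p$-complete, $p$-torsion-free Dieudonn\'e algebras (Variant~\ref{var46}), and then combine it with the Frobenius-equivariant lifting result of Corollary~\ref{cor47}. Concretely, I will construct the desired inverse to the restriction map as a composition of two bijections.

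First I would verify that $A^{\ast}$, being strict, fits the hypotheses of Variant~\ref{var46}: its underlying graded abelian group is $p$-torsion-free by Remark~\ref{olos}, and each $A^{n}$ is $p$-adically complete by Corollary~\ref{derivedcompleteDC} (indeed, $A^{n} \simeq \varprojlim_{r} \WittScript_{r}(A)^{n}$ with $\WittScript_{r}(A)^{n}$ annihilated by $p^{r}$). Applying Variant~\ref{var46}, the restriction-to-degree-$0$ map gives a bijection
\[
\Hom_{\FrobAlg}(\widehat{\Omega}^{\ast}_{R}, A^{\ast}) \;\xrightarrow{\sim}\; \bigl\{\, g\colon R \to A^{0} \,\bigm|\, g \text{ is a ring hom with } g \circ \varphi = F \circ g \,\bigr\}.
\]

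Next, I would apply Corollary~\ref{cor47} with $B = R$ (using that $R$ is $p$-torsion-free and carries the Frobenius lift $\varphi$) to identify the right-hand side with $\Hom(R, A^{0}/VA^{0})$: any ring homomorphism $f_{0}\colon R \to A^{0}/VA^{0}$ lifts essentially uniquely to a ring homomorphism $g\colon R \to A^{0}$ satisfying $g \circ \varphi = F \circ g$, and conversely post-composing such a $g$ with the quotient $A^{0} \twoheadrightarrow A^{0}/VA^{0}$ recovers $f_{0}$. This gives a bijection
\[
\bigl\{\, g\colon R \to A^{0} \,\bigm|\, g \circ \varphi = F \circ g \,\bigr\} \;\xrightarrow{\sim}\; \Hom(R, A^{0}/VA^{0}).
\]

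Finally, I would check that the composition of these two bijections is precisely the restriction map of the statement: a morphism $\widehat{\Omega}^{\ast}_{R} \to A^{\ast}$ is sent by Variant~\ref{var46} to its degree-$0$ component $R \to A^{0}$, and then by Corollary~\ref{cor47} to the composite $R \to A^{0} \to A^{0}/VA^{0}$, which is exactly what Definition~\ref{def80} calls the restriction map. This yields the claimed bijectivity. There is no real obstacle here; the main conceptual point is simply recognizing that Corollary~\ref{cor47}, proved via the identification $A^{0} \simeq W(A^{0}/VA^{0})$ of Proposition~\ref{prop28}, supplies exactly the bridge between ``Frobenius-equivariant lifts to $A^{0}$'' and ``ring maps into $A^{0}/VA^{0}$'' that the universal property of $\widehat{\Omega}^{\ast}_{R}$ needs to be converted into the universal property characterizing $\WOmega^{\ast}_{R}$.
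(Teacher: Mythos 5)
Your proposal is correct and follows exactly the paper's argument: the paper's proof is precisely to combine the universal property of $\widehat{\Omega}^{\ast}_{R}$ from Variant~\ref{var46} with Corollary~\ref{cor47}, after noting that each $A^{n}$ of a strict Dieudonn\'{e} algebra is $p$-adically complete (and $p$-torsion-free). Your write-up just makes these verifications and the composition of the two bijections explicit.
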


\begin{proof}
Combine the universal properties of Variant \ref{var46} and
Proposition~\ref{prop30} (noting that if $A^{\ast}$ is a strict Dieudonn\'{e} algebra,
then each of the abelian groups $A^{n}$ is $p$-adically complete).
\end{proof}

\begin{notation}\label{not53}
Let $R$ be an $\F_p$-algebra. For each $r \geq 0$, we let $\WrOmega{r}^{\ast}_{R}$ denote the
quotient $\WOmega^{\ast}_{R} / ( \im(V^{r}) + \im( dV^{r} ) )$. By construction, we have a tautological map
$e\colon  R \rightarrow \WrOmega{1}^{\ast}_{R}$.
\end{notation}

\begin{corollary}\label{cor70}
Let $R$ be a commutative ring which is $p$-torsion-free, and let $\varphi\colon  R \rightarrow R$ be a ring homomorphism satisfying
$\varphi(x) \equiv x^{p} \pmod{p}$ for $x \in R$. Then there is a unique map of Dieudonn\'{e} algebras $\mu\colon  \widehat{\Omega}^{\ast}_{R} \rightarrow \WOmega^{\ast}_{R/pR}$
for which the diagram
$$ \xymatrix{ R \ar[r] \ar[d] & \widehat{\Omega}^{0}_{R} \ar[r]^-{\mu} & \WOmega^{0}_{R/pR} \ar[d] \\
R/pR \ar[rr]^{e} & & \WrOmega{1}^{0}_{R/pR} }$$
commutes (here $e$ is the map appearing in Notation \ref{not53}). Moreover,
$\mu$ induces an isomorphism of Dieudonn\'{e} algebras $\WSaturate(
\widehat{\Omega}^{\ast}_{R} ) \xrightarrow{\sim} \WOmega^{\ast}_{R/pR}$.
\end{corollary}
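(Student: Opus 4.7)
The plan is to deduce both assertions from the universal properties established earlier, by showing that both $\WSaturate(\widehat{\Omega}^{\ast}_R)$ and $\WOmega^{\ast}_{R/pR}$ corepresent the same functor on the category $\FrobAlgComplete$ of strict Dieudonn\'e algebras.

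For the first assertion, I would invoke Proposition~\ref{prop50} with the strict Dieudonn\'e algebra $A^{\ast} = \WOmega^{\ast}_{R/pR}$. That proposition gives a bijection between $\Hom_{\FrobAlg}(\widehat{\Omega}^{\ast}_R, \WOmega^{\ast}_{R/pR})$ and $\Hom(R, \WOmega^{0}_{R/pR} / V\WOmega^{0}_{R/pR})$. By Remark~\ref{rem25}, the target of the latter is precisely $\WrOmega{1}^{0}_{R/pR}$, and the composition $R \twoheadrightarrow R/pR \xrightarrow{e} \WrOmega{1}^{0}_{R/pR}$ is a ring homomorphism (using that the quotient is an $\F_p$-algebra by Lemma~\ref{lem27}, so $\varphi$ and the mod-$p$ reduction become irrelevant). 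The unique preimage $\mu$ of this ring map under the Proposition~\ref{prop50} bijection is the desired morphism of Dieudonn\'e algebras, and uniqueness is immediate from the fact that a Dieudonn\'e algebra map out of $\widehat{\Omega}^{\ast}_R$ is determined by its underlying ring map $R \to \WOmega^{0}_{R/pR}$, which in turn is determined by its reduction mod $V$ (again via Proposition~\ref{prop50}).

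For the second assertion, I would factor $\mu$ through the unit $\widehat{\Omega}^{\ast}_R \to \WSaturate(\widehat{\Omega}^{\ast}_R)$ (using that $\WOmega^{\ast}_{R/pR}$ is strict and Corollary~\ref{oloter}) to obtain $\widetilde{\mu}\colon \WSaturate(\widehat{\Omega}^{\ast}_R) \to \WOmega^{\ast}_{R/pR}$. To see that $\widetilde{\mu}$ is an isomorphism in $\FrobAlgComplete$, I would check that it induces a bijection $\Hom_{\FrobAlgComplete}(\WOmega^{\ast}_{R/pR}, B^{\ast}) \to \Hom_{\FrobAlgComplete}(\WSaturate(\widehat{\Omega}^{\ast}_R), B^{\ast})$ for every strict Dieudonn\'e algebra $B^{\ast}$. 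The left side is $\Hom(R/pR, B^{0}/VB^{0})$ by Corollary~\ref{dRWLeftAdj}; the right side is $\Hom_{\FrobAlg}(\widehat{\Omega}^{\ast}_R, B^{\ast}) = \Hom(R, B^{0}/VB^{0})$ by Corollary~\ref{oloter} and Proposition~\ref{prop50}. Since $B^{0}/VB^{0}$ is an $\F_p$-algebra (Lemma~\ref{lem27}), the restriction map $\Hom(R/pR, B^{0}/VB^{0}) \to \Hom(R, B^{0}/VB^{0})$ is a bijection, and the construction of $\mu$ (hence $\widetilde{\mu}$) identifies this restriction with composition along $\widetilde{\mu}$. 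Yoneda then forces $\widetilde{\mu}$ to be an isomorphism.

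No step looks genuinely obstructive: the only care needed is to verify that the two universal properties really match up under the map induced by $\widetilde{\mu}$, i.e., that the ring map $R \to B^{0}/VB^{0}$ attached to $\mu \circ (\text{unit})$ via Proposition~\ref{prop50} agrees with the composition $R \twoheadrightarrow R/pR \to B^{0}/VB^{0}$ coming from the $\WOmega$ side. This is a direct unwinding of the definition of $\mu$ from the first part together with the fact that $e\colon R/pR \to \WrOmega{1}^{0}_{R/pR}$ is, by Corollary~\ref{dRWLeftAdj}, the unit of the adjunction between $\WOmega^{\ast}$ and $(-)^0/V(-)^0$.
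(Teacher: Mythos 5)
Your proposal is correct and follows essentially the same route as the paper: existence and uniqueness of $\mu$ come from Proposition~\ref{prop50} applied to the strict Dieudonn\'e algebra $\WOmega^{\ast}_{R/pR}$, and the isomorphism statement is proved by showing that composition with $\mu$ identifies $\Hom$ into any strict Dieudonn\'e algebra $B^{\ast}$ with the restriction map $\Hom(R/pR, B^0/VB^0) \to \Hom(R, B^0/VB^0)$, which is bijective because $B^0/VB^0$ is an $\F_p$-algebra. Your extra explicitness about factoring through $\WSaturate(\widehat{\Omega}^{\ast}_R)$ via Corollary~\ref{oloter} and invoking Yoneda is exactly what the paper's phrase ``it will suffice to show\dots'' leaves implicit.
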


\begin{proof}
The existence and uniqueness of $\mu$ follow from Proposition \ref{prop50}. To prove the last assertion, it will suffice to show that for every
strict Dieudonn\'{e} algebra $A^{\ast}$, composition with $\mu$ induces an
isomorphism
$$\theta\colon  \Hom_{ \FrobAlg}( \WOmega^{\ast}_{R/pR}, A^{\ast} )
\xrightarrow{\sim} \Hom_{\FrobAlg}( \widehat{\Omega}^{\ast}_{R}, A^{\ast}).$$
Using Proposition \ref{prop50} and the definition of the saturated de~Rham--Witt complex, we can identify $\theta$ with the evident map
$$\Hom( R/pR, A^{0} / V A^{0} ) \rightarrow \Hom(R, A^{0} / V A^{0} ).$$ This map is bijective, since $A^{0} / V A^{0}$ is an $\F_p$-algebra.
\end{proof}

\begin{theorem}\label{theo53}
Let $R$ be a commutative ring which is $p$-torsion-free, and let 
$\varphi\colon  R \rightarrow R$ be a ring homomorphism satisfying $\varphi(x) \equiv x^{p} \pmod{p}$. Suppose that
there exists a perfect ring $k$ of characteristic $p$ such that $R/pR$ is a smooth $k$-algebra. 
Then the map $\mu\colon  \widehat{ \Omega}^{\ast}_{R} \rightarrow \WOmega^{\ast}_{R/pR}$
of Corollary \ref{cor70} is a quasi-isomorphism.
\end{theorem}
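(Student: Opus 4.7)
The plan is to combine the two corollaries that immediately precede the theorem. By Corollary~\ref{cor70}, the map $\mu$ factors as the composition
\[
\widehat{\Omega}^{\ast}_{R} \xrightarrow{\ \eta\ } \WSaturate(\widehat{\Omega}^{\ast}_{R}) \xrightarrow{\ \overline{\mu}\ } \WOmega^{\ast}_{R/pR},
\]
where $\eta$ is the unit of the completed-saturation adjunction and $\overline{\mu}$ is an isomorphism of Dieudonn\'e algebras. Thus it suffices to show that $\eta$ is a quasi-isomorphism. But this is exactly the content of Corollary~\ref{cor68}(2): the hypotheses of that corollary (that $R$ is $p$-torsion-free, equipped with a Frobenius lift $\varphi$, and that $R/pR$ is smooth over the perfect $\F_p$-algebra $k$) are precisely the hypotheses in force here, so $\eta$ is a quasi-isomorphism of cochain complexes.

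Unwinding the input to Corollary~\ref{cor68}: part~(1) asserts that $\widehat{\Omega}^{\ast}_{R}$ is a Dieudonn\'e complex of Cartier type, which follows from Example~\ref{zisp} (identifying the Frobenius-induced map on $\widehat{\Omega}^{\ast}_{R}/p$ with the Cartier map) together with the Cartier isomorphism (Theorem~\ref{theo71}) for the smooth $k$-algebra $R/pR$. Part~(2) then follows by noting that $\widehat{\Omega}^{\ast}_{R}$ has $p$-adically complete terms and applying Corollary~\ref{cor67}, which says that for a Dieudonn\'e complex of Cartier type with $p$-complete terms, the map to its completed saturation is a quasi-isomorphism; the proof of that corollary in turn combines Theorem~\ref{theo60} (the Cartier criterion, giving the quasi-isomorphism after saturation) with Proposition~\ref{prop18} (preservation of the mod $p$ cohomology under completion).

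Putting this together, $\eta$ is a quasi-isomorphism and $\overline{\mu}$ is an isomorphism, so $\mu = \overline{\mu} \circ \eta$ is a quasi-isomorphism, as required. There is no real obstacle: essentially everything has been arranged in the preceding sections so that this theorem is a corollary. If anything, the most delicate ingredient is the one already encoded in Theorem~\ref{theo71} (the classical Cartier isomorphism), which supplies the Cartier-type hypothesis for the completed de Rham complex and is the single non-formal input to the argument.
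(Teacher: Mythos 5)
Your proposal is correct and is essentially identical to the paper's own argument: the paper likewise invokes Corollary~\ref{cor70} to reduce to showing that the tautological map $\widehat{\Omega}^{\ast}_{R} \rightarrow \WSaturate(\widehat{\Omega}^{\ast}_{R})$ is a quasi-isomorphism, and then cites Corollary~\ref{cor68}. Your unwinding of Corollary~\ref{cor68} (Cartier type via Example~\ref{zisp} and Theorem~\ref{theo71}, then Corollary~\ref{cor67}) matches the paper's chain of references exactly.
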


\begin{proof}
By virtue of Corollary \ref{cor70}, it will suffice to show that the tautological map 
$\widehat{ \Omega}^{\ast}_{R} \rightarrow \WSaturate( \widehat{\Omega}^{\ast}_{R} )$ is
a quasi-isomorphism. This follows from Corollary \ref{cor68}.
\end{proof}

\begin{example}[Powers of $\mathbf{G}_m$]
Let $R = \mathbb{Z}[x_1^{\pm 1}, \dots, x_n^{\pm 1}]$ be a Laurent polynomial ring on variables
$x_1, \ldots, x_n$. Then the de~Rham complex $\Omega_{R}^{\ast}$ is isomorphic
to an exterior algebra over $R$ on generators $d \log x_i = \frac{dx_i}{x_i}$ for $1 \leq i \leq n$,
with differential given by $d(x_i^n) = n x_i^n d \log x_i$. Using
Proposition~\ref{prop42}, we see that there is a unique Dieudonn\'{e} algebra
structure on $\Omega_{R}^{\ast}$ which satisfies $F( x_i ) = x_i^{p}$ for $1 \leq i \leq n$;
note that we then have $F( d \log x_i ) = d \log x_i$ for $1 \leq i \leq n$.

Set $R_{\infty} = \mathbb{Z}[x_1^{\pm 1/p^\infty}, \dots, x_n^{\pm 1/p^\infty}]$, 
so that the localization $\Omega_{R}^{\ast}[ F^{-1} ]$ can be identified with
the exterior algebra over $R_{\infty}$ on generators $d \log x_i$ for $1 \leq i \leq n$,
equipped with a differential $d\colon  \Omega_{R}^{\ast}[F^{-1}] \rightarrow \Omega_{R}^{\ast}[ F^{-1} ][p^{-1}]$
given by $d( x_i^{a/p^n} ) = (a/p^n) x_i^{a/p^n}$. Using Remark~\ref{integralformsabstract}, we see that the saturation $\Saturate( \Omega_{R}^{\ast} )$
can be identified with subalgebra of $\Omega_{R}^{\ast}[ F^{-1}]$ given by those differential forms $\omega$ for which
$d \omega$ also belongs to $\Omega_{R}^{\ast}[ F^{-1} ]$: that is, which have integral coefficients when expanded in terms of
$x_i$ and $d \log x_i$. Furthermore, the saturated de~Rham--Witt complex of $\mathbb{F}_p[x_1^{\pm 1}, \dots,
x_n^{\pm 1}]$ is obtained as the completion $\mathcal{W} \mathrm{Sat}(
\Omega_R^{\ast})$. This (together with Theorem~\ref{maintheoC}) recovers the explicit description of the classical
de~Rham--Witt complex $W \Omega^{\ast}_{R/pR}$ via the \emph{integral forms}
in \cite[Sec. I.2]{illusie}. 
\end{example}

\subsection{Comparison with the de~Rham Complex}
\label{dRComp}

Let $R$ be a commutative $\F_p$-algebra, let $\WOmega^{\ast}_{R}$ be the saturated de~Rham--Witt complex of $R$, and let
$\WrOmega{1}^{\ast}_{R}$ be the quotient of $\WOmega^{\ast}_{R}$ of Notation \ref{not53}. Then
the tautological ring homomorphism $e\colon  R \rightarrow \WrOmega{1}^{0}_{R}$ admits a unique extension to a map of
differential graded algebras $\nu \colon  \Omega^{\ast}_{R} \rightarrow \WrOmega{1}^{\ast}_{R}$.

\begin{theorem}\label{theo73}
Let $R$ be a regular Noetherian $\F_p$-algebra. Then the map $\nu\colon  \Omega^{\ast}_{R} \rightarrow \WrOmega{1}^{\ast}_{R}$ is an isomorphism.
\end{theorem}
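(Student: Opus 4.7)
The plan is to reduce to the smooth case via Popescu's theorem, then use the Cartier isomorphism to identify $\WrOmega{1}^*_R$ with $\Omega^*_R$.

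\textbf{Reduction to the smooth case.} By Popescu's theorem, every regular Noetherian $\F_p$-algebra can be written as a filtered colimit $R = \varinjlim R_i$ of smooth $\F_p$-algebras, applicable since $\F_p$ is perfect (so regularity implies geometric regularity over $\F_p$). Both sides of $\nu$ commute with filtered colimits of $\F_p$-algebras: $\Omega^*$ classically, and $\WrOmega{1}^*$ because $\WOmega^*\colon \CAlg_{\F_p} \to \FrobAlgComplete$ is a left adjoint (Corollary \ref{dRWLeftAdj}), and the passage to the quotient $\WOmega^*/(\im V + \im dV)$ preserves colimits. So we may assume $R$ is smooth over $\F_p$.

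\textbf{Smooth case.} Choose a smooth $\Z_p$-lift $\tilde R$ of $R$ equipped with a Frobenius lift $\varphi$. By Corollary \ref{cor68}, $\widehat\Omega^*_{\tilde R}$ is a Dieudonn\'e algebra of Cartier type, and by Corollary \ref{cor70} we have $\WOmega^*_R \cong \WSaturate(\widehat\Omega^*_{\tilde R})$. Combining Proposition \ref{prop17} (completion preserves $\WittScript_1$), Proposition \ref{prop11} with $r = 1$ (so that $F$ yields an isomorphism $\WrOmega{1}^*_R = \WittScript_1(\Saturate(\widehat\Omega^*_{\tilde R})) \xrightarrow{\cong} \HH^*(\Saturate(\widehat\Omega^*_{\tilde R})/p)$), the Cartier criterion (Theorem \ref{theo60}, identifying $\HH^*(\Saturate(\widehat\Omega^*_{\tilde R})/p)$ with $\HH^*(\Omega^*_R)$), and the classical Cartier isomorphism $\Cart\colon \Omega^*_R \xrightarrow{\cong} \HH^*(\Omega^*_R)$ (Theorem \ref{theo71}), we obtain an isomorphism
\[
\mu := F^{-1} \circ \Cart \colon \Omega^*_R \xrightarrow{\cong} \WrOmega{1}^*_R
\]
of graded algebras.

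\textbf{Comparison with $\nu$.} Both $\mu$ and $\nu$ are maps of cdgas; by the universal property of the de Rham complex it suffices to check they agree on $R$ in degree $0$. Unwinding the identifications, the image of $x \in R$ under both maps equals the class of the Teichm\"uller lift $[x]_T \in W(R) = \WOmega^0_R$ in $\WrOmega{1}^0_R = W(R)/V W(R) = R$ (using Proposition \ref{prop28} and that regular implies reduced). For $\mu$, this uses that any Frobenius lift satisfies $\varphi([x]_T) \equiv [x]_T^p \equiv x^p \pmod{p}$, which matches the Cartier formula $\Cart(x) = x^p$ under the identification $\Omega^*_R \simeq \widehat\Omega^*_{\tilde R}/p$.

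\textbf{Main obstacle.} The principal subtlety is the reduction step: establishing that $\WrOmega{1}^*$ commutes with filtered colimits is delicate because $\WOmega^*$ involves a $V$-adic completion which does not commute with colimits in general, so one must argue that passage to the $\WittScript_1$-quotient absorbs this defect (alternatively, one could bypass Popescu here by applying the same argument to $\Omega^*_{W(R)}$ and invoking the Cartier isomorphism for regular Noetherian $\F_p$-algebras of Theorem \ref{avoidpop}, but that result itself depends on Popescu). Once the reduction is in hand, the smooth case is essentially a bookkeeping exercise chaining the isomorphisms already proved in Sections \ref{section2}--\ref{section3}.
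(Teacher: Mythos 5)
Your proposal follows the paper's proof essentially verbatim: reduce to the smooth case via Popescu's theorem using that $R \mapsto \W_1\Omega^{\ast}_{R}$ commutes with filtered colimits (this is exactly Proposition \ref{prop74} and Corollary \ref{cor75}, where the completion issue you flag as the main obstacle is resolved by Proposition \ref{prop17}), and then treat the smooth case by choosing a lift with Frobenius and chaining the Cartier isomorphism (Theorem \ref{theo71}) with Corollary \ref{cor68}/Theorem \ref{theo53} and Proposition \ref{prop11}. The only cosmetic difference is in the smooth case: the paper (Proposition \ref{prop76}) identifies $\nu$ directly inside a commutative diagram built from Corollary \ref{cor70}, whereas you construct the isomorphism $F^{-1}\circ \Cart$ and match it with $\nu$ by the universal property of $\Omega^{\ast}_{R}$ via a degree-zero check — the same ingredients, arranged slightly differently.
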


We first treat the special case where $R$ is a smooth algebra over a perfect field $k$. In fact, our argument works more generally if $k$ is a perfect ring:

\begin{proposition}\label{prop76}
Let $k$ be a perfect $\F_p$-algebra and let $R$ be a smooth algebra over $k$.
Then the map $\nu\colon  \Omega^{\ast}_{R} \rightarrow 
\WrOmega{1}^{\ast}_{R}$ is
an isomorphism.
\end{proposition}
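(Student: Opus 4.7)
The plan is to show that, after composition with the abstract Cartier-type isomorphism $\lambda\colon \WrOmega{1}^{\ast}_R \xrightarrow{\sim} \HH^{\ast}(\WOmega^{\ast}_R/p)$ provided by Proposition~\ref{prop11} (with $r=1$), the map $\nu$ is identified with the classical Cartier map of Proposition~\ref{cartmapexist}. Since the latter is an isomorphism for smooth $R$ over a perfect base by Theorem~\ref{theo71}, and $\lambda$ is an isomorphism, this will force $\nu$ to be an isomorphism as well.

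To access the Cartier description, I would first choose a $p$-torsion-free, $p$-adically complete $W(k)$-algebra $\widetilde{R}$ lifting $R$, together with a ring endomorphism $\varphi\colon \widetilde{R} \to \widetilde{R}$ lifting the Frobenius of $R$; both exist by standard obstruction-vanishing for affine smooth schemes (if no global lift is apparent, one reduces to a Zariski cover, noting that being an isomorphism can be checked after passing to a cover). Corollary~\ref{cor70} then produces a map of Dieudonn\'e algebras $\mu\colon \widehat{\Omega}^{\ast}_{\widetilde{R}} \to \WOmega^{\ast}_R$, which is a quasi-isomorphism by Theorem~\ref{theo53}. Because both sides are $p$-torsion-free, the reduction $\overline{\mu}\colon \Omega^{\ast}_R \cong \widehat{\Omega}^{\ast}_{\widetilde{R}}/p \to \WOmega^{\ast}_R/p$ is also a quasi-isomorphism.

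Assembling these, I consider the composite homomorphism of graded rings
\[
\psi\colon \Omega^{\ast}_R \xrightarrow{\nu} \WrOmega{1}^{\ast}_R \xrightarrow[\sim]{\lambda} \HH^{\ast}(\WOmega^{\ast}_R/p) \xleftarrow[\sim]{\HH^{\ast}(\overline{\mu})} \HH^{\ast}(\Omega^{\ast}_R),
\]
and verify $\psi = \Cart$ via the uniqueness assertion of Proposition~\ref{cartmapexist}: it suffices to check $\psi(x) = x^p$ and $\psi(dx) = [x^{p-1}dx]$ for $x \in R$. Lifting $x$ to $\widetilde{x} \in \widetilde{R}$, its image in $\WOmega^0_R$ is a lift of $\nu(x) = e(x)$, and $F(\widetilde{x}) = \varphi(\widetilde{x}) \equiv x^p \pmod p$ gives the first identity. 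For the second, the Dieudonn\'e algebra formula $F(d\widetilde{x}) = \widetilde{x}^{p-1}d\widetilde{x} + d\theta(\widetilde{x})$ from Proposition~\ref{prop42}, together with the fact that $d\theta(\widetilde{x})$ is a coboundary modulo $p$, shows that $\lambda(\nu(dx))$ is represented by the class of $x^{p-1}dx$, which matches $\Cart(dx)$ under $\HH^{\ast}(\overline{\mu})$.

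The main hurdle I anticipate is this last cohomological bookkeeping: $\nu$ is defined abstractly via a universal property, whereas $\lambda$ is defined by picking a lift from $\WrOmega{1}^{\ast}_R$ back up to $\WOmega^{\ast}_R$ and applying $F$, so one must show these two pieces of data are compatible via the concrete Dieudonn\'e algebra structure on $\widehat{\Omega}^{\ast}_{\widetilde{R}}$. A minor subsidiary point is that the outcome is manifestly independent of the choices of $\widetilde{R}$ and $\varphi$, since $\nu$ itself is intrinsic to $R$ by the universal property of $\WOmega^{\ast}_R$.
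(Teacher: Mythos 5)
Your proof is correct and takes essentially the same route as the paper: choose a $p$-complete smooth lift with a Frobenius lift, use Corollary~\ref{cor70} and Theorem~\ref{theo53} to obtain the quasi-isomorphism $\mu$, and identify $\nu$ (after composing with the isomorphism of Proposition~\ref{prop11}) with the classical Cartier map, which is an isomorphism by Theorem~\ref{theo71}. The only cosmetic difference is that you check the compatibility on generators via the uniqueness clause of Proposition~\ref{cartmapexist}, whereas the paper invokes Example~\ref{zisp}, which records the same computation.
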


\begin{proof}
By a result of Elkik \cite{Elkik} and (in greater generality)
Arabia \cite{Arabia}, one can find 
a smooth $W(k)$-algebra $A$ with an isomorphism $A/pA \simeq R$; see also \cite[Tag
07M8]{stacks-project} for an exposition. Let $\widehat{A}$
denote the $p$-adic completion of $A$.\footnote{Alternatively, for the following argument, one only needs to work with
$\widehat{A}$, which can be constructed more easily (by inductively lifting $R$ to
$W_n(k)$ for each $n$).} 

Using the smoothness of $A$, we see that the ring homomorphism $$A \rightarrow A/pA \simeq R \xrightarrow{x \mapsto x^{p}} R$$ can be lifted to a map
$A \rightarrow \widehat{A}$. Passing to the completion, we obtain a ring homomorphism $\varphi\colon  \widehat{A} \rightarrow \widehat{A}$ satisfying
$\varphi(x) \equiv x^{p} \pmod{p}$. Applying Corollary \ref{cor70}, we see that there is a unique morphism of Dieudonn\'{e} algebras
$\mu\colon  \widehat{ \Omega}^{\ast}_{ \widehat{A} } \rightarrow \WOmega_{R}^{\ast}$ for which the diagram
$$ \xymatrix{ \widehat{A} \ar[r]^-{\sim} \ar[d] & \widehat{\Omega}^{0}_{ \widehat{A} } \ar[r]^-{\mu} & \WOmega^{0}_{R} \ar[d] \\
\widehat{A} / p \widehat{A} \ar[r]^-{\sim} & R \ar[r]^-{e} & \WrOmega{1}^{0}_{R} }$$
commutes. From this commutativity, we see that $\nu$ can be identified with the composition
$$ \Omega^{\ast}_{R} \simeq \widehat{\Omega}^{\ast}_{\widehat{A} } / p  \widehat{\Omega}^{\ast}_{\widehat{A} } 
\xrightarrow{\mu} \WOmega^{\ast}_{R} / p \WOmega^{\ast}_{R} \rightarrow \WrOmega{1}^{\ast}_{R}.$$
We now have a commutative diagram of graded abelian groups
$$ \xymatrix{ \widehat{\Omega}^{\ast}_{ \widehat{A} } / p \widehat{\Omega}^{\ast}_{ \widehat{A} }  \ar[rr]^{\Cart} \ar[d] \ar[dr]^{ \nu } & & \mathrm{H}^{\ast}( \widehat{\Omega}^{\ast}_{ \widehat{A} } / p \widehat{\Omega}^{\ast}_{ \widehat{A} } ) \ar[d] \\
\WOmega^{\ast}_{R} / p \WOmega^{\ast}_{R} \ar[r] & \WrOmega{1}^{\ast}_{R} \ar[r] & \mathrm{H}^{\ast}( \WOmega^{\ast}_{R} / p \WOmega^{\ast}_{R} ), }$$
where the vertical maps are induced by $\mu$, and the top horizontal and bottom right horizontal maps are induced by the Frobenius on $\widehat{\Omega}^{\ast}_{ \widehat{A} }$
and $\WOmega^{\ast}_{R}$, respectively. We now observe that the top horizontal map is the Cartier isomorphism (Theorem \ref{theo71}), the right vertical map is an isomorphism
by virtue of Theorem \ref{theo53}, and the bottom right horizontal map is an isomorphism by Proposition \ref{prop11}. It follows that $\nu$ is also an isomorphism.
\end{proof}

To handle the general case, we will need the following:

\begin{proposition}\label{prop74}
The category $\FrobAlgComplete$ admits small filtered colimits, which are preserved by the functor $A^{\ast} \mapsto \WittScript_{r}(A)^{\ast} = A^{\ast} / (\im(V^{r}) + \im(dV^{r} ))$. 
\end{proposition}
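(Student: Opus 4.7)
The plan is to construct filtered colimits in $\FrobAlgComplete$ by first forming the colimit in $\FrobAlg$, observing that it is already saturated, and then applying the completion functor. Throughout, filtered colimits in $\FrobComp$ and in $\FrobAlg$ are computed at the level of underlying graded abelian groups: in $\FrobComp$ this is immediate because the category is additive, and for $\FrobAlg$ one uses the description of Remark~\ref{algdesc} plus the fact that the tensor product in $\FrobComp$ (Remark~\ref{tprop}) commutes with filtered colimits, so that commutative-algebra structures pass to filtered colimits on underlying objects; the extra axioms (nonnegative grading, $Fx \equiv x^p \pmod p$, and $x^2 = 0$ for $x$ of odd degree) are clearly preserved under filtered colimits.

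Given a small filtered diagram $\{A_i^{\ast}\}$ in $\FrobAlgComplete$, I first form the colimit $A^{\ast} = \varinjlim_i A_i^{\ast}$ in $\FrobAlg$. The key step is then to verify that $A^{\ast}$ is saturated: $A^{\ast}$ is $p$-torsion-free since each $A_i^{\ast}$ is and filtered colimits preserve $p$-torsion-freeness; and if $x \in A^{\ast}$ satisfies $dx \in pA^{\ast}$, then the relation $dx = py$ is witnessed already at some stage $A_j^{\ast}$ (using that $A_j^{\ast}$ is $p$-torsion-free, so the relation lifts uniquely), and saturation of $A_j^{\ast}$ provides an element $x' \in A_j^{\ast}$ with $Fx' = x$, whose image in $A^{\ast}$ then satisfies $F(x') = x$. (Equivalently, one can invoke Remark~\ref{rem3} and the fact that $\eta_p$ commutes with filtered colimits, as already used in Proposition~\ref{prop4}.) Thus $A^{\ast}$ is a saturated Dieudonn\'e algebra.

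Now let $\WittScript(A)^{\ast}$ be the completion of $A^{\ast}$, which is a strict Dieudonn\'e algebra by the construction of \S\ref{lebex}. Combining the universal properties of Proposition~\ref{prop38} (or Corollary~\ref{cor69}) with the fact that $A^{\ast}$ is the colimit in $\FrobAlg$, we see that for any strict Dieudonn\'e algebra $B^{\ast}$,
\[
\Hom_{\FrobAlgComplete}(\WittScript(A)^{\ast}, B^{\ast}) \;\simeq\; \Hom_{\FrobAlgSat}(A^{\ast}, B^{\ast}) \;\simeq\; \varprojlim_i \Hom_{\FrobAlg}(A_i^{\ast}, B^{\ast}),
\]
so $\WittScript(A)^{\ast}$ represents the colimit of $\{A_i^{\ast}\}$ in $\FrobAlgComplete$.

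Finally, I verify that $\WittScript_r$ preserves this colimit. By Proposition~\ref{prop17}, the canonical map $A^{\ast} \to \WittScript(A)^{\ast}$ induces an isomorphism $\WittScript_r(A)^{\ast} \to \WittScript_r(\WittScript(A))^{\ast}$, so it suffices to identify $\WittScript_r(A)^{\ast}$ with $\varinjlim_i \WittScript_r(A_i)^{\ast}$. But $\WittScript_r(A)^{\ast}$ is the quotient of $A^{\ast}$ by $\im(V^r) + \im(dV^r)$, and since the operators $V$ and $d$ are compatible with the transition maps $A_i^{\ast} \to A^{\ast}$ and quotients by subgroups commute with filtered colimits, we get
\[
\WittScript_r(A)^{\ast} \;=\; A^{\ast}/(\im(V^r) + \im(dV^r)) \;=\; \varinjlim_i\, A_i^{\ast}/(\im(V^r) + \im(dV^r)) \;=\; \varinjlim_i \WittScript_r(A_i)^{\ast},
\]
as desired. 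The one nontrivial ingredient is the saturation of $A^{\ast}$ in the second paragraph; everything else is a straightforward combination of universal properties and the fact that quotients and the operations $V$, $d$ commute with filtered colimits.
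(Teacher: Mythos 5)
Your proposal is correct and follows essentially the same route as the paper: form the filtered colimit in $\FrobAlg$, note it stays saturated (via $\eta_p$ commuting with filtered colimits), complete it using the adjunction of Corollary~\ref{cor69} to get the colimit in $\FrobAlgComplete$, and then conclude with Proposition~\ref{prop17} together with the fact that $\WittScript_r$ commutes with filtered colimits of saturated objects. You merely spell out in more detail the closure of $\FrobAlgSat$ under filtered colimits and the universal-property check, which the paper leaves implicit.
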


\begin{proof}
We first observe that the category $\FrobAlg$ of Dieudonn\'{e} algebras admits filtered colimits, which can be computed at the level of the underlying graded abelian groups.
Moreover, the subcategory $\FrobAlgSat \subseteq \FrobAlg$ of saturated Dieudonn\'{e} algebras is closed under filtered colimits. In particular, the construction $A^{\ast} \mapsto \WittScript_{r}(A)^{\ast}$
preserves filtered colimits when regarded as a functor from the category $\FrobAlgSat$ to the category of graded abelian groups. It follows from Corollary \ref{cor69} that
the category $\FrobAlgComplete$ also admits small filtered colimits, which are computed by first taking a colimit in the larger category $\FrobAlgSat$ and then applying the completion construction
$A^{\ast} \mapsto \WittScript(A)^{\ast}$. Consequently, to show that
the restriction of $\WittScript_{r}$ to $\FrobAlgComplete$ commutes with filtered colimits, it suffices to show that the canonical map
$\WittScript_{r}( \varinjlim A_{\alpha} )^{\ast} \rightarrow \WittScript_{r}( \WittScript( \varinjlim A_{\alpha} ) )^{\ast}$ is an isomorphism for every filtered diagram $\{ A^{\ast}_{\alpha} \}$ in $\FrobAlgComplete$.
This is a special case of Proposition \ref{prop17}.
\end{proof}

\begin{remark}
In the statement and proof of Proposition \ref{prop74}, we can replace Dieudonn\'{e} algebras by Dieudonn\'{e} complexes.
\end{remark}

\begin{corollary}\label{cor75}
The construction $R \mapsto \WrOmega{r}_{R}^{\ast}$ commutes with filtered colimits (when regarded as a functor from the category of $\F_p$-algebras to the category of graded abelian groups).
\end{corollary}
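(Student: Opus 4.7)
The plan is to factor the functor $R \mapsto \WrOmega{r}^{\ast}_R$ as a composition of two functors, each of which preserves filtered colimits for reasons already established in the paper.

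First, I would invoke Corollary~\ref{dRWLeftAdj}: the saturated de Rham-Witt complex functor $\WOmega^{\ast}\colon \CAlg_{\F_p} \to \FrobAlgComplete$ is a left adjoint (to the functor $A^{\ast} \mapsto A^0/VA^0$), and hence preserves all small colimits. In particular, for any filtered diagram $\{R_\alpha\}$ of $\F_p$-algebras, the natural map
\[
\varinjlim_{\alpha} \WOmega^{\ast}_{R_\alpha} \longrightarrow \WOmega^{\ast}_{\varinjlim_{\alpha} R_\alpha}
\]
is an isomorphism in $\FrobAlgComplete$. Beware that this colimit is computed in $\FrobAlgComplete$, not at the level of underlying graded abelian groups, so a further step is required.

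Second, by the definition in Notation~\ref{not53}, $\WrOmega{r}^{\ast}_R = \WittScript_r(\WOmega^{\ast}_R)^{\ast}$. Proposition~\ref{prop74} precisely asserts that the functor $A^{\ast} \mapsto \WittScript_r(A)^{\ast}$, as a functor from $\FrobAlgComplete$ to graded abelian groups, preserves small filtered colimits. Composing with the first step, we conclude that the natural map
\[
\varinjlim_{\alpha} \WrOmega{r}^{\ast}_{R_\alpha} \longrightarrow \WrOmega{r}^{\ast}_{\varinjlim_{\alpha} R_\alpha}
\]
is an isomorphism of graded abelian groups, which is the desired statement.

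There is no real obstacle here: both of the required facts are already established. The only mild subtlety worth flagging is the distinction between colimits in $\FrobAlgComplete$ (which, as in the proof of Proposition~\ref{prop74}, are obtained by first forming colimits in $\FrobAlgSat$ and then completing) and colimits of graded abelian groups. This is exactly the issue that Proposition~\ref{prop74} resolves by showing the completion step becomes invisible after applying $\WittScript_r$, via Proposition~\ref{prop17}.
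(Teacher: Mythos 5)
Your proposal is correct and matches the paper's own argument: the paper likewise combines Proposition~\ref{prop74} with the observation that $R \mapsto \WOmega^{\ast}_{R}$ preserves colimits because it is a left adjoint (Corollary~\ref{dRWLeftAdj}). Your extra remark about colimits in $\FrobAlgComplete$ versus graded abelian groups is exactly the point Proposition~\ref{prop74} handles, so nothing is missing.
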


\begin{proof}
Combine Proposition \ref{prop74} with the observation that the functor $R \mapsto \WOmega_{R}^{\ast}$ preserves colimits (since it is defined as the left adjoint to the functor
$A^{\ast} \mapsto A^{0} / V A^{0}$).
\end{proof}

We now prove Theorem~\ref{theo73} using Popescu's smoothing theorem. In \S \ref{dRRedux}, we will give an alternative proof using the theory of derived de~Rham(-Witt) cohomology,
which avoids the use of Popescu's theorem (see Corollary~\ref{compmain}).

\begin{proof}[Proof of Theorem \ref{theo73}]
Let $R$ be a regular Noetherian $\F_p$-algebra. Applying Popescu's smoothing theorem \cite[Tag 07GB]{stacks-project}, we can write $R$ as a filtered colimit $\varinjlim R_{\alpha}$, where
each $R_{\alpha}$ is a smooth $\F_p$-algebra. It follows from Corollary \ref{cor75} that the canonical map $\nu\colon  \Omega^{\ast}_{R} \rightarrow \WrOmega{1}^{\ast}_{R}$
is a filtered colimit of the maps $\nu_{\alpha}\colon  \Omega^{\ast}_{ R_{\alpha} }
\rightarrow \WrOmega{1}_{R_{\alpha} }^{\ast}$. It will therefore suffice to show that each of the maps
$\nu_{\alpha}$ is an isomorphism, which follows from Proposition \ref{prop76}.
\end{proof}

\begin{remark}\label{plux}
Let $R$ be a regular Noetherian $\F_p$-algebra. Composing the tautological map $\WOmega^{\ast}_{R} \rightarrow \WrOmega{1}^{\ast}_{R}$
with the inverse of the isomorphism $\nu\colon  \Omega^{\ast}_{R}
\xrightarrow{\sim}
\WrOmega{1}^{\ast}_{R}$, we obtain a surjective map of commutative differential graded algebras
$\WOmega^{\ast}_{R} \rightarrow \Omega^{\ast}_{R}$. It follows from Theorem \ref{theo73} and Corollary \ref{cor79} that the induced map
$\WOmega^{\ast}_{R} / p \WOmega^{\ast}_{R} \rightarrow \Omega^{\ast}_{R}$ is a quasi-isomorphism.
\end{remark}

\subsection{The Classical de~Rham--Witt Complex}\label{compareclass}

We now consider the relationship between the saturated de~Rham--Witt complex $\WOmega^{\ast}_{R}$ of Notation \ref{not40} and
the classical de~Rham--Witt complex $W\Omega^{\ast}_{R}$ of Bloch-Deligne-Illusie. We begin with some definitions.

\begin{definition}\label{definition.V-pro}
Let $R$ be a commutative $\F_p$-algebra. An {\it $R$-framed $V$-pro-complex} consists of the following data:
\begin{itemize}
\item[$(1)$] An inverse system 
$$ \cdots \rightarrow A_{4}^{\ast} \rightarrow A_{3}^{\ast} \rightarrow A_{2}^{\ast} \rightarrow A_1^{\ast} \rightarrow A_0^{\ast}$$
of commutative differential graded algebras. We will denote each of the transition maps in this inverse system
by $\Res\colon  A_{r+1}^{\ast} \rightarrow A_{r}^{\ast}$, and refer to them as {\it restriction maps}.
We let $A^{\ast}_{\infty}$ denote the inverse limit $\varprojlim_{r} A^{\ast}_{r}$.

\item[$(2)$] A collection of maps $V\colon  A_{r}^{\ast} \rightarrow A_{r+1}^{\ast}$ in the category of graded abelian groups, which 
we will refer to as {\it Verschiebung maps}.

\item[$(3)$] A ring homomorphism $\beta\colon  W(R) \rightarrow A^{0}_{\infty}$. For each $r \geq 0$, we let
$\beta_{r}$ denote the composite map $W(R) \xrightarrow{\beta} A^{0}_{\infty} \rightarrow A^{0}_{r}$.
\end{itemize}
These data are required to satisfy the following axioms:
\begin{itemize}
\item[$(a)$] The groups $A^{\ast}_{r}$ vanish when either $\ast < 0$ or $r = 0$.

\item[$(b)$] The Verschiebung and restriction maps are compatible with one another: that is, for every $r > 0$, we have a commutative diagram
$$ \xymatrix{ A^{\ast}_{r} \ar[r]^{V} \ar[d]^{ \Res} & A^{\ast}_{r+1} \ar[d]^{\Res} \\
A^{\ast}_{r-1} \ar[r]^{V} & A^{\ast}_{r}. }$$
It follows that the Verschiebung maps induce a homomorphism of graded abelian groups $V\colon  A^{\ast}_{\infty} \rightarrow A^{\ast}_{\infty}$, which
we will also denote by $V$.

\item[$(c)$] The map $\beta\colon  W(R) \rightarrow A^{0}_{\infty}$ is compatible with Verschiebung: that is, we have a commutative diagram
$$ \xymatrix{ W(R) \ar[r]^-{\beta } \ar[d]^{V} & A^{0}_{\infty} \ar[d]^{V} \\
W(R) \ar[r]^-{ \beta } & A^{0}_{\infty} . }$$

\item[$(d)$] The Verschiebung maps $V\colon  A^{\ast}_{r} \rightarrow A^{\ast}_{r+1}$ satisfy the identity
$V( x dy ) = V(x) dV(y)$.

\item[$(e)$] Let $\lambda$ be an element of $R$ and let $[\lambda] \in W(R)$ denote its Teichm\"{u}ller representative. 
Then, for each $x \in A^{\ast}_{r}$, we have an identity
$$ (Vx) d \beta_{r+1}([\lambda] ) = V \left( x  \beta_{r}([\lambda])^{p-1}  d
\beta_{r}([ \lambda] ) \right)$$ 
in $A^{\ast}_{r+1}$.
\end{itemize}

Let $( \{ A^{\ast}_{r} \}_{r \geq 0}, V, \beta )$ and $( \{ A'^{\ast}_{r} \}_{r \geq 0}, V', \beta' )$ be $R$-framed $V$-pro-complexes.
A {\it morphism} of $R$-framed $V$-pro-complexes from $( \{ A^{\ast}_{r} \}_{r \geq 0}, V, \beta )$ to $( \{ A'^{\ast}_{r} \}_{r \geq 0}, V', \beta' )$
is a collection of differential graded algebra homomorphisms $f_{r}\colon
A^{\ast}_{r} \rightarrow A'^{\ast}_{r}$ which commute with the restriction,
Verschiebung, and $\beta$ maps, i.e., for which the diagrams
$$ \xymatrix{ A^{\ast}_{r} \ar[r]^{f_r} \ar[d]^{\Res} & A'^{\ast}_{r} \ar[d]^{\Res} & A^{\ast}_{r} \ar[r]^{f_r} \ar[d]^{V} & A'^{\ast}_{r} \ar[d]^{V'} \\
A^{\ast}_{r-1} \ar[r]^{ f_{r-1} } & A'^{\ast}_{r-1} & A^{\ast}_{r+1}
\ar[r]^{f_{r+1}} & A'^{\ast}_{r+1} }$$
$$ \xymatrix{ & W(R) \ar[dl]_{\beta_r} \ar[dr]^{ \beta'_{r} } & \\
A_{r}^{0} \ar[rr]^{ f_r} & & A'^{0}_{r} }$$
are commutative. We let $\VPro_{R}$ denote the category whose objects are $R$-framed $V$-pro-complexes and whose morphisms
are morphisms of $R$-framed $V$-pro-complexes.
\end{definition}

\begin{remark}
In the situation of Definition \ref{definition.V-pro}, we will generally abuse terminology by simply referring to the inverse system
$\{ A^{\ast}_{r} \}_{r \geq 0}$ as an {\it $R$-framed $V$-pro-complex}; in this case, we are implicitly assuming that Verschiebung maps
$V\colon  A^{\ast}_{r} \rightarrow A^{\ast}_{r+1}$ and a map $\beta\colon  W(R) \rightarrow A^{0}_{\infty}$ have also been specified.
\end{remark}

\begin{remark}
Let $R$ be a commutative $\F_p$-algebra and let $\{ A^{\ast}_{r} \}$ be an $R$-framed $V$-pro-complex.
It follows from conditions $(a)$ and $(b)$ of Definition \ref{definition.V-pro} that, for each $r \geq 0$, the composite map
$$ A_{r}^{\ast} \xrightarrow{ V^{r} } A_{2r}^{\ast} \xrightarrow{ \Res^{r} } A_{r}^{\ast}$$
vanishes. Consequently, condition $(c)$ of Definition \ref{definition.V-pro} implies that the map
$\beta_r\colon  W(R) \rightarrow A^{0}_{r}$ annihilates the subgroup $V^{r} W(R) \subseteq W(R)$, and therefore
factors through the quotient $W_{r}(R) \simeq W(R) / V^{r} W(R)$.
\end{remark}

\begin{proposition}\label{initobj}
Let $R$ be a commutative $\F_p$-algebra. Then the category $\VPro_{R}$ has an initial object.
\end{proposition}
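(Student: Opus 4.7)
The plan is to establish existence by a general category-theoretic argument, appealing to the fact that $\VPro_R$ is the category of models of a (multi-sorted) equational theory. Observe that all of the data entering an $R$-framed $V$-pro-complex (the CDGA structure on each $A^*_r$, the differential, the restriction and Verschiebung maps, and the framing $\beta\colon W(R) \to A^0_\infty$) consists of finitary operations, and that all the axioms $(a)$--$(e)$ in Definition~\ref{definition.V-pro} are universally quantified equations among these operations. In particular, neither the axioms nor the data involve any existence or uniqueness requirements that are not themselves equations.

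First, I would verify that $\VPro_R$ admits all small limits, computed componentwise at the level of the underlying graded abelian groups. Given a small diagram $\{\mathcal{A}^{(\alpha)}\}$ in $\VPro_R$, set $A^n_r := \varprojlim_\alpha (A^n_r)^{(\alpha)}$; the CDGA operations, differentials, restriction maps, Verschiebung maps, and framing map are induced from the corresponding maps in each term of the diagram, and the equational axioms pass to the limit. Second, I would verify the solution set condition: given any object $\mathcal{B} = \{B^*_r\}$ of $\VPro_R$, the smallest sub-object $\mathcal{A} \subseteq \mathcal{B}$ containing the image of $\beta\colon W(R) \to B^0_\infty$ exists as the intersection of all sub-objects with this property (an intersection of subsets closed under finitary operations is again closed). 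A cardinality count then bounds $|A^n_r|$ in terms of $|R|$ alone, independent of $\mathcal{B}$; hence, up to isomorphism, only a set of such minimal sub-objects arise, and every $\mathcal{B} \in \VPro_R$ receives a morphism from one of them.

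With small limits and the solution set condition in hand, the existence of an initial object is the standard consequence of Freyd's initial object theorem (see, e.g., \cite[Th. 1.66]{AdamekRosicky}, applied to the forgetful functor from $\VPro_R$ to a point). I expect the main technical obstacle to be the solution set step --- specifically, verifying that the sub-object generated by $W(R)$ under all the operations has bounded cardinality --- but this reduces to a routine recursive bound: starting from the set-theoretic image of $W(R)$ in $A^0_\infty$, one iteratively closes under the finitely many operations, and at each stage the cardinality grows by at most $\max(|R|, \aleph_0)$. In \S\ref{compareclass}, this initial object will be identified with the classical de Rham-Witt pro-complex $\{W_r \Omega^*_R\}$ of \cite{illusie}, giving the construction a concrete description; but for the existence assertion itself, the abstract argument suffices.
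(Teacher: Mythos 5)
Your argument is correct, but it is genuinely different from the one in the paper. You prove bare existence abstractly: $\VPro_{R}$ is (equivalent to) a variety of multi-sorted algebras -- all the data (CDGA operations, $d$, $\Res$, $V$, and the framing, encoded as a family of constants $\beta_r(w)$ for $w \in W(R)$) are operations and all of the axioms $(a)$--$(e)$ are universally quantified equations -- so it is complete and locally small with limits computed componentwise, and the subalgebra generated by the constants inside any object gives a solution set, whence Freyd's initial object theorem applies (one could even argue more directly that a variety has free objects, the initial object being the free algebra on no generators). Two small points: the cardinality bound is in terms of $|W(R)| = |R|^{\aleph_0}$ rather than $|R|$ itself, which is harmless, and the minimal subobject is an honest subobject since none of the axioms of Definition~\ref{definition.V-pro} demand surjectivity of the restriction maps. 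The paper instead constructs the initial object explicitly, as the inverse system $\{ \Omega^{\ast}_{W_r(R)} / I_r^{\ast} \}_{r \geq 0}$ where $\{ I_r^{\ast} \}$ is the smallest ``good'' collection of differential graded ideals. The trade-off matters downstream: the explicit model is what yields Remark~\ref{belox} ($I_1^{\ast} = 0$, so $\Omega^{\ast}_R \simeq W_1 \Omega^{\ast}_R$), Remark~\ref{rem101} (surjectivity of the restriction maps), and the identification $W_r \Omega^0_R \simeq W_r(R)$ used to compare with Illusie's category $\VPro'_R$ -- facts the paper obtains by ``inspecting the proof'' and which your abstract argument does not provide, so they would require separate verification. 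For the existence assertion itself, however, your route suffices.
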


\begin{proof}[Proof Sketch]
For each $r \geq 0$, let $\Omega^{\ast}_{ W_r(R) }$ denote the de~Rham complex of $W_r(R)$ (relative to $\Z$).
We will say that a collection of differential graded ideals $\{ I^{\ast}_{r} \subseteq \Omega^{\ast}_{ W_r(R) } \}_{r \geq 0}$ is
{\it good} if the following conditions are satisfied:
\begin{itemize}
\item[$(i)$] Each of the restriction maps $W_{r+1}(R) \rightarrow W_{r}(R)$ determines a map of de~Rham complexes
$\Omega^{\ast}_{ W_{r+1}(R) } \rightarrow \Omega^{\ast}_{ W_r(R) }$ which carries $I_{r+1}^{\ast}$ into $I_{r}^{\ast}$,
and therefore induces a map of differential graded algebras $\Omega^{\ast}_{ W_{r+1}(R) } / I_{r+1}^{\ast} \rightarrow
\Omega^{\ast}_{ W_r(R) } / I_{r}^{\ast}$.

\item[$(ii)$] For each $r \geq 0$, there exists a group homomorphism 
$V\colon  \Omega^{\ast}_{ W_{r}(R) } / I_{r}^{\ast} \rightarrow \Omega^{\ast}_{ W_{r+1}(R) } / I_{r+1}^{\ast}$
which satisfies the identity
$$ V( x_0 dx_1 \wedge \cdots \wedge dx_n ) = V(x_0) dV(x_1) \wedge \cdots \wedge dV(x_n);$$
here we abuse notation by identifying an element of $\Omega^{\ast}_{ W_r(R) }$ with its image
in the quotient $\Omega^{\ast}_{ W_r(R) } / I_r^{\ast}$. Note that such a map $V$ is automatically unique.

\item[$(iii)$] For each $x \in \Omega^{\ast}_{ W_{r}(R) } / I_{r}^{\ast}$ and each $\lambda \in R$,
the difference $$(Vx) d[\lambda] - V( x [\lambda]^{p-1} d[\lambda])$$ belongs to $I_{r+1}^{\ast+1}$
(here we abuse notation by identifying the Teichm\"{u}ller representative $[\lambda]$ with its
image in each $\Omega^{\ast}_{W_{s}(R) } / I_s^{\ast} $). 
\end{itemize}
It is not difficult to see that the collection of good differential graded ideals is closed under intersection.
Consequently, there exists a smallest good differential graded ideal $\{ I^{\ast}_{r} \}_{r \geq 0}$. The inverse system $\{ \Omega^{\ast}_{ W_r(R) } / I_{r}^{\ast} \}_{r \geq 0}$
(together with with the Verschiebung maps defined in $(ii)$ and the evident structural map
$\beta\colon  W(R) \rightarrow \varprojlim_r \left(\Omega^{0}_{ W_r(R) } /
I_{r}^{0}\right)$) is then an initial
object of $\VPro_R$.
\end{proof}

\begin{remark}\label{belox}
In the situation of the proof of Proposition \ref{initobj}, the differential graded ideal $I_{1}^{\ast}$ vanishes:
that is, the tautological map $\Omega^{\ast}_{R} \rightarrow W_1
\Omega^{\ast}_{R}$ is always an isomorphism. In fact, the relations involved
impose no conditions for $r = 1$ (because all relations imposed for $r>1$
have vanishing image in $\Omega^{\ast}_{R}$). 
\end{remark}

\begin{definition}
\label{classicaldRW}
Let $R$ be a commutative $\F_p$-algebra and let $\{ W_{r} \Omega^{\ast}_{R} \}_{r \geq 0}$ denote
an initial object of the category of $\VPro_R$. We let $W \Omega^{\ast}_{R}$ denote the inverse limit
$\varprojlim_{r} W_{r} \Omega^{\ast}_{R}$. We will refer to $W \Omega^{\ast}_{R}$ as the {\it classical
de~Rham--Witt complex of $R$}. 
\end{definition}

\begin{remark}
The proof of Proposition \ref{initobj} provides an explicit model for the inverse system $\{ W_r \Omega^{\ast}_{R} \}$:
each $W_{r} \Omega^{\ast}_{R}$ can be regarded as a quotient of the absolute de~Rham complex $\Omega^{\ast}_{ W_r(R) }$
by a differential graded ideal $I_{r}^{\ast}$, which is dictated by the axiomatics of Definition \ref{definition.V-pro}.
\end{remark}

\begin{warning}
Our definition of the classical de~Rham--Witt complex $W \Omega^{\ast}_{R}$ differs slightly from the definition appearing
in \cite{illusie}. In \cite{illusie}, the inverse system $\{ W_r \Omega^{\ast}_{R} \}_{r \geq 0}$ is defined to be an initial
object of a category $\VPro'_{R}$, which can be identified with the full subcategory of $\VPro_{R}$
spanned by those $R$-framed $V$-pro-complexes $\{ A^{\ast}_{r} \}$ for which $\beta\colon  W(R) \rightarrow A^{0}_{\infty}$ induces isomorphisms
$W_{r}(R) \simeq A^{0}_{r}$ for $r \geq 0$. However, it is not difficult to see that the initial object of $\VPro_{R}$ belongs to
this subcategory (this follows by inspecting the proof of Proposition \ref{initobj}), so that $W \Omega^{\ast}_{R}$ is isomorphic
to the de~Rham--Witt complex which appears in \cite{illusie}.
\end{warning}

\begin{remark}\label{rem101}
Let $R$ be a commutative $\F_p$-algebra and let $W \Omega^{\ast}_{R}$ be the classical de~Rham--Witt complex of $R$.
The proof of Proposition \ref{initobj} shows that the restriction maps $W_{r+1} \Omega^{\ast}_{R} \rightarrow W_{r} \Omega^{\ast}_{R}$ are surjective. It follows
that each $W_{r} \Omega^{\ast}_{R}$ can be viewed as a quotient of $W \Omega^{\ast}_{R}$.
\end{remark}

Our next goal is to compare the saturated de~Rham--Witt complex $\WOmega^{\ast}_{R}$ with the classical
de~Rham--Witt complex $W \Omega^{\ast}_{R}$. We begin with the following observation:

\begin{proposition}\label{prop100}
Let $R$ be a commutative $\F_p$-algebra, let $A^{\ast}$ be a strict Dieudonn\'{e} algebra, and let
$\beta_1\colon  R \rightarrow A^{0} / VA^{0}$ be a ring homomorphism. Then:
\begin{itemize}
\item[$(1)$] The map $\beta_1$ admits a unique lift to a ring homomorphism $\beta\colon  W(R) \rightarrow A^{0}$
satisfying $\beta \circ F = F \circ \beta$.
\item[$(2)$] The inverse system $\{ \WittScript_{r}( A )^{\ast} \}_{r \geq 0}$ is an
$R$-framed $V$-pro-complex (when equipped with the Verschiebung maps $V\colon  \WittScript_{r}(A)^{\ast} \rightarrow \WittScript_{r+1}(A)^{\ast}$ of Remark \ref{rem13} and the map
$\beta$ described in $(1)$).
\end{itemize}
\end{proposition}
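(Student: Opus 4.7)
For part $(1)$, I would simply invoke Proposition \ref{prop30}: since $A^{\ast}$ is a strict Dieudonn\'e algebra, that proposition provides a unique ring homomorphism $\beta\colon W(R)\to A^0$ lifting $\beta_1$ and commuting with Frobenius. Nothing further is needed here. Note that because $A^{\ast}$ is strict, $A^{0}\simeq \varprojlim_r \WittScript_r(A)^0 = A^0_{\infty}$, so $\beta$ is naturally a map into $A^0_{\infty}$ as required by Definition \ref{definition.V-pro}.

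For part $(2)$, the task is to verify each of axioms $(a)$--$(e)$ of Definition \ref{definition.V-pro}. Axioms $(a)$ and $(b)$ are essentially formal consequences of Construction \ref{completionDieudonne} and Remark \ref{rem13}: $\WittScript_r(A)^{\ast}$ vanishes in negative degrees because $A^{\ast}$ does, the quotient $\WittScript_0(A)^{\ast}$ is zero by definition, and the compatibility diagrams between restriction and Verschiebung at the level of $\WittScript_r(A)^{\ast}$ are induced by the tautological identities at the level of $A^{\ast}$. For axiom $(c)$, I would verify the identity $\beta\circ V=V\circ\beta$ in $A^{0}$ (which suffices, as everything is then obtained by passage to the quotients $\WittScript_r(A)^0$). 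Applying the Frobenius on either side and using $\beta\circ F=F\circ\beta$ together with the relations $FV=VF=p$, both $F(\beta(Vx))$ and $F(V\beta(x))$ equal $p\beta(x)$; since $A^{0}$ is $p$-torsion-free (Remark \ref{olos}) and $F$ is injective on a saturated complex, this forces $\beta\circ V=V\circ\beta$.

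For axioms $(d)$ and $(e)$, both identities already hold in $A^{\ast}$ itself, and therefore descend to the quotient $\WittScript_{r+1}(A)^{\ast}$. The key tool is Proposition \ref{proposition.dgraded}: the projection formula $x\,V(y)=V(F(x)y)$, combined with $p$-torsion-freeness of $A^{\ast}$, yields the dual formulation $V(x)\cdot z = V(x\cdot F(z))$ by applying $F$ to both sides and cancelling. For $(d)$, taking $z=dV(y)$ and using the identity $F\circ d\circ V = d$ from Proposition \ref{Vexists} gives
\[
V(x)\cdot dV(y)=V\bigl(x\cdot F(dV(y))\bigr)=V(x\,dy),
\]
which is what is needed. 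For $(e)$, set $\alpha = \beta([\lambda])\in A^{0}$; the Teichm\"uller relation $F_{W(R)}([\lambda])=[\lambda^p]=[\lambda]^p$ in $W(R)$ (valid because $R$ is an $\F_p$-algebra) and compatibility of $\beta$ with $F$ imply $F(\alpha)=\alpha^p$. Applying $d$ and using $p$-torsion-freeness yields $F(d\alpha)=\alpha^{p-1}d\alpha$. The projection formula then gives
\[
V(x)\cdot d\alpha = V\bigl(x\cdot F(d\alpha)\bigr) = V\bigl(\alpha^{p-1}\,x\,d\alpha\bigr),
\]
which is axiom $(e)$.

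The main obstacle, I expect, is just keeping track of the fact that axioms $(c)$--$(e)$ all reduce cleanly to identities already available in $A^{\ast}$ once one exploits the Frobenius-injectivity and $p$-torsion-freeness to ``invert'' the basic projection formula; the Teichm\"uller computation in $(e)$ is the only place where the hypothesis that $R$ is an $\F_p$-algebra plays a genuine role beyond the statement of part $(1)$.
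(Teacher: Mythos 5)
Your proposal is correct and follows essentially the same route as the paper: part $(1)$ is exactly Proposition \ref{prop30}, and all of axioms $(c)$--$(e)$ are verified by establishing the identities in $A^{\ast}$ itself via injectivity of $F$, $p$-torsion-freeness, and the relations $FV=p$, $FdV=d$, $dF=pFd$, together with $F[\lambda]=[\lambda]^p$ in $W(R)$. The only cosmetic difference is that you package $(d)$ and $(e)$ through the (dualized) projection formula of Proposition \ref{proposition.dgraded} and the intermediate identity $F(d\alpha)=\alpha^{p-1}d\alpha$, whereas the paper applies $F$ to both sides of each desired identity and cancels directly; these are the same argument.
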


\begin{proof}
Assertion $(1)$ is the content of Proposition \ref{prop30}. To prove $(2)$, we must verify that $\beta$ and $V$ satisfy axioms $(a)$ through $(e)$ of Definition \ref{definition.V-pro}.
Axioms $(a)$ and $(b)$ are obvious. To prove $(c)$, we compute
$$F \beta( V x) = \beta( FV x) = \beta(p x) = p \beta(x) = F V \beta(x),$$
so that $\beta(Vx ) = V \beta(x)$ by virtue of the fact that the Frobenius map $F\colon  A^{0} \rightarrow A^{0}$ is injective.
The verification of $(d)$ is similar: for $x, y \in A^{\ast}$, we have
$$ FV (x dy) = (px) (dy) = (FVx) (FdVy) = F( (Vx) (dVy) );$$
invoking the injectivity of $F$ again, we obtain $V( x dy ) = (Vx) (dVy)$. To prove $(e)$, we compute
\begin{align*}
F( (Vx) d \beta([\lambda]) ) & = F(Vx) F(d \beta([\lambda]) ) \\
& =  px F(d \beta([ \lambda) ] ) 
 =  x d(F \beta([ \lambda] )) \\
& =  x d( \beta( F[\lambda]) ) 
 =  x d \beta( [\lambda]^{p} ) \\
& =  x d \beta( [ \lambda] )^{p} 
 =  p x \beta( [ \lambda] )^{p-1} d \beta( [ \lambda] ) \\
& =  FV( x \beta( [\lambda])^{p-1} d \beta([ \lambda] ).
\end{align*}
Canceling $F$, we obtain the desired identity \[ (Vx) d \beta([\lambda] ) = V( x
\beta( [\lambda])^{p-1} d \beta([\lambda] ) ). \qedhere \]
\end{proof}

Applying Proposition \ref{prop100} in the case $A^{\ast} = \WOmega^{\ast}_{R}$ and invoking the universal property of
the classical de~Rham--Witt complex, we obtain the following:

\begin{corollary}\label{makemap}
Let $R$ be a commutative $\F_p$-algebra. Then there is a unique homomorphism of differential graded algebras
$\gamma\colon  W \Omega^{\ast}_{R} \rightarrow \WOmega^{\ast}_{R}$ with the following properties:
\begin{itemize}
\item[$(i)$] For each $r \geq 0$, the composite map
$$W \Omega^{\ast}_{R} \xrightarrow{\gamma} \WOmega_{R}^{\ast} \rightarrow \WrOmega{r}_{R}^{\ast}$$
factors through $W_{r} \Omega^{\ast}_{R}$ (necessarily uniquely, by virtue of Remark \ref{rem101}); consequently,
$\gamma$ can be realized as the inverse limit of a tower of differential graded algebra homomorphisms $\gamma_{r}\colon  W_{r} \Omega^{\ast}_{R} \rightarrow \WrOmega{r}_{R}^{\ast}$.

\item[$(ii)$] The diagrams
$$ \xymatrix{ W \Omega^{\ast}_{R} \ar[d]^{\gamma} \ar[r]^{V} & W \Omega^{\ast}_{R} \ar[d]^{\gamma} & W(R) \ar[r] \ar[d] & W\Omega^{0}_{R} \ar[d]^{\gamma}  \\
\WOmega^{\ast}_{R} \ar[r]^{ V} & \WOmega^{\ast}_{R} & R \ar[r] & \WOmega^{0}_{R} / V \WOmega^{0}_{R} }$$
are commutative. \qed
\end{itemize}
\end{corollary}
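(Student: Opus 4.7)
The plan is to combine Proposition~\ref{prop100} (applied to the strict Dieudonn\'{e} algebra $A^{\ast} = \WOmega^{\ast}_{R}$) with the universal property defining the classical de Rham-Witt complex $W\Omega^{\ast}_{R}$ as an initial object of $\VPro_{R}$ (Proposition~\ref{initobj}).

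First, I would take the tautological structure map $\beta_{1}\colon R \to \WOmega^{0}_{R}/V\WOmega^{0}_{R}$ coming from Definition~\ref{def80} / Corollary~\ref{dRWLeftAdj}. Applying Proposition~\ref{prop100}(1) lifts this uniquely to a Frobenius-equivariant ring homomorphism $\beta\colon W(R) \to \WOmega^{0}_{R} = \varprojlim_{r} \WrOmega{r}^{0}_{R}$. By Proposition~\ref{prop100}(2), the inverse system $\{\WrOmega{r}^{\ast}_{R}\}_{r\geq 0}$, together with the Verschiebung maps induced on the quotients (Remark~\ref{rem13}) and this $\beta$, becomes an $R$-framed $V$-pro-complex in the sense of Definition~\ref{definition.V-pro}.

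Next, I would invoke the universal property of $\{W_{r}\Omega^{\ast}_{R}\}_{r\geq 0}$ as an initial object of $\VPro_{R}$. This produces a unique morphism of $R$-framed $V$-pro-complexes $\{\gamma_{r}\colon W_{r}\Omega^{\ast}_{R} \to \WrOmega{r}^{\ast}_{R}\}_{r\geq 0}$, i.e., a compatible family of differential graded algebra maps commuting with restriction, $V$, and the Witt-vector structure maps. Passing to inverse limits gives a map of differential graded algebras $\gamma\colon W\Omega^{\ast}_{R} \to \WOmega^{\ast}_{R}$ fitting into property (i) by construction. Property (ii) reduces to the compatibility of the $\gamma_{r}$ with $V$ and with $\beta$, both of which are built into the definition of a morphism in $\VPro_{R}$.

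For uniqueness, suppose $\gamma'\colon W\Omega^{\ast}_{R} \to \WOmega^{\ast}_{R}$ is another dga map satisfying (i) and (ii). Property (i) lets us write $\gamma'$ as the inverse limit of maps $\gamma'_{r}\colon W_{r}\Omega^{\ast}_{R} \to \WrOmega{r}^{\ast}_{R}$, and property (ii) together with the compatibility of the restriction maps on both sides guarantees that the family $\{\gamma'_{r}\}$ is a morphism of $R$-framed $V$-pro-complexes. The initiality of $\{W_{r}\Omega^{\ast}_{R}\}$ then forces $\gamma'_{r} = \gamma_{r}$ for each $r$, hence $\gamma' = \gamma$.

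The only conceptually nontrivial step is the verification that $\{\WrOmega{r}^{\ast}_{R}\}$ really is an $R$-framed $V$-pro-complex, and this is essentially the content of Proposition~\ref{prop100}; the main technical point to watch is that the "Teichm\"{u}ller identity" (axiom (e) of Definition~\ref{definition.V-pro}) is verified by applying $F$ and using injectivity of Frobenius on the saturated complex, as was done in Proposition~\ref{prop100}. Once that proposition is in hand, the present corollary is a purely formal consequence of the two universal properties.
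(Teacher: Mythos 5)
Your construction of $\gamma$ is exactly the paper's: the paper derives the corollary in one line by applying Proposition~\ref{prop100} with $A^{\ast} = \WOmega^{\ast}_{R}$ and invoking the initiality of $\{ W_r \Omega^{\ast}_{R} \}$ in $\VPro_R$, which is precisely your existence argument, and that part (including your remark about how axiom $(e)$ is checked in Proposition~\ref{prop100}) is correct.

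The uniqueness step, however, has a gap as written. You claim that if $\gamma'$ satisfies $(i)$ and $(ii)$, then the induced family $\{ \gamma'_r \}$ is automatically a morphism of $R$-framed $V$-pro-complexes. But a morphism in $\VPro_R$ (Definition~\ref{definition.V-pro}) must be compatible with the structure maps $\beta_r \colon W(R) \rightarrow A^{0}_{r}$ at \emph{every} level $r$, i.e.\ you need $\gamma' \circ \beta_{W\Omega} = \beta_{\WOmega}$ as maps $W(R) \rightarrow \WOmega^{0}_{R}$, where $\beta_{\WOmega}$ is the Frobenius-compatible lift produced by Proposition~\ref{prop100}. Property $(ii)$ only gives this equality after projecting to $\WOmega^{0}_{R}/V\WOmega^{0}_{R}$, and upgrading it is not formal: the paper's uniqueness results for maps out of $W(R)$ (Proposition~\ref{prop30}, Lemma~\ref{lem29}) all use $F$-compatibility, which $\gamma'$ is not assumed to satisfy, and $V$-compatibility plus agreement mod $V$ does not by itself pin down a ring homomorphism $W(R) \to W(S)$ -- already at the level of $W_2$, the set of $V$-compatible ring homomorphisms lifting a given $f_0 \colon R \to S$ is a torsor under Frobenius-twisted derivations $c \colon R \to S$ (additive maps with $c(ab) = f_0(a)^p c(b) + f_0(b)^p c(a)$), which are plentiful when $R$ is not perfect. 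So you need an additional argument before invoking initiality. One way to close the gap: note that the classical complex carries an operator $F$ with $VF = p$ (Illusie, as quoted in the proof of Theorem~\ref{maintheoC}), so the identity $V\gamma'(Fx) = \gamma'(VFx) = p\gamma'(x) = VF\gamma'(x)$ together with injectivity of $V$ on the saturated complex (Proposition~\ref{Vexists}) shows $\gamma'$ commutes with $F$; then $\gamma' \circ \beta_{W\Omega}$ is an $F$-compatible lift of $f_0$ and the uniqueness clause of Proposition~\ref{prop30} forces $\gamma' \circ \beta_{W\Omega} = \beta_{\WOmega}$, after which your appeal to initiality (or the fact that each $W_r\Omega^{\ast}_{R}$ is generated in degree zero as a differential graded algebra) finishes the uniqueness.
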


We can now formulate the main result of this section:

\begin{theorem}
\label{maintheoC}
Let $R$ be a regular Noetherian $\F_p$-algebra. Then the map $\gamma\colon  W \Omega^{\ast}_{R} \rightarrow \WOmega^{\ast}_{R}$ of
Corollary \ref{makemap} is an isomorphism.
\end{theorem}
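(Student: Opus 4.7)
The strategy is to show that for regular Noetherian $R$, the classical de Rham-Witt complex $W\Omega^{\ast}_R$ is itself a strict Dieudonn\'e algebra, so that the comparison map $\gamma$ can be inverted via the universal property of $\W\Omega^{\ast}_R$ (Definition~\ref{def80}). More precisely, I would first reduce to the smooth case by Popescu's smoothing theorem (exactly as in the proof of Theorem~\ref{theo73}): write $R = \varinjlim R_{\alpha}$ with each $R_{\alpha}$ smooth over $\F_p$. Both $R \mapsto W_r\Omega^{\ast}_R$ and $R \mapsto \WrOmega{r}^{\ast}_R$ commute with filtered colimits for each fixed $r$ (Corollary~\ref{cor75} on the saturated side; standard on the classical side), so $\gamma_r$ for $R$ is the filtered colimit of $\gamma_r$ for the $R_{\alpha}$, and it suffices to treat each $R_{\alpha}$; passage to inverse limit in $r$ then yields the theorem for $R$.

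For $R$ smooth, the heart of the argument is to verify that $\{W_r \Omega^{\ast}_R\}_{r \geq 0}$, equipped with its standard $F$, $V$, and restriction operators, is a strict Dieudonn\'e tower in the sense of Definition~\ref{strictTD}. Axioms (1)--(5) of Definition~\ref{strictTD} are built into the definition of $W_r\Omega^{\ast}_R$ as an $R$-framed $V$-pro-complex (Definition~\ref{definition.V-pro}), and axiom (8) follows from the explicit presentation of the kernel of restriction sketched in the proof of Proposition~\ref{initobj}. The nontrivial axioms are (6) (elements with $p$-divisible differential lie in the image of $F$) and (7) (the kernel of restriction equals the $p$-torsion); these are essentially classical results of Illusie, and at level~$1$ they express precisely the Cartier isomorphism $\Cart\colon \Omega^{\ast}_R \xrightarrow{\sim} \mathrm{H}^{\ast}(\Omega^{\ast}_R)$ of Theorem~\ref{theo71}. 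The higher-level versions follow by d\'evissage along the tower, using the known structure of $\ker(W_{r+1}\Omega^{\ast}_R \to W_r\Omega^{\ast}_R)$ as a quotient of $\Omega^{\ast}_R \oplus \Omega^{\ast - 1}_R$ via $(V^r, dV^r)$.

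Once this is established, Corollary~\ref{cor102} shows that $W\Omega^{\ast}_R = \varprojlim_r W_r\Omega^{\ast}_R$ is a strict Dieudonn\'e complex, and the multiplicative structure promotes it to a strict Dieudonn\'e algebra. Since $W_1 \Omega^0_R = R$ by Remark~\ref{belox} and $W_1 \Omega^0_R = W\Omega^0_R / V W\Omega^0_R$ by axiom (8), the universal property of $\W\Omega^{\ast}_R$ (Definition~\ref{def80}) produces a map $\delta\colon \W\Omega^{\ast}_R \to W\Omega^{\ast}_R$ of strict Dieudonn\'e algebras compatible with the identification of $R$ on both sides. Applying the uniqueness clauses in the universal properties of $W\Omega^{\ast}_R$ (Proposition~\ref{initobj}) and of $\W\Omega^{\ast}_R$ (Definition~\ref{def80}) to the two composites $\gamma \circ \delta$ and $\delta \circ \gamma$, one concludes that $\delta$ is a two-sided inverse to $\gamma$.

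The main obstacle is the verification of axioms (6) and (7) of Definition~\ref{strictTD} for the classical de Rham-Witt tower in the smooth case. These facts are implicit in Illusie's original paper, but a self-contained treatment in the spirit of the present paper requires combining the Cartier isomorphism (Theorem~\ref{theo71}) with a careful induction on $r$; Proposition~\ref{prop76} and the mod-$p$ comparison in Remark~\ref{plux} are the key ingredients at level~$1$, and the higher levels can be bootstrapped via the short exact sequences relating consecutive stages of the tower.
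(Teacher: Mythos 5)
Your proposal is essentially correct, and its first half (reduction to each finite level, passage to a filtered colimit via Popescu, and the appeal to Illusie's structural results to see that the classical complex of a smooth $\F_p$-algebra is a saturated, strict Dieudonn\'{e} algebra whose tower is $\{W_r\Omega^{\ast}_{R}\}$) is exactly what the paper does, citing Illusie I.2.17, I.2.18, I.3.21.1 and I.3.2 for the hard inputs. Where you genuinely diverge is the endgame. The paper, having shown that $\gamma$ is a map of strict Dieudonn\'{e} algebras, invokes Corollary \ref{cor15} to reduce the bijectivity of all $\gamma_r=\W_r(\gamma)$ to the case $r=1$, and then quotes Remark \ref{belox} and Theorem \ref{theo73} (i.e.\ Proposition \ref{prop76} in the smooth case) to identify $\gamma_1$ with the isomorphism $\nu\colon\Omega^{\ast}_{R}\to\W_1\Omega^{\ast}_{R}$. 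You instead run a two-sided universal-property argument: the universal property of $\WOmega^{\ast}_{R}$ (Definition \ref{def80}) produces $\delta$, uniqueness in Definition \ref{def80} gives $\gamma\circ\delta=\mathrm{id}$, and initiality of $\{W_r\Omega^{\ast}_{R}\}$ in $\VPro_R$ gives $\delta\circ\gamma=\mathrm{id}$. This works, and it is a legitimately different finish: it does not consume Theorem \ref{theo73} at all (indeed it would re-derive Proposition \ref{prop76} as a corollary), at the price of several compatibility checks you wave through as ``uniqueness clauses'': you must verify that $\gamma$ commutes with $F$ (the paper does this via $V\gamma(Fx)=VF\gamma(x)$ and injectivity of $V$), that $\delta$ descends to finite levels and commutes with $V$, and that $\delta\circ\gamma$ respects the structure map $\beta\colon W(R)\to W\Omega^{0}_{R}$ so that it really is an endomorphism in $\VPro_R$; the last point needs $W\Omega^0_R\simeq W(R)$ and Lemma \ref{lem29} (using that a regular ring is reduced).

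Two attributions in your sketch are off, though not fatally. First, axiom $(8)$ of Definition \ref{strictTD} for the classical tower (the kernel of restriction being spanned by $V^r$ and $dV^r$ from level $1$) does \emph{not} follow from the construction in Proposition \ref{initobj}; like axioms $(6)$ and $(7)$, it is one of Illusie's nontrivial theorems about smooth algebras and must be cited from \cite{illusie} (this is exactly the role of Proposition I.3.2 in the paper's proof). Second, Proposition \ref{prop76} and Remark \ref{plux} concern the \emph{saturated} complex $\WOmega^{\ast}_{R}$ and cannot serve as ingredients for proving strictness of the \emph{classical} tower; attempting to use them there would be circular in spirit. Since your primary plan is to quote Illusie for these facts --- which is also what the paper does --- the argument stands, but the ``self-contained'' variant you gesture at would need a genuinely different source of input (Illusie's local calculations), not the results of \S\ref{dRComp}.
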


\begin{proof}
The map $\gamma$ can be realized as the inverse limit of a tower of maps $\gamma_r\colon  W_{r} \Omega^{\ast}_{R} \rightarrow \WrOmega{r}^{\ast}_{R}$; it will therefore
suffice to show that each $\gamma_r$ is an isomorphism. Since the constructions $R \mapsto W_{r} \Omega^{\ast}_{R}$ and $R \mapsto \WrOmega{r}^{\ast}_{R}$ commute
with filtered colimits, we can use Popescu's smoothing theorem to reduce to the case where $R$ is a smooth $\F_p$-algebra. Using Theorem I.2.17 and Proposition I.2.18 of
\cite{illusie}, we see that there exists a map $F\colon  W \Omega^{\ast}_{R} \rightarrow W \Omega^{\ast}_{R}$ which endows $W \Omega^{\ast}_{R}$ with the structure of
a Dieudonn\'{e} algebra and satisfies the identities $FV = VF = p$. Since $R$ is smooth over $\F_p$, Remark I.3.21.1 of \cite{illusie} guarantees that the Dieudonn\'{e} algebra $W \Omega^{\ast}_{R}$ is
saturated. Moreover, Proposition I.3.2 of \cite{illusie} (and a passage to
the limit) implies that the kernel of the projection map $W \Omega_{R}^{\ast} \rightarrow W_{r} \Omega_{R}^{\ast}$ is equal to
$\im( V^r ) + \im( d V^r )$, so that $$W \Omega^{\ast}_{R} \simeq \varprojlim
W_{r} \Omega^{\ast}_{R} \simeq \varprojlim \left(W \Omega^{\ast}_{R} / (
\im(V^{r}) + \im( dV^r) ) \right)$$
is a strict Dieudonn\'{e} algebra. For each $x \in W \Omega^{\ast}_{R}$, we have
$$ V \gamma( Fx ) = \gamma( VF x) = \gamma(px) = p \gamma(x) = V F \gamma(x).$$
Since the map $V\colon  \WOmega^{\ast}_{R} \rightarrow \WOmega^{\ast}_{R}$ is injective, it follows that $\gamma(Fx) = F \gamma(x)$: that is, $\gamma$ is a morphism of
(strict) Dieudonn\'{e} algebras. We wish to show that $\gamma_{r} = \WittScript_{r}( \gamma )$ is an isomorphism for each $r \geq 0$. By virtue of Corollary \ref{cor15}, it suffices
to prove this in the case $r = 1$. Invoking Remark \ref{belox}, we are reduced to proving that the tautological map $\Omega^{\ast}_{R} \rightarrow \WrOmega{1}^{\ast}_{R}$
is an isomorphism, which follows from Theorem \ref{theo73}.
\end{proof}

\newpage
\section{Localizations of Dieudonn\'{e} Algebras}

Our goal in this section is to introduce a global version of the saturated de~Rham--Witt complex $\WOmega^{\ast}_{R}$
of Definition \ref{def80}. Let $X$ be an $\F_p$-scheme. In \S \ref{buildscheme}, we show that there is a sheaf
of commutative differential graded algebras $\WOmega^{\ast}_{X}$ with the property that, for every
affine open subset $U \subseteq X$, the complex $\WOmega^{\ast}_{X}(U)$ can be identified
with the saturated de~Rham--Witt complex of the $\F_p$-algebra $\sheafO_{X}(U)$ (Theorem \ref{makeglob}). 
To prove the existence of this sheaf, we will need to analyze the behavior of the saturated de~Rham--Witt complex with respect to 
Zariski localization. In \S \ref{secsix}, we show that if $R$ is an $\F_p$-algebra and $s \in R$ is an element,
then the complex $\WOmega^{\ast}_{R[s^{-1}]}$ admits a relatively simple description in terms of $\WOmega^{\ast}_{R}$:
roughly speaking, it can be obtained from $\WOmega^{\ast}_{R}$ by inverting the Teichm\"{u}ller representative $[s]$ and then passing
to a suitable completion (Corollary \ref{six}). In \S \ref{orbit}, we formulate a more general version of this result, where the localization $R[s^{-1}]$
is replaced by an arbitrary {\etale} $R$-algebra $S$ (Corollary \ref{Vetalecor}). The proof requires some general facts about the behavior of the Witt vector functor
with respect to {\etale} ring homomorphisms, which we review in \S \ref{wittetalesec}.

\subsection{Localization for the Zariski Topology}\label{secsix}

Let $A^{\ast}$ be a graded-commutative ring and let $S \subseteq A^{0}$ be a multiplicatively closed subset of $A^{0}$. 
We let $A^{\ast}[S^{-1}]$ denote the (graded) ring obtained from $A^{\ast}$ by formally inverting the elements of $S$.
In this section, we study the construction $A^{\ast} \mapsto A^{\ast}[S^{-1}]$ in the case where $A^{\ast}$ is a Dieudonn\'{e} algebra.

\begin{proposition}\label{proposition.localize}
Let $A^{\ast}$ be a Dieudonn\'{e} algebra and let $S \subseteq A^{0}$ be a multiplicatively closed subset. Assume
that the Frobenius map $F\colon  A^{\ast} \rightarrow A^{\ast}$ satisfies $F(S) \subseteq S$. Then the graded ring $A^{\ast}[ S^{-1} ]$ 
inherits the structure of a Dieudonn\'{e} algebra, which is uniquely determined by the requirement that
the tautological map $A^{\ast} \rightarrow A^{\ast}[S^{-1}]$ is a morphism of Dieudonn\'{e} algebras.
\end{proposition}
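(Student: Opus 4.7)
The plan is to build the structure on $A^{\ast}[S^{-1}]$ in the only way possible and then verify the axioms of Definition~\ref{def20}. Uniqueness is nearly automatic: the multiplication on $A^{\ast}[S^{-1}]$ is forced by the universal property of localization, and for any $s \in S$ the identities $0 = d(s \cdot s^{-1}) = (ds) s^{-1} + s \, d(s^{-1})$ and $1 = F(s) F(s^{-1})$ force
\[ d(s^{-1}) = - s^{-2} \, ds, \qquad F(s^{-1}) = F(s)^{-1}. \]
Since $A^{\ast}[S^{-1}]$ is generated as a graded ring by $A^{\ast}$ together with the elements $s^{-1}$, $s \in S$, both $d$ and $F$ are determined by their restrictions to $A^{\ast}$.

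For existence, I would first extend the differential. This is a standard fact about commutative differential graded algebras: the Leibniz rule forces the formula $d(a/s^n) = (s \, da - n(ds) a)/s^{n+1}$ (up to signs in the graded case), and one checks that this is well-defined and yields a cdga structure on $A^{\ast}[S^{-1}]$. The hypothesis $F(S) \subseteq S$ together with the universal property of localization for graded rings then lets me extend $F$ uniquely to a graded ring endomorphism of $A^{\ast}[S^{-1}]$ via $F(a/s) = F(a)/F(s)$.

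It remains to verify the three axioms. Axiom (ii) is trivial since localization at $S \subseteq A^0$ does not introduce elements in negative degrees. For axiom (i), i.e.\ the identity $dF = pFd$ on $A^{\ast}[S^{-1}]$, I would observe that both sides define \emph{twisted derivations} satisfying
\[ \Delta(xy) = \Delta(x) F(y) + (-1)^{|x|} F(x) \Delta(y), \]
and therefore the set on which they agree is closed under products; since they agree on $A^{\ast}$ by hypothesis, it suffices to verify the equality on each generator $1/s$. A direct computation gives
\[ dF(s^{-1}) = d(F(s)^{-1}) = - F(s)^{-2} \, dF(s) = - pF(s)^{-2} F(ds) = pF(-s^{-2} ds) = pFd(s^{-1}). \]
For axiom (iii), given $x = a/s \in A^0[S^{-1}]$, I would write $F(a) = a^p + p\alpha$ and $F(s) = s^p + p\beta$ with $\alpha, \beta \in A^0$; then
\[ F(a/s) - (a/s)^p = \frac{F(a) s^p - a^p F(s)}{s^p F(s)} = \frac{p(\alpha s^p - a^p \beta)}{s^p F(s)} \in p A^0[S^{-1}], \]
as required. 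None of the steps presents a real obstacle; the only substantive point is the derivation identity for $dF$ and $pFd$, which reduces the verification of axiom (i) to the single calculation on $s^{-1}$ displayed above.
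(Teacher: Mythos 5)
Your proposal is correct and follows essentially the same route as the paper: both define $d$ and $F$ on $A^{\ast}[S^{-1}]$ by the forced formulas $d(x/s) = dx/s - x\,ds/s^2$ and $F(x/s) = F(x)/F(s)$, and then verify axioms (i)--(iii) of Definition~\ref{def20} by direct computation, with axiom (iii) handled by the same manipulation of $F(a)s^p - a^pF(s)$ modulo $p$. The only difference is cosmetic: for axiom (i) the paper computes $dF(x/s) = pFd(x/s)$ directly on a general fraction, whereas you reduce to the generators $s^{-1}$ via the observation that $dF$ and $pFd$ are both $F$-twisted derivations.
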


\begin{remark}\label{seffer}
In the situation of Proposition \ref{proposition.localize}, the Dieudonn\'{e} algebra $A^{\ast}[S^{-1}]$ is characterized by the following universal property:
for any Dieudonn\'{e} algebra $B^{\ast}$, precomposition with the tautological map $A^{\ast} \rightarrow A^{\ast}[S^{-1}]$ induces a monomorphism of
sets
$$ \Hom_{ \FrobAlg }( A^{\ast}[ S^{-1} ], B^{\ast} ) \hookrightarrow \Hom_{ \FrobAlg }( A^{\ast}, B^{\ast} ),$$
whose image consists of those morphisms of Dieudonn\'{e} algebras $f\colon  A^{\ast} \rightarrow B^{\ast}$ with the property that
$f(s)$ is an invertible element of $B^{0}$, for each $s \in S$.
\end{remark}

\begin{proof}[Proof of Proposition \ref{proposition.localize}]
We first observe that there is a unique differential on the ring $A^{\ast}[ S^{-1} ]$ for which the map $A^{\ast} \rightarrow A^{\ast}[ S^{-1} ]$ is a morphism of differential
graded algebras, given concretely by the formula $$d( \tfrac{x}{s} ) =
\tfrac{ dx}{s} - \tfrac{x ds}{s^2}.$$ The assumption that $F(S) \subseteq S$ guarantees that there is a unique homomorphism of graded rings $F\colon  A^{\ast}[S^{-1}] \rightarrow A^{\ast}[S^{-1}]$ for which the diagram
$$ \xymatrix{ A^{\ast} \ar[d]^{F} \ar[r] & A^{\ast}[ S^{-1} ] \ar[d]^{F} \\
A^{\ast} \ar[r] & A^{\ast}[S^{-1}] }$$
is commutative, given concretely by the formula $F( \frac{x}{s} ) = \frac{ F(x)}{ F(s) }$. To complete the proof, it will suffice to show that the triple $( A^{\ast}[S^{-1}], d, F)$ is a Dieudonn\'{e} algebra:
that is, that is satisfies the axioms of Definition \ref{def20}. Axiom $(ii)$ is immediate. To verify $(i)$, we compute
\begin{align*}
dF( \tfrac{x}{s} ) & =  d\tfrac{ F(x) }{ F(s) } \\
& =  \tfrac{ dF(x)}{ F(s) } - \tfrac{ F(x) dF(s) }{ F(s)^2 } 
 =  \tfrac{ pF(dx)}{ F(s) } - \tfrac{ p F(x) F(ds) }{ F(s)^2 } \\
& =  pF( \tfrac{ dx}{s} - \tfrac{ x ds}{s^2} ) 
 =  pF( d \left( \tfrac{x}{s}\right) ). \\
\end{align*} 
To prove $(iii)$, choose any $x \in A^{0}$ and any $s \in S$. Since $A^{\ast}$ is a Dieudonn\'{e} algebra, we have
$$ F(x) s^{p} \equiv x^{p} s^{p} \equiv x^{p} F(s) \pmod{p},$$
so that we can write $F(x) s^{p} = x^{p} F(s) + py$ for some $y \in A^{0}$. It follows that
$$ F( \frac{x}{s} ) = \frac{ F(x)}{ F(s) } = \frac{ x^{p} }{s^{p} } + p \frac{y}{ s^{p} F(s) }$$
in the commutative ring $A^{0}[S^{-1}]$, so that $F( \frac{x}{s} ) \equiv ( \frac{x}{s} )^{p} \pmod{p}$.
\end{proof}

\begin{example}
Let $A^{\ast}$ be a Dieudonn\'{e} algebra and let $s$ be an element of $A^{0}$ which satisfies the equation $F(s) = s^{p}$.
Applying Proposition \ref{proposition.localize} to the set $S = \{ 1, s, s^2, \ldots \}$, we conclude that the localization
$A^{\ast}[s^{-1}]$ inherits the structure of a Dieudonn\'{e} algebra.
\end{example}

\begin{proposition}\label{proposition.localization-saturated}
Let $A^{\ast}$ be a Dieudonn\'{e} algebra, and let $s \in A^{0}$ be an element satisfying the equation $F(s) = s^{p}$.
If $A^{\ast}$ is saturated, then the localization $A^{\ast}[ s^{-1} ]$ is also saturated.
\end{proposition}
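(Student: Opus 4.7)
The plan is to verify the two conditions of Definition~\ref{def21} for $B^{\ast} := A^{\ast}[s^{-1}]$. The $p$-torsion-freeness of $B^{\ast}$ is immediate from that of $A^{\ast}$: if $p \cdot (x/s^n) = 0$ in $B^{\ast}$, then $s^N \cdot px = 0$ in $A^{\ast}$ for some $N \geq 0$, hence $s^N x = 0$ by $p$-torsion-freeness of $A^{\ast}$, so $x/s^n = 0$ in $B^{\ast}$. For the injectivity of the Frobenius on $B^{\ast}$, if $F(x/s^n) = F(x)/s^{pn}$ vanishes, then $s^{pK} F(x) = F(s^K x) = 0$ in $A^{\ast}$ for some $K$ (using $F(s) = s^p$), and by Remark~\ref{rem5} the Frobenius on $A^{\ast}$ is injective, giving $s^K x = 0$ and thus $x/s^n = 0$ in $B^{\ast}$.

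The main content is the surjectivity of $F$ onto $\{y \in B^{\ast} : dy \in pB^{\ast+1}\}$. Given such a $y$, I would write $y = x/s^n$ with $x \in A^{\ast}$ and $dy = p\tilde{y}$ with $\tilde{y} = w/s^m$, $w \in A^{\ast+1}$. For any integer $L$ with $pL \geq \max(n+1, m)$, set $z := s^{pL} y = s^{pL-n} x \in A^{\ast}$. The Leibniz rule gives
\[ dz \; = \; pL \cdot s^{pL-n-1}\, x\, ds \; + \; s^{pL}\, dy \; = \; pL \cdot s^{pL-n-1}\, x\, ds \; + \; p \cdot s^{pL-m}\, w, \]
which manifestly lies in $pA^{\ast+1}$. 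By the saturation of $A^{\ast}$, there exists $u \in A^{\ast}$ with $F(u) = z$. The hypothesis $F(s) = s^p$ forces $F(s^L) = s^{pL}$, and therefore $F(u/s^L) = F(u)/s^{pL} = z/s^{pL} = y$ in $B^{\ast}$, as required.

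The only conceptual point is the role of the hypothesis $F(s) = s^p$: it is precisely what ensures that the denominator $s^{pL}$ appearing in $z$ lies in the image of $F$, so that a preimage $u \in A^{\ast}$ of $z$ can be transported back to $B^{\ast}$ with the correct denominator to recover $y$. Absent this compatibility the argument collapses; with it, there is no real obstacle beyond the bookkeeping of denominators above, so I would expect the proof to be essentially mechanical once $L$ is chosen large enough.
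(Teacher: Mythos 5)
Your overall strategy is the one the paper uses: multiply the given element by a power of $s$ whose exponent is divisible by $p$ so as to land in $A^{\ast}$ with differential divisible by $p$, invoke saturation of $A^{\ast}$ there, and descend back to the localization via $F(s^{L}) = s^{pL}$. The torsion-freeness and injectivity parts are correct and match the paper. However, there is a genuine gap at the pivotal step of the surjectivity argument, namely the claim that $dz$ ``manifestly lies in $pA^{\ast+1}$.''

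The displayed identity
\[ dz \; = \; pL \cdot s^{pL-n-1}\, x\, ds \; + \; p \cdot s^{pL-m}\, w \]
is an identity in the localization $A^{\ast}[s^{-1}]$, not in $A^{\ast}$: it is deduced from $dy = p\,w/s^{m}$, which is only an equation in $A^{\ast}[s^{-1}]$. Since the localization map $A^{\ast} \to A^{\ast}[s^{-1}]$ need not be injective (a saturated Dieudonn\'{e} algebra can perfectly well have $s$-torsion; $p$-torsion-freeness says nothing about this), all you may conclude is that $s^{N} dz \in pA^{\ast+1}$ for some unspecified $N \geq 0$. Computing honestly in $A^{\ast}$ one has $dz = pL\,s^{pL-n-1}x\,ds + s^{pL-n-1}\bigl(s\,dx - n\,x\,ds\bigr)$, and the hypothesis, after clearing denominators, only guarantees $s^{K}\bigl(s\,dx - n\,x\,ds\bigr) \in pA^{\ast+1}$ for some $K \gg 0$ depending on the element; your constraint $pL \geq \max(n+1,m)$ does not ensure $pL-n-1 \geq K$. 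The inference implicitly used --- that $s^{K}\omega \in pA^{\ast+1}$ forces $s^{K'}\omega \in pA^{\ast+1}$ for the smaller exponent you actually have --- is invalid in general: if $\tau$ is $s$-torsion with $\tau \notin pA^{\ast+1}$, then $\omega = p\eta + \tau$ satisfies $s\omega \in pA^{\ast+1}$ but $\omega \notin pA^{\ast+1}$. The repair is exactly how the paper proceeds: first convert the divisibility hypothesis into a statement inside $A^{\ast}$, by choosing $k \gg 0$ (as large as needed for the particular element, not bounded in terms of $n$ and $m$ alone) so that $s^{pk}$ times the relevant numerator lies in $pA^{\ast}$, and only then perform the Leibniz computation in $A^{\ast}$, where the extra term is divisible by $p$ because its coefficient $pk$ is. With ``$L$ sufficiently large'' understood in this stronger sense, your argument closes and coincides with the paper's proof.
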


\begin{proof}
We first show that the Frobenius map $F\colon  A^{\ast}[ s^{-1} ] \rightarrow A^{\ast}[ s^{-1} ]$ is a monomorphism. 
Suppose that $F( \frac{x}{s^{k}} ) = 0$ for some $x \in A^{\ast}$. Then we have an equality $s^{n} F(x) = 0$ in $A^{\ast}$
for some $n \gg 0$. Enlarging $n$ if necessary, we can assume that $n = pm$ for some integer $m \geq 0$, so that
$$0 = s^{n} F(x) = F( s^{m} ) F(x) = F( s^{m} x ).$$
Since the Frobenius map $F$ is a monomorphism on $A^{\ast}$, it follows that $s^{m} x = 0$, so that
$\frac{x}{s^k}$ vanishes in $A^{\ast}[ s^{-1} ]$.

Now suppose that we are given an element $\frac{ y}{s^{n} } \in A^{\ast}[ S^{-1} ]$ such that
$d( \frac{y}{s^n} )$ is divisible by $p$ in $A^{\ast}[S^{-1}]$; we wish to show that $\frac{y}{s^n}$ belongs to the image of $F$.
Enlarging $n$ if necessary, we can assume that $n = pm$ for some $m \geq 0$, so that
$$ d( \frac{y}{s^n} ) = \frac{ dy}{s^{n}} - pm \frac{ y ds}{ s^{n+1} } \equiv \frac{ dy}{ s^n} \pmod{p}.$$
It follows that we can choose $k \gg 0$ such that $s^{pk} dy$ belongs to $p A^{\ast}$. 
We then have $$d( s^{pk} y) = s^{pk} dy + pk s^{pk-1} y ds \equiv s^{pk} dy \equiv 0 \pmod{p}.$$
Invoking our assumption that $A^{\ast}$ is saturated, we can write $s^{pk} y = Fz$ for some
$z \in A^{\ast}$. It follows that 
$\frac{y}{s^n} = \frac{ s^{pk} y}{ s^{p(k + m)} } = F( \frac{ z}{ s^{k + m} })$, so that $\frac{y}{s^n}$ belongs to the image of $F$ as desired.
\end{proof}

In the situation of Proposition \ref{proposition.localization-saturated}, the Dieudonn\'{e} algebra $A^{\ast}[ s^{-1} ]$ is usually
not strict, even if $A^{\ast}$ is assumed to be strict. However, the completed saturation of $A^{\ast}[ s^{-1}]$ is easy to describe, by virtue of the following result:

\begin{proposition}\label{proposition.localize-strictify}
Let $A^{\ast}$ be a saturated Dieudonn\'{e} algebra and let $s \in A^{0}$ be an element satisfying $F(s) = s^{p}$. 
Let $r \geq 0$ and let $\overline{s}$ denote the image of $s$ in the quotient $\WittScript_{r}( A )^{0}$. Then
the canonical map $\psi\colon  \WittScript_{r}(A)^{\ast}[ \overline{s}^{-1}] \rightarrow \WittScript_{r}( A[ s^{-1} ] )^{\ast}$
is an isomorphism of differential graded algebras.
\end{proposition}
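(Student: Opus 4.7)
The plan is to reduce the assertion to the elementary fact that localization commutes with cohomology, via the isomorphism of Proposition \ref{prop11}. To begin, I would check that $\psi$ is well-defined by the universal property recorded in Remark \ref{seffer}: the natural map $A^{\ast} \to A^{\ast}[s^{-1}]$ is a morphism of Dieudonn\'e algebras which carries $s$ to a unit, so it induces a morphism of differential graded algebras $\WittScript_r(A)^{\ast} \to \WittScript_r(A[s^{-1}])^{\ast}$ in which $\overline{s}$ becomes invertible. This factors uniquely through $\WittScript_r(A)^{\ast}[\overline{s}^{-1}]$, producing $\psi$.

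Next, I would apply Proposition \ref{prop11} to both $A^{\ast}$ and $A[s^{-1}]^{\ast}$ (the latter is saturated by Proposition \ref{proposition.localization-saturated}), obtaining natural isomorphisms $F^r\colon \WittScript_r(A)^{\ast} \xrightarrow{\sim} \mathrm{H}^{\ast}(A^{\ast}/p^r A^{\ast})$ and $\WittScript_r(A[s^{-1}])^{\ast} \xrightarrow{\sim} \mathrm{H}^{\ast}(A[s^{-1}]^{\ast}/p^r A[s^{-1}]^{\ast})$. A small supplementary check is required: that these isomorphisms are ring maps and natural in the Dieudonn\'e algebra. Both are automatic because $F^r$ is itself a ring homomorphism carrying $A^{\ast}$ into cocycles of $A^{\ast}/p^r A^{\ast}$, and the product on cohomology is computed by multiplying representing cocycles. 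Under these identifications, $\psi$ corresponds to a map
$$\psi'\colon \mathrm{H}^{\ast}(A^{\ast}/p^r A^{\ast})\bigl[[s^{p^r}]^{-1}\bigr] \to \mathrm{H}^{\ast}(A[s^{-1}]^{\ast}/p^r A[s^{-1}]^{\ast}),$$
since the image of $\overline{s}$ under $F^r$ is the cohomology class of $F^r(s) = s^{p^r}$.

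The crucial observation is that $s^{p^r}$ is a cocycle in $A^{\ast}/p^r A^{\ast}$, because $d(s^{p^r}) = p^r s^{p^r-1}\,ds \equiv 0 \pmod{p^r}$. Hence multiplication by $s^{p^r}$ defines a chain endomorphism of $A^{\ast}/p^r A^{\ast}$. Since inverting $s^{p^r}$ in the commutative ring $A^0/p^r$ is equivalent to inverting $s$, and the differential on a localization is uniquely determined by the Leibniz rule, one obtains a canonical identification of cochain complexes
$$A[s^{-1}]^{\ast}/p^r A[s^{-1}]^{\ast} \;\cong\; \mathrm{colim}\bigl(A^{\ast}/p^r \xrightarrow{\,s^{p^r}\,} A^{\ast}/p^r \xrightarrow{\,s^{p^r}\,} \cdots\bigr).$$
Because cohomology commutes with filtered colimits of cochain complexes, applying $\mathrm{H}^{\ast}$ gives an isomorphism with $\mathrm{H}^{\ast}(A^{\ast}/p^r A^{\ast})[[s^{p^r}]^{-1}]$, and tracing the construction identifies this isomorphism with $\psi'$. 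This proves that $\psi$ is an isomorphism.

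The main obstacle is purely bookkeeping: verifying that the isomorphism of Proposition \ref{prop11} is multiplicative and natural, and that the telescope identification is compatible with $\psi$ under the maps $F^r$. Both steps are routine but require care, particularly because the differential $d$ on $A^{\ast}/p^r$ is not $A^0/p^r$-linear, so one cannot directly appeal to flatness of $(A^0/p^r) \to (A^0/p^r)[s^{-1}]$; the chain-level argument via the telescope is what makes the computation legitimate.
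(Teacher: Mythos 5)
Your argument is correct, but it takes a genuinely different route from the paper's. The paper proves the statement by a short element-wise computation: surjectivity of $\psi$ is immediate, and injectivity follows from the projection-formula identity $V^{r}(s^{-n}x) = s^{-n}V^{r}(s^{n(p^{r}-1)}x)$ (valid because $F^r(s^{-n}) = s^{-np^r}$), which shows that $(\im V^r + \im dV^r)[s^{-1}]$ already contains $V^r(A^{\ast}[s^{-1}])$, hence---being a subcomplex---also $dV^r(A^{\ast}[s^{-1}])$. You instead transport the problem through the isomorphism $F^r\colon \WittScript_r(M)^{\ast} \simeq \mathrm{H}^{\ast}(M^{\ast}/p^r)$ of Proposition \ref{prop11} (applied to both $A^{\ast}$ and the saturated algebra $A[s^{-1}]^{\ast}$) and then use the chain-level telescope along multiplication by $s^{p^r}$, which is a cocycle mod $p^r$, to see that $\mathrm{H}^{\ast}(A[s^{-1}]^{\ast}/p^r)$ is the localization of $\mathrm{H}^{\ast}(A^{\ast}/p^r)$ at $[s^{p^r}]$; your verifications (multiplicativity and naturality of $F^r$, agreement of the telescope differential with the Leibniz-rule differential modulo $p^r$, and the identification of the colimit comparison with $\psi'$) all go through, and the needed saturation of $A[s^{-1}]^{\ast}$ is exactly Proposition \ref{proposition.localization-saturated}, which both arguments use (the paper needs it so that $V$ is even defined on $A[s^{-1}]^{\ast}$). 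The trade-off: the paper's proof is a few lines and purely algebraic, while yours is longer but conceptually explains the statement as an instance of ``cohomology commutes with filtered colimits/localization,'' fitting naturally with the mod-$p^r$ descriptions of $\WittScript_r$ used elsewhere (e.g.\ Corollary \ref{cor15}). One small inaccuracy worth fixing: the factorization producing $\psi$ is just the universal property of inverting the degree-zero element $\overline{s}$ in the differential graded algebra $\WittScript_r(A)^{\ast}$ (together with functoriality of $\WittScript_r$ for the map of saturated Dieudonn\'e algebras $A^{\ast} \to A[s^{-1}]^{\ast}$), not Remark \ref{seffer}, which concerns localization of Dieudonn\'e algebras themselves.
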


\begin{proof}
The surjectivity of $\psi$ is immediate from the definitions. To prove
injectivity, we must verify the equality
$$ (V^{r} A^{\ast} + d V^{r} A^{\ast} )[ s^{-1} ] = V^{r}(A^{\ast}[ s^{-1} ]) + d V^{r}( A^{\ast}[ s^{-1} ] ).$$
Note that both sides are differential graded ideals in 
$A^{\ast}[s^{-1}]$, and the left hand side is contained in the right hand side.
To establish the reverse containment, it will suffice to show that the left hand side contains $V^{r}( A^{\ast}[ s^{-1} ] )$. This follows from the identity
\[  V^{r}(s^{-n} x) = V^{r}( F^{r}( s^{-n} ) s^{n( p^{r}-1)} x) = s^{-n} V^{r}(
s^{n(p^r-1)} x). \qedhere\]
\end{proof}

\begin{corollary}\label{six}
Let $R$ be a commutative $\F_p$-algebra, let $A^{\ast}$ be a Dieudonn\'{e} algebra, and let
$f\colon  R \rightarrow A^{0} / VA^{0}$ be a ring homomorphism which exhibits $A^{\ast}$ as
a saturated de~Rham--Witt complex of $R$ (Definition \ref{def80}). 
Let $s \in R$ be an element, and let us abuse notation by identifying the Teichm\"{u}ller representative
$[s] \in W(R)$ with its image in $A^{0}$. Set $B^{\ast} = A^{\ast}[ [s]^{-1} ]$. Then the map
$$ R[s^{-1}] \xrightarrow{f} (A^{0} / VA^{0})[ s^{-1}] \simeq B^{0} / V B^{0} =
\WittScript(B)^{0} / V \WittScript(B)^{0}$$
exhibits $\WittScript(B)^{\ast}$ as a saturated de~Rham--Witt complex of $R[s^{-1}]$.
\end{corollary}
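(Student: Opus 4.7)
The plan is to verify the universal property of Definition~\ref{def80} directly. Let $C^{\ast}$ be an arbitrary strict Dieudonn\'{e} algebra; I must produce a natural bijection between $\Hom_{\FrobAlg}(\WittScript(B)^{\ast}, C^{\ast})$ and $\Hom(R[s^{-1}], C^{0}/VC^{0})$.

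First I would assemble the preliminary input. Since $f$ exhibits $A^{\ast}$ as a saturated de Rham-Witt complex, $A^{\ast}$ is strict, so $A^{0} \cong W(A^{0}/VA^{0})$ by Proposition~\ref{prop28}, and the Teichm\"{u}ller element $[s] \in A^{0}$ is the image of $[s] \in W(R)$ under the canonical map $W(R) \to A^{0}$ furnished by Proposition~\ref{prop30}. Because that map commutes with Frobenius and $F[s] = [s]^{p}$ holds in $W(R)$, we obtain $F([s]) = [s]^{p}$ in $A^{0}$. Applying Propositions~\ref{proposition.localize} and~\ref{proposition.localization-saturated}, we conclude that $B^{\ast} = A^{\ast}[[s]^{-1}]$ inherits the structure of a saturated Dieudonn\'{e} algebra, and by Corollary~\ref{realcor14} its completion $\WittScript(B)^{\ast}$ is a strict Dieudonn\'{e} algebra.

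Next I would chain together three universal-property bijections. By Proposition~\ref{prop38}, composition with $\rho_{B}\colon B^{\ast} \to \WittScript(B)^{\ast}$ yields $\Hom_{\FrobAlg}(\WittScript(B)^{\ast}, C^{\ast}) \cong \Hom_{\FrobAlg}(B^{\ast}, C^{\ast})$; by Remark~\ref{seffer}, the latter is identified with the subset of morphisms $\phi \in \Hom_{\FrobAlg}(A^{\ast}, C^{\ast})$ for which $\phi([s]) \in C^{0}$ is a unit; and finally the hypothesis that $f$ exhibits $A^{\ast}$ as the saturated de Rham-Witt complex of $R$ identifies $\Hom_{\FrobAlg}(A^{\ast}, C^{\ast})$ with $\Hom(R, C^{0}/VC^{0})$. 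Write $\phi_{g}$ for the morphism corresponding to $g\colon R \to C^{0}/VC^{0}$.

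The crux is to show that $\phi_{g}([s])$ is a unit in $C^{0}$ if and only if $g(s)$ is a unit in $C^{0}/VC^{0}$, which would match the preceding set with $\Hom(R[s^{-1}], C^{0}/VC^{0})$. The composition $W(R) \to A^{0} \xrightarrow{\phi_{g}} C^{0}$ is a ring homomorphism commuting with Frobenius whose reduction modulo $V$ is $g$, so by the uniqueness in Proposition~\ref{prop30} it coincides with the functorial map $W(g)\colon W(R) \to W(C^{0}/VC^{0}) \cong C^{0}$. In particular, $\phi_{g}([s])$ equals the Teichm\"{u}ller representative $[g(s)] \in C^{0}$. Since $C^{\ast}$ is strict, $C^{0}$ is $V$-adically complete; hence $VC^{0}$ lies in the Jacobson radical of $C^{0}$, so $[g(s)]$ is a unit precisely when its reduction $g(s)$ is a unit in $C^{0}/VC^{0}$, which in turn is equivalent to $g$ factoring through $R[s^{-1}]$. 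The main obstacle is this last step: it rests on the Witt-vector identification $C^{0} \cong W(C^{0}/VC^{0})$ together with the $V$-adic completeness of degree zero in a strict Dieudonn\'{e} algebra, both of which are already available from the results of \S\ref{strictdieudonnecomplex}.
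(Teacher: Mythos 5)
Your proposal is correct and follows essentially the same route as the paper: reduce via the universal properties of completion (Proposition~\ref{prop38}) and localization (Remark~\ref{seffer}) to the claim that a degree-zero element of a strict Dieudonn\'{e} algebra is invertible if and only if its image modulo $V$ is, and settle that claim using $V$-adic completeness of $C^{0}$ (your Jacobson-radical phrasing is the paper's geometric-series argument). The only difference is cosmetic: the paper works directly with $\phi_g([s])$ and does not need your detour through Proposition~\ref{prop30} identifying it with the Teichm\"{u}ller representative $[g(s)]$, though that identification is correct.
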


\begin{proof}
For any strict Dieudonn\'{e} algebra $C^{\ast}$, we have a commutative diagram of sets
$$\xymatrix{
\Hom_{ \FrobAlg }( \WittScript(B)^{\ast}, C^{\ast} ) \ar[r] \ar[d] &  \Hom_{
\FrobAlg }( A^{\ast}, C^{\ast} ) \ar[d]^{\sim} \\
\Hom( R[ s^{-1} ], C^{0} / V C^{0} ) \ar[r] & \Hom( R, C^0 / V C^0 ). }$$
Note that the right vertical map is a bijection, and we wish to show that the left vertical map is also a bijection.
For this, it will suffice to show the diagram is a pullback square. Invoking the 
universal property of $B^{\ast}$ supplied by Remark \ref{seffer}, we can reformulate this assertion as follows:
\begin{itemize}
\item[$(\ast)$] Let $f\colon  A^{\ast} \rightarrow C^{\ast}$ be a morphism of Dieudonn\'{e} algebras. 
Then $f( [s] )$ is an invertible element of $C^{0}$ if and only if its image in $C^{0} / V C^{0}$ is invertible.
\end{itemize}
The ``only if'' direction is obvious. For the converse, suppose that $f([s] )$ admits an inverse in
$C^{0} / V C^{0}$, which is represented by an element $y \in C^{0}$. Then we can write
$y f([s] ) = 1 - Vz$ for some $z \in C^{0}$. Since $C^{0}$ is $V$-adically complete,
the element $f([s])$ has an inverse in $C^{0}$, given by the product $y( 1 + Vz + (Vz)^2 + \cdots )$.
\end{proof}

\subsection{The saturated de~Rham--Witt Complex of an $\F_p$-Scheme}\label{buildscheme}

We now apply the ideas of \S \ref{secsix} to globalize the construction of Definition \ref{def80}.

\begin{construction}\label{construction.global}
Let $X = (X, \sheafO_X)$ be an $\F_p$-scheme, and let $\calU_{\aff}(X)$ denote the collection of all affine open subsets of $X$.
We define a functor $\WOmega^{\ast}_{X}\colon  \calU_{\aff}(X)^{\op} \rightarrow \FrobAlg$ by the formula
$$ \WOmega^{\ast}_{X}(U) = \WOmega^{\ast}_{ \sheafO_X(U) }.$$
We regard $\WOmega^{\ast}_{X}(U)$ as a presheaf on $X$ (defined only on affine subsets of $X$) with values in the category
of Dieudonn\'{e} algebras.
\end{construction}

\begin{theorem}\label{makeglob}
Let $X$ be an $\F_p$-scheme. Then the presheaf $\WOmega^{\ast}_{X}$ of Construction \ref{construction.global} is a sheaf
(with respect to the topology on $\calU_{\aff}(X)$ given by open coverings).
\end{theorem}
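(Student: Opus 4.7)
The plan is to verify the sheaf condition degree by degree and at each finite Witt-level, then assemble via the strictness of $\W\Omega^*_R$. A presheaf of Dieudonn\'e algebras on the affine opens of $X$ is a sheaf if and only if each underlying presheaf of abelian groups is one (the algebra/differential/Frobenius structure is preserved by restriction maps automatically). By the usual reductions (refining covers, quasi-compactness of affines, principal opens forming a basis), it suffices to show that for each $\mathbb{F}_p$-algebra $R$, each $i \geq 0$, and each finite family $s_1, \ldots, s_n \in R$ generating the unit ideal, the sequence
\begin{equation*}
0 \to \W\Omega^i_R \to \prod_k \W\Omega^i_{R[s_k^{-1}]} \to \prod_{k,\ell} \W\Omega^i_{R[(s_k s_\ell)^{-1}]}
\end{equation*}
is exact.

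Next, I will reduce to finite Witt-levels. Strictness gives $\W\Omega^i_R \cong \varprojlim_r \WrOmega{r}^i_R$, and the transition maps $\WrOmega{r+1}^i_R \to \WrOmega{r}^i_R$ are surjective (they are quotient maps, since $V^{r+1}\W\Omega^*_R + dV^{r+1}\W\Omega^*_R \subseteq V^r\W\Omega^*_R + dV^r\W\Omega^*_R$). The tower is thus Mittag-Leffler, so the inverse limit is exact and it suffices to prove the analogous exactness for $\WrOmega{r}^i_R$ at each $r$. At that finite level, combining Corollary \ref{six} (which identifies $\W\Omega^*_{R[s^{-1}]}$ with the completed saturation of $\W\Omega^*_R[[s]^{-1}]$), Proposition \ref{prop17} (ensuring that $\WittScript_r$ is insensitive to completion), and Proposition \ref{proposition.localize-strictify} (passing the localization through $\WittScript_r$) yields a natural isomorphism
\begin{equation*}
\WrOmega{r}^*_{R[s^{-1}]} \;\cong\; \WrOmega{r}^*_R[\,\overline{[s]}^{\,-1}\,],
\end{equation*}
where $\overline{[s]}$ denotes the image in $\WrOmega{r}^0_R$ of the Teichm\"uller lift of $s$. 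The Čech sequence at level $r$ thus becomes the standard descent complex of the $\WrOmega{r}^0_R$-module $\WrOmega{r}^i_R$ for the family $(\overline{[s_k]})$.

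The main step is then to show that $(\overline{[s_1]}, \ldots, \overline{[s_n]})$ generate the unit ideal of $\WrOmega{r}^0_R$; once this holds, the exactness of the Čech complex follows from the classical fact that modules over a commutative ring satisfy descent along a Zariski covering of $\Spec$. For this I will show that the ideal $V\WrOmega{r}^0_R$ is nilpotent. Indeed, the projection formula (Proposition \ref{proposition.dgraded}(i)) together with $FV = p$ gives $V(x)V(y) = V(pxy) = V^2(F(x)y)$, and iterating yields $(V\WrOmega{r}^0_R)^r \subseteq V^r\WrOmega{r}^0_R$; the latter vanishes because $\WrOmega{r}^0_R = \W\Omega^0_R / V^r\W\Omega^0_R$ (using $\W\Omega^{-1}_R = 0$). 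The quotient $\WrOmega{r}^0_R/V\WrOmega{r}^0_R \cong \WrOmega{1}^0_R$ receives the structural ring homomorphism from $R$, under which each $s_k$ maps to the reduction of $\overline{[s_k]}$. Since the $s_k$ generate the unit ideal in $R$, their images generate the unit ideal in $\WrOmega{1}^0_R$, and lifting across the nilpotent ideal shows that the $\overline{[s_k]}$ generate the unit ideal of $\WrOmega{r}^0_R$.

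The main technical content is entirely contained in the localization formula of Corollary \ref{six}: once the localization of $\W\Omega^*_R$ at a Zariski open has been identified with an honest module-theoretic localization (after truncation at finite level), the remaining ingredients --- Mittag-Leffler vanishing, nilpotent lifting, and quasi-coherent descent on affine schemes --- are purely formal or classical. The only point requiring modest care is verifying surjectivity of the transition maps in the Witt-tower and the nilpotence of $V\WrOmega{r}^0_R$, but both are immediate from the projection formula and the definition of $\WrOmega{r}^0_R$.
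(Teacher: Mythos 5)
Your proposal is correct and follows essentially the same route as the paper's proof: reduce degreewise to the finite-level quotients $\WrOmega{r}^{d}$, identify $\WrOmega{r}^{d}_{R[s^{-1}]}$ with the localization of $\WrOmega{r}^{d}_{R}$ at the (image of the) Teichm\"{u}ller representative via Corollary \ref{six} and Proposition \ref{proposition.localize-strictify}, and conclude by ordinary Zariski descent for modules because those Teichm\"{u}ller elements generate the unit ideal (the paper phrases this over $W_r(R)$, you over $\WrOmega{r}^{0}_{R}$ via nilpotence of $V$, which is the same nilpotent-lifting observation). Two harmless quibbles: the Mittag--Leffler remark is unnecessary, since the sheaf axiom is a left-exactness condition and so passes to inverse limits without any surjectivity hypothesis, and in the projection-formula computation one has $V(x)V(y)=V(pxy)=V^{2}(F(x)F(y))$ rather than $V^{2}(F(x)y)$, which does not affect the containment $(V\WrOmega{r}^{0}_{R})^{r}\subseteq V^{r}\WrOmega{r}^{0}_{R}=0$.
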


\begin{proof}
We will show that, for every integer $d \geq 0$, the presheaf $\WOmega^{d}_{X}$ is a sheaf of abelian groups on $X$.
Unwinding the definitions, we can write $\WOmega^{d}_{X}$ as the inverse limit of a tower of presheaves
$$ \cdots \rightarrow \WrOmega{3}^{d}_{X} \rightarrow \WrOmega{2}^{d}_{X} \rightarrow \WrOmega{1}^{d}_{X},$$
where $\WrOmega{r}^{d}_{X}$ is given by the formula $\WrOmega{r}^{d}_{X}(U) = \WrOmega{r}^{d}_{ \sheafO_X(U) }$. 
It will therefore suffice to show that each $\WrOmega{r}^{d}_{X}$ is a sheaf of abelian groups. In other words,
we must show that for every affine open subset $U \subseteq X$ and every covering of $U$ by
affine open subsets $U_{\alpha}$, the sequence
$$ 0 \rightarrow \WrOmega{r}^{d}_{X}(U) \rightarrow \prod_{\alpha} \WrOmega{r}^{d}_{X}(U_{\alpha} )
\rightarrow \prod_{\alpha, \beta} \WrOmega{r}^{d}_{X}(U_{\alpha} \cap U_{\beta})$$
is exact. Without loss of generality, we may replace $X$ by $U$ and thereby reduce to the case where
$X = \Spec(R)$. By a standard argument, we can further reduce to the case where each $U_{\alpha}$
is the complement of the vanishing locus of some element $s_{\alpha} \in R$. In this case,
we wish to show that the sequence 
$$ 0 \rightarrow \WrOmega{r}^{d}_{R} \rightarrow \prod_{\alpha} \WrOmega{r}^{d}_{R[ s_{\alpha}^{-1} ]}
\rightarrow \prod_{\alpha, \beta} \WrOmega{r}^{d}_{R[ s_{\alpha}^{-1}, s_{\beta}^{-1} ]}$$
is exact. Setting $M = \WrOmega{r}^{d}_{R}$, we can apply Corollary \ref{six}
and Proposition \ref{proposition.localize-strictify} to rewrite this sequence as 
$$ 0 \rightarrow M \rightarrow \prod_{\alpha} M[ \overline{s}_{\alpha}^{-1} ]
\rightarrow \prod_{\alpha, \beta} M[ \overline{s}_{\alpha}^{-1}, \overline{s}_{\beta}^{-1} ],$$
where $\overline{s}_{\alpha}$ denotes the image in $W_r(R)$ of the Teichm\"{u}ller representative
$[s_{\alpha}] \in W(R)$. The desired result now follows from the observation that the elements
$\overline{s}_{\alpha}$ generate the unit ideal in $W_r(R)$.
\end{proof}

It follows from Theorem \ref{makeglob} that for any $\F_p$-scheme $X$, the presheaf
$\WOmega^{\ast}_{X}$ can be extended uniquely to a sheaf of Dieudonn\'{e} algebras on
the collection of all open subsets of $X$. We will denote this sheaf also by
$\WOmega^{\ast}_{X}$ and refer to it as the {\it saturated de~Rham--Witt complex of $X$}.

\begin{remark}\label{billings}
The proof of Theorem \ref{makeglob} shows that each $\WrOmega{r}^{d}_{X}$
can be regarded as a {\em quasi-coherent} sheaf on the $(\Z / p^{r} \Z)$-scheme
$(X, W_r( \sheafO_X) )$; here $W_r( \sheafO_X )$ denotes the sheaf of commutative
rings on $X$ given on affine open sets $U$ by the formula $W_r( \sheafO_X)(U) = W_r( \sheafO_X(U) )$. 
\end{remark}

\begin{proposition}\label{snippet}
Let $R$ be a commutative $\F_p$-algebra and set $X = \Spec(R)$.
Then, for every integer $d$, the canonical map
$\WOmega_{R}^{d} \rightarrow \mathrm{H}^{0}( X; \WOmega^{d}_{X} )$ is an isomorphism
and the cohomology groups $\mathrm{H}^{n}(X; \WOmega^{d}_X )$ vanish for $n > 0$.
\end{proposition}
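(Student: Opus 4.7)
My plan is to reduce the statement to a finite-level computation, where the quasi-coherence observation of Remark~\ref{billings} applies, and then pass to the inverse limit using a Mittag-Leffler argument. Recall that $\WOmega^d_R = \varprojlim_r \WrOmega{r}^d_R$ by the strictness of $\WOmega^\ast_R$, and correspondingly the sheaf $\WOmega^d_X$ can be identified with the inverse limit $\varprojlim_r \WrOmega{r}^d_X$ of sheaves on $X$. The first thing I would verify is that this identification is legitimate: inverse limits of sheaves are computed sectionwise, and on any affine open $U = \Spec(S) \subseteq X$, Corollary~\ref{six} (combined with the localization formulas for $\WOmega^\ast$) ensures that $(\varprojlim_r \WrOmega{r}^d_X)(U) = \varprojlim_r \WrOmega{r}^d_S = \WOmega^d_S = \WOmega^d_X(U)$.

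Next, I would invoke Remark~\ref{billings}, which says that each $\WrOmega{r}^d_X$ is a quasi-coherent sheaf on the affine scheme $\Spec(W_r(R))$ (whose underlying space is $X$). Standard vanishing for quasi-coherent sheaves on affine schemes then gives $\mathrm{H}^0(X; \WrOmega{r}^d_X) = \WrOmega{r}^d_R$ and $\mathrm{H}^n(X; \WrOmega{r}^d_X) = 0$ for all $n > 0$ and all $r \geq 0$. This handles the theorem one level at a time.

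The main obstacle, and the only non-formal step, is the passage to the inverse limit. For this I would use the Milnor/$\varprojlim^1$ short exact sequence
\[ 0 \to {\varprojlim_r}^{1} \mathrm{H}^{n-1}(X; \WrOmega{r}^d_X) \to \mathrm{H}^{n}\bigl(X; \varprojlim_r \WrOmega{r}^d_X\bigr) \to \varprojlim_r \mathrm{H}^{n}(X; \WrOmega{r}^d_X) \to 0, \]
which is available here because the tower $\{\WrOmega{r}^d_X\}$ has surjective transition maps (the quotients $\WrOmega{r+1}^d_R \twoheadrightarrow \WrOmega{r}^d_R$ are visibly surjective, and by quasi-coherence surjectivity holds at the level of sheaves too). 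The finite-level vanishing kills the $\varprojlim$ term for $n \geq 1$, and Mittag-Leffler surjectivity of $\{\WrOmega{r}^d_R\}$ kills the $\varprojlim^1$ term for $n = 1$; for $n \geq 2$ both terms vanish already from the finite-level statement. For $n = 0$ the sequence degenerates to the identification $\mathrm{H}^0(X; \WOmega^d_X) = \varprojlim_r \WrOmega{r}^d_R = \WOmega^d_R$, as desired.

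The one subtlety I would want to be careful about is justifying the Milnor exact sequence in the generality needed (the scheme $X$ need not be Noetherian, let alone finite-dimensional). The cleanest way is to compute cohomology via \v{C}ech cohomology on a finite cover when $X$ is quasi-compact, or more generally to argue directly: a tower of sheaves with surjective transition maps can be resolved by a short exact sequence of the form $0 \to \varprojlim_r F_r \to \prod_r F_r \xrightarrow{1-\mathrm{shift}} \prod_r F_r \to 0$, and the long exact sequence on cohomology together with the finite-level vanishing yields exactly the conclusion. This avoids any hypothesis on $X$ beyond being an affine $\F_p$-scheme.
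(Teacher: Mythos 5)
Your overall strategy --- quasi-coherence of the finite-level sheaves $\WrOmega{r}^{d}_{X}$ over $W_r(\sheafO_X)$ (Remark~\ref{billings}), vanishing of their higher cohomology on the affine $X$, and surjectivity of the transition maps to pass to the limit --- is the same as the paper's; the paper packages the limit step as a homotopy limit $R\varprojlim_r \WrOmega{r}^{d}_{X}$ in the derived category of abelian sheaves. However, as written your limit step has a gap. The Milnor sequence you invoke, with the honest inverse-limit sheaf in the middle term, is not available off the shelf: the standard sequence features $R\varprojlim_r \WrOmega{r}^{d}_{X}$, and identifying this with $\varprojlim_r \WrOmega{r}^{d}_{X}$ is essentially what has to be proved. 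Your proposed substitute, the short exact sequence $0 \to \varprojlim_r F_r \to \prod_r F_r \to \prod_r F_r \to 0$, is indeed exact here (surjectivity of the second map can be checked on the basis of affine opens, where Mittag--Leffler applies), but the resulting long exact sequence only yields the proposition if you also know that $\mathrm{H}^{n}(X; \prod_r \WrOmega{r}^{d}_{X}) = 0$ for $n > 0$. This does \emph{not} follow from the finite-level vanishing: sheaf cohomology need not commute with infinite products (infinite products of abelian sheaves are not exact in general, i.e.\ AB4* can fail), and $\prod_r \WrOmega{r}^{d}_{X}$ is not quasi-coherent over any single structure sheaf --- its factors are quasi-coherent over the varying sheaves $W_r(\sheafO_X)$ --- so no quasi-coherent vanishing theorem applies to it directly. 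So the assertion that "the long exact sequence together with the finite-level vanishing yields exactly the conclusion" skips the essential point.

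The gap is fixable in two ways. One is \v{C}ech-theoretic: for a finite cover of a basic affine open by basic affine opens, the \v{C}ech complex of $\prod_r \WrOmega{r}^{d}_{X}$ is the product of the \v{C}ech complexes of the $\WrOmega{r}^{d}_{X}$; each of those is exact in positive degrees by quasi-coherent vanishing at level $r$, and products of exact sequences of abelian groups are exact, so the \v{C}ech cohomology of the product vanishes in positive degrees on a basis, and Cartan's criterion upgrades this to derived-functor vanishing. The other --- which is the paper's route and avoids products of sheaves entirely --- is to work with $\sheafF := R\varprojlim_r \WrOmega{r}^{d}_{X}$: since $\RGamma(U,-)$ commutes with homotopy limits, for every affine open $U$ one gets $\RGamma(U,\sheafF) \simeq R\varprojlim_r \WrOmega{r}^{d}_{\sheafO_X(U)} \simeq \WOmega^{d}_{\sheafO_X(U)}$ concentrated in degree zero (quasi-coherence plus surjective transition maps), whence $\sheafF$ is the sheaf $\WOmega^{d}_{X}$ placed in degree zero, and taking $U = X$ gives the statement. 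With either patch your argument is complete; without one, the key step is unjustified.
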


\begin{proof}
Let $\sheafF^{\bullet}$ denote the homotopy limit of the diagram
$$ \cdots \rightarrow \WrOmega{3}^{d}_{X} \rightarrow \WrOmega{2}^{d}_{X} \rightarrow \WrOmega{1}^{d}_{X},$$
in the derived category of abelian sheaves on $X$. For every affine open subset $U \subseteq X$,
the hypercohomology $\RGamma( U; \sheafF^{\bullet} )$ can be identified with the homotopy limit of the diagram
$$ \cdots \rightarrow \RGamma(U; \WrOmega{3}^{d}_{X}) \rightarrow \RGamma(U; \WrOmega{2}^{d}_{X}) \rightarrow \RGamma(U; \WrOmega{1}^{d}_{X})$$
in the derived category of abelian groups. It follows from Remark \ref{billings} that each $\WrOmega{r}^{d}_{X}$ can be regarded as a quasi-coherent sheaf on $(X, W_r( \sheafO_X ) )$, so
we can identify the preceding diagram with the tower of abelian groups
$$ \cdots \rightarrow \WrOmega{3}^{d}_{ \sheafO_X(U) } \rightarrow \WrOmega{2}^{d}_{ \sheafO_X(U) } \rightarrow \WrOmega{1}^{d}_{ \sheafO_X(U) }.$$
This diagram has surjective transition maps, so its homotopy limit can be identified with the abelian group $\WOmega^{d}_{ \sheafO_X(U) }$ (regarded
as an abelian group, concentrated in degree zero). It follows that $\sheafF^{\bullet}$ can be identified with $\WOmega^{d}_{X}$ (regarded as a chain complex concentrated in degree zero),
so the preceding calculation gives $\RGamma(U; \WOmega^{d}_{X} ) \simeq \WOmega^{d}_{ \sheafO_X(U) }$ for each affine open subset $U \subseteq X$.
Proposition \ref{snippet} now follows by taking $U = X$.
\end{proof}
\subsection{Localization for the \'{E}tale Topology}\label{orbit}

Let $R$ be a commutative $\F_p$-algebra. In \S \ref{secsix}, we showed that
the saturated de~Rham--Witt complex $\WOmega^{\ast}_{R[s^{-1}]}$ of any localization
$R[s^{-1}]$ can be described explicitly in terms of $\WOmega^{\ast}_{R}$ (Corollary \ref{six}). In this section,
we will formulate a generalization of this result which applies to any \'{e}tale $R$-algebra $S$ (Corollary \ref{Vetalecor}).

Let $(A^{\ast}, d)$ be a commutative differential graded algebra. In what follows, we will use
the term {\it $A^{\ast}$-algebra} to refer to a commutative differential graded algebra $B^{\ast}$ equipped with a
morphism of differential graded algebras $A^{\ast} \rightarrow B^{\ast}$.

\begin{definition}\label{etaledga}
Let $f\colon  A^{\ast} \to B^{\ast}$ be a map of commutative differential
graded algebras. We will say that $f$ is \emph{\'etale} (or that $B^{\ast}$ is
an \emph{\'etale $A^{\ast}$-algebra})
if the ring homomorphism $f\colon  A^0 \to B^0$ is \'etale and the map of graded algebras $A^{\ast}
\otimes_{A^0} B^0 \to B^{\ast}$ is an isomorphism. 
\end{definition} 

If $A^{\ast}$ is a commutative differential graded algebra, then every \'{e}tale $A^{0}$-algebra can be extended to an \'{e}tale $A^{\ast}$-algebra:

\begin{proposition} 
\label{compareetale}
Let $A^{\ast}$ be a commutative differential graded algebra. Then:
\begin{itemize}
\item[$(1)$] For every \'{e}tale $A^{0}$-algebra $R$, there exists an
\'{e}tale $A^{\ast}$-algebra $B^{\ast}$ and an isomorphism of $A^{0}$-algebras $R \simeq B^{0}$.

\item[$(2)$] Let $B^{\ast}$ be an \'{e}tale $A^{\ast}$-algebra. Then, for any $A^{\ast}$-algebra $C^{\ast}$, the canonical map
$$ \Hom_{ A^{\ast}}( B^{\ast}, C^{\ast} ) \rightarrow \Hom_{ A^{0} }( B^0, C^0)$$
is bijective.

\item[$(3)$] The construction $B^{\ast} \mapsto B^{0}$ induces an equivalence from the category of \'{e}tale $A^{\ast}$-algebras to the
category of \'{e}tale $A^{0}$-algebras.
\end{itemize}
\end{proposition}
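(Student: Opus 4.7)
The plan is to reduce all three parts to the standard fact that for an \'etale morphism $A^0 \to R$ and any $R$-module $M$, every derivation $A^0 \to M$ extends uniquely to a derivation $R \to M$. This follows from the formal \'etaleness of $A^0 \to R$ applied to the square-zero extension $R \oplus M \to R$: a derivation $A^0 \to M$ is equivalent to a lift $A^0 \to R \oplus M$ of the structure map, and formal \'etaleness yields a unique factorization through $R \oplus M$, giving the extended derivation.

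For part $(1)$, given an \'etale $A^0$-algebra $R$, set $B^n := A^n \otimes_{A^0} R$ with the obvious graded ring structure. The restriction $d|_{A^0}\colon A^0 \to A^1$ yields a derivation from $A^0$ into the $R$-module $B^1$, which by the extension principle lifts uniquely to a derivation $d_R\colon R \to B^1$. Define $d\colon B^n \to B^{n+1}$ by $d(a \otimes r) = d(a) \otimes r + (-1)^{\deg a} a \cdot d_R(r)$; well-definedness on the tensor product reduces to the identity $d_R(a_0) = d(a_0) \otimes 1$ for $a_0 \in A^0$, and the Leibniz rule holds by construction. For $d^2 = 0$, observe that on any graded algebra with odd derivation $d$, the map $d^2$ is automatically a graded derivation; since $d^2$ vanishes on $A^{\ast}$, its restriction to $R$ is a derivation $R \to B^2$ whose restriction to $A^0$ is zero, and uniqueness in the extension principle forces $d^2|_R = 0$, hence $d^2 = 0$ on all of $B^{\ast}$.

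For part $(2)$, an $A^{\ast}$-algebra map $\phi\colon B^{\ast} \to C^{\ast}$ is determined by $\phi_0 := \phi|_{B^0}$ as a map of graded rings, because $B^{\ast} = A^{\ast} \otimes_{A^0} B^0$. The only issue is compatibility with $d$. Given $\phi_0\colon B^0 \to C^0$ over $A^0$, extend to graded rings and set $\delta(b) := d(\phi_0(b)) - \phi_0(d(b)) \in C^1$ for $b \in B^0$. A direct computation shows $\delta$ is a derivation from $B^0$ into the $B^0$-module $C^1$, and $\delta|_{A^0} = 0$ since $A^{\ast} \to C^{\ast}$ is a CDGA map and $\phi_0$ is $A^0$-linear. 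The uniqueness half of the extension principle gives $\delta = 0$, so $\phi$ commutes with $d$ on $B^0$ and hence, by the Leibniz rule and the fact that $B^{\ast}$ is generated by $A^{\ast}$ and $B^0$, on all of $B^{\ast}$. Part $(3)$ is a formal consequence: essential surjectivity comes from $(1)$ and full faithfulness from $(2)$.

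The main subtlety is organizing the formal \'etaleness argument so that both existence of $d_R$ and vanishing of $d^2|_R$ (and likewise both existence and uniqueness of morphism extensions in part $(2)$) come from the same universal property; once this is done, the routine checks of well-definedness on the tensor product and of the derivation identities proceed without obstruction.
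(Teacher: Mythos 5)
Your proof is correct, but it is organized differently from the paper's. The paper does not build the differential on $A^{\ast} \otimes_{A^{0}} R$ by hand: it uses the isomorphism $R \otimes_{A^{0}} \Omega^{1}_{A^{0}} \simeq \Omega^{1}_{R}$ for \'etale maps to identify $\Omega^{\ast}_{R}$ with $R \otimes_{A^{0}} \Omega^{\ast}_{A^{0}}$ as graded rings, and then defines $B^{\ast} = \Omega^{\ast}_{R} \otimes_{\Omega^{\ast}_{A^{0}}} A^{\ast}$, so that the differential, the Leibniz rule, and $d^{2}=0$ come for free from the pushout of commutative differential graded algebras; assertion $(2)$ is then proved for this particular model via the universal property of the de Rham complex, and transported to an arbitrary \'etale $B'^{\ast}$ by producing an isomorphism with the model (a degree-zero isomorphism between \'etale $A^{\ast}$-algebras is automatically an isomorphism). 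You instead run everything through the formally \'etale lifting property in the form ``derivations extend uniquely along $A^{0} \to R$'': existence produces $d_{R}$, and uniqueness kills both the derivation $d^{2}|_{B^{0}}$ and the defect $\delta(b) = d(\phi(b)) - \phi(d(b))$, which proves $(2)$ directly for every \'etale $B^{\ast}$ without comparing to a canonical model. Both arguments rest on the same underlying input (\'etale base change for K\"ahler differentials, equivalently unique extension of derivations), but yours is more elementary and self-contained, at the cost of the explicit checks of balancedness, Leibniz, and $d^{2}=0$, all of which you carry out or reduce correctly; the paper buys brevity by outsourcing these verifications to universal properties it needs elsewhere anyway.

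Two small points to tidy. In part $(2)$, the expression $\phi_{0}(d(b))$ should read $\phi(d(b))$ for the graded-ring extension $\phi$ of $\phi_{0}$, and the final step uses that $B^{\ast}$ is generated as a graded ring by the images of $A^{\ast}$ and $B^{0}$ (immediate from $B^{\ast} \simeq A^{\ast} \otimes_{A^{0}} B^{0}$). In part $(1)$, you should also record that odd-degree elements of $A^{\ast} \otimes_{A^{0}} R$ square to zero (diagonal terms vanish because they do in $A^{\ast}$, cross terms cancel by graded anticommutativity), since this is part of the paper's definition of a commutative differential graded algebra.
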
 

\begin{proof} 
Let $R$ be an \'{e}tale $A^{0}$-algebra. Then the canonical map
$R \otimes_{ A^{0} } \Omega^{1}_{ A^{0} } \rightarrow \Omega^{1}_{R}$ is an isomorphism.
It follows that the de~Rham complex $\Omega^{\ast}_{R}$ is given, as a graded ring, by the
tensor product $R \otimes_{A^{0}} \Omega^{\ast}_{A^{0}}$. Extending scalars along the map
$\Omega^{\ast}_{A^{0}} \rightarrow A^{\ast}$, we obtain an isomorphism of graded rings
$$ R \otimes_{ A^{0} } A^{\ast} \simeq \Omega^{\ast}_{R} \otimes_{ \Omega^{\ast}_{A^0} } A^{\ast}.$$
Setting $B^{\ast} = \Omega^{\ast}_{R} \otimes_{ \Omega^{\ast}_{A^0} } A^{\ast}$, we conclude
that $B^{\ast}$ is an \'{e}tale $A^{\ast}$-algebra with $B^{0} \simeq R$, which proves $(1)$.
Moreover, $B^{\ast}$ has the universal property described in assertion $(2)$. If $B'^{\ast}$ is any other \'{e}tale $A^{\ast}$-algebra
equipped with an isomorphism $\alpha\colon  R \simeq B'^{0}$, then $\alpha$ extends uniquely to a map of $A^{\ast}$-algebras $\overline{\alpha}\colon  B^{\ast} \rightarrow B'^{\ast}$,
which is automatically an isomorphism (since it is an isomorphism in degree zero and the domain and codomain of $\overline{\alpha}$ are both \'{e}tale over $A^{\ast}$).
It follows that $B'^{\ast}$ also has the universal property of assertion $(2)$. Assertion $(3)$ is an immediate consequence of $(1)$ and $(2)$.
\end{proof} 

We now formulate an analogue of Proposition \ref{compareetale} in the setting
of strict Dieudonn\'{e} algebras.

\begin{definition} 
\label{def:Vetale}
Let $f\colon  A^{\ast} \to B^{\ast}$ be a morphism of strict Dieudonn\'e algebras. We will say that 
$f$ is \emph{$V$-adically \'etale} if, for each $n$, $\mathcal{W}_n(f)\colon 
\mathcal{W}_n(A)^{\ast} \to \mathcal{W}_n(B)^{\ast}$ is \'etale 
as a morphism of commutative differential graded algebras (Definition~\ref{etaledga}).
\end{definition} 

\begin{theorem} 
\label{Vetale}
Let $A^{\ast}$ be a strict Dieudonn\'e algebra. Then:
\begin{itemize}
\item[$(1)$] For every \'{e}tale $A^{0} / VA^{0}$-algebra $R$, there exists an
$V$-adically \'{e}tale morphism of strict Dieudonn\'{e} algebras $A^{\ast} \rightarrow B^{\ast}$ and an isomorphism of
$A^{0}/VA^{0}$-algebras $B^{0}/VB^{0} \simeq R$.

\item[$(2)$] Let $f\colon  A^{\ast} \rightarrow B^{\ast}$ be a $V$-adically
\'{e}tale morphism of strict Dieudonn\'{e} algebras.
Then, for every morphism of strict Dieudonn\'{e} algebras $A^{\ast} \rightarrow C^{\ast}$, the canonical map
$$ \Hom_{ \FrobAlg_{A^{\ast}} }( B^{\ast}, C^{\ast} ) \rightarrow \Hom_{ A^{0}/VA^{0} }( B^0 / VB^{0}, C^0/VC^{0} )$$
is bijective.

\item[$(3)$] The construction $B^{\ast} \mapsto B^{0} / VB^0$ induces an equivalence from the category of $V$-adically \'{e}tale
strict Dieudonn\'e algebras over $A^{\ast}$ to the category of \'etale $A^0/VA^0$-algebras. 
\end{itemize}
\end{theorem}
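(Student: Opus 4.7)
The plan is to construct $B^{\ast}$ by building its putative strict Dieudonn\'e tower $\{\mathcal{W}_n(B)^{\ast}\}_{n \geq 0}$ level by level and then invoking Corollary~\ref{Dieudtowerstrict}. Set $R = A^0/VA^0$. By (the proof of) Proposition~\ref{prop28}, the degree-zero piece $\mathcal{W}_n(A)^0 = A^0/V^n A^0$ is canonically identified with the Witt ring $W_n(R)$, and the surjection $W_n(R) \twoheadrightarrow R$ has kernel $VW_n(R)$, which is a nil ideal (killed by $V^n = 0$). Given an \'etale $R$-algebra $S$, the classical fact that \'etale algebras lift uniquely across surjections with nil kernel produces a unique compatible system $\{\widetilde{S}_n\}$ of \'etale $W_n(R)$-algebras with $\widetilde{S}_n \otimes_{W_n(R)} R = S$ and $\widetilde{S}_{n+1} \otimes_{W_{n+1}(R)} W_n(R) = \widetilde{S}_n$. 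Applying Proposition~\ref{compareetale} at each level then yields a unique \'etale CDGA $\mathcal{W}_n(B)^{\ast}$ over $\mathcal{W}_n(A)^{\ast}$ with $\mathcal{W}_n(B)^0 = \widetilde{S}_n$; concretely $\mathcal{W}_n(B)^{\ast} = \mathcal{W}_n(A)^{\ast} \otimes_{W_n(R)} \widetilde{S}_n$ as a graded algebra.

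The next step is to install the operators $R$, $F$, $V$ on this tower. The restriction map $\mathcal{W}_{n+1}(B)^{\ast} \to \mathcal{W}_n(B)^{\ast}$ is obtained from the compatibility of the $\widetilde{S}_n$ together with Proposition~\ref{compareetale}(2). The Frobenius $F\colon \mathcal{W}_{n+1}(B)^0 \to \mathcal{W}_n(B)^0$ in degree zero is defined as the unique \'etale lift (over $F\colon W_{n+1}(R) \to W_n(R)$) of the $p$-th power map on $S$, and it extends uniquely to a CDGA map $\mathcal{W}_{n+1}(B)^{\ast} \to \mathcal{W}_n(B)^{\ast}$ over $F\colon \mathcal{W}_{n+1}(A)^{\ast} \to \mathcal{W}_n(A)^{\ast}$ by Proposition~\ref{compareetale}(2); the Verschiebung $V\colon \mathcal{W}_n(B)^{\ast} \to \mathcal{W}_{n+1}(B)^{\ast}$ is constructed similarly, using the projection formula of Proposition~\ref{proposition.dgraded} to pin it down on degree-zero generators. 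All relations between $R$, $F$, $V$ (and $d$) then hold automatically, because they hold after restriction to $\mathcal{W}_n(A)^{\ast}$ and after reduction mod $VW_n(R)$, and uniqueness of \'etale lifts forces them to hold everywhere. One then verifies that $\{\mathcal{W}_n(B)^{\ast}\}$ satisfies the axioms of Definition~\ref{strictTD}; axioms (1)--(5) are inherited formally, while axioms (6)--(8) reduce to the corresponding statements for $\{\mathcal{W}_n(A)^{\ast}\}$ after invoking faithful flatness of the \'etale extension $W_n(R) \to \widetilde{S}_n$. Setting $B^{\ast} = \varprojlim_n \mathcal{W}_n(B)^{\ast}$, Corollary~\ref{cor102} and Proposition~\ref{strictTDconverse} then give that $B^{\ast}$ is a strict Dieudonn\'e complex with $\mathcal{W}_n(B^{\ast}) \simeq \mathcal{W}_n(B)^{\ast}$, and the multiplicative structure on each level passes to the limit to make $B^{\ast}$ a strict Dieudonn\'e algebra (with $A^{\ast} \to B^{\ast}$ $V$-adically \'etale by construction and $B^0/VB^0 = \widetilde{S}_1/V\widetilde{S}_1 = S$).

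For assertion (2), given a morphism $A^{\ast} \to C^{\ast}$ of strict Dieudonn\'e algebras and a map $g_0\colon S \to C^0/VC^0$ over $R \to C^0/VC^0$, one lifts $g_0$ level-by-level: at level $n$, the \'etale lifting property gives a unique ring map $\widetilde{S}_n \to \mathcal{W}_n(C)^0$ extending the reduction of $g_0$, and Proposition~\ref{compareetale}(2) then uniquely extends this to a CDGA map $\mathcal{W}_n(B)^{\ast} \to \mathcal{W}_n(C)^{\ast}$; compatibility with $F$, $V$, and restriction across levels is again forced by uniqueness. Passing to the limit yields the required map $B^{\ast} \to C^{\ast}$, and assertion (3) is then formal. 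The main obstacle I anticipate is the careful verification of axioms (6)--(8) of Definition~\ref{strictTD} for the tower $\{\mathcal{W}_n(B)^{\ast}\}$, since the levels are not $p$-torsion-free and one cannot naively argue by flatness; the kernel characterization in axiom (8) in particular requires tracking how $V^n$ and $dV^n$ interact with \'etale base change, and depends on the fact that the relations defining the tower are already sharp on the $A$-side.
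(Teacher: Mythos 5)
Your overall skeleton matches the paper's: build the tower $\{\mathcal{W}_n(B)^{\ast}\}$ by \'etale base change of $\{\mathcal{W}_n(A)^{\ast}\}$ along $W_n(R) \to W_n(S)$ (your $\widetilde{S}_n$ coincides with $W_n(S)$ by Theorem~\ref{wittetale} and uniqueness of \'etale lifts), transfer $F$ and $\mathrm{Res}$ by Proposition~\ref{compareetale}(2), verify the axioms of Definition~\ref{stricTD}, pass to the inverse limit, and prove (2) by level-wise unique lifting exactly as in Proposition~\ref{Vetaleprop}. However, there is a genuine gap at the point you yourself flag, and it starts earlier than axioms (6)--(8): it already affects the \emph{construction} of $V$. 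The Verschiebung is not a ring homomorphism, so Proposition~\ref{compareetale}(2) simply does not apply to it, and the projection formula $xV(y)=V(F(x)y)$ does not pin $V$ down on $\mathcal{W}_n(B)^{\ast} = \mathcal{W}_n(A)^{\ast}\otimes_{W_n(R)}W_n(S)$, because a general element $s \in W_n(S)$ need not lie in the image of $F$ (times something from $W_n(R)$). What is really needed is the observation that $V$ is $(F^1,F^0)$-linear, i.e.\ a $W(R)$-module map $(\mathcal{W}_n(A)^{\ast})_{(1)} \to \mathcal{W}_{n+1}(A)^{\ast}$, together with the fact that Frobenius twisting commutes with \'etale base change of Witt vectors (Remark~\ref{olose}, Proposition~\ref{frobtwistbase}); this is what yields the unique semilinear extension (Corollary~\ref{extendmapsFaFb}), and it is exactly where the char-$p$ \'etaleness (the Frobenius square being a pushout) enters.

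The same issue undercuts your proposed verification of axioms (6) and (8) ``by faithful flatness of $W_n(R)\to \widetilde{S}_n$'': the exact sequences on the $A$-side that you want to base-change are built from maps that are not $W_n(R)$-linear --- $F$ is $(F^0,F^1)$-linear, $d$ mod $p$ is $(F^n,F^n)$-linear (Lemma~\ref{froblinear}), and $(V^n,dV^n)$ is $(F^{2n},F^n)$-linear --- so plain flat base change does not preserve their exactness as stated. The paper's mechanism is Remark~\ref{sespreserved}: after inserting the correct Frobenius twists on each term (which is legitimate precisely because of Proposition~\ref{frobtwistbase}), the sequences become linear over the Witt ring, and exactness is then preserved by flatness of $W_n(R)\to W_n(S)$. (For axiom (7) the restriction map is honestly linear, so your flatness argument is fine there.) So your plan is repairable, but only by importing the $(F^a,F^b)$-linear bookkeeping of \S\ref{wittetalesec}; as written, the steps ``extend $V$ by uniqueness of \'etale lifts / the projection formula'' and ``deduce (6) and (8) by faithful flatness'' do not go through.
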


We will give the proof of Theorem \ref{Vetale} in \S \ref{pushetale}. Note that it
implies an analogue of Corollary~\ref{six}: 

\begin{corollary}\label{Vetalecor}
Let $R \to S$ be an \'etale map of $\mathbb{F}_p$-algebras. 
Then the map $\mathcal{W} \Omega_R^{\ast} \to \mathcal{W} \Omega_S^{\ast}$ of strict Dieudonn\'e
complexes is $V$-adically \'etale. Furthermore, for each $n$, we have an
isomorphism \[ \mathcal{W}_n \Omega_R^{\ast} \otimes_{W_n(R)} W_n(S) \simeq \mathcal{W}_n
\Omega_S^{\ast}. \]
\end{corollary}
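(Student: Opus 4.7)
The plan is to deduce Corollary \ref{Vetalecor} from Theorem \ref{Vetale} and the universal property of the saturated de Rham-Witt complex (Corollary \ref{dRWLeftAdj}). Set $A^{\ast} = \W\Omega_R^{\ast}$ and write $R' = A^0/VA^0$ for the associated $\F_p$-algebra, so there is a canonical structure map $R \to R'$. The étale base change $S' := S \otimes_R R'$ is then étale over $R'$, so Theorem \ref{Vetale}(1) produces a $V$-adically étale morphism of strict Dieudonn\'e algebras $A^{\ast} \to B^{\ast}$ equipped with an isomorphism $B^0/VB^0 \simeq S'$ of $R'$-algebras. The task reduces to identifying $B^{\ast}$ with $\W\Omega_S^{\ast}$ as $A^{\ast}$-algebras.

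To construct the forward map $\W\Omega_S^{\ast} \to B^{\ast}$, I would compose $S \to S \otimes_R R' \simeq B^0/VB^0$ and invoke the universal property of $\W\Omega_S^{\ast}$ from Corollary \ref{dRWLeftAdj}; one checks using the uniqueness clause of that universal property that the resulting map is a morphism of $A^{\ast}$-algebras (by comparing both compositions $A^{\ast} \rightrightarrows B^{\ast}$ after passing to $B^0/VB^0$). For the reverse direction, functoriality of $\W\Omega$ applied to $R \to S$ supplies an $A^{\ast}$-algebra structure on $\W\Omega_S^{\ast}$ and in particular an $R'$-algebra map $R' \to \W\Omega_S^0/V\W\Omega_S^0$; combined with the tautological map $S \to \W\Omega_S^0/V\W\Omega_S^0$, the universal property of the tensor product gives an $R'$-algebra map $S' \to \W\Omega_S^0/V\W\Omega_S^0$, which Theorem \ref{Vetale}(2) lifts uniquely to a morphism $B^{\ast} \to \W\Omega_S^{\ast}$ of $A^{\ast}$-algebras. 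Both composites reduce to the identity modulo $V$ (one directly on $S$, the other on the generators $s \otimes 1$ and $1 \otimes r'$ of $S'$), so by the respective uniqueness statements in Corollary \ref{dRWLeftAdj} and Theorem \ref{Vetale}(2) they are mutually inverse isomorphisms.

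For the finite-level formula, the $V$-adic étaleness of $A^{\ast} \to B^{\ast} \simeq \W\Omega_S^{\ast}$ means by Definition \ref{etaledga} that the natural map
\[
\W_n\Omega_R^{\ast} \otimes_{\W_n\Omega_R^0} \W_n\Omega_S^0 \longrightarrow \W_n\Omega_S^{\ast}
\]
is an isomorphism. Proposition \ref{prop28} identifies $\W_n\Omega_R^0 \simeq W_n(R')$ and $\W_n\Omega_S^0 \simeq W_n(S')$ compatibly with the Frobenius. Finally, the étale base-change property of the Witt vector functor (from \S \ref{wittetalesec}), which gives $W_n(S \otimes_R R') \simeq W_n(S) \otimes_{W_n(R)} W_n(R')$, lets one rewrite
\[
\W_n\Omega_R^{\ast} \otimes_{W_n(R')} W_n(S') \;\simeq\; \W_n\Omega_R^{\ast} \otimes_{W_n(R')} \bigl(W_n(S) \otimes_{W_n(R)} W_n(R')\bigr) \;\simeq\; \W_n\Omega_R^{\ast} \otimes_{W_n(R)} W_n(S),
\]
yielding the desired formula.

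The main obstacle is bookkeeping: Theorem \ref{Vetale}(2) demands that the maps compared be morphisms of $A^{\ast}$-algebras (not merely of strict Dieudonn\'e algebras), so the bulk of the work lies in verifying that every map constructed above respects the $A^{\ast}$-structure, which in turn requires chasing the universal property of Corollary \ref{dRWLeftAdj} through the various compatibilities between $R$, $R'$, $S$, and $S'$. Everything else is a direct application of the stated results; in particular the corollary is genuinely a formal consequence of Theorem \ref{Vetale} once the compatibilities are in place.
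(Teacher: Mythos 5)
Your proposal is correct and follows essentially the same route as the paper: choosing $A^{\ast} = \W\Omega_R^{\ast}$, forming $S' = S \otimes_R (A^0/VA^0)$, invoking Theorem \ref{Vetale} to produce the $V$-adically \'etale extension $B^{\ast}$, identifying $B^{\ast}$ with $\W\Omega_S^{\ast}$ via the universal properties, and deducing the finite-level formula from $V$-adic \'etaleness together with Proposition \ref{prop28} and the \'etale base-change property of Witt vectors (Theorem \ref{wittetale}). The only cosmetic difference is that the paper verifies the universal property of $B^{\ast}$ by a single chain of Hom-set bijections, whereas you build the two mutually inverse maps explicitly; the substance is identical.
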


\begin{proof} 
Let $A^{\ast}$ be a strict Dieudonn\'{e} algebra equipped with a map $R \to A^{0}/VA^{0}$ which
exhibits $A^{\ast}$ as a saturated de~Rham--Witt complex of $R$. Set $S' = S \otimes_{R} (A^{0} / VA^{0})$. 
By Theorem~\ref{Vetale}, there exists a $V$-adically \'etale strict Dieudonn\'e algebra $B^{\ast}$ over
$A^{\ast}$ such that $B^{0}/VB^{0} \simeq S'$.
For any strict Dieudonn\'{e} algebra $C^{\ast}$, we have bijections
\begin{eqnarray*}
\Hom_{ \FrobAlg}( B^{\ast}, C^{\ast} ) & \simeq & \Hom_{ \FrobAlg}( A^{\ast},
C^{\ast} ) \times_{ \Hom( A^{0} / VA^{0}\ , C^{0} / VC^{0} ) }
\Hom( S', C^{0} / VC^{0} ) \\
& \simeq &  \Hom_{ \FrobAlg}( A^{\ast}, C^{\ast} ) \times_{ \Hom( R\ , C^{0} / VC^{0} ) }
\Hom( S, C^{0} / VC^{0} ) \\
& \simeq & \Hom(S, C^{0} / VC^{0} ), \end{eqnarray*}
so that the canonical map $S \rightarrow B^{0} / VB^{0}$ exhibits $B^{\ast}$ as a saturated de~Rham--Witt complex of $S$.
For each $n \geq 0$, we have canonical maps
$$\mathcal{W}_n(A)^{\ast} \otimes_{W_n(R)} W_n(S)
\xrightarrow{\alpha} \mathcal{W}_n(A)^{\ast} \otimes_{W_n( A^{0}/VA^{0}) } W_n(S') \xrightarrow{\beta} \mathcal{W}_n(B)^{\ast}.$$
Here $\beta$ is an isomorphism because $B^{\ast}$ is $V$-adically \'{e}tale over $A^{\ast}$, and $\alpha$ is an isomorphism
by virtue of Theorem \ref{wittetale}. It follows that $\beta \circ \alpha$ is an isomorphism.
\end{proof}

Let us now formulate the \'etale analog of Theorem~\ref{makeglob}.

\begin{construction}\label{construction.global.etale}
For any scheme $X$, let $\calU_{\aff,\mathet}(X)$ denote the category of affine schemes $U$ equipped with an \'{e}tale map $U \to X$. When $X$ is an $\F_p$-scheme, the assignment
\[ \WOmega^{\ast}_{X_{\mathet}}(U \to X) = \WOmega^{\ast}_{ \sheafO_U(U) },\]
defines a $\FrobAlg$-valued presheaf  $\W\Omega^{\ast}_{X_{\mathet}}(-)$ on $\mathcal{U}_{\aff,\mathet}(X)$.
\end{construction}

\begin{theorem}
\label{makeglobetale}
Let $X$ be an $\F_p$-scheme. Then the presheaf $\WOmega^{\ast}_{X_{\mathet}}$ of Construction \ref{construction.global.etale} is a sheaf with respect to the \'etale topology on $\calU_{\aff,\mathet}(X)$.
\end{theorem}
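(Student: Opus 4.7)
The plan is to mimic the proof of Theorem \ref{makeglob} almost verbatim, substituting étale descent for quasi-coherent sheaves in place of Zariski descent. As in the Zariski case, I would begin by writing $\WOmega^{\ast}_{X_{\mathet}}$ as the inverse limit (in each degree $d$) of the tower $\{\WrOmega{r}^{d}_{X_{\mathet}}\}$, where $\WrOmega{r}^{d}_{X_{\mathet}}(U \to X) = \WrOmega{r}^{d}_{\sheafO_U(U)}$. Since an inverse limit of sheaves is a sheaf, it suffices to show each presheaf $\WrOmega{r}^{d}_{X_{\mathet}}$ is an étale sheaf of abelian groups. By reducing to an affine open $X = \Spec(R)$ and a covering by standard étale morphisms, the sheaf condition amounts to the exactness of the equalizer sequence
\[ 0 \to \WrOmega{r}^{d}_{R} \to \prod_\alpha \WrOmega{r}^{d}_{S_\alpha} \to \prod_{\alpha, \beta} \WrOmega{r}^{d}_{S_\alpha \otimes_R S_\beta} \]
for any étale cover $\{R \to S_\alpha\}$ of $R$.

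The essential input is Corollary \ref{Vetalecor}: for an étale map $R \to T$ of $\F_p$-algebras, there is a natural isomorphism
\[ \WrOmega{r}^{d}_{R} \otimes_{W_r(R)} W_r(T) \simeq \WrOmega{r}^{d}_{T}. \]
This exhibits $\WrOmega{r}^{d}_{X_{\mathet}}$ as arising, via this extension of scalars, from the quasi-coherent $W_r(\sheafO_X)$-module discussed in Remark \ref{billings}. Since $R \to W_r(R)$ identifies the underlying topological spaces and induces an equivalence of small étale sites (the kernel of $W_r(R) \twoheadrightarrow R$ is a nilpotent ideal, as it is generated by the image of $V$, which satisfies $V^r = 0$ in $W_r$), the cover $\{R \to S_\alpha\}$ corresponds to an étale cover $\{W_r(R) \to W_r(S_\alpha)\}$, where moreover $W_r(S_\alpha) \otimes_{W_r(R)} W_r(S_\beta) \simeq W_r(S_\alpha \otimes_R S_\beta)$ by Theorem \ref{wittetale}. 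The required exactness then reduces, via the isomorphism above, to faithfully flat descent for the $W_r(R)$-module $\WrOmega{r}^{d}_{R}$ along the étale cover $W_r(R) \to \prod_\alpha W_r(S_\alpha)$, which is a standard result.

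The only point requiring genuine care — and the main obstacle — is the bookkeeping that ensures Corollary \ref{Vetalecor} can be applied functorially to both the individual $S_\alpha$ and to fibre products $S_\alpha \otimes_R S_\beta$ (which are again étale over $R$), and that the resulting tensor-product identifications are compatible with the restriction maps appearing in the Čech sequence. Once this compatibility is in hand, the sheaf axiom follows from flat descent applied to $\WrOmega{r}^{d}_{R}$ viewed as a $W_r(R)$-module, exactly as in the Zariski argument of Theorem \ref{makeglob} but with Zariski localizations of $W_r(R)$ replaced by étale extensions. Finally, once each $\WrOmega{r}^{d}_{X_{\mathet}}$ is shown to be an étale sheaf, one may either argue directly or invoke the vanishing of higher cohomology of quasi-coherent sheaves on affines (an étale analog of Proposition \ref{snippet}) to conclude that the inverse limit remains a sheaf in each degree, yielding the theorem.
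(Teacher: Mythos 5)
Your proposal is correct and follows essentially the same route as the paper: reduce to the finite-level presheaves $\W_r\Omega^d$, use Theorem \ref{wittetale} together with the nilpotence of $\ker(W_r(R) \to R)$ to identify the étale sites of $R$ and $W_r(R)$, and then apply Corollary \ref{Vetalecor} to recognize the presheaf as the étale sheaf attached to the $W_r(R)$-module $\W_r\Omega^d_R$, concluding by flat descent for modules. The compatibility bookkeeping you flag is exactly what the paper absorbs into the statement that the presheaf is \emph{isomorphic} to $\widetilde{M}_{\mathet}$ under the equivalence of étale sites, and your closing worry about the inverse limit is unnecessary, since a limit of sheaves is automatically a sheaf (as you already noted).
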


\begin{proof}
Let us begin by recalling a general fact. If $\Spec(A)$ is an affine scheme and
$M$ is an $A$-module, write $(\widetilde{M})_{\mathet}$ for the presheaf on $\mathcal{U}_{\aff,\mathet}(\Spec(A))$ determined by the formula $(\Spec(B) \to \Spec(A)) \mapsto M \otimes_A B$. It is a basic fact in descent theory that $(\widetilde{M})_{\mathet}$ is an \'etale sheaf. 

To prove Theorem \ref{makeglobetale}, it will suffice (as in the proof of Theorem~\ref{makeglob}) to show that the presheaf $\W_n\Omega^i_{X_{\mathet}}(-)$ determined by $(U \to X) \mapsto \W_n \Omega^i_{\mathcal{O}(U)}$ is a sheaf of abelian groups on $\mathcal{U}_{\aff,\mathet}(X)$ for each $i,n \geq 0$. To prove this, we may assume without loss of generality that $X \simeq \mathrm{Spec}(R)$ is affine. In this case, the functor $S \mapsto W_n(S)$ identifies the category of \'etale $R$-algebras with category of \'etale $W_n(R)$-algebras: this follows by combining Theorem~\ref{wittetale} below with the topological invariance of the \'etale site (since the restriction map $W_n(R) \to W_1(R) \simeq R$ is surjective with nilpotent kernel). Under the resulting equivalence $\mathcal{U}_{\aff,\mathet}(\Spec(R)) \simeq \mathcal{U}_{\aff,\mathet}(\Spec(W_n(R)))$, it follows from Corollary~\ref{Vetalecor} that the presheaf $\W_n \Omega^i_{X_{\mathet}}(-)$ is isomorphic to the sheaf $\widetilde{M}_{\mathet}$
associated to the $W_n(R)$-module $M = \W_n \Omega^{i}_{R}$.
\end{proof}
\subsection{Digression: Witt Vectors and \'{E}tale Morphisms}\label{wittetalesec}

To formulate and prove the \'etale localization property of the saturated
de~Rham--Witt complex  (Theorem~\ref{Vetale} above), we will make use of the following result:

\begin{theorem}\label{wittetale} 
Let $f\colon  A \to B$ be an \'etale morphism of commutative rings.
Then, for every integer $n \geq 0$, the induced map $W_n(A) \to W_n(B)$ is also \'{e}tale.
Moreover, the diagram
\[ \xymatrix{
W_n(A) \ar[d]^R \ar[r] &  W_n(B) \ar[d]^R \\
W_{n-1}(A) \ar[r] &  W_{n-1}(B) 
} \]
is a pushout square of commutative rings.
Finally, if $A \to A'$ is any map, then the induced square
of commutative rings
\[ \xymatrix{
W_n(A) \ar[d] \ar[r] &  W_n(A') \ar[d]  \\
W_n(B) \ar[r] &  W_n(A' \otimes_A B)
}\]
is a pushout. 
\end{theorem} 

Theorem \ref{wittetale} (in various forms) has been proved by Langer--Zink \cite[Prop. A.8, A.11]{LZ}, van der
Kallen \cite[Th. 2.4]{vdkdesc}, and Borger \cite[Th. 9.2]{Borgerbasic}. We will need only the special case where
$A$ and $B$ are $\F_p$-algebras which appears in the original work of Illusie \cite[Prop. 1.5.8]{illusie}. 

\begin{proof}[Proof of Theorem \ref{wittetale} for $\F_p$-algebras]
Note that the last assertion follows from the first two in view of the
topological invariance of the \'etale site, since $W_n(A) \to A$ is a nilpotent
thickening. 
For the first two assertions, we proceed by induction on $n$, the case $n = 1$ being trivial. To carry out the inductive step, let us assume that the map $W_n(A) \rightarrow W_n(B)$ is \'{e}tale and that the diagram
$$ \xymatrix{ W_n(A) \ar[r] \ar[d]^{R^{n-1}} & W_n(B) \ar[d]^{R^{n-1}} \\
A \ar[r] & B }$$
is a pushout square. Since $A$ is an $\F_p$-algebra, we can regard $W_{n+1}(A)$ as a square-zero extension of $W_n(A)$. Invoking
the topological invariance of the \'{e}tale site, we see that $W_n(f)\colon  W_n(A) \rightarrow W_n(B)$ can be lifted to an \'{e}tale map
$\overline{f}\colon  W_{n+1}(A) \rightarrow \widetilde{B}$. Moreover, the infinitesimal lifting property of \'{e}tale morphisms guarantees
that there is a unique $W_{n+1}(A)$-algebra map $\rho\colon  \widetilde{B} \rightarrow W_{n+1}(B)$ lifting the identity map $\id\colon  W_n(B) \rightarrow W_n(B)$. We have
a commutative diagram
\[ \xymatrix{ 0 \ar[r] & V^n A \ar[d] \ar[r] & W_{n+1}(A) \ar[r]^-{R} \ar[d] & W_n(A) \ar[d] \ar[r] & 0 \\	
		0 \ar[r] & V^n A \otimes_{W_{n+1}(A)} \widetilde{B} \ar[d]^{\rho_0} \ar[r] & \widetilde{B} \ar[r] \ar[d]^{\rho} & W_n(B) \ar[r] \ar@{=}[d] & 0 \\			
		0 \ar[r] & V^n B \ar[r] & W_{n+1}(B) \ar[r]^-R & W_n(B) \ar[r] & 0}\]
where each row is a short exact sequence, and the middle row is obtained from
the top row by extending scalars along $\overline{f}$.
To complete the proof, it will suffice to show that $\rho$ is an isomorphism of commutative rings, or equivalently that $\rho_0$ is an isomorphism
of abelian groups. Note that the action of $W_{n+1}(A)$ on the ideal $V^n A$ factors through the restriction map $R^{n}\colon  W_{n+1}(A) \rightarrow A$ (and
that $A$ acts on $V^n A \simeq A$ via the iterated Frobenius map $\varphi_{A}^{n}\colon  A \rightarrow A$). 
Moreover, the map $\rho_0$ factors as a composition
$$ V^n A \otimes_{ W_{n+1}(A)} \widetilde{B} \simeq V^n A \otimes_{ W_n(A)} W_n(B) \xrightarrow{\alpha} V^n A \otimes_{A} B \xrightarrow{\beta} V^n B,$$
where $\alpha$ is an isomorphism by virtue of our inductive hypothesis and $\beta$ is an isomorphism because
the diagram
\[ \xymatrix{ A \ar[r]^-{\varphi_{A}^{n}} \ar[d] & A \ar[d] \\
		  B \ar[r]^-{\varphi_{B}^{n}} & B }\]
is a pushout square (since $A \to B$ is \'etale; see \cite[Tag 0EBS]{stacks-project}). 
It follows that $\rho_0$ is an isomorphism, as desired.
\end{proof}

\begin{remark}\label{olose}
Let $f\colon  A \to B$ be an \'etale morphism of $\F_p$-algebras. Then, for every pair of integers $n,k \geq 0$, the diagram
$$ \xymatrix{ W_n(A) \ar[r]^{ W_n(f) } \ar[d]^{ F^k } & W_n(B) \ar[d]^{ F^k} \\
W_n(A) \ar[r]^{ W_n(f) } & W_n(B) }$$
is a pushout square of commutative rings. To prove this, we note that the induced map
$$ \theta\colon  W_n(A) \otimes_{ W_n(A)} W_n(B) \rightarrow W_n(B)$$
is a morphism of \'{e}tale $W_n(A)$-algebras. Consequently, it will suffice to show that $\theta$
is an isomorphism after extending scalars along the restriction map $W_n(A) \rightarrow A$ (since
$\ker( W_n(A) \rightarrow A)$ is nilpotent), which allows us to reduce to the case $n=1$.
\end{remark}

\begin{definition} 
Let $A$ be an $\mathbb{F}_p$-algebra and let $M$ be a $W(A)$-module. We will say
that $M$ is \emph{nilpotent} if $M$ is annihilated by the ideal $V^n W(A)
\subseteq W(A)$ for some $n$, i.e., if $M$ is actually a $W_n(A)$-module for some
$n$. 
\end{definition} 

\begin{definition} 
Let $A \to B$ be an \'etale map of $\mathbb{F}_p$-algebras. If $M$ is a nilpotent $W(A)$-module,
so that $M$ is a $W_n(A)$-module for some $n$, 
then we write $M_B$ for the $W(B)$-module $ M \otimes_{W_n(A)} W_n(B)$. 
By Theorem~\ref{wittetale}, the construction of $M_B$ does not depend on
the choice of $n$.
\end{definition} 

\begin{remark}\label{obos}
The construction $M \mapsto M_B$ is left adjoint
to the forgetful functor from nilpotent $W(B)$-modules to nilpotent
$W(A)$-modules. In addition, the functor $M \mapsto M_B$ is exact because each of the maps $W_n(A) \to W_n(B)$
is \'etale (and therefore flat). 
\end{remark}

Let $A$ be an $\F_p$-algebra and let $M$ be a $W(A)$-module. For each non-negative integer $k$, we let $M_{(k)}$ denote the $W(A)$-module
obtained by restriction of scalars of $M$ along the Witt vector Frobenius map $F^k\colon  W(A) \to W(A)$. 

\begin{remark}
If $M$ is a nilpotent $W(A)$-module, then $M_{(k)}$ is also nilpotent. 
In fact, if $M$ is a $W_n(A)$-module for some integer $n$, then 
$M_{(k)}$ is also a $W_n(A)$-module.
\end{remark}

\begin{proposition} 
\label{frobtwistbase}
Given an \'etale map $A \to B$ of $\mathbb{F}_p$-algebras and a nilpotent
$W(A)$-module $M$, we have $(M_B)_{(k)} \simeq (M_{(k)})_B$ for each integer $k$. 
\end{proposition}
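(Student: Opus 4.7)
The plan is to deduce this directly from the pushout square of Remark~\ref{olose}. First, since $M$ is nilpotent, choose $n$ so that $M$ is a $W_n(A)$-module; then by definition $M_B = M \otimes_{W_n(A)} W_n(B)$, and both $(M_B)_{(k)}$ and $(M_{(k)})_B$ are naturally $W_n(B)$-modules which depend functorially and right-exactly on $M \in W_n(A)\text{-Mod}$.

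Next, I would construct an explicit comparison map
\[ \phi \colon (M_{(k)})_B = M_{(k)} \otimes_{W_n(A)} W_n(B) \longrightarrow (M_B)_{(k)}, \qquad m \otimes b \longmapsto m \otimes F^k(b). \]
The identity $F^k \circ W_n(f) = W_n(f) \circ F^k$ shows that $\phi$ respects the defining relations: the class $F^k(a)m \otimes b$ on the left corresponds to $m \otimes F^k(W_n(f)(a)\,b)$ on the right, which agrees with the image of $m \otimes W_n(f)(a) b$. A similar direct check, again using the commutation of $F^k$ with $W_n(f)$, shows that $\phi$ is $W_n(B)$-linear when the target is equipped with its $F^k$-twisted action.

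Finally, to see that $\phi$ is an isomorphism, I would use the fact that both source and target are right-exact functors of $M$ (tensor product is right-exact, and restriction of scalars along $F^k$ is exact). Choosing a presentation of $M$ by free $W_n(A)$-modules and invoking the five lemma reduces the claim to the single case $M = W_n(A)$. In that case $(M_{(k)})_B = W_n(A)_{(k)} \otimes_{W_n(A)} W_n(B)$ is precisely the pushout appearing in Remark~\ref{olose}, so it is canonically isomorphic to $W_n(B)$ via $a \otimes b \mapsto W_n(f)(a) F^k(b)$; on the other hand $(M_B)_{(k)} = W_n(B)_{(k)}$, and one checks that $\phi$ is identified with the pushout isomorphism.

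The only real content is Remark~\ref{olose}, so there is no genuine obstacle; the main care required is simply in bookkeeping the various $W_n(A)$- and $W_n(B)$-module structures (twisted vs.\ untwisted on each side) so that $\phi$ is both well-defined and linear for the correct twisted $W_n(B)$-action on the target.
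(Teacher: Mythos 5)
Your proof is correct and follows essentially the same route as the paper: the paper's argument is exactly that, after fixing $n$, the claim reduces to the pushout square of Remark~\ref{olose}, which is the content of your base case $M = W_n(A)$. The explicit map $m \otimes b \mapsto m \otimes F^k(b)$ and the right-exactness/free-presentation reduction are just the bookkeeping the paper leaves implicit.
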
 

\begin{proof} 
Choose an integer $n \gg 0$ for which $M$ can be regarded as a $W_n(A)$-module.
The desired result then follows from the fact that the diagram
$$ \xymatrix{ W_n(A) \ar[r]^{ W_n(f) } \ar[d]^{ F^k } & W_n(B) \ar[d]^{ F^k} \\
W_n(A) \ar[r]^{ W_n(f) } & W_n(B) }$$
is a pushout (see Remark \ref{olose}).
\end{proof} 

\begin{definition}\label{bost}
Let $M, N$ be nilpotent $W(A)$-modules. We will say that a homomorphism $f\colon  M \to N$ of
abelian groups is $(F^k, F^{\ell})$-linear if 
for all $x \in W(A)$ and $m \in M$, we have
$$f( (F^k x)  m )  = (F^{\ell} x) f(x) \in N.$$
Equivalently, $f$ defines a map of $W(A)$-modules $M_{(k)} \to N_{(\ell)}$. 
\end{definition} 

\begin{corollary} 
\label{extendmapsFaFb}
Fix an \'etale map $A \to B$ of $\F_p$-algebras. 
Let $M$ be a nilpotent $W(A)$-module and let $N$ be a nilpotent $W(B)$-module,
considered as a nilpotent $W(A)$-module via restriction of scalars. 
Then any $(F^k, F^{\ell})$-linear map $f\colon  M \to N$ extends uniquely to a $(F^{k},
F^{\ell})$-linear map $M_{B} \to N$. 
\end{corollary}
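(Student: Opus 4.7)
The plan is to reduce the statement to the base-change adjunction of Remark~\ref{obos} by absorbing the Frobenius twists into the source and target modules.

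First I would reformulate the data. By Definition~\ref{bost}, an $(F^k, F^\ell)$-linear map $f\colon M \to N$ is the same as a $W(A)$-linear homomorphism $\widetilde{f}\colon M_{(k)} \to N_{(\ell)}$. The target $N_{(\ell)}$ carries a natural $W(B)$-module structure (restrict the $W(B)$-action on $N$ along $F^\ell\colon W(B) \to W(B)$), and its underlying $W(A)$-module structure agrees with the restriction from this $W(B)$-structure, since the Witt vector Frobenius commutes with the map $W(A) \to W(B)$. Thus $\widetilde{f}$ is a $W(A)$-linear map from the nilpotent $W(A)$-module $M_{(k)}$ to a nilpotent $W(B)$-module, viewed as a $W(A)$-module by restriction.

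Next, I would invoke the base-change adjunction of Remark~\ref{obos}: since $A \to B$ is \'etale, $W(A)$-linear maps out of $M_{(k)}$ into a $W(B)$-module $N_{(\ell)}$ extend uniquely to $W(B)$-linear maps $(M_{(k)})_B \to N_{(\ell)}$. By Proposition~\ref{frobtwistbase}, there is a canonical identification $(M_{(k)})_B \simeq (M_B)_{(k)}$, so the extension may be rewritten as a $W(B)$-linear map $g\colon (M_B)_{(k)} \to N_{(\ell)}$.

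Finally, I would translate back through the correspondence of Definition~\ref{bost}: the map $g$ corresponds to an $(F^k, F^\ell)$-linear map $M_B \to N$, and its restriction to $M \subseteq M_B$ recovers $f$ by construction of the adjunction. Uniqueness is automatic, because at every step the correspondences (Definition~\ref{bost}, the adjunction of Remark~\ref{obos}, and the isomorphism of Proposition~\ref{frobtwistbase}) are bijections. The only point requiring care is the verification that the $W(A)$-module structure on $N_{(\ell)}$ inherited from its $W(B)$-structure matches the one used in interpreting $\widetilde{f}$; this is the mildest obstacle, and it is immediate from the commutativity of the Frobenius squares used throughout \S\ref{wittetalesec}.
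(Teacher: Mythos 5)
Your proof is correct and is essentially the paper's own argument: the paper's proof is the one-line "Apply Proposition~\ref{frobtwistbase}," which implicitly packages exactly the steps you spell out (rewrite $(F^k,F^\ell)$-linearity as a $W(A)$-linear map $M_{(k)} \to N_{(\ell)}$, use the base-change adjunction of Remark~\ref{obos}, and identify $(M_{(k)})_B \simeq (M_B)_{(k)}$). Your version is just a more explicit write-up of the same route, including the routine check that the two $W(A)$-structures on $N_{(\ell)}$ coincide.
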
 

\begin{proof} 
Apply Proposition~\ref{frobtwistbase}. 
\end{proof} 

\begin{remark}
\label{sespreserved}
In the situation of Corollary~\ref{extendmapsFaFb}, suppose
that we are given nilpotent $W(A)$-modules $M$, $N$, and $P$, together
with an $(F^a, F^b)$-lienar map $f\colon  M \to N$ and an $(F^c, F^d)$-linear map
$g\colon  N \to P$. Suppose that the sequence $M \stackrel{f}{\to} N \stackrel{g}{\to} P$ is exact in the category of
abelian groups. Then $M_{B} \stackrel{f_B}{\to} N_B \stackrel{g_B}{\to} P_B$ is
exact as well. 
To see this, by replacing $(a,b)$ by $(a+k, b+k)$ and $(c,d)$ by $(c+l, d+l)$
for some $k, l \geq 0$, we can assume that $b = c$. 
We then have an exact sequence of nilpotent $W(A)$-modules $M_{(a)} \to
N_{(b)} \to P_{(d)}$, so that extending scalars to $W(B)$ yields an exact sequence
$(M_B)_{(a)} \to (N_B)_{(b)} \to (P_B)_{(d)}$ by virtue of Remark \ref{obos}.
\end{remark}

\subsection{The Proof of Theorem~\ref{Vetale}}\label{pushetale}

We begin by proving the second assertion of Theorem~\ref{Vetale}.

\begin{proposition}\label{Vetaleprop}
Let $A^{\ast}$ be a strict Dieudonn\'{e} algebra, let $B^{\ast}$ be a strict Dieudonn\'e algebra which is $V$-adically \'etale over
$A^{\ast}$, and let $C^{\ast}$ be another strict Dieudonn\'e
algebra over $A^{\ast}$. Then the restriction map
\begin{equation} 
\label{etalevcompl}
\Hom_{\FrobAlg_{A^{\ast}}} (B^{\ast}, C^{\ast}) \simeq \Hom_{A^0/VA^0}( B^0/VB^0,
C^0/VC^0).
\end{equation} 
is bijective.
\end{proposition}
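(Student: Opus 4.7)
The plan is to reduce the bijection to a collection of finite-level statements by exploiting the strictness of $B^*$ and $C^*$. By Corollary~\ref{Dieudtowerstrict}, a morphism of strict Dieudonn\'{e} $A^*$-algebras $B^* \to C^*$ is the same as a compatible system of cdga maps $f_n\colon \mathcal{W}_n(B)^* \to \mathcal{W}_n(C)^*$ over $\mathcal{W}_n(A)^*$ which commute with $R$, $F$, and $V$. Since $B^*$ is $V$-adically \'etale over $A^*$, each $\mathcal{W}_n(B)^*$ is \'etale over $\mathcal{W}_n(A)^*$ as a commutative differential graded algebra, and so Proposition~\ref{compareetale} identifies cdga morphisms $\mathcal{W}_n(B)^* \to \mathcal{W}_n(C)^*$ over $\mathcal{W}_n(A)^*$ with ring homomorphisms $g_n\colon \mathcal{W}_n(B)^0 \to \mathcal{W}_n(C)^0$ over $\mathcal{W}_n(A)^0$. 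Passing to the inverse limit, the data of a map of (underlying) differential graded $A^*$-algebras $B^* \to C^*$ is equivalent to the data of a ring map $g\colon B^0 \to C^0$ over $A^0$.

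Next, I would incorporate the Frobenius. Compatibility of $g$ with $F$ is equivalent to compatibility of each $g_n$ with $F$, and compatibility with $V$ is then automatic (since $V$ is characterized by $FV = p$ in a saturated Dieudonn\'e complex, as in Remark~\ref{rem5}). Thus morphisms of strict Dieudonn\'e $A^*$-algebras $B^* \to C^*$ correspond bijectively to ring homomorphisms $B^0 \to C^0$ over $A^0$ commuting with $F$. Now apply Proposition~\ref{prop28} to identify $B^0 \simeq W(B^0/VB^0)$, $C^0 \simeq W(C^0/VC^0)$, $A^0 \simeq W(A^0/VA^0)$, compatibly with the Frobenius. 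Since $C^0/VC^0$ is reduced by Lemma~\ref{lem27}, Proposition~\ref{prop30} (or Lemma~\ref{lem29}) shows that such Frobenius-compatible ring maps $B^0 \to C^0$ over $A^0$ are in bijection with ring maps $B^0/VB^0 \to C^0/VC^0$ over $A^0/VA^0$, yielding the bijection \eqref{etalevcompl}.

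Concretely, for injectivity, two Dieudonn\'e $A^*$-algebra maps $f,g\colon B^* \to C^*$ inducing the same map modulo $V$ in degree zero first agree as ring maps $B^0 \to C^0$ by the Witt vector uniqueness (Lemma~\ref{lem29}), and then agree at every level $n$ on all of $\mathcal{W}_n(B)^*$ by the uniqueness part of Proposition~\ref{compareetale}. For surjectivity, a ring map $\bar f\colon B^0/VB^0 \to C^0/VC^0$ over $A^0/VA^0$ lifts uniquely to $f^0\colon B^0 \to C^0$ commuting with Witt vector Frobenius by Proposition~\ref{prop30}; compatibility with the map $A^0 \to C^0$ is forced by the uniqueness of such lifts. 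Each induced ring map $\mathcal{W}_n(B)^0 \to \mathcal{W}_n(C)^0$ over $\mathcal{W}_n(A)^0$ extends uniquely to a cdga map $\mathcal{W}_n(B)^* \to \mathcal{W}_n(C)^*$ over $\mathcal{W}_n(A)^*$ by Proposition~\ref{compareetale}, and these assemble into a morphism of strict Dieudonn\'e towers (compatibility with $R$, $F$, $V$ being forced by uniqueness), giving the desired map $B^* \to C^*$.

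The main obstacle is purely organizational: verifying that compatibility with the differential, with $F$, with $V$, and with the restriction maps in the tower is automatic once the degree-zero mod-$V$ data is fixed. No new input beyond Proposition~\ref{compareetale}, Proposition~\ref{prop28}, and Proposition~\ref{prop30}/Lemma~\ref{lem29} is required; the argument is essentially a diagram chase combining \'etaleness in the cdga direction with the universal property of Witt vectors in the Frobenius/Verschiebung direction.
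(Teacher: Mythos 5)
Your proposal is correct, and it reorganizes the argument in a way that genuinely differs from the paper's at the decisive step. Both proofs reduce to the finite quotients $\W_n(B)^{\ast}$ and use Proposition~\ref{compareetale} (via $V$-adic \'etaleness) to pass between degree zero and all degrees; the difference lies in how the degree-zero data and its Frobenius compatibility are produced. The paper constructs each $f^0_n\colon B^0/V^nB^0 \to C^0/V^nC^0$ by the infinitesimal lifting property of the \'etale map $A^0/V^nA^0 \to B^0/V^nB^0$ against the nilpotent surjection $C^0/V^nC^0 \to C^0/VC^0$, and then runs a second \'etale/nilpotent argument to show these lifts commute with $F$, bottoming out in the fact that $F$ reduces to the $p$-power Frobenius modulo $V$. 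You instead produce a single $F$-equivariant ring map $B^0 \to C^0$ in one stroke from the Witt-vector universal property ($B^0 \simeq W(B^0/VB^0)$ by Proposition~\ref{prop28}, existence and uniqueness of the $F$-compatible lift by Proposition~\ref{prop30} and Lemma~\ref{lem29}, reducedness of $C^0/VC^0$ by Lemma~\ref{lem27}), so Frobenius compatibility in degree zero is built in and the second lifting argument disappears; \'etaleness enters your argument only when propagating to positive degrees and for uniqueness. The paper's route stays entirely within the \'etale-lifting formalism and never needs Proposition~\ref{prop30}; yours buys a cleaner division of labor, with Witt-vector theory handling degree zero and \'etaleness handling the rest.

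Two small points to tighten, neither a genuine gap. First, the intermediate assertion that maps of underlying differential graded $A^{\ast}$-algebras $B^{\ast} \to C^{\ast}$ biject with arbitrary ring maps $B^0 \to C^0$ over $A^0$ is not right as stated: a general degree-zero map need not carry $V^nB^0$ into $V^nC^0$, so it need not interact with the tower at all. You only use the correspondence for $F$-compatible maps, where $V$-compatibility (hence compatibility with the filtration) follows from $FV=p$ and the injectivity of $F$ on the saturated complex $C^{\ast}$, so the final bijection is unaffected; but the statement should be restricted accordingly. Second, when you say compatibility with $R$, $F$, $V$ at each finite level is ``forced by uniqueness,'' note that part $(2)$ of Proposition~\ref{compareetale} applies verbatim only to the $R$-squares, since $F$ and $V$ are not cochain maps; for the $F$-squares one should argue, as the paper implicitly does, using the graded-algebra isomorphism $\W_n(B)^{\ast} \simeq \W_n(A)^{\ast} \otimes_{\W_n(A)^0} \W_n(B)^0$ to reduce the comparison of the two (semilinear) graded ring maps to degree zero, where your construction makes it hold on the nose; $V$-compatibility of the limit map is then automatic.
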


\begin{proof}
Let $\overline{f}\colon  B^{0} / V B^{0} \rightarrow C^{0} / V C^{0}$ be a morphism of 
$A^{0} / V A^{0}$-algebras; we wish to show that $\overline{f}$ can be lifted
uniquely to a morphism $f\colon  B^{\ast} \rightarrow C^{\ast}$ of strict Dieudonn\'{e} algebras over $A^{\ast}$.
For each $n \geq 0$, we have a commutative diagram
$$ \xymatrix{ A^{0} /V^{n} A^{0} \ar[r] \ar[d] & C^{0} / V^{n} C^{0} \ar[d] \\
B^0 / V^{n} B^{0} \ar@{-->}[ur]^{f^{0}_{n} } \ar[r]^{ \overline{f} } & C^{0} / V C^{0}, }$$
where the left vertical map is {\etale} (by virtue of Proposition~\ref{prop28} and Theorem~\ref{wittetale})
and the right vertical map is a surjection with nilpotent kernel. It follows that there is a unique ring homomorphism $f^{0}_{n}\colon  B^{0} /V^{n} B^{0} \rightarrow C^{0} / V^{n} C^{0}$ as indicated
which makes the diagram commute. Using Proposition~\ref{compareetale}, we see that each $f^{0}_{n}$ can be extended uniquely to a map of $\W_{n}(A)^{\ast}$-modules
$f_n\colon  \W_n(B)^{\ast} \rightarrow \W_n(C)^{\ast}$. Passing to the inverse limit over $n$, we obtain a map of differential graded algebras $f\colon  B^{\ast} \rightarrow C^{\ast}$. We will complete
the proof by showing that $f$ is a morphism of strict Dieudonn\'{e} algebras: that is, that it commutes with the Frobenius. For this, it suffices to prove the commutativity of the diagram
$$\xymatrix{
\mathcal{W}_n(B)^{\ast} \ar[d]^{F} \ar[r]^{f_n} &  \mathcal{W}_{n}(C)^{\ast}
\ar[d]^{F}  \\
\mathcal{W}_{n-1}(B)^{\ast} \ar[r]^{f_{n-1}} &  \mathcal{W}_{n-1}(C)^{\ast}
}$$
for each $n \geq 0$. Invoking Proposition~\ref{compareetale} again, we are reduced to proving the commutativity of the diagram
of the left square in the diagram of $A^{0} / V^{n} A^{0}$-algebras
$$ \xymatrix{ B^{0} / V^{n} B^{0} \ar[r]^{ f_{n}^{0} } \ar[d]^{F} & C^{0} / V^n C^{0} \ar[r] \ar[d]^{F} & C^{0} / V C^{0} \ar[d]^{F} \\
B^{0} / V^{n-1} B^{0} \ar[r]^{ f_{n-1}^{0} } & C^{0} / V^{n-1} C^{0} \ar[r] & C^{0} / V C^{0}. }$$
Note that the right square in this diagram commutes, and that the horizontal maps on the right are surjections with nilpotent kernel.
Since $B^{0} / V^{n} B^{0}$ is {\etale} over $A^{0} / V^{n} A^{0}$, we are reduced to showing that the outer rectangle commutes. This follows from
the commutativity of the diagram
\[    \xymatrix{ B^{0} / V^{n} B^{0} \ar[r] \ar[d]^{F} & B^{0} / VB^{0} \ar[r]^{\overline{f}} \ar[d]^{F} & C^{0} / V C^{0} \ar[d]^{F} \\
B^{0} / V^{n-1} B^{0} \ar[r] & B^{0} / VB^{0} \ar[r]^{\overline{f}} & C^{0} / V
C^{0}. } \qedhere \]
\end{proof}

To prove the first assertion of Theorem \ref{Vetale}, we will need the following:

\begin{lemma} 
\label{froblinear}
Let $R$ be a commutative ring.
Let $M$ be a $W_n(R)$-module such that $p^k M = 0$ and let $d\colon  W_n(R) \to M$ be
a derivation. Then the composite $d \circ F^k\colon  W_{n+k}(R) \to M$ vanishes. 
\end{lemma}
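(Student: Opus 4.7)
The plan is to induct on $k$, with the case $k=0$ trivial since then $M=0$.

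For the inductive step, the key input is the fundamental congruence $F(x) \equiv x^p \pmod{p W(R)}$ on the big Witt vectors $W(R)$. Combined with the observation that $F$ descends to a ring map $F\colon W_{n+1}(R) \to W_n(R)$ (because $F(Vy) = py$ implies $F(V^{n+1} W(R)) \subseteq V^n W(R)$), this congruence passes to the finite levels and yields: for every $a \in W_{n+1}(R)$,
\[ F(a) \;\equiv\; \pi(a)^p \pmod{p W_n(R)}, \]
where $\pi\colon W_{n+1}(R) \to W_n(R)$ is the restriction. Choosing $\eta_a \in W_n(R)$ with $F(a) = \pi(a)^p + p\eta_a$ and applying $d$, the Leibniz rule gives
\[ d(F(a)) \;=\; p\,\pi(a)^{p-1} d(\pi(a)) + p\cdot d(\eta_a) \;\in\; pM. \]
Thus the composite $g := d \circ F\colon W_{n+1}(R) \to M$ factors through $pM \subseteq M$.

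Next I will verify that $g$ is itself a derivation when $pM$ is equipped with the $W_{n+1}(R)$-module structure $(pM)_F$ obtained by restriction of scalars along $F\colon W_{n+1}(R) \to W_n(R)$. Since $F$ is a ring homomorphism and $d$ a derivation,
\[ g(ab) \;=\; d(F(a)F(b)) \;=\; F(a)\,d(F(b)) + F(b)\,d(F(a)) \;=\; F(a)\,g(b) + F(b)\,g(a), \]
which is precisely the Leibniz rule for the twisted module. Moreover, since $F(p) = p$ in $W_n(R)$, the action of $p \in W_{n+1}(R)$ on $(pM)_F$ agrees with the usual action of $p$ on the abelian group $pM$, so $p^{k-1}(pM)_F = p^k M = 0$.

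Now invoke the inductive hypothesis applied to the derivation $g\colon W_{n+1}(R) \to (pM)_F$, whose target is killed by $p^{k-1}$: it yields that $g \circ F^{k-1}\colon W_{(n+1)+(k-1)}(R) = W_{n+k}(R) \to (pM)_F$ vanishes. Since $g \circ F^{k-1} = d \circ F^k$, the induction closes. The only subtlety is the bookkeeping of module structures under restriction of scalars along the Frobenius, together with the passage of the congruence $F(x) \equiv x^p \pmod{p}$ from $W(R)$ to the finite quotients $W_{n+1}(R) \to W_n(R)$; both are formal consequences of the universal identities in the big Witt ring.
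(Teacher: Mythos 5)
Your proof is correct, but it takes a genuinely different route from the paper's. The paper argues directly: since $d \circ F^k$ is additive, one reduces to $x = V^m[a]$ with $[a]$ a Teichm\"{u}ller representative, and then computes $F^k V^m[a] = p^k V^{m-k}[a]$ for $m \geq k$ and $F^k V^m[a] = p^m [a]^{p^{k-m}}$ for $m \leq k$, so that $d(F^k x)$ is visibly divisible by $p^k$ (using the Leibniz rule in the second case) and hence vanishes in $M$. You instead induct on $k$, using only that $F\colon W_{n+1}(R) \to W_n(R)$ is a ring homomorphism satisfying $F(a) \equiv \pi(a)^p \pmod{p W_n(R)}$ (the Frobenius-lift congruence inherited from $W(R)$ --- these are the $p$-typical Witt vectors, not the big ones, though the congruence holds in either setting) to conclude that $g = d \circ F$ lands in $pM$; you then note that $g$ is a derivation into $pM$ for the $W_{n+1}(R)$-module structure obtained by restriction of scalars along $F$, that the integer $p$ acts the same way after this twist so that $p^{k-1}(pM) = 0$, and apply the inductive hypothesis to get $g \circ F^{k-1} = d \circ F^k = 0$. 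The two points you flag as subtle (descent of the congruence to finite levels, and the behavior of the twisted module structure) are indeed the only ones requiring care, and both go through as you say. The paper's computation is shorter and completely explicit, resting on the Teichm\"{u}ller--Verschiebung decomposition of Witt vectors; your argument avoids that decomposition and uses only the $\delta$-ring (Frobenius-lift) property of $F$ together with $FV = p$, so it is somewhat more axiomatic and would apply to any compatible tower of Frobenius lifts, at the cost of the restriction-of-scalars bookkeeping.
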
 

\begin{proof} 
Let $x$ be an element of $W_{n+k}(R)$; we wish to show that $d( F^{k} x)$ vanishes.
Without loss of generality, we may assume that $x = V^{m} [a]$, where $[a] \in W_{n+k-m}(R)$
denotes the image of the Teichm\"{u}ller representative of some $a \in R$.
If $m \geq k$, then $F^k(x) = p^k V^{m-k}([a])$. If $m \leq k$, then
$F^{k} x = p^{m} [a]^{ p^{k-m} }$, so that $d( F^k x ) = p^{k} [a]^{p^{k-m}-1} d[a]$.
In either case, the desired result follows from our assumption that $p^k M = 0$.
\end{proof} 

\begin{example}\label{example:diffetale}
Let $A \rightarrow B$ be an \'{e}tale morphism of $\mathbb{F}_p$-algebras.
Suppose we are given a commutative differential graded algebra $R^{\ast}$ and
a map of rings $W(A) \rightarrow R^{0}$ which factors through $W_n(A)$ for some $n$.
Let $R^{\ast}_B$ denote the tensor product $R^{\ast} \otimes_{W_n(A)} W_n(B)$, formed
in the category of graded commutative rings. It follows from Lemma \ref{froblinear}
that the differential $d\colon  R^{\ast} \to R^{\ast}$ is $(F^n,
F^n)$-linear, in the sense of Definition \ref{bost}. It follows from
Corollary~\ref{extendmapsFaFb} that there is a unique
differential $d$ on the graded ring $R^{\ast}_B$
which is $(F^n, F^n)$-linear as a map of $W(B)$-modules and compatible with the differential
on $R^{\ast}$. Note that $R^{\ast}_{B}$ is {\etale} when regarded as a commutative differential graded algebra over
$R^{\ast}$ (in the sense of Definition \ref{etaledga}).
\end{example} 

\begin{proof}[Proof of Theorem~\ref{Vetale}]
Assertion $(2)$ of Theorem~\ref{Vetale} follows from Proposition \ref{Vetaleprop}, and assertion $(3)$ follows formally from $(1)$ and $(2)$
(as in the proof of Proposition \ref{compareetale}). We will prove $(1)$. Let $A^{\ast}$ be a strict Dieudonn\'{e} algebra, set $R = A^{0} / V A^{0}$, and let
$S$ be an {\etale} $R$-algebra; we will construct a strict Dieudonn\'{e} algebra $B^{\ast}$ which is
$V$-adically {\etale} over $A^{\ast}$ such that $B^{0} / V B^{0}$ is isomorphic to $S$ (as an $R$-algebra). 

For each $n \geq 0$, the commutative ring $W_n(S)$ is {\etale} over
$\mathcal{W}_n(A)^{0}$ (Theorem \ref{wittetale} and Proposition~\ref{prop28}). 
Using part $(1)$ of Proposition \ref{compareetale}, we can choose a commutative differential graded algebra
$B^{\ast}_{n}$ which is {\etale} over $\mathcal{W}_n(A)^{\ast}$ and an isomorphism $B^{0}_{n} \simeq W_n(S)$
of $\mathcal{W}_n(A)^{0}$-algebras. Using part $(2)$ of Proposition \ref{compareetale}, we see that the Frobenius
and restriction maps $R, F\colon  W_n(S) \rightarrow W_{n-1}(S)$ extend uniquely to maps of commutative
differential graded algebras $R, F\colon  B^{\ast}_{n} \rightarrow B^{\ast}_{n-1}$ for which the diagrams
$$ \xymatrix{ \mathcal{W}_{n}(A)^{\ast} \ar[d]^R \ar[r] & B^{\ast}_{n} \ar[d]^{R} & \mathcal{W}_n(A)^{\ast} \ar[d]^{F} \ar[r] & B^{\ast}_{n} \ar[d]^{F} \\
\mathcal{W}_{n-1}(A)^{\ast} \ar[r] & B^{\ast}_{n-1} & \mathcal{W}_{n-1}(A)^{\ast} \ar[r] & B^{\ast}_{n-1} }$$
commute. Let $B^{\ast}$ denote the inverse limit $\varprojlim B^{\ast}_{n}$ in the category of commutative differential graded algebras,
so that the Frobenius maps $F\colon  B^{\ast}_{n} \rightarrow B^{\ast}_{n-1}$ assemble
to a map of graded algebras $F\colon  B^{\ast} \rightarrow B^{\ast}$. We will show that $(B^{\ast}, F)$ is
a strict Dieudonn\'{e} algebra.

Note that conditions $(ii)$ and $(iii)$ of Definition \ref{def20} are automatically satisfied by $B^{\ast}$ (since $B^{0} \simeq W(S)$ by construction).
It will therefore suffice to show that $F$ exhibits $B^{\ast}$ as a strict
Dieudonn\'{e} complex. By virtue of Corollary~\ref{cor102},
it will suffice to show that we can define Verschiebung maps $V\colon  B^{\ast}_{n} \rightarrow B^{\ast}_{n+1}$ which endow
$\{ B^{\ast}_{n} \}_{n \geq 0}$ with the structure of a strict Dieudonn\'{e} tower, in the sense of Definition \ref{stricTD}.
Note that the Verschiebung map $V\colon  \mathcal{W}_n(A)^{\ast} \rightarrow \mathcal{W}_{n+1}(A)^{\ast}$ is
an $(F^1, F^0)$-linear map of nilpotent $W(R)$-modules. Using Corollary \ref{extendmapsFaFb}, we see
that $V$ admits an essentially unique extension to a map $V\colon  B^{\ast}_{n} \rightarrow B^{\ast}_{n+1}$
which is $(F^1, F^0)$-linear as a map of $W(S)$-modules. To complete the proof,
it will suffice to show that the Frobenius, Verschiebung, and restriction maps on $\{ B^{\ast}_{n} \}_{n \geq 0}$ satisfy axioms
$(1)$ through $(8)$ appearing in Definition \ref{stricTD}. We consider axioms
$(6)$ through $(8)$ (the first five are formal consequences of
Corollary~\ref{extendmapsFaFb}, and left to the reader):
\begin{itemize}
\item[$(6)$] For each $n \geq 0$, the sequence $B^{\ast}_{n+1} \xrightarrow{F} B^{\ast}_{n} \xrightarrow{d} B^{\ast+1}_{n} / p B^{\ast+1}_{n}$
is exact. This follows from applying Remark \ref{sespreserved} to the sequence
$$ \mathcal{W}_{n+1}(A)^{\ast} \xrightarrow{F} \mathcal{W}_n(A)^{\ast} \xrightarrow{d} \mathcal{W}_{n}(A)^{\ast+1} / p \mathcal{W}_{n}(A)^{\ast+1}$$
(which is exact by virtue of Proposition \ref{buildtowerfromsat}); note that the maps in this sequence are $(F^0, F^1)$-linear and $(F^n, F^n)$-linear, respectively
(Example~\ref{example:diffetale}).

\item[$(7)$] For each $n \geq 0$, we have an exact sequence
$$ B^{\ast}_{n+1}[p] \xrightarrow{\id} B^{\ast}_{n+1} \xrightarrow{R} B^{\ast}_{n}.$$
This follows from from the exactness of the sequence
$$ \mathcal{W}_{n+1}(A)^{\ast}[p] \xrightarrow{\id} \mathcal{W}_{n+1}(A)^{\ast} \xrightarrow{R} \mathcal{W}_{n}(A)^{\ast}$$
by extending scalars along the {\etale} map $W_{n+1}(R) \rightarrow W_{n+1}(S)$.

\item[$(8)$] For each $n \geq 0$, we haven an exact sequence
$$ B^{\ast}_{1} \oplus B^{\ast-1}_{1} \xrightarrow{(V^n, dV^{n})} B^{\ast}_{n+1} \xrightarrow{R} B^{\ast}_{n}.$$
This follows from applying Remark \ref{sespreserved} to the sequence
$$ \mathcal{W}_1(A)^{\ast} \oplus \mathcal{W}_{1}(A)^{\ast-1} \xrightarrow{(V^{n}, dV^{n})} \mathcal{W}_{n+1}(A)^{\ast} \xrightarrow{R} \mathcal{W}_{n}(A)^{\ast}$$
(which is exact by virtue of Proposition \ref{buildtowerfromsat}); note that the maps in this sequence are
$(F^{2n}, F^{n})$-linear and $(F^0, F^0)$-linear, respectively (again by Example~\ref{example:diffetale}).
\end{itemize}
\end{proof} 
\newpage

\section{The Case of a Cusp}
\label{sec:Seminormal}

Let $R$ be a commutative $\F_p$-algebra. If $R$ is smooth over a perfect field $k$ of characteristic $p$, then Theorem \ref{maintheoC} supplies a canonical isomorphism
from the classical de~Rham--Witt complex $W \Omega_{R}^{\ast}$ of \cite{illusie} to the saturated de~Rham--Witt complex $\WOmega_{R}^{\ast}$ of Definition~\ref{def80}.
It follows that we can regard $\WOmega_{R}^{\ast}$ as a representative, in the
derived category of abelian groups, for the crystalline cochain complex
$\RGamma_{\crys}(\Spec(R))$ of the affine scheme $\Spec(R)$ (we will give another proof of this result in \S~\ref{dRtocryscomp:sec}).

In this section, we study the effect of introducing a mild singularity. Let
$R = \F_p[ x, y] / (x^2 - y^3)$ be the ring of functions of an affine curve with a single cusp.
Our main result, which we prove in \S \ref{sec7sub2}, asserts that the canonical map from $R$ to its normalization $\widetilde{R} \simeq \F_p[t]$ induces
an isomorphism of saturated de~Rham--Witt complexes $\WOmega_{R}^{\ast} \simeq \WOmega_{\widetilde{R}}^{\ast}$ (Proposition \ref{CuspdRW}). 
From this, we will deduce several consequences:
\begin{itemize}
\item The comparison map $\gamma\colon  W \Omega_{R}^{\ast} \rightarrow \WOmega_{R}^{\ast}$ is not an isomorphism when $R = \F_p[x,y] / (x^2-y^3)$
(Proposition \ref{loost}).
\item The cochain complex $\WOmega_{R}^{\ast}$ is not isomorphic to $\RGamma_{\crys}( \Spec(R) )$ as an object of the derived category of abelian
groups (Proposition~\ref{ols}). 
\end{itemize}

Recall that the ring $R = \F_p[ x, y] / (x^2 - y^3)$ studied above is, in some sense, the universal example of a commutative ring that fails to be seminormal. Using our calculation of $\W\Omega_R^*$, we will prove the following:

\begin{itemize}
\item Let $S$ be any commutative $\F_p$-algebra, and let $S \to S^{\mathrm{sn}}$ be the seminormalization of $S$. Then the induced map $\W\Omega^*_S \to \W\Omega^*_{S^{\mathrm{sn}}}$ is an isomorphism (Corollary~\ref{dRWsn1}). Moreover, the unit map 
$$S \rightarrow \WOmega^{0}_{S} / V \WOmega^{0}_{S}$$ exhibits
$\WOmega^{0}_{S} / V \WOmega^{0}_{S}$ as the seminormalization $S^{\mathrm{sn}}$ of $S$ (Theorem \ref{dRWsn2}). 
\end{itemize}

This result is inspired by a result in complex algebraic geometry: if
$\underline{\Omega}^*_X$ is the Deligne--du Bois complex of a complex algebraic
variety $X$, then $\mathcal{H}^0(\underline{\Omega}^0_X)$ is identified with the structure sheaf of the seminormalization of $X$ \cite[Proposition 7.8]{KovacsSchwede2011}.

\subsection{Digression: The de~Rham Complex of a Graded Ring}\label{sec7sub1}

 Suppose that $R = \bigoplus_{d \in \Z} R_{d}$ is a graded ring.
Then the de~Rham complex of $R$ admits a bigrading $\Omega^{\ast}_{R} \simeq \bigoplus_{d,n} (\Omega^{n}_{R})_{d}$, characterized
by the requirement that for every sequence of homogeneous elements $x_0, x_1, \ldots, x_n \in R$ of degrees $d_0, d_1, \ldots, d_n \in \Z$,
the differential form
$$ x_0 (dx_1 \wedge dx_2 \wedge \cdots \wedge dx_n)$$
belongs to the summand $(\Omega^{n}_{R})_{d_0 + \cdots + d_n} \subseteq \Omega^{n}_R$.

\begin{remark}
Let $R = \bigoplus_{d \in \Z} R_d$ be a graded ring which is $p$-torsion free
and let $\varphi\colon  R \rightarrow R$ be a ring homomorphism satisfying $\phi(x) \equiv x^{p} \pmod{p}$ for $x \in R$, so that
the de~Rham complex $\Omega^{\ast}_{R}$ inherits the structure of a Dieudonn\'{e} algebra (Proposition \ref{prop42}).
Suppose that, for every homogeneous element $x \in R_{d}$, we have $\varphi(x) \in R_{pd}$. Then,
for every homogeneous differential form $\omega \in (\Omega^{\ast}_{R})_{d}$, we have $F(\omega) \in (\Omega^{\ast}_{R})_{pd}$.
\end{remark}

\begin{proposition}\label{gradedsatiso}
Let $f\colon  R \to R'$ be a homomorphism of non-negatively graded,
$p$-torsion-free rings equipped with
ring homomorphisms $\varphi\colon  R \to R$ and $\varphi'\colon  R' \to R'$
which lift the Frobenius endomorphisms on $R/pR$ and $R'/pR'$, respectively. Assume that:
\begin{itemize}
\item The diagram of ring homomorphisms
$$ \xymatrix{ R \ar[r]^{f} \ar[d]^{\varphi} & R' \ar[d]^{\varphi'} \\
R \ar[r]^{f} & R' }$$
is commutative.
\item We have $\varphi( R_d ) \subseteq R_{pd}$ and $\varphi'( R'_{d} ) \subseteq R'_{pd}$.
\item The ring homomorphism $f_0\colon  R_0 \rightarrow R'_0$ is an isomorphism.
\item There exists an integer $N \gg 0$ such that $f$ induces an isomorphism $R_{d} \rightarrow R'_{d}$ for $d \geq N$.
\end{itemize}
Then the map of de~Rham complexes $\Omega^{\ast}_{R} \rightarrow \Omega^{\ast}_{R'}$ induces
an isomorphism 
\[ \Saturate( \Omega_R^{\ast}) \to \Saturate( \Omega_{R'}^{\ast})\]
of saturated Dieudonn\'e algebras. 
\end{proposition}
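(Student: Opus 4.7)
The plan is to reduce the problem to showing that the induced map on $F$-localizations $\Omega^*_R[F^{-1}] \to \Omega^*_{R'}[F^{-1}]$ is an isomorphism, since by Remark~\ref{integralformsabstract} the saturation is cut out inside the $F$-localization by the integrality condition on $d$, with the Dieudonn\'e algebra structure preserved throughout. Under the hypotheses both de Rham complexes carry a bigrading by (form degree, internal degree); the differential $d$ preserves internal degree and the Frobenius $F$ multiplies internal degree by $p$, so the $F$-localization and the saturation inherit an internal $\mathbb{Z}[1/p]_{\geq 0}$-grading. It therefore suffices to check the isomorphism separately in each internal degree $D$; the case $D=0$ is immediate from $R_0 \cong R'_0$, which yields $(\Omega^*_R)_0 \cong \Omega^*_{R_0} \cong \Omega^*_{R'_0} \cong (\Omega^*_{R'})_0$ as Dieudonn\'e algebras.

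For $D>0$ I would factor the map as $R \twoheadrightarrow R_1 \hookrightarrow R'$ with $R_1 = R/\ker(f)$. Both the kernel $K$ of the surjection and the cokernel $C$ of the injection are graded $\varphi$-stable modules concentrated in the internal degrees $1, \ldots, N-1$. The fundamental input is that $\varphi$ multiplies internal degrees by $p$: iterating $\varphi$ eventually pushes any element of $K$ or $C$ into degrees $\geq N$ where it must vanish, so both $K$ and $C$ are \emph{$\varphi$-nilpotent}.

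For the surjection, the kernel of $\Omega^*_R \to \Omega^*_{R_1}$ is the differential graded ideal generated by $K$ and $dK$. Using $F(k) = \varphi(k)$ and $p \cdot F(dk) = d\varphi(k)$ together with the $p$-torsion-freeness of $\Omega^*_R$, the $\varphi$-nilpotence of $K$ implies every generator of this ideal is annihilated by a sufficiently large power of $F$, and hence so is every element. The ideal therefore vanishes in $\Omega^*_R[F^{-1}]$, yielding $\mathrm{Sat}(\Omega^*_R) \cong \mathrm{Sat}(\Omega^*_{R_1})$.

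The main obstacle is the injective step $R_1 \hookrightarrow R'$: here one must show that every $\omega' \in \Omega^*_{R'}$ of positive internal degree satisfies $F^k(\omega') \in f^{\Omega}(\Omega^*_{R_1})$ for some $k$, and that the kernel of $\Omega^*_{R_1} \to \Omega^*_{R'}$ is annihilated by powers of $F$. For surjectivity of the $F$-localization, one writes $\omega' = \sum r'_0\,dr'_1 \wedge \cdots \wedge dr'_n$ and observes that $\varphi^k(r'_i) \in R_1$ for $k$ large (by $\varphi$-nilpotence of $C$); via the identity $F^k(dr') = p^{-k}\,d\varphi^k(r')$, this reduces to a ``$p^k$-divisibility modulo $\ker(f^{\Omega})$'' statement on the relevant preimages in $\Omega^*_{R_1}$, which must be extracted from the graded structure of the de Rham complexes. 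This divisibility, and the $F$-nilpotence of $\ker f^{\Omega}$, both rely on the observation that in sufficiently high internal degrees, relations in $R_1$ coming from the full ring $R'$ force the needed cancellations---a phenomenon illustrated concretely by the cusp calculation in the next subsection.
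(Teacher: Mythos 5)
Your overall strategy (reduce, via Remark~\ref{integralformsabstract}, to showing the map on $F$-localizations is an isomorphism, and exploit that $\varphi$ multiplies internal degree by $p$) is sound, and the first half of your argument --- killing $\ker(R \to R_1)$ and its differentials after applying a fixed power of $F$ --- is fine up to a torsion caveat noted below. But the proof is incomplete exactly where you flag ``the main obstacle'': for the inclusion $R_1 \hookrightarrow R'$ you need (i) every homogeneous form on $R'$ of positive internal degree to have some $F$-power in the image of $\Omega^{\ast}_{R_1}$ (up to torsion), and (ii) the kernel of $\Omega^{\ast}_{R_1} \to \Omega^{\ast}_{R'}$ to be killed by powers of $F$ modulo torsion; for both you offer only the unproved assertion that ``relations in high internal degree force the needed cancellations.'' Point (i) is genuinely delicate: knowing $p^{k} F^{k}(dr') = d\varphi'^{k}(r')$ lies in the image only gives that $p^{k}F^{k}(\omega')$ is in the image, and the image of $\Omega^{\ast}_{R_1}$ is not closed under division by $p$. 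In the cusp example the required memberships hold only after combining several generators with coefficients such as $\tfrac12$ or $\tfrac13$, so this is not a formal consequence of the grading; an actual argument is required, and none is given. As written, the crux of the proposition is assumed rather than proved. A smaller error: you invoke the $p$-torsion-freeness of $\Omega^{\ast}_{R}$, which does not follow from that of $R$; one must first pass to $\Omega^{\ast}_{R}/(\text{$p$-power torsion})$, both to apply Remark~\ref{integralformsabstract} and to conclude that $F^{m}(dk)$ vanishes (rather than merely being torsion) for $k \in \ker f$.

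For comparison, the paper's proof sidesteps the divisibility issue entirely. After passing to torsion-free quotients, choose $r$ with $p^{r} \geq N$: then $\varphi'^{r}$ lands in degrees $0$ or $\geq N$, so it factors as $f \circ \psi$ for a unique $\psi\colon R' \to R$ with $\psi \circ f = \varphi^{r}$; since $(\varphi^{r})^{\ast} = p^{rn}F^{r}$ on $n$-forms, the kernel of the map on torsion-free quotients is killed by $F^{r}$ (this also disposes of your point (ii), if you restrict $\psi$ appropriately). For surjectivity, one identifies $(\Omega^{\ast}_{R}/\mathrm{tors})[1/F]$ with $((R' \otimes_{R} \Omega^{\ast}_{R})/\mathrm{tors})[1/F]$ using $R[1/F] \simeq R'[1/F]$, so only the differentiated factors, not the coefficients, need come from $R$; the target is generated over $R'[1/F]$ in degree one, reducing everything to $\Omega^{1}$; and in internal degree $\geq 2N$ every generator $x\,dy$ of $\Omega^{1}_{R'}$ may, after the substitution $x\,dy = d(xy) - y\,dx$, be assumed to have $y$ of degree $\geq N$ and hence in the image of $f$ --- an honest surjectivity requiring no division by $p$, while applying $F$ raises any positive internal degree to $\geq 2N$. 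If you wish to keep your factorization through $R_1$, you should import this base-changed comparison (or the map $\psi$) rather than attempt the $p^{k}$-divisibility claim directly.
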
 

\begin{proof} 
Let $(\Omega_R^{\ast})_{\mathrm{tors}} \subseteq 
\Omega_R^{\ast}$ and $(\Omega_{R'}^{\ast})_{\mathrm{tors}} \subseteq
\Omega_{R'}^{\ast}$ denote the submodules of $p$-power torsion elements,
and let us write $F$ for the Frobenius map on the Dieudonn\'{e} algebras $\Omega^{\ast}_{R}$ and
$\Omega^{\ast}_{R'}$. By virtue of the description of the saturation given in Remark~\ref{integralformsabstract},
it will suffice to show that the natural map
$$ \theta\colon   \left(  \Omega_R^{\ast}/(\Omega_R^{\ast})_{\mathrm{tors}}\right)[1/F]  
\to \left(  \Omega_{R'}^{\ast}/(\Omega_{R'}^{\ast})_{\mathrm{tors}}\right)[1/F]$$
is an isomorphism of graded rings.

Choose an integer $r \geq 0$ such that $p^{r} \geq N$. Then the map $f$ induces an isomorphism
$R_{d} \simeq R'_{d}$ whenever $d$ is divisible by $p^r$. It follows that there is a unique map
$\psi\colon  R' \rightarrow R$ for which the diagram
\[ \xymatrix@R=50pt@C=50pt{
R \ar[d]^{\varphi^r} \ar[r]^{f} &  R' \ar[d]^{\varphi'^r}  \ar[ld]^\psi \\
R \ar[r]_{f} &  R'
}\]
is commutative. Consequently, the kernel of the map
$$\Omega_R^{\ast}/(\Omega_R^{\ast})_{\mathrm{tors}} \to 
\Omega_{R'}^{\ast}/(\Omega_{R'}^{\ast})_{\mathrm{tors}}$$
is annihilated by $F^r$, since $p^{rd} F^r  = (\varphi^r)^{\ast}$ on
$\Omega_R^{d}$. It follows that $\theta$ is injective.

Note that, since $f$ induces an isomorphism $R[1/F] \xrightarrow{\sim} R'[1/F]$, we
can identify $\theta$ with
the natural map
$$((R' \otimes_{R} \Omega_{R}^{\ast}) / (R' \otimes_{R} \Omega_{R}^{\ast})_{\mathrm{tors} })[ 1/F] \to
(\Omega_{R'}^{\ast}/(\Omega_{R'}^{\ast})_{\mathrm{tors}})[1/F].$$
We will complete the proof by showing that this map is surjective. Note that the codomain
of $\theta$ is generated, as an algebra over $R'[1/F]$, by elements of degree $1$.
We are therefore reduced to showing that the natural map
$$ (R' \otimes_R \Omega^1_R)[1/F] \to \Omega^1_{R'}[1/F]$$
is surjective. For this, it will suffice to show that $f$ induces a surjection
$\rho\colon  (R' \otimes_{R} \Omega^{1}_{R})_{d} \rightarrow (\Omega^{1}_{R'})_{d}$ for
$d = 0$ (which is clear) and for $d \geq 2N$. In the latter case, we note that
$(\Omega^{1}_{R'})_{d}$ is generated by elements of the form $x(dy)$
for homogeneous elements $x, y \in R'$, where either $x$ or $y$ has degree $\geq N$.
Writing $x dy = d(xy) - y(dx)$, we can reduce to the case where $y$ has degree $\geq N$
and therefore belongs to the image of $f$, so that $x(dy)$ belongs to the image of $\rho$ as desired.
\end{proof} 

\subsection{The Saturated de~Rham--Witt Complex of a Cusp}\label{sec7sub2}

We now apply Proposition \ref{gradedsatiso} to study the saturated de~Rham--Witt complex of a cusp.

\begin{proposition}\label{CuspdRW}
Let $R \subseteq \F_p[ t ]$ be the subring generated by $t^2$ and $t^3$. Then the inclusion
$R \hookrightarrow \F_p[t]$ induces an isomorphism of saturated de~Rham--Witt complexes
$$ \WOmega^{\ast}_{R} \rightarrow \WOmega^{\ast}_{ \F_p[t] }.$$
\end{proposition}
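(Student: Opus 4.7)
The plan is to lift the situation to characteristic zero, where the inclusion of rings becomes an isomorphism in all sufficiently high graded pieces, and then deploy Proposition \ref{gradedsatiso} to conclude that the induced map on saturated de Rham complexes is already an isomorphism. Passing to completed saturations and reinterpreting via the universal properties of \S \ref{section3} will then yield the claim.

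First I would set $\widetilde{R} = \mathbf{Z}[t^2, t^3] \subseteq \widetilde{S} = \mathbf{Z}[t]$ and equip $\widetilde{S}$ with the Frobenius lift $\varphi$ determined by $t \mapsto t^p$. This preserves the subring $\widetilde{R}$, and therefore restricts to a compatible Frobenius lift on $\widetilde{R}$; both rings reduce mod $p$ to recover the inclusion $R \hookrightarrow \mathbf{F}_p[t]$. Both rings are $p$-torsion-free, both carry the grading with $\deg(t) = 1$, and $\varphi$ multiplies degrees by $p$. A direct inspection shows $\widetilde{R}_d = \widetilde{S}_d$ for $d = 0$ and for every $d \geq 2$ (since every such $t^d$ lies in $\mathbf{Z}[t^2, t^3]$), the only discrepancy being $\widetilde{R}_1 = 0$ while $\widetilde{S}_1 = \mathbf{Z} \cdot t$.

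All hypotheses of Proposition \ref{gradedsatiso} are therefore satisfied (with $N = 2$), and we conclude that the inclusion induces an isomorphism
$$\Saturate(\Omega^{\ast}_{\widetilde{R}}) \xrightarrow{\sim} \Saturate(\Omega^{\ast}_{\widetilde{S}})$$
of saturated Dieudonn\'e algebras. Applying the completion functor of Corollary \ref{cor69}, this upgrades to an isomorphism $\WSaturate(\Omega^{\ast}_{\widetilde{R}}) \xrightarrow{\sim} \WSaturate(\Omega^{\ast}_{\widetilde{S}})$ of strict Dieudonn\'e algebras.

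It remains to identify each side with the relevant saturated de Rham-Witt complex. For any strict Dieudonn\'e algebra $A^{\ast}$, Proposition \ref{prop39} gives a bijection between $\FrobAlg$-maps $\Omega^{\ast}_{\widetilde{R}} \to A^{\ast}$ and ring homomorphisms $\widetilde{R} \to A^0$ commuting with Frobenius; by Corollary \ref{cor47} these correspond to ring homomorphisms $\widetilde{R} \to A^0/VA^0$, and since $p = V(F(1)) \in VA^0$, such homomorphisms factor uniquely through $R = \widetilde{R}/p\widetilde{R}$. Comparing with the defining universal property of $\WOmega^{\ast}_R$ and the adjunction of Corollary \ref{oloter}, one obtains a canonical isomorphism $\WSaturate(\Omega^{\ast}_{\widetilde{R}}) \simeq \WOmega^{\ast}_R$, and likewise $\WSaturate(\Omega^{\ast}_{\widetilde{S}}) \simeq \WOmega^{\ast}_{\mathbf{F}_p[t]}$; chaining these with the isomorphism of the previous paragraph completes the proof. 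The one step that genuinely requires attention is the verification of the hypotheses of Proposition \ref{gradedsatiso}, particularly the behavior of $\varphi$ with respect to the grading; the rest of the argument is a formal manipulation of universal properties.
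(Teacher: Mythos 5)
Your proof is correct and follows essentially the same route as the paper: lift the inclusion to a graded, $p$-torsion-free ring with Frobenius lift $t \mapsto t^p$, apply Proposition \ref{gradedsatiso} to get an isomorphism of saturations, then pass to completions and identify the result with the saturated de Rham--Witt complexes. The only (immaterial) differences are that you work over $\Z$ rather than $\Z_p$ and that you re-derive the identification $\WSaturate(\Omega^{\ast}_{\widetilde{R}}) \simeq \WOmega^{\ast}_{R}$ directly from Proposition \ref{prop39}, Corollary \ref{cor47}, and Corollary \ref{oloter}, where the paper simply invokes Corollary \ref{cor70}.
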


\begin{proof}
Let $\widetilde{R}$ denote the $\Z_p$-subalgebra of the polynomial ring $\Z_p[t]$ generated by $t^2$ and $t^3$.
Then $\widetilde{R}$ is a $p$-torsion free ring satisfying $R \simeq \widetilde{R} / p \widetilde{R}$.
Let us regard $\Z_p[t]$ as a graded ring in which the element $t$ is homogeneous of degree $1$, and let
$\varphi\colon  \Z_p[t] \rightarrow \Z_p[t]$ be the $\Z_p$-algebra homomorphism given by $\varphi(t) = t^{p}$.
Then $\widetilde{R}$ is a graded subring of $\Z_p[t]$ satisfying $\varphi( \widetilde{R} ) \subseteq \widetilde{R}$, and
the inclusion map $\widetilde{R} \hookrightarrow \Z_p[t]$ is an isomorphism in degrees $\neq 1$. Applying Proposition \ref{gradedsatiso},
we deduce that the associated map of de~Rham complexes $\Omega^{\ast}_{ \widetilde{R} } \rightarrow \Omega^{\ast}_{ \Z_p[t] }$
induces an isomorphism of saturated Dieudonn\'{e} algebras $\Saturate(
\Omega^{\ast}_{ \widetilde{R} } ) \xrightarrow{\sim} \Saturate( \Omega^{\ast}_{ \Z_p[t] } )$. 
By virtue of Corollary \ref{cor70}, we have a commutative diagram of Dieudonn\'{e} algebras
$$ \xymatrix{ \Saturate( \Omega^{\ast}_{ \widetilde{R} } ) \ar[r] \ar[d] & \Saturate( \Omega^{\ast}_{ \Z_p[t] } ) \ar[d] \\
\WOmega^{\ast}_{R} \ar[r] & \WOmega^{\ast}_{ \F_p[t] }, }$$
where the vertical maps exhibit $\WOmega^{\ast}_{R}$ and $\WOmega^{\ast}_{ \F_p[t] }$ as the completions of
$\Saturate( \Omega^{\ast}_{ \widetilde{R} } )$ and $\Saturate( \Omega^{\ast}_{ \Z_p[t] } )$, respectively.
It follows that the lower horizontal map is also an isomorphism.
\end{proof}

\begin{remark}
The inclusion map $\widetilde{R} \hookrightarrow \Z_p[t]$ appearing in the proof of Proposition \ref{CuspdRW} does not
induce an isomorphism of de~Rham complexes $\rho\colon  \Omega^{\ast}_{ \widetilde{R} } \rightarrow \Omega^{\ast}_{ \Z_p[t] }$: in fact, the differential
form $dt \in \Omega^{1}_{\Z_p[t]}$ does not belong to the image of $\rho$. Nevertheless, Proposition \ref{gradedsatiso} guarantees that 
$F^{n}( dt )$ belongs to the image of $\rho$ for some integer $n$. In fact, we can be more explicit. Set $x = t^3$ and $y = t^2$, so that
$x$ and $y$ belong to $\widetilde{R}$. For $p \geq 5$, we have
$F(dt ) = t^{p-1} dt = \frac{1}{2} x y^{(p-5)/2} dy$. For $p = 3$, we have $F^2 (dt ) = t^{8} dt= \frac{1}{2} x y^2 dy$. For $p = 2$, we have
$F^3(dt) = t^7 dt = \frac{1}{3} x y dx$.
\end{remark}

\subsection{The Classical de~Rham--Witt Complex of a Cusp}\label{sec7sub3}

Let $R$ be a commutative $\F_p$-algebra. In \S \ref{compareclass}, we constructed a comparison map
$W \Omega_{R}^{\ast} \rightarrow \WOmega_{R}^{\ast}$ (Corollary \ref{makemap}) and showed that it is an isomorphism in the case where
$R$ is smooth over a perfect field (Theorem \ref{maintheoC}). Proposition \ref{CuspComparedRW} shows that it is not an isomorphism in general, even for reduced rings:

\begin{proposition}\label{loost}
\label{CuspComparedRW}
Let $R \subseteq \F_p[t]$ be the subalgebra generated by $t^2$ and $t^3$. Then the comparison map 
$$ c_{R}\colon  W \Omega_{R}^{\ast} \rightarrow \WOmega_{R}^{\ast}$$
of Corollary \ref{makemap} is not an isomorphism.
\end{proposition}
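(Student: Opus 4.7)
The plan is to obtain a contradiction by reducing the question modulo the first-level Verschiebung filtration. By part~(i) of Corollary~\ref{makemap}, the comparison map $c_R = \gamma$ is the inverse limit of differential graded algebra homomorphisms $\gamma_r\colon W_r\Omega^{\ast}_R \to \WrOmega{r}^{\ast}_R$; if $c_R$ were an isomorphism then each $\gamma_r$ would be an isomorphism as well, and in particular so would the degree-zero component of $\gamma_1$, which is a ring homomorphism $\gamma_1^0 \colon W_1\Omega^0_R \to \WrOmega{1}^0_R$.

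Now I would compute each side. On the classical side, Remark~\ref{belox} identifies $W_1\Omega^0_R$ with $R$ itself. On the saturated side, the inclusion $R \hookrightarrow \F_p[t]$ induces, by Proposition~\ref{CuspdRW}, an isomorphism $\WOmega^{\ast}_R \xrightarrow{\sim} \WOmega^{\ast}_{\F_p[t]}$ of strict Dieudonn\'e algebras, which descends in degree zero at level one to an isomorphism $\WrOmega{1}^0_R \xrightarrow{\sim} \WrOmega{1}^0_{\F_p[t]}$. Since $\F_p[t]$ is smooth over $\F_p$, Proposition~\ref{prop76} (the smooth case of Theorem~\ref{theo73}) asserts that the unit map $\F_p[t] \to \WrOmega{1}^0_{\F_p[t]}$ is an isomorphism. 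Composing, we may identify $\WrOmega{1}^0_R$ with $\F_p[t]$, and under this identification the natural map $R \to \WrOmega{1}^0_R$ becomes the subring inclusion $R \hookrightarrow \F_p[t]$.

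It remains to match $\gamma_1^0$ with this inclusion. By condition~(ii) of Corollary~\ref{makemap}, the map $\gamma$ fits into a commutative square relating the structural ring homomorphisms $W(R) \to W\Omega^0_R$ and $R \to \WOmega^0_R / V \WOmega^0_R$; reducing the former modulo $V$ and using that $W_1\Omega^0_R = R$ via the identity, one sees that $\gamma_1^0$ coincides with the unit map $R \to \WrOmega{1}^0_R$ of the saturated de Rham-Witt construction. Under the identification of the previous paragraph, this is the inclusion $R \hookrightarrow \F_p[t]$, which is not surjective because $t$ cannot be written as a polynomial in $t^2$ and $t^3$. Hence $\gamma_1^0$ is not an isomorphism, contradicting the assumption that $c_R$ is one. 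The only delicate step is the identification of $\gamma_1^0$ with the saturated unit map; everything else is a direct application of the results already established.
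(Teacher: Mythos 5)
Your argument is correct, but it reaches the conclusion by a different mechanism than the paper. The paper works in degree $2$: since $dx \wedge dy \neq 0$ in $\Omega^{2}_{R}$ and the natural map $W\Omega^{\ast}_{R} \rightarrow \Omega^{\ast}_{R}$ is surjective, one has $W\Omega^{2}_{R} \neq 0$, whereas Proposition \ref{CuspdRW} together with Corollary \ref{cor70} identifies $\WOmega^{\ast}_{R}$ with $\WSaturate( \widehat{\Omega}^{\ast}_{\Z_p[t]})$, which vanishes in degree $2$; so the two complexes differ already as graded abelian groups. Your proof instead detects the failure in degree $0$ at level one: classically $W_1 \Omega^{0}_{R} = R$ (Remark \ref{belox}), while $\WrOmega{1}^{0}_{R} \simeq \WrOmega{1}^{0}_{\F_p[t]} \simeq \F_p[t]$ by Proposition \ref{CuspdRW} and Proposition \ref{prop76}, and by naturality of the unit map together with Corollary \ref{makemap}$(ii)$ the comparison $\gamma_1^{0}$ becomes the non-surjective inclusion $R \hookrightarrow \F_p[t]$. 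This is a legitimate alternative; it makes the seminormalization phenomenon (Theorem \ref{dRWsn2}) visible, rather than the difference in amplitude.

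One step is asserted without justification: that an isomorphism $c_{R}$ would force each $\gamma_r$, in particular $\gamma_1^{0}$, to be an isomorphism. This is not a formal property of inverse limits (a limit of non-isomorphisms can be an isomorphism), so as written there is a small gap. It is, however, true and easy to repair: by Corollary \ref{makemap}$(ii)$ the map $c_R$ commutes with $V$ and $d$, so if it were an isomorphism it would carry $\im(V^r) + \im(dV^r) \subseteq W\Omega^{\ast}_{R}$ bijectively onto $\im(V^r) + \im(dV^r) = \ker( \WOmega^{\ast}_{R} \rightarrow \WrOmega{r}^{\ast}_{R})$; since the source subgroup is contained in $\ker( W\Omega^{\ast}_{R} \rightarrow W_r \Omega^{\ast}_{R})$ and this projection is surjective (Remark \ref{rem101}), each $\gamma_r$ would then be bijective. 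Alternatively, your own computation already yields a contradiction without this claim: Corollary \ref{makemap}$(i)$ gives $\pi_1^{\mathrm{sat}} \circ c_R = \gamma_1 \circ \pi_1^{\mathrm{cl}}$ for the level-one projections, so in degree zero the image of $\pi_1^{\mathrm{sat}} \circ c_R$ equals the image of $\gamma_1^{0}$, namely $R \subsetneq \F_p[t] \simeq \WrOmega{1}^{0}_{R}$; as $\pi_1^{\mathrm{sat}}$ is surjective, $c_R$ is not even surjective in degree $0$. With either patch the proof is complete.
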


\begin{proof}
Set $x = t^3$ and $y = t^2$, so that we can identify $R$ with the quotient ring $\F_p[x,y] / ( x^2 - y^3 )$. The module of K\"{a}hler differentials
$\Omega^{1}_{R}$ is generated by $dx$ and $dy$, subject only to the relation $2x(dx) = 3 y^2(dy)$. It follows that $dx$ and $dy$ have linearly
independent images in the $\F_p$-vector space $R/(x,y) \otimes_{R} \Omega^{1}_{R}$, so that the differential form $dx \wedge dy \in \Omega^2_{R}$
is nonzero. Since the natural map $W \Omega_{R}^{\ast} \rightarrow \Omega_{R}^{\ast}$ is surjective, we must have $W \Omega_{R}^{2} \neq 0$.
On the other hand, Proposition \ref{CuspdRW} supplies an isomorphism $\WOmega_{R}^{\ast} \simeq \WOmega_{ \F_p[t] }^{\ast}$, so that
we can use Corollary \ref{cor70} to identify $\WOmega_{R}^{\ast}$ with the completed saturation of the completed de~Rham complex $\widehat{\Omega}^{\ast}_{ \Z_p[t] }$.
It follows that $\WOmega_{R}^{2}$ vanishes, so that the map $c_{R}\colon  W \Omega_{R}^{\ast} \rightarrow \WOmega_{R}^{\ast}$ cannot be an isomorphism in degree $2$.
\end{proof}

\begin{remark}
The comparison map $c_{R}\colon  W \Omega_{R}^{\ast} \rightarrow \WOmega_{R}^{\ast}$ need not even be a quasi-isomorphism.
Note that we have a commutative diagram
$$ \xymatrix{ W \Omega_{R}^{\ast} \ar[r]^{c_R} \ar[d]^{c'} & \WOmega_{R}^{\ast} \ar[d] \\
W \Omega_{ \F_p[t] }^{\ast} \ar[r] & \WOmega_{ \F_p[t] }^{\ast}, }$$
where the right vertical map is a (quasi-)isomorphism by virtue of Proposition \ref{CuspdRW}. Consequently, if $c_{R}$ is a quasi-isomorphism,
then the map $c'$ is also a quasi-isomorphism, and therefore induces an isomorphism
$W\Omega_{R}^* \otimes_\Z^L \F_p \rightarrow W \Omega_{\F_p[t]}^* \otimes_{\Z}^L \F_p$ in the derived category of abelian groups.
Using the commutativity of the diagram
\[ \xymatrix{
		   W\Omega_{R}^* \otimes_\Z^L \F_p \ar[r] \ar[d] & W \Omega_{\F_p[t]}^* \otimes_{\Z}^L \F_p \ar[d] \\ 
		   \Omega_{R}^* \ar[r] & \Omega_{\F_p[t]}^*, }\]  
and the fact that the right vertical map is an isomorphism (since $\F_p[t]$ is a smooth $\F_p$-algebra), it would follow that
that the map of de~Rham complexes $\Omega_{R}^{\ast} \rightarrow \Omega_{ \F_p[t]}^{\ast}$ induces a surjection on cohomology.
In particular, if $c_{R}$ were a quasi-isomorphism, then the $0$-cocycle $t^{p}
\in \Omega^{0}_{ \F_p[t] }$ could be lifted to a $0$-cocycle in $\Omega_{R}^{\ast}$: that is, the
differential $d(t^p)$ would vanish as an element of $\Omega^{1}_{R}$. This is not true for $p \leq 7$. To see this, write $x = t^3$ and $y = t^2$ as in Proposition \ref{loost},
so that $t^{p} = x^{m} y^{n}$ for some $0 \leq m, n < p$. For $p \leq 7$, the canonical map
$$\Big(R \cdot dx \oplus R \cdot dy\Big) \rightarrow  \Big(R \cdot dx \oplus R \cdot dy\Big)/ R \cdot (2x \cdot dx - 3y^2 \cdot dy) \simeq \Omega^{1}_{R}$$
is an isomorphism in degree $p$ (with respect to the grading of \S \ref{sec7sub1}), so that $d(t^{p}) = m t^{p-3} \cdot dx + n t^{p-2} \cdot dy$ does not vanish.
\end{remark}

\subsection{The Crystalline Cohomology of a Cusp}\label{sec7sub4}

Let $R$ be an $\F_p$-algebra. If $R$ is smooth over a perfect field, then the saturated de~Rham--Witt complex $\WOmega_{R}^{\ast}$ agrees with the classical
de~Rham--Witt complex $W \Omega_{R}^{\ast}$, and can therefore be identified
with the crystalline cohomology $\RGamma_{\crys}(\Spec(R))$ (as an object of the derived category).
This observation does not extend to the non-smooth case:

\begin{proposition}\label{ols}
Let $R \subseteq \F_p[t]$ be the subalgebra generated by $t^2$ and $t^3$. Then the cochain complex
$\RGamma_{\crys}(\Spec(R)) \otimes^{L}_{ \Z_p } \F_p$ has nonvanishing cohomology in degree $2$.
In particular, there does not exist a quasi-isomorphism $\RGamma_{\crys}( \Spec(R) ) \simeq \WOmega_{R}^{\ast}$.
\end{proposition}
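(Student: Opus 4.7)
My strategy is to identify the target with the derived de Rham complex and then exhibit a non-vanishing class in $\HH^2$ via the conjugate filtration. Using the identification
\[
\RGamma_{\crys}(\Spec R) \otimes_{\Z_p}^L \F_p \;\simeq\; L\Omega_{R/\F_p}
\]
(reviewed in Section~9), it suffices to show that $\HH^2(L\Omega_{R/\F_p}) \neq 0$.

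Since $R = \F_p[x,y]/(x^2-y^3)$ is a local complete intersection cut out by $f = x^2-y^3$ in the smooth algebra $A = \F_p[x,y]$, the cotangent complex $L_{R/\F_p}$ is a perfect complex, quasi-isomorphic to $[R \xrightarrow{(2x,\,-3y^2)} R\,dx \oplus R\,dy]$ and concentrated in cohomological degrees $[-1,0]$. I would then invoke the natural exhaustive increasing conjugate filtration on $L\Omega_{R/\F_p}$, whose associated graded pieces are $\mathrm{gr}^i_{c} \simeq (L\wedge^i L_{R/\F_p})^{(1)}[-i]$, with $(-)^{(1)}$ denoting Frobenius pullback. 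From the range of $L_{R/\F_p}$, the piece $\mathrm{gr}^i_{c}$ sits in cohomological degrees $[0,i]$; in particular, the pieces with $i \leq 1$ contribute nothing to $\HH^2$, while $\mathrm{gr}^2_{c}$ contributes $\pi_0(L\wedge^2 L_{R/\F_p})^{(1)} \simeq (\Omega^2_R)^{(1)}$. The latter is non-zero by the computation in the proof of Proposition~\ref{loost}, where $\Omega^2_R \cong R/(2x,\,3y^2) \neq 0$.

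The main obstacle is to show that this class actually survives in $\HH^2(L\Omega_{R/\F_p})$, i.e., is not killed by differentials in the conjugate spectral sequence coming from $\mathrm{gr}^i_{c}$ with $i \geq 3$, which can reach cohomological degree $2$ through the ``Tor-like'' terms $\HH^{2-i}(L\wedge^i L_{R/\F_p})^{(1)}$. I would control these either by a direct analysis of the derived exterior powers via the two-term presentation of $L_{R/\F_p}$ above (using the standard Koszul-type formula for $L\wedge^\bullet$ of a length-one complex of projectives), or more concretely by working at the cochain level with the $p$-adic de Rham complex of the divided power envelope of the closed immersion $\Spec \Z_p[x,y]/(x^2-y^3) \hookrightarrow \Spec \Z_p[x,y]$, which provides an explicit cochain representative for $\RGamma_{\crys}(\Spec R)$ in which the surviving class can be identified directly.
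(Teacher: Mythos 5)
Your setup matches the paper's: identify $\RGamma_{\crys}(\Spec(R)) \otimes^{L}_{\Z_p} \F_p$ with the derived de Rham complex $L\Omega_{R}$ (valid since $R$ is a local complete intersection), and locate the candidate class in $\HH^{0}(\bigwedge^{2} L_{R/\F_p}) \simeq \Omega^{2}_{R} \neq 0$, using the computation from Proposition~\ref{loost}. But the step you yourself flag as ``the main obstacle'' --- survival of this class past the conjugate spectral sequence --- is exactly the content of the statement, and neither of your proposed ways of handling it closes the gap. Your option (a), analyzing the derived exterior powers of the two-term presentation of $L_{R/\F_p}$, cannot suffice on its own: the graded pieces of the conjugate filtration determine the $E_{1}$-terms but not the differentials, which encode extension/obstruction data of the filtered object (they are controlled by liftability data, not by Koszul algebra of $\gr$); in particular the potential incoming differential $\HH^{-2}(\bigwedge^{3} L_{R/\F_p}) \to \HH^{0}(\bigwedge^{2} L_{R/\F_p})$ is not visible from the associated graded. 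Your option (b), the explicit PD-envelope cochain model, would work in principle but is an unexecuted and genuinely nontrivial computation (compare the explicit analysis the paper carries out for $\F_p[x]/(x^2)$ in a later remark), so as written it is a plan rather than a proof.

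The missing idea, which is how the paper's proof disposes of the issue in one line, is that $R$ admits a lift to $\Z/p^{2}$ \emph{together with a lift of Frobenius}: the subring of $(\Z/p^{2})[t]$ generated by $t^{2},t^{3}$ is such a lift, and $t \mapsto t^{p}$ preserves it. This gives a natural equivalence
\[
L\Omega_{R} \;\simeq\; \bigoplus_{i} \bigl(\textstyle\bigwedge^{i} L_{R/\F_p}\bigr)[-i],
\]
so there is no spectral sequence to run: $\HH^{0}(\bigwedge^{2} L_{R/\F_p}) \simeq \Omega^{2}_{R}$ is literally a direct summand of $\HH^{2}(L\Omega_{R})$, and its nonvanishing finishes the proof. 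If you add this splitting argument (or, alternatively, actually carry out the PD-envelope computation), your proof is complete; as it stands, the crucial degeneration/splitting input is absent.
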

\begin{remark}
We will see in \S \ref{sec:sddRW} that, for any $\F_p$-algebra $R$ which can be realized as a local complete intersection over a perfect field $k$, there
is a natural comparison map $\RGamma_{\crys}(\Spec(R)) \to \W\Omega^*_R$ in the
derived category $D(\Z_p)$. By virtue of Proposition~\ref{ols}, this map
cannot be an isomorphism in general.
\end{remark}

\begin{proof}[Proof of Proposition \ref{ols}]
Since $R \simeq \F_p[x,y] / (x^2 - y^3)$ is a local complete intersection over $\F_p$, we can identify $\RGamma_{\crys}(\Spec(R)) \otimes_{\Z_p}^L \F_p$ with the derived de~Rham complex 
$L\Omega_{R}$ (see \cite[Theorem 1.5]{BhattTorsion}). Because $R$ admits a
flat lift to $\Z/(p^2)$ together with a lift of Frobenius, there is a natural
identification (as in loc.~cit.)
\[ \bigoplus_i ({\bigwedge}^i L_{R/\F_p})[-i] \simeq L\Omega_R \]
where ${\bigwedge}^{i} L_{ R/ \F_p }$ denotes the $i$th derived exterior power of the cotangent complex $L_{R/\F_p}$. In particular,
the cohomology group $\mathrm{H}^{2}( L \Omega_{R} )$ contains the classical exterior power
$\mathrm{H}^{0}( {\bigwedge}^{2} L_{R/\F_p}) \simeq \Omega^{2}_{R}$ as a direct summand. This direct summand is nonzero (since $dx \wedge dy$ is a nonvanishing element of
$\Omega^{2}_{R}$, as noted in the proof of Proposition \ref{loost}).

For the last claim, 
suppose that there exists a quasi-isomorphism $\RGamma_{\crys}(\Spec(R) ) \simeq \WOmega_{R}^{\ast}$. Reducing modulo $p$ and
using Proposition \ref{CuspdRW} and Remark \ref{plux}, we obtain quasi-isomorphisms
\begin{eqnarray*}
\RGamma_{\crys}(\Spec(R)) \otimes^{L}_{ \Z_p } \F_p & \simeq & \WOmega_{R}^{\ast} \otimes^{L}_{\Z_p} \F_p \\
& \simeq & \WOmega_{R}^{\ast} / p \WOmega_{R}^{\ast} \\
& \simeq & \WOmega_{ \F_p[t] }^{\ast} / p \WOmega_{\F_p[t] }^{\ast} \\
& \simeq & \Omega^{\ast}_{ \F_p[t] }. \end{eqnarray*}
This contradicts Proposition \ref{ols}, since $\Omega^{\ast}_{ \F_p[t] }$ is concentrated in cohomological degrees $\leq 1$. 
\end{proof}

\subsection{Seminormality}\label{sec7sub5}

Recall that a ring $R$ is {\em seminormal} if it is reduced and satisfies the following condition: given $x,y \in R$ with $x^2 = y^3$, we can find $t \in R$ with $x = t^3$ and $y= t^2$ (see \cite{SwanSeminormal}). 

\begin{proposition}\label{older}
Let $A^{\ast}$ be a strict Dieudonn\'{e} algebra. Then the commutative ring $A^{0} / V A^{0}$ is seminormal.
\end{proposition}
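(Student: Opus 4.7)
The plan has two components. First, by Lemma~\ref{lem27}, the ring $A^{0}/VA^{0}$ is already a reduced $\F_p$-algebra, so it remains only to check the cusp-root condition in the definition of seminormality: given $\bar x, \bar y \in A^{0}/VA^{0}$ with $\bar x^{2} = \bar y^{3}$, one must produce $\bar t \in A^{0}/VA^{0}$ with $\bar t^{3} = \bar x$ and $\bar t^{2} = \bar y$.

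The idea is to exploit the cusp calculation of Proposition~\ref{CuspdRW} via the universal property of the saturated de Rham-Witt complex. Let $R_{0} = \F_p[x,y]/(x^{2}-y^{3})$; the hypothesis $\bar x^{2} = \bar y^{3}$ guarantees that the assignment $x \mapsto \bar x$, $y \mapsto \bar y$ defines a ring homomorphism $\phi_{0}\colon R_{0} \to A^{0}/VA^{0}$. By the universal property of Definition~\ref{def80}, $\phi_{0}$ extends uniquely to a morphism of strict Dieudonn\'e algebras $\Phi\colon \WOmega^{\ast}_{R_{0}} \to A^{\ast}$. Next, identifying $R_{0}$ with the subring of $\F_p[t]$ generated by $y=t^{2}$ and $x=t^{3}$, Proposition~\ref{CuspdRW} provides an isomorphism $\iota\colon \WOmega^{\ast}_{R_{0}} \xrightarrow{\sim} \WOmega^{\ast}_{\F_p[t]}$. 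The composite $\Phi \circ \iota^{-1}\colon \WOmega^{\ast}_{\F_p[t]} \to A^{\ast}$ corresponds, under the universal property applied to $\F_p[t]$, to a ring homomorphism $\psi\colon \F_p[t] \to A^{0}/VA^{0}$, and we shall set $\bar t := \psi(t)$.

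It remains to verify that this $\bar t$ has the required properties. By functoriality of the construction $R \mapsto \WOmega^{\ast}_{R}$, the Dieudonn\'e-algebra extension of the restriction $\psi|_{R_{0}}\colon R_{0} \to A^{0}/VA^{0}$ factors as $\WOmega^{\ast}_{R_{0}} \xrightarrow{\iota} \WOmega^{\ast}_{\F_p[t]} \xrightarrow{\Phi \circ \iota^{-1}} A^{\ast}$, which equals $\Phi$, the extension of $\phi_{0}$. By uniqueness of extensions in the adjunction, $\psi|_{R_{0}} = \phi_{0}$, so $\bar x = \phi_{0}(x) = \psi(t^{3}) = \bar t^{\,3}$ and analogously $\bar y = \bar t^{\,2}$, as desired.

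The entire argument is essentially formal once Proposition~\ref{CuspdRW} is available; the latter result, which encodes the fact that the saturated de Rham-Witt functor cannot distinguish the cusp from its normalization, supplies the substantive input. No further technical obstacle should arise; the only point requiring care is the diagram chase with universal properties in the last step, but this is completely formal.
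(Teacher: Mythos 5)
Your proof is correct and follows essentially the same route as the paper: reducedness via Lemma~\ref{lem27}, then classifying a cusp relation $\bar x^2=\bar y^3$ by a map from $\F_p[x,y]/(x^2-y^3)$, passing to saturated de Rham--Witt complexes by the universal property, and using the isomorphism of Proposition~\ref{CuspdRW} to factor the map through $\F_p[t]$, producing the required root $\bar t$. The only difference is that you spell out the adjunction diagram chase that the paper leaves implicit; the argument is the same.
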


\begin{proof}
Lemma \ref{lem27} asserts that $A^{0} / V A^{0}$ is reduced. Suppose we are given elements $x,y \in A^{0} / V A^{0}$ satisfying $x^2 = y^3$, classified
by an $\F_p$-algebra homomorphism $u\colon  R = \F_p[ x,y] / (x^2-y^3) \rightarrow A^{0} / V A^{0}$. Then we can identify $f$ with a map of strict Dieudonn\'{e} algebras
$v\colon  \WOmega^{\ast}_{R} \rightarrow A^{\ast}$. By virtue of Proposition \ref{CuspdRW}, the map $v$ factors uniquely as a composition
$$ \WOmega^{\ast}_{R} \simeq \WOmega^{\ast}_{\F_p[t]} \xrightarrow{v'} A^{\ast}.$$
It follows that $u$ factors as a composition $R \hookrightarrow \F_p[t] \xrightarrow{u'} A^{0} / V A^{0}$: that is, there exists an element
$t \in A^{0} / VA^{0}$ satisfying $x = t^3$ and $y = t^2$.
\end{proof}

Any commutative ring $R$ admits a universal map $R \to R^{\mathrm{sn}}$ to a seminormal ring by \cite[Theorem 4.1]{SwanSeminormal}; we refer to $R^{\mathrm{sn}}$ as the {\em seminormalization} of $R$.

\begin{corollary}
\label{dRWsn1}
Let $f\colon  R \to S$ be a map of $\F_p$-algebras which induces an isomorphism of seminormalizations $R^{\mathrm{sn}} \rightarrow S^{\mathrm{sn}}$. Then
the induced map $\rho\colon  \WOmega_{R}^{\ast} \rightarrow \WOmega_{S}^{\ast}$ is an isomorphism of strict Dieudonn\'{e} algebras.
\end{corollary}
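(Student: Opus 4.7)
The plan is to reduce the corollary to the statement that for any $\F_p$-algebra $R$, the natural map $R \to R^{\mathrm{sn}}$ induces an isomorphism $\WOmega_R^{\ast} \to \WOmega_{R^{\mathrm{sn}}}^{\ast}$ of strict Dieudonn\'e algebras. Once this is established, the corollary follows immediately by functoriality: both $\WOmega_R^{\ast} \to \WOmega_{R^{\mathrm{sn}}}^{\ast}$ and $\WOmega_S^{\ast} \to \WOmega_{S^{\mathrm{sn}}}^{\ast}$ are isomorphisms, and the hypothesis identifies the targets under $f$.

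To prove that $\WOmega_R^{\ast} \to \WOmega_{R^{\mathrm{sn}}}^{\ast}$ is an isomorphism, I would test both sides against an arbitrary strict Dieudonn\'e algebra $A^{\ast}$ using Yoneda. By the defining universal property recorded in Corollary~\ref{dRWLeftAdj}, we have natural bijections
\[
\Hom_{\FrobAlgComplete}(\WOmega_R^{\ast}, A^{\ast}) \simeq \Hom_{\CAlg_{\F_p}}(R, A^0/VA^0)
\]
and similarly with $R$ replaced by $R^{\mathrm{sn}}$. By Proposition~\ref{older}, the ring $A^0/VA^0$ is seminormal, so the universal property of the seminormalization (\cite{SwanSeminormal}) guarantees that precomposition with $R \to R^{\mathrm{sn}}$ induces a bijection
\[
\Hom_{\CAlg_{\F_p}}(R^{\mathrm{sn}}, A^0/VA^0) \xrightarrow{\sim} \Hom_{\CAlg_{\F_p}}(R, A^0/VA^0).
\]
Combining these two identifications shows that $\WOmega_R^{\ast} \to \WOmega_{R^{\mathrm{sn}}}^{\ast}$ represents the same functor on $\FrobAlgComplete$, hence is an isomorphism.

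Putting the pieces together, the square
\[
\xymatrix{
\WOmega_R^{\ast} \ar[r]^-{\rho} \ar[d] & \WOmega_S^{\ast} \ar[d] \\
\WOmega_{R^{\mathrm{sn}}}^{\ast} \ar[r]^-{\sim} & \WOmega_{S^{\mathrm{sn}}}^{\ast}
}
\]
has vertical arrows that are isomorphisms by the argument above, and the bottom horizontal arrow is an isomorphism by the hypothesis $R^{\mathrm{sn}} \xrightarrow{\sim} S^{\mathrm{sn}}$ together with the functoriality of $\WOmega$. Hence $\rho$ is an isomorphism as well. There is no serious obstacle here; the entire content is the combination of Proposition~\ref{older} with the universal property of the saturated de Rham-Witt complex, each of which is already in hand.
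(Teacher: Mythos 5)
Your proposal is correct and is essentially the paper's own argument: both proofs test against an arbitrary strict Dieudonn\'e algebra $A^{\ast}$, use Proposition~\ref{older} (seminormality of $A^{0}/VA^{0}$) together with the universal properties of the seminormalization and of $\WOmega$, and conclude by Yoneda. The only difference is organizational — you factor through the special case $S = R^{\mathrm{sn}}$ and then invoke functoriality, while the paper runs the same Hom comparison for general $f$ in a single diagram.
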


\begin{proof}
For any strict Dieudonn\'{e} algebra $A^{\ast}$, we have a commutative diagram
$$ \xymatrix{ \Hom( S^{\mathrm{sn}}, A^{0} / VA^{0}) \ar[r] \ar[d] & \Hom(R^{\mathrm{sn}}, A^{0} / VA^{0} ) \ar[d] \\
\Hom(S, A^{0} / V A^{0} ) \ar[r] \ar[d]^{\sim} & \Hom( R, A^{0} / V A^{0} ) \ar[d]^{\sim} \\
\Hom_{ \FrobAlg}( \WOmega_{S}^{\ast}, A^{\ast} ) \ar[r]  & \Hom_{ \FrobAlg}( \WOmega_{R}^{\ast}, A^{\ast} ) }$$
where the top vertical maps are bijective by Proposition \ref{older} and the top horizontal map is bijective by virtue of our
assumption that $f$ induces an isomorphism of seminormalizations. It follows that the bottom horizontal map is also bijective.
\end{proof}

We will show that Corollary \ref{dRWsn1} is, in some sense, optimal:

\begin{theorem}\label{dRWsn2}
Let $R$ be a commutative $\F_p$-algebra. Then the unit map $R \rightarrow \WOmega^{0}_{R} / V \WOmega^{0}_{R}$ exhibits
$\WOmega^{0}_{R} / V \WOmega^{0}_{R}$ as a seminormalization of $R$.
\end{theorem}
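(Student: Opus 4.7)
My plan is to reduce to the seminormal case via Corollary \ref{dRWsn1}, and then verify that the unit $R \to \WOmega^0_R / V \WOmega^0_R$ is a subintegral extension, so that seminormality of $R$ forces it to be an isomorphism. By Proposition \ref{older}, the ring $S := \WOmega^0_R/V\WOmega^0_R$ is seminormal, so the unit map factors through $R^{\mathrm{sn}}$ and yields a canonical ring map $g\colon R^{\mathrm{sn}} \to S$. Applying Corollary \ref{dRWsn1} to $R \to R^{\mathrm{sn}}$, I obtain an isomorphism of strict Dieudonn\'e algebras $\WOmega^{\ast}_R \simeq \WOmega^{\ast}_{R^{\mathrm{sn}}}$, and naturality of the unit identifies $g$ with the unit map $R^{\mathrm{sn}} \to \WOmega^0_{R^{\mathrm{sn}}}/V\WOmega^0_{R^{\mathrm{sn}}}$. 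Replacing $R$ by $R^{\mathrm{sn}}$, I may therefore assume $R$ is seminormal (hence reduced) and must show the unit $R \to S$ is an isomorphism.

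For this, I will use the following concrete realization of $\WOmega^0_R$. By the construction in the proof of Proposition \ref{dRWExists}, $\WOmega^{\ast}_R = \WSaturate(\Omega^{\ast}_{W(R)})$, and taking degree zero terms gives $\WOmega^0_R = \varprojlim_r M/V^r M$ for $M := \Saturate(\Omega^{\ast}_{W(R)})^0$. By Remark \ref{rem:satissubgroup}, $M$ sits naturally as a subring of $W(R)[F^{-1}]$; since $R$ is reduced, the Witt-vector Frobenius on $W(R)$ is injective, so $W(R)[F^{-1}]$ embeds into $W(R^{\mathrm{perf}})$. Composing gives an inclusion $M \hookrightarrow W(R^{\mathrm{perf}})$ under which the abstract Dieudonn\'e Frobenius and Verschiebung on $M$ agree with the classical Witt-vector Frobenius and shift operator on $W(R^{\mathrm{perf}})$ (by uniqueness of $V$ given $FV=p=VF$). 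Finally, Proposition \ref{prop17} supplies a natural isomorphism $M/VM \xrightarrow{\sim} S$, so every element of $S$ admits a representative in $M$.

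With these identifications in hand, the two needed properties become elementary. For injectivity of $R \to S$: if $r \in R$ has $[r] \in VM$, then viewing $[r]$ inside $W(R^{\mathrm{perf}})$ we get $[r] \in V W(R^{\mathrm{perf}})$, and since the latter consists of Witt vectors with vanishing zeroth component while the Teichm\"uller $[r]$ has zeroth component $r$, we conclude $r=0$. For pure inseparability: given $s \in S$, lift $s$ to $y \in M \subseteq W(R)[F^{-1}]$; by the definition of $W(R)[F^{-1}]$ there exists $n \geq 0$ with $F^n(y) \in W(R)$, and reducing modulo $V$ (using that the Witt Frobenius descends to the $p$-th power on $S = W(S)/VW(S)$) we obtain $s^{p^n} = \overline{F^n(y)}$, which lies in the image of $R \to S$. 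Consequently $s$ satisfies the monic equation $t^{p^n} - s^{p^n} = 0$ over $R$, so $R \hookrightarrow S$ is an integral, purely inseparable injection of reduced rings, i.e., a subintegral extension. Since $R$ is seminormal by hypothesis, any such subintegral extension is an isomorphism, completing the proof.

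The main obstacle is justifying that the embedding $M \hookrightarrow W(R^{\mathrm{perf}})$ carries the abstract Dieudonn\'e operators $F$ and $V$ to the classical Witt-vector Frobenius and shift; once this compatibility is established, the injectivity and pure inseparability arguments reduce to inspecting the zeroth Witt component, and the universal characterization of seminormality does the rest.
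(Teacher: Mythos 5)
The reduction to the seminormal case, the identification of $\WOmega^{0}_{R}$ with the $V$-adic completion of $M=\Saturate(\Omega^{\ast}_{W(R)})^{0}\subseteq W(R)[F^{-1}]\subseteq W(R^{\mathrm{perf}})$, the injectivity of the unit map via the zeroth Witt component, and the observation that every $s\in S:=\WOmega^{0}_{R}/V\WOmega^{0}_{R}$ satisfies $s^{p^{n}}\in\mathrm{im}(R\to S)$ (hence is integral over $R$) are all correct; in fact your integrality argument is a pleasant shortcut compared with the paper's route through Propositions \ref{techprop2} and \ref{techprop3}. The gap is in the very last step: you claim that an injective, integral, elementwise purely inseparable extension of reduced $\F_p$-algebras is subintegral, and then invoke seminormality of $R$ to conclude it is an isomorphism. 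This implication is false. Subintegrality in Swan's sense requires the induced residue field extensions to be \emph{isomorphisms}, whereas your argument only produces purely inseparable residue field extensions; seminormality rules out proper subintegral extensions but not proper purely inseparable ones (that stronger closure property is weak normality, which is strictly stronger in characteristic $p$). Concretely, $\F_p[x]\subseteq \F_p[x^{1/p}]$ (or $\F_p(t^p)\subseteq\F_p(t)$) is injective, integral, with $p$-th powers landing in the subring, both rings are reduced and the subring is even regular, hence seminormal --- yet the inclusion is not an isomorphism. So nothing you have established about $R\to S$ prevents it from being an extension of this kind, and \cite[Lemma 2.6]{SwanSeminormal} cannot be applied.

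What is missing is precisely the paper's Proposition \ref{techprop1}: every ring homomorphism $R\to k$ into a field of characteristic $p$ factors (uniquely) through the unit $R\to S$. This is what upgrades ``purely inseparable'' to ``bijective on field-valued points with trivial residue extensions,'' i.e.\ to subnormality in Swan's sense, after which his Lemma 2.6 applies. Its proof is not a Witt-vector computation: it uses that the functor $R\mapsto \WOmega^{0}_{R}/V\WOmega^{0}_{R}$ commutes with filtered colimits and is the identity on smooth algebras over perfect rings (Theorem \ref{theo73}), together with the fact that any field of characteristic $p$ is a filtered colimit of smooth $\F_p$-algebras, so that $\Psi(k)\simeq k$; uniqueness of the factorization then comes from the isomorphism on perfections (Corollary \ref{elwosecor}). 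Your proposal would be repaired by adding this field-point argument (or some substitute establishing triviality, not just pure inseparability, of the residue field extensions), but as written the final deduction fails.
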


\begin{remark}
Recall that we have a pair of adjoint functors
\[ \xymatrix{ \CAlg_{ \F_p} \ar@<0.6ex>[rrr]^-{R \mapsto \WOmega^{\ast}_{R} } & & & \FrobAlgComplete \ar@<0.6ex>[lll]^-{ A^{0} / V A^{0}  \mapsfrom A^{\ast} } }\]
(see Corollary \ref{dRWLeftAdj}). It follows from formal categorical considerations that this adjunction restricts to an equivalence
of categories $\calC \simeq \calD$, where $\calC \subseteq \CAlg_{\F_p}$ is the full subcategory spanned by those commutative
$\F_p$-algebras $R$ for which the unit map $R \rightarrow \WOmega^{0}_{R} / V \WOmega^{0}_{R}$ is an isomorphism,
and $\calD \subseteq \FrobAlgComplete$ is the full subcategory spanned by those strict Dieudonn\'{e} algebras $A^{\ast}$ for which the counit map
$\WOmega^{\ast}_{ A^{0} / V A^{0} } \rightarrow A^{\ast}$ is an isomorphism. Using Theorem \ref{dRWsn2}, we see that a commutative
$\F_p$-algebra belongs to $\calC$ if and only if it is seminormal. It then follows from Proposition \ref{older} that
the functor $A^{\ast} \mapsto A^{0} / V A^{0}$ takes values in $\calC$, and from Corollary \ref{dRWsn1} that
the functor $\WOmega^{\ast}$ takes values in $\calD$. In particular, the inclusion functor $\calC \hookrightarrow \CAlg_{\F_p}$
admits a left adjoint (given by $R \mapsto \WOmega^{0}_{R} / V \WOmega^{0}_{R} \simeq R^{\mathrm{sn}}$), and
the inclusion functor $\calD \hookrightarrow \FrobAlgComplete$ admits a right adjoint (given by
$A^{\ast} \mapsto \WOmega^{\ast}_{A^{0} / V A^{0} }$). In other words, the adjunction between $\CAlg_{\F_p}$
and $\FrobAlgComplete$ is {\it idempotent}.
\end{remark}

\begin{remark}
Theorem~\ref{dRWsn2} gives a clean and intrinsic description of the seminormalization $R^{\mathrm{sn}}$ of an $\F_p$-algebra $R$ in terms of the
ring of Witt vectors $W(R)$. Assume that $R$ is reduced, so that $W(R)$ is $p$-torsion-free.
For each $n \geq 0$, let $W(R)^{(n)}$ denote the subring of $W(R)$ consisting of those elements $f$ which satisfy
$$ df \in p^{n}  \big(\Omega^1_{W(R)}/\mathrm{torsion}\big).$$
Then the Witt vector Frobenius $F$ carries $W(R)^{(n)}$ into $W(R)^{(n+1)}$, and the Verschiebung $V$ carries $W(R)^{(n+1)}$ into $W(R)^{(n)}$. 
In particular, the colimit 
\[ W(R)^{(\infty)} := \varinjlim_n \Big(W(R)^{(0)} \xrightarrow{F} W(R)^{(1)} \xrightarrow{F} W(R)^{(2)} \xrightarrow{F} ...\Big)\]
comes equipped with an endomorphism $V\colon W(R)^{(\infty)} \to W(R)^{(\infty)}$. Unwinding definitions, we have $W(R)^{(\infty)} \simeq \Saturate(\Omega^*_{W(R)})^0$, so that we have an isomorphism $R^{\mathrm{sn}} \simeq W(R)^{(\infty)}/VW(R)^{(\infty)}$ by Theorem~\ref{dRWsn2} and 
Remark~\ref{rem:satissubgroup}. 
\end{remark}

\subsection{The Proof of Theorem \ref{dRWsn2}}\label{sec7sub6}

We begin by introducing a (temporary) bit of notation.

\begin{notation}
For every commutative $\F_p$-algebra $R$, we let $\Psi(R)$ denote the commutative $\F_p$-algebra $\WOmega^{0}_{R} / V \WOmega^{0}_{R}$,
so that the definition of the saturated de~Rham--Witt complex supplies a comparison map $u_{R}\colon  R \rightarrow \Psi(R)$.
\end{notation}

It follows from Proposition \ref{older} that for any commutative $\F_p$-algebra $R$, the $R$-algebra $\Psi(R)$ is seminormal. We wish to show that
$u_{R}$ exhibits $\Psi(R)$ as a seminormalization of $R$. The proof proceeds in several steps.

\begin{remark}\label{elwose1}
By virtue of Corollary~\ref{cor75}, the functor $R \mapsto \Psi(R)$ commutes with filtered colimits.
\end{remark}

\begin{remark}\label{elwose2}
Let $k$ be a perfect $\F_p$-algebra. Then, for any smooth $k$-algebra $R$, the unit map $u_{R}\colon  R \rightarrow \Psi(R)$ is an isomorphism
(Proposition \ref{prop76}). In particular, if $R$ is perfect, then the map $u_R\colon  R \rightarrow \Psi(R)$ is an isomorphism.
\end{remark}

\begin{lemma}\label{elwose}
Let $R$ be a commutative $\F_p$-algebra and let $\varphi\colon  R \rightarrow R$ be the Frobenius endomorphism.
Then $\Psi( \varphi)\colon  \Psi(R) \rightarrow \Psi(R)$ is the Frobenius endomorphism of $\Psi(R)$.
\end{lemma}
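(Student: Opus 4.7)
The plan has three stages: first verify the claim when $R$ is a polynomial $\F_p$-algebra, then reduce the general case by choosing a surjection from such an algebra, and finally argue that the resulting partial agreement of the two endomorphisms spreads to all of $\Psi(R)$.

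\medskip

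For $S$ a polynomial $\F_p$-algebra in any set of variables, Proposition~\ref{prop76} handles the finitely-generated case, and Remark~\ref{elwose1} extends it to arbitrary $S$ via filtered colimits; the conclusion is that $u_S\colon S \to \Psi(S)$ is an isomorphism. Naturality of $u$ gives $\Psi(\varphi_S) \circ u_S = u_S \circ \varphi_S$, and because $u_S$ is a homomorphism of $\F_p$-algebras it commutes with Frobenius, so $\varphi_{\Psi(S)} \circ u_S = u_S \circ \varphi_S$ as well. Cancelling the isomorphism $u_S$ yields $\Psi(\varphi_S) = \varphi_{\Psi(S)}$ for all such $S$.

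\medskip

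For arbitrary $R$, choose a surjection $\pi\colon S \twoheadrightarrow R$ with $S$ polynomial (for concreteness, $S = \F_p[x_r : r \in R]$ with $x_r \mapsto r$). Applying $\Psi$ to the naturality square $\pi \circ \varphi_S = \varphi_R \circ \pi$ and combining with the first stage and the naturality of Frobenius on $\F_p$-algebras yields
\[
\Psi(\varphi_R) \circ \Psi(\pi) \;=\; \Psi(\pi) \circ \Psi(\varphi_S) \;=\; \Psi(\pi) \circ \varphi_{\Psi(S)} \;=\; \varphi_{\Psi(R)} \circ \Psi(\pi),
\]
so $\Psi(\varphi_R)$ and $\varphi_{\Psi(R)}$ agree on the image of $\Psi(\pi) \subseteq \Psi(R)$.

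\medskip

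The main obstacle will be the final step: $\Psi$ does not preserve surjections, so the image of $\Psi(\pi)$ need not exhaust $\Psi(R)$. For instance, for the cusp $R = \F_p[x,y]/(x^2-y^3)$ we have $\Psi(R) \cong \F_p[t]$ by Proposition~\ref{CuspdRW}, yet the image of $\Psi$ applied to the defining surjection $\F_p[x,y] \twoheadrightarrow R$ is only $\F_p[t^2, t^3] \subsetneq \F_p[t]$. To bridge this gap I will exploit the fact that $\Psi(R)$ is reduced (Lemma~\ref{lem27}) via the elementary identity that in a reduced $\F_p$-algebra $A$, any $u,v \in A$ satisfying $u^2 = v^2$ and $u^3 = v^3$ coincide: indeed $(u-v)u^2 = u^3 - u^2 v = v^3 - u^2 v = v(v^2 - u^2) = 0$ and similarly $(u-v)v^2 = 0$, so $((u-v)u)^2 = (u-v)^2 u^2 = (u-v)\cdot (u-v)u^2 = 0$ forces $(u-v)u = 0$ by reducedness, likewise $(u-v)v = 0$, and therefore $(u-v)^2 = (u-v)u - (u-v)v = 0$, giving $u=v$. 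Applying this with $u = \Psi(\varphi_R)(y)$ and $v = y^p = \varphi_{\Psi(R)}(y)$ for $y \in \Psi(R)$, one reduces to exhibiting, for every $y$, powers of $y$ lying in the image of $\Psi(\pi)$ where the two ring maps already agree; the seminormality of $\Psi(R)$ (Proposition~\ref{older}) and the iterated subintegral description of $\Psi(R)$ over the image of $R$ (every element arises through successive adjunction of roots satisfying quadratic/cubic relations over the ring below) will provide exactly this control and finish the proof.
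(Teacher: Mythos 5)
Your first two stages are correct: $u_S$ is an isomorphism for polynomial rings (Remark~\ref{elwose2} plus Remark~\ref{elwose1}), so $\Psi(\varphi_S)$ is the Frobenius there, and naturality shows the two endomorphisms of $\Psi(R)$ agree on the image of $\Psi(\pi)$, which is the image of $u_R$. Your reduced-ring identity is also fine, and it does show agreement at any $y$ whose square and cube lie in a subring where agreement is already known. The gap is the claim you lean on to finish: that every element of $\Psi(R)$ is reached from $\mathrm{im}(u_R)$ by successively adjoining elements $t$ with $t^2,t^3$ in the ring below. Nothing you cite gives this. Proposition~\ref{older} says only that $\Psi(R)$ is seminormal as an abstract ring; it places no upper bound on $\Psi(R)$ relative to $\mathrm{im}(u_R)$, and at this stage nothing rules out $\Psi(R)$ being far larger than the seminormalization of $\mathrm{im}(u_R)$ --- a priori it need not even be integral over it (integrality of $u_R$ is Corollary~\ref{techprop3}, proved much later). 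Indeed, the ``iterated subintegral description of $\Psi(R)$ over the image of $R$'' is essentially Theorem~\ref{dRWsn2} itself, whose proof in the paper runs through Corollaries~\ref{elwosecor} and~\ref{elwosecor2}, and hence through the very lemma you are proving. So the last step is either circular (relative to the paper's development) or simply unproven as a standalone argument; it is the hard part, and the sketch does not supply it.

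For contrast, the paper's proof sidesteps all of this. By the universal property defining $\WOmega^{\ast}_{R}$, the map $\Psi(\varphi)$ is induced by the \emph{unique} Dieudonn\'e algebra endomorphism of $\WOmega^{\ast}_{R}$ compatible with $\varphi$ on $\Psi(R)$; so it suffices to exhibit one Dieudonn\'e algebra endomorphism inducing the Frobenius on $\Psi(R)$. The map $f(x)=p^{n}F(x)$ for $x\in\WOmega^{n}_{R}$ is such an endomorphism: it is a graded ring map commuting with $d$ and $F$, and in degree $0$ it is $F$, which induces the Frobenius on $\Psi(R)=\WOmega^{0}_{R}/V\WOmega^{0}_{R}$ because $Fx\equiv x^{p}\pmod{V\WOmega^{0}_{R}}$. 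If you want to salvage your approach, you would need an independent argument bounding $\Psi(R)$ by subintegral extensions of $\mathrm{im}(u_R)$, which amounts to reproving the main theorem; the universal-property argument is the intended shortcut.
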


\begin{proof}
For every map of Dieudonn\'{e} algebras $f\colon  \WOmega_{R}^{\ast} \rightarrow \WOmega_{R}^{\ast}$,
let $f_0\colon  \Psi(R) \rightarrow \Psi(R)$ be the induced map. From the universal property of the saturated de~Rham--Witt complex $\WOmega_{R}^{\ast}$, we see that
there is at most one map of Dieudonn\'{e} algebras $f\colon  \WOmega_{R}^{\ast} \rightarrow \WOmega_{R}^{\ast}$
for which the diagram
$$ \xymatrix{ R \ar[r]^-{\varphi} \ar[d]^{u_R} & R \ar[d]^{ u_R } \\
\Psi(R) \ar[r]^-{f_0} & \Psi(R)}$$
commutes, and $\Psi( \varphi )$ is then given by $f_0$. Consequently, to show that $\Psi(\varphi)$ coincides with the Frobenius map on $\Psi(R)$, it will
suffice to construct a map of Dieudonn\'{e} algebras $f\colon  \WOmega_{R}^{\ast} \rightarrow \WOmega_{R}^{\ast}$ which induces
the Frobenius on $\Psi(R)$. We conclude by observing that such a map exists, given by $f(x) = p^{n} F(x)$ for $x \in \WOmega^{n}_{R}$. 
\end{proof}

\begin{corollary}\label{elwosecor}
Let $R$ be any $\F_p$-algebra. Then the canonical map $u_R\colon  R \rightarrow \Psi(R)$ induces an isomorphism of perfections
$R^{1 / p^{\infty} } \rightarrow \Psi(R)^{1/ p ^{\infty} }$.
\end{corollary}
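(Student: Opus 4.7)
The plan is to reduce to the case of a perfect $\F_p$-algebra, where the conclusion is already known by Remark \ref{elwose2}. Concretely, I would apply the functor $\Psi$ to the Frobenius tower defining the perfection, and exploit two key properties established above: that $\Psi$ commutes with filtered colimits (Remark \ref{elwose1}), and that $\Psi$ applied to the Frobenius endomorphism of $R$ is the Frobenius endomorphism of $\Psi(R)$ (Lemma \ref{elwose}).

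First, I would write $R^{1/p^{\infty}} = \varinjlim( R \xrightarrow{\varphi} R \xrightarrow{\varphi} R \to \cdots)$ as a filtered colimit along the Frobenius. Applying $\Psi$ and using Remark \ref{elwose1} yields a canonical isomorphism
\[ \Psi( R^{1/p^{\infty}} ) \simeq \varinjlim\left( \Psi(R) \xrightarrow{\Psi(\varphi)} \Psi(R) \xrightarrow{\Psi(\varphi)} \Psi(R) \to \cdots \right). \]
By Lemma \ref{elwose}, the transition map $\Psi(\varphi)$ coincides with the Frobenius endomorphism of $\Psi(R)$, so the right-hand colimit is precisely $\Psi(R)^{1/p^{\infty}}$. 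This gives a natural identification $\Psi(R^{1/p^{\infty}}) \simeq \Psi(R)^{1/p^{\infty}}$.

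Next, since $R^{1/p^{\infty}}$ is a perfect $\F_p$-algebra, Remark \ref{elwose2} asserts that $u_{R^{1/p^{\infty}}} \colon R^{1/p^{\infty}} \to \Psi(R^{1/p^{\infty}})$ is an isomorphism. Composing with the identification of the previous paragraph, we obtain an isomorphism $R^{1/p^{\infty}} \simeq \Psi(R)^{1/p^{\infty}}$. The final step is a naturality check: applying $u$ to the canonical map $R \to R^{1/p^{\infty}}$ produces a commutative square, and chasing a homogeneous element $x \in R$ (viewed at the first stage of the colimit) around the diagram shows that the composite isomorphism indeed agrees with the map $R^{1/p^{\infty}} \to \Psi(R)^{1/p^{\infty}}$ induced by $u_R$ on perfections.

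The argument is essentially formal given the three preceding statements, and I do not anticipate any serious obstacle. The only delicate point is the diagram chase in the last step, ensuring that the two \emph{a priori} different descriptions of the map on perfections agree; this is routine once one keeps careful track of which Frobenius one is using on each side.
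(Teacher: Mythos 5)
Your proposal is correct and is precisely the paper's argument: the paper's proof is the one-line "Combine Lemma \ref{elwose} with Remarks \ref{elwose1} and \ref{elwose2}," and your write-up simply spells out that combination (perfection as a Frobenius colimit, $\Psi$ commuting with filtered colimits, $\Psi(\varphi)$ being the Frobenius of $\Psi(R)$, and Remark \ref{elwose2} applied to the perfect ring $R^{1/p^\infty}$), with the naturality check handled as you describe.
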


\begin{proof}
Combine Lemma \ref{elwose} with Remarks \ref{elwose1} and \ref{elwose2}.
\end{proof}

\begin{corollary}\label{elwosecor2}
Let $R$ be a commutative $\F_p$-algebra and let $S$ be a reduced $R$-algebra. If there
exists an $R$-algebra homomorphism $f\colon  \Psi(R) \rightarrow S$, then $f$ is unique.
\end{corollary}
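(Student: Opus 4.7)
The plan is to deduce the uniqueness by combining Corollary \ref{elwosecor} (which says the perfection of $u_R$ is an isomorphism) with the fact that $S$ is reduced and of characteristic $p$.

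First I would observe that since $R$ is an $\F_p$-algebra, so is any $R$-algebra $S$; in particular, $S$ is an $\F_p$-algebra and the Frobenius endomorphism of $S$ is injective (because $S$ is reduced). Suppose we are given two $R$-algebra homomorphisms $f, g\colon \Psi(R) \to S$. Fix an element $x \in \Psi(R)$. By Corollary \ref{elwosecor}, the map $u_R\colon R \to \Psi(R)$ induces an isomorphism on perfections, so there exists an integer $n \geq 0$ such that $x^{p^n}$ lies in the image of $u_R$, say $x^{p^n} = u_R(a)$ for some $a \in R$.

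Since $f$ and $g$ are $R$-algebra homomorphisms, $f(u_R(a)) = g(u_R(a))$, hence $f(x)^{p^n} = g(x)^{p^n}$. Working in characteristic $p$, this yields
\[
(f(x) - g(x))^{p^n} = f(x)^{p^n} - g(x)^{p^n} = 0
\]
in $S$. Since $S$ is reduced, $f(x) = g(x)$. As $x \in \Psi(R)$ was arbitrary, we conclude $f = g$.

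The key input is Corollary \ref{elwosecor}; there is no real obstacle, since the rest is just the standard fact that a map into a reduced $\F_p$-algebra is determined by its restriction to the perfection of the source. No part of the argument requires further analysis.
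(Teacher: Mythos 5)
Your proof is correct and is essentially the paper's argument: the paper passes to perfections of both $\Psi(R)$ and $S$, applies Corollary \ref{elwosecor}, and then uses that $S \to S^{1/p^\infty}$ is injective because $S$ is reduced, which is exactly the element-wise reasoning you give (some $p^n$-th power of each $x \in \Psi(R)$ lies in the image of $u_R$, and Frobenius is injective on the reduced ring $S$). The same two inputs — Corollary \ref{elwosecor} and reducedness of $S$ — carry both versions, so there is nothing to add.
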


\begin{proof}
Suppose that we have a pair of $R$-algebra homomorphisms $f,g\colon  \Psi(R) \rightarrow S$. Passing
to perfections, we obtain a pair of $R^{1/p^{\infty}}$-algebra homomorphisms $f^{1/p^{\infty}}, g^{1/p^{\infty}}\colon  \Psi(R)^{1/p^{\infty}} \rightarrow S^{1/p^{\infty}}$.
It follows from Corollary \ref{elwosecor} that $f^{1/p^{\infty}} = g^{1/p^{\infty}}$, so that $f$ and $g$ agree after composition with the map $S \rightarrow S^{1/p^{\infty}}$.
Since $S$ is reduced, this map is injective; it follows that $f = g$.
\end{proof}

\begin{proposition}\label{techprop1}
Let $R$ be a commutative $\F_p$-algebra and let $k$ be a field of characteristic $p$. Then every ring homomorphism $f\colon  R \to k$ factors uniquely
through $u_{R}\colon  R \rightarrow \Psi(R)$.
\end{proposition}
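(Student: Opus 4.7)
The plan is to reduce the proposition almost immediately to Theorem~\ref{theo73} (which identifies $\Psi(R)$ with $R$ for regular Noetherian $R$) together with the naturality of the unit $u$. The point is that a field of characteristic $p$ is itself a regular Noetherian $\F_p$-algebra, so $u_k\colon k \to \Psi(k)$ is an isomorphism, and then the factorization $\tilde f\colon \Psi(R) \to k$ is forced by functoriality.

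For uniqueness, I will apply Corollary~\ref{elwosecor2} directly. The field $k$ is reduced, and any two $R$-algebra homomorphisms $\Psi(R) \to k$ would give two $R$-algebra structures on $k$ factoring $f$ through $u_R$; the corollary shows that at most one such factorization exists.

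For existence, the argument goes as follows. Since $k$ is a field of characteristic $p$, it is a regular Noetherian $\F_p$-algebra, so Theorem~\ref{theo73} applies: the map $\nu\colon \Omega^{\ast}_{k} \to \WrOmega{1}^{\ast}_{k}$ is an isomorphism of cochain complexes. Taking components in degree zero and recalling that $\Psi(k) = \WrOmega{1}^{0}_{k}$ and that the degree-zero component of $\nu$ is the unit $u_k$, we conclude that $u_k\colon k \to \Psi(k)$ is an isomorphism of rings. The functor $R \mapsto \Psi(R)$ (obtained by composing $\WOmega^{\ast}_{(-)}$ of Corollary~\ref{dRWLeftAdj} with the functor $A^{\ast} \mapsto A^{0}/VA^{0}$) then produces a ring homomorphism $\Psi(f)\colon \Psi(R) \to \Psi(k)$, and naturality of $u$ gives $\Psi(f) \circ u_R = u_k \circ f$. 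Setting $\tilde f := u_k^{-1} \circ \Psi(f)$ yields a ring homomorphism $\Psi(R) \to k$ satisfying $\tilde f \circ u_R = f$, as desired.

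There is essentially no obstacle in this argument once Theorem~\ref{theo73} is invoked; the proof amounts to transporting the equality $\Psi(k) = k$ along $f$ via naturality. The substantive work, namely the identification $u_k\colon k \xrightarrow{\sim} \Psi(k)$ for all fields $k$ of characteristic $p$, was already carried out in \S\ref{dRComp} via Popescu's smoothing theorem, Proposition~\ref{prop76}, and the Cartier isomorphism.
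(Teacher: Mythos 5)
Your proof is correct, and structurally it is the same as the paper's: show that $u_k$ is an isomorphism, transport along the naturality square for $u$, and get uniqueness from Corollary~\ref{elwosecor2}. The only real difference is how you justify the key intermediate fact that $u_k\colon k \to \Psi(k)$ is an isomorphism. You quote Theorem~\ref{theo73} for the regular Noetherian ring $k$ (and correctly read off the degree-zero component of $\nu$ as $u_k$, using $\W_1\Omega^0_k = \WOmega^0_k/V\WOmega^0_k$); this is legitimate and non-circular, since Theorem~\ref{theo73} is established earlier in the paper, but it is a heavier input whose given proof rests on Popescu's smoothing theorem. The paper instead observes that a field of characteristic $p$ is a filtered colimit of smooth $\F_p$-algebras (by generic smoothness over the perfect field $\F_p$, no Popescu needed) and then applies Remarks~\ref{elwose1} and~\ref{elwose2} (i.e.\ Proposition~\ref{prop76} plus compatibility with filtered colimits). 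So the paper's route keeps the dependence confined to the smooth case and elementary limit arguments, while yours trades that for a single citation of the stronger regular Noetherian comparison; both are valid.
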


\begin{proof}
We have a commutative diagram
$$ \xymatrix{ R \ar[r]^-{u_{R} } \ar[d]^-{f} & \Psi(R) \ar[d] \\
k \ar[r]^-{ u_k} & \Psi(k). }$$
Since $k$ can be written as a filtered colimit of smooth $\F_p$-algebras, the map $u_k$ is an isomorphism (Remarks \ref{elwose1} and \ref{elwose2}).
It follows that $f$ factors through $u_R$; the uniqueness of the factorization follows from Corollary \ref{elwosecor2}.
\end{proof}

\begin{proposition}\label{techprop2}
Let $R$ be a finitely generated reduced $\F_p$-algebra, let $K(R)$ denote the total ring of fractions of $R$ (that is, the product of the residue fields
of the generic points of $\Spec(R)$), and let $\overline{R}$ be the integral closure of $R$ in $K(R)$. Then the canonical map
$R \rightarrow \overline{R}$ factors uniquely through $u_{R}$, and the
resulting map $\Psi(R) \to
\overline{R}$ is injective.
\end{proposition}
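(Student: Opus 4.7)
The plan is to leverage Proposition~\ref{techprop1} together with Krull's presentation of normal Noetherian domains as intersections of DVRs, reducing the assertion to the regular case handled by Theorem~\ref{theo73}. Uniqueness is immediate: $\overline{R}$ embeds in $K(R) = \prod_i K_i$, a finite product of fields, so $\overline{R}$ is reduced, and Corollary~\ref{elwosecor2} forces any $R$-algebra homomorphism $\Psi(R) \to \overline{R}$ extending $R \to \overline{R}$ to be unique.

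For existence, let $\mathfrak{p}_1, \ldots, \mathfrak{p}_n$ denote the minimal primes of $R$, let $K_i = \mathrm{Frac}(R/\mathfrak{p}_i)$, and let $\overline{R_i}$ be the integral closure of $R/\mathfrak{p}_i$ in $K_i$, so that $\overline{R} = \prod_i \overline{R_i}$ sits inside $K(R) = \prod_i K_i$. Applying Proposition~\ref{techprop1} to each composition $R \to R/\mathfrak{p}_i \to K_i$, I first obtain a canonical $R$-algebra map $\Psi(R) \to K(R)$. The task is then to show that this map lands in $\overline{R}$; working one factor at a time, it suffices to prove that the image of $\Psi(R) \to K_i$ lies in $\overline{R_i}$.

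At this point I would invoke two classical ingredients. First, by Nagata's theorem on finiteness of normalization over a field, each $\overline{R_i}$ is a module-finite normal extension of $R/\mathfrak{p}_i$ over $\F_p$; in particular it is Noetherian normal, so Krull's theorem gives $\overline{R_i} = \bigcap_{\mathfrak{q}} (\overline{R_i})_{\mathfrak{q}}$, the intersection ranging over height-one primes, with each $V = (\overline{R_i})_{\mathfrak{q}}$ a DVR. Second, a DVR is a regular Noetherian $\F_p$-algebra, so Theorem~\ref{theo73} yields that $u_V\colon V \to \Psi(V)$ is an isomorphism. Functoriality of $\Psi$ applied to $R \to V$ therefore produces a map $\Psi(R) \to \Psi(V) \simeq V$ whose composition with $u_R$ recovers $R \to V$. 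Composing further with $V \hookrightarrow K_i$ gives an $R$-algebra map $\Psi(R) \to K_i$, which by Corollary~\ref{elwosecor2} must coincide with the one built in the previous paragraph. Hence the image of $\Psi(R)$ in $K_i$ lies in every such $V$, and therefore in their intersection $\overline{R_i}$.

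The main obstacle is not any individual estimate but rather keeping the bookkeeping of uniqueness consistent, so that the ``local'' factorizations through DVRs match the ``global'' factorization through $K(R)$. This is handled uniformly by the strong uniqueness provided by Corollary~\ref{elwosecor2} (ultimately via the isomorphism on perfections in Corollary~\ref{elwosecor}); no new calculation is required beyond the classical commutative-algebra inputs of Nagata's finiteness theorem and Krull's theorem on normal Noetherian domains.
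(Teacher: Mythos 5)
Your argument is correct and follows the paper's proof essentially step for step: produce the map $\Psi(R) \to \kappa(x)$ at each generic point via Proposition~\ref{techprop1}, use finiteness of normalization to write the (Noetherian, integrally closed) normalization as the intersection of its localizations at height-one primes, observe that $u$ is an isomorphism for these DVRs, and then use the uniqueness statement of Corollary~\ref{elwosecor2} to see that the map into $\kappa(x)$ factors through each DVR and hence through their intersection, finally taking the product over generic points. The only cosmetic difference is that you invoke the degree-zero part of Theorem~\ref{theo73} for the DVRs, while the paper instead notes that such a DVR is a filtered colimit of smooth $\F_p$-algebras and applies Remarks~\ref{elwose1} and~\ref{elwose2}; the two routes rest on the same underlying input.
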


\begin{proof}
Let $x$ be a generic point of $\Spec(R)$. Using Proposition \ref{techprop1}, we see that there exists
an $R$-algebra homomorphism $f_{x}\colon  \Psi(R) \rightarrow \kappa(x)$. Let $A(x)$ denote the integral closure of $R$ in
$\kappa(x)$. Then $A(x)$ is an integrally closed Noetherian domain, and can therefore be written as an intersection
$\bigcap_{\mathfrak{p}} A(x)_{\mathfrak{p}}$ where $\mathfrak{p}$ ranges over the height $1$ primes of $A(x)$.
Note that each $A(x)_{\mathfrak{p}}$ is regular and can therefore be realized as a filtered colimit of smooth $\F_p$-algebras.
We have a commutative diagram
$$ \xymatrix{ R \ar[r]^-{ u_R} \ar[d] & \Psi(R) \ar[d] \\
A(x)_{\mathfrak{p} } \ar[r]^-{ u_{ A(x)_{\mathfrak{p}}} } & \Psi( A(x)_{\mathfrak{p} }) }$$
where the bottom horizontal map is an isomorphism (Remarks \ref{elwose1} and \ref{elwose2}).
It follows that there exists an $R$-algebra homomorphism $g\colon  \Psi(R) \rightarrow A(x)_{\mathfrak{p}}$.
Using Corollary \ref{elwosecor2}, we see that the composition
$\Psi(R) \xrightarrow{g} A(x)_{\mathfrak{p}} \hookrightarrow \kappa(x)$ coincides with $f_x$.
It follows that $f_x$ factors through the valuation ring $A(x)_{\mathfrak{p}}$. Allowing $\mathfrak{p}$
to vary, we conclude that $f_x$ factors through the subalgebra $A(x) \subseteq \kappa(x)$.
Forming a product as $x$ varies, we obtain an $R$-algebra homomorphism $\Psi(R) \rightarrow \overline{R}$;
by virtue of Corollary \ref{elwosecor2}, this homomorphism is unique.
Injectivity follows by comparison with the perfect case and
Corollary~\ref{elwosecor}. 
\end{proof}

\begin{corollary}\label{techprop3}
Let $R$ be a commutative $\F_p$-algebra. Then the unit map $u_{R}\colon  R \rightarrow \Psi(R)$ is integral.
\end{corollary}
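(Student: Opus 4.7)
The plan is to prove integrality very directly: for any $x \in \Psi(R)$, I will produce an integer $n \geq 0$ and an element $y \in R$ with $x^{p^n} = u_R(y)$, so that $x$ satisfies the monic polynomial $T^{p^n} - y \in R[T]$. The two inputs are Corollary \ref{elwosecor}, which supplies an isomorphism $u_R^{1/p^\infty}\colon R^{1/p^\infty} \to \Psi(R)^{1/p^\infty}$ after passing to perfections, and Proposition \ref{older} (together with Lemma \ref{lem27}), which guarantees that $\Psi(R)$ is seminormal and in particular reduced.

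Concretely, I would proceed as follows. Let $\tilde{x}$ denote the image of $x$ in the perfection $\Psi(R)^{1/p^\infty}$. Using the surjectivity in Corollary \ref{elwosecor}, choose a preimage $\tilde{y} \in R^{1/p^\infty}$ with $u_R^{1/p^\infty}(\tilde{y}) = \tilde{x}$. Unwinding the definition of $R^{1/p^\infty}$ as the filtered colimit $\varinjlim \bigl( R \xrightarrow{\phi} R \xrightarrow{\phi} R \to \cdots \bigr)$ along Frobenius, the element $\tilde{y}$ is represented by some $y \in R$ sitting at a finite level $n$; by construction $\tilde{y}^{p^n}$ equals the image of $y$ from level $0$ in $R^{1/p^\infty}$. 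Applying $u_R^{1/p^\infty}$, we obtain $\tilde{x}^{p^n} = u_R^{1/p^\infty}(\tilde{y}^{p^n})$, which is the image of $u_R(y) \in \Psi(R)$ in $\Psi(R)^{1/p^\infty}$.

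To conclude, I would use the fact that $\Psi(R)$ is reduced: the canonical map $\Psi(R) \to \Psi(R)^{1/p^\infty}$ is then injective, since its kernel consists of elements killed by some iterate of Frobenius, i.e., the nilradical. Hence the equality $\tilde{x}^{p^n} = u_R(y)$ in $\Psi(R)^{1/p^\infty}$ already holds in $\Psi(R)$, giving $x^{p^n} = u_R(y)$ as required. Thus every element of $\Psi(R)$ has a $p$-power lying in the image of $u_R$, so $u_R$ is integral.

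There is no real obstacle here: the substantive content has been absorbed into Corollary \ref{elwosecor} (whose proof invokes Lemma \ref{elwose} to identify $\Psi(\varphi)$ with Frobenius on $\Psi(R)$, combined with the fact that $u_R$ is an isomorphism on perfect rings) and into the reducedness statement of Lemma \ref{lem27}. The present argument simply assembles these two inputs to produce an explicit monic polynomial equation witnessing integrality, bypassing any recourse to Propositions \ref{techprop1} and \ref{techprop2}.
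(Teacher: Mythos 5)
Your argument is correct, and it takes a genuinely different route from the paper. The paper proves Corollary \ref{techprop3} by combining Remark \ref{elwose1} (compatibility of $\Psi$ with filtered colimits) with Proposition \ref{techprop2}, whose proof maps $\Psi(R)$ into the normalization $\overline{R}$ of a finitely generated reduced algebra by working at the generic points and with discrete valuation rings (regular local rings being filtered colimits of smooth $\F_p$-algebras); integrality is then inherited from $R \rightarrow \overline{R}$. You instead use only Corollary \ref{elwosecor} and the reducedness of $\Psi(R)$ (Lemma \ref{lem27}), both of which are established earlier and independently, so there is no circularity: given $x \in \Psi(R)$, surjectivity of $u_R^{1/p^\infty}$ produces a level-$n$ representative $y \in R$ of a preimage, the colimit description of the perfection gives $\tilde{x}^{p^n} = $ image of $u_R(y)$, and injectivity of $\Psi(R) \to \Psi(R)^{1/p^\infty}$ (kernel $=$ nilradical $=0$) descends this to $x^{p^n} = u_R(y)$ in $\Psi(R)$, whence the monic relation $T^{p^n}-y$. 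This is more elementary — it avoids normalizations, valuation rings, and the reduction to finitely generated algebras — and it in fact proves a stronger statement than integrality, namely that every element of $\Psi(R)$ has a $p$-power lying in the image of $u_R$ (consistent, e.g., with the cusp, where $t^{p}\in \F_p[t^2,t^3]$). What the paper's route buys in exchange is the finer geometric information of Proposition \ref{techprop2} (the factorization $\Psi(R) \to \overline{R}$ through the normalization), which, together with Proposition \ref{techprop1}, is still needed later for the subnormality argument in the proof of Theorem \ref{dRWsn2}; your shortcut replaces only the integrality step.
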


\begin{proof}
Combine Proposition \ref{techprop2} with Remark \ref{elwose1}.
\end{proof}

\begin{proof}[Proof of Theorem \ref{dRWsn2}]
Let $R$ be a commutative $\F_p$-algebra; we wish to show that the canonical map $R \rightarrow \Psi(R)$ exhibits
$\Psi(R)$ as a seminormalization of $R$. By virtue of Corollary \ref{dRWsn1}, we can replace $R$ by $R^{\mathrm{sn}}$ and thereby
reduce to the case where $R$ is seminormal. In particular, $R$ is reduced, so the map $u$ exhibits $R$ as a subring of $\Psi(R)$
(Corollary \ref{elwosecor}). It follows from Corollary \ref{techprop3} and Proposition \ref{techprop1} that
the map $u_{R}\colon  R \rightarrow \Psi(R)$ is {\it subnormal}, in the sense of \cite{SwanSeminormal}: that is it is an integral ring homomorphism which induces
a bijection $\Spec( \Psi(R) )(k) \rightarrow \Spec(R)(k)$ for every field $k$. Since $R$ is seminormal, it follows that the map $u_{R}$ is an isomorphism
(\cite[Lemma 2.6]{SwanSeminormal}).
\end{proof}

\newpage

\part{Complements and Applications}
\section{Homological Algebra}
\label{derivedcatsec}

\normalfont

For every cochain complex $M^{\ast}$ of $p$-torsion-free abelian groups, let $(\eta_p M)^{\ast}$ denote the subcomplex of
$M^{\ast}[ 1/ p ]$ introduced in Construction \ref{etaconstruction}. Recall that endowing $M^{\ast}$ with the structure of
a Dieudonn\'{e} complex is equivalent to choosing a chain map $\alpha\colon  M^{\ast} \rightarrow (\eta_p M)^{\ast}$ (Remark \ref{rem2}),
which is an isomorphism if and only if the Dieudonn\'{e} complex $M^{\ast}$ is saturated. We can reformulate this observation
as follows: the category $\FrobCompSat$ of saturated Dieudonn\'{e} complexes can be identified with the category of
{\em fixed points} for the functor $\eta_p$ on $p$-torsion-free cochain complexes (see Definition \ref{fixe} and Example \ref{fixedpointsat}).

In this section, we will study a variant of this construction, where we replace the category of cochain complexes $\coch$ by
the {\it derived category} $D(\Z)$ obtained by formally inverting quasi-isomorphisms. The construction $M^{\ast} \mapsto (\eta_p M)^{\ast}$
induces an endofunctor of the derived category $L \eta_p\colon  D(\Z)
\rightarrow D(\Z )$, which was originally introduced in the work of Berthelot--Ogus (\cite{BO78}).
Every cochain complex $M^{\ast}$ can be regarded as an object of the derived category $D(\Z)$, and every saturated Dieudonn\'{e} complex
can be regarded as an object of the category $D(\Z)^{L \eta_p}$ of fixed points for the action of $L \eta_p$ on $D(\Z)$. The main result of
this section is that this construction restricts to an equivalence of categories 
$$ \{ \text{Strict Dieudonn\'{e} Complexes} \} \simeq \{ \text{$p$-Complete Fixed Points of $L \eta_p$} \}$$
(see Theorem \ref{fixedpointDZ} for a precise statement; we also prove an $\infty$-categorical refinement as Theorem \ref{1category}). In other words, we show that a $p$-complete object $M$
of the derived category $D(\Z)$ which is equipped with an isomorphism $\alpha\colon  M \simeq L \eta_p M$ (in the derived category)
has a canonical representative $M^{\ast}$ at the level of cochain complexes (which is determined up to isomorphism by the requirement that
$\alpha$ should lift to an isomorphism of cochain complexes $M^{\ast} \simeq (\eta_p M)^{\ast}$). In particular,
the de~Rham--Witt complex $W\Omega^{\ast}_{R}$ of a smooth algebra $R$ over a perfect field $k$ can be functorially
reconstructed from the crystalline cochain complex $R \Gamma_{ \crys }( \Spec(R) )$, together with the isomorphism
$R \Gamma_{ \crys }( \Spec(R) ) \simeq L \eta_p R \Gamma_{ \crys }(\Spec(R) )$ induced by the Frobenius on $\Spec(R)$.
In \S \ref{cryscompsec}, we will apply this observation to give a simple construction of the crystalline comparison
isomorphism for the cohomology theory $A \Omega$ of Bhatt-Morrow-Scholze \cite{BMS}. 

\subsection{$p$-Complete Objects of the Derived Category}\label{derivedsub1}
\label{S:derivedreview}
We begin with some recollections on the derived category of abelian groups. For
a modern textbook reference, we refer the reader to \cite[Ch.~13--14]{KS}.

\begin{definition}\label{defderived}
The derived category $D( \Z)$ of the integers can be defined in two equivalent ways: 
\begin{itemize}
\item[$(1)$] Let $\coch$ denote the category of cochain complexes of abelian groups. Then
$D(\Z)$ is the localization $\coch[W^{-1} ]$, where $W$ is the collection of all quasi-isomorphisms in $\coch$.
In other words, $D(\Z)$ is obtained from $\coch$ by formally adjoining inverses of quasi-isomorphisms.

\item[$(2)$] Let $\cochfr \subseteq \coch$ denote the full subcategory spanned by the cochain complexes
of {\em free} abelian groups. Then $D( \Z )$ is equivalent to the homotopy category of $\cochfr$.
That is, given a pair of objects $X^{\ast}, Y^{\ast} \in \cochfr$, we can identify
$\Hom_{D(\Z)}(X, Y)$ with the abelian group of chain homotopy classes of maps of cochain complexes $X^{\ast}
\to Y^{\ast}$ (in fact, this holds more generally if $X^{\ast}$ is a cochain complex of free abelian groups and $Y^{\ast}$ is arbitrary).
\end{itemize}

Using description $(2)$, we see that $D( \Z)$ admits a symmetric monoidal structure, given by the usual
tensor product of cochain complexes (of free abelian groups). We will denote the underlying tensor product functor by
$$ \lotimes\colon  D(\Z) \times D(\Z) \rightarrow D(\Z).$$
\end{definition} 

\begin{variant}\label{mariant}
In version $(1)$ of Definition \ref{defderived}, one can replace the category $\coch$ of all cochain complexes with
various subcategories. For example, $D(\mathbb{Z})$ can be obtained from $\cochfr$ by formally inverting all quasi-isomorphisms. 
It will be helpful for us to know that $D(\mathbb{Z})$ can also be obtained from
the intermediate subcategory $\cochtfr \subseteq \coch$ consisting of cochain
complexes of \emph{$p$-torsion-free} abelian
groups (again by formally inverting quasi-isomorphisms).
\end{variant}

\begin{notation}
In what follows, it will be convenient to distinguish between a cochain complex of abelian groups $M^{\ast} \in \coch$ and its image in the derived category $D(\Z)$. We will
typically denote the latter simply by $M$ (that is, by omitting the superscript $\ast$).
\end{notation}

\begin{remark} 
In description $(2)$ of Definition \ref{defderived}, we have implicitly used the fact that the ring $\Z$ of integers has finite projective dimension.
If we were to replace $\Z$ by a more general ring $R$ (not assumed to be of finite projective dimension), then we should also replace
$\cochfr$ by the category of \emph{$K$-projective} complexes in the sense of \cite{Spalt}. See also \cite[Ch. 2]{Hovey} for a presentation as a model category. 
\end{remark}

Next, we recall the notion of a \emph{$p$-complete} object in
$D(\mathbb{Z})$, as in \cite{DG02} and going back to ideas of Bousfield \cite{Bousfield}. 

\begin{definition}\label{definition.dhp}
Let $X$ be an object of $D(\Z)$. We will say that $X$ is \emph{$p$-complete} if for every object $Y \in D(\mathbb{Z})$
such that $p\colon  Y \to Y$ is an isomorphism in $D(\mathbb{Z})$ (i.e., such that $Y$ arises via restriction of scalars from $D( \mathbb{Z}[1/p])$), we have
$\Hom_{D(\mathbb{Z})}(Y, X) = 0$. We let $\dhp \subseteq D( \mathbb{Z})$ be the full subcategory spanned by the
$p$-complete objects. 
\end{definition} 

\begin{remark}[Derived $p$-Completion]\label{dercomp}
The inclusion functor $\dhp \hookrightarrow D(\Z)$ admits a left adjoint, which we will denote by $X \mapsto \widehat{X}_{p}$
and refer to as the {\it derived $p$-completion functor}. Concretely, if $X^{\ast}$ is a $p$-torsion-free cochain complex
representing an object $X \in D(\Z)$, then the derived $p$-completion $\widehat{X}_{p}$ is represented by the cochain complex
$\widehat{X}^{\ast}_{p}$, obtained from $X^{\ast}$ by levelwise $p$-adic
completion; this follows because $\widehat{X}_p$ can be obtained as the
homotopy limit of the diagram of quotient complexes $\{ X^{\ast}/p^n X^{\ast} \}_{n \geq 0}$.
\end{remark}

\begin{definition}\label{scab}
Let $X$ be an abelian group. We say that $X$ is \emph{derived $p$-complete} if, when considered
as an object of $D(\mathbb{Z})$ concentrated in a single degree, it is $p$-complete. 
\end{definition} 

\begin{remark}
Let $X$ be an abelian group. Then $X$ is derived $p$-complete if and only if
$$ \Hom_{\Z}( \Z[1/p], X) \simeq 0 \simeq \Ext_{\Z}( \Z[1/p], X ).$$
In other words, $X$ is derived $p$-complete if and only if every short exact sequence
$0 \rightarrow X \rightarrow M \rightarrow \Z[1/p] \rightarrow 0$ admits a unique splitting.
\end{remark}

\begin{remark}\label{cose2}
Let $X$ be an object of $D(\mathbb{\Z})$. Since the category of abelian
groups has projective dimension $1$, $X$ can be written (noncanonically) as a product
$\prod_{n \in \Z} \mathrm{H}^{n}(X)[ -n ]$. It follows that $X$ is $p$-complete (in the sense of Definition \ref{definition.dhp}) if and only if
each cohomology group $\mathrm{H}^{n}(X)$ is derived $p$-complete (in the sense of Definition \ref{scab}).
See \cite[Prop. 5.2]{DG02}. 
\end{remark}

\begin{remark}
Let $M$ be an abelian group. We say that $M$ is {\it $p$-adically separated} if the canonical map $\rho\colon  M \rightarrow \varprojlim M / p^{n} M$
is injective, and {\it $p$-adically complete} if $\rho$ is an isomorphism. Then an abelian group $M$ is $p$-adically complete if and only if
it is both $p$-adically separated and derived $p$-complete (see \cite{schenzel} or \cite[Prop. 3.4.2]{proetale}). In particular, every $p$-adically complete abelian group is derived $p$-complete.
However, the converse is not true.
\end{remark}

\begin{remark}\label{cose}
The category of derived $p$-complete abelian groups is an abelian category, stable under the formation of kernels, cokernels and extensions in
the larger category of all abelian groups. For a detailed discussion of the theory of derived $p$-complete (also called
\emph{$L$-complete}) abelian groups, we refer the reader to \cite[Appendix A]{HoveyStrickland}.
\end{remark}

\begin{example}\label{sez}
Let $X^{\ast}$ be a cochain complex. Suppose that for each $n \in \Z$, the abelian group
$X^{n}$ is derived $p$-complete. Then each cohomology group $\mathrm{H}^{n}(X)$ is derived
$p$-complete (Remark \ref{cose}), so that $X \in D(\Z)$ is $p$-complete (Remark \ref{cose2}). In particular, if each $X^{n}$ is a $p$-adically complete abelian group, then $X$ is a $p$-complete object of the derived category $D(\Z)$.
\end{example}

\begin{proposition}\label{prop-free}
Let $M$ be an abelian group. Then $M$ is isomorphic to the $p$-adic completion
of a free abelian group if and only if $M$ is derived $p$-complete and $p$-torsion free.
\end{proposition}

\begin{proof}
The ``only if'' direction is immediate. For the converse, choose a system of elements $\{ x_i \}_{i \in I}$ in $M$ which form a basis for $M / p M$ as a vector space over $\F_p$.
Let $F$ denote the free abelian group on the generators $\{ x_i \}_{i \in I}$, so that we have a canonical map
$u\colon  F \rightarrow M$ which induces an isomorphism $F/pF \rightarrow M/pM$. If $M$ is derived $p$-complete and $p$-torsion free, then $u$
exhibits $M$ as the $p$-adic completion of $F$.
\end{proof}

\begin{definition}\label{def-pro-free}
We will say that an abelian group $M$ is \emph{pro-free} if it satisfies the
equivalent conditions of Proposition \ref{prop-free}.
\end{definition} 

\begin{example}\label{subgroupfree}
Let $M$ be a pro-free abelian group and let $M_0 \subseteq M$ be a subgroup.
If $M_0$ is derived $p$-complete, then it is also pro-free.
\end{example} 

We close this section by showing that the $p$-complete derived category $\dhp$ can be realized as the homotopy category of chain complexes of pro-free abelian groups.
Closely related results appear in  \cite{rezk, valenzuela} (where a model category
structure is produced).

\newcommand{\profree}{\mathrm{pro}\text{-}\mathrm{free}}
\begin{proposition}\label{pcompleteDZ}
Let $\coch^{\profree} \subseteq \coch$ denote the full subcategory whose objects are cochain complexes $X^{\ast}$ of pro-free abelian groups.
Then the construction $X^{\ast} \rightarrow X$ determines a functor
$\coch^{\profree} \rightarrow \dhp$ which exhibits $\dhp$
as the homotopy category of $\coch^{\profree}$. In other words:
\begin{itemize}
\item[$(a)$] An object of $D(\Z)$ is $p$-complete if and only if it is isomorphic (in $D(\Z)$) to a cochain complex of pro-free abelian groups.
\item[$(b)$] For every pair of objects $X^{\ast}, Y^{\ast} \in
\coch^{\profree}$, the canonical map
$$ \Hom_{ \coch}( X^{\ast}, Y^{\ast} ) \rightarrow \Hom_{ D(\Z) }( X, Y )$$
is a surjection, and two chain maps $f,g\colon  X^{\ast} \rightarrow Y^{\ast}$ have the same image in $\Hom_{D(\Z)}( X, Y)$ if and only if
they are chain homotopic.
\end{itemize}
\end{proposition}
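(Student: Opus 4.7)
The plan is to establish part (a) by a direct cochain-level manipulation, and to reduce part (b) to a single technical lemma about acyclic complexes of pro-free abelian groups, which is the main obstacle.

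For part (a), the ``$\Leftarrow$'' implication is immediate from Example~\ref{sez}: every pro-free group is $p$-adically complete, hence derived $p$-complete, so a cochain complex of such groups has $p$-complete image in $D(\Z)$. For ``$\Rightarrow$'', given a $p$-complete $X \in \dhp$, I would choose any representative $F^{\ast}$ consisting of free abelian groups (as in Definition~\ref{defderived}$(2)$), then form the levelwise $p$-adic completion $\widehat{F}^{\ast}_p$. Since $F^{\ast}$ is $p$-torsion-free, Remark~\ref{dercomp} identifies $\widehat{F}^{\ast}_p$ with the derived $p$-completion of $F$ in $D(\Z)$, which equals $X$ because $X$ is already $p$-complete. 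Thus $\widehat{F}^{\ast}_p$ lies in $\coch^{\mathrm{pro-free}}$ and represents $X$.

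The heart of part (b) is the following Key Lemma: every acyclic cochain complex of pro-free abelian groups admits a chain contraction. Assuming this, I would deduce (b) as follows. For pro-free complexes $X^{\ast}, Y^{\ast}$, fix a quasi-isomorphism $\pi \colon F^{\ast} \to X^{\ast}$ from a complex $F^{\ast}$ of free abelian groups. Because each $X^n$ is $p$-adically complete, $\pi$ factors through the levelwise $p$-completion to give a chain map $\widehat{\pi} \colon \widehat{F}^{\ast}_p \to X^{\ast}$, which is again a quasi-isomorphism (it is the derived $p$-completion of $\pi$, and $X^{\ast}$ is its own derived $p$-completion). The mapping cone of $\widehat{\pi}$ is an acyclic complex of pro-free abelian groups, hence contractible by the Key Lemma, so $\widehat{\pi}$ is a chain homotopy equivalence. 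Using that each $Y^n$ is $p$-adically complete, any chain map $F^{\ast} \to Y^{\ast}$ extends uniquely to a chain map $\widehat{F}^{\ast}_p \to Y^{\ast}$, and similarly for chain homotopies. Combining these with the standard fact that chain homotopy classes of maps $F^{\ast} \to Y^{\ast}$ compute $\Hom_{D(\Z)}(F, Y)$ (because $F^{\ast}$ consists of free abelian groups, which are $K$-projective in the sense of Definition~\ref{defderived}$(2)$) gives the desired identification of $\Hom_{D(\Z)}(X, Y)$ with chain homotopy classes of maps $X^{\ast} \to Y^{\ast}$.

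The main obstacle is the Key Lemma, which I plan to prove in two steps. First, I would verify that pro-free abelian groups are projective in the abelian category of derived $p$-complete abelian groups: for any free $F$ and any derived $p$-complete $B$, the canonical map $\Hom_{\Z}(\widehat{F}_p, B) \to \Hom_{\Z}(F, B)$ is a bijection. This follows from identifying $\widehat{F}_p$ with the derived $p$-completion of $F$ (which uses $R^1 \varprojlim F/p^n F = 0$ since the transition maps are surjective) together with the projectivity of $F$ viewed as an object of $D(\Z)$; in particular, every short exact sequence of pro-free abelian groups splits. Second, given an acyclic pro-free complex $P^{\ast}$, the cocycles $Z^n = \ker(d^n) \subseteq P^n$ are $p$-torsion-free subgroups of $P^n$ which are derived $p$-complete (since the category of derived $p$-complete abelian groups is closed under kernels, Remark~\ref{cose}); hence they are pro-free by Example~\ref{subgroupfree}. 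Acyclicity then produces short exact sequences $0 \to Z^n \to P^n \to Z^{n+1} \to 0$ of pro-free abelian groups, which split by the first step; assembling the splittings in the standard manner yields the desired chain contraction.
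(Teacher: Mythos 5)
Your proposal is correct. Part (a) is essentially the paper's argument: take a free representative, complete it levelwise, and use that levelwise completion of a torsion-free complex computes derived $p$-completion (the paper packages this as Lemma \ref{complcomplex}, proved by reducing mod $p$; you instead cite Remark \ref{dercomp} plus $p$-completeness of $X$ — same content).

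For part (b) your route is genuinely different from the paper's, though built from the same two core ingredients: (i) derived $p$-complete subgroups of pro-free groups are pro-free (Example \ref{subgroupfree}), applied to cocycles/boundaries, and (ii) the extension property $\Hom(\widehat{F}_p,B)\simeq\Hom(F,B)$ for derived $p$-complete $B$, which you derive from the completion adjunction and the paper uses in the weaker form that its targets $Y^n$ are $p$-adically complete. The paper's key lemma is Lemma \ref{rast}: an \emph{arbitrary} complex of pro-free groups splits as a direct sum of two-term complexes; it then reduces to a complex concentrated in a single degree, writes it as $\widehat{F}_p$, and compares homotopy classes directly. Your key lemma is instead that an \emph{acyclic} complex of pro-free groups is contractible (pro-free groups are projective among derived $p$-complete groups, so the cycle sequences split and the splittings assemble to a contraction); you then resolve $X^*$ by a free complex $F^*$, complete to get $\widehat{\pi}\colon \widehat{F}^*_p\to X^*$, note its cone is acyclic and pro-free, conclude $\widehat{\pi}$ is a chain homotopy equivalence, and transport homotopy classes along $\widehat{F}^*_p \to F^*$ restriction. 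This is a ``K-projectivity via contractible cones'' argument, closer in spirit to the model-categorical references the paper mentions; it costs you the choice of a free resolution of an unbounded complex and the commutativity check identifying your chain of bijections with the canonical map $\Hom_{\coch}(X^*,Y^*)\to\Hom_{D(\Z)}(X,Y)$ (which you gloss but which does go through), while the paper's splitting lemma lets it work directly with $X^*$ and reduce to a one-line computation in a single degree. Both proofs are complete and correct; only minor polish is needed in your write-up (e.g.\ the phrase ``projectivity of $F$ viewed as an object of $D(\Z)$'' — what is really used there is the universal property of derived $p$-completion together with the fact that morphisms in $D(\Z)$ between modules in the heart are module maps).
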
 

\begin{lemma} 
\label{complcomplex}
Let $X^{\ast}$ be a cochain complex of 
$p$-torsion-free abelian groups. Suppose that the underlying object $X \in D(\mathbb{Z})$ is
$p$-complete. Let $\widehat{X}^{\ast}_p$ denote the (levelwise) $p$-completion of the cochain
complex $X^{\ast}$. Then $X^{\ast} \to \widehat{X}^{\ast}_p$ is a
quasi-isomorphism. 
\end{lemma}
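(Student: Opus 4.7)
The plan is to identify the target $\widehat{X}^{\ast}_p$ with the derived $p$-completion of $X$ (as an object of $D(\mathbb{Z})$), and then invoke the hypothesis that $X$ is already $p$-complete to conclude that the natural map is an isomorphism in $D(\mathbb{Z})$.

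First I would handle the case of a single free abelian group $F$. Since $F$ is flat, the derived tensor product $F \otimes^{L}_{\mathbb{Z}} \mathbb{Z}/p^n$ is concentrated in degree zero and equals the ordinary quotient $F/p^n F$. The derived $p$-completion of $F$ is by definition $R\lim_n (F \otimes^{L}_{\mathbb{Z}} \mathbb{Z}/p^n) \simeq R\lim_n F/p^n F$, and the Mittag--Leffler condition (the transition maps are surjective) implies that $R\lim$ coincides with the ordinary inverse limit, giving $\widehat{F}_p$.

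Next I would promote this observation to the level of cochain complexes. For $X^{\ast}$ a cochain complex of free abelian groups, the tower $\{X^{\ast}/p^n X^{\ast}\}_{n \geq 1}$ has termwise surjective transition maps, so the short exact sequence of complexes
\[
0 \to \lim_n X^{\ast}/p^n X^{\ast} \to \prod_n X^{\ast}/p^n X^{\ast} \xrightarrow{1 - \mathrm{shift}} \prod_n X^{\ast}/p^n X^{\ast} \to 0
\]
yields a distinguished triangle in $D(\mathbb{Z})$ identifying the termwise limit (which is exactly $\widehat{X}^{\ast}_p$) with the derived limit $R\lim_n X^{\ast}/p^n X^{\ast}$. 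Using the termwise flatness of $X^{\ast}$ once more to rewrite $X^{\ast}/p^n X^{\ast} \simeq X \otimes^{L}_{\mathbb{Z}} \mathbb{Z}/p^n$ in $D(\mathbb{Z})$, we conclude that $\widehat{X}^{\ast}_p$ represents the derived $p$-completion $\widehat{X}_p$ of $X$, and that the map $X^{\ast} \to \widehat{X}^{\ast}_p$ realizes the unit of the derived $p$-completion adjunction described in Remark~\ref{dercomp}.

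Finally, by hypothesis $X$ is $p$-complete in the sense of Definition~\ref{definition.dhp}, so the unit map $X \to \widehat{X}_p$ is an isomorphism in $D(\mathbb{Z})$. Therefore $X^{\ast} \to \widehat{X}^{\ast}_p$ is a quasi-isomorphism of cochain complexes, as desired. The only nontrivial step is the identification of the termwise $p$-completion with the derived $p$-completion, and the crucial input there is the vanishing of $\lim^1$ for the Mittag--Leffler tower $\{X^n/p^m X^n\}_m$ at each level $n$, together with flatness to avoid Tor terms in the reduction mod $p^m$.
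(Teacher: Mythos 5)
Your proof is correct, but it takes a different route from the paper's. The paper argues as follows: both $X^{\ast}$ and $\widehat{X}^{\ast}_p$ are $p$-complete objects of $D(\mathbb{Z})$ (the target because each term is a $p$-adically complete abelian group, as in Example~\ref{sez}), so it suffices to check that the map is a quasi-isomorphism after applying $-\otimes^{L}_{\mathbb{Z}}\mathbb{Z}/p\mathbb{Z}$; since both complexes are $p$-torsion-free, this reduction is the termwise quotient, and the induced map $X^{\ast}/pX^{\ast}\to \widehat{X}^{\ast}_p/p\widehat{X}^{\ast}_p$ is literally an isomorphism of complexes. You instead prove that the levelwise completion represents the derived $p$-completion of $X$ (via the Milnor/$R\lim$ sequence for the tower $\{X^{\ast}/p^nX^{\ast}\}$, using termwise surjectivity for the $\lim^1$-vanishing and flatness to identify $X^{\ast}/p^n$ with $X\otimes^{L}\mathbb{Z}/p^n$), and then invoke that the unit of the completion adjunction is an equivalence on $p$-complete objects. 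In effect you are supplying a proof of the unproved concrete description in Remark~\ref{dercomp} (for which torsion-freeness, not freeness, is what matters) and then applying the reflective-localization formalism, whereas the paper's argument only uses the conservativity of mod-$p$ reduction on $p$-complete objects and avoids any discussion of $R\lim$ or of which functor the levelwise completion computes. Your identification of derived $p$-completion with $R\lim_n(-\otimes^{L}\mathbb{Z}/p^n)$ is stated as a definition while the paper defines completion as a left adjoint; the two agree by a standard computation (the fiber of $X\to R\lim_n(X\otimes^{L}\mathbb{Z}/p^n)$ is $R\mathrm{Hom}(\mathbb{Z}[1/p],X)$, which vanishes for $p$-complete $X$), so this is a harmless gloss rather than a gap, but it is worth being explicit that the natural map you write down does realize the unit.
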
 

\begin{proof} 
The map of cochain complexes $X^{\ast} / p X^{\ast} \rightarrow
\widehat{X}^{\ast}_{p} / p \widehat{X}^{\ast}_{p}$ is an isomorphism. 
Since $X^{\ast}, \widehat{X}^{\ast}_p$ are  $p$-torsion-free, it follows that 
$X^{\ast} \to \widehat{X}^{\ast}_p$ becomes a  quasi-isomorphism after taking the derived tensor
product with $\Z /p \Z$. Since both complexes define $p$-complete objects in $D(\Z)$, it
follows that $X^{\ast} \to \widehat{X}^{\ast}_p$ is a quasi-isomorphism as
desired.  
\end{proof} 

\begin{lemma}\label{rast}
Let $X^{\ast}$ be a cochain complex of pro-free abelian groups. Then $X^{\ast}$ splits (noncanonically) as a direct sum
$\bigoplus_{n \in \Z} X(n)^{\ast}$, where each $X(n)^{\ast}$ is a cochain complex of pro-free abelian groups concentrated
in degrees $n$ and $n-1$.
\end{lemma}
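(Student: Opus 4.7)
The plan is to build the decomposition degree by degree by choosing a compatible system of splittings of the short exact sequences
\[ 0 \to Z^n \to X^n \xrightarrow{d^n} B^{n+1} \to 0, \]
where $Z^n = \ker(d^n)$ and $B^{n+1} = \mathrm{im}(d^n) \subseteq X^{n+1}$. Once such splittings $X^n \simeq Z^n \oplus A_{n+1}$ are in hand (with $A_{n+1}$ mapping isomorphically onto $B^{n+1}$ via $d^n$), the required decomposition is obtained by setting $X(n)^\ast$ to be the two-term complex concentrated in degrees $n-1$ and $n$ with $X(n)^{n-1} = A_n$, $X(n)^n = Z^n$, and differential equal to the restriction of $d^{n-1}$; the identity $d \circ d = 0$ guarantees $d^{n-1}(A_n) \subseteq Z^n$, so this is well-defined. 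A direct check in each degree $k$ then shows that $\bigoplus_n X(n)^\ast$ has $k$-th term $Z^k \oplus A_{k+1} = X^k$ with the correct differential.

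The first substantive step will be to verify that $Z^n$ and $B^n$ are themselves pro-free. By Remark~\ref{cose}, the class of derived $p$-complete abelian groups is closed under kernels, cokernels, and extensions. Since each $X^n$ is derived $p$-complete, so are $Z^n$ (as a kernel of a map between derived $p$-complete groups) and $B^{n+1} \simeq X^n/Z^n$ (as a cokernel). Both groups embed into pro-free — hence $p$-torsion free — abelian groups $X^n$ and $X^{n+1}$, so they are themselves $p$-torsion free. Proposition~\ref{prop-free} then identifies them as pro-free.

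The second step is to produce a section of the surjection $d^n\colon X^n \twoheadrightarrow B^{n+1}$. Writing $B^{n+1}$ as the $p$-adic completion $\widehat{F}_p$ of a free abelian group $F$ with basis $\{e_i\}_{i \in I}$, we lift each $e_i$ to some $\widetilde{e}_i \in X^n$, defining a map $F \to X^n$. Since $X^n$ is pro-free and therefore $p$-adically complete, this extends uniquely to a continuous map $s\colon \widehat{F}_p = B^{n+1} \to X^n$. Both $d^n \circ s$ and $\mathrm{id}_{B^{n+1}}$ agree on the dense subgroup $F$ and are continuous into the Hausdorff group $B^{n+1}$, so $s$ is a section. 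Letting $A_{n+1} = s(B^{n+1})$ gives the desired complement, and $A_{n+1} \simeq B^{n+1}$ is pro-free.

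The main obstacle — really the only nontrivial point — is Step one: showing that kernels and, more importantly, images of maps between pro-free abelian groups remain pro-free. The image case is somewhat delicate since pro-free groups need not be projective in the category of abelian groups; the argument instead relies crucially on the closure of derived $p$-complete groups under cokernels together with the characterization of pro-free groups in Proposition~\ref{prop-free}. Everything after that — the lifting argument and the verification that the pieces assemble into the stated direct sum decomposition — is formal.
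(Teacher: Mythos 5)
Your proof is correct and follows essentially the same route as the paper: show the coboundary groups (and cocycle groups) are derived $p$-complete and torsion-free, hence pro-free, split the surjection $d\colon X^{n}\to B^{n+1}$ using the lifting-to-the-completion argument, and assemble the two-term subcomplexes $A_n \oplus Z^n$ into the direct sum decomposition. The only difference is that you spell out the splitting and the reduction to Proposition~\ref{prop-free} in more detail than the paper, which invokes Example~\ref{subgroupfree} and asserts the splitting directly.
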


\begin{proof}
For every integer $n$, let $B^{n} \subseteq X^{n}$ denote the image of the differential $d\colon  X^{n-1} \rightarrow X^{n}$.
Then $B^{n}$ is derived $p$-complete (Remark \ref{cose}). Since $X^{n}$ is pro-free, it follows that $B^{n}$ is also
pro-free (Example \ref{subgroupfree}). It follows that the surjection $d\colon  X^{n-1} \rightarrow B^{n}$ splits: 
that is, we can choose a subgroup $Y^{n-1} \subseteq X^{n-1}$ for which the differential $d$ on $X^{\ast}$
restricts to an isomorphism $Y^{n-1} \simeq B^{n}$. Let $X(n)^{\ast}$ denote the subcomplex of
$X^{\ast}$ given by $Y^{n-1} \oplus \ker(d\colon  X^{n} \rightarrow X^{n+1})$. It is then easy to verify that
these subcomplexes determine a splitting $X^{\ast} \simeq \bigoplus_{n \in \Z} X(n)^{\ast}$.
\end{proof}

\begin{proof}[Proof of Proposition \ref{pcompleteDZ}]
We first prove $(a)$. The ``if'' direction follows from Example \ref{sez}. To prove the converse, let $X$ be an object of $\dhp$; we wish to
show that $X$ belongs to the essential image of the functor
$\coch^{\profree} \rightarrow \dhp$.
Without loss of generality, we may assume that $X$ is represented by a cochain complex $X^{\ast}$ of free abelian groups. 
Let $\widehat{X}^{\ast}_{p}$ be the $p$-adic completion of $X^{\ast}$. Lemma \ref{complcomplex} implies that
$X$ is isomorphic to $\widehat{X}_{p}$ as an object of $D(\Z)$, and therefore belongs to the essential image of
the functor $\coch^{\profree} \rightarrow D(\Z)$.

We now prove $(b)$. Let $X^{\ast}$ be a cochain complex of pro-free abelian
groups; we wish to show that for every cochain complex of pro-free abelian groups $Y^{\ast}$, 
the canonical map
$$ \rho\colon  \Hom_{ \mathrm{h} \coch}( X^{\ast}, Y^{\ast} ) \rightarrow \Hom_{ D(\Z) }( X, Y)$$
is bijective; here $\mathrm{h} \coch$ denotes the homotopy category of cochain complexes.
Using Lemma \ref{rast}, we can reduce to the case where $X^{\ast}$ is concentrated in degrees
$n$ and $n-1$, for some integer $n$. Using the (levelwise split) exact sequence of cochain complexes
$$ 0 \rightarrow X^{n}[-n] \rightarrow X^{\ast} \rightarrow X^{n-1}[-n+1] \rightarrow 0,$$
we can further reduce to the case where $X^{\ast}$ is concentrated in a single degree $n$.
In this case, we can write $X^{\ast}$ as the $p$-adic completion of a chain complex
$\widetilde{X}^{\ast}$ of free abelian groups. Then $\rho$ can be identified with
the restriction map
$$ \Hom_{ \mathrm{h} \coch}( X^{\ast}, Y^{\ast} ) \rightarrow \Hom_{ \mathrm{h} \coch}( \widetilde{X}^{\ast}, Y^{\ast} ),$$
which is bijective by virtue of the fact that each term of $Y^{\ast}$ is $p$-adically complete.
\end{proof} 

\subsection{The Functor $L\eta_p$}
\label{Leta}

For every cochain complex $M^{\ast}$ of $p$-torsion-free abelian groups, we let $\eta_p M^{\ast}$ denote the subcomplex of
$M^{\ast}[1/p]$ defined in Construction \ref{etaconstruction}, given by $(\eta_p M)^n = \left\{x \in p^n M^n\colon  dx \in
p^{n+1} M^{n+1}\right\}$. The construction $M^{\ast} \mapsto \eta_p M^{\ast}$ determines a functor
$$ \eta_p\colon  \cochtfr \rightarrow \cochtfr.$$
In this section, we discuss a corresponding operation on the derived category $D(\Z)$, which we will denote by $L \eta_p$. 
For a more extensive discussion, we refer the reader to \cite[Sec. 6]{BMS}. 

We begin with an elementary property of the functor $\eta_p$:

\begin{proposition} 
\label{etap_quism}
For every cochain complex $(M^{\ast}, d)$ of $p$-torsion-free abelian groups,
there is a canonical isomorphism of graded abelian groups $$\mathrm{H}^*( \eta_p M)
\simeq \mathrm{H}^*(M)/ \mathrm{H}^*(M)[p],$$ where $\mathrm{H}^*(M)[p]$ denotes the
$p$-torsion subgroup of $\mathrm{H}^*(M)$
\end{proposition}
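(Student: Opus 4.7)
The plan is to construct the claimed isomorphism by a direct calculation at the level of cycles and boundaries, which is straightforward once one unwinds the definitions correctly.

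First, I will describe cycles and boundaries in $\eta_p M$ concretely. Since $M^{\ast}$ is $p$-torsion-free, multiplication by $p^n$ is injective, so every element of $(\eta_p M)^n \subseteq p^n M^n$ can be written uniquely as $p^n y$ for some $y \in M^n$ with $dy \in p M^{n+1}$. A cycle in $(\eta_p M)^n$ is an element $x = p^n y$ with $dx = 0$ in $M^{n+1}[1/p]$; since $M^{n+1}$ is $p$-torsion-free, this is equivalent to $dy = 0$ in $M^{n+1}$, i.e., $y \in Z^n(M)$. Similarly, a boundary in $(\eta_p M)^n$ has the form $x = du$ where $u = p^{n-1} v$ and $dv \in p M^n$; writing $dv = p w$ with $w \in M^n$, we get $x = p^n w$, so that the underlying cycle is $y = w$ with $p y = d v$.

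Next, I define a map $\varphi\colon Z^n(\eta_p M) \to \mathrm{H}^n(M)/\mathrm{H}^n(M)[p]$ by sending $x = p^n y$ to the class of $[y]$. The analysis of boundaries above shows that if $x$ is a boundary in $\eta_p M$, then the underlying cycle $y$ satisfies $p y = d v$, so $[y] \in \mathrm{H}^n(M)$ is $p$-torsion and $\varphi(x) = 0$. Hence $\varphi$ descends to a homomorphism $\bar{\varphi}\colon \mathrm{H}^n(\eta_p M) \to \mathrm{H}^n(M)/\mathrm{H}^n(M)[p]$, and naturality in $M^{\ast}$ is immediate from the construction.

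Finally, I verify that $\bar{\varphi}$ is bijective. For surjectivity, given any cycle $y \in Z^n(M)$, the element $x = p^n y$ lies in $(\eta_p M)^n$ since $dx = p^n dy = 0 \in p^{n+1} M^{n+1}$, and is a cycle whose image under $\bar\varphi$ is $[y]$. For injectivity, suppose $x = p^n y$ is a cycle in $\eta_p M$ and $[y] \in \mathrm{H}^n(M)[p]$, i.e., there exists $v \in M^{n-1}$ with $dv = p y$; then $u := p^{n-1} v$ lies in $(\eta_p M)^{n-1}$ (its differential $du = p^n y = x$ is in $p^n M^n$) and witnesses $x = du$ as a boundary in $\eta_p M$.

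Since no step involves a delicate obstruction, there is no real ``hard part'' here — the only point requiring care is the consistent bookkeeping of which $p$-power divisibility condition corresponds to cycles versus boundaries in $\eta_p M$; once this is done, the isomorphism $\mathrm{H}^n(\eta_p M) \simeq \mathrm{H}^n(M)/\mathrm{H}^n(M)[p]$ falls out immediately, naturally in $M^{\ast}$.
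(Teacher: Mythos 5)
Your proof is correct, and it is essentially the paper's argument run in the opposite direction: the paper sends a cocycle $z \in M^n$ to $p^n z$ and shows this induces a surjection $\mathrm{H}^n(M) \to \mathrm{H}^n(\eta_p M)$ whose kernel is exactly the $p$-torsion, while you construct the inverse map $p^n y \mapsto [y]$ and verify it is well-defined and bijective. The underlying computation — cycles of $\eta_p M$ are $p^n \cdot Z^n(M)$, and boundaries correspond precisely to cycles $y$ with $py$ exact — is the same in both.
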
 

\begin{proof} 
For every integer $n$, the construction $z \mapsto p^{n} z$ determines a bijection
from the set of cocycles in $M^{n}$ to the set of cocycles in $(\eta_p M)^{n}$.
Note that if $z = dy$ is a boundary in $M^{n}$, then $p^{n} z = d ( p^{n} y )$ is a boundary
in $(\eta_p M)^{n}$; it follows that the construction $z \mapsto p^{n} z$
induces a surjection of cohomology groups $\mathrm{H}^n(M) \to \mathrm{H}^n(\eta_p M)$. 
By definition, the kernel of this map consists of those cohomology classes $[z]$
which are represented by cocycles $z \in M^{n}$ for which $p^{n} z = d (p^{n-1} x)$ for
some $x \in M^{n-1}$, which is equivalent to the requirement that $p [z] = 0$
in $\mathrm{H}^{n}(M)$.
\end{proof} 

\begin{corollary}\label{qism}
Let $f\colon  M^{\ast} \rightarrow N^{\ast}$ be a map between $p$-torsion-free cochain complexes of
abelian groups. If $f$ is a quasi-isomorphism, then the induced map $(\eta_p
M)^{\ast} \rightarrow (\eta_p N)^{\ast}$ is also a quasi-isomorphism. \qed
\end{corollary}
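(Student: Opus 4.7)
The plan is to deduce this directly from Proposition \ref{etap_quism}, provided we verify that the isomorphism $\mathrm{H}^*(\eta_p M) \simeq \mathrm{H}^*(M)/\mathrm{H}^*(M)[p]$ constructed there is natural in $M^*$ (restricting, of course, to the subcategory $\cochtfr$ where $\eta_p$ is defined).

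First I would revisit the construction in the proof of Proposition \ref{etap_quism}: the isomorphism sends the class of a cocycle $z \in M^n$ to the class of $p^n z \in (\eta_p M)^n$, and the kernel consists exactly of $\mathrm{H}^n(M)[p]$. Given a chain map $f\colon M^* \to N^*$ of $p$-torsion-free complexes, $f$ restricts to a chain map $(\eta_p M)^* \to (\eta_p N)^*$ (because multiplication by $p^n$ commutes with $f$ and $f$ is compatible with $d$), and under the description above the square
\[
\xymatrix{
\mathrm{H}^n(M)/\mathrm{H}^n(M)[p] \ar[r] \ar[d]_{\bar{f}_*} & \mathrm{H}^n(\eta_p M) \ar[d]^{(\eta_p f)_*} \\
\mathrm{H}^n(N)/\mathrm{H}^n(N)[p] \ar[r] & \mathrm{H}^n(\eta_p N)
}
\]
commutes, since $f(p^n z) = p^n f(z)$ represents $\bar{f}_*([z])$ on the right-hand side. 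This is the key naturality check, and it is essentially immediate from the explicit formula.

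With naturality in hand, the corollary follows in one line: if $f$ is a quasi-isomorphism, then $f_*\colon \mathrm{H}^*(M) \to \mathrm{H}^*(N)$ is an isomorphism, hence it induces an isomorphism $\mathrm{H}^*(M)/\mathrm{H}^*(M)[p] \to \mathrm{H}^*(N)/\mathrm{H}^*(N)[p]$ (the $p$-torsion subgroup is preserved by any group homomorphism, so quotienting is functorial), and so the horizontal arrows in the square identify $(\eta_p f)_*$ with an isomorphism.

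There is no real obstacle here; the only thing to be a bit careful about is that the isomorphism of Proposition \ref{etap_quism} was originally stated as an abstract isomorphism of graded groups, so one has to trace through the proof to confirm it is natural, but the formula $[z] \mapsto [p^n z]$ makes this transparent.
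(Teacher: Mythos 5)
Your proposal is correct and matches the paper's (implicit) argument: the paper states the isomorphism of Proposition \ref{etap_quism} as canonical and deduces Corollary \ref{qism} directly from it, exactly as you do by checking naturality of $[z] \mapsto [p^n z]$ and using that any isomorphism $\mathrm{H}^{*}(M) \to \mathrm{H}^{*}(N)$ descends to the quotients by the $p$-torsion subgroups. Your explicit verification of the commuting square is the only content the paper leaves to the reader.
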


Combining Corollary \ref{qism} with Variant \ref{mariant}, we see that $\eta_p$ induces an endofunctor of the derived category $D(\Z)$:

\begin{corollary}\label{qism2}
There is an essentially unique functor $L \eta_p\colon  D(\Z) \rightarrow D(\Z)$ for which the diagram of categories
$$ \xymatrix{ \cochtfr \ar[r]^{ \eta_p } \ar[d] & \cochtfr \ar[d] \\
D(\Z) \ar[r]^{ L \eta_p } & D(\Z) }$$
commutes up to natural isomorphism. \qed
\end{corollary}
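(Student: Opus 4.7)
The plan is to deduce this directly from the universal property of the localization together with Corollary \ref{qism}. By Variant \ref{mariant}, the derived category $D(\Z)$ can be obtained from the category $\cochtfr$ of $p$-torsion-free cochain complexes by formally inverting the class $W$ of quasi-isomorphisms. Consequently, specifying a functor out of $D(\Z)$ that fits into the square in the statement amounts to specifying a functor $\cochtfr \to D(\Z)$ that sends elements of $W$ to isomorphisms.

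First I would form the composite
\[
\Phi\colon \cochtfr \xrightarrow{\eta_p} \cochtfr \xrightarrow{Q} D(\Z),
\]
where $Q$ denotes the canonical localization functor. By Corollary \ref{qism}, the functor $\eta_p$ sends quasi-isomorphisms in $\cochtfr$ to quasi-isomorphisms in $\cochtfr$, and $Q$ sends quasi-isomorphisms to isomorphisms; hence $\Phi$ sends every morphism in $W$ to an isomorphism in $D(\Z)$. Applying the universal property of the localization $Q\colon \cochtfr \to D(\Z)[W^{-1}] = D(\Z)$ to the functor $\Phi$, I obtain an essentially unique functor $L\eta_p\colon D(\Z) \to D(\Z)$ such that $L\eta_p \circ Q \simeq \Phi = Q \circ \eta_p$, which is precisely the commutativity of the square up to isomorphism.

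For essential uniqueness, I would note that if $G\colon D(\Z) \to D(\Z)$ is another functor making the square commute up to isomorphism, then $G \circ Q \simeq Q \circ \eta_p \simeq L\eta_p \circ Q$, so $G$ and $L\eta_p$ agree (up to isomorphism) after precomposition with $Q$; since $Q$ is essentially surjective (every object of $D(\Z)$ is quasi-isomorphic to a cochain complex of free, hence $p$-torsion-free, abelian groups, as in Variant \ref{mariant}) and since the universal property of the localization identifies functors out of $D(\Z)$ with $W$-inverting functors out of $\cochtfr$, this forces $G \simeq L\eta_p$.

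No step here is a real obstacle: the content has already been absorbed into Corollary \ref{qism} (which in turn rests on the cohomology computation of Proposition \ref{etap_quism}) and into the description of $D(\Z)$ as a localization of $\cochtfr$. The only minor point worth flagging is that one must use the localization description with $\cochtfr$ rather than with all cochain complexes, since $\eta_p$ is only defined on $p$-torsion-free complexes; this is exactly the reason Variant \ref{mariant} was recorded.
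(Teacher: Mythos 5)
Your proof is correct and matches the paper's intended argument: the paper derives this corollary exactly by combining Corollary \ref{qism} (preservation of quasi-isomorphisms by $\eta_p$ on $\cochtfr$) with Variant \ref{mariant} (the presentation of $D(\Z)$ as the localization of $\cochtfr$ at quasi-isomorphisms), which is precisely the universal-property argument you spell out.
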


Given an object $X \in D(\Z)$, we have $\mathrm{H}^*(L \eta_p X) \simeq \mathrm{H}^*(X)/\mathrm{H}^*(X)[p]$
by Proposition~\ref{etap_quism}. In particular, $L \eta_p$ is an operator on $D( \Z)$ which reduces the $p$-torsion by a small amount. 

We close this section with the following (see \cite[Lem. 6.19]{BMS}):

\begin{proposition}\label{meta}
The functor $L \eta_p$ commutes with the derived $p$-completion functor of Remark \ref{dercomp}. More precisely,
if $M \rightarrow N$ is a morphism in $D(\Z)$ which exhibits $N$ as a derived $p$-completion of $N$, then
the induced map $L \eta_p M \rightarrow L \eta_p N$ exhibits $L \eta_p N$ as a derived $p$-completion of $L \eta_p M$.
In particular, the functor $L \eta_p$ carries $p$-complete objects of $D(\Z)$ to $p$-complete objects of $D(\Z)$.
\end{proposition}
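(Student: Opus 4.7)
The plan is to verify the two claims by reducing both to computable facts about cochain complexes, using the explicit model for $L\eta_p$ provided by Construction~\ref{etaconstruction}. I will first handle the ``in particular'' statement (that $L\eta_p$ preserves $p$-complete objects), since it will be needed to deduce the main claim.

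For the preservation statement, I would invoke Proposition~\ref{etap_quism}, which gives the isomorphism $\mathrm{H}^n(L\eta_p X) \simeq \mathrm{H}^n(X)/\mathrm{H}^n(X)[p]$ for every $X \in D(\Z)$. By Remark~\ref{cose2}, an object of $D(\Z)$ is $p$-complete if and only if each of its cohomology groups is derived $p$-complete. So if $N$ is $p$-complete, each $\mathrm{H}^n(N)$ is derived $p$-complete; then the $p$-torsion subgroup $\mathrm{H}^n(N)[p]$ (a kernel of multiplication by $p$) and the quotient $\mathrm{H}^n(N)/\mathrm{H}^n(N)[p]$ are both derived $p$-complete, since derived $p$-complete abelian groups form an abelian subcategory closed under kernels and cokernels (Remark~\ref{cose}). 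Hence $\mathrm{H}^n(L\eta_p N)$ is derived $p$-complete for every $n$, and $L\eta_p N$ is $p$-complete.

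For the main claim, represent $M$ by a $p$-torsion-free cochain complex $M^{\ast}$ (using Variant~\ref{mariant}). By Remark~\ref{dercomp}, the derived $p$-completion $N$ is then represented by the levelwise $p$-adic completion $\widehat{M}^{\ast}_p$, which is again $p$-torsion-free. Thus $L\eta_p M$ and $L\eta_p N$ are represented by $(\eta_p M)^{\ast}$ and $(\eta_p \widehat{M}_p)^{\ast}$, respectively. Since $L\eta_p N$ is already $p$-complete by the first step, it suffices to show that the map $L\eta_p M \to L\eta_p N$ becomes an isomorphism after $- \otimes^L_{\Z} \Z/p\Z$, as this forces the cofiber to have vanishing derived $p$-completion. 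Now the canonical map $M^{\ast}/pM^{\ast} \to \widehat{M}^{\ast}_p / p \widehat{M}^{\ast}_p$ is even an isomorphism of cochain complexes (a standard property of $p$-completion on levelwise $p$-torsion-free complexes), hence in particular a quasi-isomorphism. By Corollary~\ref{cor63}, the induced map $(\eta_p M)^{\ast}/p (\eta_p M)^{\ast} \to (\eta_p \widehat{M}_p)^{\ast}/p (\eta_p \widehat{M}_p)^{\ast}$ is a quasi-isomorphism. Since $(\eta_p M)^{\ast}$ and $(\eta_p \widehat{M}_p)^{\ast}$ are $p$-torsion-free, these quotients compute the derived tensor products with $\Z/p\Z$; this gives the desired isomorphism $L\eta_p M \otimes^L_{\Z} \Z/p\Z \simeq L\eta_p N \otimes^L_{\Z} \Z/p\Z$.

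No step here is a serious obstacle: the hard content is already packaged in Proposition~\ref{etap_quism} and Corollary~\ref{cor63}. The only point requiring any care is the observation that, for a $p$-torsion-free complex $X^{\ast}$, the subcomplex $(\eta_p X)^{\ast} \subseteq X^{\ast}[p^{-1}]$ is again $p$-torsion-free, so that reduction modulo $p$ indeed computes $L\eta_p X \otimes^L_{\Z} \Z/p\Z$ without any derived correction. With this in hand, the argument assembles formally.
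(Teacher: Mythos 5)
Your proof is correct, and it takes a genuinely different route from the paper's. The paper chooses a representative $M^{\ast}$ by a complex of \emph{free} abelian groups, takes its levelwise $p$-adic completion $\widehat{M}^{\ast}_p$, and then simply observes (without further argument) that $(\eta_p \widehat{M}_p)^{\ast}$ \emph{is} the levelwise $p$-adic completion of $(\eta_p M)^{\ast}$; since $(\eta_p M)^{\ast}$ is again a complex of free abelian groups, Remark~\ref{dercomp} finishes the proof, and the ``in particular'' clause follows formally by applying the main statement to $M = N$. You instead prove the ``in particular'' clause first, directly from Proposition~\ref{etap_quism} together with the closure of derived $p$-complete abelian groups under kernels and cokernels (Remarks~\ref{cose}, \ref{cose2}), and then reduce the main statement to a mod-$p$ comparison: since $L\eta_p N$ is $p$-complete, it suffices that $L\eta_p M \to L\eta_p N$ be an isomorphism after $-\otimes^L_{\Z}\Z/p$, which you get from the chain-level isomorphism $M^{\ast}/p \simeq \widehat{M}^{\ast}_p/p$ and Corollary~\ref{cor63}. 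What your route buys is that every step is either cited or elementary — in particular you avoid the paper's unproved chain-level identification of $(\eta_p \widehat{M}_p)^{\ast}$ with the completion of $(\eta_p M)^{\ast}$, at the cost of invoking Corollary~\ref{cor63} and the standard dévissage that a map to a $p$-complete object is a derived $p$-completion as soon as it is an equivalence mod $p$; the paper's argument is shorter and stays entirely at the level of explicit complexes. Both arguments use only material appearing before the proposition, so there is no circularity in your version.
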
 

Our proof will make use of the following:

\begin{lemma} 
\label{leftexactnesscompletion}
Suppose we are given an exact sequence of $p$-torsion-free abelian groups
\[ 0 \to A \stackrel{f}{\to}B \stackrel{g}{\to} C   \]
and for which the image of $g$ contains $p^n C$ for some $n \gg 0$. 
Then the sequence of pro-free abelian groups
\[ 0 \to \widehat{A}_p \xrightarrow{\widehat{f}_p} \widehat{B}_p
\xrightarrow{\widehat{g}_p} \widehat{C}_p \]
is also exact.
\end{lemma} 
\begin{proof} 
Let $C' \subseteq C$ be the image of $g$, so that the quotient $C/C'$ is annihilated by $p^{n}$ and is therefore (derived) $p$-complete.
Using the exactness of derived $p$-completion, we obtain exact sequences
\[ 0 \to \widehat{A}_p \to \widehat{B}_p \to \widehat{C}'_p \to 0 \]
\[ 0 \to \widehat{C}'_{p} \to \widehat{C}_p \to C/C' \to 0, \]
which can be spliced to obtain the desired sequence $0 \to \widehat{A}_p \to \widehat{B}_p \to \widehat{C}_p$.
\end{proof} 

\begin{proof}[Proof of Proposition \ref{meta}]
Let $M$ be an object of $D(\Z)$, which we can represent by a cochain complex $M^{\ast}$ of free abelian groups.
Let $\widehat{M}^{\ast}_{p}$ denote the cochain complex obtained from $M^{\ast}$ by levelwise $p$-completion,
so that the canonical map $M^{\ast} \rightarrow \widehat{M}^{\ast}_{p}$ exhibits $\widehat{M}_{p}$ as a derived $p$-completion of $M$ in $D(\Z)$.
We wish to show that the induced map $L \eta_p M \rightarrow L \eta_{p} \widehat{M}^{\ast}_{p}$ exhibits $L \eta_p \widehat{M}^{\ast}_{p}$
as a derived $p$-completion of $L \eta_p M$. Since each term of the cochain complex $(\eta_p M)^{\ast}$ is $p$-torsion-free (even free, since subgroups of free abelian
groups are free), it suffices to observe that
that the canonical map $(\eta_p M)^{\ast} \rightarrow (\eta_{p} \widehat{M}_{p})^{\ast}$ exhibits each $(\eta_{p} \widehat{M}_{p})^{n}$
as the $p$-adic completion of $(\eta_p M)^{n}$. This follows by applying Lemma~\ref{leftexactnesscompletion} to the exact sequence
$$ 0 \rightarrow (\eta_p M)^{n} \xrightarrow{x \mapsto (x,dx)} p^{n} M^{n} \oplus p^{n+1} M^{n+1} \xrightarrow{ (y,z) \mapsto dy-z} p^{n} M^{n+1}.$$
\end{proof}

It follows from Proposition~\ref{meta} that $L \eta_p$ restricts to an endofunctor of the subcategory $\widehat{D(\Z)}_p \subseteq D(\Z)$.

\subsection{Fixed Points of $L \eta_p$: $1$--Categorical Version}
\label{Leta1cat}

In this section, we explain that the category $\FrobCompComplete$ of strict Dieudonn\'{e} complexes
can be realized as the {\it fixed points} for the operator $L \eta_p$ on the $p$-complete derived
category $\dhp$ (Theorem~\ref{fixedpointDZ}). In particular, every fixed point
of $L \eta_p$ of $\dhp$ admits a canonical representative in the category
$\coch$ of cochain complexes. 

\begin{definition}\label{fixe}
Let $\mathcal{C}$ be a category and $T\colon  \mathcal{C} \to \mathcal{C}$ be an endofunctor. The \emph{fixed point category} $\mathcal{C}^T$ is defined as follows:
\begin{itemize}
\item The objects of $\calC^{T}$ are pairs $(X, \varphi)$, where $X$ is an object of $\calC$ and $\varphi\colon  X \simeq T(X)$ is an isomorphism.

\item A morphism from $(X, \varphi)$ to $(X', \varphi' )$ in $\calC^{T}$ is a morphism $f\colon  X \rightarrow X'$ in the category $\calC$ with the property that
the diagram
$$ \xymatrix{ X \ar[d]^-{f} \ar[r]^-{ \varphi}_-{\sim} & T(X) \ar[d]^{ T(f)} \\
X' \ar[r]^-{\varphi' }_-{\sim} & T(X') }$$
commutes. 
\end{itemize}
\end{definition} 

\begin{remark}[Functoriality]\label{funcfixed}
Let $\calC$ and $\calC'$ be categories equipped with endofunctors $T\colon  \calC \rightarrow \calC$ and $T'\colon  \calC' \rightarrow \calC'$.
Suppose we are given a functor $U\colon  \calC \rightarrow \calC'$ for which the diagram
$$ \xymatrix{ \calC \ar[r]^{T} \ar[d]^{U} & \calC \ar[d]^{U} \\
\calC' \ar[r]^{T'} & \calC' }$$
commutes up to (specified) isomorphism. Then $U$ induces a functor of fixed point categories $\calC^{T} \rightarrow \calC'^{T'}$, given
on objects by the construction $(C, \varphi) \mapsto ( U(C), U( \varphi ) )$.
\end{remark}

\begin{example}\label{fixedpointsat}
Let $\cochtfr$ denote the category of $p$-torsion-free cochain complexes, and let
$\eta_p\colon  \cochtfr \rightarrow \cochtfr$ be the functor of Construction \ref{etaconstruction}.
If $M^{\ast}$ is a $p$-torsion-free cochain complex, then endowing $M^{\ast}$ with the structure of
a Dieudonn\'{e} complex $(M^{\ast}, F)$ is equivalent to supplying a map of cochain complexes $\alpha_{F}\colon  M^{\ast} \rightarrow \eta_p M^{\ast}$ (Remark \ref{rem2}).
Moreover, the Dieudonn\'{e} complex $( M^{\ast}, F )$ is saturated if and only if the map $\alpha_{F}$ is an isomorphism. It follows that
we can identify the category $\FrobCompSat$ of saturated Dieudonn\'{e} complexes with the fixed point category
$(\cochtfr)^{\eta_p}$ of Definition \ref{fixe}).
\end{example}

Combining the identification of Example \ref{fixedpointsat} with Remark \ref{funcfixed}, we obtain a natural map $\FrobCompSat \simeq ( \cochtfr )^{\eta_p} \rightarrow D(\Z)^{ L \eta_p }$.

\begin{theorem}\label{fixedpointDZ}
The map $\FrobCompSat\rightarrow D(\Z)^{L \eta_p}$ constructed above restricts to an equivalence of categories
$$\FrobCompComplete \xrightarrow{\sim} \widehat{ D(\Z) }^{L \eta_p}_{p}.$$
\end{theorem}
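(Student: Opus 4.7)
The plan is to establish the equivalence by constructing a quasi-inverse functor $\Psi: \widehat{D(\Z)}_p^{L\eta_p} \to \FrobCompComplete$ and verifying both composites are naturally isomorphic to the identity. First observe that the functor $\Theta: \FrobCompSat \to D(\Z)^{L\eta_p}$ of the preceding discussion restricts as asserted: if $M^{\ast}$ is strict, then each $M^{n}$ is $p$-adically complete and $p$-torsion-free (Remark \ref{olos} together with Corollary \ref{derivedcompleteDC}), so the underlying object $M \in D(\Z)$ is $p$-complete by Example \ref{sez}, and $L\eta_p M$ is $p$-complete by Proposition \ref{meta}; meanwhile the defining isomorphism $\alpha_{F}: M^{\ast} \xrightarrow{\sim} \eta_p M^{\ast}$ (Remark \ref{rem3}) descends to an isomorphism $M \simeq L\eta_p M$ in $\dhp$.

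For essential surjectivity, given $(X,\varphi) \in \widehat{D(\Z)}_p^{L\eta_p}$, I would use Proposition \ref{pcompleteDZ}(a) to pick a representative $X^{\ast} \in \coch^{\mathrm{pro-free}}$. One then needs to check that $(\eta_p X)^{\ast}$ is again pro-free: each $(\eta_p X)^{n}$ sits in an exact sequence relating it to subgroups of $p^{n}X^{n}$ and $p^{n+1}X^{n+1}$, so is derived $p$-complete by Remark \ref{cose} and $p$-torsion-free, hence pro-free by Proposition \ref{prop-free}. By Proposition \ref{pcompleteDZ}(b), lift $\varphi$ to an actual chain map $\tilde\varphi: X^{\ast} \to \eta_p X^{\ast}$; this lift is unique up to chain homotopy. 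By Remark \ref{rem2}, $\tilde\varphi$ equips $X^{\ast}$ with the structure of a $p$-torsion-free Dieudonn\'e complex, and I set $\Psi(X,\varphi) := \WSaturate(X^{\ast})$. Since $\varphi$ is an isomorphism in $\dhp$, the chain map $\tilde\varphi$ is a quasi-isomorphism; applying Corollary \ref{qism} iteratively shows that every map in the sequence
\[ X^{\ast} \xrightarrow{\tilde\varphi} \eta_p X^{\ast} \xrightarrow{\eta_p(\tilde\varphi)} \eta_p^{2} X^{\ast} \to \cdots \]
is a quasi-isomorphism, and hence by the explicit colimit description (proof of Proposition \ref{prop4}) so is $X^{\ast} \to \Saturate(X^{\ast})$. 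Corollary \ref{derivedcompleteDC} then shows $\Saturate(X^{\ast}) \to \WSaturate(X^{\ast})$ is a $p$-completion, and since $X$ is already $p$-complete this yields an isomorphism $X \simeq \WSaturate(X^{\ast})$ in $\dhp$, naturally compatible with the fixed-point structures by construction.

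For fully faithfulness and functoriality of $\Psi$, I would argue as follows. Given strict Dieudonn\'e complexes $M^{\ast}, N^{\ast}$ and a morphism $f: (M,\alpha_F) \to (N,\alpha_{F'})$ in $\dhp^{L\eta_p}$, Proposition \ref{pcompleteDZ}(b) lifts $f$ to a chain map $\tilde f: M^{\ast} \to N^{\ast}$, and the fixed-point compatibility gives a chain homotopy $h$ witnessing $\eta_p(\tilde f)\circ\alpha_F \simeq \alpha_{F'}\circ\tilde f$. For uniqueness of Dieudonn\'e-linear lifts, I would show that the only chain map $M^{\ast} \to N^{\ast}$ which commutes with $F$ and is nullhomotopic is zero, exploiting that any nullhomotopy $s$ of a Frobenius-equivariant map can be averaged over the tower $\{\WittScript_r\}$: at the level of the strict quotients $\WittScript_r$ the homotopy becomes trivial modulo $V^r$, and then strictness forces it to vanish in the inverse limit. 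For existence of a Frobenius-equivariant lift, I would modify $\tilde f$ by a correction term extracted from $h$ and the splitting provided by pro-freeness of the mapping cone, using the key property that $\eta_p$ strictly decreases the ``$p$-adic order'' of chain homotopies between $p$-torsion-free complexes.

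\textbf{Main obstacle.} The principal difficulty is the fully faithful claim at the $1$-categorical level: a morphism in $\dhp^{L\eta_p}$ only records that the relevant square commutes in the derived category, without a specified homotopy, so in principle there may be an obstruction to producing a Frobenius-equivariant lift on the nose. The resolution must rely on the rigidity of strict Dieudonn\'e complexes under the Verschiebung-adic filtration (Proposition \ref{prop11} and Corollary \ref{cor15}), which should allow one to successively refine any chain-level lift modulo $V^r$ until the Frobenius equivariance holds strictly on the inverse limit. Once this rigidity is established, well-definedness of $\Psi$ on morphisms — that two chain-homotopic lifts $\tilde\varphi_{1},\tilde\varphi_{2}$ of $\varphi$ produce canonically isomorphic strict Dieudonn\'e complexes — follows from the same mechanism applied to the identity of $X^{\ast}$ under the two Dieudonn\'e structures, and the verification $\Psi\circ\Theta \simeq \id$ reduces to the universal property of $\WSaturate$ (Corollary \ref{oloter}) applied to the saturation map $M^{\ast} \to \WSaturate(M^{\ast})$, which is the identity on an already strict $M^{\ast}$.
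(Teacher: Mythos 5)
Your overall architecture coincides with the paper's: both arguments identify $\Hom$-sets in $\widehat{D(\Z)}_p^{L\eta_p}$ between strict complexes with chain-homotopy classes of weakly $F$-compatible chain maps (via Proposition \ref{pcompleteDZ}), and both obtain essential surjectivity by lifting the isomorphism $\varphi\colon X \simeq L\eta_p X$ to an honest chain map on a suitable representative, reading this as a Dieudonn\'e structure, and showing $X \to \WSaturate(X^{\ast})$ is an isomorphism in $\widehat{D(\Z)}_p$; your use of pro-free rather than free representatives (and the observation that $\eta_p$ of a pro-free complex is again pro-free) is a harmless variant, and your essential surjectivity argument is complete and correct. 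Your uniqueness sketch for the rigidification is also essentially the paper's: if $g$ is $F$-equivariant and $g = dh+hd$, then $g = F^{-r}\circ g\circ F^{r} \in \im(V^r) + \im(dV^r)$, so $g$ dies in every $\WittScript_r$ and vanishes by strictness.

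The genuine gap is the existence half of the rigidification, which is exactly the content of the paper's Proposition \ref{Phifullfaithful} and is the crux of full faithfulness (and of every well-definedness and naturality check your quasi-inverse $\Psi$ would require). You flag it as the main obstacle but do not resolve it, and the two mechanisms you propose do not obviously deliver it: the ``splitting provided by pro-freeness of the mapping cone'' plays no role in any argument of this type, and the claim that ``$\eta_p$ strictly decreases the $p$-adic order of chain homotopies'' fails in the relevant degree --- for degree $-1$ cochains the divisibility furnished by Proposition \ref{divisibilitydg}(1) is by $p^{r-1}=p^{0}=1$, i.e.\ nothing at the cochain level (the genuine divisibility statements there concern cocycles and cohomology classes, and are what drives the $\infty$-categorical refinement in Theorem \ref{1category}, not the $1$-categorical straightening needed here). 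The correct mechanism is $V$-adic, not $p$-adic, and is an explicit one-step construction rather than a successive refinement: writing $F^{-1}\circ f\circ F = f + dh + hd$, one uses the identity $F^{-1}\circ(du+ud)\circ F = d(Vu) + (Vu)d$ to see that the telescoping choice $u = \sum_{n\geq 0} V^{n}h$ (which converges because the strict target is the inverse limit of the $\WittScript_r$'s, and $V^{n}h$ dies in $\WittScript_r$ for $n \geq r$) makes $\overline{f} = f + du + ud$ strictly $F$-equivariant and chain homotopic to $f$. Without this (or an equivalent) construction, your proof of full faithfulness --- and hence of the theorem --- is incomplete; with it, your plan goes through, though at that point it is simpler to verify directly that the given functor is fully faithful and essentially surjective, as the paper does, rather than to build and check an explicit quasi-inverse.
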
 

The proof of Theorem \ref{fixedpointDZ} will require some preliminaries.

\begin{definition} 
Let $X^{\ast}, Y^{\ast}$ be $p$-torsion free Dieudonn\'e complexes. Let $f\colon  X^{\ast} \to
Y^{\ast}$ be a map of the underlying cochain complexes. We say that $f$ is
\emph{weakly $F$-compatible} if the diagram of cochain complexes
\[ \xymatrix{
X^{\ast} \ar[d]^f \ar[r]^{\alpha_F} &  \eta_p X^{\ast} \ar[d]^{\eta_p(f)} \\
Y^{\ast}  \ar[r]^{\alpha_F} &  \eta_p Y^{\ast}
}\]
commutes up to chain homotopy. Equivalently, if $Y^{\ast}$ is saturated (so
that $\alpha_F$ is an isomorphism), we require that the map $F^{-1} \circ f
\circ F = \alpha_F^{-1} \circ \eta_p(f) \circ \alpha_F $ of cochain complexes
$X^{\ast} \to Y^{\ast}$ should be chain homotopic to $f$. 
\end{definition} 

The main ingredient in the proof of Theorem \ref{fixedpointDZ} is the following:

\begin{proposition} 
\label{Phifullfaithful}
Let $X^{\ast}$ and $Y^{\ast}$ be Dieudonn\'{e} complexes and let
$f\colon  X^{\ast} \to Y^{\ast}$ be a map which is weakly $F$-compatible.
If $Y^{\ast}$ is strict, then there is a unique map of Dieudonn\'{e} complexes
$\overline{f}\colon  X^{\ast} \to Y^{\ast}$ which is chain homotopic to $f$.
\end{proposition}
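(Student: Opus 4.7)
The plan is to unpack weak $F$-compatibility at the cochain level, construct $\overline{f}$ as a $V$-adically convergent correction to $f$, and then argue uniqueness by showing the construction depends only on the chain homotopy class of $f$. The first step is to extract a concrete algebraic consequence of weak $F$-compatibility. Any chain homotopy $H\colon X^{\ast} \to (\eta_p Y)^{\ast-1}$ witnessing $\alpha_F \circ f \simeq \eta_p(f) \circ \alpha_F$ satisfies $H(x) \in p^{n-1}Y^{n-1}$ for $x \in X^n$; writing $H = p^{n-1}\widetilde{K}$ (using $p$-torsion-freeness of $Y^{\ast}$) and invoking saturation of $Y^{\ast}$ to factor $\widetilde{K} = F \circ \ell_0$ (possible since $d\widetilde{K} \in pY$, by Definition~\ref{def21}(ii)), one produces a degree $-1$ operator $\ell_0\colon X^{\ast} \to Y^{\ast-1}$ satisfying
\[ F \circ f - f \circ F \;=\; F \circ (d\ell_0 + \ell_0 d). \]

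For existence, the plan is to iterate: define $\ell_k := V^k \circ \ell_0 \circ F^k$ for $k \geq 0$, where $V$ is the Verschiebung of Remark~\ref{rem5}. Using $FV = p$, the identity $F \circ d \circ V = d$ from Proposition~\ref{Vexists}, and $dF = pFd$, a direct computation gives the telescoping relation
\[ F \circ (d\ell_{k+1} + \ell_{k+1} d) \;=\; (d\ell_k + \ell_k d) \circ F \qquad (k \geq 0). \]
Since $\ell_k(x) \in V^k Y^{\ast-1}$ and $Y^{\ast}$ is strict, hence complete with respect to the filtration $\{\mathrm{im}(V^r) + \mathrm{im}(dV^r)\}$, the series $\ell := \sum_{k \geq 0} \ell_k$ converges to a well-defined operator. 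Setting $\overline{f} := f - d\ell - \ell d$ gives a chain map in the chain homotopy class of $f$, and summing the telescoping identities over all $k$ yields $F \circ (d\ell + \ell d) - (d\ell + \ell d) \circ F = F \circ (d\ell_0 + \ell_0 d) = Ff - fF$, so $F\overline{f} = \overline{f}F$ and $\overline{f}$ is a map of Dieudonn\'e complexes.

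For uniqueness, I plan to show that the assignment $f \mapsto \overline{f}$ depends only on the chain homotopy class $[f]$, in two steps. First, if $\ell_0, \ell_0'$ are two choices producing the same right-hand side, then $\lambda := \ell_0 - \ell_0'$ satisfies $d\lambda + \lambda d = 0$ (using injectivity of $F$), and using the relations $V^k d = p^k dV^k$ and $dF^k = p^k F^k d$ one checks $d(V^k \lambda F^k) + (V^k \lambda F^k) d = 0$ termwise, so $\overline{f}$ is unchanged. Second, if $f' = f + d\mu + \mu d$ represents the same chain homotopy class, a short calculation (again using $FdV = d$, $FV = p$, $pFd = dF$) shows that $\ell_0 + \mu - V\mu F$ is a valid choice for $f'$, and the corresponding series telescopes to $\ell + \mu$, forcing $\overline{f}' = \overline{f}$. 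Now given any Dieudonn\'e morphism $\overline{f}_1$ chain-homotopic to $f$, the fact that $F\overline{f}_1 - \overline{f}_1 F = 0$ lets us apply the construction to $\overline{f}_1$ with $\ell_0 = 0$, returning $\overline{f}_1$ itself; since the output depends only on $[f] = [\overline{f}_1]$, we conclude $\overline{f}_1 = \overline{f}$. The main obstacle will be recognizing the precise form $\ell_k = V^k \ell_0 F^k$ of the successive corrections; this is essentially forced by the requirements $F \ell_{k+1} = p\ell_k$ and $Fd\ell_{k+1} = d\ell_k \circ F$, both dictated by the Dieudonn\'e relations, after which all remaining verifications are bookkeeping with the commutation identities in a saturated Dieudonn\'e complex, and convergence is handled by strictness of $Y^{\ast}$.
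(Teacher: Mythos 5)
Your proof is correct, and the existence half is essentially the paper's argument: you correct $f$ by the $V$-adically convergent homotopy $\ell = \sum_{k \geq 0} V^{k}\ell_0 F^{k}$ and conclude by a telescoping cancellation, using completeness of $Y^{\ast}$ for the filtration $\im(V^r)+\im(dV^r)$. In fact your version is slightly more careful than the printed one: the paper's displayed identity reads $F^{-1}\circ(du+ud)\circ F = d(Vu)+(Vu)d$ and accordingly takes $u=\sum_n V^n h$, but the correct conjugation identity is $F^{-1}\circ(du+ud)\circ F = d(VuF)+(VuF)d$, so the series really should carry the $F^{k}$'s exactly as in your $\ell_k = V^k\ell_0 F^k$; your relation $F\circ(d\ell_{k+1}+\ell_{k+1}d)=(d\ell_k+\ell_k d)\circ F$ is the right statement and the cancellation goes through verbatim. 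Where you genuinely diverge is uniqueness. The paper proves the cleaner standalone fact that any morphism of Dieudonn\'e complexes $g\colon X^{\ast}\to Y^{\ast}$ which is chain homotopic to zero must vanish: writing $g=dh+hd$ and using $gF^r=F^rg$, one gets $g = F^{-r}\circ g\circ F^{r} = dV^{r}hF^{r}+V^{r}hF^{r}d \in \im(V^{r})+\im(dV^{r})$ for every $r$, so $g$ dies in each $\W_r(Y)^{\ast}$ and hence vanishes by strictness; uniqueness of $\overline{f}$ is then immediate. You instead show that your construction is independent of the choice of $\ell_0$ and of the representative of the chain homotopy class, and then identify any Dieudonn\'e representative as a fixed point of the construction. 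Both arguments are valid; the paper's is shorter and isolates a reusable lemma (a null-homotopic Dieudonn\'e map into a strict complex is zero), while yours trades that lemma for the extra bookkeeping of well-definedness.
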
 

\begin{proof} 
Let $u\colon  X^{\ast} \to Y^{\ast -1}$ be an arbitrary map of graded abelian groups.
Then, using $FdV = d$, we have
\begin{align*} F^{-1} \circ (du + ud)  \circ F  =d(VuF) + (VuF) d. 
\end{align*}

Write $F^{-1} \circ f \circ F = f + dh + hd$ for some map $h\colon  X^{\ast} \to
Y^{\ast -1}$. 
We now set $u = \sum_{n=0}^\infty V^n h F^n$ and $\overline{f} = f + du + ud$. 
It follows that 
\begin{align*} F^{-1} \circ \overline{f} \circ F  - \overline{f} & = 
dh + hd + d(VuF) + (VuF) d - (du + ud )\\
& = dh + hd + \sum_{n=0}^\infty (dV^{n+1} h F^{n+1} + V^{n+1} h F^{n+1} d ) - 
(\sum_{n=0}^\infty d V^n h F^n + V^n h F^n  d ) \\
& = 0.
\end{align*}
It follows that $\overline{f}\colon  X^{\ast} \to Y^{\ast}$ is a map of Dieudonn\'e complexes which
is chain homotopic to $f$ as a map of chain complexes.

To establish uniqueness, suppose that $g\colon  X^{\ast} \to Y^{\ast}$ is a map of Dieudonn\'e
complexes which is chain homotopic to zero; we can then write $g = dh+hd$ for
some map $h\colon  X^{\ast} \rightarrow Y^{\ast-1}$. For each $r \geq 0$, we have
\begin{eqnarray*}
g & = & F^{-r} \circ g \circ F^{r} \\
& = & F^{-r} (dh + hd) F^r \\
& = & d V^{r} h F^{r} + V^r h F^r d \\
& \in & d V^{r} Y^{\ast} + V^{r} Y^{\ast}.
\end{eqnarray*}
It follows that the composite map $X^{\ast} \xrightarrow{g} Y^{\ast} \rightarrow \W_{r}(Y)^{\ast}$ vanishes for
each $r \geq 0$, so that $g$ vanishes by virtue of our assumption that $Y^{\ast}$ is complete.
\end{proof} 

\begin{proof}[Proof of Theorem \ref{fixedpointDZ}]
Let $X^{\ast}$ and $Y^{\ast}$ be strict Dieudonn\'{e} complexes, and let $X$ and $Y$
denote their images in $D(\Z)$. Since $X^{\ast}$ and $Y^{\ast}$ are levelwise pro-free (in the sense of Definition \ref{def-pro-free}),
we can identify $\Hom_{ D(\Z) }(X, Y)$ with the set of chain homotopy equivalence classes of maps
from $X^{\ast}$ to $Y^{\ast}$ (Proposition \ref{pcompleteDZ}). It follows that
we can identify $\Hom_{ D(\Z)^{L \eta_p} }(X, Y)$ with the set of chain homotopy equivalence classes of
maps $f\colon  X^{\ast} \rightarrow Y^{\ast}$ which are weakly $F$-compatible. Using Proposition
\ref{Phifullfaithful}, we deduce that the canonical map
$$ \Hom_{ \FrobCompComplete}( X^{\ast}, Y^{\ast}) \rightarrow \Hom_{ D(\Z)^{L \eta_p} }( X, Y)$$
is bijective.

To complete the proof, we must show that every  object $(X, \varphi) \in
D(\Z)^{L \eta_p}$ with $X$ $p$-complete
is isomorphic to a strict Dieudonn\'{e} complex. Without loss of generality, we may assume that
$X$ is represented by a cochain complex $X^{\ast}$ of free abelian groups. In this case, we can choose
a quasi-isomorphism of cochain complexes $\alpha\colon  X^{\ast} \rightarrow \eta_p X^{\ast}$ representing the
isomorphism $\varphi\colon  X \simeq L \eta_p X$.
By virtue of Remark \ref{rem2}, this choice endows $X^{\ast}$ with the structure of a Dieudonn\'{e} complex. 
Using Corollary \ref{qism}, we see that each of the transition maps in the diagram
$$ X^{\ast} \xrightarrow{ \alpha } (\eta_p X)^{\ast} \xrightarrow{ \eta_p( \alpha ) } (\eta_p \eta_p X)^{\ast} \xrightarrow{ \eta_p( \eta_p( \alpha ) )} ( \eta_p \eta_p \eta_p X)^{\ast} \rightarrow \cdots $$
is a quasi-isomorphism. It follows that the natural map $X^{\ast} \rightarrow
\Saturate(X^{\ast})$ (which is the colimit of the above sequence) is a quasi-isomorphism. Since $X$ is a $p$-complete object
of $D(\Z)$, it follows from Corollary \ref{cor66} that the canonical map $\Saturate(X^{\ast}) \rightarrow \WSaturate(X^{\ast})$ is also a quasi-isomorphism.
In particular, $X$ is isomorphic to $\WSaturate(X^{\ast})$ as an object of the fixed point category $\widehat{D( \Z)}^{L \eta_p}_{p}$.
\end{proof}

\subsection{Fixed Points of $L \eta_p$: $\infty$--Categorical Version}
\label{LetaInfCat}

From a homotopy-theoretic point of view, the definition of the fixed-point category
$D(\Z)^{L \eta_p}$ has an unnatural feature. Concretely, a morphism in $D(\Z)^{L \eta_p}$
can be represented by a diagram of cochain complexes of free abelian groups
$$ \xymatrix{ X^{\ast} \ar[d]^{f} \ar[r]^{ \varphi} & (\eta_p X)^{\ast} \ar[d]^{ \eta_p(f)} \\
Y^{\ast} \ar[r]^{\varphi' } & (\eta_p Y)^{\ast} }$$
which is required to commute up to chain homotopy, but the chain homotopy itself need not be specified. Typically, constructions which
allow ambiguities of this nature will give rise to categories which are badly behaved. One can avoid this problem by contemplating
a {\em homotopy coherent} variant of the fixed point construction, where the datum of a morphism from $(X^{\ast}, \varphi)$ to $(Y^{\ast}, \varphi')$
is required to also supply a homotopy $h\colon  X^{\ast} \rightarrow (\eta_p Y)^{\ast-1}$ which witnesses the homotopy-commutativity of the diagram above. 
Our goal in this section is to show that, if we restrict our attention to the {\em $p$-complete} derived category $\dhp$, then this modification is
unnecessary: the fixed point category $\widehat{ D(\Z) }^{L \eta_p}_{p}$ given
by Definition \ref{fixe} agrees with its homotopy-coherent
refinement.\footnote{This  refinement will be used in
Section~\ref{sec:crystallinecomp} below.}
To give a precise formulation of this statement, it will be convenient to use
the theory of $\infty$-categories (see \cite{Lur09} for a general treatment,
and \cite[Sec.~1.3]{HA} for a treatment of the derived $\infty$-category). We
will also use freely the theory of stable $\infty$-categories \cite[Ch.~1]{HA}. 

\begin{notation}
Given an $\infty$-category $\mathcal{D}$ and pair of morphisms $f,g\colon  X \rightarrow Y$ in $\mathcal{D}$, we
define an {\it equalizer} of $f$ and $g$ to be a limit of the diagram $X \rightrightarrows Y$, indexed by
the $\infty$-category $\Delta^1 \cup_{\partial \Delta^1} \Delta^1$.
\end{notation}

\begin{example}
\label{spectraequalizer}
Given spectra $X, Y$ and a pair of maps $f, g\colon  X \rightarrow Y$, the equalizer
$\mathrm{eq}(f,g)$ can be identified with the fiber $\mathrm{fib}( f- g)$.
\end{example}

\begin{definition}\label{fixeinfinity}
Let $\mathcal{C}$ be an $\infty$-category and let $T\colon  \mathcal{C} \to
\mathcal{C}$ be a functor. We define the $\infty$-category $\calC^T$
of {\it fixed points of $T$} to be the equalizer of the diagram
$$\mathrm{id}_{\calC}, T\colon  \mathcal{C} \rightrightarrows \mathcal{C},$$
formed in the $\infty$-category of $\infty$-categories.
\end{definition}


We refer to \cite[Sec.~II.1]{NS17} for a detailed treatment of fixed point
$\infty$-categories. In fact, \emph{loc.~cit.}~ treats \emph{lax} fixed point
$\infty$-categories; fixed points 
form a full subcategory of the lax fixed points (see also
Definition~\ref{fixeinfinity2} below). The $\infty$-category $\mathcal{C}^T$ can also be represented as the
pullback
\[ \xymatrix@R=50pt@C=50pt{
\mathcal{C}^{T} \ar[d]  \ar[r] &  \mathcal{C}
\ar[d]^{(\mathrm{id}_{\mathcal{C}}, T)} \ar[d]  \\
\mathcal{C} \ar[r]^{(\mathrm{id}_{\mathcal{C}}, \mathrm{id}_{\mathcal{C}})} &  \mathcal{C} \times \mathcal{C}
}.\]

\begin{remark}\label{ooso}
By construction, an object of $\mathcal{C}^T$ is represented by a pair $(X,
\varphi_X)$ where $X \in \mathcal{C}$ and $\varphi_X\colon  X \simeq TX$ is an equivalence. 
The space of maps between two objects 
$(X, \varphi_X), (Y,  \varphi_Y)$ can be described concretely as the (homotopy) equalizer of the pair of maps
$$\Hom_{\mathcal{C}}(X, Y) \rightrightarrows \Hom_{\mathcal{C}}(X,
TY),$$
given by $f \mapsto \phi_Y \circ f$ and $
f \mapsto F(f) \circ \phi_X$, respectively (see \cite[Prop. II.1.5(ii)]{NS17},
which proves this for the \emph{lax} equalizer, which contains the equalizer
as a full subcategory). 
\end{remark}

\newcommand{\DD}{\mathcal{D}}
\newcommand{\ddhp}{\widehat{\mathcal{D}(\mathbb{Z}_p)}}

\begin{notation}\label{bulish}
We let $\DD(\mathbb{Z})$ denote the derived $\infty$-category of $\Z$-modules. This $\infty$-category has many equivalent descriptions, of which
we single out the following:
\begin{itemize}
\item[$(1)$] The $\infty$-category $\DD(\mathbb{Z})$ can be obtained from the ordinary category
$\cochtfr$ of $p$-torsion-free cochain complexes by formally adjoining (in the
$\infty$-categorical sense) inverses of quasi-isomorphisms (here we can also replace
the category $\cochtfr$ by the larger category $\coch$ of all cochain complexes, or the smaller category $\cochfr$ of cochain complexes of free abelian groups).
Compare with \cite[Sec.~1.3.4]{HA}. 
\item[$(2)$] The $\infty$-category $\DD(\Z)$ can be realized as the {\it differential graded nerve} (in the sense of \cite[Tag 00PL]{kerodon}) of
$\cochfr$, regarded as a differential graded category (that is, a category enriched in cochain complexes).
\end{itemize}

The $\infty$-category $\DD(\Z)$ can be regarded as an ``enhancement'' of the derived category $D(\Z)$ of abelian groups. In particular,
there is a forgetful functor $\DD(\Z) \rightarrow D(\Z)$, which exhibits $D(\Z)$ as the homotopy category of $\DD(\Z)$. We let
$\ddhp \subseteq \DD(\Z)$ denote the inverse image of the subcategory $\dhp \subseteq D(\Z)$ (that is, the full subcategory of
$\DD(\Z)$ spanned by the $p$-complete objects).
\end{notation}

\begin{notation}\label{makeeta}
It follows from Corollary \ref{qism} (and the first characterization of $\DD(\Z)$ given in Notation \ref{bulish}) that
there is an essentially unique functor $L \eta_p\colon  \DD(\Z) \rightarrow \DD(\Z)$ for which the diagram
$$ \xymatrix{ \cochtfr \ar[r]^{ \eta_p } \ar[d] & \cochtfr \ar[d] \\
\DD(\Z) \ar[r]^{ L \eta_p} & \DD(\Z) }$$
commutes up to equivalence. Note that after passing to homotopy categories,
the functor $L \eta_p\colon  \DD(\Z) \rightarrow \DD(\Z)$ reduces to the functor
$L \eta_p\colon  D(\Z) \rightarrow D(\Z)$ of Corollary \ref{qism2}. Moreover,
the functor $L \eta_p$ carries the full subcategory $\ddhp \subseteq \DD(\Z)$
into itself (Proposition~\ref{meta}).
\end{notation}

We can now formulate our main result:

\begin{theorem}\label{1category}
The forgetful functor $\ddhp \rightarrow \dhp$ induces
an equivalence of $\infty$-categories
$$ \widehat{ \calD(\Z_p)}^{L \eta_p} \rightarrow \widehat{D(\Z_p)}^{L \eta_p}.$$
In particular, $\widehat{ \calD(\Z)}^{L \eta_p}_{p}$ is (equivalent to) an ordinary category.
\end{theorem}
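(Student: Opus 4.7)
My plan is to show that the forgetful functor $\widehat{\mathcal{D}(\Z)}_p^{L\eta_p} \to \widehat{D(\Z)}_p^{L\eta_p}$ is an equivalence of $\infty$-categories, from which the equivalence with $\FrobCompComplete$ follows by combining with Theorem~\ref{fixedpointDZ}. I would separately verify essential surjectivity and full faithfulness. Essential surjectivity is automatic: any object $(X, \varphi_X) \in \widehat{D(\Z)}_p^{L\eta_p}$ lifts to $\widehat{\mathcal{D}(\Z)}_p^{L\eta_p}$ since the forgetful functor $\widehat{\mathcal{D}(\Z)}_p \to \widehat{D(\Z)}_p$ is surjective on objects, and any isomorphism in the homotopy category can be lifted to an equivalence in the underlying $\infty$-category.

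The main task is full faithfulness. By Remark~\ref{ooso}, the mapping space from $(X, \varphi_X)$ to $(Y, \varphi_Y)$ is the homotopy equalizer of two maps $\Map_{\mathcal{D}(\Z)}(X, Y) \rightrightarrows \Map_{\mathcal{D}(\Z)}(X, L\eta_p Y)$; using the equivalence induced by $\varphi_Y$ to identify the codomain with $\Map_{\mathcal{D}(\Z)}(X, Y)$, this equalizer becomes $\mathrm{fib}(\alpha - \mathrm{id})$, where $\alpha = \varphi_Y^{-1} \circ L\eta_p(-) \circ \varphi_X$ is the Frobenius conjugation endomorphism. It suffices to show this fiber is discrete, since the bijection on $\pi_0$ with the 1-categorical morphism set then follows from Theorem~\ref{fixedpointDZ}.

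Showing $\mathrm{fib}(\alpha - \mathrm{id})$ is discrete amounts to showing $\alpha - \mathrm{id}$ acts invertibly on $\pi_n \Map_{\mathcal{D}(\Z)}(X, Y)$ for each $n \geq 1$. The key observation is that, although $L\eta_p$ does not commute with the shift functor, there is a canonical chain-level identity $\eta_p(Y[-n])^k = p^n \cdot (\eta_p Y)[-n]^k$ as submodules of $Y[-n]^k$, so that the natural comparison map $L\eta_p(Y[-n]) \to (L\eta_p Y)[-n]$ equals multiplication by $p^n$ under the canonical identification of these two objects of $\mathcal{D}(\Z)$. Unwinding the definition of $\alpha$ on an element $c \in \pi_n \Map_{\mathcal{D}(\Z)}(X, Y) = \Hom_{\mathcal{D}(\Z)}(X, Y[-n])$, this forces the relation $\alpha(c) = p^n \cdot u(c)$ for some naive Frobenius-conjugation endomorphism $u$ of $\pi_n$. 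Consequently $\alpha - \mathrm{id} \equiv -\mathrm{id} \pmod{p}$ on $\pi_n$ for $n \geq 1$, and since $\Map_{\mathcal{D}(\Z)}(X, Y)$ is derived $p$-complete (by Proposition~\ref{meta} and the $p$-completeness of $Y$), a Nakayama argument gives that $\alpha - \mathrm{id}$ is invertible on these $\pi_n$.

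The main obstacle is establishing the factor-of-$p^n$ claim for the Frobenius action on higher homotopy groups, which rests on careful chain-level bookkeeping of how the inclusion $\eta_p(Y[-n]) \hookrightarrow (\eta_p Y)[-n]$ compares with the canonical identification of the two objects in $\mathcal{D}(\Z)$; the rest of the argument (including the identification on $\pi_0$ via Theorem~\ref{fixedpointDZ}) is essentially formal once this technical input is in place.
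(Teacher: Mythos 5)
Your proposal is correct, and its overall architecture is the same as the paper's: essential surjectivity is immediate, and full faithfulness is reduced via Remark~\ref{ooso} to showing that the difference of the two maps on the mapping spectrum $\Hom_{\calD(\Z)}(X,Y) \rightrightarrows \Hom_{\calD(\Z)}(X, L\eta_p Y)$ is invertible on $\pi_r$ for $r>0$, which follows from divisibility by $p^r$ of the $L\eta_p$-action on $\pi_r$ together with derived $p$-completeness (the paper's Corollary~\ref{divhomotopy} plus Lemma~\ref{autdercompl}). The one genuine divergence is how you obtain the divisibility input: the paper proves it by a direct computation in the dg model $\mathcal{D}_{\mathrm{dg}}(\Z)$ (Proposition~\ref{divisibilitydg}), estimating how $\eta_p$ acts on degree $-r$ cochains and on cocycles of the Hom complex, while you use the chain-level identity $\eta_p(Y[-n]) = p^n\,(\eta_p Y)[-n]$, i.e.\ divisibility by $p$ of the exchange transformation between $L\eta_p$ and (de)suspension; this is precisely the alternative argument the paper sketches in the remark following Corollary~\ref{divhomotopy}, and your identity is correct. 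The ``bookkeeping'' you defer is real, though: since $L\eta_p$ is additive but not exact, one must justify that its effect on $\pi_n$ of mapping spaces is computed by applying $L\eta_p$ to a map $X \to Y[-n]$ and then composing with the canonical comparison $L\eta_p(Y[-n]) \to (L\eta_p Y)[-n]$ (compatibility of the exchange map with the loop-space description of mapping spaces); the paper's dg-level computation is designed exactly to sidestep this coherence point, so your route trades that computation for this check. Two minor corrections: derived $p$-completeness of $\Hom_{\calD(\Z)}(X,Y)$ follows directly from the $p$-completeness of $Y$ (Proposition~\ref{meta} is not the relevant citation), and the identification of $\pi_0$ of the fiber with the hom set of $\widehat{D(\Z)}^{L \eta_p}_{p}$ needs only surjectivity on $\pi_1$ and the definition of the ordinary fixed-point category --- Theorem~\ref{fixedpointDZ} enters only afterwards, to pass to strict Dieudonn\'{e} complexes.
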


\begin{corollary}\label{1categorycor}
The construction $M^{\ast} \mapsto M$ induces an
equivalence of $\infty$-categories
$$ \FrobCompComplete \rightarrow  \widehat{ \calD(\Z)}^{L \eta_p}_{p}.$$
\end{corollary}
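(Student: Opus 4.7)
The plan is to obtain Corollary \ref{1categorycor} as an immediate consequence of Theorems \ref{fixedpointDZ} and \ref{1category}. First I would promote the equivalence of ordinary categories from Theorem \ref{fixedpointDZ} to a functor $\Phi\colon \FrobCompComplete \to \widehat{\calD(\Z)}^{L\eta_p}_p$ landing in the $\infty$-categorical fixed points. Given a strict Dieudonn\'e complex $(M^{\ast}, d, F)$, saturation supplies an isomorphism of cochain complexes $\alpha_F\colon M^{\ast} \xrightarrow{\sim} (\eta_p M)^{\ast}$ (cf.\ Remark \ref{rem3}), which, after passing to $\calD(\Z)$ through the localization of $\cochtfr$ described in Notation \ref{bulish}, promotes the image $M \in \calD(\Z)$ to a fixed point of the functor $L\eta_p$ of Notation \ref{makeeta}. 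Corollary \ref{derivedcompleteDC} ensures $M \in \ddhp$, so we indeed land in $\widehat{\calD(\Z)}^{L\eta_p}_p$. Functoriality in $M^{\ast}$ is automatic, because morphisms of Dieudonn\'e complexes commute with $\alpha_F$ on the nose in $\cochtfr$, so they lift canonically to morphisms in the fixed-point $\infty$-category.

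Next I would note that, by construction, there is a commutative triangle
$$\xymatrix{
\FrobCompComplete \ar[r]^-{\Phi} \ar[dr]_-{\Psi} & \widehat{\calD(\Z)}^{L\eta_p}_p \ar[d] \\
& \widehat{D(\Z)}^{L\eta_p}_p,
}$$
where the vertical arrow is the forgetful functor appearing in Theorem \ref{1category} and $\Psi$ is the functor appearing in Theorem \ref{fixedpointDZ}: both composites send $M^{\ast}$ to the pair consisting of its image in $D(\Z)$ together with the isomorphism $\alpha_F\colon M \simeq L\eta_p M$, and both act on morphisms by the underlying map of complexes.

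Finally I would conclude using two-out-of-three. Theorem \ref{fixedpointDZ} asserts that $\Psi$ is an equivalence, and Theorem \ref{1category} asserts that the vertical forgetful functor is an equivalence of $\infty$-categories; hence $\Phi$ is also an equivalence, which is the statement of the corollary. The only nontrivial step is really the construction of $\Phi$ at the $\infty$-categorical level, and the main obstacle would have been showing that the mapping spaces in $\widehat{\calD(\Z)}^{L\eta_p}_p$ are discrete (so that no higher coherence data are required beyond what is already present in $\FrobCompComplete$); but this has been handled in advance by Theorem \ref{1category}, which identifies $\widehat{\calD(\Z)}^{L\eta_p}_p$ with an ordinary category. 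Consequently the argument reduces to bookkeeping, and no substantial further work is required.
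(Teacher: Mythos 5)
Your argument is correct and is essentially the paper's own proof: the paper also deduces the corollary by combining Theorem \ref{fixedpointDZ} with Theorem \ref{1category}, the only difference being that you spell out explicitly the construction of the lift $\Phi$ and the commuting triangle, which the paper leaves implicit.
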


\begin{proof}
Combine Theorems \ref{1category} and \ref{fixedpointDZ}.
\end{proof}

\subsection{The Proof of Theorem \ref{1category}}\label{derivedsub6}

To prove Theorem \ref{1category}, we will need a mechanism for computing spaces of morphisms in
the fixed point $\infty$-category $\calD(\Z)^{L \eta_p}$. For this, it will be convenient to work with
a certain model for $\calD(\Z)$ at the level of differential graded categories.

\newcommand{\DDD}{\mathcal{D}_{\mathrm{dg}}}

\begin{definition} 
We let $\DDD(\mathbb{Z})$ be the \emph{differential graded category} whose objects are the cochain
complexes of free abelian groups. Given objects $X^{\ast}, Y^{\ast} \in \DD(\Z)$, we let
$\Hom_{\DDD(\mathbb{Z})}(X^{\ast}, Y^{\ast})$ denote the truncation
$\tau^{\leq 0} [X^{\ast}, Y^{\ast}]$, where 
$[X^{\ast}, Y^{\ast}]$ denotes the usual cochain complex of maps from $X^{\ast}$ to $Y^{\ast}$.
More concretely, we have
\[ 
\Hom_{\DDD(\mathbb{Z})}(X^{\ast}, Y^{\ast})^r \simeq \begin{cases} 
0 & \text{ if } r > 0 \\
\Hom_{\cochfr}(X^{\ast}, Y^{\ast}) & \text{ if } r = 0 \\
\prod_{n \in \mathbb{Z}} \Hom(X^n, Y^{n+r}) & \text{ if } r< 0
 \end{cases} 
.\]
\end{definition} 

Note that we can identify the derived $\infty$-category $\DD(\Z)$ with the differential
graded nerve \cite[Tag 00PL]{kerodon} of $\DDD(\Z)$. This is a slight variant of the second description appearing in Notation \ref{bulish}; 
note that replacing the mapping complexes $[X^{\ast}, Y^{\ast}]$ by their truncations $\tau^{\leq 0} [ X^{\ast}, Y^{\ast} ]$
has no effect on the differential graded nerve.

\begin{construction}
We define a functor $\eta_p\colon  \DDD(\mathbb{Z}) \to \DDD(\mathbb{Z})$ of
differential graded categories defined as follows: 
\begin{enumerate}
\item On objects, the functor $\eta_p$ is given by Construction \ref{etaconstruction}.

\item For every pair of complexes $X^{\ast}, Y^{\ast} \in \DDD(\Z)$, we associate a map of chain complexes
$$\rho\colon  \Hom_{\DDD(\mathbb{Z})}(X^{\ast}, Y^{\ast}) \to \Hom_{\DDD(\mathbb{Z})}( (\eta_p X)^{\ast}, (\eta_p Y)^{\ast}).$$ 
The map $\rho$ vanishes in positive degrees, and is given in degree zero by the map
$\Hom_{ \cochtfr}( X^{\ast}, Y^{\ast} ) \rightarrow \Hom_{ \cochtfr}( (\eta_p X)^{\ast}, (\eta_p Y)^{\ast} )$ induced
by the functoriality of Construction \ref{etaconstruction}. In degrees $r < 0$, it is given by the construction
$$ \rho( \{ f_i\colon  X^{i} \to Y^{i+r} \}_{i \in \Z} ) = \{ f'_{i}\colon
(\eta_p X)^{i} \rightarrow (\eta_p Y)^{i+r}  \}_{i \in \Z},$$
where $f'_{i}$ is given by the restriction of the map $f_i[1/p]\colon  X^i[1/p] \to Y^{i+r}[1/p]$.
This is well-defined by virtue of the observation that 
$$ (\eta_p X)^{i} \subseteq p^{i} X^{i} \subseteq f_i[1/p]^{-1} (p^i Y^{i+r}) \subseteq f_i[1/p]^{-1} ( \eta_p Y)^{i+r}.$$
\end{enumerate}

Passing to differential graded nerves, we obtain a functor of $\infty$-categories $L \eta_p\colon  \DD(\Z) \rightarrow \DD(\Z)$.
By construction, the diagram
$$ \xymatrix{ \cochtfr \ar[r]^{ \eta_p } \ar[d] & \cochtfr \ar[d] \\
\DD(\Z) \ar[r]^{ L \eta_p} & \DD(\Z) }$$
commutes up to homotopy, so that we recover the functor $L \eta_p$ described in Notation \ref{makeeta} (up to canonical equivalence).
\end{construction}

The crucial feature of $\eta_p$ needed for the proof of Theorem \ref{1category} is the following divisibility property:

\begin{proposition} 
\label{divisibilitydg}
Let $X^{\ast}$ and $Y^{\ast}$ be cochain complexes of free abelian groups and let $r$ be a positive integer.
Then:
\begin{itemize}
\item[$(1)$] The canonical map 
$$ u\colon  \Hom_{\DDD(\mathbb{Z})} (X^{\ast}, Y^{\ast})^{-r} \to 
 \Hom_{\DDD(\mathbb{Z})} (\eta_p X^{\ast}, \eta_p Y^{\ast})^{-r}$$ 
is divisible by $p^{r-1}$.
\item[$(2)$] The restriction of $u$ to $(-r)$-cocycles
$$u_0\colon  Z^{-r} \Hom_{\DDD(\mathbb{Z})} (X^{\ast}, Y^{\ast}) \to 
Z^{-r} \Hom_{\DDD(\mathbb{Z})} (\eta_p X^{\ast}, \eta_p Y^{\ast})$$
is divisible by $p^{r}$. 

\item[$(3)$] Let
$$v\colon  \mathrm{H}^{-r}\Hom_{\DDD(\mathbb{Z})} (X^{\ast}, Y^{\ast}) \to \mathrm{H}^{-r} \Hom_{\DDD(\mathbb{Z})} (\eta_p X^{\ast}, \eta_p Y^{\ast})$$
be the map induced by $u$. Then we can write $v = p^{r} v'$
for some $$v'\colon   \mathrm{H}^{-r}\Hom_{\DDD(\mathbb{Z})} (X^{\ast},
Y^{\ast}) \to \mathrm{H}^{-r} \Hom_{\DDD(\mathbb{Z})} (\eta_p X^{\ast}, \eta_p
Y^{\ast}).$$
\end{itemize}
\end{proposition}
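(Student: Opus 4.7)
The proof rests on two elementary features of the subcomplex $(\eta_p Y)^\ast \subseteq Y^\ast[1/p]$: elements of $(\eta_p Y)^j$ are by definition divisible by $p^j$ in $Y^j$, and their differentials lie in $p^{j+1} Y^{j+1}$. Part (1) uses only the first feature, part (2) uses the cocycle condition to exploit the second, and part (3) is essentially formal. The whole argument is a careful bookkeeping of $p$-adic valuations against cohomological-degree shifts; there is no deeper input.

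For part (1), fix $\{f_i\colon X^i \to Y^{i-r}\}_i \in \Hom_{\DDD(\Z)}(X^\ast, Y^\ast)^{-r}$ and $x \in (\eta_p X)^i \subseteq p^i X^i$. By $\Z$-linearity $f_i(x) \in p^i Y^{i-r} = p^{r-1}\cdot p^{i-r+1}Y^{i-r}$, so setting $g_i(x) := f_i(x)/p^{r-1}$ places $g_i(x)$ in $p^{i-r+1}Y^{i-r} \subseteq p^{i-r}Y^{i-r}$, while $d_Y g_i(x) = d_Y f_i(x)/p^{r-1} \in p^{i-r+1} Y^{i-r+1}$ since $d_Y f_i(x)\in p^i Y^{i-r+1}$. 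Hence $g_i(x) \in (\eta_p Y)^{i-r}$, and the collection $\{g_i\}$ assembles into an element of $\Hom_{\DDD(\Z)}((\eta_p X)^\ast, (\eta_p Y)^\ast)^{-r}$ whose product with $p^{r-1}$ is $u(\{f_i\})$.

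For part (2), the cocycle condition gives $d_Y f_i = \pm f_{i+1} d_X$, which supplies the extra factor of $p$: for $x \in (\eta_p X)^i$ we have $d_X x \in p^{i+1}X^{i+1}$, hence $d_Y f_i(x) = \pm f_{i+1}(d_X x) \in p^{i+1}Y^{i-r+1}$. Setting $w_i(x) := f_i(x)/p^r$, the membership $w_i(x) \in p^{i-r}Y^{i-r}$ still holds, and now $d_Y w_i(x) = d_Y f_i(x)/p^r \in p^{i-r+1}Y^{i-r+1}$. Thus $w_i(x) \in (\eta_p Y)^{i-r}$ and $u_0 = p^r w_0$ on cocycles.

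For part (3), since $u$ is a morphism of cochain complexes coming from a dg functor, $u_0$ sends cocycles to cocycles. The identity $u_0 = p^r w_0$ together with the $p$-torsion-freeness of $\Hom_{\DDD(\Z)}((\eta_p X)^\ast, (\eta_p Y)^\ast)$ (which embeds into the mapping complex from $X^\ast$ to $Y^\ast[1/p]$, a $\Z[1/p]$-module) forces $w_0$ to also land in cocycles. Passing to cohomology then shows $v\bigl(\mathrm{H}^{-r}\Hom(X^\ast,Y^\ast)\bigr) \subseteq p^r\,\mathrm{H}^{-r}\Hom((\eta_p X)^\ast,(\eta_p Y)^\ast)$, which yields the desired factorization $v = p^r v'$. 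The one subtlety here is that $w_0$ need not preserve coboundaries, so that a literal factorization $v = p^r v'$ requires a (harmless) noncanonical choice of lifts; the divisibility of the image of $v$ by $p^r$, which is what is actually used downstream in the proof of Theorem~\ref{1category}, is unambiguous.
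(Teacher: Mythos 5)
Your parts (1) and (2) are correct and essentially identical to the paper's own argument (divide $f_i(x)$ by $p^{r-1}$, resp.\ by $p^{r}$ using the cocycle identity, and check membership in $\eta_p Y$ degreewise). Part (3), however, has a genuine gap. The proposition asserts the existence of a group homomorphism $v'$ with $v = p^{r} v'$; what you actually prove is only that every element in the image of $v$ is divisible by $p^{r}$ in $\mathrm{H}^{-r}\Hom_{\DDD(\mathbb{Z})}(\eta_p X^{\ast}, \eta_p Y^{\ast})$. These are not equivalent: the map $\Z/p \to \Z/p^{2}$ sending $1 \mapsto p$ has image contained in $p(\Z/p^{2})$ but is not $p$ times any homomorphism. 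Your proposed repair via ``a (harmless) noncanonical choice of lifts'' does not close this, since element-wise lifts need not assemble into a homomorphism, and your claim that only image-divisibility is used downstream is also not accurate: Corollary~\ref{divhomotopy} asserts precisely the factorization $v = p^{r}v'$ through an honest map $v'$, and the proof of Theorem~\ref{1category} feeds this into Lemma~\ref{autdercompl}, which requires an expression of the form $f + pg$ with $g$ an actual homomorphism (mere divisibility of image elements would not force the induced endomorphism of $A \otimes^{L}\Z/p\Z$ to vanish, as it could still act nontrivially on the $p$-torsion part).

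The fix is within reach of what you already proved, and it is what the paper does: your map $w_0 = p^{-r}u_0$ on $(-r)$-cocycles \emph{does} carry coboundaries to coboundaries. Indeed, if $z = d\kappa$ with $\kappa$ a $(-r-1)$-cochain, then part (1), applied in degree $-(r+1)$, shows that $u(\kappa)$ is divisible by $p^{r}$; hence $p^{-r}u(\kappa)$ is an honest element of $\Hom_{\DDD(\mathbb{Z})}(\eta_p X^{\ast}, \eta_p Y^{\ast})^{-r-1}$ and $w_0(z) = p^{-r}u(d\kappa) = d\bigl(p^{-r}u(\kappa)\bigr)$ is a coboundary. Consequently $[z] \mapsto [\,p^{-r}u_0(z)\,]$ is a well-defined additive map $v'$ on cohomology, and $p^{r}v' = v$ is the canonical factorization the statement requires.
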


\begin{proof} 
By definition, a $(-r)$-cochain of $\Hom_{\DDD(\mathbb{Z})} (X^{\ast}, Y^{\ast})$ is given
by a system of maps $\{ f_n\colon  X^{n} \to Y^{n-r} \}_{n \in \Z}$. Let us abuse notation by identifying each $f_n$ with
its extension to a map $X^{n}[1/p] \rightarrow Y^{n-r}[1/p]$. Assertion $(1)$ follows
from the inclusions
$$f_n( (\eta_p X)^{n} ) \subseteq f_n( p^{n} X^{n} ) \subseteq p^n Y^{n-r} \subseteq p^{r-1} ( \eta_p Y)^{n-r}.$$
If $\{ f_n \}_{n \in \Z}$ is a cocycle, then we have $d f_n = (-1)^r f_{n+1} d$ for each $n$. For $x \in (\eta_p X)^{n}$, it follows
that 
$$ d f_n(x) = \pm f_{n+1}(dx) \in (p^{n+1} Y^{n+1-r}) \cap
\ker(d) \subseteq p^{r} (\eta_p Y)^{n+1-r}$$
so that $f_n(x) \in p^{r} (\eta_p Y)^{n-r}$, which proves $(2)$.

We now prove $(3)$. For each cocycle $z$ in a cochain complex $M^{\ast}$, let $[z]$ denote its image
in the cohomology $\mathrm{H}^{\ast}( M )$. We claim that there is a unique map
$$v'\colon   \mathrm{H}^{-r}\Hom_{\DDD(\mathbb{Z})} (X^{\ast}, Y^{\ast}) \to \mathrm{H}^{-r} \Hom_{\DDD(\mathbb{Z})} (\eta_p X^{\ast}, \eta_p Y^{\ast})$$
satisfying $v'( [z] ) = [ p^{-r} u(z) ]$ for each $z \in Z^{-r} \Hom_{\DDD(\mathbb{Z})} (X^{\ast}, Y^{\ast})$. To show that
this construction is well-defined, it suffices to show that if $z$ and $z'$ are cohomologous $(-r)$-cocycles
of $\Hom_{\DDD(\mathbb{Z})} (X^{\ast}, Y^{\ast})$, then $p^{-r} u(z)$ and $p^{-r} u(z')$ are cohomologous $r$-cocycles of $\Hom_{\DDD(\mathbb{Z})} (\eta_p X^{\ast}, \eta_p Y^{\ast})$.
Writing $z = z' + dw$ for $w \in \mathrm{H}^{-r-1}\Hom_{\DDD(\mathbb{Z})} (X^{\ast}, Y^{\ast})$, we are reduced to proving
that $u(dw) = d u(w)$ is divisible by $p^{r}$. This is clear, since $u(w)$ is divisible by $p^r$ (by virtue of $(1)$). 
\end{proof} 

The next result now follows directly from Proposition~\ref{divisibilitydg} and
the construction of $\DD(\mathbb{Z})$ as a homotopy coherent nerve. 

\begin{corollary}
\label{divhomotopy}
Let $X, Y \in \DD(\mathbb{Z})$. Then, for $r \geq 0$, the canonical map
$$ v\colon   \pi_r  \Hom_{\DD(\mathbb{Z})}(X, Y) \to \pi_r \Hom_{\DD(\mathbb{Z})}(L
\eta_p X, L \eta_p Y) $$
is divisible by $p^r$. That is, there exists a map $$v'\colon
\pi_r \Hom_{\DD(\mathbb{Z})}(X, Y) \to \pi_r \Hom_{\DD(\mathbb{Z})}(L
\eta_p X, L \eta_p Y)$$ such that $v = p^{r} v'$. \qed
\end{corollary}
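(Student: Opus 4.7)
The plan is to reduce directly to Proposition~\ref{divisibilitydg}(3) using the differential graded nerve model for $\DD(\mathbb{Z})$. First, I would replace $X$ and $Y$ by cochain complexes of free abelian groups $X^{\ast}$ and $Y^{\ast}$ representing them; this is harmless since every object of $\DD(\mathbb{Z})$ is quasi-isomorphic to such a complex (Definition~\ref{defderived}, description $(2)$). Once this is done, the functor $L \eta_p$ is computed on the nose by $\eta_p$, since the diagram relating $\eta_p$ on $\cochfr$ to $L\eta_p$ on $\DD(\mathbb{Z})$ commutes by construction (Notation~\ref{makeeta}, or the definition of $L\eta_p$ as the differential graded nerve of the dg functor $\eta_p$ on $\DDD(\mathbb{Z})$).

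Next, I would invoke the standard identification of homotopy groups in the differential graded nerve: for cochain complexes of free abelian groups $X^{\ast}, Y^{\ast}$, the mapping space $\Hom_{\DD(\mathbb{Z})}(X,Y)$ has homotopy groups
\[ \pi_r \Hom_{\DD(\mathbb{Z})}(X, Y) \;\simeq\; \mathrm{H}^{-r}\Hom_{\DDD(\mathbb{Z})}(X^{\ast}, Y^{\ast}), \]
and likewise for $L\eta_p X$ and $L\eta_p Y$, which are modelled by $\eta_p X^{\ast}$ and $\eta_p Y^{\ast}$. Under these identifications, the map $v$ of the corollary is precisely the map on $\mathrm{H}^{-r}$ induced by the dg functor $\eta_p\colon \DDD(\mathbb{Z}) \to \DDD(\mathbb{Z})$ applied to the mapping complex $\Hom_{\DDD(\mathbb{Z})}(X^{\ast},Y^{\ast})$.

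At this point Proposition~\ref{divisibilitydg}(3) gives exactly the desired conclusion: the induced map on $\mathrm{H}^{-r}$ factors through multiplication by $p^r$. Translating back through the homotopy-group identification above yields a factorization
\[ v \;=\; p^r \cdot v' \quad \text{with} \quad v'\colon \pi_r \Hom_{\DD(\mathbb{Z})}(X, Y) \to \pi_r \Hom_{\DD(\mathbb{Z})}(L\eta_p X, L\eta_p Y), \]
which is the claim. No serious obstacle is anticipated; the only subtle point is ensuring that the homotopy-group identification is natural with respect to the dg functor $\eta_p$, which is automatic from the construction of the differential graded nerve (see \cite[Sec.~1.3.1]{HA}). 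In particular, this corollary is essentially a restatement of Proposition~\ref{divisibilitydg}(3) after translating from chain-level cohomology to $\infty$-categorical homotopy groups.
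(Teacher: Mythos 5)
Your proposal is correct and follows essentially the same route as the paper, which deduces the corollary directly from Proposition~\ref{divisibilitydg} together with the realization of $\DD(\mathbb{Z})$ as the differential graded (homotopy coherent) nerve of $\DDD(\mathbb{Z})$, so that $\pi_r$ of mapping spaces is computed by $\mathrm{H}^{-r}$ of the dg mapping complexes and $L\eta_p$ is computed by the dg functor $\eta_p$. The only cosmetic difference is that you spell out the reduction to complexes of free abelian groups and the naturality of the identification, which the paper leaves implicit.
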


\begin{remark} 
The functor $L \eta_p\colon  \DD(\Z) \rightarrow \DD(\Z)$ preserves zero objects,
so there is a canonical natural transformation
$f\colon  \Sigma \circ L \eta_p \rightarrow L \eta_p \circ \Sigma$, where $\Sigma\colon  \DD(\Z) \rightarrow \DD(\Z)$ denotes
the suspension equivalence. One can show that the natural transformation $f$ is divisible by $p$,
which gives another proof of Corollary \ref{divhomotopy}. 
\end{remark} 

We will deduce Theorem \ref{1category} by combining Corollary \ref{divhomotopy} with the following:

\begin{lemma} 
\label{autdercompl}
Let $A$ be a derived $p$-complete abelian group and let $f, g\colon  A \to A$ be group homomorphisms.
Suppose $f$ is an automorphism of $A$. Then $f + pg$ is an automorphism of $A$. 
\end{lemma}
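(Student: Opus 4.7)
The strategy is to show that $T := f + pg$ is both injective and surjective by reducing each statement to the following key lemma: \emph{if $A$ is a derived $p$-complete abelian group with $pA = A$, then $A = 0$.} This lemma follows from the fact that a derived $p$-complete abelian group satisfies $\Hom_{\Z}(\Z[1/p], A) = 0$. Given $a \in A$, successive use of $pA = A$ yields (by choice) a sequence $a = a_0, a_1, a_2, \dots$ with $p a_{n+1} = a_n$, which assembles into a homomorphism $\phi\colon \Z[1/p] \to A$ determined by $\phi(1/p^n) = a_n$. Vanishing of $\Hom_{\Z}(\Z[1/p], A)$ forces $\phi = 0$, hence $a = 0$.

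For surjectivity, let $C = \coker(T)$, which is derived $p$-complete by Remark~\ref{cose}. For every $a \in A$, the relation $T(a) = 0$ in $C$ gives $\overline{f(a)} = -p\,\overline{g(a)} \in pC$; since $f$ is an automorphism its image in $C$ is all of $C$, so $C = pC$. The lemma then yields $C = 0$.

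For injectivity, let $K = \ker(T)$, also derived $p$-complete, and set $h := -f^{-1}g \in \mathrm{End}(A)$. For $k \in K$, the relation $f(k) = -pg(k)$ rewrites as $k = p h(k)$. A short computation using precisely this identity,
\[
T(h(k)) = f(h(k)) + p g(h(k)) = -g(k) - p g(f^{-1} g(k)) = -g(k) + g(p h(k)) = -g(k) + g(k) = 0,
\]
shows that $h(k) \in K$. Thus $h$ restricts to an endomorphism of $K$ with $k = p h(k)$ for all $k \in K$, so $pK = K$, and the lemma gives $K = 0$.

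The only mildly delicate point is the verification that $h$ stabilizes $K$; with that in hand, both injectivity and surjectivity reduce to the same elementary lemma about derived $p$-complete groups, which is itself just the $\Hom(\Z[1/p], -) = 0$ characterization.
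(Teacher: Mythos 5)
Your proof is correct, but it follows a genuinely different route from the paper. The paper's argument is a one-line application of derived Nakayama: since $A$ is derived $p$-complete, it suffices to check that $f+pg$ becomes an automorphism after applying $-\otimes^{L}_{\Z}\Z/p\Z$, and there it literally agrees with $f$. Your argument instead works entirely at the level of abelian groups: you reduce both injectivity and surjectivity of $f+pg$ to the elementary fact that a derived $p$-complete abelian group which is $p$-divisible must vanish (which indeed follows from $\Hom_{\Z}(\Z[1/p],A)=0$, since a compatible chain of $p$-divisions of an element $a$ is exactly a homomorphism $\Z[1/p]\to A$ hitting $a$), applied to $\coker(f+pg)$ and $\ker(f+pg)$, which are derived $p$-complete by Remark~\ref{cose}. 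Your verification that $h=-f^{-1}g$ stabilizes the kernel and satisfies $k=p\,h(k)$ is the one nontrivial computation, and it checks out. What the paper's approach buys is brevity and the conceptual point that invertibility of endomorphisms of $p$-complete objects can be tested mod $p$ in the derived sense; what yours buys is self-containedness — it avoids invoking the derived-category criterion and makes explicit exactly which inputs are used, namely the $\Hom(\Z[1/p],-)=0$ half of derived $p$-completeness together with closure of derived $p$-complete groups under kernels and cokernels.
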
 

\begin{proof} 
Since $A$ is derived $p$-complete, it suffices to show that $f + pg$
induces an automorphism of $A \otimes^L \Z/p\Z \in D(\Z)$. This follows
from the observation that $f$ and $f+pg$ induce the same
endomorphism of $A \otimes^L \Z/p\Z \in D(\Z)$.
\end{proof} 

\begin{proof}[Proof of Theorem \ref{1category}]
It is clear that the forgetful functor
$$ \widehat{ \calD(\Z)}^{L \eta_p}_{p} \rightarrow \widehat{D(\Z)}^{L \eta_p}_{p}.$$
is essentially surjective. We will show that it is fully faithful.
Fix objects $(X, \varphi_X)$ and $(Y, \varphi_Y) \in \widehat{\mathcal{}D(\Z)}^{L \eta_p}_{p}$,
so that $\varphi_X\colon  X \simeq L \eta_p X$ and $\varphi_{Y}\colon  Y \simeq L \eta_p Y$
are equivalences in $\calD(\Z)$. Using Remark \ref{ooso}, we see that
the mapping space 
$$\Hom_{\widehat{ \calD(\Z)}^{L \eta_p}_{p}}( (X, \varphi_X), (Y, \varphi_Y))$$
can be realized as the homotopy fiber
\[  \mathrm{fib}(f-g\colon  \Hom_{\DD(\mathbb{Z})}( X, Y) \to
\Hom_{\DD(\mathbb{Z})}(X, L \eta_p
Y)),
\]
where
$f$ is given by composition with $\varphi_Y$ and $g$ is 
the composite of $L \eta_p$ and precomposition with $\varphi_X$. 
Note that $f$ is a homotopy equivalence. Combining Lemma~\ref{autdercompl} with
Corollary~\ref{divhomotopy}, we find that $f-g$ induces an isomorphism on 
$\pi_r \Hom_{\DD(\mathbb{Z})}( X, Y)$ for $r > 0$, since $\Hom_{\DD(\Z)} (X,
Y)$ is $p$-complete. 
It follows that
the homotopy fiber of $f-g$ can be identified with the discrete space
given by the kernel of the map
$$ \pi_0(f) - \pi_0(g)\colon  \pi_0 \Hom_{\DD(\mathbb{Z})}( X, Y) \to
\pi_0 \Hom_{\DD(\mathbb{Z})}(X, L \eta_p
Y),$$
which is the set of homomorphisms from 
$(X, \varphi_X)$ to $(Y, \varphi_Y)$ in the ordinary category
$\widehat{D(\Z)}^{L \eta_p}_{p}$.
\end{proof}

\subsection{Tensor Products of Strict Dieudonn\'{e} Complexes}\label{derivedsub4}

The derived $\infty$-category $\calD(\Z)$ admits a symmetric monoidal structure, with underlying tensor product
$$ \otimes^{L}\colon  \calD(\Z) \times \calD(\Z) \rightarrow \calD(\Z ).$$
It is not difficult to see that for any pair of objects $M, N \in \calD(\Z)$, the canonical map
$$ M \otimes^{L} N \rightarrow \widehat{M}_{p} \otimes^{L} \widehat{N}_{p}$$
induces an isomorphism after derived $p$-completion. It follows that there is an essentially
unique symmetric monoidal structure on the $p$-complete derived $\infty$-category $\ddhp$ for which
the derived $p$-completion functor $\calD(\Z) \rightarrow \ddhp$ is symmetric monoidal. We will denote
the underlying tensor product on $\ddhp$ by $\widehat{\otimes}^{L}\colon  \ddhp \times \ddhp \rightarrow \ddhp$;
it is given concretely by the formula
$$ M \widehat{\otimes}^{L} N = \widehat{ M \otimes^{L} N }_{p}.$$

\begin{proposition} 
\label{LetaSymMon}
The functor $L \eta_p\colon  \calD(\Z) \rightarrow \calD(\Z)$ admits a symmetric monoidal structure.
In particular, there are canonical isomorphisms 
$$(L \eta_p)(X) \otimes^{L} (L \eta_p)(Y) \simeq (L \eta_p)( X \otimes^{L} Y )$$
in $\calD(\Z)$.
\end{proposition}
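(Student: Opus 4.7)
The plan is to construct a natural lax symmetric monoidal structure on $\eta_p$ at the 1-categorical level, then check the lax structure maps are quasi-isomorphisms, and finally descend to the derived $\infty$-category.

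First I would work in the category $\cochtfr$ of $p$-torsion-free cochain complexes, equipped with its usual tensor product. For $p$-torsion-free $M^{\ast}, N^{\ast}$, $M^{\ast} \otimes_{\Z} N^{\ast}$ is again $p$-torsion-free, and I define a natural map of cochain complexes
\[ \mu_{M,N} \colon (\eta_p M)^{\ast} \otimes_{\Z} (\eta_p N)^{\ast} \to \eta_p(M^{\ast} \otimes_{\Z} N^{\ast}) \]
by sending $x \otimes y$ (with $x \in (\eta_p M)^i$, $y \in (\eta_p N)^j$) to $x \otimes y$, viewed inside $(M \otimes N)^{i+j}[1/p]$. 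Writing $x = p^i x'$, $y = p^j y'$ with $x' \in M^i, y' \in N^j$, one verifies $x \otimes y \in p^{i+j}(M \otimes N)^{i+j}$ and $d(x \otimes y) = dx \otimes y \pm x \otimes dy \in p^{i+j+1}(M \otimes N)^{i+j+1}$, so $\mu_{M,N}$ lands in $\eta_p(M \otimes N)$. Naturality, unitality, symmetry and associativity are immediate, so $\eta_p$ becomes a lax symmetric monoidal functor $\cochtfr \to \cochtfr$.

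Next I would show each $\mu_{M,N}$ is a quasi-isomorphism. After inverting $p$, the inclusion $\eta_p M \hookrightarrow M$ becomes an isomorphism $(\eta_p M)[1/p] \simeq M[1/p]$ (since any $y \in M^n$ satisfies $p^{n+1}y \in (\eta_p M)^n$), so $\mu_{M,N}[1/p]$ is the canonical iso $M[1/p] \otimes N[1/p] \simeq (M \otimes N)[1/p]$. Modulo $p$, we use Proposition~\ref{prop62}: the quasi-isomorphisms $\gamma_M \colon (\eta_p M)/p \xrightarrow{\sim} (\mathrm{H}^*(M/p), \beta_M)$, $\gamma_N$, $\gamma_{M \otimes N}$ fit into a commutative square whose bottom row is the Künneth map $\mathrm{H}^*(M/p) \otimes_{\F_p} \mathrm{H}^*(N/p) \to \mathrm{H}^*((M \otimes N)/p)$, an isomorphism over the field $\F_p$. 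The Bockstein compatibility is the Leibniz rule $\beta(xy) = \beta(x)y + (-1)^{|x|} x\beta(y)$, verified by lifting cocycles $x, y$ to $\widetilde{x} \in M/p^2, \widetilde{y} \in N/p^2$ and computing $d(\widetilde{x} \otimes \widetilde{y})$. So $\mu_{M,N}/p$ is a quasi-isomorphism. Finally, since the cofiber $C$ of $\mu_{M,N}$ is a $p$-torsion-free complex with $C[1/p] = 0$ (so $\mathrm{H}^*(C)$ is $p$-power torsion) and $C \otimes^L_{\Z} \F_p \simeq C/p = 0$ (so $p \colon \mathrm{H}^*(C) \to \mathrm{H}^*(C)$ is an isomorphism by the Bockstein long exact sequence), combining these forces $\mathrm{H}^*(C) = \mathrm{H}^*(C)[1/p] = 0$.

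The last step is to promote this to a symmetric monoidal structure on the $\infty$-categorical lift $L\eta_p$. The symmetric monoidal structure on $\cochtfr$ (with the ordinary tensor product, which computes $\otimes^L$ on $p$-torsion-free objects) presents $\calD(\Z)$ as a symmetric monoidal $\infty$-category after inverting quasi-isomorphisms. The lax symmetric monoidal functor $\eta_p$ descends to a lax symmetric monoidal functor $L\eta_p \colon \calD(\Z) \to \calD(\Z)$, and because each $\mu_{M,N}$ is a quasi-isomorphism, the lax structure maps are equivalences, so $L\eta_p$ is strongly symmetric monoidal. The main obstacle is the quasi-isomorphism claim for $\mu_{M,N}$, specifically the Leibniz compatibility of the Bockstein under Künneth, and the fracture-style argument needed to pass from the mod-$p$ and $p$-inverted statements to an integral quasi-isomorphism.
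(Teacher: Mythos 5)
Your proposal is correct in substance but establishes the key point by a different route than the paper. Both arguments begin identically: $\eta_p$ is lax symmetric monoidal on a torsion-free model of $\calD(\Z)$, and the entire content is that the lax structure map $\mu_{M,N}\colon \eta_p M^{\ast} \otimes \eta_p N^{\ast} \to \eta_p(M^{\ast} \otimes N^{\ast})$ is a quasi-isomorphism. The paper proves this by splitting objects of $D(\Z)$ (noncanonically) into shifts of their cohomology groups, reducing to abelian groups in degree zero, then to finitely generated and finally cyclic groups, where it cites an explicit computation from \cite{BMS}. You instead check the map after inverting $p$ (where it is the canonical isomorphism) and modulo $p$ (where, via the quasi-isomorphisms $\gamma$ of Proposition~\ref{prop62}, it becomes the K\"unneth isomorphism for $\mathrm{H}^{\ast}(-/p)$ over $\F_p$, compatible with Bocksteins by the Leibniz rule), and then conclude by an arithmetic fracture argument on the cone. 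This is a genuine alternative: it avoids the reduction to cyclic groups and the external reference, at the cost of verifying the Bockstein--K\"unneth compatibility, and it makes transparent that Proposition~\ref{prop62} is the real engine behind the statement. Two small repairs: the cone $C$ of $\mu_{M,N}$ is not literally zero after inverting $p$ or reducing mod $p$ --- it is acyclic, which is all your argument needs; and in the descent step, the ordinary tensor product computes $\otimes^{L}$ and is compatible with inverting quasi-isomorphisms on complexes of \emph{torsion-free} (hence flat) abelian groups --- which is what the paper's $\cochtfr$ denotes --- but not on merely $p$-torsion-free complexes (consider $\Z/3 \otimes \Z/3$ for $p=2$). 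Since your quasi-isomorphism argument applies verbatim to torsion-free complexes, this is only a matter of stating the localization step over the correct subcategory.
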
 

\begin{proof} 
We first note that $X^{\ast}$ and $Y^{\ast}$ are torsion-free cochain complexes of abelian groups, then the
canonical isomorphism
$$ X^{\ast}[1/p] \otimes Y^{\ast}[1/p] \simeq (X^{\ast} \otimes Y^{\ast})[1/p]$$
carries $(\eta_p X)^{\ast} \otimes (\eta_p Y)^{\ast}$ into $\eta_p( X^{\ast} \otimes Y^{\ast} )$. 
It follows that the functor $\eta_p$ can be regarded as a \emph{lax} symmetric monoidal functor
from the category of torsion-free cochain complexes to itself. Beware that this functor is not symmetric monoidal: the induced map
$\alpha\colon  (\eta_p X)^{\ast} \otimes (\eta_p Y)^{\ast}$ into $\eta_p( X^{\ast} \otimes Y^{\ast} )$
is usually not an isomorphism of cochain complexes. The content of Proposition \ref{LetaSymMon} is
that $\alpha$ is nevertheless a {\em quasi-isomorphism}: that is, it induces an isomorphism
$L \eta_p X \otimes^{L} L \eta_p Y \rightarrow L \eta_p (X \otimes^{L} Y)$ in the $\infty$-category $\calD(\Z)$.
To prove this, we can use the fact that $X$ and $Y$ split (noncanonically) as a
direct sum of their cohomology groups (Remark~\ref{cose2}).
to reduce to the case where $X$ and $Y$ are abelian groups, regarded as cochain complexes concentrated in degree zero.
Writing $X$ and $Y$ as filtered colimits of finitely generated abelian groups,
and using that $L \eta_p$ commutes with filtered colimits, we may further assume that $X$ and $Y$ are finitely
generated. Decomposing $X$ and $Y$ into direct summands, we can assume that both $X$ and $Y$ are cyclic groups which are either free or of prime power order.

We now proceed by direct calculation (as in \cite[Prop. 6.8]{BMS}). We will assume that $X \simeq \Z / p^{m} \Z$ and $Y \simeq \Z / p^{n} \Z$ for some integers
$m, n > 0$ (the remaining cases are easy and left to the reader. Then $X$ is represented by the two-term chain complex $X^{\ast} = (\Z e_{-1} \rightarrow  \Z e_0)$ and $Y^{\ast} = (\Z f_{-1} \rightarrow {\Z} f_0)$,
respectively (both complexes are concentrated in degrees $\{ 0, -1\}$, with $d(e_{-1}) = p^m e_0, d(f_{-1}) = p^n f_0$). 
Then  $X^{\ast} \otimes Y^{\ast}$ is the complex with basis vectors 
$e_0 \otimes f_0, e_{-1} \otimes f_{0},  e_0 \otimes f_{-1}, e_{-1} \otimes
f_{-1}$. 
It is not difficult to see that 
$\eta_p (X^{\ast} \otimes Y^{\ast})$ is the complex with basis vectors $
e_0 \otimes f_0, \frac{1}{p} e_{-1} \otimes f_0, \frac{1}{p} e_0 \otimes f_{-1},
\frac{1}{p^2} e_{-1} \otimes f_{-1}$, and that this agrees (at the level of
cochain complexes) with 
$\eta_p X^{\ast} \otimes \eta_p Y^{\ast}$. 
\end{proof} 

\begin{corollary}\label{LetaSymMon2}
The functor $L \eta_p\colon  \ddhp \rightarrow \ddhp$ admits a symmetric monoidal structure.
In particular, there are canonical isomorphisms 
$$(L \eta_p)(X) \widehat{\otimes}^{L} (L \eta_p)(Y) \simeq (L \eta_p)( X \widehat{\otimes}^{L} Y )$$
in the $\infty$-category $\ddhp$.
\end{corollary}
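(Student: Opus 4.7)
The plan is to deduce Corollary~\ref{LetaSymMon2} from Proposition~\ref{LetaSymMon} by descent along the derived $p$-completion functor. Recall that the symmetric monoidal structure on $\ddhp$ was set up precisely so that the derived $p$-completion functor $L\colon \calD(\Z) \to \ddhp$ is a symmetric monoidal localization: $L$ is symmetric monoidal, and $\ddhp$ is obtained from $\calD(\Z)$ by inverting the class $W$ of morphisms whose derived $p$-completion is an equivalence. Moreover, $W$ is compatible with the tensor product, which is what makes the $p$-completed tensor product on $\ddhp$ well defined in the first place.

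First, I would use Proposition~\ref{meta} to observe that $L\eta_p\colon \calD(\Z) \to \calD(\Z)$ carries $W$ into $W$: if a morphism $f\colon M \to N$ becomes an equivalence after derived $p$-completion, then the same proposition identifies $\widehat{L\eta_p(f)}_p$ with $L\eta_p$ of the equivalence $\widehat{M}_p \to \widehat{N}_p$, hence $L\eta_p(f) \in W$ as well. Consequently the composite
\[
\calD(\Z) \xrightarrow{\,L\eta_p\,} \calD(\Z) \xrightarrow{\,L\,} \ddhp
\]
inverts $W$ and therefore factors uniquely (up to equivalence) through $L$, recovering the functor $L\eta_p\colon \ddhp \to \ddhp$ of Notation~\ref{makeeta} together with a canonical equivalence $L \circ L\eta_p \simeq L\eta_p \circ L$ of functors $\calD(\Z) \to \ddhp$.

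Next, Proposition~\ref{LetaSymMon} endows the upper functor $L\eta_p\colon \calD(\Z) \to \calD(\Z)$ with a symmetric monoidal structure, so that the composite $L \circ L\eta_p$ inherits one as a composition of two symmetric monoidal functors. The universal property of the symmetric monoidal localization $L$ then transports this symmetric monoidal structure to the descended functor $L\eta_p\colon \ddhp \to \ddhp$. Unwinding the definitions, the resulting canonical equivalence is the composite
\[
L\eta_p(X) \widehat{\otimes}^{L} L\eta_p(Y) \simeq \widehat{L\eta_p(X) \otimes^{L} L\eta_p(Y)}_p \xrightarrow{\sim} \widehat{L\eta_p(X \otimes^{L} Y)}_p \simeq L\eta_p(X \widehat{\otimes}^{L} Y),
\]
where the middle equivalence is provided by Proposition~\ref{LetaSymMon} and the outer ones follow from Proposition~\ref{meta}.

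The only point requiring real care is the universal property of symmetric monoidal localizations in the $\infty$-categorical setting; this is more a standard invocation than an obstacle, since all the required hypotheses are in hand: $\ddhp$ is a presentable $\infty$-category realized as a Bousfield localization of $\calD(\Z)$, the localization is compatible with the symmetric monoidal structure by construction, and $L\eta_p$ preserves the relevant class of morphisms by Proposition~\ref{meta}. Modulo that citation, there is no further computation to perform.
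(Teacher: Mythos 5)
Your proposal is correct and follows essentially the same route as the paper, whose proof is simply to combine Proposition~\ref{LetaSymMon} (the symmetric monoidal structure on $L\eta_p$ over $\calD(\Z)$) with Proposition~\ref{meta} (compatibility of $L\eta_p$ with derived $p$-completion); your write-up just makes explicit the standard localization argument that these two inputs implicitly rely on. The extra care in checking that the descended functor agrees with the restriction of $L\eta_p$ to $\ddhp$ is welcome but not a departure from the paper's argument.
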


\begin{proof}
Combine Proposition \ref{LetaSymMon} with Proposition \ref{meta}.
\end{proof}

Let $\WSaturate\colon  \FrobComp \rightarrow \FrobCompComplete$ be the completed saturation functor of Notation \ref{completesat}.
In what follows, we will abuse notation by using Corollary \ref{1categorycor} to identify $\WSaturate$ with a functor
from $\FrobComp$ to the fixed point $\infty$-category $\widehat{ \calD(\Z_p) }^{L \eta_p}$. Note that since $L \eta_p$
is a symmetric monoidal functor from $\dhp$ to itself (Corollary \ref{LetaSymMon2}), the fixed point $\infty$-category
$\widehat{\calD(\Z_p) }^{L \eta_p}$ inherits a symmetric monoidal structure.

\begin{proposition}\label{solik}
The functor $\WSaturate\colon  \FrobComp \rightarrow \widehat{ \calD(\Z_p)}^{L \eta_p}$ is symmetric monoidal, where
we regard $\FrobComp$ as equipped with the symmetric monoidal structure given by the tensor product of
Dieudonn\'{e} complexes (Remark \ref{tprop}).
\end{proposition}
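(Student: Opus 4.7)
The plan is to use the equivalence $\FrobCompComplete \simeq \widehat{\calD(\Z)}^{L\eta_p}$ supplied by Corollary \ref{1categorycor}, together with the symmetric monoidality of $L\eta_p$ on $\ddhp$ from Corollary \ref{LetaSymMon2}. The symmetric monoidal structure on the fixed-point $\infty$-category $\widehat{\calD(\Z)}^{L\eta_p}$ is inherited from that of $\ddhp$, with tensor product $(X,\phi_X) \otimes (Y,\phi_Y) = (X \,\widehat{\otimes}^L\, Y, \phi_X \,\widehat{\otimes}^L\, \phi_Y)$ postcomposed with the symmetric monoidal comparison map for $L\eta_p$. The content of Proposition \ref{solik} is that $\WSaturate$ carries the tensor product of Dieudonn\'e complexes from Remark \ref{tprop} to this tensor product.

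First I would reduce to the case in which $M$ and $N$ are $p$-torsion-free: since the $p$-torsion subcomplex of a Dieudonn\'e complex is killed by $\WSaturate$ (see the proof of Proposition \ref{prop4}), and since the tensor product of a $p$-torsion complex with anything has $p$-power torsion cohomology, replacing $M$ and $N$ by $M/M[p^\infty]$ and $N/N[p^\infty]$ leaves both sides unchanged. Granting this reduction, the key step is to construct a natural quasi-isomorphism of Dieudonn\'e complexes $\phi\colon \Saturate(M) \otimes \Saturate(N) \to \Saturate(M \otimes N)$. The map $\phi$ is obtained as follows: viewing $\Saturate(M) \subseteq M[1/p]$ and $\Saturate(N) \subseteq N[1/p]$ via Remark \ref{rem:satissubgroup}, and iterating the inclusion $\eta_p(A) \otimes \eta_p(B) \subseteq \eta_p(A \otimes B) \subseteq (A \otimes B)[1/p]$ noted in the proof of Proposition \ref{LetaSymMon}, we get natural inclusions $\eta_p^n M \otimes \eta_p^n N \subseteq \eta_p^n(M \otimes N)$, yielding $\phi$ in the colimit. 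To show $\phi$ is a quasi-isomorphism, I would use that each intermediate map $\eta_p^n M \otimes \eta_p^n N \to \eta_p^n(M \otimes N)$ is a quasi-isomorphism: this is exactly Proposition \ref{LetaSymMon} for $n = 1$, and follows by induction from Corollary \ref{qism} for larger $n$. Filtered colimits preserve quasi-isomorphisms, and a cofinality argument identifies both $\Saturate(M) \otimes \Saturate(N)$ and $\Saturate(M \otimes N)$ with the appropriate colimits.

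With $\phi$ a quasi-isomorphism in hand, I would pass to derived $p$-completions. Since $\Saturate(M)$ and $\Saturate(N)$ are $p$-torsion-free (hence $\Z$-flat), the ordinary tensor product $\Saturate(M) \otimes \Saturate(N)$ agrees with the derived tensor product $\Saturate(M) \otimes^L \Saturate(N)$, so
\[ \WSaturate(M) \,\widehat{\otimes}^L\, \WSaturate(N) \simeq \widehat{\Saturate(M) \otimes \Saturate(N)}_p \xrightarrow{\widehat{\phi}_p} \widehat{\Saturate(M \otimes N)}_p \simeq \WSaturate(M \otimes N), \]
where the outer equivalences use Corollary \ref{derivedcompleteDC} to identify completion with derived $p$-completion. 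The intertwining of Frobenius structures is automatic: the Frobenius on the tensor product Dieudonn\'e complex is $F \otimes F$, which unwinds under the identification $\FrobCompComplete \simeq \widehat{\calD(\Z)}^{L\eta_p}$ to the tensor product of fixed-point structures composed with the canonical comparison map from Proposition \ref{LetaSymMon}.

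The main obstacle will be upgrading the equivalence of objects to a coherent symmetric monoidal structure on $\WSaturate$ as a functor of $\infty$-categories, rather than merely a functor preserving binary tensor products up to isomorphism. For this, I would exhibit the forgetful functor $\FrobComp \to \calD(\Z)$ (sending a $p$-torsion-free Dieudonn\'e complex to its underlying chain complex) as a symmetric monoidal functor, use that the assignment $M \mapsto (M \xrightarrow{\alpha_F} \eta_p M)$ defines a lax symmetric monoidal enrichment with values in a suitable ``lax fixed point'' construction on $\calD(\Z)$, and then apply the saturation-then-completion to land in genuine fixed points. The argument above, together with the explicit differential graded model for $\calD(\Z)$ from Notation \ref{bulish}, shows the lax structure maps become equivalences after $\WSaturate$, upgrading the data to a genuine symmetric monoidal functor.
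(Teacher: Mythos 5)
Your proposal is correct and follows essentially the same route as the paper: reduce to $p$-torsion-free complexes, build the comparison maps from the inclusions $\eta_p^n M \otimes \eta_p^n N \subseteq \eta_p^n(M \otimes N)$, show they are quasi-isomorphisms via Proposition \ref{LetaSymMon} together with Corollary \ref{qism} and induction on $n$, pass to ($p$-completed) derived categories, and treat coherence by first producing a lax symmetric monoidal structure and then checking its structure maps become equivalences. The paper packages the torsion reduction and coherence slightly more efficiently (the quotient-by-torsion functor $L$ is symmetric monoidal, and $\Saturate$ with the maps $\rho_n$ is lax symmetric monoidal already at the level of ordinary categories), but the mathematical content is the same.
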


\begin{proof}
Let $\FrobComp^{\mathrm{tf}}$ denote the category of torsion-free Dieudonn\'{e} complexes
and let $L\colon  \FrobComp \rightarrow \FrobComp^{\mathrm{tf}}$ be a left adjoint to the inclusion (given by 
$LM^{\ast} = M^{\ast} / \{ \text{torsion} \}$). Then the functor $L$ is symmetric monoidal, and
$\WSaturate$ factors as a composition 
$$ \FrobComp \xrightarrow{ L} \FrobComp^{ \mathrm{tf} } \xrightarrow{ \WSaturate }  \widehat{ D(\Z) }^{L \eta_p}_{p}.$$
It will therefore suffice to construct a symmetric monoidal structure on the functor $\WSaturate|_{ \FrobComp^{\mathrm{tf} } }$. 

For every pair of torsion-free Dieudonn\'{e} complexes $M^{\ast}$ and
$N^{\ast}$, there are natural maps
$$ \rho_n\colon  (\eta_p ^{n} M)^{\ast} \otimes (\eta_p^{n} N)^{\ast} \rightarrow (\eta_p^{n} (M \otimes N))^{\ast},$$
determined by the natural inclusion of both in $M^{\ast}[1/p] \otimes
N^{\ast}[1/p]$. 
Passing to the limit over $n$, we obtain comparison maps
$$ \rho\colon  \Saturate(M^{\ast}) \otimes \Saturate(N^{\ast}) \rightarrow \Saturate( M^{\ast} \otimes N^{\ast} )$$
which allow us to regard $\Saturate$ as a {\em lax} symmetric monoidal functor from the
category $\FrobComp^{\mathrm{tf}}$ to $\FrobCompSat \simeq (\cochtfr)^{\eta_p}$. 
Composing with the (symmetric monoidal) functors $(\cochtfr)^{\eta_p} \rightarrow \calD(\Z)^{L \eta_p} \rightarrow \widehat{ \calD(\Z_p) }^{L \eta_p}$,
we obtain a lax symmetric monoidal structure on the composite functor 
$\WSaturate\colon  \FrobComp^{ \mathrm{tf} } \rightarrow \widehat{ \calD(\Z_p) }^{L \eta_p}$. We will complete the proof
by showing that this lax symmetric monoidal structure is actually a symmetric monoidal structure. For this,
it will suffice to show that the comparison map $\rho$ is a quasi-isomorphism, for every pair of objects
$M^{\ast}, N^{\ast} \in \FrobComp^{\mathrm{tf}}$. In fact, we claim that
each $\rho_n$ is an isomorphism: this follows from Proposition \ref{LetaSymMon}, using induction on $n$. 
\end{proof}

\begin{remark}\label{sublet}
By virtue of Corollary \ref{1categorycor}, there is an essentially unique symmetric monoidal structure on
the category $\FrobCompComplete$ for which the functor 
$$\FrobCompComplete \xrightarrow{\sim} \widehat{ \calD(\Z_p) }^{L \eta_p}$$
is symmetric monoidal. Let us denote the underlying tensor product functor by
$$ \otimes_{ \mathrm{str} }\colon   \FrobCompComplete  \times \FrobCompComplete \rightarrow \FrobCompComplete.$$
Proposition \ref{solik} can be understood as giving an explicit description of this symmetric monoidal
structure: it is uniquely determined by the requirement that the completed saturation functor $\FrobComp \rightarrow \FrobCompComplete$
should be symmetric monoidal. In particular, for every pair of strict Dieudonn\'{e} complexes $M^{\ast}$ and $N^{\ast}$, we have
a canonical isomorphism
$$ M^{\ast} \otimes_{ \mathrm{str} } N^{\ast} \simeq \WSaturate( M^{\ast} \otimes N^{\ast} ),$$
where $M^{\ast} \otimes N^{\ast}$ is the usual tensor product of Dieudonn\'{e} complexes (Remark \ref{tprop}).
\end{remark}

Proposition \ref{solik} has a number of consequences.

\begin{example}\label{toko}
Let $R$ be a commutative ring which is $p$-complete and $p$-torsion free. Then we can regard $R$ as a commutative ring object of
the $\infty$-category $\ddhp$. The $\infty$-category $\Mod_{R}( \ddhp )$ can then be identified with the $p$-complete
derived $\infty$-category $\widehat{ \calD(R) }$: that is, with the full subcategory of $\calD(R)$ spanned by the $p$-complete objects.

Let $\sigma\colon  R \rightarrow R$ be an automorphism, and let $\sigma_{\ast}\colon  \widehat{ \calD(R) } \rightarrow \widehat{ \calD(R) }$ be
the equivalence of $\infty$-categories given by restriction of scalars along $\sigma$. We let $L \eta_p^{\sigma}\colon  \widehat{ \calD(R) } \rightarrow \widehat{ \calD(R) }$
denote the composite functor $\sigma_{\ast} \circ L \eta_p$. Unwinding the definitions, we have an equivalence of $\infty$-categories
$$ \widehat{\calD(R)}^{ L \eta_{p}^{\sigma}} \simeq \Mod_{R}( \widehat{\calD(\Z_p)}^{L \eta_p} ),$$
where we regard $R$ as a (commutative algebra) object of the fixed point $\infty$-category $\widehat{\calD(\Z_p)}^{L \eta_p}$ via the isomorphism
$\sigma\colon  R \simeq \sigma_{\ast} R$. Combining this observation with Remark \ref{sublet}, we see that
$\widehat{\calD(R)}^{ L \eta_{p}^{\sigma}}$ can be identified with the ordinary category $\Mod_{R}( \FrobCompComplete )$ of $R$-modules in the category
of strict Dieudonn\'{e} complexes. Concretely, the objects of $\Mod_{R}( \FrobCompComplete )$ are given by triples $(M^{\ast}, d, F)$, where
$(M^{\ast}, d)$ is a cochain complex of $R$-modules and $F\colon  M^{\ast} \rightarrow M^{\ast}$ is a map of graded abelian groups satisfying the identities
$$ d(Fx) = p F(dx),  \quad \quad F( ax ) = \sigma(a) F(x) \text{ for $a \in R$},$$
where the underlying Dieudonn\'{e} complex (obtained by forgetting the $R$-module structure) is strict.

We can summarize the situation as follows: given a $p$-complete object $M$ of the derived $\infty$-category $\calD(R)$ equipped
with an isomorphism $\alpha\colon  M \simeq \sigma_{\ast} L \eta_p(M)$, we can find a canonical representative of $M$ as a cochain complex
$M^{\ast}$ of ($p$-torsion free) $R$-modules, for which $\alpha$ can be lifted to an isomorphism of cochain complexes
$M^{\ast} \simeq \sigma_{\ast} \eta_p(M^{\ast})$.
\end{example}

\begin{remark}
In practice, we will be most interested in the special case of Example \ref{toko} where $R = W(k)$ is the ring of Witt vectors of a perfect field $k$,
and $\sigma\colon  W(k) \rightarrow W(k)$ is the Witt vector Frobenius map. In this case, we will denote the functor $L \eta_{p}^{\sigma}$ by
$L \eta_{p}^{\varphi}$.
\end{remark}

\begin{example}\label{torch}
Combining Theorem \ref{1category}, Theorem \ref{fixedpointDZ}, and Remark \ref{sublet}, we see that the following are equivalent:
\begin{itemize}
\item The category of commutative algebra objects $A^{\ast}$ of $\FrobComp$ which are strict as Dieudonn\'{e} algebras.
\item The category of commutative algebra objects $A^{\ast}$ of $\FrobCompComplete$ (with respect to
the tensor product of Remark \ref{sublet}).
\item The category of commutative algebra objects of the fixed point category $\widehat{ D(\Z_p) }^{L \eta_p}$
(or, equivalently, the fixed points of $L \eta_p$ on the category of commutative algebra objects of $\dhp$).
\item The $\infty$-category of commutative algebra objects of the fixed point $\infty$-category
$\widehat{ \calD(\Z_p) }^{L \eta_p}$ (or, equivalently, the fixed points of $L \eta_p$ on the $\infty$-category of commutative algebra objects of $\ddhp$).
\end{itemize}
By virtue of Remark \ref{algdesc}, each of these categories contains the category
$\FrobAlgComplete$ of strict Dieudonn\'{e} algebras as a full subcategory.

In particular, if $A$ is a $p$-complete $E_{\infty}$-algebra over $\Z$ for which there exists an isomorphism of $E_{\infty}$-algebras
$A \simeq L \eta_p (A)$, then $A$ has a canonical representative by a commutative differential graded algebra over $\Z_p$.
\end{example}

\begin{remark} 
In view of the Nikolaus--Scholze description \cite{NS17} of the homotopy theory
$\mathrm{CycSp}$ of \emph{cyclotomic spectra}, we can view the category $\FrobCompComplete$ as a
toy analog of $\mathrm{CycSp}$ (although this is perhaps more appropriate for
the \emph{lax} fixed points of $L \eta_p$ discussed in section~\ref{saturateddRW}). The connection between the de~Rham--Witt
complex and cyclotomic spectra goes back to the work of Hesselholt \cite{He96}.
\end{remark}

\newpage
\section{The Nygaard Filtration}

Let $\widetilde{R}$ be a flat $\Z_p$-algebra whose reduction $R = \widetilde{R} / p \widetilde{R}$ is smooth over $\F_p$, and let $\widehat{\Omega}^*_{\widetilde{R}}$
be the completed de~Rham complex of $\widetilde{R}$ (Variant~\ref{var46}). Then $\widehat{\Omega}^{\ast}_{\widetilde{R}}$ admits subcomplexes
\begin{equation} \label{Nygaardnaive} \calN^k \widehat{\Omega}^*_{\widetilde{R}} := (p^k \widehat{\Omega}{^0}_{\widetilde{R}} \to p^{k-1}
\widehat{\Omega}^1_{\widetilde{R}} \xrightarrow{d} ... \xrightarrow{d} p
\widehat{\Omega}^{k-1}_{\widetilde{R}} \xrightarrow{d}
\widehat{\Omega}^k_{\widetilde{R}} \xrightarrow{d}
\widehat{\Omega}^{k+1}_{\widetilde{R}} \xrightarrow{d} \cdots). \end{equation}
Allowing $k$ to vary, we obtain a descending filtration $\{\calN^k \widehat{\Omega}^*_{\widetilde{R}}\}_{k \geq 0}$ of $ \widehat{\Omega}^*_{\widetilde{R}}$.
This filtration has the following features:
\begin{itemize}
\item[$(1)$] Let $\varphi\colon  \widetilde{R} \to \widetilde{R}$ be any lift of the Frobenius map on $R$. Then the pullback map
$\varphi^*\colon  \widehat{\Omega}^*_{\widetilde{R}} \to  \widehat{\Omega}^*_{\widetilde{R}}$ carries $\calN^k  \widehat{\Omega}^*_{\widetilde{R}}$ into $p^k \widehat{\Omega}_{\widetilde{R}}^*$ (since $\varphi^*$ is divisible by $p^i$ on $i$-forms).
\item[$(2)$] Let $\varphi\colon  \widetilde{R} \to \widetilde{R}$ be any lift of the Frobenius map on $R$. Then the maps $\calN^i \widehat{\Omega}^*_{\widetilde{R}} \xrightarrow{\varphi^*}
p^i \widehat{\Omega}^*_{\widetilde{R}}$ determined by $(1)$ for $i=k,k+1$ induce a
quasi-isomorphism
\[ \gr^k_{\calN}  \widehat{\Omega}^*_{\widetilde{R}} \to \tau^{\leq k} \big(p^k  \widehat{\Omega}^*_{\widetilde{R}}/p^{k+1}  \widehat{\Omega}^*_{\widetilde{R}}) \simeq \tau^{\leq k} \Omega^*_R,\] 
where the second isomorphism is obtained by dividing by $p^k$. 
\end{itemize}

The filtered complex $\{\calN^k \widehat{\Omega}^*_{\widetilde{R}}\}_{k \geq 0}$ of $ \widehat{\Omega}^*_{\widetilde{R}}$ depends on the $\Z_p$-algebra $\widetilde{R}$, and not only on its mod $p$ reduction $R = \widetilde{R} / p \widetilde{R}$. Nevertheless, it was observed by Nygaard \cite[\S 1]{Nygaard} (and elaborated by Illusie-Raynaud \cite[\S III.3]{IllusieRaynaud}) that 
the de~Rham--Witt complex $W\Omega^{\ast}_{R}$ admits an analogous filtration $\{\calN^k W\Omega^*_R\}_{k \geq 0}$, which depends only on $R$. Moreover, if
$\varphi\colon  \widetilde{R} \rightarrow \widetilde{R}$ is a lift of Frobenius, then the quasi-isomorphism $\widehat{\Omega}^*_{\widetilde{R}} \simeq W\Omega^*_R$ of Theorem~\ref{theo53}
is actually a {\em filtered} quasi-isomorphism. Consequently, when viewed as an object of the filtered derived category, $\{\calN^k  \widehat{\Omega}^*_{\widetilde{R}}\}_{k \geq 0}$
depends only on $R$.

In this section, we construct a {\it Nygaard filtration} $\{ \calN^{k} M^{\ast} \}_{k \geq 0}$ on any saturated Dieudonn\'e complex $M^*$ (Construction~\ref{NygaardDef}). In \S \ref{nygaardsec2}, we show that this filtration always satisfies analogues of properties $(1)$ and $(2)$ above (Proposition~\ref{Nygaard}). In the special case where
$M^{\ast}$ is the saturation of a Dieudonn\'{e} complex $N^{\ast}$ of Cartier type, we show that the inclusion $N^{\ast} \hookrightarrow M^{\ast}$ is a filtered quasi-isomorphism,
where $M^{\ast}$ is equipped with the Nygaard filtration and $N^{\ast}$ is equipped with an analogue of the filtration \eqref{Nygaardnaive} (Proposition~\ref{NaiveNygaard}). 
In \S \ref{nygaardsec4}, we use the ideas of \S \ref{derivedcatsec} to give an alternative description of the Nygaard filtration, using the language of filtered derived categories.

\subsection{The Nygaard Filtration of a Saturated Dieudonn\'{e} Complex}

Throughout this section, we let $(M^\ast,d,F)$ denote a saturated Dieudonn\'e complex.

\begin{construction}[The Nygaard filtration]
\label{NygaardDef}
For every pair of integers $i$ and $k$, we define
$$ \calN^{k} M^{i} = \begin{cases} p^{k-i-1} V M^{i} & \text{ if } i < k \\
M^{i} & \text{ if } i \geq k. \end{cases}$$
Note that for $i < k$ and $x = p^{k-i} Vy \in \calN^{k} M^{i-1}$, we have
$$ dx  =  d( p^{k-i} Vy ) 
 =  p^{k-i} d(Vy)  
 =  p^{k-i-1} V(dy) 
 \in  \calN^{k} M^{i}.$$ 
It follows that 
\[ \calN^k M^\ast := \cdots \xrightarrow{d} p V M^{k-2}  \xrightarrow{d} VM^{k-1} \xrightarrow{d} M^k \xrightarrow{d} M^{k+1} ... .\]
is a subcomplex of $M^{\ast}$. Note that $\calN^{k+1} M^\ast \subseteq \calN^k M^\ast$, so we can view $\{\calN^k M^\ast\}_{k \in \Z}$ as a descending filtration on $M^\ast$.
We will refer to this filtration as the {\it Nygaard filtration} on $M^\ast$, and write 
\[ \gr^k_{\calN} M^\ast := \calN^k M^\ast/\calN^{k+1} M^\ast\]
for the corresponding graded pieces.
\end{construction}

\begin{remark}
Note that $p^{k-i} M^i \subseteq \calN^k M^i \subseteq p^{k-i-1} M^i$ for all $i < k$: the second containment is obvious, while the first follows from the equality $p^{k-i} M^i = p^{k-i-1} V(FM^i)$. It follows that $M^\ast$ complete for the Nygaard filtration (that is, the map $M^{\ast} \rightarrow \varprojlim M^{\ast} / \calN^{k} M^{\ast}$ is an isomorphism) if and only if each $M^i$ is $p$-adically complete.
\end{remark}

\begin{example}
Let $R$ be a perfect ring of characteristic $p$ and let $M = W(R)$, which we regard as a Dieudonn\'{e} complex concentrated in degree zero (Example~\ref{ex24}). Then $\calN^k W(R) = p^k W(R)$ for $k \geq 0$, since $F$ is an automorphism of $W(R)$ and $V = F^{-1} p$.
\end{example}

\begin{remark}
Let $R$ be a smooth algebra over a perfect field $k$ of characteristic $p$, so that the de~Rham--Witt complex $W\Omega^*_R$ is a saturated Dieudonn\'e complex. The image of $W\Omega^*_R$ in the derived category $D(\Z_p)$ can be described as the absolute crystalline cohomology of $\Spec(R)$: that is, it is given by
$\RGamma( (\Spec(R))_{\crys}, \sheafO_{\crys})$ where $\sheafO_{\crys}$ is the
structure sheaf on the crystalline site of $\Spec(R)$ (relative to $(\spec
(\mathbb{Z}_p), (p))$). The Nygaard filtration $\calN^k W\Omega^*_R$ also admits a crystalline interpretation for $0 \leq k < p$, as explained by Langer-Zink in \cite[Theorem 4.6]{LZ07}: the image of $\calN^k W\Omega^*_R$ in the derived category $D(\Z_p)$ is identified with $\RGamma( (\Spec(R))_{\crys}, \mathcal{I}_{\crys}^{[k]})$, where $\mathcal{I}_{\crys}^{[k]} \subseteq \sheafO_{\crys}$ is the $k$-th divided power of the universal pd-ideal sheaf $\mathcal{I}_{\crys} \subseteq \sheafO_{\crys}$ on the crystalline site. This description fails for $k \geq p$: for example, taking $p=2$ and $R = \F_2$ perfect gives a counterexample as $(2^{[k]}) = (2)$ as ideals in $W(\F_2)$ when $k$ is a power of $2$, while $\calN^k W(\F_2) = (2^k)$ for all $k \geq 0$. We are not aware of a crystalline description of the Nygaard filtration for $k \geq p$.
\end{remark}

\subsection{The Nygaard Filtration of a Completion}\label{nygaardsec2}

Let $(M^{\ast},F)$ be a Dieudonn\'{e} complex, where $M^{\ast}$ is
$p$-torsion-free. As explained in Remark \ref{rem2}, we can identify $F$ with a map
of chain complexes $\alpha_{F}\colon  M^{\ast} \to \eta_{p} M^{\ast}$. In what follows, we will identify $\eta_p M^{\ast}$ with a subcomplex of $M^{\ast}[ p^{-1} ]$, so that
we can also regard $\alpha_F$ as a map from $M^{\ast}$ to $M^{\ast}[ p^{-1} ]$ (given by $\alpha_F(x) = p^{i} F(x)$ for $x \in M^{i}$). 

\begin{proposition} 
\label{Nygaard}
Let $M^{\ast}$ be a saturated Dieudonn\'{e} complex and let $k$ be an integer. Then:
\begin{enumerate}
\item The terms of the complex $\gr^k_{\calN} M^\ast$ vanish in degrees $> k$. 

\item We have $\calN^{k} M^{\ast} = \alpha_{F}^{-1}( p^{k} M^{\ast}  \cap
\eta_p M^{\ast})$. In particular,
$\alpha_{F}$ determines a map of cochain complexes $\calN^{k} M^{\ast}
\rightarrow p^{k} M^{\ast} \cap \eta_p M^{\ast}$.

\item The map $\alpha_F$ induces an isomorphism of cochain complexes
$$\gr^k_{\calN} M^\ast \to \tau^{\leq k} \big(p^k M^\ast/p^{k+1} M^\ast\big).$$

\item The map $x \mapsto p^{-k} \alpha_F(x)$ induces an isomorphism $\gr^k_{\calN} M^* \simeq \tau^{\leq k} \big(M^*/pM^*\big)$. 
\end{enumerate}
\end{proposition}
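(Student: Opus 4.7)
Parts (1) and (2) should fall out of unwinding the definitions. For (1), whenever $i > k$ we have $\calN^k M^i = M^i = \calN^{k+1} M^i$ directly from Construction \ref{NygaardDef}, so the graded piece vanishes. For (2), the map $\alpha_F(x) = p^i F(x)$ trivially lands in $p^k M^i$ when $i \geq k$, consistent with $\calN^k M^i = M^i$; when $i < k$, the containment $\alpha_F(x) \in p^k M^i$ is equivalent to $F(x) \in p^{k-i} M^i$, which I would show equivalent to $x \in p^{k-i-1} V M^i$ by combining the identities $FV = VF = p \cdot \id$ with $p$-torsion-freeness of $M^\ast$. One direction is immediate from $FV = p$ applied to a factorization $x = p^{k-i-1} V y$; the other follows by writing $F(x) = p^{k-i} z$, applying $V$ to obtain $px = p^{k-i} V z$, and cancelling $p$.

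The substance of the proposition is (3). I would establish the isomorphism degree by degree. In degrees $> k$, both sides vanish (for the target, by definition of $\tau^{\leq k}$; for the source, by (1)). For $i < k$, the injectivity of $V$ (Proposition \ref{Vexists}) and the $p$-torsion-freeness of $M^\ast$ combine to give canonical identifications $\gr^k_\calN M^i \simeq V M^i / p V M^i \simeq M^i / p M^i$ on the source and $p^k M^i / p^{k+1} M^i \simeq M^i / p M^i$ on the target, through which a direct computation shows the map induced by $\alpha_F$ becomes the identity. The crucial case is degree $k$: here the map becomes $M^k / V M^k \to \ker\bigl(d\colon p^k M^k / p^{k+1} M^k \to p^k M^{k+1} / p^{k+1} M^{k+1}\bigr)$, sending $x \mapsto p^k F(x)$. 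Surjectivity is the key use of saturation: given a cocycle representative $p^k z$, the condition $d(p^k z) \in p^{k+1} M^{k+1}$ gives $dz \in p M^{k+1}$, so by Definition \ref{def21} we can write $z = F(x)$, whence $p^k z = p^k F(x)$ is in the image. For injectivity, if $p^k F(x) \in p^{k+1} M^k$, then $F(x) \in p M^k$; applying $V$ and using $VF = p \cdot \id$ gives $px \in p V M^k$, so $x \in V M^k$ by $p$-torsion-freeness.

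Finally, (4) is a formal consequence of (3), since division by $p^k$ defines an isomorphism of cochain complexes $p^k M^\ast / p^{k+1} M^\ast \xrightarrow{\sim} M^\ast / p M^\ast$ (using $p$-torsion-freeness), and this isomorphism intertwines the two truncations. The main obstacle throughout is the degree-$k$ case of (3): this is the unique place where the saturation hypothesis enters essentially, through the characterization of $F M^k$ as $\{y \in M^k : dy \in p M^{k+1}\}$. Once this is handled, the other degrees of (3) and all of (2) and (4) reduce to routine manipulations with the operators $V$, $F$, and multiplication by powers of $p$.
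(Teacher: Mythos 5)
Your proposal is correct and follows essentially the same route as the paper: parts (1), (2), (4) by direct manipulation with $FV = VF = p$ and $p$-torsion-freeness, and part (3) checked degree by degree, with the degree-$k$ case resting on the saturation identity $F\colon M^k \simeq \{y : dy \in pM^{k+1}\}$ exactly as in the paper (the paper packages the degree-$k$ verification as a diagram of short exact sequences rather than a direct surjectivity/injectivity check, but the content is identical).
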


\begin{proof}
Assertion $(1)$ follows from the observation that $\calN^{k} M^{i} = M^i$ for $i \geq k$, and assertion $(4)$ is a formal consequence of $(3)$.
We now prove $(2)$. We first claim that $\alpha_{M}( \calN^{k} M^{i} ) \subseteq p^{k} M^{i}$. By definition, we have
$\alpha_F(x) = p^{i} F(x)$ for $x \in M^{i}$, so the inclusion is trivial when $i \geq k$. For $i < k$, we observe that
\[ \alpha_F(p^{k-i-1} VM^i) = p^i F p^{k-i-1} VM^i = p^{k-1} FV M^i = p^k M^i.\]
To complete the proof of $(2)$, we must show that if $x \in M^{i}$ satisfies $\alpha_F(x) \in p^{k} M^{i}$, then $x$ belongs to $\calN^{k} M^{i}$. This is vacuous for
$i \geq k$, so we may assume that $i < k$. Our hypothesis gives
$$p^{i} F(x) = p^{k} y = (p^{i} F)( p^{k-i-1} Vy )$$
for some $y \in M^{i}$. Since $p^{i} F$ is an injection, we obtain $x = p^{k-i-1} Vy \in \calN^{k} M^{i}$, as desired.

We now prove $(3)$. In degrees $i < k$, the map under consideration is given by
\[ p^{k-i-1} VM^i/p^{k-i} VM^i \xrightarrow{p^i F} p^k M^i/p^{k+1} M^i.\]
It is thus sufficient to check that $p^i F$ gives a bijection $p^{k-i-1} VM^i \to p^k M^i$ for $i < k$. Since $M^{\ast}$ has no $p$-torsion, this reduces to checking that $F$ gives a bijection $VM^i \to pM^i$, which is clear: $F$ is injective and $FV = p$.

In degree $k$, the map under consideration is given by
\[ M^k/VM^k \xrightarrow{p^k F}  \Big(p^k M^k \cap d^{-1}(p^{k+1} M^{k+1})\Big)/p^{k+1} M^k.\]
To show this is an isomorphism, we may divide by $p^k$ to reduce to checking that the right vertical map in the diagram
$$ \xymatrix{ 0 \ar[r] & V M^{k} \ar[r] \ar[d]^{F} & M^{k} \ar[r] \ar[d]^{F} & M^{k} / V M^{k} \ar[r] \ar[d]^{F} & 0 \\
0 \ar[r] & p M^{k} \ar[r] & d^{-1}(p M^{k+1} ) \ar[r] & d^{-1}( p M^{k+1} ) / p M^{k} \ar[r] & 0 }$$
is an isomorphism. This is clear, since the rows are exact and the maps
$$ F\colon  V M^{k} \rightarrow pM^{k} \quad \quad F\colon  M^{k} \rightarrow d^{-1}( pM^{k+1} )$$
are isomorphisms (by virtue of our assumption that $M^{\ast}$ is saturated).
\end{proof}

\begin{corollary}\label{Nycor}
Let $f\colon  M^{\ast} \rightarrow N^{\ast}$ be a map of saturated Dieudonn\'{e} complexes. Suppose
that $f$ induces a quasi-isomorphism $M^{\ast} / p M^{\ast} \rightarrow N^{\ast} / p N^{\ast}$.
Then, for every integer $k$, the induced map
$$ M^{\ast} / \calN^{k} M^{\ast} \rightarrow N^{\ast} / \calN^{k} N^{\ast}$$
is a quasi-isomorphism.
\end{corollary}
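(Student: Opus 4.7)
The plan is to filter both sides by the quotient Nygaard filtration and compare graded pieces, reducing everything to the hypothesis that $f$ induces a quasi-isomorphism modulo $p$.

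First I would fix $k$ and consider, for $0 \le i \le k$, the descending filtration on $M^{\ast}/\calN^{k}M^{\ast}$ whose $i$th term is the image of $\calN^{i}M^{\ast}$. The associated graded for this filtration is $\bigoplus_{i=0}^{k-1}\gr_{\calN}^{i}M^{\ast}$, with the top quotient being $M^{\ast}/\calN^{1}M^{\ast}$ and the bottom subobject being $\gr_{\calN}^{k-1}M^{\ast}$. The same construction applies to $N^{\ast}$, and $f$ respects these filtrations since it is a map of Dieudonn\'{e} complexes (and therefore carries $V$, $d$ and $F$ to their counterparts). It therefore suffices, by induction on $k$ together with the long exact sequence in cohomology applied to the short exact sequences
\[
0 \to \gr_{\calN}^{i}M^{\ast} \to M^{\ast}/\calN^{i+1}M^{\ast} \to M^{\ast}/\calN^{i}M^{\ast} \to 0,
\]
and the five lemma, to show that $f$ induces a quasi-isomorphism on each $\gr_{\calN}^{i}$ for $0 \le i < k$.

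For the graded pieces, I would invoke Proposition~\ref{Nygaard}(4), which provides an isomorphism of cochain complexes
\[
\gr_{\calN}^{i} M^{\ast} \;\xrightarrow{\ \sim\ }\; \tau^{\le i}\bigl(M^{\ast}/pM^{\ast}\bigr),
\]
given by $x \mapsto p^{-i}\alpha_{F}(x)$, and similarly for $N^{\ast}$. Naturality of this isomorphism in the Dieudonn\'{e} complex (which is immediate from the formula, since $\alpha_F$ is natural) reduces the question to showing that the induced map
\[
\tau^{\le i}\bigl(M^{\ast}/pM^{\ast}\bigr)\to \tau^{\le i}\bigl(N^{\ast}/pN^{\ast}\bigr)
\]
is a quasi-isomorphism for each $0 \le i < k$. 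This is automatic: canonical truncations preserve quasi-isomorphisms, and $f$ induces a quasi-isomorphism $M^{\ast}/pM^{\ast} \to N^{\ast}/pN^{\ast}$ by hypothesis.

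There is no real obstacle here beyond bookkeeping; the only mild subtlety is to confirm that the filtration by images of $\calN^{i}M^{\ast}$ in $M^{\ast}/\calN^{k}M^{\ast}$ really has associated graded $\gr_{\calN}^{i}M^{\ast}$ for $i<k$, which is immediate from the inclusions $\calN^{i+1}M^{\ast} \subseteq \calN^{i}M^{\ast}$. Combining the inductive step with the base case $k=1$ (where $M^{\ast}/\calN^{1}M^{\ast} = \gr_{\calN}^{0}M^{\ast} \simeq \tau^{\le 0}(M^{\ast}/pM^{\ast})$) completes the argument.
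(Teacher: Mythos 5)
Your identification of the graded pieces via Proposition \ref{Nygaard}(4), and the reduction to truncations of $M^{\ast}/pM^{\ast}$ (which preserve quasi-isomorphisms), is exactly the mechanism of the paper's proof. The gap is in how you exhaust the quotient $M^{\ast}/\calN^{k}M^{\ast}$: you filter it by the images of $\calN^{i}M^{\ast}$ only for $0 \le i \le k$, and your base case asserts $M^{\ast}/\calN^{1}M^{\ast} = \gr^{0}_{\calN}M^{\ast}$. Both of these presume $\calN^{0}M^{\ast} = M^{\ast}$, which by Construction \ref{NygaardDef} amounts to $p^{-i-1}VM^{i} = M^{i}$ for every $i<0$; this holds only when $M^{\ast}$ vanishes in negative degrees. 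Saturated Dieudonn\'{e} complexes are not required to be concentrated in nonnegative degrees (only Dieudonn\'{e} algebras are: consider, e.g., $W(k)$ placed in degree $-1$, where $\calN^{0}M^{-1} = VW(k) \subsetneq W(k)$), and the corollary is asserted for every integer $k$, including $k \le 0$, where an induction with base case $k=1$ does not apply. As written, your filtration does not exhaust $M^{\ast}/\calN^{k}M^{\ast}$, so the d\'{e}vissage only proves the statement for the subquotient $\calN^{0}M^{\ast}/\calN^{k}M^{\ast}$.

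The repair is precisely the extra step in the paper's proof: run the same induction to show that $\calN^{k'}M^{\ast}/\calN^{k}M^{\ast} \to \calN^{k'}N^{\ast}/\calN^{k}N^{\ast}$ is a quasi-isomorphism for every $k' \le k$ (each graded piece being handled exactly as you do), and then pass to the direct limit as $k' \to -\infty$. Since $\calN^{k'}M^{i} = M^{i}$ once $k' \le i$, the Nygaard filtration is exhaustive, and cohomology commutes with filtered colimits, so the colimit of these quasi-isomorphisms is the desired quasi-isomorphism $M^{\ast}/\calN^{k}M^{\ast} \to N^{\ast}/\calN^{k}N^{\ast}$. With that adjustment your argument coincides with the paper's.
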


\begin{proof}
For each $k \in \Z$, Proposition \ref{Nygaard} supplies a commutative diagram of cochain complexes
$$ \xymatrix{ \gr^{k}_{\calN} M^{\ast} \ar[r] \ar[d] & \gr^{k}_{\calN} N^{\ast} \ar[d] \\
\tau^{\leq k} (M^{\ast} / p M^{\ast} ) \ar[r] & \tau^{\leq k} ( N^{\ast} / p N^{\ast} ) }$$
where the vertical maps are isomorphisms. It follows that $f$ induces a quasi-isomorphism
$\gr^{k}_{\calN} M^{\ast} \rightarrow \gr_{\calN} N^{\ast}$. Proceeding by induction, we
deduce that the maps
$$ \calN^{k'} M^{\ast} / \calN^{k} M^{\ast} \rightarrow \calN^{k'} N^{\ast} / \calN^{k} N^{\ast}$$
are quasi-isomorphisms for each $k' \leq k$. The desired result now follows by passing to the direct limit
$k' \rightarrow - \infty$.
\end{proof}

\begin{corollary}\label{Nycor2}
Let $M^{\ast}$ be a saturated Dieudonn\'{e} complex. Then, for every integer $k$,
the canonical map $M^{\ast} / \calN^{k} M^{\ast} \rightarrow \WittScript(M)^{\ast} / \calN^{k} \WittScript(M)^{\ast}$
is a quasi-isomorphism.
\end{corollary}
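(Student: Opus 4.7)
The plan is to deduce Corollary \ref{Nycor2} as an immediate consequence of Corollary \ref{Nycor} applied to the tautological map $\rho_M\colon M^{\ast} \to \WittScript(M)^{\ast}$. Since $\WittScript(M)^{\ast}$ is itself a saturated (in fact, strict) Dieudonn\'e complex by Corollary \ref{realcor14}, $\rho_M$ is a morphism in the category of saturated Dieudonn\'e complexes, so Corollary \ref{Nycor} is applicable provided its hypothesis is checked.

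The hypothesis to verify is that $\rho_M$ induces a quasi-isomorphism of mod $p$ reductions $M^{\ast}/pM^{\ast} \to \WittScript(M)^{\ast}/p\WittScript(M)^{\ast}$. But this is precisely the content of Proposition \ref{prop18} (specialized to $r=1$), which asserts that $\rho_M$ induces a quasi-isomorphism $M^{\ast}/p^{r}M^{\ast} \to \WittScript(M)^{\ast}/p^{r}\WittScript(M)^{\ast}$ for every nonnegative integer $r$.

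Thus the proof is a one-line invocation: combine Proposition \ref{prop18} with Corollary \ref{Nycor}. There is no genuine obstacle here; the corollary is a formal consequence of results already established. The real content was packaged into Corollary \ref{Nycor}, whose proof in turn rests on the identification in Proposition \ref{Nygaard} of the graded pieces $\gr^{k}_{\calN}$ with truncated mod $p$ complexes, which converts information about the mod $p$ reduction into information about the Nygaard filtration quotients.
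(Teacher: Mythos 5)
Your proof is correct and follows essentially the same route as the paper: both reduce to Corollary \ref{Nycor} and verify its hypothesis by noting that $\rho_M$ induces a quasi-isomorphism modulo $p$ (you cite Proposition \ref{prop18} with $r=1$; the paper cites Proposition \ref{prop17} together with Corollary \ref{cor79}, which is what Proposition \ref{prop18} is built from). Nothing is missing.
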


\begin{proof}
By virtue of Corollary \ref{Nycor}, it will suffice to show that the map
$M^{\ast} / p M^{\ast} \rightarrow \WittScript(M)^{\ast} / p \WittScript(M)^{\ast}$ is a quasi-isomorphism.
This follows from Proposition \ref{prop17} and Corollary \ref{cor79}.
\end{proof}

\begin{corollary}\label{Nycor3}
Let $f\colon  M^{\ast} \rightarrow N^{\ast}$ be a quasi-isomorphism of saturated Dieudonn\'{e} complexes.
Then $f$ is a filtered quasi-isomorphism: that is, for every integer $k$, the induced map
$\calN^{k} M^{\ast} \rightarrow \calN^{k} N^{\ast}$ is a quasi-isomorphism.
\end{corollary}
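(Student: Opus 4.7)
The plan is to reduce the assertion to Corollary \ref{Nycor} by using the two-step filtration $\calN^{k} M^{\ast} \subseteq M^{\ast}$ and applying the five lemma. First I would verify that $f$ actually induces a map $\calN^{k} M^{\ast} \to \calN^{k} N^{\ast}$: since $M^{\ast}$ and $N^{\ast}$ are $p$-torsion-free and the Verschiebung is the unique operator satisfying $FV = p$, the morphism $f$ automatically commutes with $V$, and hence respects the subgroups $p^{k-i-1} V M^{i}$ appearing in Construction \ref{NygaardDef}.

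The next step is to show that $f$ induces a quasi-isomorphism modulo $p$. Because $M^{\ast}$ and $N^{\ast}$ are saturated, they are $p$-torsion-free (Definition \ref{def21}(i)), so the short exact sequence $0 \to M^{\ast} \xrightarrow{p} M^{\ast} \to M^{\ast}/pM^{\ast} \to 0$ and its analogue for $N^{\ast}$ give compatible long exact sequences in cohomology. Since $f$ is a quasi-isomorphism and commutes with multiplication by $p$, the five lemma yields a quasi-isomorphism $M^{\ast}/pM^{\ast} \to N^{\ast}/pN^{\ast}$.

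Now Corollary \ref{Nycor} applies directly and gives that $M^{\ast}/\calN^{k} M^{\ast} \to N^{\ast}/\calN^{k} N^{\ast}$ is a quasi-isomorphism for every integer $k$. Combining this with the quasi-isomorphism $f\colon M^{\ast} \to N^{\ast}$ and the short exact sequences
\[ 0 \to \calN^{k} M^{\ast} \to M^{\ast} \to M^{\ast}/\calN^{k} M^{\ast} \to 0, \qquad 0 \to \calN^{k} N^{\ast} \to N^{\ast} \to N^{\ast}/\calN^{k} N^{\ast} \to 0, \]
one more application of the five lemma to the associated long exact sequences in cohomology yields that the induced map $\calN^{k} M^{\ast} \to \calN^{k} N^{\ast}$ is a quasi-isomorphism, as desired.

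There is no real obstacle here: the statement is a formal corollary of the identification of the graded pieces of the Nygaard filtration with truncations of the mod $p$ reduction (Proposition \ref{Nygaard}(4)), which has already done the substantive work in Corollary \ref{Nycor}. The only mildly delicate point is being careful that the hypothesis on $f$ (a quasi-isomorphism of underlying complexes) really does imply the mod $p$ quasi-isomorphism needed to invoke Corollary \ref{Nycor}; this is where $p$-torsion-freeness of saturated Dieudonn\'e complexes is used.
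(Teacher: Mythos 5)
Your proof is correct and follows essentially the same route as the paper: reduce mod $p$ to invoke Corollary \ref{Nycor}, then compare the short exact sequences $0 \to \calN^{k} \to (\,\cdot\,) \to (\,\cdot\,)/\calN^{k} \to 0$ for $M^{\ast}$ and $N^{\ast}$ and conclude by the long exact sequence/five lemma. The only additions are explicit verifications (that $f$ commutes with $V$ and hence preserves the filtration, and that a quasi-isomorphism of $p$-torsion-free complexes is one mod $p$) which the paper leaves implicit.
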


\begin{proof}
If $f$ is a quasi-isomorphism, then the induced map $M^{\ast} / p M^{\ast} \rightarrow N^{\ast} / p N^{\ast}$
is also a quasi-isomorphism. We have a commutative diagram of short exact sequences
$$ \xymatrix{ 0 \ar[r] & \calN^{k} M^{\ast} \ar[d] \ar[r] & M^{\ast} \ar[r] \ar[d]^{f} & M^{\ast} / \calN^{k} M^{\ast} \ar[r] \ar[d] & 0 \\
0 \ar[r] & \calN^{k} N^{\ast} \ar[r] & N^{\ast} \ar[r] & N^{\ast} / \calN^{k} N^{\ast} \ar[r] & 0 }$$
where the vertical map in the middle is a quasi-isomorphism by assumption and the vertical map on the right is
a quasi-isomorphism by Corollary \ref{Nycor}, so that the vertical map on the left is also a quasi-isomorphism.
\end{proof}

\subsection{Dieudonn\'{e} Complexes of Cartier Type}\label{nygaardsec3}

We now consider a variant of the Nygaard filtration which can be defined for {\em any} cochain complex.

\begin{construction}
\label{NygaardCartier}
Let $M^{\ast}$ be any cochain complex of abelian groups. For every integer $k$, we set
$\mathcal{N}_u^k M^{\ast} \subseteq M^{\ast}$ denote the subcomplex given by
\[ \calN^k_u M^{\ast}:= \cdots \to p^{3} M^{k-3} \to p^2 M^{k-2} \to p M^{k-1} \to M^k \to M^{k+1} \to \cdots.\]
This defines a descending filtration $\{\calN^k_u M^{\ast}\}_{k \in \Z}$ of $M^*$ in the category of cochain complexes of abelian groups.
\end{construction}

\begin{remark}\label{spanor}
Let $M^{\ast}$ be a saturated Dieudonn\'{e} complex. Then, for every integer $k$,
we have $\calN^{k}_{u} M^{\ast} \subseteq \calN^{k} M^{\ast}$.
\end{remark}

Let $M^{\ast}$ be a Dieudonn\'{e} complex and let $\Saturate( M^{\ast} )$ denote its saturation. Using Remark \ref{spanor}, we obtain maps of filtered complexes
$$ \{ \calN^{k}_{u} M^{\ast} \}_{k \in \Z} \rightarrow \{ \calN^{k}_u \Saturate(M^{\ast}) \}_{k \in \Z} \rightarrow \{ \calN^{k} \Saturate(M^{\ast} ) \}_{k \in \Z}.$$

\begin{proposition} 
\label{NaiveNygaard}
Let $M^{\ast}$ be a Dieudonn\'{e} complex of Cartier type (Definition~\ref{def:Cartiertype}).
Then, for every integer $k$, the canonical map
$$ M^{\ast} / \calN^{k}_{u} M^{\ast} \rightarrow \Saturate(M^{\ast}) / \calN^{k} \Saturate(M^{\ast} )$$
is a quasi-isomorphism.
\end{proposition}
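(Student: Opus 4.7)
The plan is a standard dévissage, reducing the statement to a computation on graded pieces. The quotients $M^\ast / \mathcal{N}^k_u M^\ast$ and $\mathrm{Sat}(M)^\ast / \mathcal{N}^k \mathrm{Sat}(M)^\ast$ carry natural filtrations by $\{\mathcal{N}^j_u M^\ast / \mathcal{N}^k_u M^\ast\}_{j < k}$ and $\{\mathcal{N}^j \mathrm{Sat}(M)^\ast / \mathcal{N}^k \mathrm{Sat}(M)^\ast\}_{j < k}$ respectively. Under the (essentially harmless) assumption that $M^\ast$ is concentrated in non-negative degrees, both filtrations are finite, since for $j \leq 0$ both $\mathcal{N}^j_u M^\ast$ and $\mathcal{N}^j \mathrm{Sat}(M)^\ast$ coincide with the ambient complex. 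Proceeding by induction on $k$ and invoking the five lemma on the obvious commutative diagram of short exact sequences, I am reduced to showing that
\[
\mathcal{N}^k_u M^\ast / \mathcal{N}^{k+1}_u M^\ast \to \mathcal{N}^k \mathrm{Sat}(M)^\ast / \mathcal{N}^{k+1} \mathrm{Sat}(M)^\ast
\]
is a quasi-isomorphism for each $k \geq 0$.

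Next, I would analyze each side explicitly. On the left, dividing by $p^{k-i}$ identifies $(\mathcal{N}^k_u M)^i/(\mathcal{N}^{k+1}_u M)^i$ with $M^i/pM^i$ for $i \leq k$ and with zero for $i > k$; moreover, the induced differentials vanish identically, because if $x = p^{k-i} y \in (\mathcal{N}^k_u M)^i$ then $dx = p^{k-i} \, dy$ already lies in $(\mathcal{N}^{k+1}_u M)^{i+1}$ (the boundary cases $i = k-1$ and $i = k$ are trivial since the target term is $M^k/pM^k$ or $0$). Hence $\mathcal{N}^k_u M^\ast / \mathcal{N}^{k+1}_u M^\ast$ is, as a cochain complex, the graded abelian group $\bigoplus_{i \leq k}(M^i/pM^i)[-i]$ equipped with zero differential. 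On the right, Proposition \ref{Nygaard}(4), applied to the saturated Dieudonn\'e complex $\mathrm{Sat}(M)^\ast$, identifies $\mathcal{N}^k \mathrm{Sat}(M)^\ast / \mathcal{N}^{k+1} \mathrm{Sat}(M)^\ast$ with $\tau^{\leq k}(\mathrm{Sat}(M)^\ast / p\mathrm{Sat}(M)^\ast)$ via $x \mapsto p^{-k} \alpha_F(x)$.

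Finally, I would trace the comparison through these identifications: a class $y \in M^i/pM^i$ on the left (represented by $p^{k-i} y$) is sent to $p^{-k} \alpha_F(p^{k-i} y) = p^{-k} \cdot p^i \cdot p^{k-i} F(y) = F(y) \in \mathrm{Sat}(M)^i / p\mathrm{Sat}(M)^i$ on the right. Thus, in degree $i \leq k$, the induced map on cohomology is the Frobenius twist $M^i/pM^i \to H^i(\mathrm{Sat}(M)^\ast/p\mathrm{Sat}(M)^\ast)$, $y \mapsto [F(y)]$. This factors as the Cartier map $M^i/pM^i \to H^i(M^\ast/pM^\ast)$ (an isomorphism by our Cartier-type hypothesis on $M^\ast$) composed with the quasi-isomorphism $M^\ast/pM^\ast \to \mathrm{Sat}(M)^\ast / p \mathrm{Sat}(M)^\ast$ supplied by Theorem \ref{theo60}. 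Both factors are isomorphisms, so the map on graded pieces is a quasi-isomorphism, completing the induction. The main subtlety I expect to keep track of is the vanishing of the induced differentials on $\mathcal{N}^k_u M^\ast/\mathcal{N}^{k+1}_u M^\ast$, which is precisely the mechanism that converts the ``naive'' filtration into the Frobenius-twisted picture underlying Proposition \ref{Nygaard}(4) and allows Cartier-type input to do its work.
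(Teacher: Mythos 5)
Your proposal is correct and follows essentially the same route as the paper: reduce to associated graded pieces, observe that the differential on $\gr^k_{\calN_u} M^{\ast}$ vanishes so that this graded piece is just $\bigoplus_{i \leq k} M^i/pM^i$ placed in the appropriate degrees, identify $\gr^k_{\calN} \Saturate(M)^{\ast}$ via Proposition \ref{Nygaard}, and recognize the induced map on cohomology as the Cartier-type map $M^i/pM^i \xrightarrow{F} \mathrm{H}^i(M^{\ast}/pM^{\ast})$ composed with the quasi-isomorphism of Theorem \ref{theo60}. The only divergence is bookkeeping at the end: rather than assuming $M^{\ast}$ is concentrated in nonnegative degrees so that the filtration is finite (an assumption not in the statement), the paper proves by induction that $\calN^{k'}_{u} M^{\ast}/\calN^{k}_{u} M^{\ast} \rightarrow \calN^{k'} \Saturate(M^{\ast})/\calN^{k} \Saturate(M^{\ast})$ is a quasi-isomorphism for every $k' \leq k$ and then passes to the direct limit as $k' \rightarrow -\infty$, which handles unbounded complexes with no extra hypothesis.
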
 

\begin{proof}
For every pair of integers $i$ and $k$, we have canonical isomorphisms
$$ \mathrm{H}^{i}( \gr^{k}_{\calN_u} M^{\ast} ) \simeq (\gr^{k}_{\calN_u} M)^{i} \simeq \begin{cases} M^{i} / p M^{i} & \text{ if } i \leq k \\
0 & \text{ otherwise. } \end{cases}$$
Using Proposition \ref{Nygaard}, we obtain isomorphisms $$\mathrm{H}^{i}( \gr_{\calN}^{k} \Saturate(M^{\ast}) ) \simeq
\begin{cases} \mathrm{H}^{i}( M^{\ast} / p M^{\ast}) & \text{ if } i \leq k \\
0 & \text{ otherwise. } \end{cases}$$
Under these isomorphisms, we can identify the canonical map
$$ \mathrm{H}^{i}(  \gr^{k}_{\calN_u} M^{\ast} ) \rightarrow \mathrm{H}^{i}( \gr_{\calN}^{k} \Saturate(M^{\ast}) )$$
with the composition
$$ M^{i} / p M^{i} \xrightarrow{F} \mathrm{H}^{i}( M^{\ast} / p M^{\ast} ) \rightarrow \mathrm{H}^{i}( \Saturate(M)^{\ast} / p \Saturate(M)^{\ast} )$$
for $i \leq k$, and with the zero map otherwise. Here the first map is a quasi-isomorphism by virtue of our assumption that $M^{\ast}$ is of Cartier type,
and the second by virtue of Theorem \ref{theo60}.

Proceeding by induction, we deduce that for each $k' \leq k$, the induced map
$$ \calN^{k'}_{u} M^{\ast} / \calN^{k}_{u} M^{\ast} \rightarrow \calN^{k'} \Saturate(M^{\ast} ) / \calN^{k} \Saturate(M^{\ast} )$$
is a quasi-isomorphism. Passing to the direct limit as $k' \rightarrow - \infty$, we obtain the desired quasi-isomorphism
$M^{\ast} / \calN^{k}_{u} M^{\ast} \rightarrow \Saturate(M^{\ast}) / \calN^{k} \Saturate(M^{\ast})$.
\end{proof}

\begin{corollary}\label{Nycor4}
Let $M^{\ast}$ be a Dieudonn\'{e} complex of Cartier type. Then, for every integer $k$, the canonical map
$$ M^{\ast} / \calN^{k}_{u} M^{\ast} \rightarrow \WSaturate(M^{\ast}) / \calN^{k} \WSaturate(M^{\ast} )$$
is a quasi-isomorphism.
\end{corollary}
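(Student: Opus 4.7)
The plan is to prove the corollary by factoring the map under consideration as a composition of two quasi-isomorphisms, each of which has already been established earlier in the paper. Recall by Notation \ref{completesat} that $\WSaturate(M^{\ast}) = \WittScript(\Saturate(M^{\ast}))$, so that the canonical map factors as
\[
M^{\ast}/\calN^{k}_{u} M^{\ast} \xrightarrow{\ \alpha\ } \Saturate(M^{\ast})/\calN^{k}\Saturate(M^{\ast}) \xrightarrow{\ \beta\ } \WSaturate(M^{\ast})/\calN^{k}\WSaturate(M^{\ast}).
\]

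First I would verify that this factorization exists at the level of filtered cochain complexes. The natural map $M^{\ast}\to \Saturate(M^{\ast})$ carries $\calN^{k}_{u} M^{\ast}$ into $\calN^{k}_{u}\Saturate(M^{\ast})$, which is contained in $\calN^{k}\Saturate(M^{\ast})$ by Remark \ref{spanor}, yielding the map $\alpha$. For $\beta$, the map $\Saturate(M^{\ast})\to \WittScript(\Saturate(M^{\ast}))$ is a morphism of (saturated) Dieudonn\'e complexes, and such morphisms automatically respect the Nygaard filtration of Construction \ref{NygaardDef}, since the latter is defined purely in terms of $V$, $d$, and multiplication by $p$.

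Next I would show that $\alpha$ is a quasi-isomorphism. This is exactly the content of Proposition \ref{NaiveNygaard}, applied to the Dieudonn\'e complex of Cartier type $M^{\ast}$. Then I would show that $\beta$ is a quasi-isomorphism by applying Corollary \ref{Nycor2} to the saturated Dieudonn\'e complex $\Saturate(M^{\ast})$; this corollary directly asserts that the canonical map from any saturated Dieudonn\'e complex to its completion is a filtered quasi-isomorphism with respect to the Nygaard filtration modulo $\calN^{k}$.

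I do not anticipate any serious obstacle in this argument — the essential homological content has already been isolated in Proposition \ref{NaiveNygaard} (which uses the Cartier-type hypothesis to compare $\calN^{k}_{u}$ with $\calN^{k}$ on the saturation) and Corollary \ref{Nycor2} (which handles passage from saturation to completed saturation). The only point requiring mild care is checking compatibility of the two filtrations under the factorization, but this follows tautologically from the inclusion $\calN^{k}_{u}\subseteq\calN^{k}$ noted in Remark \ref{spanor} and the functoriality of both $\Saturate$ and $\WittScript$ in Dieudonn\'e complexes.
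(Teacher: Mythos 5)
Your proof is correct and is essentially identical to the paper's own argument, which also deduces the statement by combining Proposition \ref{NaiveNygaard} (for the passage to the saturation) with Corollary \ref{Nycor2} (for the passage to the completion). The extra care you take in checking compatibility of the filtrations under the factorization is sound but routine, exactly as you suggest.
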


\begin{proof}
Combine Proposition \ref{NaiveNygaard} with Corollary \ref{Nycor2}.
\end{proof}

\begin{corollary}\label{Nycor5}
Let $M^{\ast}$ be a Dieudonn\'{e} complex of Cartier type, and suppose that each of the abelian groups $M^{i}$ is $p$-adically complete.
Then, for every integer $k$, the canonical map $\calN^{k}_{u} M^{\ast} \rightarrow \calN^{k} \WSaturate(M^{\ast})$ is a quasi-isomorphism.
In other words, the map $M^{\ast} \rightarrow \WSaturate(M^{\ast} )$ is a filtered quasi-isomorphism, where we equip
$\WSaturate(M^{\ast})$ with the Nygaard filtration of Construction \ref{NygaardDef} and $M^{\ast}$ with the filtration of Construction \ref{NygaardCartier}.
\end{corollary}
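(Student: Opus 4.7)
The plan is to deduce the filtered quasi-isomorphism statement from the already-established (a) unfiltered quasi-isomorphism $M^{\ast} \to \WSaturate(M^{\ast})$ and (b) quasi-isomorphism on the quotients $M^{\ast}/\calN^{k}_{u} M^{\ast} \to \WSaturate(M^{\ast})/\calN^{k}\WSaturate(M^{\ast})$, via a two-out-of-three argument on a morphism of short exact sequences.

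More precisely, I would first invoke Corollary \ref{cor67} (applicable since $M^{\ast}$ is of Cartier type with each $M^{i}$ being $p$-adically complete) to conclude that the map $u\colon M^{\ast} \to \WSaturate(M^{\ast})$ is a quasi-isomorphism. Next, I would invoke Corollary \ref{Nycor4} to conclude that for every integer $k$ the induced map on quotients
\[ M^{\ast}/\calN^{k}_{u} M^{\ast} \longrightarrow \WSaturate(M^{\ast})/\calN^{k}\WSaturate(M^{\ast}) \]
is a quasi-isomorphism. By Remark \ref{spanor}, the inclusion $\calN^{k}_{u} M^{\ast} \subseteq M^{\ast}$ is carried into $\calN^{k}\WSaturate(M^{\ast}) \subseteq \WSaturate(M^{\ast})$, so $u$ induces a commutative diagram of short exact sequences of cochain complexes
\[ \xymatrix{
0 \ar[r] & \calN^{k}_{u} M^{\ast} \ar[r] \ar[d] & M^{\ast} \ar[r] \ar[d]^{u} & M^{\ast}/\calN^{k}_{u} M^{\ast} \ar[r] \ar[d] & 0 \\
0 \ar[r] & \calN^{k}\WSaturate(M^{\ast}) \ar[r] & \WSaturate(M^{\ast}) \ar[r] & \WSaturate(M^{\ast})/\calN^{k}\WSaturate(M^{\ast}) \ar[r] & 0.
} \]
Passing to the associated long exact sequences in cohomology and applying the five lemma then forces the left-hand vertical map to be a quasi-isomorphism, which is exactly the desired statement.

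There is no real obstacle here: the entire argument is a formal consequence of the two cited corollaries together with the five lemma, and the needed compatibility $u(\calN^{k}_{u} M^{\ast}) \subseteq \calN^{k}\WSaturate(M^{\ast})$ is immediate from the definition of $\calN^{k}_{u}$ (which only involves multiplication by powers of $p$ and the differential) combined with Remark \ref{spanor}. The second sentence of the corollary is just a reformulation of the first.
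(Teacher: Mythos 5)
Your proof is correct and matches the paper's argument: the paper's own proof is precisely "combine Corollary \ref{Nycor4} with Corollary \ref{cor67}," and your short-exact-sequence/five-lemma argument is just the explicit spelling-out of that combination (the same mechanism already used in Corollary \ref{Nycor3}).
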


\begin{proof}
Combine Corollary \ref{Nycor4} with Corollary \ref{cor67}.
\end{proof}

\begin{example}
Let $\widetilde{R}$ be a flat $\Z_p$-algebra for which the reduction $R = \widetilde{R} / p \widetilde{R}$ is smooth over $\F_p$, and let $\varphi\colon  \widetilde{R} \rightarrow \widetilde{R}$ be a lift
of the Frobenius map on $R$. Then the completed de~Rham complex $\widehat{\Omega}^{\ast}_{ \widetilde{R} }$ is a Dieudonn\'{e} complex of Cartier type (Corollary \ref{cor68}),
and the saturated de~Rham--Witt complex $\WOmega^{\ast}_{R}$ can be identified with the completed saturation $\WSaturate(  \widehat{\Omega}^{\ast}_{ \widetilde{R} } )$.
It follows from Corollary \ref{Nycor5} that the canonical map $\rho\colon  \calN^{k}_{u} \widehat{\Omega}^{\ast}_{ \widetilde{R} }
\rightarrow \calN^{k} \WOmega^{\ast}_{R}$ is a quasi-isomorphism for every integer $k$.
\end{example}

\subsection{The Nygaard Filtration and $L\eta_p$ via Filtered Derived Categories}\label{nygaardsec4}

Let $M^{\ast}$ be a saturated Dieudonn\'{e} complex and let $\{ \calN^{k} M^{\ast} \}_{k \in \Z}$ be its Nygaard filtration. The goal of this section is to show that,
as an object of the filtered derived category $DF(\Z)$ (see Construction \ref{DFLeta1} below), the filtered complex $\{ \calN^{k} M^{\ast} \}_{k \in \Z}$ depends only
on the image $M \in D(\Z)$ of $M^{\ast}$ in the derived category $D(\Z)$, together with the isomorphism $M \simeq L \eta_p M$ induced by
the Frobenius operator on $M^{\ast}$. In other words, it depends only on the structure of $M$ as an object of the fixed point category
$D(\Z)^{L \eta_p }$ studied in \S \ref{derivedcatsec}.

\begin{construction}[The Filtered Derived Category]\label{DFLeta1}
Let us regard the set of integers $\Z$ as a linearly ordered set. For any $\infty$-category $\calC$, we can identify functors $\Z^{\op} \rightarrow \calC$ with
diagrams
$$ \cdots \rightarrow C^2 \rightarrow C^1 \rightarrow C^0 \rightarrow C^{-1} \rightarrow C^{-2} \rightarrow \cdots$$
The collection of all such diagrams $\calC$ can be organized into an $\infty$-category $\Fun( \Z^{\op}, \calC )$.
We define the {\it filtered derived category $DF(\Z)$} to be the homotopy category of the $\infty$-category $\Fun( \Z^{\op}, \DD(\Z) )$. Here
$\DD(\Z)$ denotes the derived $\infty$-category of abelian groups (Notation \ref{bulish}). 
\end{construction}

\begin{example}\label{sipard}
Let $\{ \mathcal{F}^{k} M^{\ast} \}_{k \in \Z}$ be a filtered cochain complex of abelian groups, which we can identify with a diagram
$$ \cdots \hookrightarrow \mathcal{F}^{2} M^{\ast}  \hookrightarrow \mathcal{F}^{1} M^{\ast} \hookrightarrow \mathcal{F}^{0} M^{\ast} \hookrightarrow \mathcal{F}^{-1} M^{\ast} \hookrightarrow \mathcal{F}^{-2} M^{\ast} \hookrightarrow \cdots$$
in the category $\coch$ of cochain complexes. Applying the functor $\coch \rightarrow \DD(\Z)$, we obtain an object of the filtered derived category $DF(\Z)$. This construction determines a functor from the category of filtered cochain complexes to the filtered derived category $DF(\Z)$.
\end{example}

\begin{remark}
Let $f \colon M^{\ast} \rightarrow N^{\ast}$ be a morphism of filtered cochain complexes. We will say that
$f$ is a {\it levelwise quasi-isomorphism} if, for every integer $k$, the induced map $\mathcal{F}^{k} M^{\ast} \rightarrow \mathcal{F}^{k} N^{\ast}$ is a quasi-isomorphism.
It is not difficult to show that the category $DF(\Z)$ of Construction \ref{DFLeta1} can be obtained from the category of filtered cochain complexes
by formally inverting the levelwise quasi-isomorphisms.
\end{remark}

\begin{warning}
The terminology of Construction \ref{DFLeta1} is not standard. Many authors use the term {\it filtered derived category} to refer to the category $\widehat{DF}(\Z)$ obtained by
localizing the category of filtered cochain complexes with respect to the
collection of {\it filtered quasi-isomorphisms}: that is, the collection of maps
$f \colon M^{\ast} \to N^{\ast}$ with the property
that, for every integer $k$, the induced map of associated graded complexes $\gr^{k}(M^{\ast}) \rightarrow \gr^{k}( N^{\ast} )$ is a quasi-isomorphism. Every levelwise quasi-isomorphism is a filtered quasi-isomorphism, but the converse is not necessarily true (for example, if $M^{\ast}$ is a cochain complex equipped with a {\em constant} filtration $\mathcal{F}^{k} M^{\ast} = M^{\ast}$, then the projection map $M^{\ast} \rightarrow 0$ is always
a filtered quasi-isomorphism, but usually not a levelwise quasi-isomorphism). The resulting category $\widehat{DF}(\Z)$ can be identified with a full subcategory of our filtered derived category $DF(\Z)$,
spanned by those functors $C^{\bullet} \colon \Z^{\op} \rightarrow \DD(\Z)$
which are {\it complete} in the sense that the homotopy limit $\varprojlim_{n}
C^{n}$ vanishes (see \cite[Sec.~5]{BMS2} for a more detailed discussion).
\end{warning}

\begin{remark}[The Underlying Complex]\label{remund}
For each object $\overrightarrow{C} = \{ C^{k} \}_{k \in \Z} \in DF(\Z)$, we let $C$ denote the direct limit $\varinjlim_{k \to -\infty} C^{k}$.
We will refer to $C$ as the {\it underlying complex} of $\overrightarrow{C}$.
The construction $\overrightarrow{C} \mapsto C^{- \infty}$ determines a functor of $\infty$-categories
$\Fun( \Z^{\op}, \DD(\Z) ) \rightarrow \DD(\Z )$, hence also a functor of ordinary categories $DF(\Z) \rightarrow D(\Z)$. 

Note that if $\overrightarrow{C}$ arises from a filtered cochain complex $\{ \mathcal{F}^{k} M^{\ast} \}_{k \in \Z}$ as in
Example \ref{sipard}, then $C$ is the image in the derived category of the usual direct limit
$\varinjlim_{k \to -\infty} \mathcal{F}^{k} M^{\ast}$.
\end{remark}

\begin{remark}[The Associated Graded]
For each object $\overrightarrow{C} = \{ C^{k} \}_{k \in \Z} \in DF(\Z)$ and each integer $n$, we let $\gr^{n}( \overrightarrow{C} )$ denote the cofiber of the map
$C^{n+1} \rightarrow C^{n}$. The construction $\overrightarrow{C} \mapsto \gr^{n}(\overrightarrow{C})$ determines a functor of $\infty$-categories
$\Fun( \Z^{\op}, \DD(\Z) ) \rightarrow \DD(\Z )$, hence also a functor of ordinary categories $DF(\Z) \rightarrow D(\Z)$.

Note that if $\overrightarrow{C}$ arises from a filtered cochain complex $\{ \mathcal{F}^{k} M^{\ast} \}_{k \in \Z}$ as in
Example \ref{sipard}, then $\gr^{n}( \overrightarrow{C})$ is the image in the derived category of the quotient
$\mathcal{F}^{n} M^{\ast} / \mathcal{F}^{n+1} M^{\ast}$.
\end{remark}

\begin{example}\label{sosho}
Let $M^{\ast}$ be a cochain complex of $p$-torsion-free abelian groups. Then we can equip the localization $M^{\ast}[1/p]$ with the $p$-adic filtration, given by the diagram
$$ \cdots \hookrightarrow p^2 M^{\ast} \hookrightarrow p M^{\ast} \hookrightarrow M^{\ast} \hookrightarrow p^{-1} M^{\ast} \hookrightarrow p^{-2} M^{\ast} \hookrightarrow \cdots.$$
This construction determines a filtered cochain complex $\{ p^{k} M^{\ast} \}_{k \in \Z}$, hence an object of the filtered derived category $DF(\Z)$. Note
that the construction $M^{\ast} \mapsto \{ p^{k} M^{\ast} \}_{k \in \Z}$ carries quasi-isomorphisms in $\cochtfr$ to equivalences in $DF(\Z)$.
We therefore obtain a functor of derived categories $D(\Z) \rightarrow DF(\Z)$, which we will denote by $M \mapsto p^{\bullet} M$. 
\end{example}

\begin{remark}[The Beilinson t-Structure]\label{rembeil}
The filtered derived category $DF(\Z)$ arises as the homotopy category of a stable $\infty$-category $\Fun( \Z^{\op}, \DD(\Z) )$, and therefore inherits the structure of a triangulated category.
This triangulated category admits a t-structure $( DF^{\leq 0}(\Z), DF^{\geq
0}(\Z))$ which can be described as follows:
\begin{itemize}
\item An object $\overrightarrow{C} \in DF(\Z)$ belongs to $DF^{\leq 0}(\Z)$ if and only if, for every integer $k$, the object $\gr^{k}( \overrightarrow{C} )$ belongs to $D^{\leq k}(\Z)$: that
is, the cohomology groups $\mathrm{H}^{n}( \gr^{k}(\overrightarrow{C} ) )$ vanish for $n > k$.
\item An object $\overrightarrow{C} = \{ C^k \}_{k \in \Z} \in DF(\Z)$ belongs to $DF^{\geq 0}(\Z)$ if and only if, for every integer $k$, we have $C^{k} \in D^{\geq k}(\Z)$: that
is, the cohomology groups $\mathrm{H}^{n}( C^{k} )$ vanish for $n < k$.
\end{itemize}
We will refer to $( DF^{\leq 0}(\Z), DF^{\geq 0}(\Z))$ as the {\it Beilinson t-structure on $DF(\Z)$}. The heart of this t-structure can be identified with the abelian category
$\coch$ of cochain complexes of abelian groups.
For details, see \cite[Sec. 5.1]{BMS2}; the $t$-structure goes back to
\cite[Appendix A]{Beilinson}. 
\end{remark}

\begin{example}\label{sup}
Let $\{ \mathcal{F}^{k} M^{\ast} \}_{k \in \Z}$ be a filtered cochain complex, which we identify with its image in the filtered derived category. Let
$\tau^{\leq 0}_{B} \left( \{ \mathcal{F}^{k} M^{\ast} \}_{k \in \Z} \right)$ denote its truncation with respect to the Beilinson t-structure. Then
$\tau^{\leq 0}_{B} \left( \{ \mathcal{F}^{k} M^{\ast} \}_{k \in \Z} \right)$ can be represented explicitly by the filtered subcomplex $\{ \mathcal{F}'^{k} M^{\ast} \}_{k \in \Z}$ described by the formula
$$ \mathcal{F}'^{k} M^n = \begin{cases} \mathcal{F}^{k} M^n & \text{ if } n < k, \\
\{ x \in \mathcal{F}^{n} M^{n}: dx \in \mathcal{F}^{n+1} M^{n+1} \} & \text{ if } n \geq k. \end{cases}$$
To prove this, it suffices to observe that each quotient $\mathcal{F}'^{k}
M^{\ast} / \mathcal{F}'^{k+1} M^{\ast}$ can be identified with $\tau^{\leq k} (
\mathrm{gr}^k M^{\ast})$
(so that $\{ \mathcal{F}'^{k} M^{\ast} \}_{k \in \Z}$ belongs to $DF^{\leq 0}(\Z)$), and that each quotient $\mathcal{F}^{k} M^{\ast} / \mathcal{F}'^{k} M^{\ast}$
belongs to $D^{> k}(\Z)$ (so that $\{ \mathcal{F}^{k} M^{\ast} / \mathcal{F}'^{k} M^{\ast} \}_{k \in \Z}$ belongs to $DF^{>0}(\Z)$).

In particular, the underlying cochain complex of $\tau^{\leq 0}_{B} \left(  \{
\mathcal{F}^{k} M^{\ast} \}_{k \in \Z} \right)$ can be identified with
$N^{\ast}$, where $N^{k} = \{ x \in \mathcal{F}^{k} M^{k}: dx \in \mathcal{F}^{k+1} M^{k+1} \}$.
\end{example}

\begin{proposition}\label{makegaard}
The functor $L \eta_p\colon  D(\Z) \rightarrow D(\Z)$ of Corollary \ref{qism2} is isomorphic to the composition
$$ D(\Z) \xrightarrow{M \mapsto p^{\bullet} M} DF(\Z) \xrightarrow{ \tau^{\leq 0}_{B} } DF(\Z) \xrightarrow{ \overrightarrow{M} \mapsto M} D(\Z).$$
Here the first functor is given by the formation of $p$-adic filtrations (Example \ref{sosho}), the second by
truncation for the Beilinson t-structure (Remark \ref{rembeil}), and the third is given by passing to the underlying cochain complex
(Remark \ref{remund}).
\end{proposition}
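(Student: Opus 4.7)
The plan is to work with $p$-torsion-free representatives and identify the three successive operations with an explicit cochain-level construction, ultimately recovering the functor $\eta_p$ of Construction~\ref{etaconstruction}. Using Variant~\ref{mariant}, every object of $D(\Z)$ admits a representative $M^{\ast} \in \cochtfr$, and by Corollary~\ref{qism2} it suffices to produce a natural isomorphism between the two functors restricted to $\cochtfr$.

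Fix $M^{\ast} \in \cochtfr$. First, the $p$-adic filtration $\{p^{k} M^{\ast}\}_{k \in \Z}$ is, by Example~\ref{sosho}, a well-defined lift of $M \in D(\Z)$ to $DF(\Z)$ that depends functorially on $M$. Next, Example~\ref{sup} applied to this filtration computes the Beilinson truncation $\tau^{\leq 0}_{B}\{p^{k} M^{\ast}\}_{k \in \Z}$ explicitly as the filtered subcomplex $\{\mathcal{F}'^{k} M^{\ast}\}_{k \in \Z}$ with
\[
\mathcal{F}'^{k} M^{n} = \begin{cases} p^{k} M^{n} & \text{if } n < k \\ \{x \in p^{n} M^{n} : dx \in p^{n+1} M^{n+1}\} & \text{if } n \geq k. \end{cases}
\]
Finally, passing to the underlying complex (Remark~\ref{remund}) amounts to forming the filtered colimit as $k \to -\infty$. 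In each degree $n$, the system $\mathcal{F}'^{k} M^{n}$ is \emph{eventually constant}: once $k \leq n$, it equals $\{x \in p^{n} M^{n} : dx \in p^{n+1} M^{n+1}\}$, which is precisely $(\eta_p M)^{n}$ by Construction~\ref{etaconstruction}. The differential is inherited from $M^{\ast}[p^{-1}]$ in both cases, so the underlying cochain complex is canonically isomorphic to $(\eta_p M)^{\ast}$.

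This identification is manifestly natural in $M^{\ast}$ and, again by Corollary~\ref{qism}, descends to a natural isomorphism on $D(\Z)$ between the composite functor and $L\eta_p$. The main technical point to verify is the computation of the Beilinson truncation in Example~\ref{sup}, which was already established there; beyond that the argument is a matter of unwinding definitions and checking that the colimit stabilizes. No genuine obstacle arises, since the explicit cochain formulas for both $\eta_p$ and $\tau^{\leq 0}_{B}$ match term by term once one restricts to $p$-torsion-free representatives.
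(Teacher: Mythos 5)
Your proof is correct and follows essentially the same route as the paper: pass to a $p$-torsion-free representative, compute $\tau^{\leq 0}_{B}(p^{\bullet}M)$ via Example~\ref{sup}, and observe that its underlying complex is exactly $(\eta_p M)^{\ast}$ from Construction~\ref{etaconstruction}. The only difference is that you spell out the stabilization of the filtration in each degree as $k \to -\infty$, which the paper leaves implicit.
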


\begin{proof}
Let $M^{\ast}$ be a cochain complex of $p$-torsion-free abelian groups. Using the analysis of Example \ref{sup}, we see that
the underlying cochain complex of $\tau^{\leq 0}_{B}( p^{\bullet} M^{\ast} )$ can be identified with 
$(\eta_p M)^{\ast} = \{ x \in p^{\ast} M^{\ast}: dx \in p^{\ast +1} M^{\ast+1} \}$.
\end{proof}

It follows from Proposition \ref{makegaard} that, for any object $M$ of the derived category $D(\Z)$, the object $L \eta_p(M)$ comes equipped with a canonical filtration:
more precisely, it can be lifted to an object of the filtered derived category $DF(\Z)$, given by $\tau^{\leq 0}_{B} (p^{\bullet} M)$. In particular, if $M$ is equipped with
an isomorphism $\varphi\colon  M \simeq L \eta_p(M)$, then we can lift $M$ itself to an object of $DF(\Z)$. In the case where $M$ arises from a saturated Dieudonn\'{e} complex
$M^{\ast}$, we can model this lift explicitly using the Nygaard filtration of Construction \ref{NygaardDef}:

\begin{proposition}
Let $M^{\ast}$ be a saturated Dieudonn\'{e} complex, so that the Frobenius map $F$ induces an isomorphism $\alpha_F\colon  M^{\ast} \simeq (\eta_p M)^{\ast}$ (Remark \ref{rem2}).
Let us abuse notation by identifying $\alpha_F$ with an isomorphism $M \simeq L \eta_p(M)$ in the derived category $D(\Z)$. Then $\alpha_F$
can be lifted to an isomorphism $\{ \mathcal{N}^{k} M^{\ast} \}_{k \in \Z} \simeq \tau^{\leq 0}_{B} ( p^{\bullet} M)$ in the filtered derived category $DF(\Z)$.
\end{proposition}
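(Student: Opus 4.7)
The plan is to exhibit the desired isomorphism at the level of filtered cochain complexes, so that it automatically descends to $DF(\mathbf{Z})$.

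First, I would invoke the explicit computation of Example \ref{sup} to get a concrete model for $\tau^{\leq 0}_{B}(p^{\bullet} M)$. Applying the formula there to the filtered cochain complex $\{p^k M^*\}_{k \in \mathbf{Z}}$ gives a representative $\{\mathcal{F}'^k M^*\}_{k \in \mathbf{Z}}$ with
\[
\mathcal{F}'^k M^n \;=\; \begin{cases} p^k M^n & \text{if } n < k, \\ \{x \in p^n M^n : dx \in p^{n+1}M^{n+1}\} = (\eta_p M)^n & \text{if } n \geq k. \end{cases}
\]
Its underlying cochain complex (the colimit as $k \to -\infty$) is precisely $(\eta_p M)^*$, in agreement with Proposition \ref{makegaard}.

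Next, I would check that the chain map $\alpha_F\colon M^* \to (\eta_p M)^*$, defined by $x \mapsto p^n F(x)$ for $x \in M^n$, restricts in each filtration degree $k$ to a bijection $\mathcal{N}^k M^* \xrightarrow{\sim} \mathcal{F}'^k M^*$. Injectivity is automatic since $F$ is injective on a saturated Dieudonné complex and $M^*$ is $p$-torsion-free (Remark \ref{rem5}). For the image, one considers the two cases: when $n \geq k$, the map $\alpha_F\colon M^n \to (\eta_p M)^n$ is already a bijection by saturation (Remark \ref{rem3}); when $n < k$, a direct calculation using $FV = p$ gives
\[
\alpha_F(p^{k-n-1} V M^n) \;=\; p^n F \bigl(p^{k-n-1} V M^n\bigr) \;=\; p^{k-1}\,FV\, M^n \;=\; p^k M^n.
\]
Since $\alpha_F$ is a chain map, these restrictions assemble to a morphism of filtered cochain complexes.

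Combining these observations, $\alpha_F$ induces an \emph{isomorphism} of filtered cochain complexes $\{\mathcal{N}^k M^*\}_{k \in \mathbf{Z}} \xrightarrow{\sim} \{\mathcal{F}'^k M^*\}_{k \in \mathbf{Z}}$, and hence an isomorphism in $DF(\mathbf{Z})$ lifting the image of $\alpha_F$ in $D(\mathbf{Z})$. There is no genuine obstacle here: the argument is essentially a matching of two explicit filtrations, with the one nontrivial input being the identification of $\tau^{\leq 0}_B(p^\bullet M^*)$ supplied by Example \ref{sup} and Proposition \ref{makegaard}.
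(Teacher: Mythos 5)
Your proposal is correct and follows essentially the same route as the paper: both use Example \ref{sup} to write down the explicit representative of $\tau^{\leq 0}_{B}(p^{\bullet} M)$ and then observe that $\alpha_F$ carries $\mathcal{N}^{k} M^{\ast}$ isomorphically onto that filtration level (your case-by-case check, including the computation $p^{n}F(p^{k-n-1}VM^{n}) = p^{k}M^{n}$, is exactly the verification the paper summarizes by saying $\mathcal{N}^{k}M^{n}$ is the preimage of $\mathcal{N}'^{k}(\eta_p M)^{n}$ under $p^{n}F$).
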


\begin{proof}
Using Example \ref{sup}, we see that $\tau^{\leq 0}_{B} (p^{\bullet} M)$ can be represented explicitly by the filtered cochain complex $\{ \mathcal{N}'^{k} (\eta_p M)^{\ast} \}_{k \in \Z}$
given by the formula
$$ \mathcal{N}'^{k} (\eta_p M)^{n} = \begin{cases} p^k M^n & \text{ if } n < k \\
\{ x \in p^n M^{n}: dx \in p^{n+1} M^{n+1} \} & \text{ if } n \geq k. \end{cases}$$
It now suffices to observe that each $\mathcal{N}^{k} M^{n}$ can be described as the inverse image of $\mathcal{N}'^{k} (\eta_p M)^{n}$ under the isomorphism
$(p^{n}F)\colon  M^{n} \xrightarrow{\sim} (\eta_p M)^{n}$ determined by $\alpha_F$.
\end{proof}

\newpage
\section{The Derived de Rham-Witt Complex}
\label{sec:sddRW}

In this section, we provide an alternative description of the saturated de
Rham--Witt complex $\W \Omega_R^{\ast}$ of an $\mathbb{F}_p$-algebra $R$ using the {\em derived de~Rham--Witt complex} (cf. \cite{BhattdR} and
\cite[Ch. VIII]{Ill2}). Our main result (Theorem~\ref{sddRWdRW}) can be summarized informally as follows: the derived de~Rham--Witt complex $LW\Omega_R$
of an $\F_p$-algebra $R$ admits a natural ``divided Frobenius'' map $\alpha_R\colon LW\Omega_R \to L\eta_p(LW\Omega_R)$, and inverting the map $\alpha_{R}$
yields the saturated de~Rham--Witt complex $\W \Omega^*_R$ studied in this paper. 

We give two applications of the preceding description. First, we prove an analog
of the Berthelot--Ogus isogeny theorem \cite{BO83} for the saturated de~Rham--Witt complex without any regularity constraints: if $R$ is an $\mathbf{F}_p$-algebra of embedding dimension $\leq d$, then the Frobenius endomorphism of $\W\Omega^*_R$ factors multiplication by $p^d$ (Corollary~\ref{FrobSatdRWIsogeny}); in particular, it is a $p$-isogeny. Second, we give an alternative proof that the comparison map
$\Omega_{R}^{\ast} \rightarrow \W_1 \Omega_{R}^{\ast}$ is an isomorphism for a regular Noetherian $\F_p$-algebra $R$ (Theorem \ref{theo73}), which avoids the 
use of Popescu's theorem. 

\subsection{Lax Fixed Points}
\label{saturateddRW} 

We begin by introducing a variant of Definition \ref{fixeinfinity}.

\begin{definition}\label{fixeinfinity2}
Let $\mathcal{C}$ be an $\infty$-category and let $T \colon \mathcal{C} \to
\mathcal{C}$ be a functor. Form a pullback diagram
$$ \xymatrix{ \calC^{T}_{\mathrm{lax} } \ar[r] \ar[d] & \Fun( \Delta^1, \calC ) \ar[d] \\
\calC \ar[r]^-{ (\id, T)} & \calC \times \calC. }$$

Then $\calC^{T}_{\mathrm{lax}}$ is an $\infty$-category, whose objects
can be identified with pairs $(C, \varphi)$ where $C$ is an object of $\calC$
and $\varphi\colon  C \rightarrow TC$ is a morphism in $\calC$. We will refer to
$\calC^{T}_{\mathrm{lax}}$ as the {\it $\infty$-category of lax fixed points of
$T$ on $\calC$}. When $\mathcal{C}$ is an ordinary 1-category, then so is
$\mathcal{C}^T_{\mathrm{lax}}$, given by the above description. 
\end{definition}

\begin{example}\label{CFrobComp}
Let $\FrobComp^{\mathrm{tf}} \subseteq \FrobComp$ be the subcategory spanned
by the $p$-torsion-free Dieudonn\'e complexes. Then $\FrobComp^{\mathrm{tf} }$ can be identified
with the category of lax fixed points for $\eta_p$ on the category $\cochtfr$ of
$p$-torsion-free cochain complexes of abelian groups
(see Remark \ref{rem2}). 
\end{example}

\begin{construction}
\label{LaxToStrict}
Let $\calC$ be an $\infty$-category and let $T\colon  \calC \rightarrow \calC$ be a functor. Then we can regard the fixed point
$\infty$-category $\calC^{T}$ of Definition \ref{fixeinfinity} as a full subcategory of the lax fixed point $\infty$-category
$\calC^{T}_{\mathrm{lax}}$ of Definition \ref{fixeinfinity2}. If $\calC$ admits filtered colimits which are preserved by the functor
$T$, then the inclusion $\calC^{T} \hookrightarrow \calC^{T}_{\mathrm{lax}}$ admits a left adjoint, which carries
a pair $(C, \varphi)$ to the colimit of the diagram
$$ C \xrightarrow{ \varphi} TC \xrightarrow{T \varphi} T^2 C \xrightarrow{ T^2 \varphi} T^3 C \rightarrow \cdots$$
Compare the proof of \cite[Prop. II.5.3]{NS17}. 
\end{construction}

\begin{example}
In the case where $\calC = \cochtfr$ and $T$ is the functor $\eta_p$, the functor
$$ \FrobComp^{\mathrm{tf}} = (\cochtfr)^{\eta_p}_{ \mathrm{lax} } \rightarrow (\cochtfr)^{\eta_p} = \FrobCompSat$$
given in Construction \ref{LaxToStrict} agrees with the saturation functor on Dieudonn\'{e} complexes introduced in \S \ref{substrict}.
\end{example}

\begin{notation}\label{satcomp}
Let $\widehat{\calD(\Z)}^{L \eta_p}_{\mathrm{lax}}$ denote the lax fixed points for $L \eta_p$, regarded
as a functor from the $p$-complete derived $\infty$-category $\ddhp$ to itself. Note that
the functor $L \eta_p\colon  \ddhp \rightarrow \ddhp$ preserves filtered colimits, since it commutes
with filtered colimits on the larger $\infty$-category $\calD(\Z)$ and with the operation of $p$-completion (Proposition \ref{meta}).
Applying Construction \ref{LaxToStrict}, we see that the inclusion
$\widehat{\calD(\Z_p)}^{L \eta_p} \hookrightarrow \widehat{\calD(\Z)}^{L \eta_p}_{\mathrm{lax}}$ admits a left adjoint,
which we will denote by
$$ \widehat{\Saturate}\colon  \widehat{\calD(\Z)}^{L \eta_p}_{\mathrm{lax}} \rightarrow \widehat{\calD(\Z_p)}^{L \eta_p}.$$
\end{notation}

\begin{remark}\label{obo}
The $\infty$-category $\widehat{\calD(\Z)}^{L \eta_p}_{\mathrm{lax}}$ can be described as the 
\emph{lax equalizer} of the pair of functors $(\mathrm{id}, L \eta_p)\colon  \ddhp \to \ddhp$, in the sense of \cite[Def. II.1.4]{NS17}. 
It follows that $\widehat{\calD(\Z)}^{L \eta_p}_{\mathrm{lax}}$ is a presentable $\infty$-category and that
the forgetful functor $\widehat{\calD(\Z)}^{L \eta_p}_{\mathrm{lax}} \rightarrow
\widehat{\calD(\Z_p)}$ preserves small colimits, cf. \cite[Prop.
II.1.5(iv)]{NS17}.
\end{remark}

\begin{remark}
Let $M^{\ast}$ be a torsion-free Dieudonn\'{e} complex having image $M \in \calD(\Z)$. Then we can regard
$M$ as a lax fixed point for the functor $L \eta_p\colon  \calD(\Z) \rightarrow \calD(\Z)$, so that its derived $p$-completion
$\widehat{M}$ is a lax fixed point for the restriction $L \eta_p\colon  \widehat{ \calD(\Z_p)} \rightarrow \widehat{ \calD(\Z_p)}$.
Using Corollary \ref{derivedcompleteDC}, we see that the saturation functor $\widehat{\Saturate}$ of
Construction \ref{LaxToStrict} carries $\widehat{M}$ to the completed saturation $\WSaturate( M^{\ast} )$ of Notation \ref{completesat}.
In other words, the diagram of $\infty$-categories
$$ \xymatrix{ \FrobComp^{\mathrm{tf}} \ar[r]^{\WSaturate} \ar[d]^-{M^{\ast}
\mapsto \widehat{M}} & \FrobCompComplete \ar[d]^{\sim} \\
\widehat{\calD(\Z)}^{L \eta_p}_{\mathrm{lax}} \ar[r]^-{\widehat{\Saturate}} & \widehat{\calD(\Z_p)}^{L \eta_p} }$$
commutes up to canonical isomorphism; note here that we use 
that $\WSaturate(\cdot)$ is insensitive to $p$-adic completion. Here the right vertical map is an equivalence by virtue of Corollary \ref{1categorycor}.
\end{remark}

\subsection{Digression: Nonabelian Derived Functors}
 
We now give a brief review of the theory of nonabelian derived functors in the setting of $\infty$-categories, following \cite[\S 5.5.9]{Lur09}. For the sake of concreteness,
we will confine our attention to functors which are defined on the $\infty$-category of simplicial commutative $\F_p$-algebras.

\begin{notation}
Let $\SCRf$ denote the $\infty$-category of simplicial commutative $\F_p$-algebras. More precisely, we define
$\SCRf$ to be the $\infty$-category obtained from the ordinary category of
simplicial commutative $\F_p$-algebras by formally inverting quasi-isomorphisms. For a detailed discussion of the resulting homotopy theory (using the language of model categories), we refer the reader to \cite[II.4]{Quillen}. 
\end{notation}

Let $\CAlg_{\F_p}^{\mathrm{poly}}$ denote the ordinary category whose objects are $\F_p$-algebras of the form $\F_p[ x_1, \ldots, x_n ]$. 
In what follows, we will identify $\CAlg_{ \F_p}^{ \mathrm{poly} }$ with a full subcategory of the $\infty$-category $\SCRf$.
By virtue of \cite[Prop.~5.5.8.15 and Cor.~5.5.9.3]{Lur09} (which applies since
$\CAlg_{\F_p}^{\mathrm{poly}}$ admits finite coproducts), the $\infty$-category $\SCRf$ can be characterized by a universal property:

\begin{proposition}\label{uprop}
Let $\calD$ be any $\infty$-category which admits small sifted
colimits (equivalently, filtered colimits and geometric realizations of simplicial objects) and let $\Fun'( \SCRf, \calD )$ denote
the full subcategory of $\Fun( \SCRf, \calD )$ spanned by those functors which preserve small sifted colimits. 
Then composition with the inclusion functor $\CAlg_{ \F_p}^{\mathrm{poly} } \hookrightarrow \SCRf$ induces an equivalence of $\infty$-categories
\[ \pushQED{\qed} \Fun'( \SCRf, \calD ) \rightarrow \Fun( \CAlg_{ \F_p }^{\mathrm{poly} },
\calD ).   \qedhere \popQED \] 
\end{proposition}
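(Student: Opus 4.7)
The plan is to reduce to \cite[Prop. 5.5.8.15]{Lur09} by identifying $\SCRf$ with the nonabelian derived $\infty$-category $\mathcal{P}_{\Sigma}(\CAlg_{\F_p}^{\mathrm{poly}})$. Recall that $\mathcal{P}_{\Sigma}(\mathcal{C})$, for a small $\infty$-category $\mathcal{C}$ with finite coproducts, is the full subcategory of $\Fun(\mathcal{C}^{\op}, \mathcal{S})$ (where $\mathcal{S}$ denotes the $\infty$-category of spaces) spanned by those presheaves which convert finite coproducts in $\mathcal{C}$ into products of spaces. The universal property in \textit{loc. cit.} then asserts that composition with the Yoneda embedding $j\colon \mathcal{C} \hookrightarrow \mathcal{P}_{\Sigma}(\mathcal{C})$ induces an equivalence
\[ \Fun^{\mathrm{sift}}(\mathcal{P}_{\Sigma}(\mathcal{C}), \mathcal{D}) \xrightarrow{\sim} \Fun(\mathcal{C}, \mathcal{D}) \]
for any $\infty$-category $\mathcal{D}$ admitting small sifted colimits, where $\Fun^{\mathrm{sift}}$ denotes the full subcategory of sifted-colimit-preserving functors.

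First I would observe that $\CAlg_{\F_p}^{\mathrm{poly}}$ is a small category with finite coproducts (given by tensor product over $\F_p$, since $\F_p[x_1,\ldots,x_m] \otimes_{\F_p} \F_p[y_1,\ldots,y_n] \simeq \F_p[x_1,\ldots,x_m,y_1,\ldots,y_n]$), so $\mathcal{P}_{\Sigma}(\CAlg_{\F_p}^{\mathrm{poly}})$ is well-defined and the hypothesis of \cite[Prop. 5.5.8.15]{Lur09} is satisfied. Moreover, the objects of $\CAlg_{\F_p}^{\mathrm{poly}}$ are precisely the compact projective objects of the ordinary category $\CAlg_{\F_p}$, and every object of $\CAlg_{\F_p}$ is a sifted (in fact reflexive coequalizer) colimit of such objects.

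The heart of the argument is then to produce an equivalence $\mathcal{P}_{\Sigma}(\CAlg_{\F_p}^{\mathrm{poly}}) \simeq \SCRf$ extending the inclusion $\CAlg_{\F_p}^{\mathrm{poly}} \hookrightarrow \SCRf$. This identification is standard and is treated in detail in \cite[Ch. 25]{SAG}; concretely, given a simplicial commutative $\F_p$-algebra $A_\bullet$, the associated object of $\mathcal{P}_{\Sigma}(\CAlg_{\F_p}^{\mathrm{poly}})$ is the presheaf $R \mapsto \Hom_{\SCRf}(R, A_\bullet)$, which converts finite coproducts to products because each $R \in \CAlg_{\F_p}^{\mathrm{poly}}$ corepresents a product-preserving functor out of $\CAlg_{\F_p}$. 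Conversely, the inverse functor is determined by sifted-colimit extension from the inclusion $\CAlg_{\F_p}^{\mathrm{poly}} \hookrightarrow \SCRf$ together with the universal property of $\mathcal{P}_{\Sigma}$.

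The main obstacle is the verification of the equivalence $\mathcal{P}_{\Sigma}(\CAlg_{\F_p}^{\mathrm{poly}}) \simeq \SCRf$, since $\SCRf$ is defined via a localization of an ordinary $1$-category (Quillen's simplicial commutative rings modulo quasi-isomorphisms) and one needs to identify this with a homotopy-theoretic object built from polynomial algebras. As this identification is worked out in full in \cite[Ch. 25]{SAG}, I would simply cite that reference and then combine it with \cite[Prop. 5.5.8.15]{Lur09} to conclude the desired universal property of $\SCRf$.
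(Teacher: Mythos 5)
Your proposal is correct and is essentially the paper's own argument: the paper offers no independent proof, simply invoking \cite[Prop. 5.5.8.15]{Lur09} together with the standard identification of $\SCRf$ with $\mathcal{P}_{\Sigma}(\CAlg_{\F_p}^{\mathrm{poly}})$ as treated in \cite[Ch. 25]{SAG}, exactly as you do. (Minor quibble: the compact projective objects of $\CAlg_{\F_p}$ are retracts of finitely generated polynomial algebras rather than literally the polynomial algebras, but this does not affect the argument.)
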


We can summarize Proposition \ref{uprop} more informally as follows: any functor $F_0 \colon   \CAlg_{ \F_p }^{\mathrm{poly} } \rightarrow \calD$
admits an essentially unique extension to a functor $F\colon  \SCRf \rightarrow \calD$ which preserves small sifted colimits. In this case,
we will refer to $F$ as the {\it nonabelian derived functor} of $F_0$.

\begin{example} 
Let $\calD = \calD( \F_p )$ be the derived $\infty$-category of $\F_p$ and let
$F_0\colon  \CAlg_{ \F_p}^{\mathrm{poly} } \rightarrow \mathcal{D}(\F_p)$ be the functor given by
$F_0(R) = \Omega^{1}_{R/\F_p}$. Then the nonabelian derived functor
$F\colon  \SCRf \rightarrow \calD$ carries a simplicial commutative $\F_p$-algebra
$R$ to the {\it cotangent complex} $L_{R/\F_p}$ (regarded as an object of $\mathcal{D}(\F_p)$
via restriction of scalars).
\end{example} 

\begin{remark}\label{sebix}
Let $\calD$ be an $\infty$-category which admits small sifted colimits, let $f\colon  \CAlg_{\F_p}^{\mathrm{poly}} \rightarrow \calD$
be a functor, and let $F\colon  \SCRf \rightarrow \calD$ be its nonabelian derived functor. Then $F$ can be characterized
as the {\it left Kan extension} of the functor $f$ (see \cite[Sec.
4.3.2]{Lur09}), in view of the $\infty$-categorical Yoneda theory, cf.
\cite[Lem.~5.1.5.5, Thm. 5.1.5.6, and Prop. 5.5.8.10]{Lur09}. It follows that if $\calC \subseteq \SCRf$ is a full subcategory containing
$\CAlg_{\F_p}^{\mathrm{poly}}$ and $G\colon  \calC \rightarrow \calD$ is any functor, then the restriction map
$$ \Hom_{ \Fun( \calC, \calD) }( F|_{\calC}, G) \rightarrow \Hom_{ \Fun( \CAlg_{\F_p}^{\mathrm{poly}}, \calD) }( f, G|_{ \CAlg_{\F_p}^{\mathrm{poly}}} )$$
is a homotopy equivalence. We will be primarily interested in the special case where $\calC = \CAlg_{\F_p}$ is the category of commutative $\F_p$-algebras.
\end{remark}

We now restrict our attention to the case of primary interest to us.

\begin{construction}[The derived de~Rham--Witt complex]\label{ddRW}
For each polynomial algebra $R \in \CAlg_{ \F_p }^{\mathrm{poly} }$, let us abuse notation by identifying
the de~Rham--Witt complex $W \Omega^{\ast}_{R} \simeq \WOmega^{\ast}_{R}$ with its image
in the ($p$-complete) derived $\infty$-category $\widehat{ \calD(\Z_p) }$. The construction
$R \mapsto W \Omega^{\ast}_{R}$ determines a functor $f\colon  \CAlg_{\F_p}^{\mathrm{poly}} \rightarrow 
\ddhp$, which admits a nonabelian derived functor $F\colon  \SCRf \rightarrow \ddhp$. Given
a simplicial commutative $\F_p$-algebra $R$, we will denote its image under $F$
by $L W\Omega_{R}$, and refer to it as the {\it derived de~Rham--Witt complex of
$R$}. We refer the reader to \cite[Sec. 8]{BMS2} for a more detailed discussion.

Note that for $R \in \CAlg_{ \F_p}^{\mathrm{poly} }$, we can regard $W \Omega^{\ast}_{R}$ as a Dieudonn\'{e} complex.
In particular, it can be regarded as a lax fixed point for the functor $L \eta_p$ on $\ddhp$; that is, we can promote $f$ to a
functor $\widetilde{f}\colon  \CAlg_{\F_p}^{\mathrm{poly}} \rightarrow 
\widehat{\calD(\Z)}^{L \eta_p}_{\mathrm{lax}}$. Applying Proposition \ref{uprop}, we see that $\widetilde{f}$ admits a nonabelian
left derived functor $\widetilde{F}\colon  \SCRf \rightarrow  \widehat{\calD(\Z)}^{L \eta_p}_{\mathrm{lax}}$.
Since the forgetful functor $\widehat{\calD(\Z)}^{L \eta_p}_{\mathrm{lax}} \rightarrow \widehat{\calD(\Z)}_p$ preserves small colimits (Remark \ref{obo}),
we can regard $\widetilde{F}$ as a lift of the functor $F$. Given
a simplicial commutative $\F_p$-algebra $R$, we will abuse notation by denoting its image under the functor $\widetilde{F}$ also by
$L W \Omega_{R}$. In other words, we regard $\widetilde{F}$ as supplying a canonical map
$\alpha_{R}\colon  LW\Omega_R \to L\eta_p(LW\Omega_R)$, for each simplicial commutative $\F_p$-algebra $R$. 
\end{construction}

\begin{variant}[The derived de~Rham complex]\label{deriveddeRham}
For any $R \in \SCRf$, write 
\[ L\Omega_{R} := LW\Omega_R \otimes_{\Z_p}^L \F_p \in \mathcal{D}(\F_p).\]
We will refer to $L \Omega_{R}$ as the {\it derived de~Rham complex} of $R$. Using Theorem~\ref{theo73} and Corollary~\ref{cor79}, 
we see that the construction $R \mapsto L \Omega_{R}$ is the nonabelian derived functor of the usual de~Rham complex functor
$\Omega^*_{(-)}\colon  \CAlg_{\F_p}^{\mathrm{poly}} \to \mathcal{D}(\F_p)$.
\end{variant}

\newcommand{\filc}{\mathrm{Fil}^{\mathrm{conj}}}
\begin{remark}[The Conjugate Filtration]\label{ddr} 
For every polynomial algebra $R \in \CAlg_{\F_p}^{ \mathrm{poly} }$, let
$\tau^{\leq n} \Omega_{R}^{\ast}$ denote the $n$th truncation of the
de~Rham complex $\Omega_{R}^{\ast}$, which we regard as an object of
the derived $\infty$-category $\calD( \F_p )$. The construction
$R \mapsto \tau^{\leq n} \Omega_{R}^{\ast}$ admits a nonabelian 
left derived functor
$$ \SCRf \rightarrow \calD( \F_p ) \quad \quad R \mapsto \filc_n L \Omega_{R}.$$
Then we can regard $\{ \filc_n L  \Omega_{R} \}_{n \geq 0}$ as an exhaustive
filtration of the derived de~Rham complex $L \Omega_{R}$, which we refer
to as the {\it conjugate filtration}. Using the Cartier isomorphism for $R \in \CAlg_{\F_p}^{\mathrm{poly}}$, 
we obtain canonical isomorphisms
$$ \gr^{n} \left( \filc_{\ast}L \Omega_{R} \right) \simeq (\bigwedge^{n} L_{ R^{(1)} / \F_p})[-n]$$
for all $R \in \SCRf$; here $R^{(1)} \to R$ denotes the Frobenius endomorphism of $R$.
\end{remark}

\begin{remark}\label{ddr2}
For any commutative $\F_p$-algebra $R$, there is a canonical map
$L\Omega_{R} \to \Omega^*_{R/\F_p}$, which is uniquely determined by the
requirement that it depends functorially on $R$ and is the identity when $R$ is
a polynomial algebra over $\F_p$ (Remark~\ref{sebix}).
Using the conjugate filtration of Remark \ref{ddr}, one can show that this map is a quasi-isomorphism when
$R$ is a smooth algebra over a perfect field $k$ of characteristic $p$ (see
\cite[Cor. 3.10]{BhattdR}). Indeed, 
we observe that for a smooth $k$-algebra $R$, we have the
(increasing, exhaustive) Postnikov filtration
on 
$\Omega^{\ast}_{R/\mathbb{F}_p}$ whose associated graded is given by
$\bigwedge^i \Omega^1_{R^{(1)}/\mathbb{F}_p}[-i]$. Since each of the functors
$$ \{ \text{Smooth $k$-algebras} \} \rightarrow \DD(\Z) \quad \quad R \mapsto \bigwedge^{i} \Omega^{1}_{R^{(1)}/\mathbb{F}_p}[-i]$$
is a left Kan extension of its restriction to the subcategory of polynomial algebras over $k$, it follows that the functor
$$ \{ \text{Smooth $k$-algebras} \} \rightarrow \DD(\Z) \quad \quad R \mapsto \Omega_{R}^{\ast}$$
is also a left Kan extension of its restriction to the subcategory of of polynomial algebras over $k$.
\end{remark}

\subsection{Saturated Derived Crystalline Cohomology}

If $R$ is a polynomial algebra over $\F_p$, then the de~Rham--Witt complex $W \Omega^{\ast}_{R}$ is a {\em strict} Dieudonn\'{e} algebra.
In particular, we can regard $W \Omega^{\ast}_{R}$ as a fixed point (rather than a lax fixed point) for the endofunctor $L \eta_p\colon  \ddhp \rightarrow \ddhp$.
In general, the derived de~Rham--Witt complex $L W \Omega_{R}$ of Construction \ref{ddRW} need not have this property: the canonical map
$\alpha_{R}\colon  L W \Omega_{R} \rightarrow L \eta_p ( L W \Omega_{R} )$ need not be an isomorphism, since the functor
$L \eta_p$ does not commute with sifted colimits. This can be corrected by passing to the saturation 
$\widehat{\Saturate}( L W \Omega_{R} )$ in the sense of Notation \ref{satcomp}. This saturation admits a more concrete description:

\begin{theorem}\label{sddRWdRW}
Let $R$ be a simplicial commutative $\F_p$-algebra. Then there is a canonical map
\[ \tau_R\colon  LW\Omega_R \to \W\Omega^*_{\pi_0(R)}\]
which exhibits the saturated de~Rham--Witt complex $\WOmega^{\ast}_{ \pi_0(R)}$ as a saturation of
$L W \Omega_{R}$, in the sense of Notation \ref{satcomp}.
\end{theorem}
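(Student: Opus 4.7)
The plan is to build $\eta_R$ by a left Kan extension and then reduce the saturation claim to showing that both sides of the putative equivalence preserve sifted colimits in $\SCRf$. First, I would note that the composite $R \mapsto \W\Omega^*_{\pi_0(R)}$, regarded as a functor $\SCRf \to \widehat{\calD(\Z)}^{L\eta_p}_p \hookrightarrow \widehat{\calD(\Z)}^{L\eta_p}_{\mathrm{lax}}$ via Corollary~\ref{1categorycor}, agrees on polynomial algebras with the functor $R \mapsto W\Omega^*_R$, thanks to Theorem~\ref{maintheoC}. Since $LW\Omega_{(-)}$ is, by definition, the nonabelian left derived functor of the same assignment on $\CAlg_{\F_p}^{\mathrm{poly}}$ (Construction~\ref{ddRW}), Proposition~\ref{uprop} together with Remark~\ref{sebix} produces a canonical natural transformation $\eta_R\colon LW\Omega_R \to \W\Omega^*_{\pi_0(R)}$.

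By the defining adjunction of Notation~\ref{satcomp}, $\eta_R$ exhibits the target as the saturation of the source if and only if the adjoint map $\widehat{\Saturate}(\eta_R)\colon \widehat{\Saturate}(LW\Omega_R) \to \W\Omega^*_{\pi_0(R)}$ is an equivalence in $\widehat{\calD(\Z)}^{L\eta_p}_p$. Both constructions define functors $\SCRf \to \widehat{\calD(\Z)}^{L\eta_p}_p$ that agree on $\CAlg_{\F_p}^{\mathrm{poly}}$ (returning $W\Omega^*_R$ in both cases), so by the uniqueness clause of Proposition~\ref{uprop} it suffices to check that both preserve sifted colimits. For the left-hand side, $LW\Omega_{(-)}$ preserves sifted colimits by construction, and $\widehat{\Saturate}$ is a left adjoint (Notation~\ref{satcomp}), so the composite does as well. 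For the right-hand side, $\pi_0\colon \SCRf \to \CAlg_{\F_p}$ is left adjoint to the inclusion of discrete algebras and preserves all colimits, so the question reduces to showing that $\W\Omega^*\colon \CAlg_{\F_p} \to \widehat{\calD(\Z)}^{L\eta_p}_p$ preserves sifted colimits.

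The hard part will be this last assertion. As a functor into $\FrobAlgComplete$, $\W\Omega^*$ is a left adjoint by Corollary~\ref{dRWLeftAdj} and hence preserves all colimits computed in $\FrobAlgComplete$. The equivalence $\widehat{\calD(\Z)}^{L\eta_p}_p \simeq \FrobCompComplete$ of Corollary~\ref{1categorycor} identifies the target with an ordinary $1$-category, so the claim amounts to showing that the forgetful functor $\FrobAlgComplete \to \FrobCompComplete$ preserves sifted colimits. For filtered diagrams this follows directly from Proposition~\ref{prop74}, since filtered colimits in both $\FrobAlgSat$ and $\FrobCompSat$ are computed levelwise in $\FrobComp$ and then completed. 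The remaining case of reflexive coequalizers can be handled by mimicking the classical fact that the forgetful functor from commutative rings to abelian groups preserves reflexive coequalizers: the existence of a common section ensures that the coequalizer formed at the level of Dieudonn\'e complexes inherits a compatible multiplication, which is then preserved under saturation and completion. Combining the filtered and reflexive-coequalizer cases yields the required sifted-colimit preservation and completes the proof.
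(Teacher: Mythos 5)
Your proposal is correct and follows essentially the same route as the paper: construct $\eta_R$ by left Kan extension from polynomial algebras, check that $\widehat{\Saturate}(LW\Omega_R) \leftarrow LW\Omega_R \rightarrow \W\Omega^*_R$ are equivalences on $\CAlg_{\F_p}^{\mathrm{poly}}$ (strictness of $W\Omega^*_R$ plus Theorem~\ref{maintheoC}), and reduce the general case to sifted-colimit preservation via Proposition~\ref{uprop}. Your splitting of the key point into filtered colimits (Proposition~\ref{prop74}) plus reflexive coequalizers is just an unpacking of the paper's Lemma~\ref{colimitsAlg}, whose proof likewise rests on the fact that colimits in $\FrobAlgComplete$ are computed by applying $\WSaturate$ to colimits of underlying objects.
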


\begin{remark}\label{labelremark}
Concretely, Theorem \ref{sddRWdRW} asserts that the saturated de~Rham--Witt complex $\WOmega^{\ast}_{\pi_0(R)}$ can be computed as the $p$-completed direct limit of the diagram
\[ LW\Omega_R \xrightarrow{\alpha_R} L\eta_p(LW\Omega_R) \xrightarrow{L\eta_p(\alpha_R)} L\eta_{p^2}(LW\Omega_R) \to \cdots.\]
\end{remark}

The proof of Theorem \ref{sddRWdRW} will make use of the following:

\begin{lemma} 
\label{colimitsAlg}
The category $\FrobAlgComplete$ of strict Dieudonn\'{e} algebras admits small colimits, and the forgetful functor
$\FrobAlgComplete \to \FrobCompComplete$ commutes with sifted colimits. 
\end{lemma}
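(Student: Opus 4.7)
The plan is to construct colimits in $\FrobAlgComplete$ by composing an embedding into $\CAlg(\FrobComp)$ with the adjunction of Corollary \ref{oloter}, and then to analyze the forgetful functor on sifted diagrams by passing to the underlying Dieudonn\'e complex.

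\emph{Step 1: Existence of colimits.} By Remark \ref{algdesc}, $\FrobAlg$ is identified with the full subcategory of $\CAlg(\FrobComp)$ spanned by those commutative algebra objects $A^{\ast}$ satisfying three conditions: concentration in degrees $\geq 0$, the Frobenius congruence $F(x) \equiv x^{p} \pmod{p}$ on $A^{0}$, and vanishing of squares in odd degrees. Since $\FrobComp$ is cocomplete and its tensor product (Remark \ref{tprop}) preserves colimits separately in each variable, $\CAlg(\FrobComp)$ admits small colimits (via the usual free/forgetful adjunction). Each of the three defining conditions above is inherited by colimits, so $\FrobAlg$ is cocomplete. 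Corollary \ref{oloter} then supplies the left adjoint $\WSaturate\colon \FrobAlg \to \FrobAlgComplete$ to the inclusion, and formally one concludes that $\FrobAlgComplete$ is cocomplete, with $\varinjlim_{\alpha} A_{\alpha}^{\ast} \simeq \WSaturate(\varinjlim_{\alpha}^{\FrobAlg} A_{\alpha}^{\ast})$.

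\emph{Step 2: Compatibility with the forgetful functor on sifted diagrams.} The argument has three ingredients. First, the forgetful functor $\CAlg(\FrobComp) \to \FrobComp$ preserves sifted colimits: this is the general principle that in a presentable symmetric monoidal category whose tensor product preserves sifted colimits in each variable, sifted colimits of commutative algebras are computed on underlying objects. Combined with Step 1 this shows the forgetful functor $\FrobAlg \to \FrobComp$ preserves sifted colimits. Second, for any Dieudonn\'e algebra $A^{\ast}$, the underlying Dieudonn\'e complex of $\WSaturate(A^{\ast})$ computed in the algebra sense coincides with the completed saturation of the underlying Dieudonn\'e complex, essentially by construction: the saturation is described intrinsically as a subgroup of the localized complex (Remark \ref{rem:satissubgroup}), and the completion is the inverse limit of the $\W_{r}(-)^{\ast}$ whose formation is insensitive to the algebra structure (cf.\ the proof of Proposition \ref{prop74}). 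Third, the analogous formula for colimits in $\FrobCompComplete$ is provided by Corollaries \ref{cor65} and \ref{cor66}. Chaining these three observations, for a sifted diagram $\{A_{\alpha}^{\ast}\}$ of strict Dieudonn\'e algebras one obtains
\[ U\bigl(\varinjlim_{\alpha}^{\FrobAlgComplete} A_{\alpha}^{\ast}\bigr) \;\simeq\; \WSaturate\bigl(\varinjlim_{\alpha}^{\FrobComp} U(A_{\alpha}^{\ast})\bigr) \;\simeq\; \varinjlim_{\alpha}^{\FrobCompComplete} U(A_{\alpha}^{\ast}), \]
which is the desired compatibility.

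The main obstacle is Step 2's first ingredient: that the forgetful functor $\CAlg(\FrobComp) \to \FrobComp$ preserves sifted colimits. Concretely this requires showing that the symmetric power functors $\mathrm{Sym}^{n}\colon \FrobComp \to \FrobComp$ preserve sifted colimits, which in turn reduces to the fact that $n$-fold tensor powers in $\FrobComp$ preserve sifted colimits (and that finite products commute with sifted colimits of underlying graded abelian groups). This follows from the bilinearity of the tensor product together with the explicit form of sifted colimits in chain complexes, but could alternatively be obtained by invoking the general results of \cite{HA} on symmetric monoidal structures on presentable $\infty$-categories once $\FrobComp$ is recognized as the underlying category of such a structure.
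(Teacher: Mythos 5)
Your proposal is correct and follows essentially the same route as the paper: colimits in $\FrobAlgComplete$ are obtained by applying $\WSaturate$ (Corollary \ref{oloter}) to colimits formed in $\FrobAlg$, and the sifted-colimit statement reduces to the observation that the forgetful functor $\FrobAlg \rightarrow \FrobComp$ preserves sifted colimits, together with the compatibility of completed saturation for algebras and complexes. The paper simply records these points more tersely, leaving implicit the cocompleteness of $\FrobAlg$ and the standard fact about sifted colimits of commutative algebra objects that you spell out.
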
 

\begin{proof} 
By virtue of Corollary \ref{oloter}, the colimit of a diagram $\{ A^{\ast}_{\alpha} \}$ in the
category $\FrobAlgComplete$ can be computed by applying the completed saturation functor
$\WSaturate$ to the colimit $\varinjlim A^{\ast}_{\alpha}$, formed in the larger category
$\FrobAlg$ of all Dieudonn\'{e} algebras. It will therefore suffice to observe that
the forgetful functor $\FrobAlg \rightarrow \FrobComp$ (as a forgetful
functor from a category of algebras) preserves sifted colimits.
\end{proof} 

\begin{proof}[Proof of Theorem \ref{sddRWdRW}]
We wish to prove that the diagram of $\infty$-categories
$$ \xymatrix{ \SCRf \ar[r]^{\pi_0} \ar[d]^{L W\Omega_{\bullet}} & \CAlg_{ \F_p } \ar[r]^{ \WOmega^{\ast}_{\bullet} } & \FrobAlgComplete \ar[d] \\
\widehat{\calD(\Z)}^{L \eta_p}_{\mathrm{lax} } \ar[r]^{ \widehat{\Saturate} }&
\widehat{\calD(\Z)}^{L \eta_p} \ar[r]^-{\sim} & \FrobCompComplete }$$
commutes (up to natural isomorphism), where the lower right horizontal map is an inverse to the equivalence of Corollary \ref{1categorycor}.
Note that the horizontal maps in this diagram admit right adjoints, and therefore preserve all small colimits. The left vertical map preserves sifted
colimits by construction, and the right vertical map preserves sifted colimits by Lemma \ref{colimitsAlg}. Using the universal property
of $\SCRf$ (Proposition \ref{uprop}), we are reduced to showing that the analogous diagram commutes when we replace $\SCRf$ with by the full subcategory
$\CAlg_{\F_p}^{\mathrm{poly}} \subseteq \SCRf$. This follows from the observation that the canonical maps
$$ \widehat{\Saturate}( L W \Omega_{R} ) \leftarrow L W \Omega_{R} \rightarrow \WOmega^{\ast}_{R}$$
are isomorphisms when $R$ is a polynomial algebra over $\F_p$ (on the left because 
the divided Frobenius $W \Omega^{\ast}_{R} \rightarrow \eta_p W \Omega^{\ast}_{R}$ is an isomorphism, and
on the right because of Theorem \ref{maintheoC}).
\end{proof}

\begin{remark}
For any commutative $\F_p$-algebra $R$, the derived de~Rham--Witt complex $LW\Omega_R$ provides a lift to $\Z_p$ of the derived de~Rham complex $L\Omega_R$. In particular, if $R$ is not a local complete intersection, then $LW\Omega_R$ potentially has cohomology in infinitely many negative (cohomological) degrees; for instance, this holds if $R$ itself admits a lift to $\Z_p$ with a lift of Frobenius (by \cite[Theorem 1.5]{BhattTorsion}). By virtue of Remark \ref{labelremark}, we can identify the saturated de~Rham--Witt complex $\WOmega^{\ast}_{R}$ with the colimit of the diagram
\[ LW\Omega_R \xrightarrow{\alpha_R} L\eta_p(LW\Omega_R) \xrightarrow{L\eta_p(\alpha_R)} L\eta_{p^2}(LW\Omega_R) \to \cdots,\]
computed in the $\infty$-category $\ddhp$. It follows that the formation of this direct limit has the effect of killing the cohomology of $L W \Omega_{R}$ in negative degrees.
\end{remark}


Using the description of saturated de~Rham--Witt complexes via derived de~Rham
cohomology, we can prove an analog of the Berthelot--Ogus isogeny theorem \cite[Theorem 1.6]{BO83} in our setting. 

\begin{corollary}
\label{FrobSatdRWIsogeny}
Let $R$ be an $\mathbf{F}_p$-algebra. Then the endomorphism $\varphi^*\colon \W \Omega^*_R \to \W\Omega^*_R$ induced by the Frobenius endomorphism $\varphi\colon R \to R$ is a $p$-isogeny: that is,
it induces an isomorphism after inverting the prime $p$. Moreover, if the $R$-module $\Omega^1_R$ can be locally generated by $\leq d$ elements,  then multiplication by $p^d$ factors through $\varphi^*$.
\end{corollary}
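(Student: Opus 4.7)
The proof combines the explicit formula for the Frobenius on a saturated Dieudonné complex with the description of $\W\Omega^*_R$ as a completed saturation of $LW\Omega_R$ supplied by Theorem~\ref{sddRWdRW}. The key input is the formula $\varphi^*|_{\W\Omega^n_R} = p^n F$, where $F$ denotes the Dieudonné Frobenius. This follows from the universal property of $\W\Omega^*_R$ by reducing to the case of polynomial $\F_p$-algebras, for which the classical identity $F(\omega) = p^{-n}\widetilde{\varphi}^*\omega$ on a lift $\widetilde{\Omega}^n_{\widetilde{A}}$ applies.

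For part (a), because $FV = VF = p$ by Proposition~\ref{Vexists}, the Frobenius $F$ becomes invertible after inverting $p$, with two-sided inverse $V/p$. Hence $\varphi^* = p^n F$ is an isomorphism after inverting $p$ in every degree, proving the $p$-isogeny claim.

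For part (b), I will construct an explicit chain endomorphism $\beta$ of $\W\Omega^*_R$ satisfying $\varphi^* \circ \beta = \beta \circ \varphi^* = p^d \cdot \id$, defined by
\[
\beta_n := \begin{cases} p^{d-n-1} V & \text{if } 0 \le n < d, \\ F^{-1} & \text{if } n = d, \\ 0 & \text{if } n > d. \end{cases}
\]
Using the identities $FV = VF = p$, $Vd = p\, dV$, and $F\,dV = d$ from Proposition~\ref{Vexists}, one checks directly that $\beta$ is a chain map (the most delicate identity $d\beta_{d-1} = \beta_d \,d$ rewrites as $F\,dV = d$) and that the relations $\varphi^* \beta = \beta\varphi^* = p^d$ hold in each degree.

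The main obstacle is to justify that $\beta$ is well-defined, which requires (i) $\W\Omega^n_R = 0$ for $n > d$, and (ii) $F$ is bijective on $\W\Omega^d_R$. Claim (ii) follows from (i) together with saturation, since $F$ is injective with image $\{x \in \W\Omega^d_R : dx \in p\W\Omega^{d+1}_R\} = \W\Omega^d_R$. For (i), since $\W\Omega^n_R$ is $p$-adically complete (as the inverse limit of the $p^r$-torsion groups $\W_r\Omega^n_R$), it suffices to verify $\W_1\Omega^n_R = 0$ for $n > d$. This is the crux of the argument: combining Theorem~\ref{sddRWdRW} with a derived analog of the Cartier isomorphism (in the spirit of Theorem~\ref{theo60}, applied to the derived de Rham-Witt complex), one obtains a quasi-isomorphism $\W\Omega^*_R \otimes^L_{\Z_p} \F_p \simeq L\Omega_R$. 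The hypothesis on $\Omega^1_R$ gives $\Lambda^i\Omega^1_R = 0$ for $i>d$, and combining this with a careful analysis of the conjugate filtration on $L\Omega_R$ (whose graded pieces are the derived wedge powers $\Lambda^i L_{R/\F_p}[-i]$) yields the desired cohomological vanishing $H^n(L\Omega_R) = 0$ for $n > d$. This last step, which requires controlling the higher cohomology of the derived wedge powers above the $d$-th exterior power, is the principal technical challenge.
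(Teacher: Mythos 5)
Your overall skeleton (the formula $\varphi^* = p^nF$, the $p$-isogeny from $FV=VF=p$, the explicit homotopy-free inverse $\beta$ built from $V$ and $F^{-1}$ in top degree, and the reduction of everything to the vanishing $\W\Omega^n_R = 0$ for $n > d$) is sound, and your $\beta$ is a nice explicit version of what the paper does implicitly (the paper instead observes that $p^d\W\Omega^*_R \subseteq \varphi^*(\W\Omega^*_R)$ and uses injectivity of $\varphi^*$). But the step you rely on to get the vanishing is wrong as stated: there is \emph{no} quasi-isomorphism $\W\Omega^*_R \otimes^L_{\Z_p}\F_p \simeq L\Omega_R$ for general $\F_p$-algebras $R$. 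Theorem~\ref{theo60} needs the Cartier-type hypothesis, which fails in general, and the paper's cusp example is an explicit counterexample: for $R = \F_p[x,y]/(x^2-y^3)$ one has $\W\Omega^*_R/p \simeq \Omega^*_{\F_p[t]}$, concentrated in degrees $\leq 1$, while $\HH^2(L\Omega_R) \neq 0$ (Proposition~\ref{ols}, Corollary~\ref{stix}). So the asserted ``derived Cartier isomorphism for the derived de Rham--Witt complex'' cannot be the source of the vanishing. What is true, and what suffices, is a one-directional coconnectivity transfer: $\HH^n(L\Omega_R)=0$ for $n>d$ forces $LW\Omega_R \in D(\Z)^{\leq d}$ (by induction on $p$-power torsion coefficients and passage to the limit), and since $\W\Omega^*_R$ is the $p$-completed colimit of the $L\eta_{p^k}(LW\Omega_R)$ (Remark~\ref{labelremark}) and each $L\eta_{p^k}$ preserves $D(\Z)^{\leq d}$, the cohomology of $\W\Omega^*_R$, hence of $\W\Omega^*_R/p^r$, vanishes above degree $d$; then Proposition~\ref{prop11} gives $\W_r\Omega^n_R = \HH^n(\W\Omega^*_R/p^r) = 0$ for $n>d$ and all $r$, whence $\W\Omega^n_R = \varprojlim_r \W_r\Omega^n_R = 0$. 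This is exactly Lemma~\ref{lemma:concentration} in the paper.

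Two further points. First, you also leave unproved the input $\HH^n(L\Omega_R)=0$ for $n>d$, flagging it as ``the principal technical challenge''; in the paper this is done by choosing locally a surjection $R^d \twoheadrightarrow \Omega^1_R$, so that the cone $K$ of $R^d \to L_{R/\F_p}$ lies in $D(R)^{\leq -1}$, and then applying Quillen's connectivity estimate $\bigwedge^i K \in D(R)^{\leq -i}$ (Lemma~\ref{Quillenlemma}) to the conjugate filtration; without some such dévissage your argument is incomplete even granting the (false) comparison. Second, your reduction ``$p$-adic completeness of $\W\Omega^n_R$ implies it suffices to check $\W_1\Omega^n_R=0$'' is correct but not for the reason given: the kernel of $\W_{r+1}\Omega^n_R \to \W_r\Omega^n_R$ also involves $dV^r$ applied to degree $n-1$, so one cannot induct on the tower directly; instead use Proposition~\ref{prop11} to rewrite $\W_r\Omega^n_R \simeq \HH^n(\W\Omega^*_R/p^r)$ and induct on $r$ via the exact sequences $0 \to \W\Omega^*_R/p^{r-1} \xrightarrow{p} \W\Omega^*_R/p^r \to \W\Omega^*_R/p \to 0$.
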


The proof of Corollary \ref{FrobSatdRWIsogeny} will require a few preliminaries.

\begin{lemma}[Quillen's connectivity estimate] 
\label{Quillenlemma}
Let $R$ be a commutative ring, and let $M \in D(R)^{\leq -1}$. Then
$\bigwedge^i M \in D(R)^{\leq -i}$ for each $i \geq 0$.
\end{lemma}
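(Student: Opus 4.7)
The plan is to reduce the statement to a concrete simplicial model and then apply a Künneth-type connectivity estimate. Since $M \in D(R)^{\leq -1}$, we may represent it by a simplicial $R$-module $M_\bullet$ with $\pi_0(M_\bullet)=0$; by Dold--Kan, we can moreover arrange $M_0=0$. The derived exterior power $\bigwedge^i M$ is the nonabelian left derived functor of the classical exterior power, so it is computed by the simplicial $R$-module $\bigwedge^i M_\bullet$ obtained by applying $\bigwedge^i$ in each simplicial degree, and our task is to show that its normalized chain complex is concentrated in homological degrees $\geq i$.

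The first step is to handle the tensor power $M^{\otimes^L i}$, which dominates $\bigwedge^i M$ via the natural antisymmetrization epimorphism $M_\bullet^{\otimes i} \twoheadrightarrow \bigwedge^i M_\bullet$. By Eilenberg--Zilber, $M^{\otimes^L i}$ is modelled by the levelwise tensor power $M_\bullet^{\otimes i}$; the vanishing $M_0=0$ transfers to all simplicial degrees of interest, and an iterated application of the Künneth spectral sequence gives $M^{\otimes^L i} \in D(R)^{\leq -i}$.

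The second step is to transfer this bound from $M^{\otimes^L i}$ to $\bigwedge^i M$. In characteristic zero or when $i!$ is invertible, $\bigwedge^i M$ is a direct summand of $M^{\otimes^L i}$, so the estimate is immediate. Over a general ring, one may instead identify $\bigwedge^i M$ with the homotopy coinvariants of $\Sigma_i$ acting with sign on $M^{\otimes^L i}$ and run the homotopy-orbit spectral sequence
\[ E^2_{s,t} = H_s\!\bigl(\Sigma_i;\,\pi_t(M^{\otimes^L i})\otimes \mathrm{sgn}\bigr) \Longrightarrow \pi_{s+t}\bigl(\bigwedge{}^{\!i} M\bigr), \]
which is supported in $t \geq i$ and hence forces $\pi_j(\bigwedge^i M)=0$ for $j<i$. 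The cleanest and most robust route is to invoke the general Dold--Puppe connectivity theorem for nonabelian derived functors of reduced polynomial functors: the classical exterior power $\bigwedge^i$ is a reduced polynomial functor of degree $i$ on $R$-modules, and the theorem says exactly that such a functor increases the connectivity of a $1$-connective simplicial module by a factor of $i$.

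The main obstacle is precisely this last passage: $\bigwedge^i$ is not additive, so one cannot argue by naive Künneth alone, and over a general base the identification $\bigwedge^i M \simeq (M^{\otimes^L i})_{h\Sigma_i,\mathrm{sgn}}$ is delicate. The Dold--Puppe machinery (as presented in Quillen's work on the (co)homology of commutative rings and in Illusie's \emph{Complexe Cotangent}) is the essential input that bridges this gap.
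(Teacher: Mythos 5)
Your first two steps (simplicial model, Künneth estimate for $M^{\otimes^L i}$) are fine, but the step that actually proves the lemma rests on a statement that is false. The "general Dold--Puppe connectivity theorem" you invoke --- that any reduced polynomial functor of degree $i$ carries $1$-connective simplicial modules to $i$-connective ones --- fails already in degree $2$: the divided square $\Gamma^2$ is a reduced homogeneous polynomial functor of degree $2$, yet its nonabelian derived functor does not double connectivity. Concretely, take $M = \Z[1] \in D(\Z)^{\leq -1}$. The short exact sequence $0 \to \Gamma^2 A \to A \otimes A \to \bigwedge^2 A \to 0$ (valid for free $A$, hence applicable levelwise to a simplicial resolution) gives a triangle $L\Gamma^2 M \to M \otimes^L M \to L\textstyle\bigwedge^2 M$. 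Here $M \otimes^L M \simeq \Z[2]$ and $L\bigwedge^2 M \simeq (\Gamma^2 \Z)[2] \simeq \Z[2]$ by décalage, and the middle map is multiplication by $\pm 2$ in degree $-2$ (compose with the antisymmetrization $L\bigwedge^2 M \to M\otimes^L M$, which is an isomorphism in that degree by the dual exact sequence $0 \to \bigwedge^2 \to \otimes^2 \to \mathrm{Sym}^2 \to 0$ and the vanishing $L\mathrm{Sym}^2(\Z[1]) = 0$; the composite $\bigwedge^2 \to \otimes^2 \to \bigwedge^2$ is multiplication by $2$). The long exact sequence then gives $\mathrm{H}^{-1}(L\Gamma^2(\Z[1])) \simeq \Z/2 \neq 0$, so $L\Gamma^2$ does not send $D^{\leq -1}$ to $D^{\leq -2}$. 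So there is no general "polynomial degree multiplies connectivity" principle; the estimate you need is genuinely special to exterior powers, and your proposal never supplies the special input. (Relatedly, the identification of $\bigwedge^i M$ with the sign-homotopy-coinvariants of $M^{\otimes^L i}$ is not merely delicate integrally --- it is simply not the Dold--Puppe/Illusie derived functor --- so that route is closed as well; your Künneth step for tensor powers, while correct, does not feed into any argument that closes the gap.)

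The paper's proof is exactly this missing special input, and it is short: either quote Quillen's connectivity estimate directly (\cite[Cor. 7.40]{Quillennotes}), or use Illusie's décalage isomorphism $\bigwedge^i(N[1]) \simeq (\Gamma^i N)[i]$ (\cite[Ch. I, Sec. 4.3.2]{Ill1}). Writing $M \simeq N[1]$ with $N \in D(R)^{\leq 0}$, one gets $\bigwedge^i M \simeq (\Gamma^i N)[i] \in D(R)^{\leq -i}$, because $L\Gamma^i$ is defined by applying $\Gamma^i$ levelwise to a simplicial resolution and hence tautologically preserves $D(R)^{\leq 0}$ --- note that only the trivial connectivity statement is needed for $\Gamma^i$, and the entire multiplicative gain comes from the shift in the décalage formula (which, per the example above, has no analogue with the roles of $\bigwedge^i$ and $\Gamma^i$ exchanged). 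If you want to keep your simplicial outline, you must prove the décalage isomorphism or Quillen's estimate in place of the appeal to a general polynomial-functor connectivity theorem.
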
 
\begin{proof} 
See \cite[Cor. 7.40]{Quillennotes} for this as well as additional connectivity
assertions on derived symmetric and exterior powers. 
Alternatively, the result follows from 
Illusie's formula $\bigwedge^i (N[1]) \simeq (\Gamma^i N)[i]$ for $\Gamma^i$
the $i$th divided power functor, cf. \cite[Ch. I, Sec. 4.3.2]{Ill1} and the
fact that, by construction, $\Gamma^i$ is defined as a functor $D(R)^{\leq
0} \to D(R)^{\leq 0}$. 
\end{proof} 

\begin{lemma}\label{lemma:concentration}
Let $R$ be a commutative $\F_p$-algebra and suppose that the module of K\"{a}hler differentials $\Omega^{1}_{R/\F_p}$ is locally generated by $\leq d$ elements, for some $d \geq 0$.
Then:
\begin{itemize}
\item[$(1)$] The derived de~Rham complex $L \Omega_{R}$ belongs to $D( \F_p)^{\leq d}$ (that is, its cohomologies are concentrated in degrees $\leq d$).
\item[$(2)$] The derived de~Rham--Witt complex $LW\Omega_R$ belongs to $D( \Z )^{\leq d}$ (that is, its cohomologies are concentrated in degrees $\leq d$).
\item[$(3)$] The saturated de~Rham--Witt complex $\W\Omega^{\ast}_{R}$ is concentrated in degrees $\leq d$.
\end{itemize}
\end{lemma}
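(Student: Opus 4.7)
The plan is to prove parts (1), (2), and (3) in sequence, with each building on the previous. First, I reduce to the case where $\Omega^1_{R/\F_p}$ is globally generated by $\leq d$ elements: all three constructions commute with Zariski localization on $\Spec R$, since $L\Omega_R$ and $LW\Omega_R$ are defined as nonabelian derived functors (so they preserve filtered colimits), and $\W\Omega^*_R$ is a Zariski sheaf by Theorem \ref{makeglob}. Under this hypothesis, $\bigwedge^j \Omega^1_{R/\F_p} = 0$ for all $j > d$.

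For part (1), I use the conjugate filtration of Remark \ref{ddr}; it suffices to show each graded piece $(\bigwedge^n L_{R^{(1)}/\F_p})[-n]$ lies in $\calD(\F_p)^{\leq d}$, equivalently $\bigwedge^n L_{R/\F_p} \in \calD(R)^{\leq d-n}$. From the cofiber sequence $N := \tau^{\leq -1} L_{R/\F_p} \to L_{R/\F_p} \to \Omega^1_{R/\F_p}$ (with $N \in \calD(R)^{\leq -1}$), the standard Illusie filtration of derived exterior powers along a cofiber sequence (Illusie's thesis, Ch.~I, \S 4) produces a filtration on $\bigwedge^n L_{R/\F_p}$ with graded pieces $\bigwedge^i N \otimes^L \bigwedge^{n-i} \Omega^1_{R/\F_p}$ for $0 \leq i \leq n$. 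The second factor vanishes when $n - i > d$, so only summands with $i \geq \max(0, n-d)$ contribute; each such summand lies in $\calD^{\leq -i}$ by Quillen's estimate (Lemma \ref{Quillenlemma}), yielding the required connectivity bound $\calD^{\leq -\max(0, n-d)}$, which shifts to $\calD^{\leq d-n}$ after multiplying by $[-n]$.

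Part (2) follows by derived Nakayama. By Variant \ref{deriveddeRham}, $LW\Omega_R \otimes^L_\Z \F_p \simeq L\Omega_R$, and $LW\Omega_R \in \ddhp$ by construction. The universal coefficient sequence $0 \to H^i(LW\Omega_R)/p \to H^i(L\Omega_R) \to H^{i+1}(LW\Omega_R)[p] \to 0$ and part (1) force $H^i(LW\Omega_R)/p = 0 = H^i(LW\Omega_R)[p]$ for $i > d$, so $H^i(LW\Omega_R)$ is a uniquely $p$-divisible and derived $p$-complete abelian group, hence zero.

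For part (3), Theorem \ref{sddRWdRW} identifies $\W\Omega^*_R$, viewed in $\ddhp$, with the $p$-completion of $\varinjlim_n L\eta_p^n(LW\Omega_R)$. By Proposition \ref{etap_quism}, $L\eta_p$ preserves $\calD^{\leq d}$ (since each $H^i(L\eta_p X)$ is a quotient of $H^i(X)$), and so do filtered colimits and $p$-completion; combined with part (2), this gives $\W\Omega^*_R \in \calD(\Z)^{\leq d}$, i.e.~$H^i(\W\Omega^*_R) = 0$ for $i > d$. To promote this derived vanishing to the literal statement $\W\Omega^i_R = 0$ for $i > d$, I observe that each $\W\Omega^i_R$ is $p$-torsion-free and $p$-adically complete (by strictness), so it suffices to show $\W\Omega^i_R / p = 0$; by Proposition \ref{prop11}, $\W_1\Omega^i_R \cong H^i(\W\Omega^*_R/p) = H^i(\W\Omega^*_R)/p$, which vanishes for $i > d$. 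One then propagates through the strict Dieudonn\'e tower $\{\W_r\Omega^*_R\}$ using axiom (8) of Definition \ref{stricTD}, which identifies the kernel of $\W_{r+1}\Omega^i_R \to \W_r\Omega^i_R$ with the span of the images of $V^r$ and $dV^r$ from $\W_1\Omega^i_R$ and $\W_1\Omega^{i-1}_R$; a straightforward induction gives $\W_r\Omega^i_R = 0$ for $i > d+1$ and all $r$. The main obstacle will be the borderline case $i = d+1$, where the only possibly nonzero contributions come from $dV^{r-1}\W_1\Omega^d_R$; these must be shown to die in the inverse limit $\W\Omega^{d+1}_R = \varprojlim_r \W_r\Omega^{d+1}_R$, which can be handled by combining the cohomological vanishing with the Nygaard-filtration description of \S 8 to force these contributions to vanish compatibly across the tower.
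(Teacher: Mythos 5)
Your overall architecture (conjugate filtration for (1), derived Nakayama for (2), inverting $L\eta_p$ plus strictness for (3)) is sound, and your part (2) is a legitimate alternative to the paper's argument (the paper instead inducts on $n$ to show each $(\Z/p^n)\otimes^L LW\Omega_R$ lies in degrees $\leq d$ with surjective transition maps on $\HH^d$ and passes to the limit; your route via ``$p$-divisible and derived $p$-complete implies zero'' works, though note that $\HH^{d+1}(LW\Omega_R)[p]$ is \emph{not} forced to vanish by universal coefficients -- only divisibility is, and fortunately divisibility plus derived $p$-completeness already suffices). However, there are two genuine gaps. First, in part (1): after filtering along the truncation sequence $\tau^{\leq -1}L_{R/\F_p} \to L_{R/\F_p} \to \Omega^1_{R/\F_p}$, the graded pieces of the Illusie filtration are $\bigwedge^{i}N \otimes^L \bigwedge^{n-i}\Omega^1_{R/\F_p}$ with \emph{derived} exterior powers, and your claim that the second factor vanishes for $n-i>d$ is false when $\Omega^1_{R/\F_p}$ is not flat: only the ordinary exterior power of a $d$-generated module vanishes above $d$, whereas for example $\bigwedge^2(R/xR) \simeq (R/xR)[1] \neq 0$ over $R=\F_p[x]$, a cyclic module. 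Without that vanishing, the pieces with $i<n-d$ are only known to lie in $\calD^{\leq -i}$, which is weaker than the required $\calD^{\leq d-n}$. The repair is essentially the paper's argument: choose $x_1,\dots,x_d$ generating $\Omega^1_{R/\F_p}$ locally, lift them to a map $R^d \to L_{R/\F_p}$ with cone $K \in D(R)^{\leq -1}$, and filter $\bigwedge^n L_{R/\F_p}$ along $R^d \to L_{R/\F_p} \to K$; the pieces $\bigwedge^{a}(R^d)\otimes^L\bigwedge^{n-a}K$ genuinely vanish for $a>d$ (ordinary exterior powers of a free module) and lie in $D^{\leq -(n-a)} \subseteq D^{\leq d-n}$ for $a \leq d$ by Lemma \ref{Quillenlemma}.

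Second, in part (3): your upgrade from cohomological vanishing of $\W\Omega^{\ast}_{R}$ to termwise vanishing is incomplete exactly where you say it is. Knowing $\W_1\Omega^{i}_{R}=0$ for $i>d$ and climbing the tower via axiom $(8)$ of Definition \ref{stricTD} only kills $\W_r\Omega^{i}_{R}$ for $i>d+1$; the contributions $dV^{r}\W_1\Omega^{d}_{R}$ in degree $d+1$ are left unhandled, and the appeal to the Nygaard filtration is not an argument. The missing (and much simpler) step is to apply Proposition \ref{prop11} at \emph{every} level $r$, not just $r=1$: since $\W\Omega^{\ast}_{R}$ is $p$-torsion-free with cohomology in degrees $\leq d$, one has $\HH^{i}(\W\Omega^{\ast}_{R}/p^{r})=0$ for $i>d$ and all $r$, hence $\W_r\Omega^{i}_{R}\simeq \HH^{i}(\W\Omega^{\ast}_{R}/p^{r})=0$ for $i>d$, and strictness gives $\W\Omega^{i}_{R}=\varprojlim_r \W_r\Omega^{i}_{R}=0$ directly, with no borderline case. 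This is how the paper concludes.
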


\begin{proof}
We first prove $(1)$. Using the conjugate filtration (Remark~\ref{ddr}), we are reduced to checking that the derived exterior powers $\bigwedge^i_{R} L_{R/\mathbf{F}_p}$ belong to $D(R)^{\leq d-i}$ for all $i \geq 0$. 
This assertion is local on $\Spec(R)$, so we may assume that there exists a sequence of elements $x_1, \ldots, x_d$ which generate $\Omega^{1}_{R/\F_p}$ as an $R$-module.
The $x_{i}$ then determine a map $f\colon R^{d} \rightarrow L_{ R/\F_p }$ in the derived category
$D(R)$. Let $K$ denote the cone of $f$, so that $K$ belongs to $D( R)^{\leq -1}$. Using the distinguished triangle
\[ R^{d} \to L_{R/ \F_p } \to K \to R^{d}[1], \]
we see that $\bigwedge^{i}_{R} L_{R/\F_p}$ admits a finite filtration 
(a left Kan extension of the usual filtration on the exterior power of the
middle term of a short exact sequence) whose successive quotients have the form
$\bigwedge^{a}_{R}( R^{d} ) \otimes_{R}^{L} \bigwedge^{i-a}_{R}( K )$ for $0
\leq a \leq i$. Note that each $\bigwedge^{a}_{R}(R^{d})$ is a projective
$R$-module of finite rank which vanishes for $a > d$. It will therefore suffice to show that $\bigwedge^{i-a}_{R}(K)$ belongs to $D(R)^{\leq d-i}$ for $a \leq d$, which
is a special case of Lemma~\ref{Quillenlemma}.

Using $(1)$ and induction on $n$, we deduce that each derived tensor product $(\Z / p^{n} \Z) \otimes^{L}_{\Z} LW\Omega_{R}$ is concentrated in degrees $\leq d$
and that the transition maps $\HH^{d}( (\Z / p^{n} \Z) \otimes^{L}_{\Z} LW\Omega_{R} ) \rightarrow \HH^{d}( (\Z / p^{n-1} \Z) \otimes^{L}_{\Z} LW\Omega_{R}$ are surjective.
Passing to the homotopy limit over $n$, we deduce that $LW \Omega_{R}$ belongs to $D(\Z)^{\leq d}$. This proves $(2)$.

Using Theorem \ref{sddRWdRW} (and Remark \ref{labelremark}), we see that as an object of the derived category $D(\Z)$, the
saturated de~Rham--Witt complex $\W\Omega^{\ast}_{R}$ is given by the $p$-completed homotopy direct limit of the diagram
\[ LW\Omega_R \xrightarrow{\alpha_R} L\eta_p(LW\Omega_R) \xrightarrow{L\eta_p(\alpha_R)} L\eta_{p^2}(LW\Omega_R) \to \cdots.\]
Since each of the functors $L\eta_{p^k}$ carries $D(\Z)^{\leq d}$ into itself, it follows from $(2)$ that the {\em cohomologies}
of the complex $\WOmega^{\ast}_{R}$ are concentrated in degrees $\leq d$. In particular, the cohomology groups
$\HH^{i}( \W \Omega_{R}^{\ast} / p^{r} )$ vanish for $i > d$ (and any nonnegative integer $r$). Using the isomorphism
$\W_{r} \Omega^{\ast}_{R} \simeq \HH^{\ast}( \W \Omega_{R}^{\ast} / p^{r} )$ of Proposition \ref{prop11}, we deduce
that each of the chain complexes $\W_r \Omega^{\ast}_{R}$ is concentrated in cohomological degrees $\leq d$.
Assertion $(3)$ now follows by passing to the inverse limit over $r$.
\end{proof}

\begin{proof}[Proof of Corollary \ref{FrobSatdRWIsogeny}]
For any Dieudonn\'e algebra $(A^*,d,F)$, let $\varphi_{A^*}\colon A^* \to A^*$ denote the map of cochain complexes determined by $\varphi_{A^*} = p^n F$ in degree $n$ for all $n$: that is, the composition $A^* \xrightarrow{\alpha_F} \eta_p A^* \xrightarrow{i_A} A^*$ where $i_A$ is the natural inclusion. If $A^*$ is a saturated Dieudonn\'e algebra, then $\varphi_{A^*}[1/p]$ is an isomorphism: the map $\alpha_F$ is an isomorphism because $A^{\ast}$ is saturated, and the map $i_A[1/p]$ is always an isomorphism. If, in addition, we have $A^i = 0$ for $i > d$, then $\varphi_{A^*}$ divides $p^d$: the subcomplex $p^d A^*$ is contained inside $i_A(\eta_p A^*) = \varphi_{A^*}(A^*) \subseteq A^*$. Applying the preceding observation in the case $A^* = \W\Omega^*_R$, we deduce that $\varphi^*$ is a $p$-isogeny (note that $\varphi_{\W\Omega^*_R} = \varphi^*$ by the universal property of the saturated de~Rham--Witt complex). We complete the proof of Corollary \ref{FrobSatdRWIsogeny} by observing that if the module of K\"{a}hler differentials $\Omega^{1}_{R/\F_p}$ is locally generated by
$d$ elements, then $\W\Omega^{\ast}_{R}$ is concentrated in cohomological degrees $\leq d$ by virtue of Lemma \ref{lemma:concentration}.
\end{proof}

\begin{remark}
In contrast with Corollary~\ref{FrobSatdRWIsogeny}, the  endomorphism  
\[ \varphi^*\colon \RGamma_{\crys}(\Spec(R)) \to \RGamma_{\crys}(\Spec(R))\]
induced by the Frobenius map $\varphi\colon R \to R$ is generally not an $p$-isogeny if $R$ is not regular. This phenomenon can already be seen when $R = \mathbf{F}_p[x]/(x^2)$ if $p$ is odd. 
Let $D$ denote the $p$-adically completed divided power envelope of the natural surjection $\mathbf{Z}_p[x] \to \mathbf{F}_p[x]/(x^2)$ (where the divided powers are required to be compatible with those on $(p)$),
given explicitly by the formula
\begin{align*}
D &\simeq \mathbf{Z}_p\left[x, \{\frac{x^{2k}}{k!}\}_{k \geq 1}\right]^{\wedge} \\
  &\simeq \big({\bigoplus_{m \geq 0}} \ \mathbf{Z}_p \cdot \frac{x^{2m}}{m!}\big)^\wedge \oplus \big({\bigoplus_{n \geq 0}} \ \mathbf{Z}_p \cdot \frac{x^{2n+1}}{n!} \big)^\wedge,\end{align*}
where the indicated completions are taken with respect to the $p$-adic topology. Standard results in crystalline cohomology
(see \cite[Theorem 7.23]{BO78}) allow us to identify $\RGamma_{\crys}(\Spec(R))$ with the de~Rham complex
\[ A^* := \Big(D \xrightarrow{d} D \cdot dx\Big). \]
Under this identification, the Frobenius endomorphism of $\RGamma_{\crys}(\Spec(R))$ is induced by the map $x \mapsto x^p$ of $D$. A simple calculation then gives an isomorphism
\[ \HH^1(A^*) \simeq \big({\bigoplus_{n \geq 0}} \ \mathbf{Z}_p/(2n+1) \cdot \frac{x^{2n}}{n!} \cdot dx\big)^{\wedge} \oplus \big({\bigoplus_{m \geq 0}} \ \mathbf{Z}_p/(2) \cdot \frac{x^{2m+1}}{m!} \cdot dx\big)^{\wedge}, \]
where again the indicated completions are with respect to the $p$-adic topology. When $p$ is odd, the second summand above vanishes, but the first contains non-torsion elements: for example, it contains the non-torsion $\mathbf{Z}_p$-module
\[ \big(\bigoplus_{k \geq 0} \ \mathbf{Z}_p/(p^k)\big)^\wedge \] as a direct summand. In particular, the localization $\HH^1_{\crys}(\Spec(R))[\frac{1}{p}]$ is nonzero. On the other hand, the Frobenius map $\varphi\colon \mathbf{F}_p[x]/(x^2) \to \mathbf{F}_p[x]/(x^2)$ factor through the inclusion map $\mathbf{F}_p \hookrightarrow \mathbf{F}_p[x]/(x^2)$, so the induced map $\varphi^*\colon  \HH^1_{\crys}(\Spec(R)) \to \HH^1_{\crys}(\Spec(R))$ vanishes. 
In particular, the map $\varphi^*[1/p]$ is not an isomorphism.
\end{remark}

\begin{remark}
Fix a perfect field $k$ of characteristic $p$, and let $K = W(k)[1/p]$. The construction $R \mapsto \W \Omega^{\ast}_{R}[ 1/ p ]$ determines a functor from category
of finite type $k$-algebras to the derived $\infty$-category $\calD( K )$ of
the field $K$. This presheaf is a sheaf for the \'etale topology
(Theorem~\ref{makeglobetale}) and carries universal homeomorphisms to
quasi-isomorphisms (this follows from Corollary~\ref{FrobSatdRWIsogeny} since universal
homeomorphisms between perfect $\F_p$-schemes are isomorphisms; see \cite[Lemma 3.8]{BhattScholzeProjectivity}). It is tempting to guess that this functor is a sheaf for the $h$-topology. However, we do not even know if it satisfies fppf descent.
\end{remark}

\subsection{Comparison with the de~Rham complex}

\newcommand{\cl}{\colon}
Let $R$ be an $\F_p$-algebra. Recall that the map $R \to \W_1 \Omega^0_R$
extends uniquely to a map 
\[ \nu \cl  \Omega_R^{\ast} \to \W_1 \Omega_R^{\ast} \]
of differential graded algebras. In \S \ref{dRComp}, we showed that $\nu$ is an isomorphism when
$R$ is a regular Noetherian $\F_p$-algebra (Theorem~\ref{theo73}).
Our strategy was to give a direct proof for smooth $\F_p$-algebras, and to extend the result to arbitrary regular Noetherian $\F_p$-algebras using Popescu's smoothing theorem.

In this section, we give a more general criterion for that $\nu$ to be an isomorphism (Theorem~\ref{critnuiso}). We will apply this criterion in \S\ref{dRRedux} to give a new proof that
$\nu$ is an isomorphism in the case where $R$ is regular and Noetherian (which avoids the use of Popescu's theorem, 
at the cost of using ``derived'' technology), and also in the case where $R$ admits a $p$-basis (Theorem \ref{theorem:p-basis}).

Recall that the  derived de~Rham complex $R \mapsto L \Omega_{R}$ is defined as the left Kan extension of the usual de~Rham complex
$R \mapsto \Omega_{R/\F_p}^{\ast}$, where the latter is considered as a functor from the category $\CAlg_{\F_p}^{\mathrm{poly}}$ to
the $\infty$-category $\calD( \F_p )$ (see Variant \ref{deriveddeRham}). In particular, for any $\F_p$-algebra $R$, we have a tautological comparison map
$\rho_{R} \colon L \Omega_{R} \rightarrow \Omega_{R/\F_p}^{\ast}$ in the derived $\infty$-category $\calD( \F_p )$, which is an isomorphism when $R$ is a polynomial ring (Remark \ref{ddr2}).

\begin{theorem}\label{critnuiso}
Let $R$ be a commutative $\mathbb{F}_p$-algebra. Suppose that: 
\begin{enumerate}
\item The comparison map $\rho_R \colon L \Omega_R \to \Omega_R^{\ast}$ is
an isomorphism in $\mathcal{D}( \F_p)$.  
\item The map $\Cart \colon \Omega_R^{\ast} \to \mathrm{H}^*( \Omega_R^{\ast})$ 
(of Proposition~\ref{cartmapexist})
is an
isomorphism. 
\end{enumerate}
Then the map $\nu \cl  \Omega_R^{\ast} \to \W_1 \Omega_R^{\ast}$ is an isomorphism. 
\end{theorem}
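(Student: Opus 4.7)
The strategy is to identify $\W\Omega_R^*/p\W\Omega_R^*$ with $\Omega_R^*$ in the derived category $\mathcal{D}(\F_p)$ using assumptions~(1) and~(2), and then to upgrade this to a term-wise isomorphism realized by $\nu$ using Proposition~\ref{prop11} and the Cartier isomorphism. The key computational ingredient is that, under these identifications, the ``divided Frobenius'' map on the derived de Rham--Witt complex modulo $p$ becomes the identity, so the saturation procedure of Theorem~\ref{sddRWdRW} collapses.

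First I will use Theorem~\ref{sddRWdRW} (in the form of Remark~\ref{labelremark}) to present $\W\Omega_R^*/p\W\Omega_R^*$ as the colimit in $\mathcal{D}(\F_p)$ of the sequence
\[ L\Omega_R \;\to\; (L\eta_p LW\Omega_R)/p \;\to\; (L\eta_{p^2} LW\Omega_R)/p \;\to\; \cdots, \]
obtained by reducing modulo $p$ the directed system whose $p$-completed colimit computes $\W\Omega_R^*$. By iterating Proposition~\ref{prop62}, each $(L\eta_{p^{n+1}} LW\Omega_R)/p$ is the cohomology of the previous term with respect to its Bockstein differential. Under assumption~(1), the initial term is $\Omega_R^*$; under assumption~(2), the Bockstein on $H^*(\Omega_R^*)$ corresponds via Cartier to the de Rham differential (a standard computation: $\beta[a^p] = [a^{p-1}da] = \Cart(da)$). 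Thus each term in the colimit is identified with $\Omega_R^*$, and a direct calculation of the divided Frobenius $\alpha_F$ on the de Rham complex of a polynomial lift --- using $F(dx) = x^{p-1}dx + d\theta(x)$ from Proposition~\ref{prop42}, which reduces modulo $p$ to the Cartier formula --- shows that, under these identifications, every transition map is the identity on $\Omega_R^*$. Nonabelian left derived functoriality ensures that it suffices to perform the calculation on polynomial algebras. The colimit therefore collapses to yield a quasi-isomorphism $\Omega_R^* \xrightarrow{\sim} \W\Omega_R^*/p\W\Omega_R^*$ in $\mathcal{D}(\F_p)$.

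To promote this to a term-wise isomorphism realized by $\nu$, I will invoke Proposition~\ref{prop11}, which provides a graded isomorphism $F\colon \W_1\Omega_R^n \xrightarrow{\sim} H^n(\W\Omega_R^*/p\W\Omega_R^*)$. A check on generators $x \in R$ and $dx$, in the spirit of the diagram appearing in the proof of Proposition~\ref{prop76}, shows that $F \circ \nu$ in degree $n$ coincides with the composition
\[ \Omega_R^n \xrightarrow{\Cart} H^n(\Omega_R^*) \xrightarrow{\sim} H^n(\W\Omega_R^*/p\W\Omega_R^*), \]
whose second map is the isomorphism coming from the quasi-iso established in the previous paragraph. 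Each map in this composite is an isomorphism --- $\Cart$ by assumption~(2), the second by what we just established, and $F$ by Proposition~\ref{prop11} --- so $\nu$ is an iso in each degree, completing the proof.

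The principal obstacle is the identification, in the second paragraph, of the transition maps in the colimit with the identity on $\Omega_R^*$. This requires carefully tracking the divided Frobenius through both the Bockstein-cohomology identification of Proposition~\ref{prop62} and the Cartier isomorphism. The computation is essentially local and straightforward on polynomial lifts (reducing to Proposition~\ref{prop42}), but the bookkeeping needed to make the identifications consistently is the most delicate aspect of the argument.
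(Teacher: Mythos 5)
Your proposal is, in substance, the paper's own proof: the paper likewise uses Theorem~\ref{sddRWdRW} (Remark~\ref{labelremark}) to present $\W\Omega^*_R$ as the completed colimit of the $L\eta_{p^n}(LW\Omega_R)$, identifies the mod-$p$ reduction of the divided Frobenius $\alpha_R$ with the (derived) Cartier comparison map by left Kan extension from polynomial algebras (Proposition~\ref{proposition:diagram}), uses hypotheses (1) and (2) to conclude that this map is an equivalence so that the colimit collapses, and then finishes via Proposition~\ref{prop11} (the paper phrases your generator check as the identity $\nu=\mu\circ\epsilon$, again by Kan extension). One caution about your second paragraph: the Kan-extension argument only controls the \emph{first} transition map, since for $n\geq 1$ the functor $R\mapsto L\eta_{p^n}(LW\Omega_R)$ is not left Kan extended from polynomial algebras ($L\eta_p$ does not commute with sifted colimits), so the claim that \emph{every} transition map becomes the identity cannot be justified by a polynomial computation alone; instead, once $\alpha_R$ is an equivalence mod $p$ one concludes that $\alpha_R$ itself is an equivalence because both sides are $p$-complete (this is the role of Remark~\ref{muaniso} in the paper's proof), whence all $L\eta_{p^n}(\alpha_R)$ are equivalences — alternatively, iterate the derived analogue of Corollary~\ref{cor63} to stay mod $p$ throughout.
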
 


The proof of Theorem \ref{critnuiso} will require some auxiliary constructions. First, we need to introduce a derived version of the Cartier map
of Proposition~\ref{cartmapexist}. 

\begin{construction}[Derived Cartier Operator]\label{construction:derived-cartier}
Let $R$ be a commutative $\F_p$-algebra with derived de~Rham complex $L \Omega_{R}$ and
derived de~Rham--Witt complex $LW\Omega_{R}$. The identification $L \Omega_{R} \simeq \F_p \otimes_{\Z}^{L} L W\Omega_{R}$
determines a Bockstein operator on the cohomology ring $\mathrm{H}^{\ast}( L \Omega_{R} )$, which we will denote by $\beta$.

When $R$ is a polynomial algebra over $\F_p$, we recall the Cartier map of
Proposition~\ref{cartmapexist},
which yields a map of differential graded algebras
$$\Cart \cl \Omega^{\ast}_R \to \mathrm{H}^{\ast}( L \Omega_R).$$
We claim that $\Cart$ intertwines de Rham
differential on the left-hand side with the Bockstein differential on the
right-hand side. To prove this, we can reduce to the case where $R = \F_p[x]$ is a polynomial ring on one generator.
In this case, $\Cart(x) \in H^0( L \Omega_R)$ is represented by the class
$[x^p]$, which is carried under the Bockstein differential to $[x^{p-1} dx] =
\Cart( dx)$. 

Returning to the case where $R$ is any polynomial algebra over $\F_p$, we obtain a morphism of differential graded algebras
$$ \Cart \cl ( \Omega^{\ast}_{R}, d) \rightarrow ( \mathrm{H}^{\ast}( L \Omega_{R}), \beta ).$$
Note that, when regarded as a functor (of ordinary categories) from commutative $\F_p$-algebras to differential graded algebras, the functor
$R \mapsto \Omega^{\ast}_{R}$ commutes with sifted colimits and is therefore a left Kan extension of its restriction to polynomial algebras.
It follows that for any commutative $\F_p$-algebra $R$, there is a canonical map of differential graded algebras
$$ \epsilon \cl  ( \Omega^{\ast}_{R}, d) \rightarrow ( \mathrm{H}^{\ast}( L \Omega_{R}), \beta ),$$
which is uniquely determined by the requirement that it coincides with the Cartier map when $R$ is a polynomial algebra, and depends functorially on $R$
(Remark~\ref{sebix}). Alternatively, one can construct $\epsilon$ by invoking the universal property of the de~Rham complex. 
\end{construction}

\begin{construction}[The map $\mu$]\label{construction:mu}
Let $R$ be a commutative $\F_p$-algebra. According to Remark~\ref{labelremark}, the saturated
de~Rham--Witt complex $\W \Omega_{R}^{\ast}$ can be identified, as an object of the derived $\infty$-category $\calD(\Z)$, with the $p$-completed
direct limit of the diagram
\begin{equation} \label{dRWfiltcolimit} L W \Omega_R \xrightarrow{\alpha_R} L \eta_p ( L W
\Omega_R ) \to L \eta_{p^2} ( L W
\Omega_R ) \to \dots ,    \end{equation}
In particular, we have a canonical map $L W \Omega_{R} \rightarrow \W \Omega_{R}^{\ast}$ in $\calD(\Z)$.
Reducing modulo $p$ and taking cohomology (using Proposition~\ref{prop11}), we obtain a map of differential
graded algebras
$$ \mu \cl (\mathrm{H}^*(L \Omega_R), \beta)  \to 
(\mathrm{H}^*( \W \Omega_R^\ast / p), \beta) \simeq
(\W_1 \Omega_R^\ast, d).$$ 
Note that, if $\epsilon \cl   ( \Omega^{\ast}_{R}, d) \rightarrow ( \mathrm{H}^{\ast}( L \Omega_{R}), \beta )$ is the map
of Construction \ref{construction:derived-cartier}, then the composition $\mu \circ \epsilon$ agrees
with the comparison map $\nu \cl  ( \Omega^{\ast}_{R}, d) \rightarrow ( \W_1 \Omega_{R}^{\ast}, d)$
introduced in \S \ref{dRComp} (this is immediate from the construction of $\epsilon$ in the case where $R$ is a polynomial ring,
and follows in general by Remark~\ref{sebix}).
\end{construction}

\begin{remark}\label{muaniso}
Let $R$ be a commutative $\F_p$-algebra and suppose that the map $\alpha_{R} \cl  L W \Omega_R \to L \eta_p  (L W \Omega_R)$ is an
isomorphism in $\mathcal{D}( \mathbb{Z})$. Then each of the transition maps in the direct system \eqref{dRWfiltcolimit} is an isomorphism,
so the comparison map $\mu \cl  L W \Omega_{R} \rightarrow \W \Omega^{\ast}_{R}$ is an isomorphism in the derived category.
\end{remark}

\begin{proposition}\label{proposition:diagram}
Let $R$ be a commutative $\F_p$-algebra. Then we have a commutative diagram
\begin{equation} \label{epsdiag} \xymatrix{
L W \Omega_R \ar[d]  \ar[r]^-{\alpha_R} & L\eta_p (L W \Omega_R ) \ar[d] \\
L \Omega_R \ar[r]^-{\epsilon \circ \rho_R} &  (\mathrm{H}^*( L \Omega_R), \beta )
},\end{equation}
in the derived $\infty$-category $\calD(\Z)$, where the bottom row is obtained by reducing the top row modulo $p$. 
\end{proposition}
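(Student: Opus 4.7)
The plan is to reduce the claim to the case of polynomial algebras via a nonabelian left Kan extension argument, then verify the identification explicitly in that case.

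First, I would make precise the sense in which the right vertical map "reduces $\alpha_R$ modulo $p$". By Proposition~\ref{prop62}, for any $p$-torsion-free cochain complex $M^{\ast}$ there is a natural quasi-isomorphism $\gamma\colon (\eta_p M)^{\ast}/p(\eta_p M)^{\ast} \xrightarrow{\sim} (\mathrm{H}^{\ast}(M/pM), \beta)$. Representing any object of $\calD(\Z)$ by a cochain complex of free abelian groups, this globalizes to a natural isomorphism $L\eta_p(X) \otimes^L_{\Z} \F_p \simeq (\mathrm{H}^{\ast}(X \otimes^L_{\Z} \F_p), \beta)$ in $\calD(\F_p)$. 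Applied to $X = LW\Omega_R$, this identifies $L\eta_p(LW\Omega_R) \otimes^L_{\Z} \F_p$ with $(\mathrm{H}^{\ast}(L\Omega_R), \beta)$, and the right vertical arrow is the mod-$p$ reduction composed with this identification. Under this bookkeeping, the claim becomes that the mod-$p$ reduction of $\alpha_R$ agrees with $\epsilon \circ \rho_R$ as a map $L\Omega_R \to (\mathrm{H}^{\ast}(L\Omega_R), \beta)$ in $\calD(\F_p)$.

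Second, I would invoke the universal property of nonabelian derived functors. Both sides are natural transformations of functors $\SCRf \to \calD(\F_p)$ (or, for the present purpose, $\CAlg_{\F_p} \to \calD(\F_p)$) which preserve sifted colimits: the top map $\alpha_R$ is such by Construction~\ref{ddRW}; its mod-$p$ reduction is then sifted-colimit-preserving since $(-)\otimes^L_{\Z}\F_p$ is exact; and $\epsilon \circ \rho_R$ is sifted-colimit-preserving because $\epsilon$ is defined by nonabelian left derivation from polynomial algebras in Construction~\ref{construction:derived-cartier}, while $\rho_R$ is by construction the comparison from $L\Omega_R$ to $\Omega_R^{\ast}$. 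By Remark~\ref{sebix}, it therefore suffices to verify the desired equality after restriction to $\CAlg_{\F_p}^{\mathrm{poly}}$.

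Third, I would carry out the verification on a polynomial algebra $R \in \CAlg_{\F_p}^{\mathrm{poly}}$. Here $LW\Omega_R \simeq W\Omega_R^{\ast}$ as a (saturated) Dieudonn\'e algebra (Theorem~\ref{maintheoC}), and $\alpha_R$ is the divided Frobenius isomorphism $\alpha_F\colon W\Omega_R^{\ast} \xrightarrow{\sim} \eta_p W\Omega_R^{\ast}$ of Remark~\ref{rem2}, given in degree $n$ by $x \mapsto p^n F(x)$. Reducing modulo $p$ and postcomposing with $\gamma$ sends $x \in W\Omega_R^n / p$ to the class $[F(x)] \in \mathrm{H}^n(W\Omega_R^{\ast}/p)$. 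Under the identification $W\Omega_R^{\ast}/p \simeq \Omega_R^{\ast}$ (Remark~\ref{plux}), this is the Frobenius-induced map of Construction~\ref{construction:derived-cartier}, which by Example~\ref{zisp} and Theorem~\ref{theo71} coincides with the classical Cartier map $\Cart\colon \Omega_R^{\ast} \to \mathrm{H}^{\ast}(\Omega_R^{\ast})$. On the other hand, $\rho_R$ is the identity on polynomial algebras, so $\epsilon \circ \rho_R = \epsilon$, which by construction agrees with $\Cart$.

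The main obstacle will be setting up the identification $L\eta_p(X) \otimes^L_\Z \F_p \simeq (\mathrm{H}^{\ast}(X \otimes^L_\Z \F_p), \beta)$ in sufficient naturality to apply it to the sifted-colimit/Kan-extension argument: in particular, one needs that this identification is natural enough in $X$ that the mod-$p$ reduction of $\alpha_R$ can itself be recognized as the nonabelian left derived functor of its value on polynomial rings. Once this point of naturality is in place, the polynomial case supplies the required agreement of the two transformations, and the general case follows by uniqueness of the left Kan extension.
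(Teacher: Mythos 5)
Your proposal is correct and takes essentially the same route as the paper: reduce to polynomial algebras via the left Kan extension property recorded in Remark~\ref{sebix}, then observe that for a polynomial ring the square commutes at the level of cochain complexes, where $\alpha_R$ is the divided Frobenius on $W\Omega^{\ast}_{R}$ and $\epsilon \circ \rho_R$ is the Cartier map. The naturality point you flag for the identification $L\eta_p(X)\otimes^{L}_{\Z}\F_p \simeq (\mathrm{H}^{\ast}(X\otimes^{L}_{\Z}\F_p),\beta)$ is exactly what Proposition~\ref{prop62} (via its cochain-level construction) supplies, so it is not a genuine obstacle.
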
 

\begin{proof} 
When $R$ is a polynomial algebra over $\F_p$, we have a commutative diagram
\begin{equation} \label{epsdiag} \xymatrix{
W \Omega_R \ar[d]  \ar[r]^-{\alpha_R} & \eta_p (W \Omega_R ) \ar[d] \\
\Omega_R \ar[r] &  (\mathrm{H}^*( \Omega_R), \beta )
},\end{equation}
in the ordinary category of cochain complexes, which yields a commutative diagram in the $\infty$-category $\calD(\Z)$ depending functorially on
$R \in \CAlg_{\F_p}^{\mathrm{poly}}$. The general result follows by passing to left Kan extension along the inclusion
$\CAlg_{\F_p}^{\mathrm{poly}} \hookrightarrow \SCRf$ (see Remark~\ref{sebix}).
\end{proof} 

\begin{proof}[Proof of Theorem~\ref{critnuiso}] 
Let $R$ be a commutative $\F_p$-algebra for which the maps $\rho_R \cl L \Omega_R \to \Omega_R^{\ast}$
and $\Cart \cl  \Omega_R^{\ast} \to \mathrm{H}^*( \Omega_R^{\ast})$ are isomorphisms.
We wish to show that the map $\nu \cl  \Omega_R^{\ast} \to \W_1 \Omega_R^{\ast}$ is an isomorphism.
Let $\epsilon \cl  \Omega^{\ast}_{R} \rightarrow \mathrm{H}^{\ast}( L \Omega_R)$ be as in Construction \ref{construction:derived-cartier}.
By construction, the composition
$$ \Omega^{\ast}_{R} \xrightarrow{\epsilon} \mathrm{H}^{\ast}( L \Omega_R) \xrightarrow{ \mathrm{H}^{\ast}(\rho_{R}) } \mathrm{H}^{\ast}( \Omega_{R}^{\ast} )$$
coincides with the Cartier map $\Cart$ (this reduces to the case of
polynomial algebras). Since $\Cart$ and $\rho_{R}$ are isomorphisms, it follows that $\epsilon$ is an isomorphism (in the ordinary category of differential graded algebras).
Combining this with our assumption that $\rho_{R}$ is an isomorphism, we conclude that the composite map
$$ L \Omega_R \xrightarrow{ \rho_{R} } \Omega^{\ast}_{R} \xrightarrow{\epsilon}
( \mathrm{H}^*( L \Omega_R), \beta),$$
is an isomorphism in the derived $\infty$-category $\calD(R)$. Using Proposition \ref{proposition:diagram} (together with the observation that
$L W \Omega_R$ and $\eta_{p}( L W \Omega_{R} )$ are derived $p$-complete), we conclude that the map
$\alpha_{R} \cl  L W \Omega_R \to L \eta_p( L W \Omega_R)$ is an isomorphism in the derived $\infty$-category $\mathcal{D}(\Z)$. 
Applying Remark \ref{muaniso}, we see that comparison map $L W \Omega_{R} \rightarrow \W \Omega^{\ast}_{R}$ is an isomorphism in $\calD(\Z)$.
Reducing modulo $p$ and passing to cohomology, we conclude that the map  
$\mu \cl  (\mathrm{H}^*(L \Omega_R), \beta) \rightarrow (\W_1 \Omega_R^\ast, d)$ is an isomorphism of differential graded algebras,
so that the composition $\nu = \mu \circ \epsilon$ is also an isomorphism.
\end{proof} 

Let $R$ be a commutative ring. We will say that an object of the derived category $D(R)$ is {\it flat} if it is isomorphic to a flat $R$-module, regarded as a cochain complex concentrated in degree zero.
If $R$ is an $\F_p$-algebra for which the cotangent complex $L_{ R/ \F_p}$ is flat, then the first hypothesis of Theorem \ref{critnuiso} is automatic. In particular, we obtain a criterion
which does not reference the theory of derived de~Rham cohomology.

\begin{proposition}\label{ddRHodgeComplete}
Let $R$ be an $\F_p$-algebra. Suppose that the map $\Cart \cl  \Omega_R^{\ast} \to \mathrm{H}^*( \Omega_R^{\ast})$
is an isomorphism and that the cotangent complex $L_{R/\mathbb{F}_p}$ is flat. 
Then:
\begin{itemize}
\item[$(1)$] The comparison map $\rho_{R}\colon  L \Omega_{R} \rightarrow \Omega^{\ast}_{R}$ is an isomorphism in $\calD(\Z)$.
\item[$(2)$] The map $\nu \cl \Omega_R^{\ast} \to \W_1 \Omega_R^\ast$ is an
isomorphism of differential graded algebras.
\end{itemize}
\end{proposition}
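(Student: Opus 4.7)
Part $(2)$ is a direct consequence of Part $(1)$: combined with the hypothesis that $\Cart$ is an isomorphism, it supplies both inputs of Theorem~\ref{critnuiso}, which forces $\nu$ to be an isomorphism. The substance is therefore Part $(1)$, which I approach through the conjugate filtration on the derived de Rham complex.

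The conjugate filtration $\{\filc_n L\Omega_R\}_{n \geq 0}$ from Remark~\ref{ddr} has graded pieces $(\bigwedge^n L_{R^{(1)}/\F_p})[-n]$. Since the Frobenius on $\F_p$ is the identity, $R^{(1)}$ equals $R$ as a ring and $L_{R^{(1)}/\F_p} \simeq L_{R/\F_p}$, which by hypothesis is a flat $R$-module concentrated in degree zero, namely $\Omega^1_{R/\F_p}$. The derived exterior powers therefore collapse to the classical ones, giving
$$\gr^n \filc_\ast L\Omega_R \simeq \Omega^n_{R^{(1)}/\F_p}[-n],$$
a complex concentrated in cohomological degree $n$. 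The associated spectral sequence degenerates at $E_2$, producing canonical isomorphisms $\mathrm{H}^n(L\Omega_R) \simeq \Omega^n_{R^{(1)}/\F_p}$ for each $n \geq 0$.

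To conclude that $\rho_R$ is an isomorphism in $\mathcal{D}(\F_p)$, I apply the factorization
$$\Cart = \mathrm{H}^\ast(\rho_R) \circ \epsilon \colon \Omega_R^\ast \to \mathrm{H}^\ast(\Omega_R^\ast)$$
of the classical Cartier map recorded in the proof of Theorem~\ref{critnuiso}, where $\epsilon\colon (\Omega_R^\ast,d) \to (\mathrm{H}^\ast(L\Omega_R),\beta)$ is the DGA map of Construction~\ref{construction:derived-cartier}. The map $\epsilon$ is characterized by the formulas $\epsilon(x) = [x^p]$ and $\epsilon(dy) = [y^{p-1} dy]$ on elements $x,y \in R$, i.e.\ by the same formulas defining the classical Cartier map of Proposition~\ref{cartmapexist}. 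Under the identification $\mathrm{H}^n(L\Omega_R) \simeq \Omega^n_{R^{(1)}/\F_p}$ produced by the conjugate filtration, $\epsilon$ therefore agrees with the classical Cartier map viewed with target $\Omega^\ast_{R^{(1)}/\F_p}$. The hypothesis that $\Cart$ is an isomorphism then implies that $\epsilon$ is an isomorphism, and the factorization forces $\mathrm{H}^\ast(\rho_R) = \Cart \circ \epsilon^{-1}$ to be an isomorphism as well.

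The main technical obstacle is justifying the penultimate step: identifying $\epsilon$ with the classical Cartier map under the conjugate-filtration isomorphism $\mathrm{H}^n(L\Omega_R) \simeq \Omega^n_{R^{(1)}/\F_p}$. This identification is tautological on $\CAlg_{\F_p}^{\mathrm{poly}}$, where $\rho_R$ is the identity and the conjugate filtration reduces to the canonical truncation filtration equipped with Cartier's description of its graded pieces. The extension to an arbitrary $\F_p$-algebra $R$ satisfying the flatness hypothesis then follows from the facts that both the conjugate filtration and $\epsilon$ are defined by left Kan extension from polynomial algebras, and that the flatness hypothesis ensures that the relevant derived constructions on $R$ agree with their underived counterparts.
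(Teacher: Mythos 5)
Your argument is correct and rests on the same ingredients as the paper's proof: the conjugate filtration of Remark~\ref{ddr}, the flatness of $L_{R/\F_p}$ to collapse the derived exterior powers to $\Omega^n_R$, and the identification of the resulting comparison with the Cartier map, checked on polynomial algebras and propagated by left Kan extension (Remark~\ref{sebix}); part $(2)$ is then Theorem~\ref{critnuiso}, exactly as in the paper. The packaging differs mildly: the paper filters the target by canonical truncations and inducts on comparison maps $\filc_n L\Omega_R \to \tau^{\leq n}\Omega^{\ast}_{R}$, never isolating $\epsilon$, whereas you compute $\mathrm{H}^{\ast}(L\Omega_R)\simeq \Omega^{\ast}_{R^{(1)}}$ outright from the degenerate conjugate spectral sequence, deduce that $\epsilon$ is an isomorphism, and then conclude from the factorization $\Cart = \mathrm{H}^{\ast}(\rho_R)\circ \epsilon$. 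The Kan-extension step you flag as the main obstacle is genuine but goes through as you sketch; one way to make it airtight is to note that the map $\mathrm{H}^n(\filc_n L\Omega_R)\to \mathrm{H}^n(\gr^n\filc_{\ast}L\Omega_R)$ is an isomorphism for \emph{every} $R$ (since $\filc_{n-1}L\Omega_R$ always lies in cohomological degrees $\leq n-1$), so the transformation against which $\epsilon_n$ is compared is defined on all of $\CAlg_{\F_p}$ and the uniqueness of maps out of the Kan-extended functor $\Omega^n_{(-)}$ applies on the nose; flatness is then only needed to know that $\mathrm{H}^n(\filc_n L\Omega_R)\to \mathrm{H}^n(L\Omega_R)$ is an isomorphism.

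One sentence should be repaired: ``The hypothesis that $\Cart$ is an isomorphism then implies that $\epsilon$ is an isomorphism.'' This is backwards and, read literally, risks circularity, since identifying $\epsilon$ with $\Cart$ as maps would require identifying $\mathrm{H}^{\ast}(L\Omega_R)$ with $\mathrm{H}^{\ast}(\Omega^{\ast}_R)$, which is what is being proved. What your conjugate-filtration analysis actually gives is that $\epsilon$ is an isomorphism \emph{unconditionally} once $L_{R/\F_p}$ is flat: under the isomorphism $\mathrm{H}^n(L\Omega_R)\simeq \Omega^n_{R^{(1)}}$, the map $\epsilon$ becomes the canonical Frobenius-semilinear identification $\Omega^n_R\simeq \Omega^n_{R^{(1)}}$. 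The hypothesis on $\Cart$ enters only at the final step, to conclude from $\Cart = \mathrm{H}^{\ast}(\rho_R)\circ\epsilon$ that $\mathrm{H}^{\ast}(\rho_R)$, and hence $\rho_R$, is an isomorphism. With that rewording the proof is complete.
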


\begin{proof}
We will prove $(1)$; assertion $(2)$ then follows from the criterion of Theorem \ref{critnuiso}.
For each $n \geq 0$, Remark \ref{sebix} supplies a comparison map
$$ \rho_{R}^{\leq n}\colon  \filc_n L \Omega_{R} \rightarrow \tau^{\leq n} \Omega^{\ast}_{R} $$
in the $\infty$-category $\calD(\Z)$, where the domain of $\rho_{R}^{n}$ is the $n$th stage of the
conjugate filtration of Remark \ref{ddr}. Note that $\rho_{R}$ can be identified with a filtered colimit
of the maps $\rho_{R}^{\leq n}$. It will therefore suffice to show that each $\rho_{R}^{\leq n}$ is
an isomorphism.

Proceeding by induction on $n$, we are reduced to showing that the induced map of filtration quotients
$$ \rho_{R}^{=n} \cl  ({\bigwedge}^{n} L_{R/\F_p})[-n] \rightarrow \mathrm{H}^{n}( \Omega^{\ast}_{R} ) [-n]$$
is an isomorphism in $\calD(\F_p)$; here $\bigwedge^{n}$ denotes the nonabelian derived functor of
the $n$th exterior power. The cotangent complex $L_{R/\mathbb{F}_p}$ is flat by assumption, so Lazard's theorem yields an isomorphism
$\bigwedge^{n} L_{R/\F_p} \simeq \Omega^{n}_{R}$. Under this identification, the map $\rho_{R}^{=n}$ corresponds
to the Cartier map of Proposition \ref{cartmapexist} (in degree $n$), which is
an isomorphism by assumption. 
\end{proof}

\begin{warning}
Assertion $(1)$ of Proposition \ref{ddRHodgeComplete} is not a formal consequence of the flatness of the cotangent complex $L_{R / \F_p}$. 
For example, let $R$ be a nonreduced $\F_p$-algebra for which the cotangent
complex $L_{ R/ \F_p}$ vanishes (Gabber has shown that such algebras exist;
see \cite{Bhatt_Gabber}). In this case, one can use the conjugate filtration of Remark \ref{ddr} to identify the comparison map $L \Omega_{R} \rightarrow \Omega^{\ast}_{R}$ with the Frobenius morphism $R^{(1)} \to R$, which is not an isomorphism when $R$ is not reduced.
\end{warning}

\begin{remark} 
It is known that the above hypotheses are satisfied for valuation rings over
$\F_p$, by results of Gabber--Romero and Gabber; see \cite[Appendix A]{KST} for
an account. Such rings are not known to be ind-smooth.  
\end{remark}

\subsection{The de~Rham comparison for regular $\F_p$-algebras, redux}
\label{sec9sub6}
\label{dRRedux}

The purpose of this subsection is to reprove Theorem~\ref{theo73}, which asserts that the
comparison map $\nu \cl \Omega_R^{\ast} \to \W_1 \Omega_R^{\ast}$ is an
isomorphism when $R$ is a regular Noetherian $\F_p$-algebra. We will prove this by showing
that $R$ satisfies the hypotheses of Proposition~\ref{ddRHodgeComplete}. We also use this
strategy to prove that $\nu$ is an isomorphism in the case where $R$ is a (possibly non-Noetherian) $\F_p$-algebra which admits a $p$-basis (see Definition \ref{definition:p-basis}).

\begin{proposition}
\label{RegCCFlat}
Let $R$ be a regular Noetherian $\F_p$-algebra. Then the cotangent complex $L_{R/\F_p}$ is flat.
\end{proposition}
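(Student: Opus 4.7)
\medskip

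The plan is to prove this by reducing to the local case, then using Kunz's theorem on flatness of Frobenius together with a derived Nakayama argument via the transitivity triangle for the residue field quotient.

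Since flatness can be checked locally on $\Spec(R)$, I may assume that $R$ is a regular Noetherian local $\F_p$-algebra with maximal ideal $\mathfrak{m}$ and residue field $k$. The cotangent complex $L_{R/\F_p}$ lies in $\mathcal{D}(R)^{\leq 0}$, so the goal is to show it is concentrated in degree $0$ and that $H^0$ is a flat $R$-module. By a suitable derived Nakayama lemma, it suffices to establish that $L_{R/\F_p} \otimes_R^L k$ is concentrated in degree $0$, together with enough finiteness or bounded-flat-dimension hypothesis on $L_{R/\F_p}$ to promote this to the desired conclusion.

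The first step is to apply the transitivity triangle for the composition $\F_p \to R \to k$:
\[ L_{R/\F_p} \otimes_R^L k \to L_{k/\F_p} \to L_{k/R}. \]
There are two standard inputs. On one side, any field $k$ of characteristic $p$ is a filtered colimit of localizations of polynomial $\F_p$-algebras (choose a transcendence basis), so $L_{k/\F_p}$ is concentrated in degree $0$ and equals $\Omega^1_{k/\F_p}$. On the other side, because $R$ is regular local, $\mathfrak{m}$ is generated by a regular sequence, so $L_{k/R} \simeq (\mathfrak{m}/\mathfrak{m}^2)[1]$ is concentrated in cohomological degree $-1$ with $H^{-1} = \mathfrak{m}/\mathfrak{m}^2$. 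A diagram chase on the long exact sequence in cohomology forces $H^{-i}(L_{R/\F_p} \otimes_R^L k) = 0$ for all $i \geq 1$, and yields a short exact sequence
\[ 0 \to \mathfrak{m}/\mathfrak{m}^2 \to H^0\!\bigl(L_{R/\F_p} \otimes_R^L k\bigr) \to \Omega^1_{k/\F_p} \to 0. \]

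The main obstacle is the last step: upgrading the concentration of $L_{R/\F_p} \otimes_R^L k$ in degree $0$ to the concentration (and flatness) of $L_{R/\F_p}$ itself, since the usual derived Nakayama lemma requires pseudo-coherence which $L_{R/\F_p}$ need not satisfy a priori. To bridge this gap, I would invoke Kunz's theorem, which asserts that the regularity of $R$ is equivalent to the flatness of the Frobenius $\varphi\colon R \to R$. Applying the transitivity triangle for $\F_p \to R \xrightarrow{\varphi} R$ and using that Frobenius acts as zero on Kähler differentials ($d(x^p) = 0$) gives strong constraints relating $L_{R/\F_p}$ and its pullback along the flat map $\varphi$; iterating the Frobenius and leveraging these constraints should then force $L_{R/\F_p}$ to be concentrated in degree $0$ and flat. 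The delicate point will be to carry out this derived Nakayama-type argument without assuming pseudo-coherence of $L_{R/\F_p}$, which is the essential work hidden in avoiding Popescu's theorem.
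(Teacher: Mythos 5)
Your reduction and fiber computation match the first half of the paper's argument: localizing, using the transitivity triangle for $\F_p \to R \to k$, the regularity of $R$ to get $L_{k/R} \simeq (\mathfrak{m}/\mathfrak{m}^2)[1]$, and the discreteness of $L_{k/\F_p}$ to conclude that $L_{R/\F_p} \otimes_R^L k$ is concentrated in degree $0$. (One small caveat: a finitely generated extension of $\F_p$ need not be a localization of a polynomial ring; the correct justification, as in the paper, is that every finitely generated extension of a perfect field is separably generated, so $k$ is a filtered colimit of \emph{smooth} $\F_p$-algebras — the conclusion $L_{k/\F_p} \in D(k)^{\heartsuit}$ is unaffected.)

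The genuine gap is the step you yourself flag as "delicate": passing from discreteness of the derived fibers to flatness of $L_{R/\F_p}$. Your proposal to bridge it with Kunz's theorem and an iterated-Frobenius argument is left entirely unexecuted — you do not say what the "strong constraints" are or how iterating Frobenius forces concentration in degree $0$ — so as written this is a sketch of a hoped-for argument, not a proof, and it is not clear it can be made to work in this form. The paper closes exactly this gap with a different and much softer tool (its Lemma \ref{FlatCrit}): for a Noetherian ring $R$ and any $M \in D^{\leq 0}(R)$, if $M \otimes_R^L k \in D^{\geq 0}(R)$ for every residue field $k$ of $R$, then $M$ is a flat module placed in degree $0$. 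No pseudo-coherence of $M$ is required; the lemma is proved by Noetherian induction on supports, showing $M \otimes_R^L N$ lies in the heart for every finitely generated $N$ by d\'evissage to $N = R/\mathfrak{p}$ and the exact sequence $0 \to R/\mathfrak{p} \to \kappa(\mathfrak{p}) \to N' \to 0$. In particular, your concern that the absence of pseudo-coherence is "the essential work hidden in avoiding Popescu's theorem" is misplaced: the resolution is this elementary flatness criterion, and Kunz's theorem plays no role in the paper's proof of this proposition (it is only used later, in the pointwise analysis of the Cartier map). To complete your argument you would need either to prove such a criterion or to actually carry out the Frobenius argument in detail.
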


If $R$ is smooth over $\mathbb{F}_p$, then $L_{R/\mathbb{F}_p}
\simeq \Omega^1_{R/\mathbb{F}_p}$ is flat. Consequently,
Proposition~\ref{RegCCFlat} follows from Popescu's theorem (which implies that every regular
Noetherian $\mathbb{F}_p$-algebra is a filtered colimit of smooth $\F_p$-algebras). However, we give a more direct argument below,
based on the following flatness criterion (see \cite[Prop. 5.3F]{AvFox} for a more general result):

\begin{lemma}
\label{FlatCrit}
Let $R$ be a Noetherian ring and fix $M \in D^{\leq 0}(R)$. Then $M$ is flat
if and only if $M \otimes_{R}^{L} k$ belongs to $D^{\geq 0}(R)$, for each residue field $k$ of $R$.
\end{lemma}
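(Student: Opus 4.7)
The forward direction is immediate: if $M$ is a flat $R$-module concentrated in degree zero, then $M \otimes_{R}^{L} k = M \otimes_{R} k$ is concentrated in degree zero for every residue field $k$.

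For the converse, the conclusion is local on $\mathrm{Spec}(R)$, so I would first reduce to the case where $R$ is a Noetherian local ring with maximal ideal $\mathfrak{m}$ and residue field $k$. Since derived tensor product preserves $D^{\leq 0}$, the hypothesis on $M$ forces $M \otimes_{R}^{L} k$ to be concentrated in degree zero. The goal is then to establish the two separate assertions
\begin{itemize}
\item[(i)] $H^{-j}(M) = 0$ for every $j > 0$;
\item[(ii)] $H^{0}(M)$ is a flat $R$-module.
\end{itemize}

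The main tool is the hyper-$\mathrm{Tor}$ spectral sequence
\[
E_{2}^{-i,-j} = \mathrm{Tor}_{i}^{R}\!\bigl(H^{-j}(M),\, k\bigr) \Longrightarrow H^{-(i+j)}(M \otimes_{R}^{L} k),
\]
together with the standard Noetherian local criterion for flatness of a (not necessarily finitely generated) module $N$, namely that $N$ is flat as soon as $\mathrm{Tor}_{1}^{R}(N, R/\mathfrak{p}) = 0$ for every prime $\mathfrak{p}$. For (i), I would argue by a descending analysis on $j$ using this spectral sequence: in negative total degree the abutment vanishes, and the topmost nonvanishing row (if any) cannot be killed by differentials, so $H^{-j}(M) \otimes_{R} k = 0$ at the extremal $j$. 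Since the hypothesis on $M$ propagates along any localization (applying it at every prime of $R$ produces the same situation at each $R_{\mathfrak{p}}$), the same argument run at every localization reduces (i) to showing that $H^{-j}(M)_{\mathfrak{p}} \otimes_{R_{\mathfrak{p}}} k(\mathfrak{p})$ vanishes for every $\mathfrak{p}$, hence to a prime-by-prime Nakayama-type vanishing. Once (i) is known, setting $N = H^{0}(M)$ the hypothesis at every prime translates to $\mathrm{Tor}_{i}^{R}\!\bigl(N, k(\mathfrak{p})\bigr) = 0$ for all $i > 0$, and the local flatness criterion gives (ii).

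The main obstacle is that the cohomology modules $H^{-j}(M)$ are not assumed to be finitely generated, so one cannot invoke Nakayama's lemma directly to pass from vanishing modulo $\mathfrak{m}$ to vanishing after localization. The way around this is to use the full strength of the Noetherian hypothesis, propagating the derived vanishing assumption to every localization and every quotient $R/\mathfrak{p}$, and then appealing to the unbounded-complex flat-dimension criterion of Avramov--Foxby cited in \cite[Prop.~5.3F]{AvFox}, which is exactly tailored to conclude flatness of a complex in $D^{\leq 0}(R)$ from $\mathrm{Tor}$-vanishing against residue fields over a Noetherian base.
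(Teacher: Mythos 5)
Your outline contains two genuine gaps, and the patch you propose for them amounts to citing the result rather than proving it. First, the descending induction on the ``topmost nonvanishing row'' does not get off the ground: an object of $D^{\leq 0}(R)$ may have cohomology in infinitely many negative degrees, so there need not be an extremal $j$ to anchor the argument. Even for bounded complexes, the asserted mechanism is wrong: in the hyper-$\Tor$ spectral sequence the corner term $\Tor_0(\mathrm{H}^{-j_{\max}}(M),k)$ has vanishing \emph{outgoing} differentials, but it can receive nonzero differentials from $\Tor_r(\mathrm{H}^{-j}(M),k)$ with $j<j_{\max}$ (already for a two-term complex of free modules over a non-regular Noetherian local ring such differentials are nonzero), so vanishing of the abutment in positive degrees only shows that $\mathrm{H}^{-j_{\max}}(M)\otimes_R k$ is exhausted by images of differentials, not that it is zero. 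Second, in step (ii) you quote the criterion ``$\Tor_1(N,R/\mathfrak{p})=0$ for all primes $\mathfrak{p}$ implies $N$ flat'' but then apply it with the residue fields $k(\mathfrak{p})$ in place of $R/\mathfrak{p}$; passing from $\Tor$-vanishing against $k(\mathfrak{p})$ to $\Tor$-vanishing against $R/\mathfrak{p}$ (for modules that need not be finitely generated) is precisely the nontrivial content, and, as you acknowledge, Nakayama is unavailable. Finally, invoking \cite[Prop.~5.3F]{AvFox} to close these holes is circular in spirit: the paper cites that result exactly as a more general version of this lemma, and the point of the proof in the text is to be self-contained.

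The paper's argument takes a different, spectral-sequence-free route that handles unboundedness and non-finite generation in one stroke: it proves the stronger assertion that $M\otimes^L_R N$ lies in the heart of $D(R)$ for \emph{every} $R$-module $N$ (taking $N=R$ shows $M$ is a module, and then the assertion for all $N$ gives flatness). Since $N\mapsto \mathrm{H}^{\ast}(M\otimes^L_R N)$ commutes with filtered colimits, one reduces to finitely generated $N$; by Noetherian induction on $\supp(N)$ and a filtration with quotients $R/\mathfrak{p}_i$, one reduces to $N=R/\mathfrak{p}$; and the exact sequence $0\to R/\mathfrak{p}\to k(\mathfrak{p})\to N'\to 0$, in which $N'$ is a filtered colimit of finitely generated modules with strictly smaller support, lets the hypothesis at the residue field $k(\mathfrak{p})$ and the inductive hypothesis force $M\otimes^L_R R/\mathfrak{p}\in D^{\geq 0}(R)$. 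To salvage your outline you would need to reproduce an induction of this kind (or actually prove the Avramov--Foxby criterion), since both the corner argument and the prime-by-prime Nakayama step break down as written.
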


\begin{proof}
Assuming the condition on $M$, we will prove the following:
\begin{itemize}
\item[$(\ast)$] For every $R$-module $N$, the derived tensor product $M \otimes_{R}^{L} N$ belongs to the heart of $D(R)$.
\end{itemize}
Applying $(\ast)$ in the case $N = R$, we deduce that $M$ belongs to the heart of $D(R)$, in which case $(\ast)$ implies the flatness of $M$.
Since the construction $N \mapsto \mathrm{H}^{\ast}( M \otimes_{R}^{L} N )$ commutes with filtered colimits, it will suffice to prove
assertion $(\ast)$ in the case where $N$ is a finitely generated $R$-module. Proceeding by Noetherian induction, we may assume
that $(\ast)$ is satisfied for every finitely generated $R$-module $N'$ with $\mathrm{supp}(N') \subsetneq \mathrm{supp}(N)$ (as subsets of $\Spec(R)$).
Writing $N$ as a finite extension, we can reduce to the case where $N \simeq R / \mathfrak{p}$ for some prime ideal $\mathfrak{p} \subseteq R$. 
Let $k$ denote the fraction field of $R / \mathfrak{p}$. We have an exact sequence of $R$-modules
$$ 0 \rightarrow N \rightarrow k \rightarrow N' \rightarrow 0,$$
where $N'$ can be written as a filtered colimit of finitely generated $R$-modules whose support is a proper subset of $\mathrm{\supp}(N)$.
We therefore obtain a distinguished triangle
$$ M \otimes_{R}^{L} N \rightarrow M \otimes_{R}^{L} k \rightarrow M \otimes_{R}^{L} N' \rightarrow (M \otimes_{R}^{L} N)[1],$$
where $M \otimes_{R}^{L} N'$ belongs to $D^{\geq 0}(R)$ (by our inductive hypothesis), and
$M \otimes_{R}^{L} k$ belongs to $D^{\geq 0}(R)$ (by assumption). It follows that $M \otimes_{R}^{L} N$ also belongs to
$D^{\geq 0}(R)$. Since $D^{\leq 0}(R)$ is closed under derived tensor products, the object
$M \otimes_{R}^{L} N$ belongs to the heart $D^{\leq 0}(R) \cap D^{\geq 0}(R)$ of $D(R)$.
\end{proof}

For later reference, we record the following consequence of Lemma \ref{FlatCrit}:

\begin{corollary}\label{skez}
Let $R$ be a Noetherian ring and let $f\colon  M \to N$ be a homomorphism of flat $R$-modules. Assume that,
for each residue field $k$ of $R$, the map $f_{k}\colon  M \otimes_{R} k \rightarrow N \otimes_{R} k$ is a monomorphism.
Then $f$ is a monomorphism and the cokernel $\coker(f)$ is flat. If each $f_{k}$ is an isomorphism, then $f$ is an isomorphism.
\end{corollary}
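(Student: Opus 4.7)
\medskip

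The plan is to deduce both assertions from Lemma~\ref{FlatCrit} applied to the cofiber $C = \operatorname{cofib}(f)$, viewed as an object of the derived category $D(R)$. Since $M$ and $N$ are concentrated in degree $0$, we have $C \in D^{\leq 0}(R)$ automatically. For each prime $\mathfrak{p}$ of $R$ with residue field $k = \kappa(\mathfrak{p})$, I will tensor the distinguished triangle $M \to N \to C \to M[1]$ with $k$ over $R$ and use the flatness of $M, N$ to identify the derived tensor products with the ordinary ones. The hypothesis that $f_k$ is a monomorphism, together with the resulting long exact sequence, then forces $C \otimes_R^L k$ to be concentrated in degree $0$, i.e.\ to lie in $D^{\geq 0}(R)$. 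Applying Lemma~\ref{FlatCrit} shows that $C$ itself is a flat $R$-module concentrated in degree $0$; reading this back through the triangle says that $f$ is injective with cokernel $C$, which is flat. This handles the first assertion.

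For the second assertion, assume in addition that each $f_k$ is an isomorphism, so that the flat $R$-module $C$ satisfies $C \otimes_R \kappa(\mathfrak{p}) = 0$ for every prime $\mathfrak{p} \subseteq R$. I will then show $C = 0$ directly. For any prime $\mathfrak{p}$, the $R/\mathfrak{p}$-module $C \otimes_R R/\mathfrak{p}$ is flat over the Noetherian domain $R/\mathfrak{p}$, hence torsion-free; its localization at the generic point of $\operatorname{Spec}(R/\mathfrak{p})$ is $C \otimes_R \kappa(\mathfrak{p}) = 0$, so $C \otimes_R R/\mathfrak{p} = 0$. Equivalently, $\mathfrak{p} C = C$ for every prime $\mathfrak{p}$. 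Letting $\mathfrak{p}_1, \ldots, \mathfrak{p}_n$ be the (finitely many) minimal primes of $R$ and iterating this relation yields $C = \mathfrak{p}_1 \mathfrak{p}_2 \cdots \mathfrak{p}_n C$. Since $\mathfrak{p}_1 \cdots \mathfrak{p}_n \subseteq \mathfrak{p}_1 \cap \cdots \cap \mathfrak{p}_n$ is contained in the nilradical $N$ of $R$, which is nilpotent by the Noetherian hypothesis, we conclude $C = N^k C = 0$ for $k$ large, so that $f$ is surjective as well.

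The main obstacle is the isomorphism assertion: the intuition that vanishing of fibers forces a flat module to vanish is false without Noetherian hypotheses (e.g.\ $\mathbf{Q}_p$ as a $\mathbf{Z}_p$-module would be a counterexample were it not that its generic fiber is nonzero), and even in the Noetherian setting a naive Nakayama argument does not apply since $C$ need not be finitely generated. The trick to overcome this is the observation that flatness over $R$ descends to flatness over $R/\mathfrak{p}$, at which point one gets \emph{torsion-freeness over a domain} for free; combined with finiteness of the set of minimal primes and nilpotence of the nilradical, this converts the fiberwise vanishing into global vanishing.
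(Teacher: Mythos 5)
Your proposal is correct. The first assertion is handled exactly as in the paper: apply Lemma~\ref{FlatCrit} to the cone $C=\mathrm{cn}(f)$, using flatness of $M$ and $N$ to see that $C\otimes_R^L k$ has cohomology $\ker(f_k)$ and $\coker(f_k)$ in degrees $-1$ and $0$, so the hypothesis forces $C\otimes_R^L k\in D^{\geq 0}(R)$ and hence $C$ is a flat module in degree $0$, i.e.\ $f$ is injective with flat cokernel. For the isomorphism assertion you diverge from the paper: the paper simply applies Lemma~\ref{FlatCrit} a second time, now to the shift $\mathrm{cn}(f)[1]$ --- when each $f_k$ is an isomorphism one has $\mathrm{cn}(f)[1]\otimes_R^L k=0\in D^{\geq 0}(R)$, so $\mathrm{cn}(f)[1]$ is a flat module concentrated in degree $0$; since its cohomology sits in degrees $\leq -1$ it must vanish, whence $f$ is an isomorphism. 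You instead argue directly that a flat $R$-module $C$ with $C\otimes_R\kappa(\mathfrak{p})=0$ for all $\mathfrak{p}$ vanishes: base change to $R/\mathfrak{p}$ is flat, hence torsion-free over the domain $R/\mathfrak{p}$, and vanishing at the generic point gives $\mathfrak{p}C=C$; running over the finitely many minimal primes and using nilpotence of the nilradical then kills $C$. Your argument is correct and more elementary in spirit (it isolates the commutative-algebra fact ``flat with all fibers zero implies zero over a Noetherian ring,'' and correctly notes that naive Nakayama does not apply since $C$ need not be finitely generated), at the cost of being longer; the paper's route is a one-line reuse of the derived flatness criterion it has already set up.
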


\begin{proof}
The first assertion follows by applying Lemma \ref{FlatCrit} to the cone $\mathrm{cn}(f)$ of $f$ (formed in the derived category $D(R)$), and
the second assertion from applying Lemma \ref{FlatCrit} to the shift $\mathrm{cn}(f)[1]$.
\end{proof}

\begin{proof}[Proof of Proposition \ref{RegCCFlat}]
Let $R$ be a regular Noetherian $\F_p$-algebra. By virtue of Lemma~\ref{FlatCrit}, it will suffice to show that
$L_{R/\F_p} \otimes_{R}^{L} k$ belongs to $D^{\geq 0}(R)$ for each residue field $k$ of $R$. 
As the formation of the cotangent complex commutes with localization, we may assume $R$ is a regular local ring with maximal ideal $\mathfrak{m} \subseteq R$ and residue field
$k \simeq R / \mathfrak{m}$. Writing the field $k$ as a filtered colimit of
smooth $\F_p$-algebras (using generic smoothness applied to any finitely generated extension of the perfect field $\F_p$), we see that
$L_{k/\F_p}$ belongs to the heart of $D(k)$. Since $R$ is regular, the kernel of the map $R \to k$ is generated by a regular sequence, so that
$L_{k/R}[-1] \simeq \mathfrak{m}/\mathfrak{m}^2$ also belongs to the heart of $D(k)$. 
The transitivity triangle for $\F_p \to R \to k$ gives a distinguished triangle \[ L_{k/R}[-1] \to L_{R/\F_p} \otimes_R^L k \to L_{k/\F_p} \to L_{k/R} .\]
which shows that $L_{R/\F_p} \otimes_{R}^{L} k$ also belongs to the heart of $D(k)$.
\end{proof}

\begin{corollary}\label{saget}
Let $R$ be a regular Noetherian $\F_p$-algebra. Then, for each $n \geq 0$, the module $\Omega^{n}_{R}$ is flat over $R$.
\end{corollary}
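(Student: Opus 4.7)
The plan is to bootstrap directly from Proposition \ref{RegCCFlat}, using only the fact that exterior powers preserve flatness over a commutative ring. First, Proposition \ref{RegCCFlat} asserts that $L_{R/\F_p}$ is flat when regarded as an object of the derived category $D(R)$. In particular, it is concentrated in cohomological degree zero, so the canonical map $L_{R/\F_p} \to \Omega^{1}_{R/\F_p}$ (which exhibits $\Omega^1_{R/\F_p}$ as $\mathrm{H}^0(L_{R/\F_p})$) is an isomorphism, and $\Omega^1_{R/\F_p}$ is a flat $R$-module.

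Next, I would identify $\Omega^{n}_R$ with the classical exterior power $\bigwedge^n_R \Omega^1_{R/\F_p}$ and reduce the claim to showing that exterior powers of flat modules over a commutative ring are flat. This is standard: by Lazard's theorem, any flat $R$-module $M$ can be written as a filtered colimit $M \simeq \varinjlim_\alpha M_\alpha$ of finite free $R$-modules; since exterior powers commute with filtered colimits and carry finite free modules to finite free modules, we obtain $\bigwedge^n_R M \simeq \varinjlim_\alpha \bigwedge^n_R M_\alpha$ as a filtered colimit of free $R$-modules, hence flat.

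There is no serious obstacle here: all the work has already been done in Proposition \ref{RegCCFlat}, and the passage from flatness of $\Omega^1_R$ to flatness of $\Omega^n_R$ is a purely formal application of Lazard's theorem. (One could alternatively bypass Lazard by using the derived exterior power $\bigwedge^n L_{R/\F_p}$: the flatness of $L_{R/\F_p}$ guarantees that this derived exterior power coincides with the classical $\Omega^n_R$ and inherits the flatness property, e.g.\ via the criterion of Lemma \ref{FlatCrit} applied to the residue fields of $R$.)
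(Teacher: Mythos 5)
Your proposal is correct and follows the same route as the paper: Proposition \ref{RegCCFlat} gives flatness of $\Omega^{1}_{R}$ (as $L_{R/\F_p}$ is flat, hence concentrated in degree zero), and the general case is deduced from $\Omega^{n}_{R} = \bigwedge^{n}\Omega^{1}_{R}$. The paper leaves the fact that exterior powers of flat modules are flat implicit, whereas you justify it via Lazard's theorem, which is exactly the standard argument.
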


\begin{proof}
For $n = 1$, this follows from Proposition \ref{RegCCFlat}. The general case then follows from the definition
$\Omega^{n}_{R} = \bigwedge^{n} ( \Omega^{1}_{R} )$ and Lazard's theorem.
\end{proof}

\begin{corollary}
\label{CCComp}
Let $R$ be a regular Noetherian $\F_p$-algebra. Assume that $R$ is local with maximal ideal $\mathfrak{m}$, and let $\widehat{R}$ denote the completion of $R$.
Let $N$ be an $R$-module on which the action of $\mathfrak{m}$ is locally
nilpotent, i.e., every element is annihilated by a power of $\mathfrak{m}$ (so that $N$ can also be regarded as a $\widehat{R}$-module). 
Then the canonical map
$$ \rho_m\colon  \Omega^{m}_{R} \otimes_{R} N \rightarrow \Omega^{m}_{ \widehat{R} } \otimes_{ \widehat{R} } N$$
is an isomorphism for each $m \geq 0$.
\end{corollary}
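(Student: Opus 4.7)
The plan is to reduce the problem, through a sequence of steps, to the core case $m=1$, where the result can be extracted from the universal property of K\"ahler differentials. First, I would observe that both functors $N \mapsto \Omega^m_R \otimes_R N$ and $N \mapsto \Omega^m_{\widehat R} \otimes_{\widehat R} N$ are right exact and commute with filtered colimits. Since every locally $\mathfrak m$-nilpotent $R$-module is a filtered colimit of its submodules annihilated by powers of $\mathfrak m$, this reduces the problem to the case where $\mathfrak m^k N = 0$ for some $k$.

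In that case, $N$ is naturally a module over the common quotient $R/\mathfrak m^k \simeq \widehat R/\widehat{\mathfrak m}^k$ (where we use the Noetherian hypothesis), and tensoring with $N$ factors through this quotient. It therefore suffices to show that the natural map
$$\Omega^m_R/\mathfrak m^k \Omega^m_R \longrightarrow \Omega^m_{\widehat R}/\widehat{\mathfrak m}^k \Omega^m_{\widehat R}$$
is an isomorphism of $R/\mathfrak m^k$-modules. For $m > 1$, the functorial identification $\Omega^m = \bigwedge^m \Omega^1$ together with the compatibility of exterior powers with base change reduces this in turn to the case $m = 1$.

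The remaining case $m = 1$ follows from the universal property of the module of K\"ahler differentials together with the following key observation: any derivation $d\colon R \to M$ into an $\mathfrak m^k$-torsion $R$-module $M$ automatically vanishes on $\mathfrak m^{2k}$. Indeed, writing an element of $\mathfrak m^{2k}$ as a sum of products of pairs of elements of $\mathfrak m^k$ and applying the Leibniz rule shows that $d$ lands in $\mathfrak m^k M = 0$. The analogous statement holds for derivations $\widehat R \to M$. Consequently both types of derivations factor through the common ring $R/\mathfrak m^{2k} \simeq \widehat R/\widehat{\mathfrak m}^{2k}$, which produces a natural bijection between derivations $R \to M$ and derivations $\widehat R \to M$ for any $\mathfrak m^k$-torsion module $M$. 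By the Yoneda lemma, this bijection identifies the representing objects $\Omega^1_R/\mathfrak m^k \Omega^1_R$ and $\Omega^1_{\widehat R}/\widehat{\mathfrak m}^k \Omega^1_{\widehat R}$. The main subtlety, though minor, lies in this last step: one must verify carefully that the bijection of derivations is induced by the completion map $R \to \widehat R$, so that it really implements the natural comparison map $\rho_1$ and not some other isomorphism.
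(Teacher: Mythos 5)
Your proof is correct, but it takes a genuinely different route from the paper. The paper reduces to $N$ of finite length and then (via flatness of $\Omega^{m}_{R}$ over $R$, which rests on regularity through Proposition \ref{RegCCFlat} and Corollary \ref{saget}) to $N = k$, where it identifies $\rho_1 \otimes k$ with the map of cotangent complexes $L_{R/\F_p} \otimes_R^L k \to L_{\widehat{R}/\F_p} \otimes_{\widehat{R}}^L k$ and concludes from the transitivity triangles together with the isomorphism $\mathfrak{m}/\mathfrak{m}^2 \simeq \widehat{\mathfrak{m}}/\widehat{\mathfrak{m}}^2$ (again invoking regularity to compute $L_{k/R}$). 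You instead reduce to $N = R/\mathfrak{m}^{k}$, use compatibility of exterior powers with base change to get down to $m=1$, and settle that case by the purely formal observation that any derivation into an $\mathfrak{m}^k$-torsion module kills $\mathfrak{m}^{2k}$, hence factors through the common Artinian quotient $R/\mathfrak{m}^{2k} \simeq \widehat{R}/\widehat{\mathfrak{m}}^{2k}$; the Yoneda step does implement $\rho_1$, since restriction of derivations along $R \to \widehat{R}$ corresponds exactly to precomposition with the canonical map $\Omega^1_R \to \Omega^1_{\widehat{R}}$. Your argument uses neither regularity nor flatness of $\Omega^1_R$ nor the cotangent complex, so it is more elementary and in fact proves the statement for an arbitrary Noetherian local ring; the paper's version is natural in context only because the cotangent-complex and flatness machinery is already set up there for the surrounding results.
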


\begin{proof}
Writing $N$ as a union of finitely generated submodules, we can reduce to the case where $N$ is finitely generated and therefore Artinian. Proceeding by induction
on the length of $N$, we can further reduce to the case where $N = k$ is the residue field of $R$. In this case, we can identify $\rho_m$ with the 
$m$th exterior power of $\rho_1$ (in the category of vector spaces over $k$). Using Proposition \ref{RegCCFlat}, we can identify
$\rho_1$ with the canonical map $L_{R/ \F_p} \otimes_{R}^{L} k \rightarrow L_{
\widehat{R} / \F_p} \otimes_{ \widehat{R} } k$.
Using the commutative diagram of transitivity sequences
$$ \xymatrix{ L_{R / \F_p} \otimes_{R}^{L} k \ar[d]^{\rho_1} \ar[r] & L_{k/\F_p} \ar[d]^{\id} \ar[r] & L_{ k / R} \ar[d]^{\rho'} \\
L_{ \widehat{R} / \F_p} \otimes_{ \widehat{R} }^{L} k \ar[r] & L_{ k / \F_p} \ar[r] & L_{k/ \widehat{R} }, }$$
we are reduced to showing that the map $\rho'$ is an isomorphism. This follows from the observation that the map
$R \to \widehat{R}$ induces an isomorphism $\mathfrak{m} / \mathfrak{m}^2 \simeq \widehat{\mathfrak{m} } / \widehat{ \mathfrak{m} }^2$, where
$\widehat{ \mathfrak{m} }$ denotes the maximal ideal of $\widehat{R}$.
\end{proof}


Let $R$ be a commutative $\F_p$-algebra and let $\Cart\colon  \Omega_{R}^{\ast} \rightarrow \mathrm{H}^{\ast}( \Omega^{\ast}_{R} )$
be the Cartier map (Proposition \ref{cartmapexist}). It follows from Theorem \ref{theo71} that
$\Cart$ is an isomorphism when $R$ is a smooth algebra over a perfect field $k$. Our next goal is to prove the following
more general result:

\begin{theorem}\label{avoidpop}
Let $R$ be a regular Noetherian $\F_p$-algebra. Then the Cartier map
$$ \Cart\colon  \Omega_{R}^{\ast} \rightarrow \mathrm{H}^{\ast}( \Omega^{\ast}_{R} )$$
is an isomorphism of graded rings.
\end{theorem}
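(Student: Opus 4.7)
The Cartier map commutes with localization on $R$, and regularity is preserved under localization, so we may assume $R$ is a regular local Noetherian $\F_p$-algebra with maximal ideal $\mathfrak{m}$ and residue field $k$. As noted in the proof of Theorem~\ref{critnuiso}, the Cartier map admits a canonical factorization
\[ \Cart \;=\; \mathrm{H}^\ast(\rho_R) \circ \epsilon, \]
where $\epsilon \colon \Omega^\ast_R \to \mathrm{H}^\ast(L\Omega_R)$ is the derived Cartier operator of Construction~\ref{construction:derived-cartier} and $\rho_R \colon L\Omega_R \to \Omega^\ast_R$ is the canonical comparison to the classical de Rham complex. It therefore suffices to show that both $\epsilon$ and $\mathrm{H}^\ast(\rho_R)$ are isomorphisms.

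\textbf{Computing $\mathrm{H}^\ast(L\Omega_R)$.} By Proposition~\ref{RegCCFlat}, $L_{R/\F_p}$ is flat and concentrated in degree zero, equal to $\Omega^1_R$. Each derived exterior power $\bigwedge^n L_{R^{(1)}/\F_p}$ is therefore discrete and identifies with $\Omega^n_{R^{(1)}}$, so the graded pieces $\filc_n L\Omega_R/\filc_{n-1} L\Omega_R \simeq \Omega^n_{R^{(1)}}[-n]$ of the conjugate filtration (Remark~\ref{ddr}) are concentrated in cohomological degree $n$. Inductive analysis of the short exact sequences arising from the conjugate filtration yields canonical isomorphisms $\mathrm{H}^n(L\Omega_R) \simeq \Omega^n_{R^{(1)}}$, which by the left Kan extension characterization of Remark~\ref{sebix} coincide with the map induced by $\epsilon$ (matching the classical Cartier map on polynomial generators). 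Hence $\epsilon$ is a Frobenius-semilinear isomorphism of graded rings.

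\textbf{Descent from the completion.} It remains to show that $\mathrm{H}^\ast(\rho_R)$ is an isomorphism, equivalently that $\Cart_R$ is itself an iso. By Cohen's structure theorem, $\widehat{R} \simeq k[[x_1, \ldots, x_n]]$, and this power series ring admits a $p$-basis; the Cartier isomorphism for $\widehat{R}$ is thus supplied by Theorem~\ref{theorem:p-basis}. To transfer the result to $R$, we apply Corollary~\ref{CCComp}, which identifies $\Omega^\ast_R \otimes_R R/\mathfrak{m}^k$ with $\Omega^\ast_{\widehat{R}} \otimes_{\widehat{R}} R/\mathfrak{m}^k$ as cochain complexes for each $k \geq 1$, compatibly with Cartier maps. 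Rewriting the Frobenius-semilinear map $\Cart_R$ as an $R$-linear map between flat $R$-modules by restriction of scalars along Frobenius (noting Kunz's theorem implies Frobenius is flat on $R$), we apply the flatness criterion Corollary~\ref{skez} to upgrade the mod-$\mathfrak{m}^k$ comparisons to an iso of the full modules by testing at residue fields.

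\textbf{Main obstacle.} The subtle point is this last descent step. Although Corollary~\ref{CCComp} gives exact comparison modulo every $\mathfrak{m}^k$, neither $\Omega^n_R$ nor $\mathrm{H}^n(\Omega^\ast_R)$ is finitely generated over $R$ in general, so a direct Nakayama-style argument does not apply. One must combine flatness of $\Omega^n_R$ (Corollary~\ref{saget}), the flatness of Frobenius for regular Noetherian $R$, and the flatness criterion of Lemma~\ref{FlatCrit} to conclude the full isomorphism on $R$.
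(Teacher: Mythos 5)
Your overall strategy (localize, use flatness of $L_{R/\F_p}$ and of Frobenius, pass to the completion, descend via residue fields) is close in spirit to the paper's, but it has two genuine gaps, and they are exactly where the paper has to work hardest. First, the claim that $\widehat{R} \simeq k[[x_1,\ldots,x_n]]$ admits a $p$-basis is false in general: it is fine when $[k:k^p]<\infty$, but if the residue field $k$ has an infinite $p$-basis $\{b_1,b_2,\ldots\}$ then an element such as $\sum_n b_n t^n \in k[[t]]$ cannot be written as a \emph{finite} $k[[t]]^p$-linear combination of monomials in $\{b_i\}\cup\{t\}$ (compare $t$-adic coefficients: infinitely many distinct $b_n$ would have to lie in the $k^p$-span of finitely many monomials), so the natural candidate fails and no $p$-basis is available in general. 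Regular Noetherian local rings with residue fields of infinite $p$-degree are precisely the hard case, and this is why the paper does \emph{not} attack $k[[x_1,\ldots,x_d]]$ directly: instead it compares the complete local ring with the polynomial ring $S=k[t_1,\ldots,t_d]$ (which has the same behavior at the origin, Remarks~\ref{slip1} and \ref{slip2}) and establishes the statement for $S$ by writing $k$ as a filtered union of finitely generated fields (Corollary~\ref{effiscor}, via the smooth case and Proposition~\ref{transferunivcart}). Note also that Theorem~\ref{theorem:p-basis} as stated concerns $\nu$, not $\Cart$, so even in the $F$-finite case you would need to extract the Cartier statement from its proof rather than cite it.

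Second, the step you yourself label the ``main obstacle'' is not an obstacle you can wave away with Corollary~\ref{saget}, Kunz, and Corollary~\ref{skez}: the target $\mathrm{H}^{\ast}(\Omega^{\ast}_{R})$ of the Cartier map is a subquotient of $\Omega^{\ast}_{R}$ whose flatness over $R^{(1)}$, and whose compatibility with base change to residue fields and to the completion, are not known in advance -- they are part of what must be proved. (Relatedly, $\Omega^{\ast}_{R}\otimes_R R/\mathfrak{m}^k$ is not a cochain complex, since $d$ is only $R^{(1)}$-linear; all tensoring has to be done over $R^{(1)}$.) The paper's resolution is the notion of a \emph{universal Cartier isomorphism} (Definition~\ref{universalcartier}), which strengthens the assertion by building in exactness of the two sequences involving $Q^{n+1}\Omega^{\ast}_R$ together with flatness of $Q^{n+1}\Omega^{\ast}_{R}$ and $\Omega^{n+1}_{R}/B^{n+1}\Omega^{\ast}_{R}$ over $R^{(1)}$; it is this extra flatness, threaded through an induction on $n$, that makes the residue-field criterion of Proposition~\ref{kanster} work (via Corollary~\ref{skez} and Kunz) and makes the pointwise condition invariant under completion (Remark~\ref{slip2}, using Corollary~\ref{CCComp}). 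Without this (or an equivalent device) your descent from $\widehat{R}$ to $R$ does not go through, so the proposal as written is incomplete precisely at the heart of the argument. Finally, the derived detour in your first two steps does no work: after arguing that $\epsilon$ is an isomorphism you reduce to showing $\mathrm{H}^{\ast}(\rho_R)$ is an isomorphism, which you observe is equivalent to the original statement, so the entire burden falls on the unproved descent step.
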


\begin{remark}
As with Proposition~\ref{RegCCFlat}, Theorem \ref{avoidpop} is an immediate consequence of Popescu's theorem (together with the classical Cartier isomorphism).
However, we will give a more direct argument which avoids the use of Popescu's theorem.
\end{remark}

The proof of Theorem \ref{avoidpop} will require some preliminaries.

\begin{notation}
Let $R$ be an $\F_p$-algebra. To avoid confusion, we let $R^{(1)}$ denote the same commutative ring $R$, where
we view $R$ as an $R^{(1)}$-module via the Frobenius map $R^{(1)} \to R$.

For each $n \geq 0$, we let $B^{n} \Omega^{\ast}_{R}$ denote the $n$-coboundaries in the de~Rham complex
$\Omega^{\ast}_{R}$: that is, the image of the de~Rham differential $d\colon  \Omega^{n-1}_{R} \rightarrow \Omega^{n}_{R}$ (by convention,
we set $B^{0} \Omega^{\ast}_{R} = (0)$). We will regard the cohomology group $\mathrm{H}^{n}( \Omega^{\ast}_{R} )$
as a subgroup of the quotient $\Omega^{n}_{R} / B^{n} \Omega^{\ast}_{R}$, and we define
$Q^{n+1} \Omega^{\ast}_{R}$ to be the cokernel of the composite map
$$ \Omega^{n}_{ R^{(1)} } \xrightarrow{\Cart} \mathrm{H}^{n}( \Omega^{\ast}_{R} ) \hookrightarrow
\Omega^{n}_{R} / B^{n} \Omega^{\ast}_{R}.$$
By construction, we have short exact sequences of $R^{(1)}$-modules

\begin{equation}
\label{CC1smooth}
\Omega^{n}_{R^{(1)} } \xrightarrow{ \Cart} \Omega^{n}_{R} / B^{n} \Omega^{\ast}_R
\rightarrow Q^{n+1} \Omega^{\ast}_{R} \rightarrow 0 ,\end{equation}

\begin{equation}
\label{CC2smooth}
Q^{n+1} \Omega^{\ast}_{ R } \xrightarrow{d} \Omega^{n+1}_{R} \rightarrow \Omega^{n+1}_{R} / B^{n+1} \Omega^{\ast}_{R} \rightarrow 0.
\end{equation}

Note that the sequence (\ref{CC1smooth}) is exact on the left if and only if
the Cartier map $\Cart\colon  \Omega^{n}_{ R^{(1) } } \to \mathrm{H}^{n}( \Omega^{\ast}_{R} )$ is injective,
and that the sequence (\ref{CC2smooth}) is exact on the left if and only if
the Cartier map $\Cart\colon  \Omega^{n}_{R^{(1)}} \to \mathrm{H}^{n}( \Omega^{\ast}_{R} )$ is surjective
(in which case the de~Rham differential induces an isomorphism $Q^{n+1} \Omega^{\ast}_{R} \simeq B^{n+1} \Omega^{\ast}_{R}$).
\end{notation}

\begin{definition}\label{universalcartier}
Let $R$ be an $\F_p$-algebra. We will say that $R$ {\it has a universal Cartier isomorphism} 
if it satisfies the following pair of conditions, for every integer $n \geq 0$:
\begin{itemize}
\item[$(a_n)$] The sequence
$$0 \rightarrow \Omega^{n}_{R^{(1)} } \xrightarrow{ \Cart} \Omega^{n}_{R} / B^{n} \Omega^{\ast}_R
\rightarrow Q^{n+1} \Omega^{\ast}_{R} \rightarrow 0$$
is exact. Moreover, $Q^{n+1} \Omega^{\ast}_{R}$ is a flat $R^{(1)}$-module.

\item[$(b_n)$] The sequence 
$$ 0 \rightarrow Q^{n+1} \Omega^{\ast}_{R} \xrightarrow{d} \Omega^{n+1}_{R} \rightarrow \Omega^{n+1}_{R} / B^{n+1} \Omega^{\ast}_{R}  \rightarrow 0$$
is exact. Moreover, the quotient $\Omega^{n+1}_{R} / B^{n+1} \Omega^{\ast}_{R}$ is a flat $R^{(1)}$-module.
\end{itemize}
\end{definition}

\begin{example}\label{polynomialring}
Let $k$ be a perfect ring of characteristic $p$ and let $R = k[x_1, \ldots, x_m]$ be a polynomial algebra over $k$. Then $R$ has a universal Cartier isomorphism.
In fact, we know that $R$ has a Cartier isomorphism, so the desired sequences
\eqref{CC1smooth} and \eqref{CC2smooth} are exact. 
It will therefore suffice to prove the following assertion for each $n \geq 0$:
\begin{itemize}
\item[$(S_n)$] The groups $Q^{n+1} \Omega_R^{\ast}$ and $\Omega^{n+1}_R/B^{n+1} \Omega R^{\ast}$ are flat
$R^{(1)}$-modules.
\end{itemize}
Note that $(S_n)$ is automatic for $n \gg 0$ (since the modules $Q^{n+1}
\Omega_R^{\ast}$ and $\Omega^{n+1}_R/B^{n+1} \Omega_R^{\ast}$ both vanish).
It will therefore suffice to show that if condition $(S_{n+1})$ is satisfied for some $n \geq 0$, then $(S_n)$ is also satisfied. This is clear:
the exact sequence of \eqref{CC1smooth} guarantees that $\Omega_R^{n} /B^n
\Omega_R^{\ast}$ is a flat $R^{(1)}$-module, and the exact sequence of \eqref{CC2smooth} guarantees that $Q^{n} \Omega_{R}^{\ast}$
is a flat $R^{(1)}$-module.
\end{example}

\begin{proposition} 
\label{transferunivcart}
Let $R \to R'$ be a morphism of $\F_p$-algebras. Suppose that: 
\begin{enumerate}
\item $R$ has a universal Cartier isomorphism.  
\item The diagram
of $\F_p$-algebras
\[ \xymatrix{
R^{(1)} \ar[d]  \ar[r] &  R  \ar[d]  \\
R'^{(1)} \ar[r] &  R'
}\]
is a pushout square. 
\end{enumerate}
Then $R'$ has a universal Cartier isomorphism. 
\end{proposition}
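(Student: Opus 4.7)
My plan is to argue that the pushout hypothesis allows all the structure appearing in Definition~\ref{universalcartier} for $R$ to be transferred to $R'$ by extension of scalars along the map $R^{(1)} \to R'^{(1)}$, and that the flatness clauses in $(a_n)$, $(b_n)$ ensure that the relevant short exact sequences remain short exact after base change. The key preliminary observation is that the de Rham differential $d\colon \Omega^{n}_{R} \to \Omega^{n+1}_{R}$ is $R^{(1)}$-linear (since it kills $p$-th powers), so the entire de Rham complex $\Omega^{\ast}_{R}$ may be regarded as a complex of $R^{(1)}$-modules; the Cartier map $\Cart$ is likewise $R^{(1)}$-linear. Moreover, the Frobenius-pushout hypothesis $R' \simeq R \otimes_{R^{(1)}} R'^{(1)}$ is precisely what is needed to identify
\[ \Omega^{n}_{R'} \;\simeq\; \Omega^{n}_{R} \otimes_{R^{(1)}} R'^{(1)} \]
as $R'^{(1)}$-modules (via the universal property of $\Omega^{1}_{R'/R'^{(1)}} = \Omega^{1}_{R'}$, and taking exterior powers), and this identification is compatible with the de Rham differentials.

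First, I would verify the displayed identification of the de Rham complex under base change. The derivation $R \to \Omega^{1}_{R}$ is $R^{(1)}$-linear; extending scalars to $R'^{(1)}$ produces an $R'^{(1)}$-linear derivation $R' = R \otimes_{R^{(1)}} R'^{(1)} \to \Omega^{1}_{R} \otimes_{R^{(1)}} R'^{(1)}$ which satisfies the universal property of $\Omega^{1}_{R'/R'^{(1)}}$; since the differential kills $p$-th powers, $\Omega^{1}_{R'/R'^{(1)}} \simeq \Omega^{1}_{R'}$. Taking exterior powers (and using flatness of $R'^{(1)}$-scalar extension at each step, which we will establish inductively) gives the desired formula for all $n$, compatibly with differentials. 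Second, I would observe that since $B^{n}\Omega^{\ast}_{R}$, $\HH^{n}(\Omega^{\ast}_{R})$, and $Q^{n+1}\Omega^{\ast}_{R}$ are defined as (co)kernels or images of $R^{(1)}$-linear maps between $R^{(1)}$-modules, their formation commutes with the right-exact functor $(-) \otimes_{R^{(1)}} R'^{(1)}$; and since the Cartier map is natural in ring homomorphisms, its base change is identified with the Cartier map for $R'$.

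Third, I would apply the flatness hypotheses. By $(a_n)$ for $R$, the module $Q^{n+1}\Omega^{\ast}_{R}$ is flat over $R^{(1)}$, so tensoring the short exact sequence of $(a_n)$ with $R'^{(1)}$ over $R^{(1)}$ yields a short exact sequence; using the identifications of the previous paragraph, this is precisely the sequence in $(a_n)$ for $R'$. Similarly, by $(b_n)$ for $R$, the module $\Omega^{n+1}_{R}/B^{n+1}\Omega^{\ast}_{R}$ is flat over $R^{(1)}$, so the short exact sequence of $(b_n)$ base-changes to the corresponding sequence for $R'$. The flatness assertions for $R'$ — i.e., that $Q^{n+1}\Omega^{\ast}_{R'}$ and $\Omega^{n+1}_{R'}/B^{n+1}\Omega^{\ast}_{R'}$ are flat over $R'^{(1)}$ — follow from the compatibility of flatness with base change.

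The main point to check carefully is the identification of the base-changed Cartier map with the Cartier map for $R'$; this is a naturality statement that should reduce to comparing the two maps on elements of the form $dy \otimes 1$ for $y \in R$, where both send this to the class of $y^{p-1}\,dy \otimes 1$, and then extending by multiplicativity and $R'^{(1)}$-linearity. I do not foresee substantial obstacles beyond bookkeeping: once the identification $\Omega^{\ast}_{R'} \simeq \Omega^{\ast}_{R} \otimes_{R^{(1)}} R'^{(1)}$ as complexes of $R^{(1)}$-modules is in hand, the rest is formal homological algebra using the flatness hypothesis.
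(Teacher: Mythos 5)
Your argument is correct and is essentially the paper's proof: the Frobenius-pushout hypothesis gives $\Omega^{\ast}_{R'} \simeq \Omega^{\ast}_{R} \otimes_{R^{(1)}} R'^{(1)}$ as commutative differential graded algebras (using $\Omega^1_{R'} = \Omega^1_{R'/R'^{(1)}}$), and the sequences $(a_n)$, $(b_n)$ then base change along $R^{(1)} \to R'^{(1)}$ with exactness preserved precisely because their third terms $Q^{n+1}\Omega^{\ast}_{R}$ and $\Omega^{n+1}_{R}/B^{n+1}\Omega^{\ast}_{R}$ are flat over $R^{(1)}$. Two small remarks: exterior powers commute with arbitrary base change, so the flatness you invoke at that step is unnecessary; and the objects you actually need to commute with base change are the cokernels $\Omega^{n}_{R}/B^{n}\Omega^{\ast}_{R}$ and $Q^{n+1}\Omega^{\ast}_{R}$ (together with Cartier naturality), not $B^{n}\Omega^{\ast}_{R}$ or $\HH^{n}(\Omega^{\ast}_{R})$ themselves, whose formation does not commute with non-flat base change in general.
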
 
\begin{proof} 
The identifications $\Omega^1_R = \Omega^1_{R/R^{(1)}}$ and 
$\Omega^1_{R'} = \Omega^1_{R'/R'^{(1)}}$ yield an isomorphism $ \Omega_{R'}^1 \simeq \Omega_R^1 \otimes_{R^{(1)}} R'^{(1)}$, which extends
to an isomorphism of commutative differential graded algebras
\[ \Omega_{R'}^{\ast} \simeq\Omega^{\ast}_R \otimes_{R^{(1)}} R'^{(1)}  .\]
Since $Q^{n+1} \Omega^{\ast}_{R}$ and $\Omega^{n+1}_{R} / B^{n+1} \Omega^{\ast}_{R}$ are flat over
$R^{(1)}$, the exact sequences $(a_n)$ and $(b_n)$ of Definition~\ref{universalcartier} 
remain exact after extending scalars along the map $R^{(1)} \to R'^{(1)}$, yielding analogous sequences for
the de Rham complex $\Omega_{R'}^{\ast}$.
\end{proof}

\begin{corollary}\label{effis}
Let $k$ be a perfect ring of characteristic $p$ and let $R$ be a smooth $k$-algebra. Then $R$ has a universal
Cartier isomorphism.
\end{corollary}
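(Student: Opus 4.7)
The plan is to deduce this from Example~\ref{polynomialring} combined with Proposition~\ref{transferunivcart}, using the standard fact that a smooth $k$-algebra is Zariski-locally \'etale over a polynomial $k$-algebra. The key preliminary observation is that having a universal Cartier isomorphism is a Zariski-local property on $\Spec(R)$: conditions $(a_n)$ and $(b_n)$ of Definition~\ref{universalcartier} concern exactness of short sequences of $R^{(1)}$-modules together with flatness over $R^{(1)}$, and both exactness and flatness can be checked Zariski-locally on $\Spec(R^{(1)}) = \Spec(R)$. Since $R^{(1)}$ coincides with $R$ as a ring and the formation of $\Omega^{\ast}_R$, $B^{\ast}\Omega^{\ast}_R$, $\mathrm{H}^{\ast}(\Omega^{\ast}_R)$, and $Q^{\ast}\Omega^{\ast}_R$ all commute with Zariski localization in $R$, it will suffice to show that each localization $R[s^{-1}]$ in some Zariski cover of $\Spec(R)$ has a universal Cartier isomorphism.

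Since $R$ is smooth over the perfect ring $k$, standard structure theory for smooth morphisms (e.g.\ \cite[Tag 054L]{stacks-project}) guarantees that $\Spec(R)$ admits a cover by distinguished affine opens $\Spec(R[s_i^{-1}])$ such that each $R[s_i^{-1}]$ admits an \'etale $k$-algebra map from a polynomial ring $P_i = k[x_1, \ldots, x_{n_i}]$. By Example~\ref{polynomialring}, each $P_i$ has a universal Cartier isomorphism. It therefore remains to invoke Proposition~\ref{transferunivcart} for each \'etale map $P_i \to R[s_i^{-1}]$, which in turn requires verifying that the associated Frobenius square
\[ \xymatrix{ P_i^{(1)} \ar[d] \ar[r] & P_i \ar[d] \\ R[s_i^{-1}]^{(1)} \ar[r] & R[s_i^{-1}] } \]
is a pushout of commutative rings.

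This last verification is the key technical input but is already well-known: for any \'etale morphism $A \to B$ of $\F_p$-algebras, the relative Frobenius $B \otimes_{A^{(1)}} A \to B$ is an isomorphism (cf.\ \cite[Tag 0EBS]{stacks-project}), which is precisely the pushout condition required. This same fact was already invoked in the proof of Theorem~\ref{wittetale} for $\F_p$-algebras. Applying Proposition~\ref{transferunivcart} then shows that each $R[s_i^{-1}]$ has a universal Cartier isomorphism, and the locality observation from the first paragraph promotes this to the analogous statement for $R$. The only nontrivial step in the argument is justifying the locality of Definition~\ref{universalcartier}; this reduces to the facts that Zariski localization is exact, preserves flatness, and commutes with the formation of the de Rham complex, its differentials, and its cohomology, all of which are routine.
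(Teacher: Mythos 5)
Your proposal is correct and follows essentially the same route as the paper: reduce by Zariski locality to the case where $R$ admits an \'etale map from a polynomial ring $k[x_1,\ldots,x_n]$, then combine Example~\ref{polynomialring} with Proposition~\ref{transferunivcart}, the pushout hypothesis being exactly the standard fact (Tag 0EBS) that Frobenius squares are pushouts along \'etale maps of $\F_p$-algebras. The paper leaves the locality of Definition~\ref{universalcartier} and the pushout verification implicit; your spelling out of both (localization at $s$ corresponding to localization at $s^{(1)}$ on $R^{(1)}$, exactness and flatness being local) is a correct elaboration rather than a different argument.
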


\begin{proof}
The assertion is Zariski local on $\Spec(R)$, so we may assume without loss of generality that there exists
an \'{e}tale ring homomorphism $k[x_1, \ldots, x_n] \rightarrow R$. In this case, the desired result follows from
Example \ref{polynomialring} and Proposition \ref{transferunivcart}.
\end{proof}


\begin{corollary}\label{effiscor}
Let $k$ be a field of characteristic $p$. Then the polynomial ring $k[t_1, \ldots, t_d]$ has a universal Cartier isomorphism.
\end{corollary}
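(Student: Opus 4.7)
My plan is to reduce to the smooth case of Corollary~\ref{effis} in two stages: first handling finitely generated fields $k/\F_p$, then extending to arbitrary $k$ via a filtered colimit argument.

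For $k$ finitely generated over $\F_p$: since $\F_p$ is perfect, $k$ is separably generated over $\F_p$, so $k = \mathrm{Frac}(A)$ for some smooth $\F_p$-algebra $A \subseteq k$ (one may take a smooth model built from a separating transcendence basis and a separable primitive polynomial, suitably localized). Thus $R = k[t_1, \ldots, t_d]$ is a localization of the smooth $\F_p$-algebra $A[t_1, \ldots, t_d]$, which has a universal Cartier isomorphism by Corollary~\ref{effis}. A direct computation (compare Remark~\ref{integralformsabstract}) shows that localization at a multiplicative subset $S \subseteq A$ satisfies the Frobenius pushout hypothesis of Proposition~\ref{transferunivcart}, since inverting $s$ is equivalent to inverting $s^p$ in a $p$-torsion-free ring. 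Proposition~\ref{transferunivcart} then gives $R$ a universal Cartier isomorphism in this case.

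For general $k$, I would write $k = \varinjlim_\alpha k_\alpha$ as a filtered colimit over its finitely generated subfields and set $R_\alpha := k_\alpha[t_1, \ldots, t_d]$, so $R = \varinjlim_\alpha R_\alpha$. Each $R_\alpha$ has a universal Cartier isomorphism by the previous step, and since the absolute K\"ahler differentials, de Rham cohomology, and the Cartier map all commute with filtered colimits of $\F_p$-algebras, the sequences $(a_n)$ and $(b_n)$ of Definition~\ref{universalcartier} for $R$ are filtered colimits of those for $R_\alpha$, with exactness preserved in the category of abelian groups.

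The hardest step will be verifying that the flatness of $Q^{n+1} \Omega^{\ast}_{R_\alpha}$ and $\Omega^{n+1}_{R_\alpha}/B^{n+1}\Omega^{\ast}_{R_\alpha}$ over $R_\alpha^{(1)}$ descends to flatness of the colimit modules over $R^{(1)} = R$; the transitions $R_\alpha \to R_\beta$ need not satisfy the Frobenius pushout condition, so Proposition~\ref{transferunivcart} does not apply directly at the colimit level. Instead, I would exploit that each $R_\alpha$ is smooth over $\F_p$, so the relevant modules are finitely generated projective over $R_\alpha^{(1)}$, and deduce flatness of the colimit via standard descent properties of flat modules under filtered colimits of rings.
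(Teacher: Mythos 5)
Your proposal follows essentially the same route as the paper's proof, which is just a compressed version of it: reduce to $k$ finitely generated over $\F_p$ by writing $k$ as the union of its finitely generated subfields, and in that case realize $k[t_1,\ldots,t_d]$ as a localization of a smooth $\F_p$-algebra and invoke Corollary \ref{effis} (the paper leaves the localization and colimit steps implicit; you spell them out via Proposition \ref{transferunivcart} and filtered colimits, which is the intended reasoning). Two of your justifications are off, though neither damages the strategy. First, the pushout hypothesis of Proposition \ref{transferunivcart} for a localization $R \to S^{-1}R$ has nothing to do with Remark \ref{integralformsabstract} or with $p$-torsion-freeness (your rings are $\F_p$-algebras, so they are never $p$-torsion-free); the correct reason is simply that the pushout $R \otimes_{R^{(1)}} (S^{-1}R)^{(1)}$ inverts the elements $s^p$ for $s \in S$, and $s$ is a unit if and only if $s^p$ is, in any commutative ring. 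Second, in the colimit step the rings $R_\alpha = k_\alpha[t_1,\ldots,t_d]$ are not smooth (not even of finite type) over $\F_p$, so the finite-projectivity argument you sketch is misstated; but no finiteness is needed: since $\Omega^{\ast}$, coboundaries, cohomology, and the formation of $Q^{n+1}$ all commute with filtered colimits, the sequences $(a_n)$ and $(b_n)$ of Definition \ref{universalcartier} for $R$ are the filtered colimits of those for the $R_\alpha$, and the standard fact that a filtered colimit of flat modules over a filtered system of rings is flat over the colimit ring gives the required flatness over $R^{(1)}$ directly.
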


\begin{proof}
Writing $k$ as a union of finitely generated subfields, we can reduce to the case where $k$ is finitely generated over $\F_p$.
In this case, $k[t_1, \ldots, t_d]$ is a localization of a smooth $\F_p$-algebra, so the desired result follows from Corollary \ref{effis}.
\end{proof}

In good cases, the conditions of Definition \ref{universalcartier} can be tested pointwise.

\begin{definition}\label{universalcartieratx}
Let $R$ be an $\F_p$-algebra and let $k$ denote the residue field of $R$ at a point $x \in \Spec(R)$.
We will say that $R$ {\it has a universal Cartier isomorphism at $x$} if it satisfies the following
pair of conditions, for every integer $n \geq 0$:
\begin{itemize}
\item[$(a_{n,x})$] The sequence of vector spaces
$$0 \rightarrow \Omega^{n}_{R^{(1)} } \otimes_{ R^{(1)} } k^{(1)}
\xrightarrow{ \Cart} (\Omega^{n}_{R} / B^{n} \Omega^{\ast}_R) \otimes_{ R^{(1)}} k^{(1)}
\rightarrow Q^{n+1} \Omega^{\ast}_{R} \otimes_{ R^{(1)}} k^{(1)} \rightarrow 0$$
is exact. 

\item[$(b_{n,x})$] The sequence of vector spaces
$$ 0 \rightarrow (Q^{n+1} \Omega^{\ast}_{R} ) \otimes_{ R^{(1)}} k^{(1)} \xrightarrow{d}
 \Omega^{n+1}_{R} \otimes_{ R^{(1)}} k^{(1)}  \rightarrow (\Omega^{n+1}_{R} / B^{n+1} \Omega^{\ast}_{R}) \otimes_{ R^{(1)} } k^{(1)}  \rightarrow 0$$
is exact.
\end{itemize}
\end{definition}

\begin{proposition}\label{kanster}
Let $R$ be a regular Noetherian $\F_p$-algebra. Then $R$ has a universal Cartier isomorphism if and only if it has a universal Cartier isomorphism at
$x$, for each $x \in \Spec(R)$.
\end{proposition}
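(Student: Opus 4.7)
The forward direction is immediate: if $(a_n)$ and $(b_n)$ hold over $R^{(1)}$ with flat cokernels, then tensoring with the residue field $k^{(1)}$ at any point $x \in \Spec(R)$ preserves the exactness of both sequences, yielding $(a_{n,x})$ and $(b_{n,x})$.

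For the converse, the plan is to run an induction on $n$ that simultaneously establishes $(a_n)$ and $(b_n)$. The main tools are: Corollary \ref{saget} (which guarantees that each $\Omega^{n}_{R}$ is flat over $R$); Kunz's theorem (which guarantees that the Frobenius map $R^{(1)} \to R$ is flat, since $R$ is regular), so that each $\Omega^{n}_{R}$ is also flat as an $R^{(1)}$-module; and Corollary \ref{skez}, which allows us to upgrade pointwise injectivity of maps between flat modules to genuine injectivity with flat cokernel. Note also that $\Omega^{n}_{R^{(1)}}$ is flat over $R^{(1)}$ for the same reason. Thus every module that appears on the left of $(a_n)$ and $(b_n)$ is automatically flat, and the content of the induction is to control the other terms.

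More precisely, assume as an inductive hypothesis that $(a_i)$ and $(b_i)$ hold for all $i < n$ (the base case $i = -1$ is vacuous, and a tiny additional argument handles $i = 0$ using $\Omega^{0}_{R^{(1)}} = R^{(1)}$ and $\Omega^{0}_{R} = R$). From $(b_{n-1})$ we extract both the flatness of $\Omega^{n}_{R}/B^{n}\Omega^{\ast}_{R}$ over $R^{(1)}$ and, combined with the flatness of $\Omega^{n}_{R}$, the flatness of $Q^{n} \Omega^{\ast}_{R}$. Now consider the natural map
\[ \Cart\colon \Omega^{n}_{R^{(1)}} \longrightarrow \Omega^{n}_{R}/B^{n}\Omega^{\ast}_{R}. \]
This is a map of flat $R^{(1)}$-modules, and by hypothesis $(a_{n,x})$ it is injective after tensoring with every residue field $k^{(1)}$ of $R^{(1)}$. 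Corollary \ref{skez} then implies that $\Cart$ is itself injective and that its cokernel, which is $Q^{n+1} \Omega^{\ast}_{R}$, is flat over $R^{(1)}$. This proves $(a_n)$.

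To obtain $(b_n)$, consider next the map
\[ d\colon Q^{n+1} \Omega^{\ast}_{R} \longrightarrow \Omega^{n+1}_{R}. \]
By the previous step, $Q^{n+1} \Omega^{\ast}_{R}$ is flat over $R^{(1)}$, and $\Omega^{n+1}_{R}$ is flat over $R^{(1)}$ as noted above. Hypothesis $(b_{n,x})$ says this map is injective after tensoring with every residue field $k^{(1)}$. Applying Corollary \ref{skez} once more, we conclude that $d$ is injective and that its cokernel, $\Omega^{n+1}_{R}/B^{n+1}\Omega^{\ast}_{R}$, is flat over $R^{(1)}$, which is exactly $(b_n)$. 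This closes the induction. The main obstacle is simply organizing the bookkeeping so that each flatness assertion needed for an application of Corollary \ref{skez} is available at the moment it is used; once the induction is set up in the order $(a_0), (b_0), (a_1), (b_1), \ldots$, everything follows mechanically from Kunz's theorem, Corollary \ref{saget}, and Corollary \ref{skez}.
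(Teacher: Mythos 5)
Your proof is correct and follows essentially the same route as the paper's: an induction alternating between $(a_n)$ and $(b_n)$, using Corollary \ref{saget} for flatness of the $\Omega^n_R$, Kunz's theorem to transfer flatness along the Frobenius $R^{(1)} \to R$, and Corollary \ref{skez} to promote pointwise injectivity to injectivity with flat cokernel. The only difference is that you spell out the bookkeeping (where the flatness of $\Omega^n_R/B^n\Omega^*_R$ and of $Q^{n+1}\Omega^*_R$ enters, and the degree-zero base case) slightly more explicitly than the paper does.
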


\begin{proof}
The ``only if'' direction is immediate. For the converse, suppose that $R$ has a universal Cartier isomorphism at $x$, for each $x \in \Spec(R)$. We will show that $R$ satisfies
conditions $(a_n)$ and $(b_n)$ of Definition \ref{universalcartier}. The proof proceeds by induction on $n$. To prove $(a_n)$, we note that
the inductive hypothesis (together with Corollary \ref{saget}) implies that the Cartier map $\Cart\colon  \Omega^{n}_{R^{(1)}} \rightarrow \Omega^{n}_{R} / B^{n} \Omega^{n}_{R}$
can be regarded as a morphism of flat $R^{(1)}$-modules. Consequently, assertion $(a_n)$ follows from assertions $(a_{n,x})$ for $x \in \Spec(R)$
by virtue of Corollary \ref{skez}. To prove $(b_n)$, we observe that $\Omega^{n+1}_{R}$ is flat as an $R$-module (Corollary \ref{saget}), hence also
as an $R^{(1)}$-module (since the regularity of $R$ guarantees that the
Frobenius morphism $R^{(1)} \to R$ is flat; see \cite{Kunz69}). Using $(a_n)$, we see that the map
$Q^{n+1} \Omega^{\ast}_{R} \rightarrow \Omega^{n+1}_{R}$ is a morphism of flat $R^{(1)}$-modules,
so that assertion $(b_n)$ follows from $(b_{n,x} )$ for $x \in \Spec(R)$ (again by virtue of Corollary \ref{skez}).
\end{proof}

\begin{remark}\label{slip1}
Let $R$ be a commutative $\F_p$-algebra and let $x$ be a point of $\Spec(R)$, corresponding to a prime ideal $\mathfrak{p} \subseteq R$.
Then $R$ has a universal Cartier isomorphism at $x$ if and only if the local ring $R_{\mathfrak{p}}$ has a universal Cartier isomorphism
at $x$ (where we abuse notation by identifying $x$ with the closed point of $\Spec( R_{\mathfrak{p}} )$).
\end{remark}

\begin{remark}\label{slip2}
Let $R$ be a regular Noetherian local $\F_p$-algebra and let $\widehat{R}$ be its completion. Let $x$ denote the closed point of $\Spec( \widehat{R} )$, which
we identify with its image in $\Spec(R)$. Using Corollary \ref{CCComp}, we see
that if $M$ is an $R^{(1)}$-module on which the action of the maximal ideal $\mathfrak{m}^{(1)} \subset R^{(1)}$ is locally nilpotent, 
then $\Omega_R^m \otimes_{R^{(1)}} M \simeq \Omega^m_{\widehat{R}}
\otimes_{\widehat{R}^{(1)}} M$. 
Therefore, $R$ has a universal Cartier isomorphism at $x$ if and only if $\widehat{R}$ has a universal Cartier isomorphism
at $x$.
\end{remark}

Theorem \ref{avoidpop} is a consequence of the following slightly more refined statement:

\begin{theorem}\label{sep}
Let $R$ be a regular Noetherian $\F_p$-algebra. Then $R$ has a universal Cartier isomorphism.
\end{theorem}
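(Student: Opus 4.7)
The plan is to reduce the statement, via the machinery set up in Propositions \ref{kanster}, Remarks \ref{slip1} and \ref{slip2}, and Corollary \ref{effiscor}, to an application of the Cohen structure theorem. Concretely, I would proceed in the following steps.

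First, by Proposition \ref{kanster} it suffices to show that a regular Noetherian $\F_p$-algebra $R$ has a universal Cartier isomorphism at every point $x \in \Spec(R)$, corresponding to some prime $\mathfrak{p} \subseteq R$. By Remark \ref{slip1}, I may replace $R$ by the localization $R_{\mathfrak{p}}$ and assume $R$ is a regular Noetherian local ring with $x$ its closed point. Then by Remark \ref{slip2}, I may further replace $R$ by its $\mathfrak{m}$-adic completion $\widehat{R}$, reducing to the case of a complete regular Noetherian local $\F_p$-algebra.

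Second, I apply the Cohen structure theorem: since $\widehat{R}$ is a complete regular Noetherian local ring containing the field $\F_p$, and its residue field $k$ is of characteristic $p$, there is an isomorphism $\widehat{R} \simeq k[[t_1,\ldots,t_d]]$ where $d = \dim \widehat{R}$. Thus the problem is reduced to showing that the formal power series ring $k[[t_1,\ldots,t_d]]$ has a universal Cartier isomorphism at its closed point $x_0$.

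Third, I relate this back to a polynomial algebra. By Corollary \ref{effiscor}, the polynomial ring $A := k[t_1,\ldots,t_d]$ has a universal Cartier isomorphism; in particular, it has one at the maximal ideal $\mathfrak{n} = (t_1,\ldots,t_d)$. Applying Remark \ref{slip1} to $A$ with $\mathfrak{p} = \mathfrak{n}$, I deduce that $A_{\mathfrak{n}}$ has a universal Cartier isomorphism at its closed point. Since $A_{\mathfrak{n}}$ is a regular Noetherian local $\F_p$-algebra with completion $k[[t_1,\ldots,t_d]]$, Remark \ref{slip2} then gives that $k[[t_1,\ldots,t_d]]$ has a universal Cartier isomorphism at $x_0$, completing the argument.

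The main obstacle I anticipate is ensuring that the reductions supplied by Remarks \ref{slip1} and \ref{slip2} are legitimately applicable at each stage — in particular, the use of Remark \ref{slip2} requires the completion to behave correctly with respect to $\Omega^n$ and $B^n$ pointwise at the closed point, which is exactly what Corollary \ref{CCComp} is designed to provide (via the flatness of $\Omega^n_R$ from Corollary \ref{saget} combined with the residue-field calculation). Everything else is essentially bookkeeping: the Cohen structure theorem provides the crucial structural input that equicharacteristic complete regular local rings are formal power series rings, allowing us to transport the universal Cartier isomorphism from polynomial algebras (where it is verified by explicit calculation in Example \ref{polynomialring}) to the geometric completions that actually appear.
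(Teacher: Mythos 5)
Your proposal is correct and follows essentially the same route as the paper: reduce via Proposition \ref{kanster} and Remarks \ref{slip1}, \ref{slip2} to the complete local case, apply the Cohen structure theorem to identify the completion with $k\llbracket t_1,\ldots,t_d\rrbracket$, and then transfer the universal Cartier isomorphism from the polynomial ring $k[t_1,\ldots,t_d]$ (Corollary \ref{effiscor}) back through localization and completion using the same two remarks. The only cosmetic difference is that you spell out the passage through $A_{\mathfrak{n}}$ explicitly, which the paper compresses into one sentence.
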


\begin{proof}
By virtue of Proposition \ref{kanster}, it will suffice to show that $R$ has a universal Cartier isomorphism at
$x$, for each point $x \in \Spec(R)$. Using Remark \ref{slip1}, we can reduce to the case where $R$ is local
and $x$ is the closed point of $\Spec(R)$. By virtue of Remark \ref{slip2}, we can replace $R$ by its completion
$\widehat{R}$. Let $k$ denote the residue field of $R$, so that the Cohen structure theorem supplies an isomorphism
$R \simeq k \llbracket t_1,..,t_d \rrbracket$. Set $S = k[t_1, \ldots, t_d]$, and let $y \in \Spec(S)$ be
the point corresponding to the maximal ideal $(t_1, \ldots, t_d) \subseteq S$. Using Remarks \ref{slip1} and \ref{slip2}
again, we see that $R$ has a universal Cartier isomorphism at $x$ if and only if $S$ has a universal Cartier isomorphism at $y$.
Using Proposition \ref{kanster}, we are reduced to showing that $S$ has a universal Cartier isomorphism, which follows
from Corollary \ref{effiscor}.
\end{proof}

We can now reprove Theorem~\ref{theo73}. 

\begin{corollary} 
\label{compmain}
Let $R$ be a regular Noetherian $\F_p$-algebra. Then the map $\nu \cl
\Omega_R^{\ast} \to \W_1 \Omega_R^\ast$ is an isomorphism. 
\end{corollary}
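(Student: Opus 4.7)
The plan is to deduce Corollary \ref{compmain} directly from the criterion furnished by Proposition \ref{ddRHodgeComplete}, whose two hypotheses have now been verified for arbitrary regular Noetherian $\F_p$-algebras earlier in this subsection.

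More precisely, I would first recall that Proposition \ref{ddRHodgeComplete} asserts the following: if $R$ is an $\F_p$-algebra for which the Cartier map $\Cart\colon \Omega^{\ast}_R \to \HH^{\ast}(\Omega^{\ast}_R)$ is an isomorphism and the cotangent complex $L_{R/\F_p}$ is flat (i.e.\ quasi-isomorphic to a flat module concentrated in degree $0$), then the comparison map $\nu\colon \Omega^{\ast}_R \to \W_1\Omega^{\ast}_R$ is an isomorphism of differential graded algebras. Therefore, it suffices to check these two hypotheses when $R$ is a regular Noetherian $\F_p$-algebra.

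For the flatness of $L_{R/\F_p}$, I would simply invoke Proposition \ref{RegCCFlat}, whose proof uses the Noetherian flatness criterion (Lemma \ref{FlatCrit}) to reduce to showing that $L_{R/\F_p} \otimes_R^L k$ lies in the heart of $D(k)$ for each residue field $k$ of $R$; this in turn follows from the transitivity triangle for $\F_p \to R \to k$, together with the facts that $L_{k/\F_p}$ is concentrated in degree $0$ (by generic smoothness) and that $L_{k/R}[-1] \simeq \mathfrak{m}/\mathfrak{m}^2$ is also concentrated in degree $0$ (since $\ker(R\to k)$ is generated by a regular sequence when $R$ is regular local). For the Cartier isomorphism, I would invoke Theorem \ref{avoidpop} (equivalently Theorem \ref{sep}), which asserts the stronger statement that $R$ even admits a universal Cartier isomorphism in the sense of Definition \ref{universalcartier}.

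Combining these two inputs with Proposition \ref{ddRHodgeComplete} immediately yields that $\nu$ is an isomorphism. The main conceptual step is really the preceding one, namely the verification of Theorem \ref{sep}: one reduces to the complete local case via Remarks \ref{slip1} and \ref{slip2}, applies the Cohen structure theorem to write $\widehat{R} \simeq k\llbracket t_1,\dots,t_d\rrbracket$, and then transfers the universal Cartier isomorphism from the polynomial ring $k[t_1,\dots,t_d]$ (Corollary \ref{effiscor}) across completion. For the present corollary, however, no additional work is needed — it is a clean corollary of the pieces already in place.
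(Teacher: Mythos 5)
Your proof is correct and follows exactly the paper's own route: the paper proves Corollary \ref{compmain} by combining Proposition \ref{ddRHodgeComplete}, Proposition \ref{RegCCFlat}, and Theorem \ref{avoidpop}, which is precisely the argument you give (your recap of the proofs of those ingredients, while accurate, is not needed for the corollary itself).
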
 

\begin{proof} 
Combine Proposition~\ref{ddRHodgeComplete}, Proposition~\ref{RegCCFlat}, and
Theorem~\ref{avoidpop}. 
\end{proof} 

We conclude this section by applying Proposition~\ref{ddRHodgeComplete} to another class of $\F_p$-algebras.

\begin{definition}\label{definition:p-basis}
Let $R$ be a commutative $\F_p$-algebra. We say that a collection of elements $\{ x_i \}_{i \in I}$ of $R$
is a {\it $p$-basis} if the products $\prod_{i \in I} x_i^{d_i}$ freely generate $R$ as a module over $R^{(1)}$,
where $\{ d_i \}_{i \in I}$ ranges over all functions $I \rightarrow \{0, 1, \ldots, p-1\}$ which vanish on all but finitely many elements of $I$.
\end{definition} 

\begin{theorem}\label{theorem:p-basis}
Let $R$ be an $\F_p$-algebra which admits a $p$-basis. Then the map $\nu \cl 
\Omega_R^{\ast} \to \W_1 \Omega_R^\ast$ is an isomorphism. 
\end{theorem}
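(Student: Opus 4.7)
The plan is to verify the two hypotheses of Proposition~\ref{ddRHodgeComplete}: that the cotangent complex $L_{R/\F_p}$ is flat, and that the Cartier map $\Cart\colon \Omega_R^{\ast} \to \HH^{\ast}(\Omega_R^{\ast})$ is an isomorphism. The result will then follow immediately.

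To obtain the Cartier isomorphism, I would transfer it from a polynomial ring via Proposition~\ref{transferunivcart}. Let $S = \F_p[\{t_i\}_{i \in I}]$ be the polynomial ring, and consider the $\F_p$-algebra map $S \to R$ sending $t_i \mapsto x_i$. I claim the Frobenius square
\[ \xymatrix{ S^{(1)} \ar[r] \ar[d] & S \ar[d] \\ R^{(1)} \ar[r] & R } \]
is cocartesian. Indeed, both horizontal Frobenius maps exhibit their targets as free modules on the same monomial basis $\{x^{\underline{d}}\}_{0 \leq d_j < p}$ (for $R$ this is the $p$-basis hypothesis; for $S$ it is a classical computation), so the canonical map $S \otimes_{S^{(1)}} R^{(1)} \to R$ is an isomorphism. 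The polynomial ring $S$ has a universal Cartier isomorphism: when $I$ is finite this is Example~\ref{polynomialring}, and for arbitrary $I$ one writes $S$ as a filtered colimit of finite-variable polynomial rings, noting that the short exact sequences and flatness conditions of Definition~\ref{universalcartier} are preserved by filtered colimits. Proposition~\ref{transferunivcart} then transfers the universal Cartier isomorphism to $R$; in particular, $\Cart$ is an isomorphism.

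To check the flatness of $L_{R/\F_p}$, I would invoke the classical fact (due essentially to Cartier; see e.g.\ Matsumura, \textsl{Commutative Ring Theory}, \S 28) that an $\F_p$-algebra admitting a $p$-basis is formally smooth over $\F_p$ for the discrete topology. Consequently $L_{R/\F_p}$ is concentrated in degree $0$ and identifies with $\Omega^1_{R/\F_p}$, which is free on $\{dx_i\}$ (any derivation from $R$ vanishes on $R^{(1)}$ and so is determined by its values on the $p$-basis). Alternatively, one can give a direct Koszul argument: the $p$-basis gives a presentation $R \simeq R^{(1)}[Y_i]/(Y_i^p - s_i)$, the relations form a Koszul-regular sequence by freeness of $R$ over $R^{(1)}$, and the vanishing $d(Y_i^p - s_i) = 0$ in $\Omega^1_{R^{(1)}[Y_i]/R^{(1)}}$ forces the degree $-1$ contribution $J/J^2[1]$ in $L_{R/R^{(1)}}$ to cancel against $(R \otimes^L_{R^{(1)}} L_{R^{(1)}/\F_p})[1]$ in the transitivity triangle, leaving $L_{R/\F_p} \simeq \Omega^1_{R/\F_p}$ in degree $0$.

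With both hypotheses verified, Proposition~\ref{ddRHodgeComplete} delivers the desired isomorphism $\nu\colon \Omega_R^{\ast} \to \W_1\Omega_R^{\ast}$. The main obstacle is the cotangent complex computation, which is recursive in the direct approach because $L_{R^{(1)}/\F_p}$ is itself a Frobenius twist of $L_{R/\F_p}$; the cleanest resolution is to quote formal smoothness, though the Koszul cancellation argument is self-contained and avoids any reducedness hypothesis on $R$.
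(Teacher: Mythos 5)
Your reduction to Proposition~\ref{ddRHodgeComplete} and your treatment of the Cartier hypothesis are exactly the paper's argument: the $p$-basis condition says precisely that the Frobenius square for $\F_p[\{t_i\}_{i\in I}]\to R$ is a pushout, so Proposition~\ref{transferunivcart} transfers the universal Cartier isomorphism from the polynomial ring (Example~\ref{polynomialring} plus filtered colimits). That half is correct and matches the paper.

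The gap is in the flatness of $L_{R/\F_p}$, which the paper simply quotes from de Jong's Lemma~1.1.2. Your primary justification does not deliver what Proposition~\ref{ddRHodgeComplete} needs. It is true and classical that a ring with a $p$-basis is formally smooth over $\F_p$; but formal smoothness is equivalent to the vanishing of $\Ext^1_R(L_{R/\F_p},M)$ for all $R$-modules $M$, which yields only that $\Omega^1_R$ is projective and that $\mathrm{H}_1(L_{R/\F_p})=0$. It gives no control over $\mathrm{H}_n(L_{R/\F_p})$ for $n\geq 2$, and the implication ``formally smooth $\Rightarrow$ cotangent complex concentrated in degree $0$'' is not a formal consequence of formal smoothness; it is essentially the content of de Jong's lemma in this situation and cannot simply be absorbed into the word ``consequently.'' Since the proof of Proposition~\ref{ddRHodgeComplete} uses flatness of the whole complex $L_{R/\F_p}$ (to identify the derived exterior powers $\bigwedge^n L_{R/\F_p}$ with $\Omega^n_R$ in the conjugate filtration), this step is genuinely missing.

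Your backup Koszul argument has the same hole, as you half-suspect. The identity $d(Y_i^p-s_i)=0$ in $\Omega^1_{R^{(1)}[Y_i]/R^{(1)}}$ shows only that $L_{R/R^{(1)}}\simeq \Omega^1_{R/R^{(1)}}\oplus (J/J^2)[1]$ with free summands; it does not produce the asserted cancellation in the transitivity triangle for $\F_p\to R^{(1)}\to R$, which still relates $L_{R/\F_p}$ to its own Frobenius twist, so nothing cancels for free. The missing ingredient that breaks the recursion is the standard fact that Frobenius induces the \emph{zero} map $L_{R^{(1)}/\F_p}\otimes^L_{R^{(1)}}R\to L_{R/\F_p}$ (on a simplicial polynomial resolution it is $dx\mapsto d(x^p)=0$). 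Granting this, the triangle splits the long exact sequence into short exact sequences $0\to \mathrm{H}_n(L_{R/\F_p})\to \mathrm{H}_n(L_{R/R^{(1)}})\to \mathrm{H}_{n-1}(L_{R^{(1)}/\F_p}\otimes^L_{R^{(1)}}R)\to 0$; for $n\geq 2$ the middle term vanishes, killing the higher homology, and for $n=1$ one computes that the connecting map sends the class of $Y_i^p-s_i$ to $\pm ds_i\otimes 1$, a basis of $\Omega^1_{R^{(1)}/\F_p}\otimes_{R^{(1)}}R$, so it is injective and $\mathrm{H}_1(L_{R/\F_p})=0$. With that addition your Koszul argument closes; otherwise the efficient course is to cite de Jong's lemma, as the paper does.
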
 

\begin{proof} 
Let $\{ x_i \}_{i \in I}$ be a collection of elements of $R$, classified by a ring homomorphism
$f \colon \F_p[ \{ X_i \}_{i \in I} ] \rightarrow R$. Writing $A = \F_p[ \{ X_i \}_{i \in I} ]$ as a filtered colimit of finitely generated polynomial rings, we
see that $A$ has a universal Cartier isomorphism (Example \ref{polynomialring}). If $\{ x_i \}_{i \in I}$ is a $p$-basis for $R$, then
the homomorphism $f$ satisfies the hypotheses of Proposition \ref{transferunivcart}. It follows that $R$ has a universal Cartier isomorphism.
Moreover, the cotangent complex $L_{R/\F_p}$ is a flat object of $D(R)$ by \cite[Lemma 1.1.2]{deJong}. Applying the criterion of Proposition~\ref{ddRHodgeComplete}, we deduce
that the map $\nu \colon \Omega^{\ast}_{R} \to \W_1 \Omega^{\ast}_{R}$ is an isomorphism.
\end{proof}

\newpage
\section{Comparison with Crystalline Cohomology}
\label{dRtocryscomp:sec}
\subsection{Introduction}
Let $k$ be a perfect field of characteristic $p >0$ and let $X$ be a smooth $k$-scheme. We consider the following two invariants of $X$: 

\begin{enumerate}
\item The de Rham--Witt cohomology $\mathrm{H}^{\ast}(X, W \Omega_X^{\ast})$, defined as the cohomology of $X$ with coefficients in the de Rham--Witt complex $W \Omega_{X}^{\ast}$.
By virtue of Theorem~\ref{maintheoC}, we can replace $W \Omega_{X}^{\ast}$ with our saturated de Rham--Witt complex $\WOmega^{\ast}_{X}$.

\item The crystalline cohomology $\mathrm{H}^{\ast}_{\crys}(X)$, defined as the cohomology of the structure sheaf on the crystalline site of $X$ (see \cite{Berthelot74, BO78}) relative to 
the divided power ring $(\Z_{p}, (p) )$.
\end{enumerate}

These cohomology rings are canonically isomorphic, by virtue of the following result of Illusie (\cite[Th{\'e}or{\`e}me II.1.4]{illusie}):

\begin{theorem}[The de Rham--Witt to crystalline comparison]
\label{dRWtocrys}
Let $k$ be a perfect field and let $X$ be a smooth $k$-scheme. Then there is a canonical isomorphism of cohomology rings
$\mathrm{H}^{\ast}_{\crys}(X) \simeq \mathrm{H}^{\ast}(X, W\Omega_{X}^{\ast})$, depending functorially on $X$.
\end{theorem} 

Our goal in this section is to give a new proof of Theorem~\ref{dRWtocrys}. Our argument will not make use of any details of the construction of the (saturated) de Rham-Witt complex $\WOmega^{\ast}_{X}$.
Instead, we will deduce Theorem \ref{dRWtocrys} formally from the fact that there is a comparison map $\WOmega^{\ast}_{X} \rightarrow \Omega^{\ast}_{X}$ (depending functorially on $X$)
which induces a quasi-isomorphism $\WOmega^{\ast}_{X} / p \WOmega^{\ast}_{X} \rightarrow \Omega^{\ast}_{X}$ (see Remark \ref{plux}). 

To carry out the details, it will be convenient to restrict our attention to affine schemes. Let $\CAlg_{\F_p}^{\mathrm{reg}}$ denote the category of regular Noetherian $\F_p$-algebras
and let $\Ab$ denote the category of abelian groups. We regard the construction $R \mapsto \WOmega^{\ast}_{R}$ as a chain complex taking values in the abelian category of functors
$\Fun( \CAlg_{\F_p}^{\mathrm{reg}}, \Ab)$. We denote this chain complex of functors by $\WOmega^{\ast}_{(-)}$, and we write $\WOmega_{(-)}$ for its image in the derived category
$D( \Fun( \CAlg_{\F_p}^{\mathrm{reg}}, \Ab) )$. Note that $D( \Fun( \CAlg_{\F_p}^{\mathrm{reg}}, \Ab) )$ has a symmetric monoidal structure (given by the left derived tensor product $\otimes^{L}$),
and that $\WOmega_{(-)}$ can be regarded as a commutative algebra object of $D(
\Fun( \CAlg_{\F_p}^{\mathrm{reg}}, \Ab) )$ (inherited from the multiplication
on the saturated de Rham--Witt complex
$\WOmega_{R}^{\ast}$ for $R \in \CAlg_{\F_p}^{\mathrm{reg} }$). Similarly, the construction of crystalline cohomology determines a commutative algebra object
$R \Gamma_{\crys}( - )$ of the derived category $D( \Fun(
\CAlg_{\F_p}^{\mathrm{reg}}, \Ab) )$. Both of these commutative algebra objects have the property that, after reduction modulo $p$,
they are equivalent to the de Rham complex functor $R \mapsto \Omega_{R}$, regarded as a commutative algebra object of the derived category $D( \Fun( \CAlg_{\F_p}^{\mathrm{reg}}, \Vect_{\F_p} ) )$.
To show that these functors are equivalent, it will suffice to prove the following:

\begin{theorem}\label{theorem:uniqueness-of-crys}
Let $A(-)$ be a commutative algebra object of the derived category $D( \Fun(
\CAlg_{\F_p}^{\mathrm{reg}}, \Ab))$ and let $u_0 \colon \Omega_{(-)} \xrightarrow{\sim} \F_p \otimes^{L} A(-)$
be an isomorphism of commutative algebra objects of the derived category $D( \Fun( \CAlg_{\F_p}^{\mathrm{reg}}, \Vect_{\F_p} ) )$. If $A(-)$ is $p$-complete,
then $u_0$ can be lifted uniquely to an isomorphism $u \colon R \Gamma_{\crys}(-)
\xrightarrow{\sim} A(-)$ of commutative algebra objects of $D( \Fun(
\CAlg_{\F_p}^{\mathrm{reg}}, \Ab))$.
\end{theorem}

\begin{remark}
The derived category $D( \Fun( \CAlg_{\F_p}^{\mathrm{reg}}, \Ab) )$ can be regarded as the homotopy category of the $\infty$-category
$$\calD( \Fun( \CAlg_{\F_p}^{\mathrm{reg}}, \Ab) ) \simeq \Fun( \CAlg_{\F_p}^{\mathrm{reg}}, \calD( \Z) )$$ of functors from the ordinary category
$\CAlg_{\F_p}^{\mathrm{reg}}$ to the derived $\infty$-category $\calD( \Z )$ of abelian groups. In what follows, we will often invoke this identification implicitly.
\end{remark}

\begin{proof}[Proof of Theorem \ref{dRWtocrys} from Theorem \ref{theorem:uniqueness-of-crys}]
For every regular Noetherian $\F_p$-algebra $R$, Remark \ref{plux} supplies a quasi-isomorphism of chain complexes
$$\WOmega^{\ast}_{R} / p \WOmega^{\ast}_{R} \rightarrow \Omega^{\ast}_{R}.$$ These quasi-isomorphisms depend functorially on $R$, and
therefore the inverse maps define an isomorphism $u_0 \colon \Omega_{(-)} \rightarrow \F_p \otimes^{L} \WOmega_{(-)}$ in the derived category
$D( \Fun( \CAlg_{\F_p}^{\mathrm{reg}}, \Vect_{\F_p} ) )$. By virtue of Theorem \ref{theorem:uniqueness-of-crys}, the isomorphism $u_0$
lifts uniquely to an isomorphism $u \colon R \Gamma_{\crys}(-) \simeq \WOmega_{(-)}$ in the derived category
$D( \Fun( \CAlg_{\F_p}^{\mathrm{reg}}, \Ab) )$.

Let us identify $u$ with an isomorphism in the $\infty$-category $\Fun( \CAlg_{\F_p}^{\mathrm{reg}}, \calD( \Z) )$. For any regular $\F_p$-scheme,
we have canonical isomorphisms
\begin{eqnarray*}
R \Gamma_{\crys}(X) & \simeq & \varprojlim_{U} R \Gamma_{\crys}(U) \\
& \simeq & \varprojlim_{U} \WOmega_{\mathcal{O}_{X}(U)} \\
& \simeq & R \Gamma(X, \WOmega_{X}^{\ast} ),
\end{eqnarray*}
where the limits are indexed by the category of affine open subsets $U \subseteq X$ and formed in the $\infty$-category $\calD( \Z )$.
Passing to cohomology rings, we obtain the desired isomorphism $\mathrm{H}^{\ast}_{\crys}(X) \simeq \mathrm{H}^{\ast}(X, W\Omega_{X}^{\ast})$.
\end{proof}

We will carry out the proof of Theorem \ref{theorem:uniqueness-of-crys} in three steps. The first is to produce the natural transformation
$u \colon R \Gamma_{\crys}(-) \rightarrow A(-)$ which appears in the statement of Theorem \ref{theorem:uniqueness-of-crys}. 
Our construction is based on the syntomic ideas of \cite{FontaineMessing}, \cite{FontaineJannsen} and \cite{BMS2}, and will be given in
in \S \ref{subsection:make-comparison-map}. In \S \ref{subsection:check-comparison-iso}, we prove that $u$ is an isomorphism by showing that it is compatible
with the given isomorphism $u_0 \colon \Omega_{(-)} \xrightarrow{\sim} \F_p \otimes^{L} A(-)$, by virtue of a suitable rigidity property of the de Rham functor
$R \mapsto \Omega_{R}$ (Proposition \ref{proposition:check-comparison-iso}). In \S \ref{subsection:unique-comparison}, we show that the isomorphism $u$ is uniquely determined. Note that, for the purpose of proving Theorem~\ref{dRWtocrys}, this last step is not necessary.

\subsection{Construction of the Comparison Map}\label{subsection:make-comparison-map}

Throughout this section, we fix a $p$-complete commutative algebra object $A(-)$ of $D( \Fun( \CAlg_{\F_p}^{\mathrm{reg}}, \Ab) )$
and an isomorphism of commutative algebras $u_0 \colon \Omega_{(-)} \xrightarrow{\sim} \F_p \otimes^{L} A(-)$ in the derived category
$D( \Fun( \CAlg_{\F_p}^{\mathrm{reg}}, \Vect_{\F_p}) )$. We will identify $A(-)$ with a functor of $\infty$-categories $\CAlg_{\F_p}^{\mathrm{reg}} \rightarrow \calD(\Z)$.
Our assumption that $A(-)$ is $p$-complete guarantees that this functor takes values in the full subcategory $\widehat{\calD( \Z_p )}$ spanned by the
derived $p$-complete objects of $\calD(\Z)$ (Definition \ref{definition.dhp}).

For every regular Noetherian $\F_p$-algebra $R$, let us write $A(R)$ for the value of $A(-)$ on $R$, and $\epsilon_{R}$ for the composite map
$$ A(R) \rightarrow \F_p \otimes^{L} A(R) \xrightarrow{u_0^{-1}} \Omega_{R} \rightarrow R;$$
we can then regard $R \mapsto \epsilon_{R}$ as a natural transformation $\epsilon$ from the functor $A(-)$ to the forgetful functor $\CAlg_{\F_p}^{\mathrm{reg}} \rightarrow \Ab \hookrightarrow \calD(\Z)$.
Similarly, for every regular Noetherian $\F_p$-algebra $R$, let us write $\epsilon^{\crys}_{R}$ for the composite map
$$ R \Gamma_{\crys}( \Spec(R) ) \rightarrow \F_p \otimes^{L} \RGamma_{\crys}(\Spec(R)) \simeq \Omega_{R} \rightarrow R.$$
More concretely, $\epsilon^{\crys}_{R} \colon R \Gamma_{\crys}( \Spec(R) ) \rightarrow R$ is the map given by evaluation on $\Spec(R)$ (regarded as an object of its own crystalline site).
Our goal is to prove the following result:

\begin{proposition}\label{proposition:make-comparison-map}
There exists a natural transformation $u \colon R\Gamma_{\crys}(-) \rightarrow A(-)$ which is compatible with the commutative
algebra structures on $R \Gamma_{\crys}(-)$ and $A(-)$ (as objects of the derived category $D( \Fun( \CAlg_{\F_p}^{\mathrm{reg}}, \Ab) )$)
and for which the diagram
$$ \xymatrix{ R \Gamma_{\crys}(-) \ar[rr]^-{u} \ar[dr]^-{ \epsilon^{\crys} } & & A(-) \ar[dl]_-{ \epsilon} \\
& \id & }$$
commutes.
\end{proposition}

\begin{remark}
The statement of Proposition \ref{proposition:make-comparison-map} does not directly reference the isomorphism
$u_0 \colon \Omega_{(-)} \simeq \F_p \otimes^{L} A(-)$; it mentions only the natural transformation $\epsilon$ obtained from $u_0$.
Consequently, it is not {\it a priori} clear that the natural transformation $u$ of Proposition \ref{proposition:make-comparison-map} is compatible with $u_0$:
we will verify this in \S \ref{subsection:check-comparison-iso}.
\end{remark}

Our first step is to extend the functor $A(-)$ to a larger category of rings.

\begin{definition}[\cite{BMS2}] 
\label{qsyndef}
Let $R$ be a commutative $\F_p$-algebra. We say that $R$ is {\it quasisyntomic} if
the relative cotangent complex $L_{R/\F_p} \in D(R)$ has $\mathrm{Tor}$-amplitude in $[-1, 0]$. 
Let $\CAlg_{\F_p}^{\mathrm{qs}}$ denote the full subcategory of $\CAlg_{\F_p}$ spanned by the
quasisyntomic $\F_p$-algebras. Note that $\CAlg_{\F_p}^{\mathrm{qs}}$ contains the category
$\CAlg_{\F_p}^{\mathrm{reg}}$ of regular Noetherian $\F_p$-algebras (Proposition~\ref{RegCCFlat}).
\end{definition} 

\begin{lemma}\label{lemma:polynomial-extension}
The functor $A \colon \CAlg_{\F_p}^{\mathrm{reg}} \rightarrow \widehat{\calD(\Z_p)}$ is a left Kan extension of
its restriction to the full subcategory $\CAlg^{\mathrm{poly}}_{\mathbb{F}_p}
\subset \CAlg_{\F_p}^{\mathrm{reg}}$ spanned by the finitely generated polynomial algebras over $\F_p$.
\end{lemma}

\begin{proof}
Since the extension-of-scalars functor $\widehat{ \calD(\Z_p)} \rightarrow \calD( \F_p )$ is conservative
and preserves small colimits, it will suffice to prove that the functor $R \mapsto \F_p \otimes^{L} A(R) \simeq \Omega_{R}$
is a left Kan extension of its restriction to $\CAlg^{\mathrm{poly}}_{\mathbb{F}_p}$. In other words, it suffices to show that if
$R$ is a regular Noetherian $\F_p$-algebra, then the comparison map $L \Omega_{R} \rightarrow \Omega_{R}$
is an equivalence. This follows from the criterion of Proposition~\ref{ddRHodgeComplete} (together with Proposition~\ref{RegCCFlat} and
Theorem~\ref{avoidpop}).
\end{proof}

\begin{lemma}\label{lemma:make-lke}
The functor $A \colon \CAlg_{\F_p}^{\mathrm{reg}} \rightarrow \widehat{ \calD( \Z_p ) }$ admits a left Kan extension
$\overline{A} \colon \CAlg_{\F_p}^{\mathrm{qs}} \rightarrow \widehat{ \calD( \Z_p ) }$.
\end{lemma}

\begin{proof}
By virtue of Lemma \ref{lemma:polynomial-extension}, it will suffice to show that the restriction $A_0 = A|_{ \CAlg^{\mathrm{poly}}_{\mathbb{F}_p} }$
admits a left Kan extension to the category $\CAlg_{\F_p}^{\mathrm{qs}}$. This is clear, since the category $\CAlg^{\mathrm{poly}}_{\mathbb{F}_p}$ is small and the $\infty$-category $\widehat{ \calD(\Z_p) }$ admits small colimits.
\end{proof}

\begin{remark}\label{remark:describe-lke}
The natural isomorphism of functors $u_0 \colon \Omega_{(-)} \simeq \F_p \otimes^{L} A(-)$ admits an essentially unique extension to a natural isomorphism
$\overline{u}_0 \colon L \Omega_{(-)} \rightarrow \F_p \otimes^{L} \overline{A}(-)$ of functors from $\CAlg^{\mathrm{qs}}_{\F_p}$ to $\widehat{ \calD(\Z_p) }$. Here
$L \Omega_{(-)}$ denotes the derived de Rham complex functor of Variant \ref{deriveddeRham} (restricted to quasisyntomic $\F_p$-algebras).

In particular, for every quasisyntomic $\F_p$-algebra $R$, the reduction $\F_p \otimes^{L} \overline{A}(R)$ can be equipped with the conjugate filtration of
Remark \ref{ddr}. This is an exhaustive increasing filtration, whose associated graded is given on objects by the formula
$$ \gr^{n}( \F_p \otimes^{L} \overline{A}(R) ) \simeq \gr^{n} \left( \filc_{\ast}L \Omega_{R} \right) \simeq (\bigwedge^{n} L_{ R^{(1)} / \F_p})[-n].$$
\end{remark}

In the situation of Lemma \ref{lemma:make-lke} and Remark \ref{remark:describe-lke}, there is no need to restrict our attention to quasisyntomic $\F_p$-algebras: one can also contemplate
the left Kan extension of the functor $A(-)$ to the category of {\em all} $\F_p$-algebras (or even simplicial $\F_p$-algebras). However, the extension to quasisyntomic $\F_p$-algebras is particularly convenient
because of the following:

\begin{lemma}\label{lemma:cohomologically-nonnegative}
Let $R$ be a quasisyntomic $\F_p$-algebra. Then the cohomology groups $\mathrm{H}^{n}( \overline{A}(R) )$ vanish for $n < 0$. Moreover, the cohomology group $\mathrm{H}^{0}( \overline{A}(R) )$ is
$p$-torsion-free.
\end{lemma}

\begin{proof}
It will suffice to show that the cohomology of the reduction $\F_p \otimes^{L} \overline{A}(R) \simeq L \Omega_{R}$ is concentrated in nonnegative cohomological degrees. This follows
by inspecting the conjugate filtration of Remark \ref{remark:describe-lke} (our assumption that $R$ is quasisyntomic guarantees that each of the exterior powers
$(\bigwedge^{n} L_{ R^{(1)} / \F_p})[-n]$ is concentrated in nonnegative cohomological degrees).
\end{proof}

We now restrict to a subcategory of $\CAlg_{\F_p}^{\mathrm{qs}}$ where the functor $\overline{A}(-)$ is even more well-behaved.

\begin{definition}
Let $R$ be a commutative $\F_p$-algebra. We will say that $R$ is {\it quasiregular semiperfect} if it is quasisyntomic
and the Frobenius map $\varphi \colon R \rightarrow R$ is surjective. We let $\CAlg_{\F_p}^{\mathrm{qrsp}}$ denote the full subcategory of
$\CAlg_{\F_p}^{\mathrm{qs}}$ spanned by the quasiregular semiperfect $\F_p$-algebras.
\end{definition}

\begin{remark}\label{rasp}
Let $R$ be a quasiregular semiperfect $\F_p$-algebra. Then the shifted cotangent complex $L_{R/\F_p}[-1] \in D(R)$ is a flat $R$-module,
concentrated in degree $0$. It follows that each of the shifted exterior powers $(\bigwedge^{n} L_{ R / \F_p})[-n]$ is also a flat $R$-module
(which can be identified with the divided power module $\Gamma^{n}_{R}( L_{R/\F_p}[-1] )$: see \cite[Prop. 4.3.2.1]{Ill1}).
Using the conjugate filtration of Remark \ref{remark:describe-lke}, we conclude that $\F_p \otimes^{L} \overline{A}(R) \simeq L \Omega_{R}$
is concentrated in cohomological degree zero. By derived $p$-adic
completeness, it follows that $\overline{A}(R)$ is also concentrated in cohomological degree zero,
and can therefore be identified with an ordinary commutative ring (which is $p$-adically complete and $p$-torsion-free), and the derived de Rham complex
$L \Omega_{R}$ can be identified with the quotient ring $\overline{A}(R) / p \overline{A}(R)$.
\end{remark}

Our next goal is to show that the functor $\overline{A}(-)$ can be recovered from its values on quasiregular semiperfect $\F_p$-algebras.

\begin{lemma}\label{lemma:right-kan-from-qrsp}
The functor $\overline{A} \colon \CAlg_{\F_p}^{\mathrm{qs}} \rightarrow \widehat{\calD(\Z_p)}$ is a right Kan extension of its
restriction to the subcategory $\CAlg_{\F_p}^{\mathrm{qrsp}} \subset \CAlg_{\F_p}^{\mathrm{qs}}$.
\end{lemma}

\begin{proof}
Let $R$ be a quasisyntomic $\F_p$-algebra. Choose a collection of elements $\{ x_i \}_{i \in I}$ of $R$ which generate
$R$ as an algebra over $\F_p$, and let $R^{0}$ denote the tensor product $R \otimes_{ \F_p[ \{ x_i \} ] } \F_p[ \{ x_i \} ]_{\perf}$
(here $\F_p[ \{ x_i \} ]$ denotes the polynomial algebra on generators $\{ x_i \}_{i \in I}$). Let $R^{\bullet}$ denote the cosimplicial
$R$-algebra given by the tensor powers of $R^{0}$ over $R$. Note that $R^{0}$ is a weakly initial object in the category of
$R$-algebras which are quasiregular semiperfect: that is, any ring homomorphism from $R$ to a quasiregular semiperfect $\F_p$-algebra
$S$ factors (not necessarily uniquely) through $R^{0}$. Consequently, to show that the functor $\overline{A}$ is a right Kan extension of its
restriction $\overline{A}|_{ \CAlg_{\F_p}^{\mathrm{qrsp}} }$ at $R$, it will suffice to show that the comparison map
$\overline{A}(R) \rightarrow \Tot( \overline{A}( R^{\bullet} ) )$ is an
isomorphism; here $\Tot$ denotes the homotopy limit over the simplex category
$\Delta$. To establish this, we are free to reduce modulo $p$
and (by virtue of the coconnectivity supplied by Lemma \ref{lemma:cohomologically-nonnegative}) it will suffice to verify the analogous property
for each successive quotient of the conjugate filtration of Remark \ref{remark:describe-lke}. In other words, we are reduced to the problem of showing
that for each $n \geq 0$, the canonical map $\bigwedge^{n} L_{R/\F_p} \rightarrow \Tot( \bigwedge^{n} L_{ R^{\bullet} / \F_p})$ is an isomorphism
in the $\infty$-category $\calD( R )$, which follows from \cite[Theorem 3.1]{BMS2}.
\end{proof}

For verifying certain identities in derived de Rham cohomology, it will be
convenient to use automatic gradings that are inherited in certain cases. 
We now quickly review how a semisimple automorphism yields such a grading. 

\begin{definition} 
Let $V$ be a (possibly infinite-dimensional) vector space over an
algebraically closed field $k$ equipped with an
endomorphism $g \colon V \to V$. We say that the pair $(V, g)$ is \emph{semisimple} if, when $V$ is
regarded as a $k[u]$-module with $u$ acting as $g$, then $V$ is isomorphic to a
direct sum of simple modules $k[u]/(u-\lambda)$, for $\lambda \in k$. 
\end{definition} 

\begin{proposition} 
\label{semisimplepair}
\begin{enumerate}
\item Every semisimple pair $(V, g)$ decomposes  
uniquely as a direct sum $V \simeq \bigoplus_{\lambda \in k} V_{\lambda}$, where
$V_\lambda$ is the $\lambda$-eigenspace of $g$; any map $(V, g) \to (V', g')$
of semisimple pairs respects the decomposition. 
\item 
Given a short exact sequence of pairs $0 \to (V', g') \to (V, g) \to (V'', g'')
\to 0$ with $(V', g')$ and $(V'', g'')$ semisimple and such that the
eigenvalues of $g', g''$ are disjoint, we have that $(V, g)$ is
semisimple. 
\end{enumerate}
\end{proposition} 
\begin{proof} 
Part (1) follows from the definition. 
Part (2) follows because $\mathrm{Ext}^1_{k[u]} (V'', V') = 0$; indeed, this reduces by
taking direct sums to the case where $V'' = V''_\lambda$ has a single
eigenvalue $\lambda$, and then $u - \lambda$ acts trivially on $V''$ but as an
isomorphism on $V'$. 
\end{proof} 

We will need the following result on the compatibility of derived de Rham cohomology with the Frobenius operator:

\begin{lemma}\label{lemma:compatibility-of-frobenius}
Let $R$ be a quasiregular semiperfect $\F_p$-algebra, and let $L \Omega_{\varphi_{R}}$ denote the endomorphism of
$L \Omega_{R}$ induced by the Frobenius on $R$. Then $L \Omega_{\varphi_R}$ coincides with the Frobenius
endomorphism of $L \Omega_{R}$ (where we regard $L \Omega_{R}$ as an
$\mathbb{F}_p$-algebra as in Remark \ref{rasp}).
\end{lemma}

\begin{proof}
Choose a surjection of commutative $\F_p$-algebras $q \colon S \twoheadrightarrow R$, where $S$ is perfect
(for example, we can take $S$ to be the inverse limit of the tower $\cdots \rightarrow R \xrightarrow{\varphi_{R}} R \xrightarrow{ \varphi_{R} } R$).
Let $I \subseteq S$ denote the kernel of $q$. Choose a collection of elements $\{ x_{\alpha} \in I \}$ whose images generate the quotient $I/I^2$ as a module over $R$.
Let $\overline{R}$ denote the tensor product $\bigotimes_{\alpha} S[ x_{\alpha} ]_{\perf} / (x_{\alpha} )$, formed in the category of $S$-algebras, so that
we have a commutative diagram of quasiregular semiperfect $\F_p$-algebras
$$ \xymatrix{ \bigotimes_{\alpha} S[ x_{\alpha} ]_{\mathrm{perf}} \ar[r] \ar[d] & \overline{R} \ar[d] \\
S \ar[r] & R. }$$
By construction, the right vertical map is a surjection and induces a surjection on shifted cotangent complexes $L_{ \overline{R} / \F_p}[-1] \twoheadrightarrow
L_{R/\F_p}[-1] \simeq I/I^2$. It follows that the map $L \Omega_{\overline{R}} \rightarrow L \Omega_{R}$ induces a surjection of associated graded rings
$\gr^{\ast}( L \Omega_{\overline{R}} ) \rightarrow \gr^{\ast}( L \Omega_{R} )$ (with respect to the conjugate filtration of Remark \ref{remark:describe-lke})
and is therefore a surjection. We may therefore replace $R$ by $\overline{R}$ and thereby reduce to the case where $R$ is given by
a (possibly infinite) tensor product, over a perfect $\F_p$-algebra $S$, of algebras having the form $S[ x]_{\perf} / (x)$. In this situation,
the derived de Rham complex also factors as the tensor product (over $S$) of algebras of the form $L \Omega_{ S[x]_{\perf} / (x) }$.
We may therefore assume without loss of generality that $R = S[ x]_{\perf} / (x)$, where $S$ is perfect. Moreover, we can further arrange
(by extending scalars on $S$ if necessary) that $S$ contains an algebraically closed field $k$ which contains an element $t$ which is not algebraic over $\F_p$.
Let $\widetilde{\tau} \colon S[x]_{\perf} \rightarrow S[x]_{\perf}$ be the automorphism given by $\widetilde{\tau}(x) = tx$,
so that $\widetilde{\tau}$ descends to an automorphism $\tau \colon R \rightarrow R$.

We have two endomorphisms of the algebra $L \Omega_R$ given by 
$L \Omega_{\varphi_R}$ and the internal Frobenius of $L\Omega_R$ (as an
$\F_p$-algebra), which we need to prove are equal. 
We can regard these as $k$-linear maps 
$L \Omega_R^{(1)} \to L \Omega_R$ where $L \Omega_R^{(1)}$ denotes the Frobenius
twist. 
Note that $L \Omega_R$ admits the functorial, increasing, and exhaustive conjugate filtration
of $k$-vector spaces
with $\mathrm{gr}^i L \Omega_{R} \simeq (\bigwedge^i L_{R/k})[-i]^{(1)}$. 
In particular, 
$\mathrm{gr}^* L \Omega_R $ is identified with the divided power algebra $R \left \langle \overline{x}\right\rangle$ 
for $\overline{x}  \in L_{R/\mathbb{F}_p}[-1]^{(1)} =
L_{R/S[x]_{\mathrm{perf}}}^{(1)}[-1] \simeq  ( (x)/(x)^2)^{(1)}$ the image of the class
$x$. 
On the associated graded of the conjugate filtration, it is easy to see that
both the map induced by the Frobenius $\varphi \colon R \to R$ and the internal
Frobenius agree (in fact, both are zero in degrees $>0$ with respect to the
grading as $\overline{x}$ admits divided powers). 

The automorphism $\tau$ acts on $L \Omega_R$ by functoriality and preserves the
conjugate filtration.  Moreover, for each $i \geq 0$, the automorphism $\tau$ acts semisimply on 
$\mathrm{gr}^i L \Omega_R$, and 
the eigenvalues of $\tau$ on $\mathrm{gr}^i L \Omega_R$ have the form
$t^{p(i + \alpha)}$, for $\alpha \in \mathbb{Z}[1/p]$ with $0 \leq \alpha < 1$
(corresponding to the eigenspace spanned by $\gamma_i(\overline{x})
x^{\alpha}$). It follows that the eigenvalues are disjoint for distinct $i$, and Proposition~\ref{semisimplepair} implies that $\tau$ induces a semisimple automorphism of both $L \Omega_R$ and $L \Omega_R^{(1)}$. 
It follows that the conjugate filtrations of $L \Omega_R^{(1)}$ and $L \Omega_R$ admit canonical splittings (which are preserved by the Frobenius morphisms $L \Omega_{\varphi_{R}}$ and $\varphi_{ L \Omega_{R} }$).
Since $L \Omega_{ \varphi_{R} }$ and $\varphi_{ L \Omega_{R} }$ agree at the associated graded level, they are equal.
\end{proof}

\begin{remark}\label{remark:compatfrob2}
For every commutative $\F_p$-algebra $R$, the endomorphism $L \Omega_{\varphi_{R}}$ of $L \Omega_{R}$ is given by the composition
$L \Omega_R \xrightarrow{\epsilon^{\mathrm{dR}}} R \xrightarrow{\iota} L \Omega_R$,
where $\iota$ is the inclusion of the first stage of the conjugate filtration and $\epsilon^{\mathrm{dR}}$ is the projection map: this follows by Kan extension from the case where $R$ is a smooth $\F_p$-algebra,
in which case it can be verified at the level of chain complexes. For later use, we remark that for $R$ quasiregular semiperfect, the complex $L\Omega_R$ is concentrated in degree $0$ and the map $\iota$ is injective: this follows as each associated graded piece $\bigwedge^n L_{R^{(1)}/\mathbf{F}_p}[-n]$ of the conjugate filtration (Remark~\ref{ddr}) on $L\Omega_R$ is concentrated in degree $0$ for such $R$.
\end{remark}

Note that the natural transformation of functors $\epsilon \colon A(-) \rightarrow \id_{ \CAlg_{\F_p}^{\mathrm{reg}} }$ admits an essentially unique extension to a natural transformation
$\overline{A}(-) \rightarrow \id_{ \CAlg_{\F_p}^{\mathrm{qs} } }$. By a slight abuse of notation, we will denote the value of this natural transformation on a quasisyntomic $\F_p$-algebra $R$
by $\epsilon_{R} \colon \overline{A}(R) \rightarrow R$.

\begin{lemma}\label{lemma:make-divided-powers}
Let $R$ be a quasiregular semiperfect $\F_p$-algebra. Then the map $\epsilon_{R}
\colon\overline{A}(R) \rightarrow R$
is surjective, and the ideal $I = \ker( \epsilon_{R} )$ has divided powers (which are necessarily unique, since the ring $\overline{A}(R)$ is $p$-torsion-free).
\end{lemma}

\begin{proof}
Note that $\epsilon_{R} \colon \overline{A}(R) \rightarrow R$ induces the homomorphism
of $\F_p$-algebras $\epsilon^{\mathrm{dR}} \colon \overline{A}(R) / p \overline{A}(R) \rightarrow R$,
whose restriction to the first step of the conjugate filtration is the Frobenius
map $\varphi_{R} \colon R \rightarrow R$. Since $R$ is semiperfect, the map $\varphi_{R}$
is surjective. It follows that $\epsilon_{R}$ is surjective as well.

Let $\varphi \colon \overline{A}(R) \rightarrow \overline{A}(R)$ denote the map
induced by $\varphi_{R}$. It follows from Lemma
\ref{lemma:compatibility-of-frobenius}
that $\varphi$ is a lift of the Frobenius on the $\F_p$-algebra $\overline{A}(R)/p \overline{A}(R)$, and therefore endows $\overline{A}(R)$ with the structure of a $\delta$-ring (Definition \ref{lambdapring}).
We have a commutative diagram
$$ \xymatrix@R=50pt@C=50pt{ \overline{A}(R) \ar[rr]^{\varphi} \ar[d] \ar[dr]^-{ \epsilon_R} & & \overline{A}(R) \ar[d] \\
\overline{A}(R) / p \overline{A}(R) \ar[r]^-{\epsilon^{\mathrm{dR}}} & R \ar[r]^-{\iota} & \overline{A}(R) / p \overline{A}(R), }$$
in which $\epsilon^{\mathrm{dR}}, \iota$ are defined as in Remark
\ref{remark:compatfrob2} (using the identification of $\overline{A}(R)/p
\overline{A}(R)$ with the derived de Rham complex $L \Omega_{R}$). Since
$\iota$ is injective, it follows
that 	an element $x \in \overline{A}(R)$ belongs to the ideal $I$ if and only
if $\varphi(x)$ is divisible by $p$. If this condition is satisfied,
then \cite[Lemma 2.34]{BhattScholzePrisms} guarantees that $x$ has divided powers in
$\overline{A}(R)$. Moreover, for each $n \geq 0$, the calculation
$$ \varphi( \tfrac{ x^n }{n!} ) = \tfrac{ \varphi(x)^{n} }{n!} =
\tfrac{p^{n}}{n!} (\tfrac{\varphi(x)}{p})^n \equiv 0 \pmod{p}$$
shows that the divided power $\frac{x^n}{n!}$ also belongs to the ideal $I$.
\end{proof}

Note that the functor $R \Gamma_{\crys}(-) \colon \CAlg_{\F_p}^{\mathrm{reg}} \rightarrow \calD(\Z)$ extends naturally to the category of 
all $\F_p$-algebras, as the cohomology of the structure sheaf on the
crystalline site. 
In what follows, we will abuse notation by denoting this extension also by $R \Gamma_{\crys}(-)$.
Although we will not need this fact, $R \Gamma_{\crys}$ when restricted to
$\CAlg_{\F_p}^{\mathrm{qs}}$ is a left Kan extension of its restriction to
regular (or even finitely generated polynomial) $\F_p$-algebras. 
 Proposition \ref{proposition:make-comparison-map} is an immediate consequence of the following more refined statement:

\begin{proposition}\label{proposition:make-comparison-map2}
There exists a natural transformation $\overline{u} \colon R\Gamma_{\crys}(-) \rightarrow \overline{A}(-)$ which is compatible with the commutative
algebra structures on $R \Gamma_{\crys}(-)$ and $\overline{A}(-)$ (as objects of the derived category $D( \Fun( \CAlg_{\F_p}^{\mathrm{qs}}, \Ab) )$)
and for which the diagram
\begin{equation} \label{crystrianglediag} \xymatrix{ R \Gamma_{\crys}(-) \ar[rr]^-{\overline{u}} \ar[dr]_-{ \epsilon^{\crys} } & & \overline{A}(-) \ar[dl]^-{ \epsilon} \\
& \id & } \end{equation}
commutes.
\end{proposition}

\begin{proof}[Proof of Proposition \ref{proposition:make-comparison-map2}]
Let $R$ be a quasiregular semiperfect $\F_p$-algebra. It follows from Lemma \ref{lemma:make-divided-powers} that each of the schemes $\Spec( \overline{A}(R) / p^{n} \overline{A}(R) )$
can be regarded as an object of the crystalline site of $\Spec(R)$. We therefore
obtain a comparison map $\overline{u}_{R} \colon R \Gamma_{\crys}( \Spec(R) )
\rightarrow \varprojlim_{n} \overline{A}(R) / p^{n} \overline{A}(R) \simeq
\overline{A}(R)$, depending functorially on $R$ and making
\eqref{crystrianglediag} commute. By virtue of Lemma \ref{lemma:right-kan-from-qrsp}, this construction admits an essentially unique extension to a natural transformation
$\overline{u} \colon R \Gamma_{\crys}( \Spec(-) ) \rightarrow \overline{A}(-)$
of functors from $\CAlg_{\F_p}^{\mathrm{qs} }$ to
$\widehat{\calD(\Z_p)}$, making
\eqref{crystrianglediag} commute. 
\end{proof}

\subsection{Endomorphisms of the de Rham Functor}\label{subsection:check-comparison-iso}

Let $A(-)$ be a $p$-complete commutative algebra object of the derived category $D( \Fun( \CAlg_{\F_p}^{\mathrm{reg}}, \Ab) )$, and let
$u_0 \colon \Omega_{(-)} \simeq \F_p \otimes^{L} A(-)$ be an isomorphism of commutative algebra objects in the derived category
$D( \Fun( \CAlg_{\F_p}^{\mathrm{reg}}, \Vect_{\F_p} ) )$. In \S \ref{subsection:make-comparison-map}, we constructed a comparison map
$u \colon R \Gamma_{\crys}(-) \rightarrow A(-)$. Reducing modulo $p$, we obtain a map
$$ \theta \colon \Omega_{(-)} \simeq \F_p \otimes^{L} R\Gamma_{\crys}(-) \xrightarrow{u} \F_p \otimes^{L} A(-) \xrightarrow{u_0^{-1}} \Omega_{(-)}.$$
By construction, this map fits into a commutative diagram
$$ \xymatrix{ \Omega_{(-)} \ar[rr]^{\theta} \ar[dr]_-{ \epsilon^{\dR} } & & \Omega_{(-)} \ar[dl]^-{ \epsilon^{\dR} } \\
& \id, & }$$
where $\epsilon^{\dR}$ denotes the natural transformation carrying each regular
$\F_p$-algebra $R$ to the map $\epsilon^{\dR}_{R} \colon \Omega_{R} \rightarrow R$ given by
induced by projection to the zeroth term of the de Rham complex $\Omega^{\ast}_{R}$. Our goal in this section is to show that $\theta$ is the identity map: that is,
the natural transformation $u$ of Proposition \ref{proposition:make-comparison-map} is automatically compatible with $u_0$ (and is therefore an isomorphism).
This is a consequence of the following more general assertion:

\begin{proposition}\label{proposition:check-comparison-iso}
Let $\theta \colon \Omega_{(-)} \rightarrow \Omega_{(-)}$ be a morphism of commutative algebra objects of the derived category
$D( \Fun( \CAlg_{\F_p}^{\mathrm{reg}}, \Vect_{\F_p} ) )$ for which the diagram
$$ \xymatrix{ \Omega_{(-)} \ar[rr]^{\theta} \ar[dr]_-{ \epsilon^{\dR} } & & \Omega_{(-)} \ar[dl]^-{ \epsilon^{\dR} } \\
& \id & }$$
is commutative. Then $\theta$ is the identity map.
\end{proposition}

\begin{warning}
The $\infty$-categories $\calD(\Fun(\CAlg_{\F_p}^{\mathrm{reg}}, \Vect_{\F_p}))$ and $\Fun(\CAlg_{\F_p}^{\mathrm{reg}}, \mathcal{D}(\mathbf{F}_p))$ are equivalent, and have $D( \Fun( \CAlg_{\F_p}^{\mathrm{reg}}, \Vect_{\F_p} ) )$ as their homotopy categories. Consequently, in the statement of Proposition \ref{proposition:check-comparison-iso}, we can regard $\theta$ as an endomorphism of
the construction $R \mapsto \Omega_{R}$, regarded as a functor from the ordinary category $\CAlg_{\F_p}^{\mathrm{reg}}$ to the
$\infty$-category $\calD( \F_p )$. To conclude that $\theta$ is the identity, it is essential to regard the target of $\theta$ as an $\infty$-category, i.e., the analog of Proposition~\ref{proposition:check-comparison-iso} fails for endomorphisms of the  functor $\CAlg_{\F_p}^{\mathrm{reg}} \rightarrow D( \F_p )$ given by $R \mapsto \Omega_R$. Indeed, as $\F_p$ is a field, the derived category $D(\F_p)$ is equivalent to the category of graded vector spaces over $\F_p$ (via the construction $M \mapsto \mathrm{H}^{\ast}(M)$).
Consequently, when regarded as a commutative algebra object of $D(\F_p)$, the datum of the de Rham complex $\Omega_{R}$ is equivalent
to the datum of the de Rham cohomology ring $\mathrm{H}^{\ast}_{\dR}( \Spec(R) )$. This cohomology ring admits an endomorphism
$$ \mathrm{H}^{\ast}_{\dR}( \Spec(R) ) \rightarrow \mathrm{H}^{\ast}_{\dR}( \Spec(R) ) \quad \quad (x \in \mathrm{H}^{n}_{\dR}( \Spec(R) )
\mapsto \begin{cases} x & \text{ if $n =0$ } \\
0 & \text{ otherwise} \end{cases}$$
which depends functorially on $R$ and is the identity in degree zero. It follows from Proposition \ref{proposition:check-comparison-iso} that the resulting endomorphism $\overline{\theta}$ of the functor $\CAlg_{\F_p}^{\mathrm{reg}} \to D(\F_p)$ between ordinary categories given by $R \mapsto \Omega_R$, which satisfies the commutativity constraint formulated in Proposition~\ref{proposition:check-comparison-iso}, cannot lift to an endomorphism of the functor $\CAlg_{\F_p}^{\mathrm{reg}} \to \calD(\F_p)$ between $\infty$-categories determined by the same construction.
%
%
\end{warning}

We first prove a weak version of Proposition \ref{proposition:check-comparison-iso}.

\begin{lemma}\label{lemma:weak-iso-check}
Let $\theta \colon \Omega_{(-)} \rightarrow \Omega_{(-)}$ be as in the statement of Proposition \ref{proposition:check-comparison-iso}. Then,
for every regular $\F_p$-algebra $R$, the induced map $\mathrm{H}^{\ast}( \Omega_{R} ) \rightarrow \mathrm{H}^{\ast}( \Omega_{R} )$ is the identity map.
\end{lemma}

\begin{proof}
Without loss of generality, we may assume that $\Spec(R)$ is connected. To avoid
confusion, let us write $R^{(1)}$ to denote the image of the Frobenius map
$\varphi_{R} \colon R \rightarrow R$,
so that we have a Cartier isomorphism $\Cart \colon \Omega^{\ast}_{R^{(1)}}
\xrightarrow{\sim} \mathrm{H}^{\ast}_{\dR}( \Spec(R) )$ (Theorem \ref{avoidpop}). Note that the composite map
$$ R^{(1)} = \Omega^{0}_{ R^{(1)} } \xrightarrow[\sim]{ \Cart} \mathrm{H}^{0}_{\dR}( \Spec(R) ) \xrightarrow{ \epsilon^{\dR} } R$$
is the inclusion map. Since $\theta$ is assumed to be compatible with the map $\epsilon^{\dR}$, it follows that $\theta$ must induce the identity endomorphism of
$\mathrm{H}^{0}( \Omega_{R} )$. Note that the cohomology ring $\mathrm{H}^{\ast}(  \Omega_{R} )$ is generated, as an algebra over
$\mathrm{H}^{0}( \Omega_{R} )$, by elements of the form $\Cart( dx )$ for $x \in R$. It will therefore suffice to show that every such element is fixed by $\theta$.
To prove this, we may assume without loss of generality that $R = k[x]$ for some
perfect field $k$ of characteristic $p$. Enlarging $k$ if necessary, we may
further assume that it contains an element $t$ which is not algebraic over $\F_p$.

Since the ring $R = k[x]$ has Krull dimension $1$, the de Rham complex
$\Omega^{\ast}_{R}$ is concentrated in degrees $\leq 1$. Consequently, the cone
of the map $\epsilon_{R}^{\dR} \colon \Omega_{R} \rightarrow R$
can be identified with the module of K\"{a}hler differentials $\Omega^{1}_{R}$. Since $\theta$ is assumed to be compatible with $\epsilon^{\dR}$, it induces an endomorphism $\rho$ of $\Omega^{1}_{R}$
fitting into a commutative diagram
$$ \xymatrix{ \Omega_{R} \ar[r]^-{\epsilon^{\dR}_{R}} \ar[d]^{\theta} & R \ar[r]^-{d} \ar[d]^{\id} & \Omega^{1}_{R} \ar[d]^{\rho} \\
\Omega_{R} \ar[r]^{\epsilon^{\dR}_{R} } & R \ar[r]^{d} & \Omega^{1}_{R} }$$
in the $\infty$-category $\calD( \F_p )$. Note that the cohomology class $\Cart(dx)$ is represented by the $1$-form $\omega = x^{p-1} dx$. We will complete the proof by showing that
$\rho(\omega) = \omega$.

Note that every automorphism of the $k$-algebra $R = k[x]$ induces a $k$-linear automorphism of $\Omega^{1}_{R}$, which commutes with $\rho$ by naturality (here we use the fact that $\theta$ is a natural transformation
at the $\infty$-categorical level, where the formation of cones is functorial).
In particular, the map $x \mapsto tx$ induces an automorphism $\tau \colon \Omega^{1}_{R} \rightarrow \Omega^{1}_{R}$, and a simple
calculation shows that the eigenspace $(\Omega^{1}_{R})^{\tau = t^{p} }$ is the $1$-dimensional vector space $k\omega \subseteq \Omega^{1}_{R}$ spanned by the differential form $\omega$. Since $\rho$ commutes with $\tau$, it must carry the eigenspace $( \Omega^{1}_{R} )^{\tau = t^{p}}$ to itself, and therefore satisfies $\rho( \omega ) = c \omega$ for some element $c$ of the field $k$.

The construction $x \mapsto x + 1$ induces another $k$-linear automorphism
$\sigma \colon \Omega^{1}_{R} \rightarrow \Omega^{1}_{R}$, and an elementary calculation gives
$\sigma( \omega ) = \omega + df(x)$ where $f(x)$ denotes the polynomial $\sum_{i=1}^{p-1} \frac{ (p-1)!}{(p-i)! i!} x^{i}$.
Since $\sigma$ commutes with $\rho$, we have
\begin{align*}
df(x) + c \omega & =  df(x) + \rho(\omega)  =  \rho( df(x) + \omega) \\
& =  \rho( \sigma( \omega) )   =  \sigma( \rho(\omega) ) \\
& =  \sigma( c \omega )  =  cdf(x) + c \omega.
\end{align*}
Since $df(x)$ is a nonzero element of $\Omega^{1}_R{}$, it follows that $c=1$ so that $\rho(\omega) = \omega$, as desired.
\end{proof}

\begin{proof}[Proof of Proposition \ref{proposition:check-comparison-iso}]
Let us regard the construction of the derived de Rham complex $L \Omega_{(-)}$ as a commutative algebra object
in the homotopy category of functors from the ordinary $\CAlg_{\F_p}^{\mathrm{qs}}$ to the $\infty$-category $\calD( \F_p )$.
Since this functor is a left Kan extension of its restriction to
$\CAlg_{\F_p}^{\mathrm{reg}}$, the map $\theta$ admits an essentially unique extension to a natural transformation
$\overline{\theta} \colon L \Omega_{(-)} \rightarrow L \Omega_{(-)}$. 

Let us say that an $\F_p$-algebra $R$ is {\it standard} if it can be written as
a finite tensor product over $\mathbb{F}_p$ of algebras
of the form $\F_p[x]_{\perf}$ or $\F_p[x]_{\perf} / (x)$. If this condition is satisfied, then $R$ is quasiregular semiperfect, so the
derived de Rham complex $L \Omega_{R}$ is concentrated in cohomological degree zero and can therefore be identified with an ordinary commutative ring.
We will prove the following:
\begin{itemize}
\item[$(\ast)$] The natural transformation $\overline{\theta}$ is the identity when restricted to the category of standard $\F_p$-algebras.
\end{itemize}
Let us assume $(\ast)$ for the moment, and show that it leads to a proof of Proposition \ref{proposition:check-comparison-iso}.
Let $R$ be any finite type polynomial algebra over $\F_p$, let $R^{0} = R_{\perf}$ denote its perfection,
and let $R^{\bullet}$ denote the cosimplicial $R$-algebra given by the tensor powers of $R^{0}$ over $R$. As in the proof of Lemma \ref{lemma:right-kan-from-qrsp}, we note that the canonical map
$$ \Omega_{R} = L \Omega_{R} \rightarrow \Tot( L \Omega_{ R^{\bullet} } )$$
is an equivalence. Note that each term of the cosimplicial ring $R^{\bullet}$ is standard, and that $R^{0}$ is a weakly initial object in the category of standard $R$-algebras.
It follows that the restriction of the functor $L \Omega_{(-)}$ to the category of polynomial rings $\CAlg_{\F_p}^{\mathrm{poly}}$ is a right Kan extension of its restriction
to the category of standard $\F_p$-algebras. Invoking $(\ast)$ and the right
Kan extension property, we deduce that the natural transformation $\overline{\theta}$ is the identity when restricted to the category
$\CAlg_{\F_p}^{\mathrm{poly}}$. Since the functor $L \Omega_{(-)}$ is a left Kan extension of its restriction to $\CAlg_{\F_p}^{\mathrm{poly} }$, it follows that $\overline{\theta}$
is the identity and therefore $\theta$ is also the identity, as desired.

We now prove $(\ast)$. Let $R$ be a standard $\F_p$-algebra; we wish to show
that the map $\overline{\theta}_{R} \colon L \Omega_{R} \rightarrow L \Omega_{R}$ 
is equal to the identity. Factoring $R$ as a tensor product, we may assume that it is either a perfected polynomial ring $\F_p[x]_{\perf}$ or a quotient
$\F_p[x]_{\perf} / (x)$. In the first case, the projection map $\epsilon^{\dR}
\colon L \Omega_{R} \rightarrow R$ is an isomorphism, and the result is obvious. The second case is a special case of the following assertion:
\begin{itemize}
\item[$(\ast')$] Let $k$ be a perfect field of characteristic $p$ and let $R =
k[x]_{\perf} / (x)$. Then the homomorphism $\overline{\theta}_{R} \colon L \Omega_{R} \rightarrow L \Omega_{R}$ is the identity.
\end{itemize}
To prove $(\ast')$, we are free to enlarge $k$ to a larger perfect field: this follows by faithful flatness of extending scalars and the fact that $L\Omega_R \otimes_k K \simeq L\Omega_{R \otimes_k K}$ for any extension $K/k$ of perfect fields. We may therefore assume that $k$
is algebraically closed and contains an element $t$ which is transcendental over $\F_p$. Let us regard the commutative ring
$L \Omega_{R}$ as equipped with the conjugate filtration of Remark~\ref{ddr}. 
By Lemma~\ref{lemma:weak-iso-check} and left Kan extension from finite type
polynomial algebras,  $\overline{\theta_R}$ preserves the
conjugate filtration on $L \Omega_R$ (which is the left Kan extension of the
Postnikov filtration) and  induces the identity on associated
graded terms. 

Note that the construction the construction $x \mapsto tx$ extends to an automorphism of the ring $k[x]_{\perf}$ carrying the ideal $(x)$ to itself, and therefore induces an automorphism
$\tau$ of $L \Omega_{R}$. 
As in the proof of Lemma~\ref{lemma:compatibility-of-frobenius}, the action of $\tau$ on $L \Omega_R$ is semisimple on
associated graded terms $\mathrm{gr}^i L \Omega_R$, and the eigenvalues for
different $i$ are disjoint. 
It follows that $\tau$ induces a splitting $L \Omega_R \simeq \bigoplus_{i \geq
0} \mathrm{gr}^i L \Omega_R$, which is necessarily preserved by
$\overline{\theta_R}$ (which commutes with $\tau$). Since $\overline{\theta}_R$ is the
identity on associated graded terms, it follows that $\overline{\theta}_R$ is
the identity on $L \Omega_R$ as desired. 
\end{proof}

\subsection{Uniqueness of the Comparison Map}\label{subsection:unique-comparison}

Throughout this section, we fix a $p$-complete commutative algebra object $A(-)$
of $D( \Fun( \CAlg_{\F_p}^{\mathrm{reg}}, \Ab) )$, and an isomorphism $u_0 \colon \Omega_{(-)} \simeq \F_p \otimes^{L} A(-)$ of commutative algebras in $D( \Fun( \CAlg^{\mathrm{reg}}_{\F_p}, \Vect_{\F_p} ) )$. 
This equivalence induces a natural map $\epsilon \colon A(-) \to \mathrm{id}$. 
Our goal is to prove the uniqueness of the isomorphism $u$ appearing in the statement of Theorem \ref{theorem:uniqueness-of-crys}. This is an easy consequence of
the following rigidity result:

\begin{proposition}\label{proposition:rigid-A}
Let $v \colon A(-) \rightarrow A(-)$ be a morphism of commutative algebra objects of $D( \Fun( \CAlg_{\F_p}^{\mathrm{reg}}, \Ab) )$ for which the diagram
$$ \xymatrix{ A(-) \ar[rr]^{v} \ar[dr]_-{\epsilon} & & A(-) \ar[dl]^-{\epsilon} \\
& \id & }$$
is commutative. Then $v$ is the identity.
\end{proposition}

\begin{proof}[Proof of Theorem \ref{theorem:uniqueness-of-crys} from Proposition \ref{proposition:rigid-A}]
It follows from Proposition \ref{proposition:make-comparison-map} that there
exists a morphism $u \colon R \Gamma_{\crys}(-) \rightarrow A(-)$
satisfying $\epsilon \circ u = \epsilon^{\crys}$. Reducing modulo $p$, we obtain a comparison map
$$u'_0 \colon \Omega_{(-)} \simeq \F_p \otimes^{L} R \Gamma_{\crys}(-) \rightarrow \F_p \otimes^{L} A(-).$$
Since $u_0$ is an isomorphism, we can write $u'_0$ as a composition $u_0 \circ \theta$, where $\theta$ is an endomorphism of the de Rham functor
$\Omega_{(-)}$ which is compatible with $\epsilon^{\dR}$. It follows from Proposition \ref{proposition:check-comparison-iso} that $\theta$ is the identity map,
so that $u'_0 = u_0$. In particular, $u'_0$ is an isomorphism. Since $R \Gamma_{\crys}(-)$ and $A(-)$ are $p$-complete, it follows that $u$ is also an isomorphism. This proves the existence 
assertion of Theorem \ref{theorem:uniqueness-of-crys}.

To prove uniqueness, suppose that $u' \colon R \Gamma_{\crys}(-) \rightarrow
A(-)$ is some other isomorphism compatible with $u_0$. We can then write $u' = v
\circ u$, where $v \colon A(-) \simeq A(-)$
is an isomorphism which reduces modulo $p$ to the identity. In particular, we have $\epsilon \circ v = \epsilon$, so Proposition \ref{proposition:rigid-A} guarantees that $v$ is the identity
and $u = u'$, as desired.
\end{proof}

We now turn to the proof of Proposition \ref{proposition:rigid-A}. Let
$\overline{A}(-)$ denote the left Kan extension of $A(-)$ to the category of quasisyntomic $\F_p$-algebras
(Lemma \ref{lemma:make-lke}), and let $\overline{u} \colon R \Gamma_{\crys}(-) \rightarrow \overline{A}(-)$ be the natural transformation constructed in the proof of Proposition \ref{proposition:make-comparison-map2}.

\begin{lemma}\label{lemma:surjective-on-cohomology}
Let $R$ be a quasisyntomic $\F_p$-algebra. Then the morphism $\overline{u}_{R}
\colon R \Gamma_{\crys}( \Spec(R) ) \rightarrow \overline{A}(R)$ induces a surjection of cohomology rings
$$\mathrm{H}^{\ast}_{\crys}( \Spec(R) ) \twoheadrightarrow \mathrm{H}^{\ast}(\overline{A}(R)).$$
\end{lemma}

\begin{proof}
It follows from Proposition \ref{proposition:check-comparison-iso} that the map
$\overline{u}_{S} \colon R \Gamma_{\crys}( \Spec(S) ) \rightarrow \overline{A}(S)$ is an isomorphism
when $S$ is a regular Noetherian $\F_p$-algebra. Since the functor $\overline{A}(-)$ is a left Kan extension of its restriction to $\CAlg_{\F_p}^{\mathrm{reg}}$, the construction
$S \mapsto \overline{u}_{S}^{-1}$ extends to a natural transformation $w \colon \overline{A}(-) \rightarrow R \Gamma_{\crys}( - )$ which is a section of $\overline{u}$. It follows that
for every quasisyntomic $\F_p$-algebra $R$, the map of cohomology rings $\mathrm{H}^{\ast}_{\crys}( \Spec(R) ) \twoheadrightarrow \mathrm{H}^{\ast}(\overline{A}(R))$ also admits a section.
\end{proof}

\begin{proof}[Proof of Proposition \ref{proposition:rigid-A}]
Let $v \colon A(-) \rightarrow A(-)$ be a morphism of commutative algebra
objects of $D( \Fun( \CAlg^{\mathrm{reg}}_{\F_p}, \Ab ) )$, which we regard as an endomorphism of the functor
$A \colon \CAlg_{\F_p}^{\mathrm{reg}} \rightarrow \calD(\Z)$. Then $v$ admits an
essentially unique extension to a natural transformation $\overline{v} \colon \overline{A}(-) \rightarrow \overline{A}(-)$.
Let $R$ be a quasiregular semiperfect $\F_p$-algebra, so that $\overline{A}(R)$ can be identified with a commutative ring which is $p$-adically complete and $p$-torsion-free (Remark \ref{rasp}).
Let $I \subseteq \overline{A}(R)$ be the kernel of the projection map $\epsilon
\colon\overline{A}(R) \rightarrow R$, which we regard as a divided power ideal in $\overline{A}(R)$ (Lemma \ref{lemma:make-divided-powers}).
Since $\overline{A}(R)$ is $p$-torsion-free, the ring homomorphism
$\overline{v}_{R} \colon \overline{A}(R) \rightarrow \overline{A}(R)$ can be regarded as a morphism of divided power rings $( \overline{A}(R), I) \rightarrow (\overline{A}(R), I)$.
From the construction of the natural transformation $\overline{u}$ carried
out in the proof of Proposition~\ref{proposition:make-comparison-map2}, we see that the diagram
$$ \xymatrix{ & R \Gamma_{\crys}( \Spec(R) ) \ar[dl]_-{ \overline{u}_{R} } \ar[dr]^-{ \overline{u}_{R} } & \\
\overline{A}(R) \ar[rr]^{ \overline{v}_{R} } & & \overline{A}(R) }$$
is commutative. Since the vertical maps are surjective on cohomology (Lemma \ref{lemma:surjective-on-cohomology}), it follows that $\overline{v}_{R}$ is the identity map.
In other words, the natural transformation $\overline{v}$ is the identity when restricted to the category $\CAlg_{\F_p}^{\mathrm{qrsp}} \subseteq \CAlg_{\F_p}^{\mathrm{qs}}$ of quasiregular semiperfect $\F_p$-algebras.
Since the functor $\overline{A}(-)$ is a right Kan extension of its restriction to $\CAlg_{\F_p}^{\mathrm{qrsp}}$ (Lemma \ref{lemma:right-kan-from-qrsp}), it follows that $\overline{v}$ is the identity.
Restricting to regular Noetherian $\F_p$-algebras, we conclude that $v \colon A(-) \rightarrow A(-)$ is the identity.
\end{proof}

\newpage

\section{The Crystalline Comparison for $A\Omega$}
\label{cryscompsec}
\label{sec:crystallinecomp}

The main goal of this section is to provide a  simpler account of the
crystalline  comparison isomorphism for the $A\Omega$-complexes from \cite{BMS}
using the theory developed in this paper. In \cite{BMS}, one can find two proofs
of this result: one involving a comparison over Fontaine's $A_{\crys}$, and the
other involving relative de~Rham--Witt complexes for mixed characteristic rings.
By contrast, the proof presented here uses only the Hodge--Tate comparison and de~Rham--Witt complexes for smooth algebras over perfect fields of characteristic $p$.

\subsection{Review of the Construction of $A\Omega$}
\label{AOmegaReview}

We begin by recalling some pertinent details of the construction of \cite{BMS}. First, we introduce the basic rings involved.

\begin{notation}\label{AOmeganot}
Let $C/\mathbf{Q}_p$ denote a complete nonarchimedean extension, i.e., $C$ is the fraction field of $p$-adically complete rank $1$ valuation ring $\mathcal{O}_C$ that is $p$-torsion-free. Let $\mathfrak{m} \subseteq \mathcal{O}_C$ be the maximal ideal of $\mathcal{O}_{C}$,  $k = \mathcal{O}_C/\mathfrak{m}$ the residue field of $\mathcal{O}_{C}$, and $W = W(k)$ the ring of Witt vectors of $k$. We will assume that the field $C$ is {\it perfectoid}: that is, the valuation on $C$ is nondiscrete
and the Frobenius map $\varphi\colon  \mathcal{O}_{C} / p \mathcal{O}_{C} \rightarrow \mathcal{O}_{C} / p \mathcal{O}_{C}$ is surjective. Let $\mathcal{O}_{C}^{\flat}$ denote the tilt of $\mathcal{O}_{C}$, given by the inverse limit of the system
$$ \cdots \xrightarrow{\varphi} \mathcal{O}_{C} / p \mathcal{O}_{C} \xrightarrow{\varphi} \mathcal{O}_{C} / p \mathcal{O}_{C} \xrightarrow{\varphi} \mathcal{O}_{C} / p \mathcal{O}_{C}.$$
Let $A_{\inf}$ denote the ring of Witt vectors $W( \mathcal{O}_{C}^{\flat})$, and let $\varphi\colon  A_{\inf} \rightarrow A_{\inf}$ denote the automorphism of $A_{\inf}$ induced
by the Frobenius automorphism of $\mathcal{O}_{C}^{\flat}$. We let $\theta\colon  A_{\inf} \rightarrow \mathcal{O}_{C}$ denote the unique ring homomorphism for which the diagram
$$ \xymatrix{ A_{\inf} \ar[r]^{\theta} \ar[d] & \mathcal{O}_{C} \ar[d] \\
\mathcal{O}_{C}^{\flat} \ar[r] & \mathcal{O}_{C} / p \mathcal{O}_{C} }$$ commutes, and define $\tilde{\theta} := \theta \circ \varphi^{-1}$. 
Note that the map $A_{\inf} \xrightarrow{\tilde{\theta}} \mathcal{O}_C \to k$ lifts uniquely to a ring homomorphism $A_{\inf} \to W$, which we will sometimes refer to as the {\it (crystalline) specialization map}.  Unless otherwise specified, any map $A_{\inf} \to W$ is assumed to be the crystalline specialization map, and any map $A_{\inf} \to k$ is assumed to be its mod $p$ reduction, i.e., the map coming from $\tilde{\theta}$; see Remark~\ref{FrobeniusdRWComp} for a consequence of this choice.
\end{notation}

Next, we name certain elements that play an important role (see \cite[Example 3.16 and Proposition 3.17]{BMS}). 

\begin{notation}
In what follows, we will assume that the field $C$ contains a system of primitive $p^{n}$th roots of unity $\epsilon_{p^{n}} \in \mu_{p^{n}}(C)$ satisfying
$\epsilon_{p^n}^p = \epsilon_{p^{n-1}}$.  The system $\{ \epsilon_{p^{n}} \}_{n \geq 0}$ then determines an element $\underline{\epsilon} \in \mathcal{O}_{C}^{\flat}$; we normalize our choices to ensure that $\theta([\underline{\epsilon}]) = 1$ and $\tilde{\theta}([\underline{\epsilon}]) = \epsilon_p$. Let $[ \underline{\epsilon} ] \in A_{\inf}$ denote the Teichm\"{u}ller representative of $\underline{\epsilon}$ and set $\mu = [ \underline{\epsilon} ] - 1$, so that
$\varphi^{-1}(\mu)$ divides $\mu$. We will use the normalization of \cite{BMS}, so that 
$$ \xi =  \frac{\mu}{\varphi^{-1}(\mu)} = \frac{[\underline{\epsilon}]-1}{[\underline{\epsilon}^{1/p}]-1}$$
is a generator of the ideal $\ker(\theta) \subseteq A_{\inf}$, and
$$ \tilde{\xi} = \varphi(\xi) = \frac{ \varphi( \mu ) }{ \mu } = \frac{[\underline{\epsilon}^{p}]-1}{[\underline{\epsilon}]-1}$$
is a generator of the ideal $\ker( \tilde{\theta} ) \subseteq A_{\inf}$.
\end{notation}

Finally, we fix the main geometric object of interest.

\begin{notation}
Let $\mathfrak{X}$ be a smooth formal $\mathcal{O}_C$-scheme. We follow the same conventions for formal schemes as in \cite{BMS}: a smooth formal $\mathcal{O}_C$-scheme is one that locally has the form $\mathrm{Spf}(R)$ where $R$ is a $p$-adically complete and $p$-torsion-free $\mathcal{O}_C$-algebra equipped with the $p$-adic topology and for which $R/pR$ is smooth over $\mathcal{O}_C/p \mathcal{O}_C$. 
For such $R$, in this section we will write $\Omega^i_{R/\mathcal{O}_C}$ for the $p$-adically
completed module of $i$-forms (i.e., we suppress the  extra $p$-adic
completion).

For every commutative ring $\Lambda$, we let $\calD( \mathfrak{X}, \Lambda)$ denote the derived $\infty$-category of sheaves of $\Lambda$-modules on (the underlying topological space of) $\mathfrak{X}$,
and we let $D( \mathfrak{X}, \Lambda)$ denote its homotopy category (that is, the usual derived category of sheaves of $\Lambda$-modules on $\mathfrak{X}$).
Let $\mathfrak{X}_k$ denote the special fibre of $\mathfrak{X}$. As the topological spaces underlying $\mathfrak{X}$ and $\mathfrak{X}_k$ are identical, we can also view $D(\mathfrak{X},\Lambda)$ as the derived category $D(\mathfrak{X}_k,\Lambda)$ of sheaves of $\Lambda$-modules on $\mathfrak{X}_k$.
\end{notation}

The following summarizes the main construction from \cite{BMS}. 

\begin{construction}[The $A\Omega$-complex {\cite[\S 9]{BMS}}]\label{olosecon}
We let $A\Omega_{\mathfrak{X}}$
denote the object of the derived category $D(\mathfrak{X}, A_{\inf})$ given by the formula
\[ A\Omega_{\mathfrak{X}} := L\eta_\mu \left(R\nu_* A_{\inf,X}\right)\]
where $\nu\colon X_{proet} \to \mathfrak{X}$ is the nearby cycles map from the generic fibre $X$ of $\mathfrak{X}$. This is a commutative algebra object of $D(\mathfrak{X}, A_{\inf})$ which is equipped with a multiplicative isomorphism
\begin{equation}
\label{DividedFrobAOmega}
 \widetilde{\phi}_{\mathfrak{X}}\colon A\Omega_{\mathfrak{X}} \stackrel{\simeq}{\to} \varphi_* L\eta_{\tilde{\xi}} A\Omega_{\mathfrak{X}} 
 \end{equation}
that factors the commutative $A_{\inf}$-algebra map 
\[\phi_{\mathfrak{X}}\colon  A\Omega_{\mathfrak{X}} \to \varphi_* A\Omega_{\mathfrak{X}}\]
induced by $L\eta_\mu$ functoriality from the Frobenius automorphism $\phi_X$ of the complex $A_{\inf,X}$ on $X_{proet}$. We refer to $\widetilde{\varphi}_{\mathfrak{X}}$ informally as the {\it divided Frobenius} on $A\Omega_{\mathfrak{X}}$.
\end{construction}

Finally, let us recall the Hodge-Tate comparison theorem, which provides us with our main tool for controlling the local structure of $A\Omega$.

\begin{theorem}[Hodge-Tate comparison theorem]
\label{HodgeTateComp}
Let $\mathrm{Spf}(R) \subseteq \mathfrak{X}$ be an affine open subset. Write 
\[ A\Omega_R := R\Gamma(\mathrm{Spf}(R), A\Omega_{\mathfrak{X}}) \in \mathcal{D}(A_{\inf}) \quad \text{and} \quad \widetilde{\Omega}_R :=A\Omega_R \otimes_{A_{\inf},\ \widetilde{\theta}}^L \mathcal{O}_C \in \mathcal{D}(\mathcal{O}_C)\] 
for the displayed commutative algebra objects. Then there is a natural map of
commutative algebras $R \to \widetilde{\Omega}_R$ in $\mathcal{D}(\mathcal{O}_C)$, and the cohomology ring $H^*(\widetilde{\Omega}_R)$ can be identified with the exterior algebra $\bigwedge^*_R \Omega^1_{R/\mathcal{O}_C}$ as a graded $R$-algebra.
\end{theorem}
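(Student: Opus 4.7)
The plan is to establish the theorem by a direct computation in a local toric model, then extend to the general case by étale base change. Since the statement is Zariski-local on $\mathrm{Spf}(R)$, one may shrink $\mathrm{Spf}(R)$ so that there is an étale map to the formal torus $R_0 = \mathcal{O}_C\langle T_1^{\pm 1},\ldots,T_d^{\pm 1}\rangle$ of dimension $d = \dim \mathfrak{X}$. Étale base change for $A\Omega$ --- a consequence of almost purity together with the fact that $L\eta_\mu$ commutes with the relevant flat base change once the mod-$\mu$ torsion has been killed --- gives an isomorphism $A\Omega_R \simeq A\Omega_{R_0}\otimes^L_{R_0} R$ (suitably $p$-completed). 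Reducing modulo $\tilde\xi$ and using that $\Omega^1_{R/\mathcal{O}_C} = R\otimes_{R_0}\Omega^1_{R_0/\mathcal{O}_C}$ for $R$ étale over $R_0$, the general case of the theorem follows from the toric case.

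In the toric case, let $R_{0,\infty} = \mathcal{O}_C\langle T_1^{\pm 1/p^\infty},\ldots,T_d^{\pm 1/p^\infty}\rangle^\wedge$ be the perfectoid cover obtained by extracting $p$-power roots of the coordinates. Its generic fibre is a pro-étale $\Gamma$-Galois cover of $\mathrm{Spf}(R_0)[1/p]$ with $\Gamma = \mathbf{Z}_p^d$, and the complex $R\nu_\ast A_{\inf,X}$ on $\mathrm{Spf}(R_0)$ is computed as the continuous group cohomology $\RGamma_{\mathrm{cont}}(\Gamma, A_{\inf}(R_{0,\infty}))$. The ring $A_{\inf}(R_{0,\infty})$ admits a decomposition as a $p$-completed direct sum indexed by $\mathbf{Z}[1/p]^d$ of rank-one $A_{\inf}$-submodules on which $\Gamma$ acts by an explicit character valued in $1 + \mu A_{\inf}$, so the continuous cohomology can be read off from a Koszul complex. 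A calculation then shows that each $H^i$ splits as the direct sum of a free $A_{\inf}$-module on the $i$-fold wedges of the classes $\mathrm{dlog}(T_j) := d\log[T_j^\flat]$ and a ``junk'' summand annihilated by a bounded power of $\mu$; the functor $L\eta_\mu$ kills the junk while preserving the free part, yielding $A\Omega_{R_0} \simeq \bigoplus_{i=0}^d \bigwedge^i_{A_{\inf}}(A_{\inf}^d)[-i]$ as a graded $E_\infty$-algebra with generators in cohomological degree one.

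To conclude, reducing modulo $\tilde\xi$ and identifying the image of each $\mathrm{dlog}(T_j)$ with $dT_j/T_j \in \Omega^1_{R_0/\mathcal{O}_C}$ produces the identification $H^\ast(\widetilde{\Omega}_{R_0}) \simeq \bigwedge^\ast_{R_0}\Omega^1_{R_0/\mathcal{O}_C}$; the natural map $R_0 \to H^0(\widetilde{\Omega}_{R_0})$ is the identity under this identification, and multiplicativity follows from the $E_\infty$-algebra structure on $A\Omega$. The main obstacle will be the Koszul-complex computation of $\RGamma_{\mathrm{cont}}(\Gamma, A_{\inf}(R_{0,\infty}))$ together with the verification that $L\eta_\mu$ correctly separates the free and junk contributions; this rests on the precise divisibility $\mu = \varphi^{-1}(\mu)\cdot \xi$ and on a careful analysis of the specific $A_{\inf}$-valued characters that arise, and is where the normalization of $\mu$ plays an essential role.
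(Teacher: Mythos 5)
Your route is necessarily different from the paper's: the paper does not reprove the Hodge--Tate comparison at all, but simply cites \cite[Theorem 9.2(i)]{BMS} (or the simplified treatment in \cite{BhattSpecializing}), observing that the sheaf-level identification there yields the affine statement after taking $\RGamma$ and using vanishing of coherent cohomology on formal affines, and that for small affines one may instead quote \cite[Theorems 8.7 and 9.4(i)]{BMS}. What you are proposing is to redo the local BMS computation, which is a legitimate plan, but two of your intermediate assertions are not correct as stated. First, the \'etale base change step ``$A\Omega_R \simeq A\Omega_{R_0}\otimes^L_{R_0}R$'' does not typecheck: $A\Omega_{R_0}$ is an $A_{\inf}$-algebra and carries no $R_0$-algebra structure (that structure only appears after applying $\widetilde{\theta}$, i.e.\ on $\widetilde{\Omega}_{R_0}$, via $\mathrm{H}^0$). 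In the actual argument the \'etale locality is implemented \emph{before} applying $L\eta_\mu$: one base-changes the perfectoid tower, setting $R_\infty = R\widehat{\otimes}_{R_0}R_{0,\infty}$, so that the character decomposition of $A_{\inf}(R_{0,\infty})$ persists (with coefficients in a lift of $R$), or equivalently one lifts the framing to an \'etale map of $(p,\mu)$-complete $A_{\inf}$-algebras by deformation theory; almost purity is what compares $\RGamma_{\mathrm{cont}}(\Gamma, A_{\inf}(R_\infty))$ with $R\nu_*A_{\inf,X}$ up to error terms killed by $L\eta_\mu$.

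Second, and more seriously, the claimed outcome of the toric computation, $A\Omega_{R_0}\simeq \bigoplus_{i=0}^d \bigwedge^i_{A_{\inf}}(A_{\inf}^d)[-i]$, is false. The nonzero \emph{integral} weights $a\in\Z^d$ contribute: the weight-$a$ Koszul complex has $\mathrm{H}^1$-type terms $A_{\inf}/([\underline{\epsilon}]^{a_i}-1)$, and after $L\eta_\mu$ these do not vanish (for $d=1$ the weight $p$ already contributes an $A_{\inf}/\tilde{\xi}$). The correct statement is that $A\Omega_{R_0}$ is represented by the $q$-de Rham (Koszul) complex over the lift $A_{\inf}\langle U_1^{\pm1},\dots,U_d^{\pm1}\rangle$, whose \emph{terms} are exterior powers of a free rank-$d$ module over that ring but whose differential is nontrivial; its cohomology is very far from finite free over $A_{\inf}$. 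The exterior-algebra description appears only after base change along $\widetilde{\theta}$, and then as an exterior algebra over $R_0$ on the $\mathrm{dlog}$ classes --- which is precisely the assertion being proved, so as written your toric step assumes the conclusion. Relatedly, while each non-integral weight piece is indeed killed by $\varphi^{-n}(\mu)\mid\mu$, the cohomology of the completed direct sum is not the completed sum of the weightwise cohomologies, so ``$L\eta_\mu$ kills the junk'' requires the quantitative almost-mathematics estimates of \cite{BMS}; you correctly flag this as the main obstacle, but it is the heart of the proof rather than a matter of the normalization of $\mu$. With the toric step corrected (compute the Koszul complexes weight by weight \emph{after} reduction mod $\tilde{\xi}$, identifying the surviving classes with $\Omega^i_{R_0/\mathcal{O}_C}$) and the base change rephrased at the level of towers or lifted framings, your outline becomes the standard BMS argument that the paper chooses to cite rather than reproduce.
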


\begin{remark} 
We are implicitly suppressing all Breuil--Kisin twists above. 
\end{remark} 

\begin{proof}
This follows from \cite[Theorem 9.2 (i)]{BMS} (see \cite[Proposition 7.5 and Remark 7.11]{BhattSpecializing} for a simpler argument). More precisely, these references give an identification $A\Omega_{\mathfrak{X}} \otimes_{A_{\inf},\widetilde{\theta}}^L \mathcal{O}_C \simeq \widetilde{\Omega}_{\mathfrak{X}}$ in the derived $\infty$-category of sheaves, where the target is defined as $L\eta_{\epsilon_p-1} R\nu_* \mathcal{O}_X^+$. The claim above now follows from the calculation in \cite[Theorem 8.3]{BMS} by applying $R\Gamma(\mathrm{Spf}(R),-)$, and using the vanishing of coherent sheaf cohomology on formal affines. 

Alternately, we remark that in the sequel, we shall only use this result for small affine opens (see Definition~\ref{defsmall}), and here one can simply invoke \cite[Theorem 9.4 (i) and (iii)]{BMS} with $r=1$ and \cite[Theorem 8.7]{BMS}.
\end{proof}

\subsection{The First Formulation of the Main Comparison Theorem}

We follow the notation of \S \ref{AOmegaReview}. In particular, $\mathfrak{X}$ denotes a smooth formal $\mathcal{O}_C$-scheme. Our goal is to extract the de~Rham--Witt complex of $\mathfrak{X}_k$ (viewed as a sheaf of strict Dieudonn\'e algebras on $\mathfrak{X}_k$) from the pair $(A\Omega_{\mathfrak{X}}, \widetilde{\varphi}_{\mathfrak{X}})$ coming from Construction~\ref{olosecon}. A slightly imprecise version of this comparison can be formulated as follows, and gives a new proof of \cite[Theorem 1.10 (i)]{BMS}.

\begin{theorem}
\label{DRWAOmegaCoarse}
There is a natural identification $\varphi^*\left(A\Omega_{\mathfrak{X}} \widehat{\otimes}_{A_{\inf}} W\right) \simeq W\Omega^*_{\mathfrak{X}_k}$ of commutative algebras in $D(\mathfrak{X},W)$ that carries $\varphi_{\mathfrak{X},W}$ to $\varphi_{\mathfrak{X}_k}$. Moreover, this identification can be promoted to an isomorphism of presheaves of strict Dieudonn\'e $W$-algebras (see Theorem~\ref{dRWAOmegaMainComp} for a precise formulation).
\end{theorem}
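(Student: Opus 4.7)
The plan is to leverage the equivalence of categories developed in this paper (Corollary \ref{1categorycor}, along with its multiplicative, $W$-linear refinements appearing in Examples \ref{toko} and \ref{torch}) to extract the de Rham-Witt complex from the divided Frobenius on $A\Omega$. First, I compute the image of $\tilde{\xi}$ in $W$ under the specialization map $A_{\inf} \to W$: since $[\underline{\epsilon}] \mapsto 1$ (all $p$-power roots of unity trivialize in the residue field $k$), we get $\tilde{\xi} = 1 + [\underline{\epsilon}] + \cdots + [\underline{\epsilon}]^{p-1} \mapsto p$. Using this together with the compatibility of $L\eta$ with base change (cf.\ \cite[Lem.\ 6.19--6.20]{BMS}) and the commutativity of the specialization map with the Frobenius automorphisms, the divided Frobenius isomorphism \eqref{DividedFrobAOmega} base changes to an isomorphism
\[
\widetilde{\phi}_W\colon B_{\mathfrak{X}} := A\Omega_{\mathfrak{X}} \widehat{\otimes}^L_{A_{\inf}} W \xrightarrow{\sim} \sigma_* L\eta_p B_{\mathfrak{X}}
\]
in $\widehat{\calD(\mathfrak{X}, W)}_p$, where $\sigma$ is the Witt vector Frobenius. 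Applying the aforementioned equivalence of categories sectionwise to this pair yields a canonical presheaf $\mathcal{M}^*_{\mathfrak{X}}$ of strict Dieudonn\'e $W$-algebras on $\mathfrak{X}$ whose underlying complex in $\widehat{\calD(\mathfrak{X},W)}_p$ recovers $B_{\mathfrak{X}}$.

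Next, I construct the comparison morphism $W\Omega^*_{\mathfrak{X}_k} \to \mathcal{M}^*_{\mathfrak{X}}$ using the universal property of the saturated de Rham-Witt complex (Corollary \ref{dRWLeftAdj}): it suffices to produce a ring map $\mathcal{O}_{\mathfrak{X}_k} \to \mathcal{M}^0_{\mathfrak{X}}/V\mathcal{M}^0_{\mathfrak{X}}$. By Proposition \ref{prop11} the target is $H^0(\mathcal{M}^*_{\mathfrak{X}}/p\mathcal{M}^*_{\mathfrak{X}})$, which on an affine $\Spf(R) \subseteq \mathfrak{X}$ unwinds to $H^0(\widetilde{\Omega}_R \otimes^L_{\mathcal{O}_C} k) = R_k$ using the Hodge-Tate comparison (Theorem \ref{HodgeTateComp}); take this to be the identity. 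Theorem \ref{maintheoC} identifies $\WOmega^*_{\mathfrak{X}_k}$ with the classical $W\Omega^*_{\mathfrak{X}_k}$ (since $\mathfrak{X}_k$ is smooth over $k$), so this yields the required map of presheaves, which is compatible with the Frobenius by construction.

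The main obstacle is verifying that this comparison map is an isomorphism. By Corollary \ref{cor15} it suffices to check this after reduction modulo $p$. On affines $\Spf(R) \subset \mathfrak{X}$, the source $\WOmega^*_{R_k}/p\WOmega^*_{R_k}$ is quasi-isomorphic to the de Rham complex $\Omega^*_{R_k/k}$ by Remark \ref{plux}, while the target $\mathcal{M}^*_R/p\mathcal{M}^*_R$ identifies in $\calD(k)$ with $\widetilde{\Omega}_R \otimes^L_{\mathcal{O}_C} k$, whose cohomology is the exterior algebra $\bigwedge^*_{R_k} \Omega^1_{R_k/k}$ by Hodge-Tate. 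The comparison gives a multiplicative map of cdga's which is the identity on $H^0 = R_k$; the key remaining step is to check that the induced map on $H^1$ matches the Cartier isomorphism of Theorem \ref{theo71} (which will then propagate to all degrees by multiplicativity and the fact that the exterior algebra is generated in degree one). This can be carried out on ``small'' affines in the sense of \cite[Def.\ 8.5]{BMS}, where $A\Omega_R$ admits an explicit Koszul-complex description and the degree-$1$ behavior can be read off directly from the Bockstein formulas of \cite[Sec.\ 8--9]{BMS}; alternatively, by functoriality, it reduces to the toric model case. Once the mod-$p$ quasi-isomorphism is established on affines, the presheaf-level isomorphism follows because all constructions are functorial in $R$ and compatible with restrictions.
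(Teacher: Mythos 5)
Your outline follows the same route as the paper's proof (base change the divided Frobenius along $A_{\inf}\to W$, apply the fixed-point equivalence of Corollary~\ref{1categorycor} to obtain a presheaf of strict Dieudonn\'e $W$-algebras, then compare with $\W\Omega^{\ast}_{\mathfrak{X}_k}$ via its universal property and a mod~$p$ Hodge--Tate/Cartier check), but two steps that you treat as citations or formalities are genuine assertions requiring proof. First, the claim that the divided Frobenius ``base changes to an isomorphism $B_{\mathfrak{X}}\simeq \sigma_{\ast} L\eta_p B_{\mathfrak{X}}$'' is not covered by \cite[Lem.~6.19--6.20]{BMS}: those lemmas concern derived completions, and $L\eta$ does \emph{not} commute with derived base change in general. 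One must prove that the natural map $(L\eta_{\tilde{\xi}}A\Omega_R)\widehat{\otimes}^L_{A_{\inf}}W \to L\eta_{p}(A\Omega_R\widehat{\otimes}^L_{A_{\inf}}W)$ is an isomorphism; this is precisely Proposition~\ref{GetSDC}, whose proof writes $W$ as the completion of $\varinjlim_r A_{\inf}/\phi^{-r}(\mu)$ and uses the Hodge--Tate comparison (Theorem~\ref{HodgeTateComp}) to see that the groups $\mathrm{H}^i(A\Omega_R\otimes^L_{A_{\inf},\tilde{\theta}}\mathcal{O}_C)\simeq \Omega^i_{R/\mathcal{O}_C}$ are $\mathcal{O}_C$-flat, hence $\phi^{-r}(\mu)$-torsion-free, so that the base-change criterion for $L\eta$ applies quotient by quotient. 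Relatedly, you cannot apply the fixed-point equivalence ``sectionwise'' to a sheaf-level isomorphism: $L\eta_p$ does not commute with $R\Gamma(U,-)$, so the divided Frobenius must first be refined to the presheaf $U\mapsto R\Gamma(U,A\Omega_{\mathfrak{X}})$ on \emph{small} affine opens using \cite[Thm.~9.4]{BMS} (Proposition~\ref{DividedFrobSmall}); this is the reason the paper works throughout with $\mathcal{U}(\mathfrak{X})_{\sm}$ rather than with all opens.

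Second, to invoke the universal property of $\W\Omega^{\ast}$ (Definition~\ref{def80}) you need your presheaf $\mathcal{M}^{\ast}$ to take values in strict Dieudonn\'e \emph{algebras}, i.e.\ to satisfy $Fx\equiv x^p \pmod{V\mathcal{M}^0}$ (Remark~\ref{algdesc}, Proposition~\ref{prop35}). This is not automatic for a commutative algebra object of $\FrobCompComplete$ produced by the abstract equivalence (for instance, the $p$-completion of $\Z[t]$ placed in degree zero with $F=\mathrm{id}$ is such an object which is not a Dieudonn\'e algebra), and your proposal never verifies it; without it the comparison map $W\Omega^{\ast}_{\mathfrak{X}_k}\to \mathcal{M}^{\ast}$ you want does not exist. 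The paper's Proposition~\ref{AOmegaSDA} supplies exactly this verification, by mapping $A\Omega_R$ (for small $\Spf(R)$) into $A_{\inf}(R_\infty)$, where after base change to $W(R_{\infty,k})$ the operator $\phi$ is visibly the Witt-vector Frobenius, and then using faithful flatness of $R\to R_\infty$ modulo $p$ together with Hodge--Tate to see that the induced map on $\mathrm{H}^0(-\otimes^L_{A_{\inf}}k)$ is injective. With these two points supplied, the remainder of your plan (the mod-$p$ identification via the Bockstein/de Rham compatibility checked on small affines) is essentially the proof of Theorem~\ref{dRWAOmegaMainComp}.
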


There are roughly two steps in the (formulation and) proof of this result. First, in \S \ref{AOmegatoSDA}, we explain how to represent $A\Omega_{\mathfrak{X}} \widehat{\otimes}_{A_{\inf}}^L W$ by a presheaf of strict Dieudonn\'e $W$-algebras on a basis of the topology of $\mathfrak{X}$ (given by the small affine opens of Definition~\ref{defsmall}). Second, in \S \ref{AOmegaDRWComp}, we compare the aforementioned presheaf with the one given by the de~Rham--Witt complex using the universal property of the latter (Definition~\ref{def80}), and deduce Theorem~\ref{DRWAOmegaCoarse} using the Hodge-Tate comparison theorem.

\begin{remark}
\label{FrobeniusdRWComp}
Theorem~\ref{DRWAOmegaCoarse} has an extra Frobenius twist compared to \cite[Theorem 1.10 (i)]{BMS}. The reason for this discrepancy is that we have base changed along the map $A_{\inf} \to W$ coming from $\tilde{\theta}$ in formulating Theorem~\ref{DRWAOmegaCoarse} above, while \cite[Theorem 1.10 (i)]{BMS} uses the map $A_{\inf} \to W$ coming from $\theta$. Our choice of convention is dictated by the prismatic theory \cite{BhattScholzePrisms}, where it is more natural to have a Frobenius pullback present while relating to crystalline cohomology.
\end{remark}

\subsection{Extracting a Presheaf of Strict Dieudonn\'e Algebras from $A\Omega_{\mathfrak{X}}$}
\label{AOmegatoSDA}

We follow the notation of \S \ref{AOmegaReview}. In particular, $\mathfrak{X}$ denotes a smooth formal $\mathcal{O}_C$-scheme. In this section, we explain how to lift the object $A\Omega_{\mathfrak{X}} \widehat{\otimes}_{A_{\inf}} W \in D(\mathfrak{X},W)$ naturally to a presheaf of strict Dieudonn\'e $W$-algebras (Proposition~\ref{AOmegaSDAFinal}). The strategy roughly is to construct a divided Frobenius isomorphism for $A\Omega_{\mathfrak{X}} \widehat{\otimes}_{A_{\inf}} W$ from the one for $A\Omega_{\mathfrak{X}}$ (Proposition~\ref{GetSDC}), and then produce the desired rigidification by appealing to Corollary~\ref{1categorycor}. As in the local study of \cite{BMS}, we restrict attention to the following collection of affine open subsets of $\mathfrak{X}$.

\begin{definition}[Small affine opens]
\label{defsmall}
We will say that an open subset $U \subseteq \mathfrak{X}$ is {\it small} if it
is affine and admits an \'etale map to a (split) torus, i.e., we can write $U$ as a formal spectrum $\Spf(R)$, where $R$ is a $p$-adically complete and $p$-torsion free $\mathcal{O}_{C}$-algebra for which
there exists an \'etale $\mathcal{O}_C/p$-algebra map $(\mathcal{O}_C/p)[t_1^{\pm 1},...,t_d^{\pm 1}] \to R/pR$. In this case,
we will simply write $A\Omega_R$ for the commutative algebra object
\[ \RGamma( U, A \Omega_{\mathfrak{X}}|_{U} ) = \RGamma( \Spf(R), A \Omega_{\Spf(R)} )\] 
of the derived category $D( A_{\inf} )$.
\end{definition}

Let us introduce the relevant category of presheaves.

\begin{construction}[Presheaves on small affine opens]
\label{PresheafSmall}
Write $\mathcal{U}(\mathfrak{X})_{\sm}$ denote the poset of all small affine opens $U \subseteq \mathfrak{X}$, viewed as a category. For a commutative ring $\Lambda$, let $\calD(\Lambda)$ denote
the derived $\infty$-category of $\Lambda$, and let $\mathrm{Fun}(\mathcal{U}(\mathfrak{X})_{\sm}^{\op}, \calD(\Lambda))$ be the $\infty$-category of $\calD(\Lambda)$-valued presheaves on $\mathcal{U}(\mathfrak{X})_{\sm}$. 

Note that each $\sheafF \in \calD(\mathfrak{X}, \Lambda)$ defines an object of 
$i_{\Lambda}( \sheafF) \in \mathrm{Fun}(\mathcal{U}(\mathfrak{X})_{\sm}^{\op}, \calD(\Lambda))$ by the formula 
\[ i_{\Lambda}(\sheafF)(U) = \RGamma(U, \sheafF).\] 
This construction determines a fully faithful embedding from the derived category $D( \mathfrak{X}, \Lambda)$ to the homotopy category
of the $\infty$-category $\mathrm{Fun}(\mathcal{U}(\mathfrak{X})_{\sm}^{\op}, \calD(\Lambda))$ (see Remark~\ref{Kedlaya}).

The symmetric monoidal structure on the $\infty$-category $\calD( \Lambda )$ induces a symmetric monoidal structure on the functor $\infty$-category 
$\mathrm{Fun}(\mathcal{U}(\mathfrak{X})_{\sm}^{\op}, \calD(\Lambda))$, given by pointwise (derived) tensor product over $\Lambda$. In particular, we have a natural identification
\[ \mathrm{CAlg}(\mathrm{Fun}(\mathcal{U}(\mathfrak{X})_{\sm}^{\op}, \calD(\Lambda))) \simeq \mathrm{Fun}(\mathcal{U}(\mathfrak{X})_{\sm}^{\op}, \mathrm{CAlg}(\calD(\Lambda))) \]
of $\infty$-categories of commutative algebra objects. Similar remarks apply to variants involving presheaves of more structured objects.

We say that an object $F \in \mathrm{Fun}(\mathcal{U}(\mathfrak{X})_{\sm}^{\op}, \calD(\Lambda))$ is {\it $p$-complete} if $F(U)$ is $p$-complete, for each $U \in \mathcal{U}( \mathfrak{X})_{\sm}$.
\end{construction}

\begin{remark}
The $\infty$-category $\mathrm{Fun}(\mathcal{U}(\mathfrak{X})_{\sm}^{\op}, \calD(\Lambda))$ can be identified with the derived $\infty$-category of the ordinary abelian category of
presheaves of $\Lambda$-modules on $\mathcal{U}(\mathfrak{X})_{\sm}$. In particular, we can identify the homotopy category of $\mathrm{Fun}(\mathcal{U}(\mathfrak{X})_{\sm}^{\op}, \calD(\Lambda))$
with the derived category of $D(\mathcal{U}(\mathfrak{X})_{\sm}, \Lambda)$ where we endow the poset $\mathcal{U}(\mathfrak{X})_{\sm}$ with the indiscrete Grothendieck topology (so that all presheaves are sheaves).
Since $\mathcal{U}(\mathfrak{X})_{\sm}$ possesses a more natural Grothendieck topology coming from the Zariski topology of $\mathfrak{X}$, we prefer to not use this notation.
\end{remark}

\begin{remark}
\label{smallMatch}
In analogy with Definition~\ref{defsmall}, let us say that an open subset $V \subseteq \mathfrak{X}_k$ is {\em small} if it is affine and admits an \'etale map to a torus. Using the fact that \'etale morphisms are finitely presented, it is easy to see that the identification of $\mathfrak{X}_k \simeq \mathfrak{X}$ of topological spaces preserves the notion of small affine opens, i.e., an open $U \subseteq \mathfrak{X}$ is small if and only if $U_k \subseteq \mathfrak{X}_k$ is so. Consequently, we can identify the poset $\mathcal{U}(\mathfrak{X})_{\sm}$ of small affine opens in $\mathfrak{X}$ with the corresponding poset $\mathcal{U}(\mathfrak{X}_k)_{\sm}$ for the special fibre $\mathfrak{X}_k$. 
\end{remark}

\begin{remark}[Sheaves and presheaves]
\label{Kedlaya}
The collection of small affine opens forms a basis for the topology of $\mathfrak{X}$. In particular, if one equips the category $\mathcal{U}(\mathfrak{X})$ with the standard Grothendieck topology (i.e., the one inherited from the Zariski topology on $\mathfrak{X}$), then the resulting category of sheaves is identified with the category of sheaves on $\mathfrak{X}$. In particular, the construction
\[ i_{\Lambda}\colon  \calD(\mathfrak{X}, \Lambda) \to \mathrm{Fun}(\mathcal{U}(\mathfrak{X})_{\sm}^{\op}, \calD(\Lambda))\]
of Construction~\ref{PresheafSmall} determines a fully faithful embedding from the derived $\infty$-category of sheaves on $\mathfrak{X}$ to the derived $\infty$-category of presheaves on $\mathcal{U}(\mathfrak{X})_{\sm}$. This has a left adjoint 
\[ G_{\Lambda}\colon \mathrm{Fun}(\mathcal{U}(\mathfrak{X})_{\sm}^{\op}, \calD(\Lambda)) \to D(\mathfrak{X}, \Lambda) \]
given by sheafification for the Zariski topology. 

For future reference, we also record a variant of this observation for $p$-complete objects. Let $\widehat{D}(\mathfrak{X},W) \subseteq \calD(\mathfrak{X},W)$ denote the full subcategory spanned by $p$-complete objects, and $\mathrm{Fun}(\mathcal{U}(\mathfrak{X})_{\sm}^{\op}, \widehat{\calD(W)}) \subseteq \mathrm{Fun}(\mathcal{U}(\mathfrak{X})_{\sm}^{\op}, \calD(W))$ denote the full subcategory spanned by presheaves valued in the full subcategory $\widehat{\mathcal{D}}(W) \subseteq \calD(W)$ of $p$-complete objects in $\calD(W)$. Then the functor $i_W$ discussed above restricts to a fully faithful functor
\[ \widehat{i}_W\colon \widehat{\mathcal{D}}(\mathfrak{X},W) \to \mathrm{Fun}(\mathcal{U}(\mathfrak{X})_{\sm}^{\op}, \widehat{\mathcal{D}}(W))\]
which admits a left adjoint 
\[ \widehat{G}_W\colon \mathrm{Fun}(\mathcal{U}(\mathfrak{X})_{\sm}^{\op}, \widehat{\mathcal{D}}(W)) \to \widehat{\mathcal{D}}(\mathfrak{X},W)\]
given on objects by composing the sheafification functor $G_W$ mentioned above with the derived $p$-completion functor $\calD(\mathfrak{X},W) \to \widehat{\mathcal{D}}(\mathfrak{X},W)$. Unwinding definitions, we can write $\widehat{G}_W(K) = \varprojlim G_W(K \otimes_W^L W/p^n)$.
\end{remark}

Let us write $A\Omega_{\mathfrak{X}}^{\sm} \in \mathrm{Fun}(\mathcal{U}(\mathfrak{X})_{\sm}^{\op}, \calD(A_{\inf}))$ for the complex $A\Omega_{\mathfrak{X}}$ of Construction \ref{olosecon}
regarded as a presheaf: that is, the image of $A\Omega_{\mathfrak{X}} \in \calD(\mathfrak{X},A_{\inf})$ under the functor $i_{A_{\inf}}$ of Construction~\ref{PresheafSmall}. Beware
that composition with the functor $L \eta_{\tilde{\xi} }$ does not preserve the essential image of the functor $i_{ A_{\inf} }$: that is, it generally does not carry sheaves to sheaves.
Nevertheless, we have the following:
 
 \begin{proposition}
 \label{DividedFrobSmall}
 The Frobenius map $\widetilde{\varphi}_{\mathfrak{X}}$ on $A\Omega_{\mathfrak{X}}$ lifts to an identification
\[ \widetilde{\varphi}_{\mathfrak{X}}^{\sm}\colon A\Omega_{\mathfrak{X}}^{\sm} \simeq \varphi_* L\eta_{\tilde{\xi}} A\Omega_{\mathfrak{X}}^{\sm} \]
of commutative algebras in $\mathrm{Fun}(\mathcal{U}(\mathfrak{X})_{\sm}^{\op}, D(A_{\inf}))$. 
 \end{proposition}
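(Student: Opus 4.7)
The plan is to reduce the proposition to the assertion that, for every small affine open $U = \Spf(R) \subseteq \mathfrak{X}$, the natural comparison morphism
\[ c_U\colon L\eta_{\tilde\xi}\, R\Gamma(U, A\Omega_{\mathfrak{X}}) \longrightarrow R\Gamma\bigl(U,\, L\eta_{\tilde\xi} A\Omega_{\mathfrak{X}}\bigr) \]
is an equivalence in $\CAlg(\calD(A_{\inf}))$. Granting this, unwinding the definition of $A\Omega_{\mathfrak{X}}^{\sm}$ shows that the isomorphism $\widetilde{\varphi}_{\mathfrak{X}}$ of Construction~\ref{olosecon}, read after evaluating at $U$, yields the desired multiplicative identification on each small affine; the naturality in $U$ then packages these equivalences into the required isomorphism $\widetilde{\varphi}_{\mathfrak{X}}^{\sm}$ of commutative algebras in $\Fun(\mathcal{U}(\mathfrak{X})_{\sm}^{\op}, D(A_{\inf}))$.

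To verify that $c_U$ is an equivalence, the first input is the Hodge-Tate comparison. By Theorem~\ref{HodgeTateComp}, we may identify $R\Gamma(U, A\Omega_{\mathfrak{X}}) \otimes_{A_{\inf},\tilde\theta}^L \mathcal{O}_C$ with the Hodge-Tate complex $\widetilde{\Omega}_R$, whose cohomology consists of the coherent $R$-modules $\Omega^i_{R/\mathcal{O}_C}$. In particular $R\Gamma(U, A\Omega_{\mathfrak{X}}/\tilde\xi)$ is concentrated in non-negative cohomological degrees. Since $\tilde\xi$ is a nonzerodivisor on $A\Omega_{\mathfrak{X}}$, an induction on $n$ using the short exact sequences
\[ 0 \to A\Omega_{\mathfrak{X}}/\tilde\xi \xrightarrow{\tilde\xi^{n-1}} A\Omega_{\mathfrak{X}}/\tilde\xi^n \to A\Omega_{\mathfrak{X}}/\tilde\xi^{n-1} \to 0 \]
shows that every $R\Gamma(U, A\Omega_{\mathfrak{X}}/\tilde\xi^n)$ likewise has no higher cohomology, and that $R\Gamma(U,-)$ commutes with the operation $(-)/\tilde\xi^n$ on $A\Omega_{\mathfrak{X}}$.

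The core step, which I expect to be the main obstacle, is then the interchange of $R\Gamma(U,-)$ with $L\eta_{\tilde\xi}$ itself. The cleanest route is via the Beilinson-t-structure description of $L\eta$ given in Proposition~\ref{makegaard} (applied with $p$ replaced by $\tilde\xi$): the complex $L\eta_{\tilde\xi} A\Omega_{\mathfrak{X}}$ is computed as the Beilinson truncation $\tau^{\leq 0}_B(\tilde\xi^\bullet A\Omega_{\mathfrak{X}})$ of the $\tilde\xi$-adic filtration, whose associated graded pieces are (shifts of) $A\Omega_{\mathfrak{X}}/\tilde\xi$. Since the higher cohomology of each of these graded pieces on $U$ vanishes by the previous paragraph, Grothendieck vanishing combined with the exactness properties of $\tau^{\leq 0}_B$ shows that $R\Gamma(U,-)$ commutes with this Beilinson truncation; equivalently, choosing a $\tilde\xi$-torsion-free cochain complex model $\mathcal{K}^\bullet$ for $A\Omega_{\mathfrak{X}}|_U$, the inclusion $\eta_{\tilde\xi}\,\Gamma(U, \mathcal{K}^\bullet) \hookrightarrow \Gamma(U, \eta_{\tilde\xi} \mathcal{K}^\bullet)$ is a quasi-isomorphism, as can be checked degree-by-degree via the Bockstein formula $H^i(\eta_{\tilde\xi} L)=H^i(L)/H^i(L)[\tilde\xi]$ of Proposition~\ref{etap_quism}.

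For the multiplicative enhancement, the functor $L\eta_{\tilde\xi}$ is lax symmetric monoidal (by the $\tilde\xi$-analog of Proposition~\ref{LetaSymMon}, proved by the same reduction to cyclic modules), and the natural transformation $c$ is a map of lax symmetric monoidal functors by construction; once $c_U$ is known to be an equivalence on underlying complexes, it is therefore automatically an equivalence in $\CAlg(\calD(A_{\inf}))$. Assembling these pointwise equivalences produces the desired isomorphism $\widetilde{\varphi}_{\mathfrak{X}}^{\sm}$ of $\CAlg(\calD(A_{\inf}))$-valued presheaves on $\mathcal{U}(\mathfrak{X})_{\sm}$.
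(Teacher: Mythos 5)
Your proof is correct in outline, but it takes a genuinely different route from the paper. The paper's own proof is essentially a citation: by \cite[Theorem 9.4]{BMS}, for a small affine $U=\Spf(R)$ the sections $\RGamma(U,A\Omega_{\mathfrak{X}})$ are themselves of the form $L\eta_\mu$ of (pro-\'etale/group) cohomology, so the divided Frobenius identification $A\Omega^{\sm}_{\mathfrak{X}}(U)\simeq \varphi_*L\eta_{\tilde{\xi}}A\Omega^{\sm}_{\mathfrak{X}}(U)$ is already present at the level of sections, exactly as in Construction~\ref{olosecon}; smallness is what makes that citation available. You instead prove directly that the comparison map $L\eta_{\tilde{\xi}}\RGamma(U,A\Omega_{\mathfrak{X}})\to \RGamma(U,L\eta_{\tilde{\xi}}A\Omega_{\mathfrak{X}})$ is an equivalence, by combining the Hodge--Tate comparison with the Beilinson-t-structure description of $L\eta$ (the graded pieces of the $\tilde{\xi}$-adic filtration have coherent cohomology sheaves, whose higher cohomology on an affine formal scheme vanishes, so $\RGamma(U,-)$ preserves both halves of the Beilinson t-structure on this filtered object), and then evaluate the sheaf-level isomorphism $\widetilde{\varphi}_{\mathfrak{X}}$ on $U$. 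What this buys: your argument never uses smallness (any affine open works, since Theorem~\ref{HodgeTateComp} and coherent vanishing hold there) and is self-contained modulo the Hodge--Tate comparison, in the same spirit as the paper's later Proposition~\ref{GetSDC}; what it costs is redoing, by hand, a commutation statement that is part of what \cite[Theorem 9.4]{BMS} already encodes, which is all the paper needs.

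Two small points to tighten. First, the input you actually need is the \emph{sheaf-level} Hodge--Tate comparison (the cohomology sheaves of $A\Omega_{\mathfrak{X}}/\tilde{\xi}$ are the locally free modules $\Omega^i_{\mathfrak{X}/\mathcal{O}_C}$), not just the global-sections statement of Theorem~\ref{HodgeTateComp}; the relevant vanishing is that of higher coherent cohomology on affine formal schemes (not Grothendieck's dimension bound). Second, the parenthetical ``equivalently, choose a $\tilde{\xi}$-torsion-free model and compare $\eta_{\tilde{\xi}}\Gamma(U,\mathcal{K}^\bullet)\hookrightarrow \Gamma(U,\eta_{\tilde{\xi}}\mathcal{K}^\bullet)$'' is not literally valid, since $\Gamma(U,-)$ of an arbitrary torsion-free model does not compute hypercohomology; your Beilinson-truncation argument is the one to keep, together with the (easy, but worth stating) observation that the truncated filtration stabilizes degreewise because $A\Omega_{\mathfrak{X}}$ is coconnective, so passing to the underlying object commutes with $\RGamma(U,-)$.
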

 
 The proof below crucially uses the smallness restriction in the definition of $\mathcal{U}(\mathfrak{X})_{\sm}$. 
 
\begin{proof} 
By definition, for each small affine open $U \subseteq \mathfrak{X}$, we have $A\Omega_{\mathfrak{X}}^{\sm}(U) := \RGamma(U, A\Omega_{\mathfrak{X}})$. By \cite[Theorem 9.4]{BMS}, we have
 \[ A\Omega_{\mathfrak{X}}^{\sm}(U) \simeq L\eta_\mu \RGamma(U, A_{\inf,U}).\]
In particular, the Frobenius map $\widetilde{\varphi}_{\mathfrak{X}}$ refines to give an identification
\[  A\Omega_{\mathfrak{X}}^{\sm}(U) \simeq \varphi_* L\eta_{\tilde{\xi}} A\Omega_{\mathfrak{X}}^{\sm}(U)\]
for all small affine $U$, and hence we have an identification
\[ \varphi_{\mathfrak{X}}^{\sm}\colon A\Omega_{\mathfrak{X}}^{\sm} \simeq \varphi_* L\eta_{\tilde{\xi}} A\Omega_{\mathfrak{X}}^{\sm} \]
of commutative algebras in $\mathrm{Fun}(\mathcal{U}(\mathfrak{X})_{\sm}^{\op}, \calD(A_{\inf}))$, as desired.
\end{proof}

We can now introduce the crystalline specialization of $A\Omega$ with its divided Frobenius map.

\begin{construction}[The crystalline specialization]
\label{FrobBaseChange}
Let 
\[ A\Omega_{\mathfrak{X},W}^{\sm} := A\Omega_{\mathfrak{X}}^{\sm} \widehat{\otimes}^L_{A_{\inf}} W \in \mathrm{Fun}(\mathcal{U}(\mathfrak{X})_{\sm}^{\op}, \calD(W))\]
be the $\calD(W)$-valued presheaf on $\mathcal{U}(\mathfrak{X})_{\sm}$ defined by $A\Omega_{\mathfrak{X}}^{\sm}$ via $p$-adically completed base change along $A_{\inf} \to W$. Write $A\Omega_{R,W}$ for its value on a small affine open $\mathrm{Spf}(R) \subseteq \mathfrak{X}$, so $A\Omega_{R,W} := A\Omega_R \widehat{\otimes}_{A_{\inf}}^L W$. The map $\widetilde{\varphi}_{\mathfrak{X}}^{\sm}$ induces an isomorphism
\begin{equation}
\label{FBC1}
A\Omega_{\mathfrak{X},W}^{\sm}  \simeq \big(\varphi_* L\eta_{\tilde{\xi}} A\Omega_{\mathfrak{X}}^{\sm}\big) \widehat{\otimes}_{A_{\inf}}^L W.
\end{equation}
Since the crystalline specialization map $A_{\inf} \rightarrow W$ carries $\tilde{\xi}$ to $p$, the natural map 
$A\Omega_{\mathfrak{X}}^{\sm} \to A\Omega_{\mathfrak{X},W}^{\sm}$ induces by functoriality a commutative $A_{\inf}$-algebra map
\[ \varphi_* L\eta_{\tilde{\xi}} A\Omega_{\mathfrak{X}}^{\sm} \to \varphi_* L\eta_p A\Omega_{\mathfrak{X},W}^{\sm}. \]
As the codomain of the preceding map is $p$-complete, we obtain a map
\begin{equation}
\label{FBC2} 
\big(\varphi_* L\eta_{\tilde{\xi}} A\Omega_{\mathfrak{X}}^{\sm}\big) \widehat{\otimes}^L_{A_{\inf}} W \to \varphi_* L\eta_p A\Omega_{\mathfrak{X},W}^{\sm}
\end{equation}
Composing with the isomorphism in \eqref{FBC1}, we obtain a map
\[ \widetilde{\varphi}_{\mathfrak{X},W}^{\sm}\colon  A\Omega_{\mathfrak{X},W}^{\sm} \to \varphi_* L\eta_p A\Omega_{\mathfrak{X},W}^{\sm}\]
of commutative algebra objects of $\mathrm{Fun}(\mathcal{U}(\mathfrak{X})_{\sm}^{\op}, \widehat{\calD(W)} )$; we will refer to this map as
the {\it divided Frobenius} on $A\Omega_{\mathfrak{X},W}^{\sm}$. 
\end{construction}

To justify the ``divided Frobenius'' terminology, we prove the following:

\begin{lemma}
\label{AOmegaCoconn}
The presheaf $A\Omega_{\mathfrak{X},W}^{\sm} \otimes^L_W W/p^n$ takes coconnective values for all $n \geq 1$, i.e., the values of these sheaves lie in $D^{\geq 0}$. Consequently, there is a natural map 
\[ \varphi_* L\eta_p A\Omega_{\mathfrak{X},W}^{\sm} \to \varphi_* A\Omega_{\mathfrak{X},W}^{\sm}\]
of commutative algebras in $\mathrm{Fun}(\mathcal{U}(\mathfrak{X})_{\sm}^{\op}, \widehat{\mathcal{D}}(W))$.
\end{lemma}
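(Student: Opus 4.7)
The plan is in two parts: first establish the coconnectivity of $A\Omega_{\mathfrak{X},W}^{\sm} \otimes^L_W W/p^n$ by induction on $n$, then extract the advertised map from a natural transformation $L\eta_p \Rightarrow \id$ defined on the coconnective part of $\widehat{\calD}(W)$.

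For the base case $n = 1$, I will evaluate on a small affine open $\Spf(R) \subseteq \mathfrak{X}$. Since $k = W/p$ is $p$-torsion, the completion is inessential and $A\Omega_{R,W} \otimes^L_W k \simeq A\Omega_R \otimes^L_{A_{\inf}} k$. By construction the crystalline specialization $A_{\inf} \to W$ lifts the composite $A_{\inf} \xrightarrow{\widetilde{\theta}} \mathcal{O}_C \to k$, so its reduction modulo $p$ coincides with the reduction of $\widetilde{\theta}$ modulo $\mathfrak{m}$. The two base-change maps $A_{\inf} \to k$ therefore agree, allowing us to identify the complex with $\widetilde{\Omega}_R \otimes^L_{\mathcal{O}_C} k$. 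Theorem \ref{HodgeTateComp} identifies the cohomology of $\widetilde{\Omega}_R$ with $\bigwedge^*_R \Omega^1_{R/\mathcal{O}_C}$; smallness of $\Spf(R)$ ensures $\Omega^1_{R/\mathcal{O}_C}$ is a finite free $R$-module, so each cohomology group is flat over $\mathcal{O}_C$. The hypercohomology spectral sequence for $\otimes^L_{\mathcal{O}_C} k$ degenerates, and coconnectivity follows. The inductive step exploits the fiber sequence $k \xrightarrow{p^{n-1}} W/p^n \to W/p^{n-1}$ of $W$-modules: tensoring with $A\Omega_{R,W}$ produces a fiber sequence whose outer terms are coconnective by the base case and the inductive hypothesis, hence so is the middle term.

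For the consequence, derived $p$-completeness exhibits $A\Omega_{R,W}$ as the limit $\varprojlim_n A\Omega_{R,W} \otimes^L_W W/p^n$, and since $\widehat{\calD}(W)^{\geq 0}$ is closed under limits, $A\Omega_{R,W}$ is itself coconnective. I will then invoke a general natural transformation $L\eta_p \Rightarrow \id$ of symmetric monoidal endofunctors on $\widehat{\calD}(W)^{\geq 0}$: every coconnective $p$-complete complex admits a $p$-torsion-free cochain representative $M^\ast$ concentrated in non-negative degrees, and the containment $(\eta_p M)^n \subseteq p^n M^n \subseteq M^n$ (which is valid precisely because $n \geq 0$) yields a multiplicative inclusion of cochain complexes $\eta_p M^\ast \hookrightarrow M^\ast$ representing the sought transformation. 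Evaluating pointwise on $A\Omega^{\sm}_{\mathfrak{X},W}$ then yields the desired map $\varphi_* L\eta_p A\Omega_{\mathfrak{X},W}^{\sm} \to \varphi_* A\Omega_{\mathfrak{X},W}^{\sm}$ of commutative algebras.

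The main subtlety is rigorously upgrading the natural transformation $L\eta_p \Rightarrow \id$ from a pointwise construction on $p$-torsion-free representatives to a genuine morphism of $\infty$-functors; one clean route is to use Proposition \ref{makegaard}, which identifies $L\eta_p M$ with the underlying complex of the Beilinson truncation $\tau^{\leq 0}_B(p^\bullet M) \in DF(\Z)$, and to observe that the canonical map from this truncation into the $p$-adic filtration $p^\bullet M$ induces on underlying complexes the map $L\eta_p M \to M[1/p]$, which factors canonically through $M$ once $M$ is coconnective. The remaining inputs --- Hodge-Tate comparison, flatness on small affines, and the Milnor-type argument for derived limits --- are entirely standard.
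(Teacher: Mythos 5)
Your treatment of the coconnectivity statement is correct and is essentially the paper's argument: reduce by induction on $n$ (via the exact sequence $0 \to k \to W/p^n \to W/p^{n-1} \to 0$) to the case $n=1$, identify $A\Omega_{R,W}\otimes^L_W k$ with $\widetilde{\Omega}_R\otimes^L_{\mathcal{O}_C}k$ using the compatibility of the crystalline specialization with $\tilde{\theta}$, and conclude from the Hodge--Tate comparison together with the $\mathcal{O}_C$-flatness of the $\Omega^i_{R/\mathcal{O}_C}$. The paper's own proof of the ``consequently'' clause is a one-line parenthetical (it invokes a symmetric monoidal transformation $L\eta_p(K)\to K$ on the subcategory of those $K$ with $k\otimes^L_W K$ coconnective), so your attempt to justify this is welcome; but as written it contains a genuine gap.

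The problem is the claim that every coconnective $p$-complete object of $\widehat{\mathcal{D}}(W)$ admits a $p$-torsion-free cochain representative concentrated in non-negative degrees. This is false: for a complex $M^\ast$ of $p$-torsion-free modules with $M^i=0$ for $i<0$, the group $\mathrm{H}^0(M^\ast)$ is a submodule of $M^0$ and hence $p$-torsion-free, so $K=k=W/p$ (which is coconnective and $p$-complete) admits no such representative. Your fallback via Proposition \ref{makegaard} has the same defect: the factorization of $L\eta_p M \to M[1/p]$ through $M$ is not canonical for general coconnective $M$, since lifts differ by maps from $L\eta_p M$ into $\operatorname{fib}(M\to M[1/p])$, whose $\mathrm{H}^0$ contains the $p$-power torsion of $\mathrm{H}^0(M)$; already for $M=W\oplus k$ the lift is non-unique. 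In other words, plain coconnectivity (plus $p$-completeness) is the wrong hypothesis on which to define the transformation. The correct hypothesis is exactly what the first half of the lemma provides and what the paper uses: coconnectivity of $k\otimes^L_W K$. Under that hypothesis (together with derived $p$-completeness, or by splitting over the discrete valuation ring $W$) one finds that $\mathrm{H}^0(K)$ is $p$-torsion-free and $K$ \emph{does} admit a torsion-free representative $M^\ast$ in non-negative degrees, whereupon your termwise multiplicative inclusion $(\eta_p M)^n\subseteq p^nM^n\subseteq M^n$ gives the map, and its homotopy-coherent naturality can be organized as you suggest through the filtered description of $L\eta_p$. So the argument is repairable, but you should replace ``coconnective and $p$-complete'' by the mod-$p$ coconnectivity established in the first part, and not attempt to produce the factorization by lifting along $M\to M[1/p]$ in the derived category.
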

\begin{proof}
Proceeding by induction on $n$, we can reduce to the case $n=1$. We must show that for every small affine open $\mathrm{Spf}(R) \subseteq \mathfrak{X}$, the object 
$$(A\Omega_R \otimes_{A_{\inf}}^L W) \otimes_W^L W/p \simeq A\Omega_R \otimes_{A_{\inf}}^L k$$
is coconnective. By the Hodge-Tate comparison (Theorem~\ref{HodgeTateComp}), we have
\[ A\Omega_R \otimes^L_{A_{\inf},\widetilde{\theta}} \mathcal{O}_C \simeq \widetilde{\Omega}_R,\]
where $\widetilde{\Omega}_R$ is a coconnective $R$-complex with cohomology groups given by $\HH^i(\widetilde{\Omega}_R) \simeq \Omega^i_{R/\mathcal{O}_C}$. The claim now follows by extending scalars along the map $\mathcal{O}_C \to k$ (and observing that each $\Omega^i_{R/\mathcal{O}_C}$ is a locally free $R$-module, hence flat over $\mathcal{O}_C$). 

The second assertion is immediate from the first; here we implicitly use that there is a symmetric monoidal natural transformation $L\eta_p(K) \to K$ defined on the full subcategory of $\calD(W)$ spanned by those objects $K$ for which $k \otimes_{W}^{L} K \in \calD(k)$ is coconnective, see \cite[Lemma 6.10]{BMS}.
\end{proof}

Composing the divided Frobenius $\widetilde{\varphi}_{\mathfrak{X},W}^{\sm}$ from Construction~\ref{FrobBaseChange} with the map from Lemma~\ref{AOmegaCoconn} gives a map 
\[ \varphi_{\mathfrak{X},W}^{\sm}\colon A\Omega_{\mathfrak{X},W}^{\sm} \to \varphi_* A\Omega_{\mathfrak{X},W}^{\sm}\]
of commutative algebras; we refer to this map as the {\it Frobenius} on $A\Omega_{\mathfrak{X},W}^{\sm}$.

\begin{remark}
\label{AOmegaSmallSheaf}
The object $A\Omega_{\mathfrak{X},W}^{\sm}$ from Construction~\ref{FrobBaseChange} belongs to the full subcategory $\mathrm{Fun}(\mathcal{U}(\mathfrak{X})_{\sm}^{\op}, \widehat{\mathcal{D}}(W)) \subseteq \mathrm{Fun}(\mathcal{U}(\mathfrak{X})_{\sm}^{\op}, \calD(W))$ of $p$-complete objects introduced in Remark~\ref{Kedlaya}. The completed sheafification functor $\widehat{G}_W$ of Remark~\ref{Kedlaya} carries $A\Omega_{\mathfrak{X},W}^{\sm}$ to the object $A\Omega_{\mathfrak{X},W} := A\Omega_{\mathfrak{X}} \widehat{\otimes}_{A_{\inf}}^L W \in \widehat{\mathcal{D}}(\mathfrak{X},W) \subseteq \calD(\mathfrak{X},W)$: this follows easily from the formula for $\widehat{G}_W$ given at the end of Remark~\ref{Kedlaya}.
\end{remark}

Let us now show that the divided Frobenius $\widetilde{\varphi}_{\mathfrak{X},W}^{\sm}$ constructed above is an isomorphism; this relies on the Hodge-Tate comparison theorem.

\begin{proposition}
\label{GetSDC}
The divided Frobenius map $\widetilde{\varphi}_{\mathfrak{X},W}^{\sm}$ of Construction~\ref{FrobBaseChange} is an isomorphism of commutative algebra objects of
$\mathrm{Fun}(\mathcal{U}(\mathfrak{X})_{\sm}^{\op}, \widehat{\calD(W)} )$.
\end{proposition}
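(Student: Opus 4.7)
The plan is to reduce the statement to an affine, mod $p$ computation and then identify both sides with the de Rham complex of $R_k/k$ via the Hodge-Tate comparison and the Bockstein description of $L\eta$ in characteristic $p$.

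More precisely, unwinding the construction of $\widetilde{\varphi}_{\mathfrak{X},W}^{\sm}$, the isomorphism in equation \eqref{FBC1} is automatic (it is the $p$-completed base change of the already-known isomorphism $\widetilde{\varphi}_{\mathfrak{X}}^{\sm}$ from Proposition~\ref{DividedFrobSmall}); thus only the functoriality map in \eqref{FBC2} needs to be checked. Equivalently, for each small affine $\Spf(R) \subseteq \mathfrak{X}$, I must show that the natural comparison
\[ \beta_R\colon (L\eta_{\tilde{\xi}} A\Omega_R)\widehat{\otimes}^L_{A_{\inf}} W \longrightarrow L\eta_p(A\Omega_{R,W}) \]
is an isomorphism in $\widehat{\calD}(W)$. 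Since the source and target are $p$-complete by Proposition~\ref{meta} and the construction, it suffices to prove that $\beta_R \otimes^L_W k$ is a quasi-isomorphism in $\calD(k)$.

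The target $L\eta_p(A\Omega_{R,W}) \otimes^L_W k$ is identified, via the décalage-Bockstein formula (Proposition~\ref{prop62}), with the cohomology complex $\HH^*(A\Omega_{R,W}/p)$ equipped with its Bockstein differential. Factoring the specialization as $A_{\inf} \xrightarrow{\widetilde{\theta}} \mathcal{O}_C \twoheadrightarrow k$, we compute
\[ A\Omega_{R,W}/p = A\Omega_R \otimes^L_{A_{\inf}} k \simeq \widetilde{\Omega}_R \otimes^L_{\mathcal{O}_C} k, \]
and the Hodge-Tate comparison (Theorem~\ref{HodgeTateComp}) gives $\HH^i(\widetilde{\Omega}_R) \simeq \bigwedge^i_R \Omega^1_{R/\mathcal{O}_C}$, which is locally free over $R$ and hence flat over $\mathcal{O}_C$. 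Thus the derived base change $\widetilde{\Omega}_R \otimes^L_{\mathcal{O}_C} k$ is concentrated in degrees $\geq 0$ with cohomology $\bigwedge^* \Omega^1_{R_k/k}$. Similarly, for the source, since $k = \mathcal{O}_C/\mathfrak{m}$, we factor
\[ (L\eta_{\tilde{\xi}} A\Omega_R)\otimes^L_{A_{\inf}} k = \bigl((L\eta_{\tilde{\xi}} A\Omega_R)\otimes^L_{A_{\inf}} \mathcal{O}_C\bigr)\otimes^L_{\mathcal{O}_C} k, \]
and the inner factor is, by Proposition~\ref{prop62} again, the Bockstein complex of $\widetilde{\Omega}_R$; flatness of $\HH^*(\widetilde{\Omega}_R)$ over $\mathcal{O}_C$ then makes the further base change to $k$ an underived tensor product, producing a complex with the same underlying graded terms $\bigwedge^* \Omega^1_{R_k/k}$.

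Thus after mod $p$ reduction, $\beta_R \otimes^L_W k$ becomes a morphism of $E_\infty$-algebras between two cochain complexes whose underlying graded pieces are both $\bigwedge^* \Omega^1_{R_k/k}$, and which is the identity in degree $0$. The main technical obstacle — and the core of the argument — is to check that the two differentials agree so that $\beta_R\otimes^L_W k$ is a termwise isomorphism of cochain complexes (hence a quasi-isomorphism). This comes down to: (i) both differentials are derivations (as Bockstein differentials on multiplicative reductions of $E_\infty$-algebras), and (ii) they agree on the degree-one generators $\overline{dt}$ for $t \in R$ lifting coordinates, which reduces to an explicit compatibility between the two specialization maps from $A\Omega_R$. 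Since both differentials must coincide with the de Rham differential of $R_k/k$ (by Hodge-Tate for the target, and by a direct identification of the Bockstein induced by $\tilde{\xi}$-reduction with de Rham for the source, cf.\ the comparison results of \cite{BMS}), we conclude that $\beta_R \otimes^L_W k$ is an isomorphism, completing the proof.
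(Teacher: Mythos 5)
Your strategy is sound and genuinely different from the paper's. The paper proves that the map \eqref{FBC2} is an isomorphism by showing directly that $L\eta_{\tilde{\xi}}$ commutes with the completed base change $A_{\inf}\to W$: it writes $W$ as the $\tilde{\xi}$-adic completion of $\varinjlim_r A_{\inf}/\varphi^{-r}(\mu)$, checks that $L\eta_{\tilde{\xi}}$ commutes with each quotient $A_{\inf}\to A_{\inf}/\varphi^{-r}(\mu)$ using the torsion-freeness criterion of \cite[Lemma~5.16]{BhattSpecializing} (the cohomology of $A\Omega_R\otimes^L_{A_{\inf},\tilde{\theta}}\mathcal{O}_C$ has no $\varphi^{-r}(\mu)$-torsion because it is $\mathcal{O}_C$-flat, by Hodge--Tate), and then passes to filtered colimits and derived $\tilde{\xi}$-adic completion, using that $L\eta$ commutes with both. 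You instead exploit derived $p$-completeness of source and target to reduce modulo $p$, and identify both reductions with Bockstein complexes via d\'ecalage. Both arguments rest on exactly the same key input, namely the Hodge--Tate comparison and the resulting $\mathcal{O}_C$-flatness of $\Omega^i_{R/\mathcal{O}_C}$, so your route is a viable alternative; it is somewhat more direct, at the price of a mod-$p$ identification step that the paper's argument avoids.

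One step in your write-up is looser than it should be. Knowing that the two mod-$p$ complexes both have terms $\Omega^{\ast}_{R_k/k}$, that the map is multiplicative and the identity on the degree-zero term, and that ``the two differentials agree'' does not by itself give a termwise isomorphism: you must control the induced map on every term (as you begin to do via the degree-one generators), and to even treat $\beta_R\otimes^L_W k$ as a multiplicative chain map between these explicit models you need the functoriality of the d\'ecalage/Bockstein identification -- which is the real content hiding in your last paragraph. The cleaner fix is to invoke that functoriality along the map of pairs $(A_{\inf},\tilde{\xi})\to(W,p)$: the general d\'ecalage statement (the paper's Proposition~\ref{prop62} covers only $p$ over $\Z$; for the non-zero-divisor $\tilde{\xi}$ on $A_{\inf}$ one needs \cite[Prop.~6.12]{BMS}) then exhibits $\beta_R\otimes^L_W k$ as the canonical base-change map $\mathrm{H}^{\ast}(A\Omega_R/\tilde{\xi})\otimes_{\mathcal{O}_C}k \to \mathrm{H}^{\ast}\bigl(A\Omega_R/\tilde{\xi}\otimes^L_{\mathcal{O}_C}k\bigr)$, compatible with the two Bockstein differentials by naturality, and $\mathcal{O}_C$-flatness makes this a termwise isomorphism. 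In particular, no identification of either Bockstein with the de Rham differential is needed for this proposition; that identification is only required later, in the proof of Theorem~\ref{dRWAOmegaMainComp}. Finally, record explicitly that $p$-completeness of $L\eta_p(A\Omega_{R,W})$, which your reduction mod $p$ requires, follows from Proposition~\ref{meta}.
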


\begin{proof}
It is enough to show that the natural map
\[ \big(\varphi_* L\eta_{\tilde{\xi}} A\Omega_{\mathfrak{X}}^{\sm}\big) \widehat{\otimes}^L_{A_{\inf}} W \to \varphi_* L\eta_p A\Omega_{\mathfrak{X},W}^{\sm} \]
appearing in Construction~\ref{FrobBaseChange} as \eqref{FBC2} is a quasi-isomorphism. Unwinding definitions (and neglecting Frobenius twists),  we must show that the natural map
\[ \big(L\eta_{\tilde{\xi}} A\Omega_{\mathfrak{X}}^{\sm}\big) \widehat{\otimes}^L_{A_{\inf}} W \to  L\eta_{\tilde{\xi}} (A\Omega_{\mathfrak{X}}^{\sm} \widehat{\otimes}^L_{A_{\inf}} W)\]
is a quasi-isomorphism, i.e., that $L\eta_{\tilde{\xi}}$ commutes with $p$-completed tensor product $W$; here we replace $p$ with $\tilde{\xi}$ since they have the same image in $W$. Fix a small affine open $\mathrm{Spf}(R) \subseteq \mathfrak{X}$; we wish to show that the map
\[ (L\eta_{\tilde{\xi}} A\Omega_R) \widehat{\otimes}_{A_{\inf}}^L W \to L\eta_{\tilde{\xi}} \big(A\Omega_R \widehat{\otimes}_{A_{\inf}}^L W)\]
is a quasi-isomorphism. 

Observe that we can write $W$ as $p$-adic completion of $\varinjlim_r A_{\inf}/\phi^{-r}(\mu)$. Moreover, since the equation $\tilde{\xi} = p$ holds in each quotient ring $A_{\inf}/\phi^{-r}(\mu)$, we can also realize $W$ as the $\tilde{\xi}$-adic completion of $\varinjlim_r A_{\inf}/\phi^{-r}(\mu)$. Let us first show that for $r \geq 0$, the natural map $A_{\inf} \to A_{\inf}/\phi^{-r}(\mu)$ induces a quasi-isomorphism
\[ (L\eta_{\tilde{\xi}} A\Omega_{R}) \otimes^L_{A_{\inf}} A_{\inf}/\phi^{-r}(\mu) \simeq L\eta_{\tilde{\xi}}\big(A\Omega_{R} \otimes^L_{A_{\inf}} A_{\inf}/\phi^{-r}(\mu)\big).\]
Using the criterion in \cite[Lemma 5.16]{BhattSpecializing}, it is enough to
show that the $\mathcal{O}_C$-modules $\mathrm{H}^i(A\Omega_{R}
\otimes^L_{A_{\inf},\tilde{\theta}} \mathcal{O}_C)$ have no
$\phi^{-r}(\mu)$-torsion for all $i$. Since the ring homomorphism
$\tilde{\theta}$ carries $\phi^{-r}(\mu)$ to the non-zero-divisor
$\epsilon_{p^{r+1}}-1 \in \mathcal{O}_C$, it enough to show that these cohomology
groups are flat over $\mathcal{O}_C$. This is immediate from the Hodge-Tate
comparison (Theorem~\ref{HodgeTateComp}), since each
$\Omega^i_{R/\mathcal{O}_C}$ is a projective $R$-module of finite rank (and therefore
$\mathcal{O}_C$-flat).

Since the functors $L\eta_{\tilde{\xi}}$ and $\otimes^L$ both preserve filtered colimits, we get a natural identification
\[ (L\eta_{\tilde{\xi}} A\Omega_{R}) \otimes^L_{A_{\inf}} \varinjlim_r A_{\inf}/\phi^{-r}(\mu) \simeq L\eta_{\tilde{\xi}}\big(A\Omega_{R} \otimes^L_{A_{\inf}} \varinjlim_r A_{\inf}/\phi^{-r}(\mu)\big)\]
in the $\infty$-category $\calD( A_{\inf} )$. Proposition \ref{GetSDC} now follows by applying derived $\tilde{\xi}$-adic completions to both sides, since $W$ is the derived $\tilde{\xi}$-adic completion of $\varinjlim_r A_{\inf}/\phi^{-r}(\mu)$ and the functor $L\eta_{\tilde{\xi}}(-)$ commutes with derived $\tilde{\xi}$-adic completions (see \cite[Lemma 6.20]{BMS}).
\end{proof}

We now use the divided Frobenius map $\widetilde{\varphi}_{\mathfrak{X},W}^{\sm}$ and the equivalence of Corollary~\ref{1categorycor} to promote $A\Omega_{\mathfrak{X},W}^{\sm}$ to a presheaf taking values in the ordinary category of strict Dieudonn\'e algebras. 

\begin{construction}
\label{AOmegaSDC}
By Proposition~\ref{GetSDC}, the pair $(A\Omega_{\mathfrak{X},W}^{\sm}, \widetilde{\varphi}_{\mathfrak{X},W}^{\sm})$ provides a lift of $A\Omega_{\mathfrak{X},W}^{\sm}$ along the forgetful functor
\[ \mathrm{Fun}(\mathcal{U}(\mathfrak{X})_{\sm}^{\op}, \widehat{\calD(W)}^{\varphi_* L\eta_p}) \simeq \mathrm{Fun}(\mathcal{U}(\mathfrak{X})_{\sm}^{\op}, \widehat{\calD(W)})^{\varphi_*  L\eta_p} \to\mathrm{Fun}(\mathcal{U}(\mathfrak{X})_{\sm}^{\op}, \calD(W)),\]
where we use the notation from Example~\ref{toko} (applied to $R = W$ and $\sigma = \varphi$). Applying Example~\ref{toko}, we can identify this lift with a presheaf
\[ A\Omega_{\mathfrak{X},W}^{\sm,\ast} \in \mathrm{Fun}(\mathcal{U}(\mathfrak{X})_{\sm}^{\op}, \mathrm{Mod}_W(\FrobCompComplete))\] 
taking values in the ordinary category of $W$-module objects in the category $\FrobCompComplete$ of strict Dieudonn\'e complexes. As all our constructions are compatible with the commutative algebra structure on $A\Omega_{\mathfrak{X}}$, it follows that $A\Omega_{\mathfrak{X},W}^{\sm,\ast}$ is a commutative algebra object in 
$\mathrm{Fun}(\mathcal{U}(\mathfrak{X})_{\sm}^{\op}, \mathrm{Mod}_W(\FrobCompComplete))$; see Example~\ref{torch} for four equivalent descriptions of this category.
\end{construction}

\begin{definition}
Let us regard the ring $W = W(k)$ of Witt vectors as a strict Dieudonn\'{e}
algebra (Example \ref{ex24}). A {\it strict Dieudonn\'{e} $W$-algebra} is a strict Dieudonn\'{e} algebra $A^{\ast}$
equipped with a morphism of Dieudonn\'{e} algebras $W \rightarrow A^{\ast}$. We let $(\FrobAlgComplete)_{W/}$ denote the category of strict Dieudonn\'{e} $W$-algebras, which
we will view as a full subcategory of the category $\CAlg( \Mod_{W}( \FrobComp) )$ of commutative $W$-algebras in $\FrobComp$.
\end{definition}

\begin{remark}\label{sulfi}
By virtue of Remark \ref{algdesc} and Proposition \ref{prop35}, a commutative algebra object $A^{\ast}$ of $\Mod_{W}( \FrobComp )$ belongs to $(\FrobAlgComplete)_{W/}$
if and only if the underlying Dieudonn\'{e} complex is strict, the groups $A^{n}$ vanish for $n < 0$, and the Frobenius map
satisfies the congruence $Fx \equiv x^{p} \pmod{ VA^{0} }$ for each $x \in A^{0}$.
\end{remark}

\begin{proposition}\label{AOmegaSDAFinal}
The presheaf $A\Omega_{\mathfrak{X},W}^{\sm,\ast}$ of Construction~\ref{AOmegaSDC} takes values in the
full subcategory $(\FrobAlgComplete)_{W/} \subseteq \CAlg( \Mod_{W}(\FrobComp) )$ of strict Dieudonn\'{e} $W$-algebras.
\end{proposition}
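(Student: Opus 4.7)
The plan is to verify, for each small affine open $\Spf(R) \subseteq \mathfrak{X}$, that the strict Dieudonn\'e $W$-complex $A\Omega_{R,W}^{\ast}$ produced by Construction~\ref{AOmegaSDC} satisfies the three conditions in Remark~\ref{sulfi}: (i) the underlying Dieudonn\'e complex is strict, (ii) $A\Omega_{R,W}^{n}=0$ for $n<0$, and (iii) $F(x) \equiv x^{p} \pmod{V A\Omega_{R,W}^{0}}$ for each $x \in A\Omega_{R,W}^{0}$. Condition (i) is built into Construction~\ref{AOmegaSDC} via the equivalence of Corollary~\ref{1categorycor} (in the form given by Example~\ref{toko}), applied to the isomorphism $\widetilde{\varphi}_{\mathfrak{X},W}^{\sm}$ established in Proposition~\ref{GetSDC}, so no additional work is required for this point.

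For condition (ii), I would first argue that $A\Omega_{R,W}$ is a coconnective object of $\widehat{\mathcal{D}}(W)$. This follows from Lemma~\ref{AOmegaCoconn}, which gives coconnectivity of each $A\Omega_{R,W} \otimes_{W}^{L} W/p^{n}$, combined with the Milnor exact sequence
\[ 0 \to R^{1}\varprojlim_{n} \HH^{i-1}(A\Omega_{R,W}/p^{n}) \to \HH^{i}(A\Omega_{R,W}) \to \varprojlim_{n} \HH^{i}(A\Omega_{R,W}/p^{n}) \to 0, \]
which is valid because $A\Omega_{R,W}$ is $p$-complete. I would then observe that under the equivalence of Corollary~\ref{1categorycor}, every coconnective $p$-complete object corresponds to a strict Dieudonn\'e complex concentrated in non-negative degrees: inspecting the proof of Theorem~\ref{fixedpointDZ}, such an object can be represented by a cochain complex $X^{\ast}$ with $X^{i}=0$ for $i<0$, and then both $\Saturate(X^{\ast})$ (by Remark~\ref{rem:satissubgroup}) and its levelwise $p$-adic completion $\WSaturate(X^{\ast})$ inherit the same concentration.

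The bulk of the work goes into (iii), which I expect to be the main obstacle. By Proposition~\ref{prop35}, together with the ring and saturation structure already established, (iii) is equivalent to the statement that the induced map $\overline{F}$ on the $\F_p$-algebra $A\Omega_{R,W}^{0}/V A\Omega_{R,W}^{0}$ coincides with the $p$-th power map. Via Proposition~\ref{prop11}, the isomorphism $F \colon A\Omega_{R,W}^{0}/V A\Omega_{R,W}^{0} \xrightarrow{\sim} \HH^{0}(A\Omega_{R,W}^{\ast}/p)$ intertwines $\overline{F}$ with the endomorphism of $\HH^{0}(A\Omega_{R,W}^{\ast}/p)$ induced by the mod-$p$ reduction of $F$. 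Unwinding Construction~\ref{AOmegaSDC} using Remark~\ref{rem2}, the Frobenius $F$ in the strict Dieudonn\'e structure agrees in degree zero with the undivided Frobenius $\varphi_{\mathfrak{X},W}^{\sm}$ in degree zero, since the natural inclusion $(\eta_p M)^{0} \hookrightarrow M^{0}$ is the identity on underlying abelian groups.

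It then remains to identify the endomorphism of $\HH^{0}(A\Omega_{R,W}^{\ast}/p)$ induced by $\varphi_{\mathfrak{X},W}^{\sm}$ with the $p$-th power map. By Construction~\ref{olosecon}, the Frobenius $\phi_{\mathfrak{X}}$ on $A\Omega_{\mathfrak{X}}$ is the $L\eta_{\mu}$-functorial extension of the Frobenius on $A_{\inf,X}$ on the pro-\'etale site of the generic fibre, and modulo $p$ this is by definition the $p$-th power map on the tilted structure sheaf $A_{\inf,X}/p = \sheafO_{X}^{\flat}$. Consequently, the mod-$p$ reduction of $\phi_{\mathfrak{X}}$ is the intrinsic $p$-th power endomorphism of the commutative algebra $A\Omega_{R}/p$ over $\sheafO_{C}^{\flat}$, and further base change along $\sheafO_{C}^{\flat} \to k$ yields the $p$-th power map in $\HH^{0}$. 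The Hodge-Tate comparison of Theorem~\ref{HodgeTateComp} identifies $\HH^{0}(A\Omega_{R,W}^{\ast}/p) \simeq R/\mathfrak{m}R$, under which the induced endomorphism is the Frobenius on the residue ring $R_{k}$. The anticipated difficulty is the careful bookkeeping required to track Frobenius-semilinearity through the successive base changes $A_{\inf} \to \sheafO_{C}^{\flat} \to k$ in conjunction with the Hodge-Tate identification; the essential conceptual input is the tautology that the Frobenius on a characteristic-$p$ $E_{\infty}$-algebra reduces on $\HH^{0}$ to the usual $p$-th power map.
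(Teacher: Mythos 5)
Your overall skeleton agrees with the paper's: reduce via Remark~\ref{sulfi} to (i) strictness (built into Construction~\ref{AOmegaSDC}), (ii) vanishing in negative degrees, and (iii) the congruence $Fx \equiv x^p \pmod{V}$, and then reduce (iii), via Propositions~\ref{prop35} and~\ref{prop11}, to showing that the endomorphism of $\HH^0(A\Omega_R \otimes^L_{A_{\inf}} k)$ induced by the Frobenius is the $p$-th power map. The gap is in how you finish (iii). There is no ``intrinsic $p$-th power endomorphism'' of the $E_\infty$-algebra $A\Omega_R/p$ in the derived category to which you can appeal, and the statement you call a tautology is exactly the assertion that needs proof: a priori the map induced on $\HH^0(A\Omega_R\otimes^L_{A_{\inf}}k)\simeq R_k$ by $\varphi_{\mathfrak{X},W}^{\sm}$ is merely some $\varphi_k$-semilinear ring endomorphism, and nothing in the formal $L\eta$/base-change bookkeeping forces it to be $x\mapsto x^p$ (note also that $\HH^0$ of a mod-$p$ reduction is not $\HH^0$ of the integral object mod $p$, so the Frobenius on $\HH^0$ of the sheaf $A_{\inf,X}/p$ does not directly transfer). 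The paper's proof of Proposition~\ref{AOmegaSDA} supplies the missing content, and this is where the smallness of $R$ is genuinely used: choose a perfectoid pro-\'etale cover $R\to R_\infty$ with $A\Omega_R \simeq L\eta_\mu R\Gamma(\Gamma, A_{\inf}(R_\infty))$ (\cite[Theorem 9.4]{BMS}), obtain a $\varphi$-equivariant map $\eta\colon A\Omega_R \to A_{\inf}(R_\infty)$, use the identification $A_{\inf}(R_\infty)\widehat{\otimes}^L_{A_{\inf}}W \simeq W(R_{\infty,k})$ to produce a $\varphi$-equivariant map $\HH^0(\eta\otimes^L_{A_{\inf}}k)\colon \HH^0(A\Omega_R\otimes^L_{A_{\inf}}k) \to R_{\infty,k}$, where $\varphi$ is literally the $p$-th power map, and then prove this map is \emph{injective} using the Hodge--Tate comparison (which identifies the source with $R_k$ and the map with $R_k \to R_{\infty,k}$) together with faithful flatness of $R\to R_\infty$ modulo $p$. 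Without an injective Frobenius-equivariant embedding into an honest characteristic-$p$ ring (or an equivalent argument), your key step is unproven.

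There is also a smaller, repairable error in your treatment of (ii). The claim that a coconnective $p$-complete object of $\widehat{D(\Z)}_p^{L\eta_p}$ corresponds to a strict Dieudonn\'e complex concentrated in non-negative degrees is false: take the free strict Dieudonn\'e complex on a degree-zero generator $x$ (the example in \S\ref{strictdieudonnecomplex}) and shift it into degrees $-1,0$; it is strict and $p$-complete, the differential is injective so the complex is coconnective, but $\HH^0$ contains the nonzero $p$-torsion class of $Fdx$, whereas any $p$-torsion-free complex concentrated in degrees $\geq 0$ has $p$-torsion-free $\HH^0$ (for the same reason, a coconnective object whose $\HH^0$ has $p$-torsion admits no torsion-free representative in degrees $\geq 0$, so the representative you propose to extract from the proof of Theorem~\ref{fixedpointDZ} need not exist). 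The correct input is coconnectivity \emph{mod $p^r$}, which you already have from Lemma~\ref{AOmegaCoconn}: by Proposition~\ref{prop11}, $\W_r(A\Omega^{\sm,\ast}_{\mathfrak{X},W}(U))^n \simeq \HH^n(A\Omega^{\sm}_{\mathfrak{X},W}(U)\otimes^L_W W/p^r) = 0$ for $n<0$, and strictness then gives $A\Omega^{\sm,n}_{\mathfrak{X},W}(U) = \varprojlim_r \W_r(\cdots)^n = 0$ for $n<0$; the Milnor-sequence detour through coconnectivity of the limit is both unnecessary and insufficient.
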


To prove Proposition \ref{AOmegaSDAFinal}, we must show that for every small open $U \subseteq \mathfrak{X}$, the
complex $A\Omega_{\mathfrak{X},W}^{\sm, \ast}(U)$ is a strict Dieudonn\'{e} $W$-algebra: that is, it satisfies
the requirements of Remark \ref{sulfi}. Note that $A\Omega_{\mathfrak{X},W}^{\sm, \ast}(U)$ is strict by construction,
and the groups $A\Omega_{\mathfrak{X},W}^{\sm, n}(U)$ vanish for $n < 0$ by virtue of Lemma~\ref{AOmegaCoconn} (together with
Proposition \ref{prop11}). Using Proposition~\ref{prop11} again, we are then reduced to proving the following:

\begin{proposition}
\label{AOmegaSDA}
Let $U \subseteq \mathfrak{X}$ be a small open. Then the Frobenius endomorphism of the $\mathbf{F}_p$-algebra $\HH^0(A\Omega_{\mathfrak{X},W}^{\sm,\ast}(U)/p)$ coincides with the map induced by the structure map $F\colon  A\Omega_{\mathfrak{X},W}^{\sm,\ast}(U) \to A\Omega_{\mathfrak{X},W}^{\sm,\ast}(U)$. 
\end{proposition}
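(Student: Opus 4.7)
The plan is to reduce Proposition \ref{AOmegaSDA} to a direct compatibility statement between the Frobenius on $A\Omega_R$ and the Hodge--Tate comparison, and then to extract this compatibility from the construction of $A\Omega$ in \cite{BMS}. Fix a small affine open $U = \Spf(R)$ and write $A^{\ast} := A\Omega_{\mathfrak{X},W}^{\sm,\ast}(U)$. By Construction \ref{AOmegaSDC}, Lemma \ref{AOmegaCoconn} and Proposition \ref{prop11}, we already know that $A^{\ast}$ is a commutative algebra object in $\Mod_W(\FrobCompComplete)$ with $A^{<0}=0$. Hence, by Remark \ref{sulfi} and Proposition \ref{prop35}, proving Proposition \ref{AOmegaSDAFinal} reduces to verifying that the Dieudonn\'e Frobenius $F$ and the $p$-th power map induce the same endomorphism of the $\F_p$-algebra $A^0/VA^0$. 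Via the Frobenius isomorphism $F\colon A^0/VA^0 \xrightarrow{\sim} \HH^0(A^{\ast}/p)$ of Proposition \ref{prop11}, this is exactly the statement of Proposition \ref{AOmegaSDA}.

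The second step is to identify $\HH^0(A^{\ast}/p)$ concretely. The equivalence of Corollary \ref{1categorycor} together with Remark \ref{AOmegaSmallSheaf} shows that $A^{\ast}$ represents $A\Omega_{R,W} = A\Omega_R \widehat{\otimes}_{A_{\inf}}^L W$ in $\widehat{\mathcal{D}(W)}_p$, so $\HH^0(A^{\ast}/p) \simeq \HH^0(A\Omega_R \otimes^L_{A_{\inf}} k)$. The Hodge--Tate comparison (Theorem \ref{HodgeTateComp}) identifies $A\Omega_R \otimes^L_{A_{\inf},\widetilde{\theta}} \mathcal{O}_C$ with $\widetilde{\Omega}_R$, whose cohomology is $\bigwedge^{\ast}_R \Omega^1_{R/\mathcal{O}_C}$; since each term is a locally free $R$-module, hence $\mathcal{O}_C$-flat, further base change along $\mathcal{O}_C \to k$ yields a natural isomorphism $\HH^0(A^{\ast}/p) \simeq R_k := R \otimes_{\mathcal{O}_C} k$. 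In degree zero, the Dieudonn\'e Frobenius $F$ coincides with the ``undivided'' Frobenius $\varphi^{\sm}_{\mathfrak{X},W}(U)$ from Construction \ref{FrobBaseChange} (no division by $p$ in degree $0$), and the latter is obtained from $\varphi_R\colon A\Omega_R \to A\Omega_R$ by base change along the crystalline specialization $A_{\inf} \to W$.

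The final step is to check that, under the chain of identifications above, the map induced by $\varphi_R$ on $R_k$ is the $p$-th power map. This is the Frobenius-compatibility of the Hodge--Tate comparison: by design, $\varphi_R$ on $A\Omega_R = L\eta_\mu R\nu_{\ast} A_{\inf,X}$ arises by $L\eta_\mu$-functoriality from the Frobenius of the period sheaf $A_{\inf,X}$, and the induced map on degree-$0$ Hodge--Tate cohomology lifts the $p$-th power on $R_k$ (cf.\ the Frobenius-compatibility discussion in \cite[\S 9--11]{BMS}). The hard part is verifying this cleanly, since tracing $\varphi_R$ through the Hodge--Tate identification $\HH^0(\widetilde{\Omega}_R) \simeq R$ requires some care. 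To sidestep this, the statement is Zariski-local on $\Spf(R)$, so after choosing an \'etale morphism $\Spf(R) \to \Spf(\mathcal{O}_C\langle t_1^{\pm 1}, \dots, t_d^{\pm 1}\rangle)$, we may reduce to the toric case, where the local description of $A\Omega_R$ in \cite[\S 9]{BMS} expresses everything in terms of continuous group cohomology of $\Z_p(1)^d$ acting on an explicit perfectoid cover, and the Frobenius acts via $[t_i^\flat] \mapsto [t_i^\flat]^p$, manifestly inducing $t_i \mapsto t_i^p$ on $R_k$ in degree zero.
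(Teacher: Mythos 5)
The first two-thirds of your proposal (reducing to the assertion that $\varphi_R \otimes \varphi_k$ induces the $p$-th power map on $\HH^0(A\Omega_R\otimes^L_{A_{\inf}}k)\simeq R_k$, and computing this $\HH^0$ via the Hodge--Tate comparison and $\mathcal{O}_C$-flatness) matches the paper's setup. The problem is the last step. You assert that ``the statement is Zariski-local on $\Spf(R)$, so after choosing an \'etale morphism $\Spf(R)\to\Spf(\mathcal{O}_C\langle t_1^{\pm1},\dots,t_d^{\pm1}\rangle)$ we may reduce to the toric case.'' An \'etale map to the torus is not a Zariski localization, and the statement for the torus does not formally imply it for $R$: functoriality of $A\Omega$ only tells you that the endomorphism $\phi$ of $R_k$ induced in degree zero is semilinear over the Frobenius of the toric subring $k[t_1^{\pm1},\dots,t_d^{\pm1}]$, and that does not pin it down. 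Concretely, if $A=k[t^{\pm1}]$ and $B=A[x]/(x^2-t)$ with $p\neq 2$ (an \'etale $A$-algebra), then $x\mapsto -x^p$ defines a ring endomorphism of $B$ extending the Frobenius of $A$ which is not the Frobenius of $B$. So an argument that only uses the toric case plus \'etaleness cannot conclude, and your proposal supplies nothing else at this point.

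The paper closes exactly this gap differently: using smallness, it chooses the perfectoid cover $R\to R_\infty$ of $R$ itself (not just of the torus), so that $A\Omega_R\simeq L\eta_\mu R\Gamma(\Gamma,A_{\inf}(R_\infty))$ and one gets a $\phi$-equivariant map $\eta\colon A\Omega_R\to A_{\inf}(R_\infty)$. Base-changing along $A_{\inf}\to W\to k$ and using $A_{\inf}(R_\infty)\widehat{\otimes}^L_{A_{\inf}}W\simeq W(R_{\infty,k})$ yields a $\phi$-equivariant map $\HH^0(A\Omega_R\otimes^L_{A_{\inf}}k)\to R_{\infty,k}$, on whose target $\phi$ is literally the $p$-th power map; hence it suffices to prove this map is injective, which follows from the Hodge--Tate identification of it with $R_k\to R_{\infty,k}$ and the faithful flatness of $R\to R_\infty$ modulo $p$. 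If you want to salvage your approach, you must run this kind of argument for general small $R$ (or otherwise produce a Frobenius-equivariant embedding of $\HH^0$ into a ring where $\phi$ is visibly the Frobenius); the reduction to the toric case alone does not suffice. (Even in the toric case, your ``manifestly'' step quietly uses the compatibility of the Hodge--Tate identification of $\HH^0$ with the explicit group-cohomology model, which deserves a sentence, but that is a minor point by comparison.)
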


\begin{proof}
Write $U = \mathrm{Spf}(R)$. Unwinding the definitions, we must check that the Frobenius endomorphism of the $\mathbf{F}_p$-algebra $\HH^0(A\Omega_R \otimes_{A_{\inf}}^L k)$ coincides with the one induced by the tensor product of the map $\varphi_R\colon  A\Omega_R \to A\Omega_R$ and the Frobenius map $\varphi_k\colon  k \to k$.
Using the smallness of $R$, we can choose a map $R \to R_\infty$ as in \cite[Definitions 8.5 and 8.6]{BMS}. In particular, $R_\infty$ is perfectoid, the map $R \to R_\infty$ is faithfully flat modulo $p$ and a pro-\'etale $\Gamma$-torsor after inverting $p$ (where $\Gamma = \mathbf{Z}_p(1)^{\oplus d}$), and $A\Omega_R \simeq L\eta_\mu R\Gamma(\Gamma, A_{\inf}(R_\infty))$ (by  \cite[Theorem 9.4 (iii)]{BMS}). We therefore obtain a $\phi$-equivariant homomorphism
\[ \eta\colon A\Omega_R \to A_{\inf}(R_\infty). \]
Recall that if $S$ is a perfectoid $\mathcal{O}_C$-algebra, then there is a Frobenius-equivariant isomorphism $A_{\inf}(S) \widehat{\otimes}^L_{A_{\inf}} W \simeq W(S_k)$ (see \cite[Lemma 3.13]{BMS}). Applying this to $S = R_\infty$, we obtain a $\phi$-equivariant map
\[ \mathrm{H}^0(\eta \otimes_{A_{\inf}}^L k)\colon  \mathrm{H}^0(A\Omega_R \otimes_{A_{\inf}}^L k) \to R_{\infty,k}.\]
As $\phi$ acts as the Frobenius on $R_{\infty,k}$, it also acts as the Frobenius on any $\phi$-stable subring. Consequently, to prove Proposition \ref{AOmegaSDA}, it will
suffice to show that the map $\mathrm{H}^0(\eta \otimes_{A_{\inf}}^L k)$ is injective. For this, we use the Hodge-Tate comparison isomorphism (Theorem~\ref{HodgeTateComp}), which
carries $\mathrm{H}^0(\eta \otimes_{A_{\inf}}^L k)$ to the map obtained by applying
the functor $\mathrm{H}^{0}$ to the map of chain complexes
\[ \widetilde{\Omega}_R \otimes_{\mathcal{O}_C}^L k \to R_{\infty,k}.\]
We now observe that on $0$th cohomology groups, this can be identified with the map $R_k \to R_{\infty,k}$ (using \cite[Theorem 8.7]{BMS} to identify the left hand side, and the construction of $\eta$ to identify the map). Since $R \to R_\infty$ is faithfully flat modulo $p$, it is also faithfully flat (hence injective) after extending scalars to $k$.
\end{proof}

\subsection{Comparison with the de~Rham--Witt Complex}
\label{AOmegaDRWComp}

We follow the notation of \S \ref{AOmegaReview}. Let $\mathfrak{X}$ be a smooth formal $\mathcal{O}_C$-scheme, and let $A\Omega_{\mathfrak{X},W}^{\sm,\ast}$ be the presheaf
of strict Dieudonn\'{e} $W$-algebras given by Proposition~\ref{AOmegaSDAFinal}. Our goal is to prove Theorem~\ref{DRWAOmegaCoarse} by identifying
$A\Omega_{\mathfrak{X},W}^{\sm, \ast}$ with the de~Rham--Witt complex of the special fiber $\mathfrak{X}_{k}$. 

\begin{notation}\label{dRWonX}
Let $\mathcal{O}_{ \mathfrak{X}_k}$ denote the structure sheaf of the $k$-scheme $\mathfrak{X}_{k}$ and let $\mathcal{O}_{\mathfrak{X}_k}^{\sm}$ denote its
restriction to the category $\mathcal{U}(\mathfrak{X})_{\sm} \simeq \mathcal{U}(\mathfrak{X}_k)_{\sm}$ of small open subsets of $\mathfrak{X}_k$. We let $W\Omega^{\sm,*}_{\mathfrak{X}_k}$ denote the presheaf of strict Dieudonn\'e $W$-algebras on $\mathcal{U}(\mathfrak{X})_{\sm}$ given by the formula
\[W\Omega^{\sm,*}_{\mathfrak{X}_k}(U) =  W\Omega^*_{\mathcal{O}(U_k)}.\] 
For each integer $i$, let $\Omega^{\sm,i}_{\mathfrak{X}_k}$ be the presheaf on $\mathcal{U}(\mathfrak{X})_{\sm}$ given by $U \mapsto \Omega^i_{\mathcal{O}(U_k)}$. Write \[ \eta_{W\Omega}\colon \mathcal{O}_{\mathfrak{X}_k}^{\sm} \to \varphi_* H^0(W\Omega^{\sm,*}_{\mathfrak{X}_k}/p)\]
for the natural $k$-algebra map induced by the degree $0$ case of the Cartier isomorphism in Proposition~\ref{prop11} (applicable since $\mathfrak{X}_k$ is smooth over $k$).
\end{notation}

\begin{remark}
\label{dRWSheafify}
Note that passage to the derived $\infty$-category (that is, composing with the forgetful functor $(\FrobAlgComplete)_{W/} \to \widehat{\calD(W)}$), carries the presheaf $W\Omega^{\sm,\ast}_{\mathfrak{X}_k}$ of Notation~\ref{dRWonX} to a commutative algebra object $W\Omega_{\mathfrak{X}_k}^{\sm,*} \in \mathrm{Fun}(\mathcal{U}(\mathfrak{X})_{\sm}^{\op},\widehat{\calD(W)})$ equipped with a Frobenius endomorphism. Applying the completed sheafification functor $\widehat{G}_W$ of Remark~\ref{Kedlaya} to this object yields the classical de~Rham--Witt complex $W\Omega_{\mathfrak{X}_k}^* \in \widehat{\calD}(\mathfrak{X}_k,W)$. 
\end{remark}

Our proof of Theorem~\ref{DRWAOmegaCoarse} will use the following recognition criterion for the de~Rham--Witt complex, which may be of independent interest:

\begin{proposition}
\label{dRWCrit}
Let $A^*$ be a presheaf of strict Dieudonn\'e $W$-algebras on $\mathcal{U}(\mathfrak{X}_k)_{\sm}$ equipped with a map 
\[ \eta_A\colon \mathcal{O}_{\mathfrak{X}_k}^{\sm} \to \varphi_* \HH^0(A/p A).\] 
of commutative $k$-algebras. Then $\HH^*(A/pA)$ has the structure of a presheaf of commutative differential graded algebras over $k$, with differential given by the Bockstein map
associated to $p$, and $\eta_A$ extends naturally to a map
\[ \widetilde{\eta}_A\colon  \Omega^{\sm,*}_{\mathfrak{X}_k} \to \varphi_* \HH^*(A/pA)\]
of presheaves of commutative differential graded algebras. If $\widetilde{\eta}_A$ is an isomorphism, then there is a unique isomorphism $W\Omega^{\sm,*}_{\mathfrak{X}_k} \simeq A^*$ that intertwines $\eta_{W\Omega}$ with $\eta_A$. 
\end{proposition}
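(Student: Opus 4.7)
The plan proceeds in four steps: construct the CDGA structure on $\HH^*(A/pA)$, extend $\eta_A$ via the universal property of de Rham complexes, build the comparison map via the universal property of the saturated de Rham-Witt complex, and check this map is an isomorphism via the Cartier isomorphism modulo $p$.

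First I would fix a small affine $U$ and use strictness of $A^*(U)$, which forces $p$-torsion freeness by Remark~\ref{olos}, to exhibit the Bockstein $\beta\colon \HH^i(A^*(U)/p) \to \HH^{i+1}(A^*(U)/p)$ coming from the short exact sequence $0 \to A^*(U)/p \xrightarrow{p} A^*(U)/p^2 \to A^*(U)/p \to 0$. The identity $\beta^2 = 0$ is standard, and the graded Leibniz rule follows from a routine cocycle-level computation using the fact that $A^*(U)$ is a $p$-torsion-free commutative differential graded algebra. With this CDGA structure in place, the universal property of the de Rham complex (property $(b)$ from the introduction) extends $\eta_A$ uniquely to a map of presheaves of CDGAs $\widetilde{\eta}_A\colon \Omega^{\sm,*}_{\mathfrak{X}_k} \to \HH^*(A/pA)$. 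Functoriality in $U$ is automatic.

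Assuming $\widetilde{\eta}_A$ is an isomorphism, I next construct the comparison map. For each small $U$, Proposition~\ref{prop11} applied with $r = 1$ gives a ring isomorphism $\bar F\colon A^0(U)/VA^0(U) \simeq \HH^0(A^*(U)/p)$ induced by the Frobenius. Composing $\eta_A(U)$ with $\bar F^{-1}$ produces a $k$-algebra map $\mathcal{O}(U_k) \to A^0(U)/VA^0(U)$, which by the universal property of the saturated de Rham-Witt complex (Definition~\ref{def80}) extends uniquely to a map $f_U\colon \W\Omega^*_{\mathcal{O}(U_k)} \to A^*(U)$ of strict Dieudonn\'e algebras. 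Functoriality in $U$ yields a map $f\colon \W\Omega^{\sm,*}_{\mathfrak{X}_k} \to A^*$ of presheaves which intertwines $\eta_{W\Omega}$ with $\eta_A$ by construction; uniqueness of any such intertwining is immediate, since the map $\mathcal{O}(U_k) \to A^0(U)/VA^0(U)$ classifying $f_U$ via Definition~\ref{def80} is forced to be $\bar F^{-1} \circ \eta_A$.

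The main work, and the principal obstacle, is to verify that $f$ is an isomorphism. By Corollary~\ref{cor15} it suffices to check that $f_U/p\colon \W\Omega^{\sm,*}_{\mathfrak{X}_k}(U)/p \to A^*(U)/p$ is a quasi-isomorphism for each small $U$. Consider the commutative diagram
$$\xymatrix{\Omega^{\sm,*}_{\mathfrak{X}_k}(U) \ar@{=}[d] \ar[r]^-{\widetilde{\eta}_{W\Omega}(U)} & \HH^*(\W\Omega^{\sm,*}_{\mathfrak{X}_k}(U)/p) \ar[d]^-{\HH^*(f_U/p)} \\ \Omega^{\sm,*}_{\mathfrak{X}_k}(U) \ar[r]^-{\widetilde{\eta}_A(U)} & \HH^*(A^*(U)/p), }$$
whose commutativity is guaranteed by the uniqueness clause in the universal property of the de Rham complex (both composites agree in degree zero by the intertwining property and are CDGA maps). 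The bottom horizontal map is an isomorphism by hypothesis. For the top map, unwinding the construction and using Proposition~\ref{prop11} to identify $\HH^*(\W\Omega^{\sm,*}_{\mathfrak{X}_k}(U)/p) \simeq \W_1\Omega^*_{\mathcal{O}(U_k)}$ (noting that $\bar F$ on the de Rham-Witt side is precisely the Frobenius iso implementing this identification), one sees that $\widetilde{\eta}_{W\Omega}(U)$ corresponds to the canonical map $\nu\colon \Omega^*_{\mathcal{O}(U_k)} \to \W_1\Omega^*_{\mathcal{O}(U_k)}$ of Section~\ref{dRComp}; since $\mathcal{O}(U_k)$ is smooth over the perfect ring $k$ for any small $U$, Proposition~\ref{prop76} guarantees $\nu$ is an isomorphism. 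Therefore the right vertical map is an isomorphism. The delicate point, which I expect to require the most care, is verifying that the identification of $\widetilde{\eta}_{W\Omega}$ with $\nu$ is correct at the level of CDGAs (including the compatibility of $\beta$ with $d$ on $\W_1\Omega^*$); once this is established, the quasi-isomorphism $f_U/p$ follows and hence $f_U$ itself is an isomorphism of strict Dieudonn\'e algebras.
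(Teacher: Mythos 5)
Your proposal is correct and follows essentially the same route as the paper's proof: build the comparison map from the universal property of the saturated de Rham--Witt complex (with the Frobenius twist via Proposition \ref{prop11}), identify its reduction mod $p$ with $\widetilde{\eta}_A$ using the Cartier isomorphism $\Omega^{\ast}_{\mathcal{O}(U_k)} \simeq \HH^{\ast}(W\Omega^{\ast}/p)$ together with the uniqueness clause in the universal property of the de Rham complex, and then upgrade the mod-$p$ quasi-isomorphism to an isomorphism using strictness. The only cosmetic difference is the endgame: you conclude via Corollary \ref{cor15} and $M^{\ast} \simeq \varprojlim \W_r(M)^{\ast}$, whereas the paper passes through $p$-torsion-freeness, $p$-completeness, and Theorem \ref{fixedpointDZ}; these are equivalent.
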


\begin{proof}
Write $\beta_p$ for the Bockstein differential on $\HH^*(A/pA)$. Recall from Proposition~\ref{prop62} that we can naturally identify $(\HH^*(A/pA),\beta_p)$ with $(\eta_p A^*)/p$. Since this isomorphism is multiplicative, we learn that $(\HH^*(A/pA),\beta_p)$ is a presheaf of strictly commutative differential graded algebras with vanishing terms in degrees $< 0$. The universal property of the de Rham complex then yields the map $\widetilde{\eta}_A$ extending the map $\eta_A$. 

On the other hand, by the universal property of the de~Rham--Witt complex (Definition~\ref{def80} and Theorem~\ref{maintheoC}) and the Cartier isomorphism (Proposition~\ref{prop11}) $\mathcal{W}_1(A^*) \to \varphi_* \HH^*(A/pA)$ for $A^*$, the map $\eta_A$ lifts to uniquely to a map 
\[ \Psi\colon W\Omega^{\sm,*}_{\mathfrak{X}_k} \to A^*\] 
which intertwines $\eta_A$ with $\eta_{W\Omega}$. 

Now recall that the classical Cartier isomorphism gives an identification 
\[ C:\Omega^*_{\mathfrak{X}_k} \xrightarrow{\simeq} \varphi_* \HH^*(W\Omega^{\sm}_{\mathfrak{X}_k}/p)\] 
that intertwines the de~Rham differential with the Bockstein differential (e.g., by Proposition~\ref{prop11} for $W\Omega_{\mathfrak{X}_k}^{\sm,\ast}$). The map $\HH^*(\Psi/p)$ intertwines the Bockstein differentials, and coincides with the map $\widetilde{\eta}_A$ constructed above in degree $0$ by construction under the Cartier isomorphism. By the universal property of the de~Rham complex, we obtain an equality 
\[ \varphi_* \HH^*(\Psi/p) \circ C = \varphi_* \HH^*(\widetilde{\eta})\]
of maps $\Omega^*_{\mathfrak{X}_k} \to \varphi_* \HH^*(A/pA)$. Now $C$ is an isomorphism by construction while $\varphi_* \HH^*(\widetilde{\eta})$ is an isomorphism by assumption. It follows from the formula above that $\HH^*(\Psi/p)$ is an isomorphism, i.e., $\Psi/p$ is a quasi-isomorphism. 
Since the domain and codomain of $\Psi$ are $p$-torsion-free and $p$-adically complete, it follows that $\Psi$ is a quasi-isomorphism. Applying Corollary~\ref{cor15}, we deduce that $\Psi$ is an isomorphism.
The uniqueness of $\Psi$ is clear from the universal property of the saturated
de Rham--Witt complex. 
\end{proof}

\begin{theorem}
\label{dRWAOmegaMainComp}
There is a natural isomorphism $\varphi^* A\Omega_{\mathfrak{X},W}^{\sm,\ast} \simeq W\Omega_{\mathfrak{X}_k}^{\sm,\ast}$ of presheaves of strict Dieudonn\'e $W$-algebras on $\mathcal{U}(\mathfrak{X})_{\sm}$. 
\end{theorem}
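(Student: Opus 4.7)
The strategy is to apply the recognition criterion of Proposition~\ref{dRWCrit} to $A^{*} := A\Omega_{\mathfrak{X},W}^{\sm,\ast}$, viewed as a presheaf of strict Dieudonn\'e $W$-algebras via Proposition~\ref{AOmegaSDAFinal}. This requires constructing a map $\eta_A\colon \mathcal{O}_{\mathfrak{X}_k}^{\sm} \to \HH^0(A^{*}/p)$ of presheaves of commutative $k$-algebras and verifying that the resulting map $\widetilde{\eta}_A\colon \Omega^{\sm,*}_{\mathfrak{X}_k} \to \HH^*(A^{*}/p)$ of presheaves of commutative differential graded algebras (with the Bockstein differential on the target) is an isomorphism.

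For the first step, fix a small affine open $U = \Spf(R)$. By the Hodge-Tate comparison (Theorem~\ref{HodgeTateComp}), the structural map $R \to \widetilde{\Omega}_R = A\Omega_R \otimes^L_{A_{\inf},\widetilde{\theta}} \mathcal{O}_C$ identifies $R$ with $\HH^0(\widetilde{\Omega}_R)$, and more generally $\HH^i(\widetilde{\Omega}_R) \simeq \bigwedge^i_R \Omega^1_{R/\mathcal{O}_C}$ is a projective (hence flat) $\mathcal{O}_C$-module. Consequently, reducing modulo $p$ yields
\[ \HH^*(A\Omega_{R,W}/p) \simeq \HH^*(\widetilde{\Omega}_R \otimes^L_{\mathcal{O}_C} k) \simeq \Omega^*_{R/\mathcal{O}_C} \otimes_{\mathcal{O}_C} k \simeq \Omega^*_{R_k/k}, \]
where we have used that $A\Omega_{R,W}/p \simeq A\Omega_R \otimes^L_{A_{\inf}} k$ (since the $p$-complete base change agrees with the ordinary one when the target is already killed by $p$) and the smoothness of $R_k$ over $k$. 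In degree $0$ this provides the desired map $\eta_A(U)\colon R_k \to \HH^0(A\Omega_{R,W}/p)$ (in fact already an isomorphism), which is evidently functorial in $U$.

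For the second step, by the universal property of the de Rham complex (used in the proof of Proposition~\ref{dRWCrit}), the map $\widetilde{\eta}_A$ is the unique morphism of cdga's extending $\eta_A$. The graded-algebra isomorphism displayed above extends $\eta_A$ in degree $0$, so if it is in addition a morphism of cdga's (when $\HH^*(A\Omega_{R,W}/p)$ is equipped with the Bockstein differential associated to the short exact sequence $0 \to A\Omega_{R,W} \xrightarrow{p} A\Omega_{R,W} \to A\Omega_{R,W}/p \to 0$), then it must coincide with $\widetilde{\eta}_A$, and consequently $\widetilde{\eta}_A$ will be an isomorphism. Invoking Proposition~\ref{dRWCrit} will then yield the asserted isomorphism $W\Omega_{\mathfrak{X}_k}^{\sm,\ast} \simeq A\Omega_{\mathfrak{X},W}^{\sm,\ast}$ of presheaves of strict Dieudonn\'e $W$-algebras.

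The main obstacle is therefore the compatibility of the Hodge-Tate comparison with differentials: one must check that, under the identification $\HH^*(A\Omega_{R,W}/p) \simeq \Omega^*_{R_k/k}$ above, the Bockstein $\beta$ corresponds to the de Rham differential $d$. Since both differentials are derivations and the Hodge-Tate isomorphism is multiplicative, it suffices to verify this in degree zero on $R_k$. This in turn follows by unravelling the construction of the Hodge-Tate comparison in \cite[\S 8]{BMS}, where the comparison isomorphism is itself produced by extending a Bockstein derivation $R \to \HH^1(\widetilde{\Omega}_R)$ (associated to the cofiber sequence $A\Omega_R \xrightarrow{\widetilde{\xi}} A\Omega_R \to \widetilde{\Omega}_R$) to a multiplicative map $\Omega^*_{R/\mathcal{O}_C} \to \HH^*(\widetilde{\Omega}_R)$, and noting that the relevant Bocksteins are compatible under the crystalline specialization $A_{\inf} \to W$ (which sends $\widetilde{\xi}$ to $p$).
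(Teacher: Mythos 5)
Your proposal is correct and follows essentially the same route as the paper: apply the recognition criterion of Proposition~\ref{dRWCrit}, use the Hodge--Tate comparison to identify $\HH^*(A\Omega_{\mathfrak{X},W}^{\sm,\ast}/p)$ with $\Omega^{\sm,*}_{\mathfrak{X}_k}$ as graded algebras, and reduce the remaining point to checking in degree zero that the $p$-Bockstein matches the de Rham differential, via compatibility of the $\tilde{\xi}$- and $p$-Bocksteins under the crystalline specialization $A_{\inf}\to W$ together with the fact (from the proof of the de Rham comparison for $A\Omega$) that the $\tilde{\xi}$-Bockstein on Hodge--Tate cohomology is the de Rham differential. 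The only difference is the precise reference cited for that last input, which is immaterial.
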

\begin{proof}
We will show that the criterion of Proposition~\ref{dRWCrit} applies to the presheaf $A^* := \varphi^* A\Omega_{\mathfrak{X},W}^{\sm,\ast}$.
Recall that we have $A\Omega^{\sm}_{\mathfrak{X},W} := A\Omega_{\mathfrak{X}}^{\sm} \widehat{\otimes}_{A_{\inf}}^L W$ by definition (Construction~\ref{FrobBaseChange}). Moreover, $A\Omega^{\sm,\ast}_{\mathfrak{X},W}$ is a presheaf of commutative differential graded algebras over $W$ which represents $A\Omega^{\sm}_{\mathfrak{X},W}$ as a commutative algebra object of the the derived category (Construction~\ref{AOmegaSDC}). In particular, $\HH^0(A\Omega^{\sm,\ast}_{\mathfrak{X},W}/p)$ can be identified with $\HH^0(A\Omega_{\mathfrak{X}}^{\sm} \otimes_{A_{\inf}}^L k)$. By transitivity of tensor products, we obtain an isomorphism
\[ A\Omega_{\mathfrak{X}}^{\sm} \otimes^L_{A_{\inf}} k \simeq (A\Omega_{\mathfrak{X}}^{\sm} \otimes^L_{A_{\inf},\widetilde{\theta}} \mathcal{O}_C) \otimes^L_{\mathcal{O}_C} k. \]
We also have the Hodge-Tate comparison isomorphism (Theorem~\ref{HodgeTateComp})
 \[ \widetilde{\Omega}_{\mathfrak{X}}^{\sm} \simeq A\Omega_{\mathfrak{X}}^{\sm} \otimes^L_{A_{\inf},\widetilde{\theta}} \mathcal{O}_C\]
of commutative algebras in $\mathrm{Fun}(\mathcal{U}(\mathfrak{X})_{\sm}^{\op}, \calD(\mathcal{O}_C))$, where the left side is the presheaf which carries a 
small affine open $\mathrm{Spf}(R) \subseteq \mathfrak{X}$ to $\widetilde{\Omega}_R$. For such $R$, the $R$-module $\HH^i(\widetilde{\Omega}_R) \simeq \Omega^i_{R/\mathcal{O}_C}$ is locally free and hence flat over $\mathcal{O}_C$. It immediately follows that $\HH^0(A\Omega_{\mathfrak{X},W}^{\sm,\ast}/p)$ is identified with $\mathcal{O}_{\mathfrak{X}_k}^{\sm}$ as a commutative $k$-algebra, which supplies the map $\eta_A$ required in Proposition~\ref{dRWCrit}; note that the Frobenius twist appearing in Proposition~\ref{dRWCrit} is nullified by the Frobenius pullback present in the definition of $A^*$, i.e., we have
\[\varphi_* \HH^*(A^*/p) = \varphi_* \HH^*(\varphi^* A\Omega_{\mathfrak{X},W}^{\sm,*}/p) = \HH^*(A\Omega_{\mathfrak{X},W}^{\sm,*}/p).\]
 Moreover, this analysis also shows that the cohomology ring $\HH^*(A\Omega_{\mathfrak{X},W}^{\sm,\ast}/p)$ is an exterior algebra on 
\[ \HH^1(A\Omega_{\mathfrak{X},W}^{\sm,\ast}/p) \simeq \HH^1(\widetilde{\Omega}^{\sm}_{\mathfrak{X}}) \otimes_{\mathcal{O}_C} k \simeq \Omega^{\sm,1}_{\mathfrak{X}_k},\]
where the target is the presheaf determined by sending a small affine open $\mathrm{Spf}(R)$ to $\Omega^1_{R_k}$. To apply Proposition~\ref{dRWCrit}, it remains to check that the Bockstein map
\[ \HH^0(A\Omega_{\mathfrak{X},W}^{\sm,\ast}/p) \xrightarrow{\beta_p} \HH^1(A\Omega_{\mathfrak{X},W}^{\sm,\ast}/p) \]
corresponds to the to the de~Rham differential
\[ \mathcal{O}_{\mathfrak{X}_k}^{\sm} \to \Omega^{\sm,1}_{\mathfrak{X}_k} \]
under the preceding identifications. In fact, it is enough to check this compatibility after taking sections over a small affine open $\mathrm{Spf}(R)$. Over such an affine, we will deduce the desired compatibility from the de~Rham comparison theorem for $A\Omega_R$. Because the crystalline specialization map $A_{\inf} \rightarrow W$ carries $\tilde{\xi}$ to $p$, 
we obtain a commutative diagram in the derived category $D(A_{\inf})$ comparing the Bockstein constructions for $\tilde{\xi}$ and $p$:
\[ \xymatrix{ A\Omega_R/\tilde{\xi} \ar[r]^-{\tilde{\xi}} \simeq A\Omega_R \otimes_{A_{\inf},\widetilde{\theta}}^L \mathcal{O}_C \ar[d] & A\Omega/\tilde{\xi}^2 \ar[r] \ar[d] & A\Omega_R/\tilde{\xi} \simeq A\Omega_R \otimes_{A_{\inf},\widetilde{\theta}}^L \mathcal{O}_C\ar[d] \\
		  A\Omega_R/\tilde{\xi} \otimes_{A_{\inf}}^L W \ar[r]^-{\tilde{\xi}} \ar[d]^-{\simeq} & A\Omega/\tilde{\xi}^2 \otimes_{A_{\inf}}^L W \ar[r] \ar[d]^{\simeq} & A\Omega_R/\tilde{\xi} \otimes_{A_{\inf}}^L W \ar[d]^{\simeq} \\
		  (A\Omega_R \widehat{\otimes}^L_{A_{\inf}} W) \otimes_{\Z_p}^L \F_p \ar[r]^-{p} &  (A\Omega_R \widehat{\otimes}^L_{A_{\inf}} W) \otimes_{\Z_p}^L \Z/p^2 \ar[r] &  (A\Omega_R \widehat{\otimes}^L_{A_{\inf}} W) \otimes_{\Z_p}^L \F_p. }\]
Each row of this diagram is an exact triangle, and the second row is obtained from the first by extending scalars along the crystalline specialization $A_{\inf} \to W$.		  
Comparing boundary maps on cohomology for the first and last row above gives a commutative diagram
\[ \xymatrix{ \mathrm{H}^i(A\Omega_R \otimes_{A_{\inf},\widetilde{\theta}}^L \mathcal{O}_C) \ar[r]^-{\beta_{\tilde{\xi}}} \ar[d] & \mathrm{H}^{i+1}(A\Omega_R \otimes_{A_{\inf},\widetilde{\theta}}^L \mathcal{O}_C) \ar[d] \\
		   \mathrm{H}^i( (A\Omega_R \widehat{\otimes}^L_{A_{\inf}} W) \otimes_{\Z_p}^L \F_p) \ar[r]^-{\beta_p} &  \mathrm{H}^{i+1}( (A\Omega_R \widehat{\otimes}^L_{A_{\inf}} W) \otimes_{\Z_p}^L \F_p). }\]
Via the Hodge-Tate comparison (and ignoring Breuil--Kisin twists), this square is identified with
\[ \xymatrix{ \Omega^i_{R/\mathcal{O}_C} \ar[r]^-{\beta_{\tilde{\xi}}} \ar[d] & \Omega^{i+1}_{R/\mathcal{O}_C} \ar[d] \\
		   \Omega^i_{R_k} \simeq \Omega^i_{R_k/k} \ar[r]^-{\beta_p} & \Omega^{i+1}_{R_k} \simeq \Omega^{i+1}_{R_k/k}.}\]
where the vertical maps are the natural ones. It will therefore suffice to show that the top horizontal map is given by the de~Rham differential on $\Omega^{\ast}_{R/ \mathcal{O}_C}$,
which follows from the proof of the de~Rham comparison theorem for $A \Omega$ (see \cite[Proposition 7.9 and Remark 7.11]{BhattSpecializing}).
\end{proof}

\begin{proof}[Proof of Theorem~\ref{DRWAOmegaCoarse}]
Theorem~\ref{dRWAOmegaMainComp} gives an identification 
\[ \Psi\colon W\Omega^{\sm,*}_{\mathfrak{X}_k} \simeq \varphi^* A\Omega^{\sm,*}_{\mathfrak{X},W}\]
 of presheaves of strict Dieudonn\'e $W$-algebras on $\mathcal{U}(\mathfrak{X})_{\sm}$. This already proves the second statement of Theorem~\ref{DRWAOmegaCoarse}. 

Let us now deduce the first statement. Passing to the derived category (i.e., composing with the forgetful functor $(\FrobAlgComplete)_{/W} \to \widehat{D}(W)$), the isomorphism $\Psi$ induces an isomorphism 
\[ \Theta\colon W\Omega^{\sm,*}_{\mathfrak{X}_k}  \simeq \varphi^* A\Omega_{\mathfrak{X},W}^{\sm}\]
 of commutative algebra objects in $\mathrm{Fun}(\mathcal{U}(\mathfrak{X})_{\sm}^{\op}, \widehat{D}(W))$ that is compatible with the Frobenius maps. Applying the completed sheafification functor $\widehat{G}_W$ of Remark~\ref{Kedlaya} to $A\Omega_{\mathfrak{X},W}^{\sm,\ast}$ yields the complex $A\Omega_{\mathfrak{X},W} \in \widehat{D}(\mathfrak{X},W)$ (Remark~\ref{AOmegaSmallSheaf}). Similarly, by Remark~\ref{dRWSheafify}, applying $\widehat{G}_W$ to $W\Omega_{\mathfrak{X}_k}^{\sm,*}$ gives the de~Rham--Witt complex $W\Omega_{\mathfrak{X}_k}^* \in \widehat{D}(\mathfrak{X},W)$. Combining these observations with the Frobenius equivariant isomorphism $\Theta$, we obtain the first statement of Theorem~\ref{DRWAOmegaCoarse}.
\end{proof}

\newpage

\bibliographystyle{amsalpha}
\bibliography{Crystalline}

\providecommand{\bysame}{\leavevmode\hbox to3em{\hrulefill}\thinspace}
\providecommand{\MR}{\relax\ifhmode\unskip\space\fi MR }
\providecommand{\MRhref}[2]{%
  \href{http://www.ams.org/mathscinet-getitem?mr=#1}{#2}
}
\providecommand{\href}[2]{#2}
\begin{thebibliography}{BMS19}

\bibitem[AF91]{AvFox}
Luchezar~L. Avramov and Hans-Bj\o~rn Foxby, \emph{Homological dimensions of
  unbounded complexes}, J. Pure Appl. Algebra \textbf{71} (1991), no.~2-3,
  129--155. \MR{1117631}

\bibitem[AR94]{AdamekRosicky}
{Ji\v{r}\'\i} Ad\'amek and {Ji\v{r}\'\i} Rosick\'y, \emph{Locally presentable
  and accessible categories}, London Mathematical Society Lecture Note Series,
  vol. 189, Cambridge University Press, Cambridge, 1994. \MR{1294136}

\bibitem[Ara01]{Arabia}
Alberto Arabia, \emph{Rel\`evements des alg\`ebres lisses et de leurs
  morphismes}, Comment. Math. Helv. \textbf{76} (2001), no.~4, 607--639.
  \MR{1881700}

\bibitem[Bei87]{Beilinson}
Alexander Beilinson, \emph{On the derived category of perverse sheaves},
  Lecture Notes in Mathematics, vol. 1289, Springer-Verlag, Berlin, 1987,
  Papers from the seminar held at Moscow State University, Moscow, 1984--1986.
  \MR{923130}

\bibitem[Ber74]{Berthelot74}
Pierre Berthelot, \emph{Cohomologie cristalline des sch\'emas de
  caract\'eristique {$p>0$}}, Lecture Notes in Mathematics, Vol. 407,
  Springer-Verlag, Berlin-New York, 1974. \MR{0384804}

\bibitem[Bha]{Bhatt_Gabber}
Bhargav Bhatt, \emph{An imperfect ring with a trivial cotangent complex},
  Available at \url{http://www-personal.umich.edu/~bhattb/math/trivial-cc.pdf}.

\bibitem[Bha12]{BhattdR}
\bysame, \emph{{$p$}-adic derived de {R}ham cohomology}, ar{X}iv eprints
  (2012), Available at \url{https://arxiv.org/abs/1204.6560}.

\bibitem[Bha14]{BhattTorsion}
Bhargav Bhatt, \emph{Torsion in the crystalline cohomology of singular
  varieties}, Doc. Math. \textbf{19} (2014), 673--687. \MR{3247799}

\bibitem[Bha18]{BhattSpecializing}
\bysame, \emph{Specializing varieties and their cohomology from characteristic
  0 to characteristic {$p$}}, Algebraic geometry: {S}alt {L}ake {C}ity 2015,
  Proc. Sympos. Pure Math., vol.~97, Amer. Math. Soc., Providence, RI, 2018,
  pp.~43--88. \MR{3821167}

\bibitem[Blo77]{Bloch}
Spencer Bloch, \emph{Algebraic {$K$}-theory and crystalline cohomology}, Inst.
  Hautes \'Etudes Sci. Publ. Math. (1977), no.~47, 187--268 (1978). \MR{488288}

\bibitem[BMS18]{BMS}
Bhargav Bhatt, Matthew Morrow, and Peter Scholze, \emph{Integral {$p$}-adic
  {H}odge theory}, Publ. Math. Inst. Hautes \'{E}tudes Sci. \textbf{128}
  (2018), 219--397. \MR{3905467}

\bibitem[BMS19]{BMS2}
\bysame, \emph{Topological {H}ochschild homology and integral {$p$}-adic
  {H}odge theory}, Publ. Math. Inst. Hautes \'{E}tudes Sci. \textbf{129}
  (2019), 199--310. \MR{3949030}

\bibitem[BO78]{BO78}
Pierre Berthelot and Arthur Ogus, \emph{Notes on crystalline cohomology},
  Princeton University Press, Princeton, N.J.; University of Tokyo Press,
  Tokyo, 1978. \MR{0491705}

\bibitem[BO83]{BO83}
P.~Berthelot and A.~Ogus, \emph{{$F$}-isocrystals and de {R}ham cohomology.
  {I}}, Invent. Math. \textbf{72} (1983), no.~2, 159--199. \MR{700767}

\bibitem[Bor11]{Borgerbasic}
James Borger, \emph{The basic geometry of {W}itt vectors, {I}: {T}he affine
  case}, Algebra Number Theory \textbf{5} (2011), no.~2, 231--285. \MR{2833791}

\bibitem[Bou79]{Bousfield}
A.~K. Bousfield, \emph{The localization of spectra with respect to homology},
  Topology \textbf{18} (1979), no.~4, 257--281. \MR{551009}

\bibitem[BS15]{proetale}
Bhargav Bhatt and Peter Scholze, \emph{The pro-\'{e}tale topology for schemes},
  Ast\'{e}risque (2015), no.~369, 99--201. \MR{3379634}

\bibitem[BS17]{BhattScholzeProjectivity}
\bysame, \emph{Projectivity of the {W}itt vector affine {G}rassmannian},
  Invent. Math. \textbf{209} (2017), no.~2, 329--423. \MR{3674218}

\bibitem[BS19]{BhattScholzePrisms}
\bysame, \emph{Prisms and prismatic cohomology}, Available at
  \url{https://arxiv.org/abs/1905.08229}.

\bibitem[BW05]{BoWi05}
James Borger and Ben Wieland, \emph{Plethystic algebra}, Adv. Math.
  \textbf{194} (2005), no.~2, 246--283. \MR{2139914}

\bibitem[Del71]{HodgeII}
Pierre Deligne, \emph{Th\'eorie de {H}odge. {II}}, Inst. Hautes \'Etudes Sci.
  Publ. Math. (1971), no.~40, 5--57. \MR{0498551}

\bibitem[DG02]{DG02}
W.~G. Dwyer and J.~P.~C. Greenlees, \emph{Complete modules and torsion
  modules}, Amer. J. Math. \textbf{124} (2002), no.~1, 199--220. \MR{1879003}

\bibitem[dJ95]{deJong}
A.~J. de~Jong, \emph{Crystalline {D}ieudonn\'e module theory via formal and
  rigid geometry}, Inst. Hautes \'Etudes Sci. Publ. Math. (1995), no.~82, 5--96
  (1996). \MR{1383213}

\bibitem[Elk73]{Elkik}
Ren\'ee Elkik, \emph{Solutions d'\'equations \`a coefficients dans un anneau
  hens\'elien}, Ann. Sci. \'Ecole Norm. Sup. (4) \textbf{6} (1973), 553--603
  (1974). \MR{0345966}

\bibitem[FJ13]{FontaineJannsen}
Jean-Marc Fontaine and Uwe Jannsen, \emph{Frobenius gauges and a new theory of
  $p$-torsion sheaves in characteristic $p$. ${I}$}, Available at
  \url{https://arxiv.org/abs/1304.3740}.

\bibitem[FM87]{FontaineMessing}
Jean-Marc Fontaine and William Messing, \emph{{$p$}-adic periods and {$p$}-adic
  \'{e}tale cohomology}, Current trends in arithmetical algebraic geometry
  ({A}rcata, {C}alif., 1985), Contemp. Math., vol.~67, Amer. Math. Soc.,
  Providence, RI, 1987, pp.~179--207. \MR{902593}

\bibitem[Gro66]{GrothendieckdR}
A.~Grothendieck, \emph{On the de {R}ham cohomology of algebraic varieties},
  Inst. Hautes \'Etudes Sci. Publ. Math. (1966), no.~29, 95--103. \MR{0199194}

\bibitem[Gro68]{Grothendieck68}
\bysame, \emph{Crystals and the de {R}ham cohomology of schemes}, Dix expos\'es
  sur la cohomologie des sch\'emas, Adv. Stud. Pure Math., vol.~3,
  North-Holland, Amsterdam, 1968, Notes by I. Coates and O. Jussila,
  pp.~306--358. \MR{269663}

\bibitem[Hes96]{He96}
Lars Hesselholt, \emph{On the {$p$}-typical curves in {Q}uillen's
  {$K$}-theory}, Acta Math. \textbf{177} (1996), no.~1, 1--53. \MR{1417085}

\bibitem[HM04]{HMabsolute}
Lars Hesselholt and Ib~Madsen, \emph{On the {D}e {R}ham-{W}itt complex in mixed
  characteristic}, Ann. Sci. \'Ecole Norm. Sup. (4) \textbf{37} (2004), no.~1,
  1--43. \MR{2050204}

\bibitem[Hov99]{Hovey}
Mark Hovey, \emph{Model categories}, Mathematical Surveys and Monographs,
  vol.~63, American Mathematical Society, Providence, RI, 1999. \MR{1650134
  (99h:55031)}

\bibitem[HS99]{HoveyStrickland}
Mark Hovey and Neil~P. Strickland, \emph{Morava {$K$}-theories and
  localisation}, Mem. Amer. Math. Soc. \textbf{139} (1999), no.~666, viii+100.
  \MR{1601906}

\bibitem[Ill71]{Ill1}
Luc Illusie, \emph{Complexe cotangent et d\'eformations. {I}}, Lecture Notes in
  Mathematics, Vol. 239, Springer-Verlag, Berlin-New York, 1971. \MR{0491680}

\bibitem[Ill72]{Ill2}
\bysame, \emph{Complexe cotangent et d\'eformations. {II}}, Lecture Notes in
  Mathematics, Vol. 283, Springer-Verlag, Berlin-New York, 1972. \MR{0491681}

\bibitem[Ill79]{illusie}
\bysame, \emph{Complexe de de\thinspace {R}ham-{W}itt et cohomologie
  cristalline}, Ann. Sci. \'Ecole Norm. Sup. (4) \textbf{12} (1979), no.~4,
  501--661. \MR{565469}

\bibitem[IR83]{IllusieRaynaud}
Luc Illusie and Michel Raynaud, \emph{Les suites spectrales associ\'ees au
  complexe de de {R}ham-{W}itt}, Inst. Hautes \'Etudes Sci. Publ. Math. (1983),
  no.~57, 73--212. \MR{699058}

\bibitem[Joh77]{Johnstone}
P.~T. Johnstone, \emph{Topos theory}, Academic Press [Harcourt Brace
  Jovanovich, Publishers], London-New York, 1977, London Mathematical Society
  Monographs, Vol. 10. \MR{0470019}

\bibitem[Joy85]{Joyal}
Andr\'e Joyal, \emph{{$\delta$}-anneaux et {$\lambda$}-anneaux}, C. R. Math.
  Rep. Acad. Sci. Canada \textbf{7} (1985), no.~4, 227--232. \MR{802319}

\bibitem[Kat70]{Katz}
Nicholas~M. Katz, \emph{Nilpotent connections and the monodromy theorem:
  {A}pplications of a result of {T}urrittin}, Inst. Hautes \'Etudes Sci. Publ.
  Math. (1970), no.~39, 175--232. \MR{0291177}

\bibitem[KS06]{KS}
Masaki Kashiwara and Pierre Schapira, \emph{Categories and sheaves},
  Grundlehren der Mathematischen Wissenschaften [Fundamental Principles of
  Mathematical Sciences], vol. 332, Springer-Verlag, Berlin, 2006. \MR{2182076}

\bibitem[KS11]{KovacsSchwede2011}
S\'andor~J. Kov\'acs and Karl~E. Schwede, \emph{Hodge theory meets the minimal
  model program: a survey of log canonical and {D}u {B}ois singularities},
  Topology of stratified spaces, Math. Sci. Res. Inst. Publ., vol.~58,
  Cambridge Univ. Press, Cambridge, 2011, pp.~51--94. \MR{2796408}

\bibitem[KST19]{KST}
Moritz Kerz, Florian Strunk, and Georg Tamme, \emph{Towards {V}orst's
  conjecture in positive characteristic}, ar{X}iv preprint ar{X}iv:1812.05342
  (2019).

\bibitem[Kun69]{Kunz69}
Ernst Kunz, \emph{Characterizations of regular local rings for characteristic
  {$p$}}, Amer. J. Math. \textbf{91} (1969), 772--784. \MR{0252389}

\bibitem[Lur]{kerodon}
Jacob Lurie, \emph{\textit{Kerodon}}, \url{http://kerodon.net}.

\bibitem[Lur09]{Lur09}
Jacob Lurie, \emph{Higher topos theory}, Annals of Mathematics Studies, vol.
  170, Princeton University Press, Princeton, NJ, 2009. \MR{2522659}

\bibitem[Lur17]{HA}
\bysame, \emph{Higher algebra}, 2017.

\bibitem[LZ04]{LZ}
Andreas Langer and Thomas Zink, \emph{De {R}ham-{W}itt cohomology for a proper
  and smooth morphism}, J. Inst. Math. Jussieu \textbf{3} (2004), no.~2,
  231--314. \MR{2055710}

\bibitem[LZ07]{LZ07}
\bysame, \emph{De {R}ham-{W}itt cohomology and displays}, Doc. Math.
  \textbf{12} (2007), 147--191. \MR{2302526}

\bibitem[Maz73]{Mazur73}
B.~Mazur, \emph{Frobenius and the {H}odge filtration (estimates)}, Ann. of
  Math. (2) \textbf{98} (1973), 58--95. \MR{0321932}

\bibitem[NS18]{NS17}
Thomas Nikolaus and Peter Scholze, \emph{On topological cyclic homology}, Acta
  Math. \textbf{221} (2018), no.~2, 203--409. \MR{3904731}

\bibitem[Nyg81]{Nygaard}
Niels~O. Nygaard, \emph{Slopes of powers of {F}robenius on crystalline
  cohomology}, Ann. Sci. \'Ecole Norm. Sup. (4) \textbf{14} (1981), no.~4,
  369--401 (1982). \MR{654203}

\bibitem[Qui]{Quillennotes}
Daniel Quillen, \emph{Homology of commutative rings}, Unpublished notes.

\bibitem[Qui67]{Quillen}
Daniel~G. Quillen, \emph{Homotopical algebra}, Lecture Notes in Mathematics,
  No. 43, Springer-Verlag, Berlin-New York, 1967. \MR{0223432}

\bibitem[Rez]{rezk}
Charles Rezk, \emph{Analytic completion}, Available at
  \url{https://faculty.math.illinois.edu/~rezk/analytic-paper.pdf}.

\bibitem[Sch14]{schenzel}
Peter Schenzel, \emph{A criterion for {$I$}-adic completeness}, Arch. Math.
  (Basel) \textbf{102} (2014), no.~1, 25--33. \MR{3154155}

\bibitem[Ser68]{SerreLF}
Jean-Pierre Serre, \emph{Corps locaux}, Hermann, Paris, 1968, Deuxi\`eme
  \'{e}dition, Publications de l'Universit\'{e} de Nancago, No. VIII.
  \MR{0354618}

\bibitem[Spa88]{Spalt}
N.~Spaltenstein, \emph{Resolutions of unbounded complexes}, Compositio Math.
  \textbf{65} (1988), no.~2, 121--154. \MR{932640}

\bibitem[SPA18]{stacks-project}
The {S}tacks~{P}roject {A}uthors, \emph{\textit{Stacks Project}},
  \url{http://stacks.math.columbia.edu}, 2018.

\bibitem[Swa80]{SwanSeminormal}
Richard~G. Swan, \emph{On seminormality}, J. Algebra \textbf{67} (1980), no.~1,
  210--229. \MR{595029}

\bibitem[Val15]{valenzuela}
Gabriel Valenzuela, \emph{Homology of complete and torsion modules}, Ph.D.
  thesis, Wesleyan University, 2015.

\bibitem[vdK86]{vdkdesc}
Wilberd van~der Kallen, \emph{Descent for the {$K$}-theory of polynomial
  rings}, Math. Z. \textbf{191} (1986), no.~3, 405--415. \MR{824442}

\end{thebibliography}

\end{document}